\newtheorem{defi}{Definition}[section]
\newtheorem{thm}{Theorem}[section]
\newtheorem{lem}{Lemma}[section]
\newtheorem{rmk}{Remark}[section]
\newtheorem{cor}{Corollary}[section]
\newtheorem{prop}{Proposition}[section]
\numberwithin{equation}{section}
\newcommand{\eps}{\ep}
\let\f=\frac
\newcommand{\lr}[1]{\langle #1 \rangle}
\newcommand{\lf}[1]{#1_{\phi}}
\newcommand{\hf}[1]{#1^{\phi}}
\newcommand{\sgn}{\mathop{\mathrm{sgn}}}
\newcommand{\wcT}{\widetilde{\mathcal{T}}}
\newcommand{\beq}{\begin{equation}}
\newcommand{\eeq}{\end{equation}}
\newcommand{\ben}{\begin{eqnarray}}
\newcommand{\een}{\end{eqnarray}}
\newcommand{\beno}{\begin{eqnarray*}}
\newcommand{\eeno}{\end{eqnarray*}}
\def\pa{\partial}
\def\virgp{\raise 2pt\hbox{,}}
\def\cdotpv{\raise 2pt\hbox{;}}
\def\eqdef{\buildrel\hbox{\footnotesize d\'ef}\over =}
\def\eqdefa{\buildrel\hbox{\footnotesize def}\over =}
\def\C{\mathop{\mathbb C\kern 0pt}\nolimits}
\def\DD{\mathop{\mathbb D\kern 0pt}\nolimits}
\def\EE{\mathop{{\mathbb E \kern 0pt}}\nolimits}
\def\K{\mathop{\mathbb K\kern 0pt}\nolimits}
\def\N{\mathop{\mathbb N\kern 0pt}\nolimits}
\def\Q{\mathop{\mathbb Q\kern 0pt}\nolimits}
\def\R{\mathop{\mathbb R\kern 0pt}\nolimits}
\def\SS{\mathop{\mathbb S\kern 0pt}\nolimits}
\def\NN{\mathop{\mathbb N\kern 0pt}\nolimits}
\def\ZZ{\mathop{\mathbb Z\kern 0pt}\nolimits}
\def\TT{\mathop{\mathbb T\kern 0pt}\nolimits}
\def\P{\mathop{\mathbb P\kern 0pt}\nolimits}
\def\na{\nabla}
\def\ep{\epsilon}
\begin{document}
\title[Global dynamics of Boltzmann equations]
{On the  global dynamics of  the  inhomogeneous Boltzmann equation without angular cutoff: Hard potentials and Maxwellian molecules}

\author{Ling-Bing He}
\address{Department of Mathematical Sciences, Tsinghua University, Beijing, 100084, P.R.China}
\email{hlb@mail.tsinghua.edu.cn}

\author{Jin-Cheng Jiang}
\address{Department of Mathematics, National Tsing Hua University, Hsinchu, Taiwan 30013, R.O.C}
\email{jcjiang@math.nthu.edu.tw}
\maketitle

\begin{abstract}
 We consider the global dynamics of the original Boltzmann equation without angular cutoff 
on the torus  for the hard potentials  and Maxwellian molecules. The new idea to solve the problem is the energy-entropy method  which characterizes the propagation of the regularity,  $H$-theorem and the interplay between the energy and the entropy.   Our  main results are as follows:

 (1).  We present a unified framework to prove the well-posedness for the original Boltzmann equation  for both angular cutoff and without cutoff  in weighted Sobolev spaces with polynomial weights. As a consequence, we obtain the propagation of the regularity and an explicit formula for the asymptotics of the  equation from angular cutoff to non-cutoff.

(2). We describe the global dynamics of the equation under the almost optimal assumption on the solution which
ensures that the Boltzmann collision operator behaves like a fractional Laplace operator for the velocity variable. In particular,
we obtain a new mechanism for the convergence 
of the solution to its equilibrium with quantitative estimates. 

(3). We prove that any global and smooth solution to the equation is stable, i.e., any perturbed solution will remain close to the reference solution if initially they are close to each other. Here we  remove  the assumption that perturbed solution and the reference solution should have the same associated equilibrium.

Our approach incorporates almost all the fundamental properties of the equation: the entropy production inequality, the  immediately appearance of pointwise lower bound of the solution, the smoothing property of the positive part of the collision operator, averaging lemma for the transport equation, the Povnzer inequality for $L^1$ moment and the sharp bounds for the collision operator.  

\medskip

{\bf MSC2010:} 	35Q20;35A01 A02 A09;35B35 B40.

\end{abstract}

\tableofcontents

\section{Introduction} 

We investigate the global dynamics of the Boltzmann equation without angular cutoff in this  paper and an 
upcoming paper~\cite{hjz}. Due to the different structures of the collision operator, only
the hard potentials and Maxwellian molecules  will be discussed in this paper and the soft potential
case is left to~\cite{hjz}.    
The new idea of this program is an introduction of the energy-entropy method which characterizes the propagation of the regularity,  $H$-theorem and the interplay between the energy and the entropy.  
\subsection{Boltzmann equation: basic assumptions and properties.} We first recall that
the spatially inhomogeneous Boltzmann equation with the initial data $f_0$ reads:
\ben\label{Boltz eq}  \left\{
\begin{aligned}
	& \partial_t f+v\cdot\nabla_x f=Q(f,f),\\
	&f|_{t=0}=f_0.\end{aligned}\right. 
\een
where $f(t,x,v)\geq 0$ is a   distribution
function  at time
$t\geq 0$   of  colliding particles at position $x\in  \TT^3=[-\f12,\f12]^3$  with velocity
$v\in\R^3$.    Here  the collision operator $Q$ is a bilinear
operator which acts only on the velocity variable, that is,
\beq\label{COQ} Q(g,f)(v)\eqdefa
\int_{\R^3}\int_{\SS^{2}}B(v-v_*,\sigma)(g'_*f'-g_*f)d\sigma dv_*.
\eeq In the above we use the standard shorthand notations  $f=f(t,x,v)$, $g_*=g(t,x, v_*)$,
$f'=f(t,x,v')$, $g'_*=g(t,x,v'_*)$  where $(v,v_*)$ and $(v',v_*')$ are the velocities of particles before and after the collision. Here $v'$ and $v_*'$ are given by
\beq\label{sigma-rep}
v'=\frac{v+v_{*}}{2}+\frac{|v-v_{*}|}{2}\sigma\ ,\ \ \
v'_{*}=\frac{v+v_{*}}{2}-\frac{|v-v_{*}|}{2}\sigma\
,\qquad\sigma\in\SS^{2}
\eeq
which follows from the parametrization of the
set of solutions of the physical laws of elastic collision: \beno
v+v_*=v'+v_*',\quad |v|^2+|v_*|^2=|v'|^2+|v'_*|^2. \eeno

The non-negative function $B(v-v_*,\sigma)$, Boltzmann collision kernel, in the collision
operator is always assumed to depend only on $|v-v_{*}|$ and 
$\frac{v-v_{*}}{|v-v_{*}|}\cdot\sigma$. Usually, we introduce the angle variable $\theta$ through
$\cos\theta=\frac{v-v_{*}}{|v-v_{*}|}\cdot\sigma$.  Without loss of generality, we may
assume that $B(v-v_{*},\sigma)$ is supported in the set
 $0\leq\theta\leq\frac{\pi}{2}$ , i.e,
$\frac{v-v_{*}}{|v-v_{*}|}\cdot\sigma\ge0
$,   otherwise, $B$ can be replaced by its symmetrized form:\beno
\bar{B} (v-v_{*},\sigma)=[B (v-v_{*},\sigma)+B(v-v_{*},-\sigma)]\mathbf{1}_{\{\frac{v-v_{*}}{|v-v_{*}|}\cdot\sigma\ge0\}}.\eeno
Here, $\mathbf{1}_A$ is the characteristic function of the set $A$.
 By change of variables, it is easy to check that
 \[
\lr{Q(f,g),h}_v=\iiint f_*g(h'-h)B(v-v_*,\sigma) d\sigma dv_*dv.
\]
In this paper, we assume  that  the collision kernel
satisfies the following assumptions:
\begin{itemize}
\item  The cross-section $B(v-v_{*},\sigma)$ takes a product form as
\ben \label{a1} B(v-v_{*},\sigma)=\Phi(|v-v_*|)b(\cos\theta),\een
 where both $\Phi$ and $b$ are non-negative functions.
\item The angular function $b(\cos\theta)$ is not   integrable and it
satisfies  for $\theta\in [0, \pi/2]$\begin{eqnarray}\label{a2} K\theta^{-1-2s} \le\sin\theta
b(\cos\theta)\le K^{-1}
 \theta^{-1-2s},  \quad
\mbox{with}\   0<s<1,\ K>0.
\end{eqnarray}
\item The kinetic factor $\Phi$ takes the form \ben\label{a3}
\Phi(|v-v_*|)=|v-v_{*}|^{\gamma}, \,\mbox{with}\,\, 0\le \gamma\le2.\een

\end{itemize}
\medskip

When the deviation angle $\theta$ has a lower bound, i.e., $\theta\ge C\epsilon>0$ which corresponds to the famous Grad's  cutoff assumption, the  equation is called the Boltzmann equation with angular cutoff. In this case, the collision operator $Q$ is turned to be $Q^\eps$ defined by:
\beq\label{COQeps} Q^\eps(g,f)(v)\eqdefa
\int_{\R^3}\int_{\SS^{2}}B^\eps(v-v_*,\sigma)(g'_*f'-g_*f)d\sigma dv_*
\eqdefa Q^\eps_+(g,f)- Q^\eps_-(g,f),
\eeq 
where $B^\eps(v-v_*,\sigma)=b^\eps(\cos\theta)|v-v_*|^\gamma$ with $b^\eps(\cos\theta)=b(\cos\theta)(1-\psi)((\sin\f{\theta}2)/\ep)$.  
The bump function $\psi$ with support around $0$ is defined in \eqref{defpsivarphi}.
Then the  Boltzmann equation with angular cutoff and initial data $f_0$ is written as
\ben\label{NepsB}  \left\{
\begin{aligned}
	& \pa_t f+v\cdot \na_x f =Q^{\ep}(f,f),\\
	&f|_{t=0}=f_0.\end{aligned}\right. \een

\begin{rmk} 
The parameter $\eps$  is used to emphasize that the deviation angle $\theta$ has the lower bound that $\theta\ge C \eps$. In this paper, we will consider the case that $\eps$ is sufficiently small. The mathematical problem of the asymptotics of the Boltzmann equation from angular cutoff to non cutoff  is formulated by taking the limit in which the parameter $\eps$ 
goes to zero. Although there is a slightly difference, it is convenient to
regard this process intuitively as a limit from short-range interactions to long-range 
interactions.
\end{rmk}
The solutions of the Boltzmann equation \eqref{Boltz eq} or \eqref{NepsB} enjoy the fundamental properties of the conservation of  mass,  momentum and  kinetic energy, that is, for all $t\ge0$,
\beno  \iint_{\TT^3\times\R^3} f(t,x,v)\phi(v)dvdx=\iint_{\TT^3\times\R^3} f(0,x, v)\phi(v)dvdx,\quad \phi(v)=1,v,|v|^2.\eeno

\begin{defi}  Suppose  $\bar{\rho}, \bar{u}, \bar{T}$ are constants. We call  the function $M_{\bar{\rho}, \bar{u}, \bar{T}}$ to be a global Maxwellian if $M_{\bar{\rho}, \bar{u}, \bar{T}}\eqdefa\f{\bar{\rho} e^{\f{|v-\bar{u}|^2}{2\bar{T}}}}{(2\pi \bar{T})^{\f32}} $. For given a distribution function $f$,  $M_f$ is called to be a global Maxwellian associated to $f$ if  $M_f$ is a global Maxwellian and  has  the same mass, momentum and the kinetic energy as those for $f$. 
\end{defi}

\begin{defi} The hydrodynamical fields: the density $\rho$, mean value velocity $u$ and temperature $T$, associated to the distribution $f(x,v)$  are defined by 
  \ben\label{hydrofield}
\rho(x)=\int_{\R^3} f dv,\quad (\rho u)(x)=\int_{\R^3} fvdv,\quad 3(\rho T)(x)=\int_{\R^3} f|v-u|^2dv.
\een 
Then the local Maxwellian $M_{\rho,u,T}^f$ associated to $f$ can be defined  by  $M_{\rho,  u, T}^f\eqdefa\f{\rho e^{\f{|v-u|^2}{2T}}}{(2\pi T)^{\f32}} $. 
\end{defi}

Next we introduce the relative 
  entropy $H(f|M_f)$  which is  defined by  $$H(f|M_f)(t)\eqdefa \iint_{\TT^3\times\R^3} (f \ln\f{f}{M_f}-f+M_f ) dvdx.$$ 
Then the  Boltzmann's $H$-theorem can be stated   as follows 
\beno \f{d}{dt}H(f|M_f)(t)= -D(f)\eqdefa \iint_{\TT^3\times\R^3} Q (f ,f )\ln f   dvdx\le0,\eeno
which  predicts  that  the entropy is decreasing over time. It is not difficult to check that $D(f)=0$ is equivalent to $f=M^f_{\rho,u,T}$.

\subsection{Short review of the problem} The global dynamics of the Boltzmann equation is to describe the behavior of the solution of the Cauchy problem when the initial data is given.   Mathematically it can be formulated to prove the propagation of the  regularity of the solution and give the quantitative estimate for the convergence to the equilibrium. To solve the problem, we introduce the energy-entropy method which characterizes the propagation of the regularity, $H$-theorem and the interplay between the energy and the entropy.    We mention that this new method is motivated by the following  aspects:

\smallskip

(i). As a well known fact,  $H$-theorem indicates that the relative entropy $H(f|M_f)$ will never stop decreasing whenever $D(f)>0$.   Thus to get the longtime behavior of the solution, the essential part is to make full use of the entropy dissipation $D(f)$. However it is very difficult to derive useful information from $D(f)$ for the inhomogeneous equation due to the complexity of the structure.  The authors in \cite{DV} overcome the difficulty by  imposing the regularity assumptions on the solution to derive the explicit dissipation estimate (see \eqref{DispE}) which is the key to get the desired result. This shows that the propagation of the regularity is helpful to get the convergence to the equilibrium. Thus it is natural to  take the energy into consideration and investigate the interplay between the energy and the entropy.     
\smallskip

(ii). In \cite{cclu}, Carlen, Carvalho and Lu constructed a large class of the solutions verifying that the rate of convergence to equilibrium is only algebraic for the  Boltzmann equation with or without angular cutoff if the initial data only have  finite $L^1$-moment. Yet it is proved only for the moderate soft potentials in the spatially homogeneous setting. But we believe that this phenomenon is universal  for   soft potentials.   It  discloses that the  rate of the convergence has strong connection with  the initial data. Roughly speaking, the initial data with  polynomial moment induces the polynomial convergent rate while the initial data with  exponential moment induces the exponential convergent rate.  We stress that the propagation of the  regularity should be involved in the strategy to catch the rich phenomenon of the longtime behavior of the solution. Only in this way, we can quantify the dependence of the rate of the convergence on the initial data.
\smallskip

There are huge number of literatures on the global dynamics of the Boltzmann equation. In the following, we only review the results on the inhomogeneous equation.
 \subsubsection{Linearized theory} 
The first breakthrough to the global dynamics is due to the linearized theory. 
The main idea of the linearized theory is looking for a special solution near the  global Maxwellian which is the steady solution to the equation. Suppose $f=\mu+\mu^{\f12}F$ where $\mu=M_{1,0,1}$. Then the equation is transformed into 
\beno \pa_t F+v\cdot\na_x F+\mathcal{L}_B F=\Gamma(F,F),\eeno
where $\mathcal{L}_Bf\eqdefa -(\mu^{-\f12}(Q(\mu,\mu^{\f12} f)+Q(\mu^{\f12} f,\mu)))$  and  $\Gamma(h,f)\eqdefa \mu^{-\f12}Q(\mu^{\f12}h,\mu^{\f12} f)$.  By the standard perturbation theory, the problem is essentially reduced to  the coercivity estimate for the self-adjoint operator $\mathcal{L}_B$ and the upper bounds for  the nonlinear term $\Gamma(h,f)$.  We remark that the entropy  is useless in this   framework. The typical explanation is that in this situation all  information is contained in the linearized collision operator $\mathcal{L}_B$.  We refer   readers to \cite{amuxy2, cal, dhwy, gs1, guo1, guo2, gktt,  ukai2} and references therein for more details.  

Based on the linearized theory, the stability results may be relaxed to   so-called weakly inhomogeneous data. It corresponds to the perturbation of any homogeneous solution. We refer readers to \cite{asp,  momi, guoliu, wennberg}.  We comment that
\begin{enumerate}
\item These results more or less depend on the linearized theory. In particular, the linearized theory is widely used in the construction of the existence of the solution;
\item The equation is treated as a semi-linear equation by the standard perturbation theory;
\item These results require that the perturbed solution and the reference solution have the same associated global Maxwellian.
\end{enumerate}
 
\subsubsection{Entropy method} The entropy method is built on the $H$-theorem. The ambitious goal of this method is to show  how the solution converges to its associated equilibrium when the initial data is far away from the equilibrium.   The main difficult part of the method lies in the entropy dissipation estimate:
\ben\label{DispE} D(f)\gtrsim C(f)H(f|M^f_{\rho,u,T})^{1+\epsilon}.\een 
It only measures  the distance between the current state and the local Maxwellian which is not enough to get the convergence to the equilibrium.  The breakthrough by this method is due to Desvillettes and Villani in \cite{DV}. 
Suppose that the solution is smooth and uniformly bounded for all time. Then based on {\bf the instability of the hydrodynamic description}, that is, ruling out the eventuality that the solution spends much of its time close to a local (non global) Maxwellian,  
the authors derived estimates on the rate of the convergence to equilibrium.  Technically, the method relies on the analysis of a differential inequality like
\ben\label{SecIne} h''(t)+Ch(t)^{1-\epsilon}\ge c>0.\een
This pioneer result gives the first positive answer for the convergence to the equilibrium without the restriction that the initial data is near equilibrium. 

 However the result in \cite{DV} is not completely satisfactory because of the strong assumptions on the regularity. Moreover  the statement of the result  was not given in a quantitative way. It only indicates that the high regularity will induce the high rate of the convergence. Some comments are in order:
 \begin{enumerate}
\item We first note that in the reality the assumption on the solution as in \cite{DV} is too strong. For instance, when the solution verifies that  the rate of the convergence to the equilibrium is  algebraic,  we can only have the uniform control of the solution with finite derivative.  

\item Secondly, from the point view of the dynamics, the method used in \cite{DV} only emphasized the role of the entropy. It neglected the propagation of the regularity and so the interplay between the entropy and the energy. As a result, the pure entropy method cannot quantify the dependence of the rate of the convergence on the initial data. 

\item Thirdly,   in the reality,  the propagation of the regularity and the convergence to the equilibrium should be treated as an organic whole (see Section 1.3.3 for  details in the below).   
\end{enumerate}

\subsubsection{Semi-group method}   The semi-group method was introduced by  Gualdani, Mischler and Mouhot  in \cite{momi}.  The goal of the method is to build a bridge between the entropy method and the linearized theory to obtain the optimal rate of the convergence  beyond the linearized setting. It is a remarkable method when the solution is near the equilibrium in the sense that $f-M_f\sim o(1)$.  We emphasize that it is different from the words ``near the equilibrium" in the linearized theory. In the latter case,  we mean $(f-M_f)/\sqrt{M_f}\sim o(1)$.

\subsubsection{Existence results}  There are few results on the local well-posedness for the original equation~\eqref{Boltz eq}.  We refer to \cite{ukai1}  for the local well-posedness result in Gevrey space with exponential weight, to \cite{amuxy1} for the  local solution in Sobolev spaces if the initial data has exponential moment and to \cite{my} for the local existence of polynomial decay solutions to the Boltzmann equation for soft potentials. 
It is also worthy to mention the result on the global renormalized solution
in  \cite{dl} and \cite{AV02} for the  equation with and without angular cutoff respectively.

\subsection{Difficulties, new ideas and strategies}  The difficulties of the problem of the global dynamics lies in   three parts. The first one is on  the propagation of the  regularity. The second one comes from the hypofunction of \eqref{DispE} which only measures the distance between the current state and the local Maxwellian.  We need a new ingredient to  ensure that the solution will converge to its associated equilibrium. The third one is  the  strategy to   catch the interplay between the energy and the entropy.  

\subsubsection{New ideas and strategies (I): propagation of the regularity}

 The main obstacles for showing the propagation of the regularity for the original Boltzmann equation without angular cutoff lie in three aspects: the increase in weight from the lower and upper bounds for the collision operator, getting the non-negativity of the solution and the robustness of the method which can extend the local existence to be  global.  In what follows, we will explain them in detail. 
\smallskip

 {\it (i).  The increase in weight.} Roughly speaking,  if  $g$ is a non-negative and smooth function and have some  lower bound of the density,  then the lower and upper bounds for the collision operator in Sobolev spaces can be stated as follows:
\ben\label{boundWSP} C_g|f|_{H^s_{\gamma/2}}^2-|g|_{L^1}|f|_{L^2_{\gamma/2}}^2 \le\langle -Q(g, f), f\rangle_v \lesssim |g|_{L^1_{\gamma+2s}} |f|_{H^s_{\gamma/2+s}}^2,\een 
 where the weighted Sobolev norm $|\cdot|_{H^m_l}$ is  defined in Section \ref{funspace}. It is obvious that
additional weight is required for $f$ in the upper bound compared to that in the lower bound. The reason results from the fractional Laplace-Beltrami operator $(-\triangle_{\SS^2})^s$ which exists in the structure of the operator(see \cite{HE16} for details). Moreover when $\gamma>0$,  the lower bound in \eqref{boundWSP} is not handy for the energy estimates. For instance, from  the basic $L^2$ energy estimate, we have
\beno \f{d}{dt}\|f\|_{L^2}^2+ C_f \|f\|_{H^s_{\gamma/2}}^2\le \|f
\|_{L^\infty_xL^1_{\gamma+2s}}\|f\|_{L^2_{\gamma/2}}^2. \eeno
It is not sure that  $\|f
\|_{L^\infty_xL^1_{\gamma+2s}}\|f\|_{L^2_{\gamma/2}}^2$ can be controlled by the dissipation  $C_f \|f\|_{H^s_{\gamma/2}}^2$. On the other hand, since $\gamma>0$, we cannot apply the Gronwall inequality to close the estimate.  It is not difficult to see that  the problem, the increase in weight, occurs in each step of $L^2$ and high order energy estimates. Note that this difficulty does not exist for the linearized theory  thanks to the coercivity estimates(see \cite{amuxy2, gs1} for  details).

Now let us review how this difficulty is  overcome in the previous work. In \cite{ukai1}, the author  designed a proper   Gevrey space with exponential weighted function to solve the problem. In \cite{amuxy1}, by assuming that the initial data $f_0$ has exponential moment, i.e., $ f_0e^{a\lr{v}^2}\in L^2$, and taking the transform: $f=e^{-(a-t)\lr{v}^2}g$, the equation \eqref{Boltz eq} will be reduced to the   equation for $g$ verifying
\beno \pa_tg+v\cdot \na_xg+\lr{v}^2g=\Gamma^t(g,g).\eeno
Now the problem of the increase in weight is absorbed by the damping term in the equation  if $\gamma+2s<1$. Then the well-posedness for $g$ is obtained. However in this case the lifespan of the well-posedness depends not only  on the initial data but also on the transform from $f$ to $g$. Indeed the lifespan will not be longer than $a$ because the equation for $g$ is valid only in the time interval $[0, a)$.  In \cite{my}, the  authors constructed a local solution to the equation with soft potentials in weighted Sobolev spaces with $s\in (0,\f12]$ and $\gamma+2s\le 0$. Let us explain how to establish the local existence in this case. From the sharp upper bounds for the collision operator $Q$ in Theorem \ref{thmub}, the lower bound in \eqref{boundWSP} and the observation that 
\beno \lr{ \pa^{\alpha}Q(f, f) ,\pa^{\alpha}f}_v=\lr{Q(f,  \pa^{\alpha}f),  \pa^{\alpha}f}_v+\sum_{|\alpha_1|\ge1,\alpha_1+\alpha_2=\alpha}\lr{Q( \pa^{\alpha_1}f,  \pa^{\alpha_2}f),  \pa^{\alpha}f}_v, \eeno
 we derive that 
$ |\lr{ \pa^{\alpha}Q(f, f) ,\pa^{\alpha}f}_v|\lesssim\sum_{|\alpha_1|\ge1,\alpha_1+\alpha_2=\alpha}  |\pa^{\alpha_1}f|_{L^2_{7}} |\pa^{\alpha_2}f|_{H^{2s}_{\gamma+2s}}|\pa^{\alpha}f|_{L^2}. $
Thanks to the restrictions that $s\in (0,\f12]$ and $\gamma+2s\le 0$, the above inequality implies that 
\beno |\lr{ \pa^{\alpha}Q(f, f) ,\pa^{\alpha}f}_v|&\lesssim&\sum_{|\alpha_1|\ge1,\alpha_1+\alpha_2=\alpha}  |\pa^{\alpha_1}f|_{L^2_{7}} |\pa^{\alpha_2}f|_{H^1} |\pa^{\alpha}f|_{L^2}. \eeno
By the standard estimates for the commutator between the weight function and the operator $Q$, the energy estimate can be concluded as 
$$ \f{d}{dt} X(t)\le X(t)^{\f32},$$ where $X(t)$ denotes the proper energy functional for the solution.  It is enough to close the argument and get the local existence. Thus the increase in weight does not occur in the energy estimates  due to 
the restrictions   $s\in (0,\f12]$ and $\gamma+2s\le 0$.

 {\it (ii). Getting the non-negativity.}  As for the non-negativity of the solution, since the Boltzmann equation is a non-local parabolic type equation,   the maximum principle is not available and so it is not so easy to prove the desired result.  In \cite{amuxy2}, the authors use proper energy estimates to prove the non-negativity but with the restriction that the solution should have the exponential moment. While in \cite{ukai1,my},  authors proved it by the cutoff approximation thanks to  the Duhamel formula. Here we will follow the latter method to prove the non-negativity by reducing the problem to the study of the asymptotics of the equation from angular cutoff to  non-cutoff.

  {\it (iii). Robustness of the method.}  As we mentioned before, 
  we request that  the method is   robust such that  the local solution can be extended to be the global one. It is not difficult to check that the methods in \cite{amuxy2, my,ukai2} are not robust. We need some new idea. 

\medskip

Our new idea on the proof of the propagation of the regularity is a combination of  the smoothing property of the positive part of the collision operator, averaging lemma for the transport equation, the Povnzer inequality for $L^1$ moment and the sharp bounds for the collision operator.

To make the method robust, we will consider the equation:\ben\label{h-eq} \pa_th+v\cdot \na_x h=Q(f,h)+Q(h,g), \een where
 $h=f-g$ and $f$ and $g$ are  non-negative smooth and bounded functions verifying that their densities have lower bounds.  
The equation \eqref{h-eq} is related to the original equation if $g=0$, related to the consideration of the long time behavior of the solution if $g=M_f$ and related to the strong stability result if $g$ is a  reference solution  to the equation.   
 In what follows, we will  assume that $f$ and $g$ are given functions and  focus on the energy estimates for the linear equation \eqref{h-eq}.
 
 Next we will sketch our strategy and explain what is new in the proof and how to overcome the difficulties mentioned before.   Our strategy can be concluded as follows:
 \begin{enumerate}
 \item  We first consider the propagation of $L^1$-moment. By revisiting the Povnzer inequality (see Lemma \ref{Povnzer2}), we prove  that   
 $$ \int_{\sigma\in \SS^2} (-\lr{v'}^l+\lr{v}^l)b(\cos\theta) d\sigma=O(l^s)\lr{v}^l+L.O.T.$$
  Compared to the previous result, we get the sharp coefficient of $\lr{v}^l$ which is of order $l^s$.  This fact will be used to prove the gain of the moment and also the uniqueness result considering that $l$ can be chosen arbitrarily large.  Based on this new estimate, we prove the propagation of   $L^1$ moments.
  
 \item When the propagation of $L^1$-moment is available, we are ready to prove the  propagation of $L^2$-moment.  To get rid of the increase in weight in the lower bound of  \eqref{boundWSP}, we perform a decomposition of the collision operator and make full use of the smoothing property of the positive part of the collision operator.  Roughly speaking,  $Q$ can be decomposed into three parts:
 \beno Q=\f12Q+\f12(Q^\delta_+-Q^\delta_-)+\f12(Q-Q^\delta), \eeno 
where $\delta$ is sufficiently small and $Q^\delta$ is defined in \eqref{COQeps}. Then the coercivity estimate can be improved by   
\beno \int_{\TT^3}\lr{-Q(f,hW_l),hW_l}_vdx\gtrsim \|h\|_{H^s_{l+\gamma/2}}^2+(\delta^{-2s}-c_f) \|h\|_{L^2_{l+\gamma/2}}^2-\delta^{-6-6s}\|h\|_{L^1_{2l+\gamma}}\|h\|_{L^\infty_xL^2_{2}},\eeno  
 where $W_l=\lr{v}^l$. It is clear that  there is no increase in weight in the estimate.
  \item As we explain it before, we will meet the increase in weight in each time of using the energy estimates. To prove the propagation of the high order regularity, the key observation lies in the result from the regularity theory.  In fact, from $L^2$-moment estimate and the averaging lemma, we can get the smoothing estimates with respect to $x$ variable as follows
 \beno \int_0^t \|h\|_{H^\varrho_x L^2_l}^2 d
 \tau \lesssim \|h_0\|_{L^2_{l+2}}^2+\int_0^t [\|h\|_{L^2_{\gamma+2s+l}}^2+\|h\|_{H^s_{l+\gamma/2}}^2]d\tau+C(\|h\|_{L^1_{2l+\gamma}}, \|h\|_{L^\infty_xL^2_{2}}), \eeno  
where $\varrho\in(0,1)$.   
It seems that  the minus term in the coercivity estimate \eqref{boundWSP} can be absorbed by the smoothing estimate if we replace $h$ by  $|D_x|^\rho h$ in \eqref{h-eq}. In other words the  fractional regularity with respect to $x$ variable for the solution can be propagated. By using the same idea to bootstrap the order, we   can  prove the propagation of the regularity for $x$ variable. To overcome the increase in weight in the upper bound of \eqref{boundWSP}, the weight function and the regularity should be designed well which obeys the law that the high order regularity is equipped with the low order weight function. 
\item Once we have the control of the regularity with respect to $x$ variable, the equation will behave like a homogeneous Boltzmann equation. Then it is not difficult to prove the propagation of the regularity for $v$ variable. This completes the energy estimates. 
\end{enumerate}

To apply the strategy to the non-linear equation \eqref{Boltz eq},  the proper bootstrap assumptions for the solution itself should be given and then applied in the iteration scheme.  In this sense our approach is really quasi-linear. Moreover in the proof, we find that   the lower bound of the density   and   upper bound of the solution in the space $ H^{\f32+\delta}_xL^2_4$  will control the propagation of the regularity.  These conditions are comparable to the minimal assumptions on the solution to ensure that the collision operator behaves like a fractional Laplace operator for $v$ variable. 
 \smallskip
 
Finally let us illustrate how to prove the non-negativity of the solution. Our method is to reduce the problem to the study of the asymptotics of the equation from angular cutoff to  non-cutoff. If  such kind of  asymptotics can be justified, we will automatically get the desired result. However it is not so easy to do that because
the collision operator $Q^\eps$ behaves like a fractional Laplace operator only in the low frequency part and remains the hyperbolic structure in the high frequency part. Thus to show that  the strategy explained in the above is stable for the equation \eqref{NepsB}, we face the following difficulties:
\begin{enumerate}
	\item Prove the  propagation of the regularity noticing that the equation is  hyperbolic in the  high frequency part;
	\item Modify the energy functional to ensure that all the estimates obtained in the strategy   are  uniformly  bounded with respect to the parameter $\eps$.
	\end{enumerate}
Both of above request sharp bound estimates for the collision operators which are uniform with respect to $\eps$, the commutator estimates and the localization of the equation \eqref{NepsB} in the frequency space. 
 
 \subsubsection{New ideas and strategies (II): revisiting the entropy dissipation} 
To bypass the difficulty caused by \eqref{DispE},  the authors in \cite{DV} derived a second-order in time differential inequality on $\|f-M^f_{\rho,u,T}\|$ to prove that the solution $f$ cannot stay too close to local Maxwellians.  Roughly speaking, they repeatedly used the inequality \eqref{SecIne} to show that
 the solution will depart from of the set of local Maxwellians except  one point, the global Maxwellian. It ensures that the solution  converges to its equilibrium. 
 \smallskip
 
 Our new idea for using the entropy is  to show that the relative entropy  will never stop decreasing  until it vanishes.  Assume that $M_f=M_{1,0,1}=M$ and $H(f|M)(t_1)=H(f|M)(t_2)$ with $t_1< t_2$. Then the entropy dissipation estimate \eqref{DispE} implies that $f=M^f_{\rho,u,T}$ for $t\in[t_1,t_2]$.  Our strategy is carried out by three steps:

{\it Step 1:} From the  equation for $f-M^f_{\rho,u,T}$, we prove that there exists a function $F_1=F_1(\rho,u,T)$ such that
\beno  \f{d}{dt}(f-M^f_{\rho,u,T}, F_1(\rho,u,T))+r_1\|T-\lr{T}_x\|_{L^2}^2\le C\|f-M^f_{\rho,u,T}\|_{L^2};\eeno
where $\lr{T}_x=\int_{\TT^3} Tdx$. It implies that $f=M^f_{\rho,u,T}=M^f_{\rho,u,\lr{T}_x}$ for $t\in[t_1,t_2]$. 

{\it Step 2:} By checking the equation for $f-M^f_{\rho,u,\lr{T}_x}$, we obtain 
that there exists a function $F_2=F_2(\rho,u,\lr{T}_x)$ such that
\beno  \f{d}{dt}(f-M^f_{\rho,u,\lr{T}_x}, F_2(\rho,u,\lr{T}_x))+r_2\|u-\lr{u}_x\|_{L^2}^2\le C\|f-M^f_{\rho,u,\lr{T}_x}\|_{L^2};\eeno
 which implies that $f=M^f_{\rho,\lr{u}_x,\lr{T}_x}$ for $t\in[t_1,t_2]$. 

{\it Step 3:} From the equation for $f-M^f_{\rho,\lr{u}_x,\lr{T}_x}$, we derive 
that there exists a function $F_3=F_3(\rho,\lr{u}_x,\lr{T}_x)$ such that
\beno  \f{d}{dt}(f-M^f_{\rho,\lr{u}_x,\lr{T}_x}, F_3(\rho,\lr{u}_x,\lr{T}_x))+r_3\|\rho-1\|_{L^2}^2\le C\|f-M^f_{\rho,\lr{u}_x,\lr{T}_x}\|_{L^2};\eeno  which yields  $f=M^f_{1,\lr{u}_x,\lr{T}_x}$ for $t\in[t_1,t_2]$. 

Due to the conservation of the mass, momentum and the energy,  we can derive that $f=M_{1,0,1}$ for $t\in[t_1,t_2]$. It implies that the entropy is a strictly decreasing function until it vanishes.  In other words, the solution never arrives at the local Maxwellian $M^f_{\rho,u,T}$ until it reaches the equilibrium. Moreover this method provides some kind of the dissipation estimates for hydrodynamical fields which are crucial to the quantitative estimate on the convergence. 

\subsubsection{New ideas and strategies (III): new mechanism for the convergence} Now we are in a position to introduce the energy-entropy method to catch the propagation of the regularity, entropy dissipation and the interplay between the energy and the entropy.

To explain our idea in detail,  we go back to \eqref{h-eq} with $g=M_f$. Let  $X(t)$, $D(t)$  and $H(t)$  denote the energy and dissipation functionals  for $h$ and the relative entropy $H(f|M_f)$ respectively.  Then the energy-entropy method relies on the following first-order system:
\ben 
 \label{E:Energy}\f{d}{dt}X(t)+D(t)&\lesssim& H(t),\\
\label{E:Entropy} \f{d}{dt}H(t)+c_1H(f|M^f_{\rho,u,T})(t)&\lesssim& c_2(H(t)^a+X(t)),\\
\label{E:Hydro} \f{d}{dt}M_h(t)+(\|(\rho-1)(t)\|_{L^2}^2+\|u(t)\|_{L^2}^2+\|(T-1)(t)\|_{L^2}^2)&\lesssim& H(f|M^f_{\rho,u,T})^b(t)X^{1-b}(t),
\een
where $0<a\le1$, $c_2\ll c_1$,  $|M_h(t)|\le X(t)$ and $b\in (0,1)$.  Let us explain where they come from.

\smallskip

{\it (i). Energy inequality \eqref{E:Energy}.}  We first note that   \eqref{E:Energy} comes from the energy estimates for $h$-equation, i.e.,
\ben\label{energyX} \f{d}{dt}X(t)+D(t)\lesssim \|h\|_{L^1}^2, \een 
where $D(t)\ge X(t)$ for hard potentials and $D(t)\le X(t)$ for soft potentials.
Generally it is impossible to eliminate the righthand side term otherwise we will get the convergence to the equilibrium without using the entropy.  Notice that  the term in the righthand side of \eqref{energyX}  can be bounded by the relative entropy thanks to  the Csiszar-Kullback-Pinsker inequality. Thus the energy inequality can be rewritten as \eqref{E:Energy}.

{\it (ii). Entropy dissipation \eqref{E:Entropy} and \eqref{E:Hydro}.} Without loss of generality, we assume that $M_f=M_{1,0,1}\eqdefa M$. Since now the regularity of the solution is propagated at least within a time interval, we may get the pointwise lower bound of the solution thanks to the main results in \cite{Mouhot}. Then the entropy method is invoked. By a slight modification of the entropy dissipation inequality (see \cite{DV}), we get  \eqref{E:Entropy}. 
Thanks to the dissipation estimates for the hydrodynamical fields derived in the last subsection, it is not difficult to obtain \eqref{E:Hydro}. 

 \smallskip

Putting together all the estimates, eventually we  arrive at  the first-order system (\ref{E:Energy}-\ref{E:Hydro}).
 Since the energy is involved in the system, we can quantify the dependence of the rate of the convergence  on the initial data.  And in particular, for $\gamma=2$, we recover the exponential rate of the convergence to the equilibrium.

\medskip

Finally let us give some comments on the  the mechanism of the convergence for hard potentials and soft potentials.

(i). Roughly speaking, for the hard potentials, the mechanism for the convergence can be concluded as follows. By the energy estimates, we first have the control of the energy thanks to the conservation of the mass and the energy estimate \eqref{energyX}:
$$\f{d}{dt}X+X\lesssim 1.$$ Then due to the main theorem in \cite{Mouhot}, we have the pointwise lower bound of the solution for any positive time.  It in turn gives the entropy dissipation estimate.  Finally the interplay between the energy and the entropy results in the convergence to the equilibrium.

(ii). For the soft potentials, the   mechanism is more subtle because in this case the dissipation in the energy estimate is weaker than that for the hard potentials.   In fact, if the initial data only has finite moment, then the dissipation functional $D(t)$ verifies $D(t)<X(t)$. In this case, we only get 
 $$ \f{d}{dt}X\lesssim 1,$$  which implies that we cannot get the uniform bounds for the solution first. It seems that the strategy applied for hard potentials does not work anymore. Therefore for soft potentials, we should go back to  the first-order system to treat the global existence, pointwise lower bound  and the convergence to the equilibrium as a organic whole. We refer readers to \cite{hjz}
 for details.

\subsubsection{Summary} The key point to understand the global dynamics of the equation relies on characterizing the propagation of the regularity,  $H$-theorem and the interplay between the entropy and the energy. Let us summarize our results in this paper. 

We first consider the propagation of the  regularity. The quasi-linear method instead of the standard linearization method is used and meanwhile the approach is   stable in  the  asymptotics of the  equation from angular cutoff to non-cutoff. As a byproduct, we prove that
\begin{enumerate}
	\item  \eqref{Boltz eq} (or \eqref{NepsB}) admits a non-negative and unique solution in weighted Sobolev spaces with polynomial weight. The lifespan of the solution is totally determined by the initial data.
	\item The lower bound of the density together with the upper bound of the solution in the space $ H^{\f32+\delta}_xL^2_{\gamma+4}$  control the propagation of the   regularity.
\item  We derive the first explicit formula on the asymptotics of the Boltzmann equation from angular cutoff to non-cutoff.
\end{enumerate}	
  
In the next, under the assumptions that we have the control of the density and the upper bound for the solution in the space $ H^{\f32+\delta}_xL^2_{\gamma+4}$, we obtain the global dynamics of the equation. In particular, we derive a new mechanism for the convergence. More precisely, we obtain

\begin{enumerate}
\item  the propagation of the regularity or smoothing estimates uniformly in time;
	\item the relative entropy is a decreasing function until it vanishes. 
	\item the quantitative estimates for the dependence of the rate of the convergence on the initial data.
\end{enumerate}

 As a corollary of the local-wellposedness and the new mechanism for the convergence, we can prove the   general strong stability. Roughly speaking, we  prove that any  small perturbation for a reference solution initially will generate a global solution to the equation and   these two   solutions will remain close to each other for all time.   Compared to the previous work, our stability result removes the assumption that perturbed solution and the reference solution should have the same associated equilibrium. Hence the result  implies that the set of smooth and bounded solutions to the equation is  open.

The strategy of the proof for the stability falls into three steps:
\begin{enumerate}
	\item By the local well-posedness for the equation \eqref{h-eq},  we can show that the perturbed solution $f$ will keep close to the reference solution $g$ for a long time if initially they are close. 
	\item The  mechanism for the convergence implies that the reference solution is close to its associated global equilibrium after a long time. 
	\item  Combining these two facts, we can find a time $t_0$ such that $t_0$ is far away from the initial time and  $f(t_0)$ is close to its equilibrium $M_f$. Then it is not difficult to prove the global existence  in the close-to-equilibrium setting.
	\end{enumerate}

 \subsection{Notations, function spaces and main results} \label{funspace} We first  list some notations which will be used in the paper.  We denote the multi-index $\alpha =(\alpha _1,\alpha _2,\alpha _3)$ with
$|\alpha |=\alpha _1+\alpha _2+\alpha _3$. We write $a\lesssim b$ to indicate that  there is a
uniform constant $C,$ which may be different on different lines,
such that $a\leq Cb$.  We use the notation $a\sim b$ whenever $a\lesssim b$ and $b\lesssim
a$. The notation $a^+$ means the maximum value of $a$ and $0$ and $[a]$ denotes the maximum integer which does not exceed $a$.   The Japanese bracket $\lr{\cdot}$ is defined by $\lr{v}=(1+|v|^2)^{\frac{1}{2}}$. We denote $C(\lambda_1,\lambda_2,\cdots, \lambda_n)$ by a constant depending on   parameters $\lambda_1,\lambda_2,\cdots, \lambda_n$. The notations  $\lr{f,g}_v\eqdef \int_{\R^3}f(v)g(v)dv$ and $(f,g)\eqdefa \int_{\R^3\times\TT^3} fgdxdv$ are used to denote the inner products for $v$ variable and for $x,v$ variables respectively. We also set $\lr{f}_x\eqdefa \int_{\TT^3} f(x)dx$ to denote the average value of the function $f$ over the domain $\TT^3$ recalling that $|\TT^3|=1$.

\subsubsection{Function spaces}
  For a distribution function $f(v)$, we have the following definitions.
\begin{enumerate}
\item For   real number $m, l $, we define the weighted Sobolev space
\begin{equation*}
H^{m}_l \eqdefa\bigg\{f(v):  |f|^2_{H^m_l}=\int_{\R^3_v} |\langle D\rangle^m \langle v\rangle^l f(v)|^2 dv
 <+\infty\bigg\},
\end{equation*} Here $a(D)$ is a   differential operator with the symbol
$a(\xi)$ defined by
\beno  \big(a(D)f\big)(v)\eqdefa\f1{(2\pi)^3}\int_{\R^3}\int_{\R^3} e^{i(v-y)\xi}a(\xi)f(y)dyd\xi.\eeno
Similarly we can define a  differential operator $D_x^a$ on the position variables $x$ in the tours $\TT^3$,
$ D^{a}_xf\eqdefa \sum_{q\in\ZZ^3,q\neq 0} |q|^{a}\hat{f}(q)e^{2\pi iq\cdot x},$
where  $\hat{f}$ denotes the Fourier transform with respect to $x$ variables and $a\in \R$.

\item The general weighted Sobolev space $W^{N,p}_l$  with $p\in [1, \infty)$ is defined as follows
\beno
W^{N,p}_l\eqdefa \bigg\{f(v): |f|_{W^{N,p}_l}=\sum_{|\alpha|\le N} \bigg(\int_{\R^3}  |\partial^\alpha f(v)|^p\langle v\rangle^{lp}dv \bigg)^{1/p}<\infty
\bigg\}.
\eeno
In particular, if $N=0$, we  introduce the weighted $L^p_l$ space as
\beno
L^p_l\eqdefa \bigg\{f(v): |f|_{L^p_l}=\bigg(\int_{\R^3} |f(v)|^p\langle v
\rangle^{l p}dv\bigg)^{\f1{p}}<\infty \bigg\}.
\eeno
\item The $L\log L$ space  is defined by
\beno L\log L\eqdefa \bigg\{f(v):  |f|_{L\log L}=\int_{\R^3}
|f|\log (1+|f|)dv<\infty \bigg\}.\eeno
\end{enumerate}

For a distribution function $f(x,v)$, we use the following weighted Sobolev  spaces
with weight  on velocity variable.

\begin{enumerate}
\item For $N_1,N_2\in \N$, the general weighted Sobolev space $H^{N_2}_xW^{N_1,p}_l$  with $p\in [1, \infty)$ is defined by
\ben
H^{N_2}_xW^{N_1,p}_l\eqdefa \bigg\{f(x,v): \|f\|^2_{H^{N_2}_xW^{N_1,p}_l}=\sum_{|\alpha|\le N_2} \int_{\TT^3} |\pa^\alpha_x f|_{W^{N_1,p}_l}^2 dx <\infty
\bigg\}.
\een
When $N_2=0$, we  introduce the weighted $L^{p_1}_xW^{N_1,p_2}_l$ space as
\ben\label{funsp2}
L^{p_1}_xW^{N_1,p_2}_l\eqdefa \bigg\{f(x,v): \|f\|_{L^{p_1}_xW^{N_1,p_2}_l}=\bigg(\int_{\TT^3} |f|^{p_1}_{W^{N_1,p_2}_l}dx \bigg)^{\f1{p_1}}<\infty \bigg\}.
\een
When $p_1=p_2=p$ and $N_1=0$, for simplicity we will use the notation $L^p_l$ to denote $L^p_xL^p_l$. In this case, we set that $\|f\|_{L^p_l}\eqdefa\|f\|_{L^p_xL^p_l}$.

\item For   real numbers $m,n, l $ with $m\ge0$, we define the weighted Sobolev space $H^n_xH^{m}_l$ as
\ben\label{funsp1} &&H^n_xH^{m}_l \eqdefa \bigg\{f(x,v):  \|f\|^2_{H^n_xH^m_l}=\|\lr{D}^mW_l f\|_{L^2_xL^2_v}^2+\mathrm{1}_{n\in \N}\sum_{|\alpha|= n}\|\lr{D}^mW_l\pa^\alpha_x f\|_{L^2_xL^2_v}^2 \notag\\&&\quad+\mathrm{1}_{n\notin \N}\sum_{|\alpha|= [n]} \iiint_{\R^3\times\TT^3\times\TT^3} \f{|(\lr{D}^mW_l\pa_x^\alpha f)(x+k,v)-(\lr{D}^mW_l\pa_x^\alpha f)(x,v)|^2}{|k|^{3+2(n-[n])}}dxdkdv <\infty\bigg\}.
\een 
 Due to the fact that $\int_{\TT^3}  \f{|e^{iq\cdot k}-1|^2}{|k|^{3+2a}}dk\sim |q|^{2a}$ for $a\in(0,1)$, it is easy to check that
   $\|f\|^2_{H^n_xH^m_l}\sim \sum_{q\in\ZZ^3}(1+|q|^{2})^{n} |\hat{f}(q)|^2_{H^m_l},$
where  $\hat{f}$ denotes the Fourier transform with respect to $x$ variables.  
\end{enumerate}

Let $X$ be a function space defined in (\ref{funsp1}-\ref{funsp2}), then $L^2([0,T]; X)$ is defined by
\beno L^2([0,T]; X)\eqdefa \{f(t,x,v) \big|\|f\|_{L^2([0,T]; X)}^2= \int_0^T \|f(t)\|^2_{X} dt<\infty\}. \eeno

\subsubsection{Two types of the dyadic decomposition}
We first list some basic knowledge on the Littlewood-Paley decomposition. Let $B_{\frac{4}{3}}\eqdefa  \{\xi\in \mathrm{R}^3 ~|~ |\xi|\leq \frac{4}{3} \} $
and $ \emph{C}\eqdefa     \{\xi\in \mathrm{R}^3 ~|~ \frac{3}{4} \leq |\xi|\leq \frac{8}{3} \} $. Then one may introduce two radial functions $ \psi \in
C_0^{\infty}(B_{\frac{4}{3}})$ and $ \varphi \in C_0^{\infty}(\emph{C} )$  which satisfy
\begin{eqnarray}\label{defpsivarphi} \psi, \varphi \ge0,\quad\mbox{and}\quad \psi(\xi) + \sum_{j\geq 0} \varphi(2^{-j} \xi) =1,~~\xi \in \mathrm{R}^3.  \end{eqnarray}

We  first   introduce the dyadic decomposition  in the phase space.  Let $N_0$ be a integer. The dyadic operator in the phase space   $ \mathcal{P}_j$ can be defined as
  \begin{eqnarray*} \mathcal{P}_{-1}f(x) =  \psi(x)f(x),~~~~
\mathcal{P}_{j}f(x) = \varphi(2^{-j}x)f(x)  ,~(j\geq 0). \end{eqnarray*}
 Let
$\tilde{ \mathcal{P}}_{j}f(x)=\sum_{|k-j|\le N_0}\mathcal{P}_{k}f(x) $ and $  \mathcal{U}_{j}f(x) = \sum_{k\le j}\mathcal{P}_{k}f(x)$ where $N_0$ verifies $\mathcal{P}_{j}\mathcal{P}_{k}=0$ if $|j-k|>N_0$. For any smooth function $f$, we have
$ f = \mathcal{P}_{-1} f +\sum_{j\geq 0} \mathcal{P}_j f.$

Next we  introduce the dyadic decomposition  in the frequency space.   We denote $ \tilde{m}\eqdefa\mathfrak{F} ^{-1} \psi $ and $\tilde{\phi} \eqdefa \mathfrak{F}^{-1} \varphi $, where they are the inverse Fourier Transform of $\varphi$ and $\psi$.
If we set $\tilde{\phi}_j(x)\eqdefa2^{3j}\tilde{\phi}(2^{j}x)$, then the dyadic operator  in the frequency space $ \mathfrak{F}_j$ can be defined as
follows \begin{eqnarray*} \mathfrak{F}_{-1}f(x) = \int_{\mathrm{R}^3}\tilde{m}(x-y) f(y)dy,~~~~
\mathfrak{F}_{j}f(x) =  \int_{\mathrm{R}^3}\tilde{\phi}_j(x-y) f(y)dy,~(j\geq 0).  \end{eqnarray*}Let  $\tilde{\mathfrak{F}}_{j}f(x)=\sum_{|k-j|\le 3N_0}\mathfrak{F}_{k}f(x)
$ and $\mathcal{S}_{j}f(x) = \sum_{k\le j}\mathfrak{F}_{k}f . $
Then for any $f \in \mathcal{S}'(\mathbb{R}^3)$, it holds
$ f = \mathfrak{F}_{-1} f +\sum_{j\geq 0} \mathfrak{F}_j f. $
\subsubsection{The symbol of the collision operator}  We first give the definition on the symbol $S^{m}_{1,0}$.
\begin{defi}\label{psuopde} A smooth function $a(v,\xi)$ is said to a symbol of type $S^{m}_{1,0}$ if   $a(v,\xi)$  verifies for any multi-indices $\alpha$ and $\beta$,
\beno |(\pa^\alpha_\xi\pa^\beta_v a)(v,\xi)|\le C_{\alpha,\beta} \langle \xi\rangle^{m-|\alpha|}, \eeno
where $C_{\alpha,\beta}$ is a constant depending only on   $\alpha$ and $\beta$.
\end{defi}
To analyze the operator $Q^\ep$, we introduce 
\beno W^\ep_{q}(\xi)\eqdefa \lr{\xi}^q(1-\phi(\ep \xi))+\epsilon^{-q}\phi(\epsilon \xi),\quad  W^\ep_{q+\log}(\xi)=\phi(\ep\xi)\lr{\xi}^{q}\log\lr{\xi}+(1-\phi(\ep\xi))\eps^{-q}|\log \epsilon|
 ,\eeno with $q\in\R^+$ 
and  $\phi\eqdefa 1-\psi$. We  also set functions $ f_\phi\eqdefa(1-\phi(\ep D))f $ and $   f^\phi\eqdefa\phi(\ep D)f$.
 We emphasize that $W^\eps_q(D)$ and $\phi(\ep D)$ are pseudo-differential operators acting only on   $v$ variable.

\subsubsection{Well-prepared sequences of Weight functions}\label{wswf} To prove the propagation of the   regularity for the equation, two types of
sequences of weight functions will be introduced. These sequences of weighted functions
have decreasing orders. 

\begin{defi}$W_l(v)$ is called to be a weight function if $W_l(v)\eqdefa\lr{v}^l$.
	\end{defi}

 Let $\varrho\in (0,1)$  be a parameter related to the hypoellipticity of the equation. It is easy to check that if $\f1{2\varrho}\notin \N$, then $0<N_d<\varrho$ where $N_d\eqdefa(N_{\varrho,2}+1)\varrho-\f12 $ with $N_{\varrho,2}\eqdefa [1/(2\varrho)]$.
  
\begin{defi}\label{ws1} Let  $N\in \N$ and $\varrho,\kappa,\delta_1\in (0,1)$ verify 
	
	 \noindent $\bullet$ (P-1) $\varrho\le \f{s}{4(s+4)}$, $\f1{2\varrho}, \f1{\varrho} \notin\N$;
	
	\noindent $\bullet$ (P-2)
	$3\delta_1\le N_d/2$ and $[(\f12+2\delta_1)/\varrho]=N_{\varrho,2}$;
	
	\noindent $\bullet$  (P-3)  $N+\kappa\ge \f32+2\delta_1$ and $\f{\kappa}{\varrho}\notin\N$.

 Assume that  $2s<q_2<q_1<1+s$ and  \beno
 N_{\varrho,1}\eqdefa[1/\varrho], N_{\varrho,\kappa}\eqdefa [\kappa/\varrho];   N_{\varrho,\delta_1}\eqdefa[\log_2(\varrho/\delta_1+2)]+1,   N_{\delta_1}\eqdefa[\log_2(3(2\delta_1)^{-1}+1)]+1, \\
 N_{q,s,1}\eqdefa\max\{[\log_2(q+s)]+1,[\log_2\big(\f{q+s}{2s}(1+\delta_1)\big)]+1\},  N_{q_1,s,2}\eqdefa \big[\log_2(\f{q_1+s}{q_1-s})\big]+1, \\N_s\eqdefa\f{2s}{1-s},N_{q_2,s,2}\eqdefa \big[\log_2(\f{q_2}{q_2-2s}(1+\delta_1))\big]+1,
 N_{q_1,q_2,s}\eqdefa[\log_2(\f{q_2+s}{q_1-q_2}+1)]+1.\eeno
 
 Suppose that  $  \mathbb{I}_x(N,\kappa)\eqdefa\big\{   \{0,1,\cdots,N-1\}\times \{-1,0,\cdots, N_{\varrho,1}+1\}\cup  \{\{N\}\times\{-1, 0,\cdots,N_{\varrho,\kappa}+1\}\big\}$ and $W_{m,n}\eqdefa W_{l_{m,n}}=\lr{v}^{l_{m,n}}$.  
  Then $\mathbb{W}_{I}(N,\kappa, \varrho,
  \delta_1,q_1,q_2)\eqdefa\{W_{1,\f12+\delta_1}, W_{1,\f12+2\delta_1}\}\cup \{W_{m,n}\}_{(m,n)\in \mathbb{I}_x(N,
	\kappa)}$ is called to be a well-prepared sequence of the  weighted functions of {\bf Type I} if it verifies the following conditions:

\noindent $\bullet$ (W-1) If $W_{0,-1}=W_{l_1}, W_{0,0}=W_{l_2}$, then $l_1\ge \min\{ N_s+2, 2l_2+\gamma\}$;

\noindent $\bullet$ (W-2) if $m\in[0,N-1]$, $W_{m,N_{\varrho,1}+1}=W_{m+1,0}$, $W_{N, N_{\varrho,\kappa}+1}=W_{N,\kappa}\eqdefa W_{l_{N,\kappa}}\ge W_{\gamma+4}$;

\noindent $\bullet$ (W-3) if $m\in [1,N]$, $ W_{m,-1}=W_{m-1,N_{\varrho,1}}$;

\noindent $\bullet$ (W-4)
$\max\{W_{m,n+1}W_{\f32\gamma+2s+d_1+d_2}W_{d_3}, W_{d_3}W_{m,n+1}(W_{2s})^{2^{N_{\varrho,\delta_1}}}\}\le W_{m,n}$;

\noindent $\bullet$ (W-5) $ W_{1,\f12+\delta_1}=W_{l_{1,\f12+\delta_1}}=W_{1,N_{\varrho,2}+1}W_{d_2/2}$, $ \max\{W_{1,\f12+\delta_1}W_{\f32\gamma+2s+d_1+d_2}W_{d_3},W_{d_3}W_{1,\f12+\delta_1}(W_{2s})^{2^{N_{\varrho,\delta_1}}}\}\le W_{1,N_{\varrho,2}}$,
where $d_1>\f32,d_2>\gamma,d_3\in\R^+$ and $W_{1,\f12+2\delta_1}=W_{\gamma+4}$;

\noindent $\bullet$ (W-6)
$\gamma/2+\max\{\f522^{N_{q_1,s,1}},\f522^{N_{q_2,s,1}}, \f{\gamma}22^{N_{q_1,s,2}}(2s+2),  2^{N_{\delta_1}+N_{q_1,q_2,s}}(2s)\}\le l_2$.\\
For simplicity, we   use notation $\mathbb{W}_{I}$ to denote $\mathbb{W}_{I}(N,\kappa, \varrho,
\delta_1,q_1,q_2)$.
 \end{defi}

\begin{defi}\label{ws2}Let  $N\in \N$ and $\varrho,\kappa,\delta_1\in (0,1)$ verify (P-1)-(P-3). Then $\mathbb{W}_{II}(N,\kappa, \varrho,
	\delta_1)\eqdefa \\ \{W_{1,\f12+\delta_1},W_{1,\f12+2\delta_1}\}\cup \{W_{m,n}\}_{(m,n)\in \mathbb{I}_x(N,
		\kappa)}$ is called to be a well-prepared sequence of the  weighted functions of {\bf Type II} if it verifies (W-1)-(W-5). For simplicity, we   use the notation $\mathbb{W}_{II} $ to denote $\mathbb{W}_{II}(N,\kappa, \varrho,\delta_1)$.
\end{defi}
Some remarks are   in order:

 \begin{rmk}
The well-prepared sequences of weight functions depend on the parameters $\gamma, s, N, \kappa, \varrho, q_1, q_2$ and $ \delta_1$. Even when all the parameters are fixed, the well-prepared sequences of weighted functions are not unique.   Let us explain the meaning of the parameters. The sum of parameters $N$ and $\kappa$ is the regularity index for $x$  variable. Parameters $q_1$ and $q_2$ are the regularity indices for the velocity variable $v$.
\end{rmk}

\begin{rmk} Conditions ((W-1)-(W-6)) reflect the fact that the propagation of the regularity depends heavily on the $L^1$ moment. Indeed, Conditions ((W-1)-(W-5))  are used to prove the propagation of the regularity for the spatial variable $x$.  While Condition  (W-6)  is used to prove the propagation of the   regularity for the velocity variable $v$. It obeys the rule that the high moment we have, the more regularity can be propagated and produced(thanks to the hypo-elliptic property of the equation).
\end{rmk}

\begin{rmk} Due to the lower and upper bounds for the original Boltzmann collision operator, we believe that the design of the well-prepared sequences of the weighted functions is compulsory to catch the propagation of the regularity.
	\end{rmk}

 \subsubsection{Energy spaces and the dissipation functionals}\label{notspace} In this subsection, we introduce the energy spaces and the related dissipation functionals. We emphasize that all the definitions are based on the well-prepared sequences of weight functions.  

(i). {\it Notation $E^{m,n,\eps}$.}  In the procedure of applying energy estimates to the equation, the inductive method will be used.
To catch the smoothing effect or the propagation of the regularity for  $x$ variable in each step, 
 we introduce the notations $E^{m,n,\eps}(f), D^{m,n,\eps}_2(f)$ and $D^{m,n,\eps}_3(f)$. Let $W_{m,n}\in \mathbb{W}_{II}(N,\kappa, \varrho,
 \delta_1)$(or $\mathbb{W}_{I}(N,\kappa, \varrho,\delta_1,q_1,q_2)$). Then    $E^{m,n,\eps}(f)\eqdefa\|W_{m,n}f\|^2_{H^{m+n\varrho}_xL^2}$ and
\beno D^{m,n,\eps}_2(f)&\eqdefa&\|W^\ep_s(D)W_{m,n}W_{\gamma/2}f\|^2_{H^{m+n\varrho}_xL^2} +\int_{\TT^3}  \mathcal{E}^{0,\eps}_\mu(W_{m,n}W_{\gamma/2}|D_x|^{m+n\varrho}f)dx,\\
D^{m,n,\eps}_3(f)&\eqdefa& \|W_{-d_1}(W_{m,n+1}W_{\gamma/2+d_1+d_2}f)_\phi\|^2_{H^{m+(n+1)\varrho}_xL^2},
\eeno
where $\mu\eqdefa M_{1,0,1}$ and \ben\label{defEg} \mathcal{E}^{\gamma,\eps}_{g}(f)\eqdefa\f12 \int_{\sigma,v_*,v} |v-v_*|^\gamma b^\ep(\cos\theta) g_*(f'-f)^2 d\sigma dv_*dv.\een
We remark that they are used  in the $(m+n)$-th step of the energy estimates for the equation.

Similar notations can be defined for $E^{1,\f12+\delta_1,\eps}, E^{1,\f12+2\delta_1,\eps} ,E^{N,\kappa,\eps}$ and $D^{1,\f12+\delta_1,\eps}_2, D^{1,\f12+2\delta_1,\eps}_2, D^{N,\kappa,\eps}_2$. For instance, $E^{N,\kappa,\eps}(f)\eqdefa \|W_{N,\kappa}f\|_{H^{N+\kappa}_xL^2}$ and
\beno D^{N,\kappa,\eps}_2(f)&\eqdefa&\|W^\ep_s(D)W_{N,\kappa}W_{\gamma/2}f\|^2_{H^{N+\kappa}_xL^2} +\int_{\TT^3}  \mathcal{E}^{0,\eps}_\mu(W_{N,\kappa}W_{\gamma/2}|D_x|^{N+\kappa}f)dx.
\eeno
When $\epsilon=0$, we simplify the notations  $E^{m,n,0}(f), D^{m,n,0}_2(f)$ and $D^{m,n,0}_3(f)$ to   $E^{m,n}(f), D^{m,n}_2(f)$ and $D^{m,n}_3(f)$.

 (ii).  {\it Energy spaces $V^{q,\eps}$ and $V^q$.} To prove the smoothing effect or the propagation of the regularity for $v$ variable, we introduce the notation:
 $V^{q,\eps}(f)\eqdefa   \sum\limits_{j\le |\log \epsilon|}2^{2qj} \|\mathfrak{F}_j h\|_{L^2}^2+\sum\limits_{j\ge |\log \epsilon|} \ep^{-2q} \|\mathfrak{F}_j h\|_{L^2}^2\sim\|W^\eps_q(D)f\|_{L^2}^2$ and $V^{q}(f)\eqdefa \sum\limits_{j\ge-1}2^{2qj} \|\mathfrak{F}_j h\|_{L^2}^2\sim \|\lr{D}^q f\|_{L^2}^2$.  Energy spaces $V^{q,\eps}$ and $V^q$ are defined by
 \beno V^{q,\eps}=\{f|V^{q,\eps}(f)<\infty\}, V^{q}=\{f|V^{q}(f)<\infty\}. \eeno

(iii). {\it Energy space $\mathbb{E}^{N,\kappa,\eps}(\mathbb{E}^{N,\kappa})$ and the dissipation functional $\mathbb{D}^{N,\kappa,\eps}(\mathbb{D}^{N,\kappa})$.} 
   The
  energy space  $\mathbb{E}^{N,\kappa,\eps}$  is called to be a function space associated to $\mathbb{W}_{I}(N,\kappa,\varrho,\delta_1,q_1,q_2)$ if it is defined by
 $\mathbb{E}^{N,\kappa,\eps}=\{f|\mathbb{E}^{N,\kappa,\eps}(f)<\infty\}, $
 where \beno
&& \mathbb{E}^{N,\kappa,\eps}(f)\eqdefa  \|W_{l_1}f\|_{L^1}+\|W_{l_1-\gamma}f\|_{L^1}^2 +\sum_{(m,n)\in [0, N-1]\times[0, N_{\varrho,1}] }E^{m,n,\eps}(f)+\sum_{n\in [0, N_{\varrho,\kappa}]} E^{N,n,\eps}(f)+ E^{N,\kappa,\eps}(f)\\&&\,\,+V^{q_1,\eps}(f) +E^{1,\f12+\delta_1,\eps}(f)+E^{1,\f12+2\delta_1,\eps}(f)
+\mathrm{1}_{N+\kappa\ge \f52+\delta_1}\|W^\eps_{q_2}(D)f\|_{H^{\f32+\delta_1}_xL^2}^2,\eeno with $W_{m,n}\in \mathbb{W}_{I}(N,\kappa,\varrho, \delta_1,q_1,q_2)$ and $ W_{0,-1}=W_{l_1}, W_{0,0}=W_{l_2}$.
We remark that this kind of space is used to prove the well-posedness for the equation. 

The  dissipation functional $\mathbb{D}^{N,\kappa,\eps}$   consists of four parts:
 \beno  \mathbb{D}^{N,\kappa,\eps}(f)&=&\mathbb{D}^{N,\kappa,\eps}_1(f)+\mathbb{D}^{N,\kappa,\eps}_2(f)+\mathbb{D}^{N,\kappa,\eps}_3(f)+\mathbb{D}_g^{N, \kappa,\eps}(f),\eeno
 where \beno
 \mathbb{D}^{N,\kappa,\eps}_1(f)&\eqdefa& l_1^{s}\|W_{l_1+\gamma}f\|_{L^1}+l_1^{s}\|W_{l_1-\gamma}f\|_{L^1}\|W_{l_1}f(t)\|_{L^1}+\delta^{-2s}\|W_{l_2+\gamma/2}f\|_{L^2}^2,  \\ \mathbb{D}^{N,\kappa,\eps}_2(f)&\eqdefa& \|W^\ep_{q_1+s}(D)W_{\gamma/2}f\|_{L^2}^2
 +\sum_{(m,n)\in [0, N-1]\times[0, N_{\varrho,1}]} D_2^{m,n,\eps}(f)
 +\sum_{n\in [0, N_{\varrho,\kappa}]}D_2^{N,n,\eps}(f) \\&&
 +D_2^{N,\kappa,\eps}(f)
 +D_2^{1,\f12+\delta_1,\eps}(f)+D_2^{1,\f12+2\delta_1,\eps}(f)
   +\mathrm{1}_{N+\kappa\ge \f52+\delta_1}\|W^\ep_{q_2+s}(D)W_{\gamma/2}f\|_{H^{\f32+\delta_1}_xL^2}^2,\\
    \mathbb{D}^{N,\kappa,\eps}_3(f)&\eqdefa&\sum_{0\le m\le N-1,0<n\le N_{\varrho,1}}D^{m,n,\eps}_3(f)+\sum_{m=1}^{N}D^{m,0,\eps}_3(f)
 +\sum_{n\in [1, N_{\varrho,\kappa}+1]}D^{N,n,\eps}_3(f)\\&&+\|W_{-d_1}(W_{1,N_{\varrho,2}+1}W_{\gamma/2+d_1+d_2}f)_\phi\|^2_{H^{1+(N_{\varrho,2}+1)\varrho}_xL^2}, \\
  \mathbb{D}_g^{N, \kappa,\eps}(f)&\eqdefa& \int_{\TT^3} \bigg( 
 \sum_{(m,n)\in [0, N-1]\times[0,N_{\varrho,1}] }  \mathcal{E}^{\gamma,\eps}_{g}(W_{m,n}|D_x|^{m+n\varrho}f) 
 +\sum_{n\in [0, N_{\varrho,\kappa}]}\mathcal{E}^{\gamma,\eps}_{g}(W_{N,n}|D_x|^{N+n\varrho}f)\\&&\quad+\mathcal{E}^{\gamma,\eps}_{g}(W_{N,\kappa}|D_x|^{N+\kappa}f)  +\mathcal{E}^{\gamma,\eps}_{g}(W_{1,\f12+\delta_1}|D_x|^{\f32+\delta_1}f)+\mathcal{E}^{\gamma,\eps}_{g}(W_{1,\f12+2\delta_1}|D_x|^{\f32+2\delta_1}f) \bigg)dx.\eeno
We explain that   $\mathbb{D}^{N,\kappa,\eps}_1,\mathbb{D}^{N,\kappa,\eps}_2,\mathbb{D}^{N,\kappa,\eps}_3$ and $ \mathbb{D}_g^{N, \kappa,\eps}$ correspond to the gain of the weights, gain of the regularity for $v$ variable, gain of the regularity for $x$ variable and gain of the sharp regularity  respectively. The parameter $\delta$ in $\mathbb{D}^{N,\kappa,\eps}_1$ will be determined by the initial data of the equation.

When $\eps=0$, we use the notation $\mathbb{E}^{N,\kappa}\eqdefa \mathbb{E}^{N,\kappa,0}$ to denote the total energy functional for the Boltzmann equation without angular cutoff.
 Similarly the   notations   $\mathbb{D}^{N,\kappa}_1,\mathbb{D}^{N,\kappa}_2,\mathbb{D}^{N,\kappa}_3$ and $ \mathbb{D}_g^{N, \kappa}$ will be used  in the context.

(iv). {\it Energy space $\mathbb{P}_e^{N,\kappa}$  and the dissipation functional $\mathbb{D}^{N,\kappa}_{e}$.} The energy space $\mathbb{P}_e^{N,\kappa}$ is called to be a function space associated to $\mathbb{W}_{II}(N,\kappa,\varrho,\delta_1)$ if it is defined by $\mathbb{P}_e^{N,\kappa}=\{f|\mathbb{P}_e^{N,\kappa}(f)<\infty\}$, where
\beno &&\mathbb{P}_e^{N,\kappa}(f)\eqdefa  \|W_{l_1}f\|_{L^1}^2+\ \sum_{(m,n)\in [0, N-1]\times[0, N_{\varrho,1}]}E^{m,n}(f)+\sum_{n\in [0, N_{\varrho,\kappa}]} E^{N,n }(f)\\&&\qquad+E^{N,\kappa}(f) +E^{1,\f12+\delta_1}(f)+E^{1,\f12+2\delta_1}(f)
 , \eeno with $W_{m,n}\in \mathbb{W}_{II}(N,\kappa,\varrho, \delta_1)$ and $ W_{l_1}=W_{0,-1}, W_{l_2}=W_{0,0}$. Then the corresponding dissipation functional can be defined by \beno &&\mathbb{D}^{N,\kappa}_{e}(f)\eqdefa \|W_{l_1}f\|_{L^1}\|W_{l_1+\gamma}f\|_{L^1}
 +\sum_{(m,n)\in [0, N-1]\times[0, N_{\varrho,1}] } D_2^{m,n}(f)
 +\sum_{n\in [0, N_{\varrho,\kappa}]}D_2^{N,n}(f)
 \\&&\qquad+  D_2^{N,\kappa}(f)
 +D_2^{1,\f12+\delta_1}(f)+D_2^{1,\f12+2\delta_1}(f)
 .\eeno
The energy space $\mathbb{P}_e^{N,\kappa}$ is used to show the propagation of the regularity for  $x$  variable.

(v). {\it Energy space $\mathbb{X}^{N,\kappa,q}$.} Suppose that $\mathbb{P}_e^{N,\kappa}$ is associated to $\mathbb{W}_{II}(N,\kappa,\varrho,\delta_1)$.  The energy space $\mathbb{X}^{N,\kappa,q}$ is defined by $\mathbb{X}^{N,\kappa,q}=\{f|\mathbb{X}^{N,\kappa,q}(f)<\infty\}$, where $q$ verifies that $\f522^{N_{q,s,1}}\le l_{0,0}$ and 
$ \mathbb{X}^{N,\kappa,q}(f)\eqdefa  \mathbb{P}_e^{N,\kappa}(f)+V^q(f).$ It is easy to check that $\mathbb{X}^{N,\kappa,q}= \mathbb{P}_e^{N,\kappa}\cap V^q$.
The space $\mathbb{X}^{N,\kappa,q}$ is used to consider  the propagation of full regularity.

 \subsubsection{Main results} Now we are in a position to state our main results. We begin with the   well-posedness for the original Boltzmann equation.

\begin{thm}[Well-posedness]\label{thmwepo} Let $\mathbb{E}^{N,\kappa,\eps}(\mathbb{E}^{N,\kappa})$ and $\mathbb{E}^{1,\f12+2\delta_1,\eps}(\mathbb{E}^{1,\f12+2\delta_1})$ be function spaces associated to $\mathbb{W}_{I}(N,\kappa,\varrho,\delta_1,q_1,q_2)$ where  $  \mathbb{W}_{I}(N,\kappa,\varrho,\delta_1,q_1,q_2)=\{W_{1,\f12+\delta_1}, W_{1,\f12+2\delta_1}\}\cup \{W_{m,n}\}_{(m,n)\in \mathbb{I}_x(N,
		\kappa)}$ with $N+\kappa\ge \f52+\delta_1$ and $W_{0,-1}=W_{l_1}, W_{0,0}=W_{l_2},W_{1,\f12+\delta_1}=W_{l_{1,\f12+\delta_1}}$. Assume that $f_0$ is a non-negative function   verifying $\|f_0\|^2_{H^{\f32+2\delta_1}_xL^2_{\gamma+4}}\le c_2/2$ and  $\inf\limits_{x\in\TT^3}|f_0|_{L^1}\ge  2c_1$.
 Let  $A_i(c_1,c_2)(i=1,\dots,9)$ be the constants which can be calculated explicitly from Proposition \ref{L1i} to Proposition \ref{Enmix} and \eqref{defiAi}. Suppose that 	$\mathbb{W}_{I}(N,\kappa,\varrho,\delta_1,q_1,q_2), c_1$  and $c_2$ verify \ben\label{unicon1}&&\quad \max\{A_1(c_1,c_2)^{-\f1s},  \mathrm{1}_{\gamma\neq0}8C_I (c_2^{\f12}+V^{q_1}(f_0)),N_s+2\} <  l_{1,\f12+\delta_1}2^{-N_{q_2,s,2}}-N_s-2-\gamma,\\
	&&\quad l_1^{1+s}\ge 40C_E\sqrt{c_2}A_8^{-1},\quad \delta^{-2s}\ge\max\{40C_E c_2A_8^{-1}, \f{4\mathcal{C}_2}{\mathcal{C}_3},\f{12c_2}{\mathcal{C}_5}\} \label{Rstrdelta},\een where $C_I$ is defined in \eqref{defici}, $ N_s, N_{q_2,s,2}$ are  defined in Section \ref{wswf},   $C_E$ is a universal constant appearing in Proposition \ref{Enmix}, $\delta$ is a constant appearing in the definition of $\mathbb{D}_1^{N,\kappa}$ and $\mathcal{C}_i(i=2,3,5)$ are constants in Theorem \ref{thmlb1}, Proposition \ref{lbLdelta} and Theorem \ref{thmlb2}. Then  if $\mathbb{E}^{1,\f12+2\delta_1}(f_0)\le M_1$ and $ \mathbb{E}^{N,\kappa}(f_0)\le M_2$, 
	there exists a time $T^*=T^*(c_1,c_2,M_1,M_2,\mathbb{W}_{I})$ such that
	
	{\bf (i).}  \eqref{NepsB} admits a unique and non-negative solution $f^\eps$ in the function space  $C([0,T^*];   \mathbb{E}^{N,
	 	\kappa,\eps})$ if \ben\label{Rstreps} &&\eps\le \min\{[(\f{A_5c_o  A_7}{200A_6(N_{\varrho,1}+2)(2C_Ec_0A_5+1)})^{1+4/s}c_2^{-1}]^{1/(2(1-s))}, \notag\\&&\qquad\qquad
(\f{\min\{A_7,1\}}{20(N_{\varrho,1}+2)C_{E,1}(A_6+4M_1+4+c_2C_{E,1})})^{1/(2s)} ,l_1^{-\f12-\eta},\f12\delta\},  \een where $\eta>0$, $C_{E,1}$ and $c_o$  are universal constants appearing in Proposition \ref{Enmix}  and \eqref{e321}.  Moreover $f^\eps$ satisfies  that 
	\beno &&\inf_{x\in\TT^3, t\in[0,T^*]}|f^\eps|_{L^1}\ge  c_1; \sup_{ t\in[0,T^*]}\|f^\eps\|_{H^{\f32+2\delta_1}_xL^2_{\gamma+4}}^2\le c_2;\sup_{ t\in[0,T^*]}V^{q_1,\eps}(f(t))\le 4V^{q_1,\eps}(f_0);\\
&& \sup_{t\in[0,T^*]} \mathbb{E}^{1,\f12+2\delta_1,\eps}(f^\eps(t))+A_7(c_1,c_2)\int_0^{T^*}\mathbb{D}_2^{1,\f12+2\delta_1,\eps}(f^\eps(\tau))d\tau\nonumber\\&&\quad+A_8(c_1,c_2)\int_0^{T^*}\mathbb{D}_1^{1,\f12+2\delta_1,\eps}(f^\eps(\tau))d\tau\le 4M_1;
\int_0^{T^*} \mathbb{D}^{1,\f12+2\delta_1,\eps}_3(f^\eps(\tau))d\tau \le 4A_{9}(c_1,c_2) M_1;\\
&& \sup_{t\in[0,T^*]} \mathbb{E}^{N,\kappa,\eps}(f^\eps(t)) +A_7(c_1,c_2)\int_0^{T^*}\mathbb{D}_2^{N, \kappa,\eps}(f^\eps(\tau))d\tau  \le 4M_2;
\int_0^{T^*} \mathbb{D}^{N,\kappa,\eps}_3(f^\eps(\tau))d\tau \le 4A_{9}(c_1,c_2) M_2.
\eeno
If additionally  $f_0\in V^q$ with $2\le q\le N+\kappa$, $\f522^{N_{q,s,1}}\le l_2$ and $(\gamma+5)q\le l_{1,\f12+\delta_1}$, then $f^\eps\in C([0,T^*]; V^q)$. 
	
	{\bf (ii).} \eqref{Boltz eq} admits a unique and non-negative solution $f$ in the function space  $C([0,T^*];   \mathbb{E}^{N,
		\kappa})$  and  for $t\in[0,T^*]$, \beno &&\inf_{x\in\TT^3, t\in[0,T^*]}|f|_{L^1}\ge  c_1; \sup_{ t\in[0,T^*]}\|f\|_{H^{\f32+2\delta_1}_xL^2_{\gamma+4}}^2\le c_2;  \sup_{ t\in[0,T^*]}V^{q_1}(f(t))\le 4V^{q_1}(f_0);\\
	&& \sup_{t\in[0,T^*]} \mathbb{E}^{1,\f12+2\delta_1}(f(t))+A_7(c_1,c_2)\int_0^{T^*}\mathbb{D}_2^{1,\f12+2\delta_1}(f(\tau))d\tau\nonumber\\&&\quad+A_8(c_1,c_2)\int_0^{T^*}\mathbb{D}_1^{1,\f12+2\delta_1}(f (\tau))d\tau\le 4M_1;
\int_0^{T^*} \mathbb{D}^{1,\f12+2\delta_1}_3(f(\tau))d\tau \le 4A_{9}(c_1,c_2) M_1;\\
&& \sup_{t\in[0,T^*]} \mathbb{E}^{N,\kappa}(f(t)) +A_7(c_1,c_2)\int_0^{T^*}\mathbb{D}_2^{N, \kappa}(f(\tau))d\tau  \le 4M_2;
\int_0^{T^*} \mathbb{D}^{N,\kappa}_3(f(\tau))d\tau \le 4A_{9}(c_1,c_2) M_2.
\eeno

{\bf (iii).}  $\forall t\in [0,T^*], \qquad\|f-f^\eps\|_{L^1}=O(\eps^{2-2s})$.
\end{thm}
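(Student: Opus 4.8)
The plan is to construct the solution by a quasi-linear iteration on the linear transport--collision equation \eqref{h-eq}, to carry out every a priori estimate uniformly in $\eps$, and then to read off parts (ii) and (iii) from the resulting $\eps$-independent bounds together with the stability estimate for the difference equation. Freezing the first slot of the collision operator, I would set $f^{0}\eqdefa f_0$ and solve successively
\beno \pa_t f^{n+1}+v\cdot\na_x f^{n+1}=Q^\eps(f^{n},f^{n+1}),\qquad f^{n+1}|_{t=0}=f_0,\eeno
which is exactly \eqref{h-eq} with $g=0$, $f=f^{n}$, $h=f^{n+1}$. Each step is a \emph{linear} problem, and the heart of the matter is to show that the bounds defining the ``good set'' $\cG$ — non-negativity, $\inf_x|\cdot|_{L^1}\ge c_1$, $\|\cdot\|^2_{H^{\f32+2\delta_1}_xL^2_{\gamma+4}}\le c_2$, $\mathbb{E}^{1,\f12+2\delta_1,\eps}\le 4M_1$ and $\mathbb{E}^{N,\kappa,\eps}\le 4M_2$ — are stable under the iteration on a short interval $[0,T^*]$.

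The a priori estimates for $h=f^{n+1}$ I would build in the order announced in the introduction; they are the content of Propositions \ref{L1i}--\ref{Enmix}. First, propagation of $L^1$ moments via the refined Povzner inequality of Lemma \ref{Povnzer2}, whose sharp $O(l^{s})$ coefficient for $\lr{v}^{l}$ is what lets the loss term of $\mathbb{D}_1^{N,\kappa,\eps}$ dominate and, for $l$ arbitrarily large, also gives uniqueness. Second, propagation of $L^2$ moments: split $Q^\eps=\f12Q^\eps+\f12(Q^\delta_+-Q^\delta_-)+\f12(Q^\eps-Q^\delta)$ with $\delta$ fixed as in \eqref{Rstrdelta}, and use the coercivity in \eqref{boundWSP} together with the smoothing of $Q^\delta_+$ to absorb the weight increase into $\mathbb{D}_1^{N,\kappa,\eps}$ and $\mathbb{D}_2^{N,\kappa,\eps}$, leaving no weight gain on the right. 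Third, the hypoelliptic gain in $x$: feed the $L^2_l$ bound into the averaging lemma for the transport operator to gain $\varrho$ fractional derivatives in $x$, then bootstrap along the index set $\mathbb{I}_x(N,\kappa)$, lowering the weight at each step according to (W-1)--(W-6) so that the upper bound of \eqref{boundWSP} and Theorem \ref{thmub} never out-weigh the available dissipation — this is the mechanism by which the ``increase in weight'' is defeated. Fourth, regularity in $v$: once the $x$-regularity is controlled the equation behaves like a spatially homogeneous one, and a Littlewood--Paley argument in the velocity frequency yields the $V^{q,\eps}$ bound, with the weight budget dictated by (W-6). All of this must be uniform in $\eps$: the sharp bounds for $Q^\eps$, the commutator estimates and the frequency localization of \eqref{NepsB} have to be shown $\eps$-independent, which is possible because $W^\eps_q(D)$ only sees the fractional structure below frequency $\eps^{-1}$ while the hyperbolic high-frequency piece becomes harmless after the localization. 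Summing would give a closed inequality of the shape $\f{d}{dt}\mathbb{E}^{N,\kappa,\eps}(f^{n+1})+c\,\mathbb{D}^{N,\kappa,\eps}(f^{n+1})\le \Psi\big(\mathbb{E}^{N,\kappa,\eps}(f^{n+1})\big)\big(1+\mathbb{E}^{N,\kappa,\eps}(f^{n})\big)$, and the restrictions \eqref{unicon1}--\eqref{Rstreps} are precisely what line up the constants $A_i(c_1,c_2)$ — and the constants of Theorem \ref{thmlb1}, Proposition \ref{lbLdelta}, Theorem \ref{thmlb2} — so that $\cG$ is stable and the $\eps$-dependent errors are negligible; this produces the time $T^*=T^*(c_1,c_2,M_1,M_2,\mathbb{W}_I)$ and a ball in $C([0,T^*];\mathbb{E}^{N,\kappa,\eps})$ left invariant by the whole sequence $\{f^{n}\}$. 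Convergence of $\{f^{n}\}$ in the low norm $C([0,T^*];L^2)$ I would get from the difference equation
\beno \pa_t(f^{n+1}-f^{n})+v\cdot\na_x(f^{n+1}-f^{n})=Q^\eps(f^{n},f^{n+1}-f^{n})+Q^\eps(f^{n}-f^{n-1},f^{n}),\eeno
which is a contraction after shrinking $T^*$; the limit solves \eqref{NepsB}, and uniqueness in the same class is the same computation with the Povzner bound controlling the large-velocity tails. Non-negativity of $f^\eps$ I would read off the Duhamel form: $Q^\eps_+(g,\cdot)\ge 0$ and $Q^\eps_-(g,\cdot)$ is a multiplication operator, so a monotone sub-iteration started from $f_0\ge 0$ stays non-negative, as in \cite{my,ukai1}; the bound $\inf_{x,t}|f^\eps|_{L^1}\ge c_1$ is propagated by integrating the equation in $v$ and using the moment control; the extra assertion $f^\eps\in C([0,T^*];V^q)$ is one more Littlewood--Paley estimate under the stated budget $(\gamma+5)q\le l_{1,\f12+\delta_1}$. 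This settles part (i).

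Part (ii) would follow by letting $\eps\to 0$: the bounds in (i) are uniform in $\eps$, hence $\{f^\eps\}$ is weak-$*$ precompact in the energy space; the estimate of part (iii) below (which uses only these uniform bounds) upgrades this to strong convergence of $f^\eps$ in $C([0,T^*];L^1)$, the limit $f$ inherits every bound and solves \eqref{Boltz eq}, and uniqueness is again the Povzner-based estimate. For part (iii), set $w^\eps\eqdefa f-f^\eps$; subtracting the two equations,
\beno \pa_t w^\eps+v\cdot\na_x w^\eps=Q(f,w^\eps)+Q(w^\eps,f^\eps)+(Q-Q^\eps)(f^\eps,f^\eps).\eeno
The decisive input is the discrepancy bound: $b-b^\eps$ is supported in $\theta\lesssim\eps$, and because the difference of the two collision operators is of order two in the velocity variable (the genuinely singular part of the kernel behaves like $|v'-v|^{2}$ there), testing against a velocity-weighted $H^2$ function produces the smallness $\int_0^{\eps}\theta^{2}\,\theta^{-1-2s}\,d\theta\sim\eps^{2-2s}$; that is, $(Q-Q^\eps)(f^\eps,f^\eps)=O(\eps^{2-2s})$ in a weighted Sobolev space of negative order, with constant governed by the $\cG$-bound on $f^\eps$. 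Feeding this into an $L^1$ estimate for $w^\eps$ — using the sharp $L^1$ bounds for $Q$ and $Q^\eps$ applied to $w^\eps$, available because both $f$ and $f^\eps$ lie in $\cG$ — and Gronwall on $[0,T^*]$ yields $\|w^\eps(t)\|_{L^1}=O(\eps^{2-2s})$.

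The main obstacle will be the second and third estimates in the chain: simultaneously defeating the ``increase in weight'' of \eqref{boundWSP} for $\gamma>0$ (by the $Q^\delta_+$-smoothing together with the hypoelliptic bootstrap in $x$) and keeping every constant uniform in $\eps$ even though $Q^\eps$ loses its fractional-Laplacian coercivity above frequency $\eps^{-1}$. This is exactly why the well-prepared weight sequences $\mathbb{W}_I,\mathbb{W}_{II}$ and the bookkeeping conditions (P-1)--(P-3), (W-1)--(W-6) are forced on us, and why the quantitative restrictions \eqref{unicon1}--\eqref{Rstreps} take the form they do.
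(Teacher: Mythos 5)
Your architecture for part (i) is essentially the paper's: a quasi-linear Picard iteration $\pa_t f^{n+1}+v\cdot\na_x f^{n+1}=Q^\eps(f^n,f^{n+1})$ fed through the ladder Propositions \ref{L1i}--\ref{Enmix}, with non-negativity obtained from the Duhamel form of the linear approximating problem and with the $\eps$-uniformity of all constants enforced by \eqref{unicon1}--\eqref{Rstreps} and the weight bookkeeping (W-1)--(W-6). Two small deviations: the paper initiates the iteration at $f^0=c_1M_{1,0,1}$ rather than $f^0=f_0$ so that the constants in the first step are universal ($c_1,c_3$) and only become $(c_1,c_2)$-dependent from $f^2$ on, and the contraction for the iterates is performed in $L^1_l$ (the series estimate with $S^n=\sum_{i} L^{-i}X^{n-i}$ around \eqref{wbwc1}) rather than in $L^2$; both are harmless.

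Where you genuinely depart is part (ii). The paper passes $\eps=\tfrac1{n}\to0$ by invoking the DiPerna--Lions renormalized compactness to extract a strong $L^1$-limit and then checks weak convergence of the collision integral. You instead propose to \emph{deduce} strong $L^1$ convergence from the $O(\eps^{2-2s})$ discrepancy estimate; this is more elementary and avoids the renormalized theory, but, as written, it is circular: the estimate $\|f-f^\eps\|_{L^1}=O(\eps^{2-2s})$ presupposes the existence of the non-cutoff solution $f$. The clean way to run your argument is to compare two cutoff solutions $f^{\eps_1}-f^{\eps_2}$, so that $(Q^{\eps_1}-Q^{\eps_2})(f^{\eps_2},f^{\eps_2})$ is supported in $\theta\in[c\eps_2,c\eps_1]$ and contributes $O(\eps_1^{2-2s})$ after the cancellation lemma, giving a Cauchy sequence in $C([0,T^*];L^1_l)$; the limit then inherits the uniform bounds and solves \eqref{Boltz eq}. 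Once that is phrased correctly your route is sound, arguably simpler than the paper's, and it also streamlines the subsequent derivation of part (iii). Everything else — the identification of the source-term scaling $\int_0^\eps\theta^2\theta^{-1-2s}d\theta\sim\eps^{2-2s}$, the role of the refined Povnzer coefficient $O(l^s)$ in controlling weight gain and giving uniqueness, the hypoelliptic-plus-$Q^\delta_+$-smoothing mechanism against the increase in weight — matches the paper.
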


\begin{rmk}  Function spaces $\mathbb{E}^{1,\f12+2\delta_1,\eps}$ and $\mathbb{E}^{1,\f12+2\delta_1}$ are used to prove the propagation of the key quantity $\|f\|_{H^{\f32+2\delta_1}_xL^2_{\gamma+4}}$. And the function spaces $ \mathbb{E}^{N,
	 	\kappa,\eps}$ and $ \mathbb{E}^{N,
	 	\kappa}$ are used to prove the propagation of the lower bound of the density.  When these two quantities are controllable,  the regularity (or the smoothing estimates) can be propagated(or produced).    
\end{rmk}

\begin{rmk}  Conditions \eqref{unicon1} and \eqref{Rstrdelta} are crucial to prove the stability and the uniqueness results for the equation. They also indicate the dependence of $\mathbb{W}_{I}(N,\kappa,\varrho,\delta_1,q_1,q_2)$ and  $ \mathbb{E}^{N,
		\kappa}$($ \mathbb{E}^{N,
		\kappa,\eps}$)    on the initial data. As a result,  conditions \eqref{unicon1} and \eqref{Rstrdelta} should be verified  when we want to use the continuity argument to improve the lifespan of the solution. 
\end{rmk}

\begin{rmk}    The estimates in result $(i)$ are the bootstrap assumptions for the  existence theory.  To the best of our knowledge, the   formula $f-f^\eps=O(\eps^{2-2s})$ is the first result on the asymptotics of the Boltzmann equation from angular cutoff to non-cutoff.
\end{rmk}

Our second result is on  the global dynamics of the equation.

 \begin{thm}[Global dynamics] \label{thmdyna} Suppose that $f$ is a  non-negative and smooth  solution to the equation \eqref{Boltz eq} verifying  that for $t\ge0$ and $x\in \TT^3$,
 	\ben\label{dynacondi}\rho(t,x)\eqdefa\int_{\R^3} fdv\ge 2c_1>0, \quad \|f(t)\|^2_{H^{\f32+2\delta_1}_xL^2_{\gamma+4}}\le  c_2/2, \een where $c_i(i=1,2)$ are universal constants. Let  $h=f-M_f$.
 	\begin{enumerate}
 	\item {\bf  ($\gamma=0$).}   Let $6<q\le N+\kappa$ and $\eta_1\ll1$. 
 		 \begin{enumerate}
 		 	
 		 	\item {\bf (Propagation of the regularity and the lower bound of the solution)}  Suppose that $\mathbb{X}^{N,\kappa, q}=\mathbb{P}_{e}^{N,\kappa}\cap V^q$ and $\mathbb{P}_e^{1,\f12+2\delta_1}$ are function spaces associated to $\mathbb{W}_{II}(N,\kappa,\varrho,\delta_1)$.  
 		 	 Then for $t\ge0$,
 		\beno \mathbb{X}^{N,\kappa,q}(h(t)) +\int_t^{t+1}( \mathbb{D}^{N,\kappa}_{e}(h)+\mathbb{D}^{N,\kappa}_3(h)+ V^{q+s}(W_{\gamma/2}h))d\tau\le C(c_1,c_2,  \mathbb{X}^{N,\kappa,q}(h_0) ).\eeno	
 		Moreover, if $t\ge (N-1)(N_{\varrho,1}+1)+N_{\varrho,\kappa}+[q/s]+3$, it holds
 		\beno \mathbb{X}^{N,\kappa,q}(h(t))+\int_t^{t+1}( \mathbb{D}^{N,\kappa}_{e}(h)+\mathbb{D}^{N,\kappa}_3(h)+ V^{q+s}(W_{\gamma/2}h))d\tau\le C(c_1,c_2,  \mathbb{P}_{e}^{1,\f12+2\delta_1}(h_0)).\eeno
 		As a direct consequence, we have the pointwise lower bound:
 		$f\ge K_0 \exp\{-A_0 |v|^{q_0}\}$ where $K_0, A_0, q_0$ are constants depending only on $c_1,c_2,  \mathbb{P}_e^{1,\f12+2\delta_1}(h_0)$.
 		
      \item {\bf (Control of the entropy)} For $t> (N-1)(N_{\varrho,1}+1)+N_{\varrho,\kappa}+[q/s]+3$, $H(f|M_f)(t)$ is a strictly decreasing function until it vanishes and verifies $\lim_{t\rightarrow\infty} H(f|M_f)(t)=0$ and $H(f|M_f)(t)\le C(c_1,c_2,  \mathbb{P}_e^{1,\f12+2\delta_1}(h_0)) \|f-M_f\|_{L^1_{q_0+2}}$.

     \item {\bf   (Long time behavior)}  Suppose that
     $\mathbb{X}_{1}^{N,\kappa,q}=\mathbb{P}_{e,1}^{N,\kappa}\cap V^q$ and $\mathbb{X}_{2}^{1,\f12+2\delta_1, q_1}=\mathbb{P}_{e,2}^{1,\f12+2\delta_1}\cap V^{q_1}$ are
     the function spaces associated to  $\mathbb{W}_{II}^{(1)}(N,\kappa,\varrho,\delta_1)$ and $\mathbb{W}_{II}^{(2)}(1,\f12+2\delta_1^{},\varrho,\delta_1^{ })$ respectively.
     Assume that $h_0\in \mathbb{P}_{e,1}^{1,\f12+2\delta_1}$, $W_{m,n}^{(2)}(=W_{l^{(2)}_{m,n}})\le W_{m,n}^{(1)}(=W_{l^{(1)}_{m,n}})$ if $(m,n)\in \mathbb{I}_x(1,\f12+2\delta_1)$, $W_{1,\f12+\delta_1}^{(2)}\le W_{1,\f12+\delta_1}^{(1)}$ and 
     \ben\label{evolcondi5}&&  8C_E( \|M_f\|_{ H^{\eta_1}_{ \gamma+4}}+c_2^{\f12})\le \f12(l^{(2)}_{0,-1})^{1+s}A_1(c_1,c_2).\een    Let 
      $\bar{m}, q_1\in \R^+,a,\theta_1\in (0,1)$ verify
     \ben
   \label{evolcondi}&& 1<q_1<1+s,\quad     102^{N_{	q_1}}\le l_{0,0}^{(2)},\quad \mbox{where}\quad N_{q_1}=[\log_2 (\f{q_1}{q_1-1})]+1, \\
     \label{evolcondi1}&& \max\{2(1-a)^{-1}(q_0+\bar{m}+1)+q_0+2, q_0+2\bar{m}+4\} \le l_{0,0}^{(2)},\qquad 5q_0+4\bar{m}+6\le l^{(2)}_{1,\f12+2\delta_1},  \een  and the interpolation inequality
     \ben\label{evolcondi3} \big(\mathbb{X}^{1,\f12+2\delta_1,q_1}_{2}(h) \big)^{\f12}\le \|h\|_{L^1}^{\theta_1}\big(\mathbb{X}^{N,\kappa,q}_{1}(h) \big)^{(1-\theta_1)/2}. \een 
 There exists a constant $c=\min\{(1-\theta_1)^{-1}, \f{\f32-A}{2(1-a)}( A+\eta(\f32-A))^{-1}\}$ with $A=\f{3}{2(\bar{m}+2)}$ and $\eta<\f12$  
     such that for $t\ge (N-1)(N_{\varrho,1}+1)+N_{\varrho,\kappa}+[q/s]+3$,
     \beno H(f|M_f)(t)+\mathbb{P}_{e,2}^{1,\f12+2\delta_1}(h(t))\lesssim (1+t)^{-c}. \eeno
      \end{enumerate}
  
  	\item {\bf ($\gamma>0$).} Let $t\ge t_0>0$.
  \begin{enumerate}
  	
  	\item {\bf (Smoothing estimates and low bound of the solution)} For any $N,q,l\in\R^+$,
  	\beno  \|h(t)\|_{H^N_xH^q_l}\le C(t_0,N,q,l,c_1,c_2).\eeno As a corollary, for $t\ge t_0>0$, there exist   constants  $K_0, A_0, q_0$ depending only on $c_1,c_2, t_0$ such that
  	$f\ge K_0 \exp\{-A_0 |v|^{q_0}\}$.
  	
  	\item {\bf (Control of the entropy)} For $t\ge t_0>0$, $H(f|M_f)$ is a strictly decreasing function until it vanishes
  	and  verifies $\lim_{t\rightarrow\infty} H(f|M_f)(t)=0$ and $H(f|M_f)(t)\lesssim \|f-M_f\|_{L^1_{q_0+2}}$.
  	
  	\item {\bf (Long time behavior)} For any sufficiently small $\eta>0$,
  	\beno H(f|M_f)(t) \lesssim (1+t)^{-\f1{\eta}}. \eeno
  	If $\gamma=2$, then there exists a universal constant $c$ such that 
 $H(f|M_f)(t) \lesssim e^{-ct}.$
  \end{enumerate}
  \end{enumerate}
 \end{thm}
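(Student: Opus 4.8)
The plan is to derive, and then close, the first--order energy--entropy system \eqref{E:Energy}--\eqref{E:Hydro} for the triple $\big(X(t),H(t),M_h(t)\big)$ associated with $h=f-M_f$, where $X$ is the energy functional of $h$, $D$ its dissipation, and $H=H(f|M_f)$. Three ingredients get combined: (i) the energy estimates for the linear equation \eqref{h-eq} with $g=M_f$, which are exactly the estimates established for Theorem~\ref{thmwepo} (the sharp collision bounds of Theorem~\ref{thmub}, the revisited Povzner inequality of Lemma~\ref{Povnzer2}, the positive--part smoothing of $Q$, and the averaging lemma); (ii) the instantaneous pointwise lower bound $f\gtrsim\exp\{-A_0|v|^{q_0}\}$ of \cite{Mouhot}, which is what makes the entropy production of \cite{DV} operational; and (iii) the hydrodynamic rigidity and dissipation identities outlined in the introduction. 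Hypothesis \eqref{dynacondi} is precisely what forces the bootstrap assumptions of Theorem~\ref{thmwepo}---a lower bound on the density together with an $H^{\f32+2\delta_1}_xL^2_{\gamma+4}$ control---to hold \emph{uniformly} in $t\ge0$, so these tools are available on every window $[t,t+1]$.

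The first step handles parts 1(a) and 2(a). On $[t,t+1]$ the energy estimate reads $\f{d}{dt}X+D\lesssim\|h\|_{L^1}^2$, and since mass is conserved and all polynomial $L^1$--moments stay bounded by Lemma~\ref{Povnzer2}, the right side is $\le C(c_1,c_2)$. When $\gamma>0$ the dissipation carries a strict weight gain, so $D$ dominates $X$ with room to spare; feeding the resulting control into the averaging lemma and iterating---each step upgrading both weight and regularity---produces, for every $N,q,l$, the bound $\|h(t)\|_{H^N_xH^q_l}\le C(t_0,N,q,l,c_1,c_2)$, with dependence only on $t_0$ because the weight gain rules out secular growth. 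When $\gamma=0$ there is no weight to spend; one still has $D\gtrsim X$ at each fixed regularity level (for $\gamma=0$ the refined coercivity of Theorems~\ref{thmlb1} and~\ref{thmlb2} retains the full $H^s$ control without weight loss), so $\mathbb{X}^{N,\kappa,q}(h)$ is propagated uniformly in time with a constant depending on $\mathbb{X}^{N,\kappa,q}(h_0)$, while the hypoelliptic cascade encoded in the well--prepared sequences $\mathbb{W}_{II}$ spends $\approx\varrho$ of $x$--regularity, or $\approx s$ of $v$--regularity, per unit of time; after $(N-1)(N_{\varrho,1}+1)+N_{\varrho,\kappa}+[q/s]+3$ such steps the full norm is manufactured from the base norm, whence the late--time bound depending only on $\mathbb{P}_e^{1,\f12+2\delta_1}(h_0)$. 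In either regime \cite{Mouhot} then yields $f\ge K_0\exp\{-A_0|v|^{q_0}\}$ for $t\ge t_0$, with the advertised dependence of $K_0,A_0,q_0$.

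The second step proves parts 1(b) and 2(b): $H(f|M_f)$ strictly decreases until it vanishes. Suppose $H(f|M_f)(t_1)=H(f|M_f)(t_2)$ with $t_1<t_2$. Since $\f{d}{dt}H(f|M_f)=-D(f)\le0$, $H$ is constant on $[t_1,t_2]$, so $D(f)\equiv0$ there, and \eqref{DispE} forces $f=M^f_{\rho,u,T}$ on that interval. Then one runs the three--step rigidity argument: differentiating $(f-M^f_{\rho,u,T},F_1(\rho,u,T))$ along the equation produces a term $r_1\|T-\lr{T}_x\|_{L^2}^2$ bounded by $C\|f-M^f_{\rho,u,T}\|_{L^2}=0$, hence $T\equiv\lr{T}_x$; the same device with $F_2$ and then $F_3$ gives $u\equiv\lr{u}_x$ and $\rho\equiv1$; conservation of momentum and energy then pins $\lr{u}_x=0$, $\lr{T}_x=1$, so $f\equiv M_{1,0,1}=M_f$ on $[t_1,t_2]$. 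Thus $H(f|M_f)$ cannot stall before reaching equilibrium; being nonincreasing and bounded below it converges, and the above identifies the limit as $0$. The estimate $H(f|M_f)\lesssim\|f-M_f\|_{L^1_{q_0+2}}$ is the standard entropy--versus--$L^1$ inequality combined with the lower bound from the first step, and this step also delivers the hydrodynamic dissipation estimate feeding \eqref{E:Hydro}.

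The last step assembles the quantitative rates of parts 1(c) and 2(c). One closes \eqref{E:Energy}--\eqref{E:Hydro} using the Csiszar--Kullback--Pinsker inequality $\|h\|_{L^1}^2\lesssim H(f|M_f)$ and the splitting $H(f|M_f)=H(f|M^f_{\rho,u,T})+H(M^f_{\rho,u,T}|M_f)$, whose second term is comparable to $\|\rho-1\|^2_{L^2}+\|u\|^2_{L^2}+\|T-1\|^2_{L^2}$ near equilibrium. For $\gamma=2$ the weight gain in $D$ is strong enough that a combination $X+\lambda H+\mu M_h$ with small $\lambda,\mu$ obeys a closed Gr\"onwall inequality with a spectral gap, giving $H(f|M_f)\lesssim e^{-ct}$. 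For $0<\gamma<2$ one loses $\lr{v}^{2-\gamma}$ in passing from $H(f|M^f_{\rho,u,T})$ to $D$; interpolating against the higher moments produced in the first step turns the system, for each small $\eta>0$, into a differential inequality $\f{d}{dt}\Phi+c\Phi^{1+\eta}\le0$, hence $H(f|M_f)\lesssim(1+t)^{-1/\eta}$. For $\gamma=0$ there is no weight to spend, so one invokes instead the interpolation inequality \eqref{evolcondi3}, which under the structural conditions \eqref{evolcondi5}--\eqref{evolcondi3} bounds the medium energy $\mathbb{X}^{1,\f12+2\delta_1,q_1}_{2}(h)$ by $\|h\|_{L^1}^{2\theta_1}\lesssim H(f|M_f)^{\theta_1}$ times the uniformly bounded high energy $\mathbb{X}^{N,\kappa,q}_{1}(h)$; substituting into \eqref{E:Energy}--\eqref{E:Entropy} yields a coupled nonlinear system of differential inequalities for $\big(H(f|M_f),\mathbb{P}_{e,2}^{1,\f12+2\delta_1}(h)\big)$ whose solution decays like $(1+t)^{-c}$ with $c=\min\{(1-\theta_1)^{-1},\f{\f32-A}{2(1-a)}(A+\eta(\f32-A))^{-1}\}$ and $A=\f{3}{2(\bar m+2)}$. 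The main obstacle is twofold: for $\gamma=0$, carrying out the multi--step hypoelliptic bootstrap while tracking which weight/regularity pair $W_{m,n}$ is available at each step---which is exactly what the conditions (W-1)--(W-6) in the definitions of $\mathbb{W}_I,\mathbb{W}_{II}$ are engineered to guarantee---and, throughout, closing the coupled entropy--energy system, because $H(f|M^f_{\rho,u,T})$, the hydrodynamic fluctuations and $X$ all feed back into one another, so the combination and the smallness of the coupling constants must be chosen so that no forcing term overwhelms the available gain.
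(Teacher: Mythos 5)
Your outline reproduces the paper's proof strategy almost step for step: Lemmas 5.3--5.6 (refined moment propagation, exponential damping of the medium energy, and the $x$- and $v$-regularity bootstrap encoded in the well-prepared weight sequences), the instantaneous lower bound of Mouhot, the entropy dissipation estimate (Lemma 5.1 with the free parameters $\delta,R,m,a$), the three-step hydrodynamic rigidity (Lemma 5.2 with test functionals $F_1,F_2,F_3$), and the full hydrodynamic dissipation (Theorem 5.1). Your Step~2 (strict monotonicity of $H$) is exactly the paper's argument, as is the application of \eqref{crentp} for the bound $H(f|M_f)\lesssim\|f-M_f\|_{L^1_{q_0+2}}$, and the explanations for $\gamma=2$ (exponential) and $0<\gamma<2$ ($a,\theta_1\to1$ gives $c\to\infty$) match the paper's.

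The one place you are more schematic than the paper, and where a naive reading would not quite work, is the quantitative rate for $\gamma=0$. You write that the closed system yields ``a coupled nonlinear system of differential inequalities whose solution decays like $(1+t)^{-c}$.'' In the paper the inequality \eqref{decme} is \emph{not} closed into a clean autonomous ODE: the forcing term $b_2\bar\delta^{\gamma+\f32}\int H^a$ cannot simply be absorbed for a fixed $\bar\delta$, since the entropy-production coefficient is the smaller power $\bar\delta^{\gamma+A+\eta(\f32-A)}$. The crucial device is a dyadic-in-entropy time decomposition: one defines $t_j$ by $H(f|M)(t_j)=2^{-j}$, picks $\bar\delta=2^{-jB}$ with $B=\f{2(1-a)}{\f32-A}$ at scale $j$, checks $B\big(\gamma+A+\eta(\f32-A)\big)+1<(\gamma+\f32)B+a$ to win the competition between the two $\bar\delta$-powers at that scale, and then sums the resulting bounds on $T_j=t_{j+1}-t_j$ together with \eqref{evolcondi3} to get $t_N\lesssim 2^{Nc^{-1}}$. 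This rescaling of the free parameter at each entropy scale is the real engine of the algebraic rate; without it the forcing term does ``overwhelm the available gain'' in the sense you flag at the end. Since you correctly identified that the coupling constants have to be tuned and that \eqref{evolcondi3} and CKP are the levers, this is a gap only in the final technical execution, not in the overall scheme.
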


Some remarks are in order.
\begin{rmk}   The solutions constructed in \cite{gs1} and \cite{HE12}  verify  \eqref{dynacondi}.
	The lower bound of the density and the condition $\|f(t)\|_{L^{\infty}_x(L^1_{\gamma+2s}\cap L\log L)}\le  c_2^{\f12}$ are the minimal assumptions  to prove that the collision operator $Q$ behaves  like a fractional Laplace operator for velocity variable. Obviously   \eqref{dynacondi} is a little stronger than the optimal one.  Considering that $\|f\|_{L^{\infty}_x(L^1_{\gamma+2s}\cap L\log L)}$ is comparable to $\|f\|_{H^{\f32+\delta}_xL^2_{\gamma+2s+2}}$, we think that the assumption \eqref{dynacondi}  is  almost optimal.  It is not clear how to relax it. Some new idea should be introduced, for instance,      the recent development of the De Giorgi's method for the Boltzmann equation in   \cite{ImbSil}. \end{rmk}

 \begin{rmk} Since our new mechanism for the convergence is still related to the entropy method, we do not get the exponential  rate (except for $\gamma=2$) of the convergence compared to the linearized theory.   Noting the recent development of the semi-group's method introduced in \cite{momi},  for $\gamma>0$,  one may obtain the optimal decay rate by  construction of the proper energy functional when the solution is sufficiently close to its equilibrium(see \cite{ht}). \end{rmk}
 
\begin{rmk}
	 It is not   clear that the proof in \cite{Mouhot} can be applied  to \eqref{NepsB} to get the pointwise lower bound for the solution $f^\eps$ uniformly in $\eps$. If it holds,  slight modification of our proof will yield the   rate of the convergence to the equilibrium for \eqref{NepsB} which will be uniform in $\eps$. 
\end{rmk}

Our final  result is concerned with the global-in-time strong stability for the equation. 
\begin{thm}[Global-in-time strong stability] \label{thmstab} Suppose that $g$ is a  non-negative and  smooth  solution to the equation \eqref{Boltz eq} verifying \eqref{dynacondi}
and $g\ge K_0 \exp\{-A_0 |v|^{q_0}\}$. Let $6<q\le N+\kappa, \eta_1\ll1$. 
\begin{enumerate}
\item{\bf ($\gamma=0$)} Suppose that
$\mathbb{X}_{1}^{N,\kappa,q}=\mathbb{P}_{e,1}^{N,\kappa}\cap V^q(\mathbb{E}^{N,\kappa},\mathbb{P}_{e,1}^{1,\f12+2\delta_1})$ and $\mathbb{X}_{2}^{1,\f12+2\delta_1,q_1}=\mathbb{P}_{e,2}^{1,\f12+2\delta_1}\cap V^{q_1}$ are
the function spaces associated to  $\mathbb{W}_{I}^{(1)}(N,\kappa,\varrho,\delta_1,q^{(1)}_1,q^{(1)}_2)$ and $\mathbb{W}_{II}^{(2)}(1,\f12+2\delta_1^{},\varrho,\delta_1^{ })$ respectively. Assume that    $\mathbb{W}_{I}^{(1)}$  and $\mathbb{W}_{II}^{(2)}$ verify all the assumptions stated in result $(c)$  of Theorem \ref{thmdyna} for $\gamma=0$. Moreover, $\mathbb{W}_{I}^{(1)}$ satisfies \eqref{unicon1} and \eqref{Rstrdelta}.  Let   $h_0,g_0\in \mathbb{E}^{N,\kappa}\cap V^q$ and  for $W_{m,n}^{(2)}\in \mathbb{W}_{II}^{(2)}$,
\ben\label{evolcondi2}&&\qquad  8C_E(\sup_{t\ge0}\|g(t)\|_{H^{\f32+\delta_1}_xH^{\eta_1}_{ \gamma+4}}+\|M_g\|_{ H^{\eta_1}_{ \gamma+4}}+c_2^{\f12})\le \f12(l^{(2)}_{0,-1})^{1+s}A_1(c_1,c_2); \|W_{0,-1}^{(2)}W_{2+N_s}g\|^2_{H^{\f32+\delta_1}_xH^{2s+\eta_1}}\\  &&\qquad\qquad\label{evolcondi4}+  \|W_{m,n}^{(2)}W_{\f32\gamma+2s+4} g\|^2_{H_x^{m+n\varrho}H^s}+\sum_{i=1}^2\|W_{1,\f12+i\delta_1}^{(2)}W_{\f32\gamma+2s+4} g\|^2_{H_x^{\f32+i\delta_1}H^s}\lesssim  \mathbb{X}_{1}^{N,\kappa,q}(g).  \een	
  There exists a sufficiently small constant $\eta$  depending only on $c_1,c_2,  \mathbb{P}_{e,1}^{N,\kappa}(g_0)$ such that if $\mathbb{P}_{e,1}^{1,\f12+2\delta_1}(h_0)\le \eta$, then
	 \eqref{Boltz eq}
admits a unique, non-negative and smooth solution $f$ with the initial data   $f_0=g_0+h_0$ in the space $C([0,\infty);\mathbb{E}^{N,\kappa}\cap V^q)$. Moreover, for any $t\ge 0$, we have
\beno  \mathbb{P}_{e,2}^{1,\f12+2\delta_1}((f-g)(t))\lesssim \min\{(1+ |\ln \eta|)^{-c}, (1+ t)^{-c}+\eta\},
\eeno
where the constant $c$ is stated in Theorem \ref{thmdyna}.

 \item{\bf  ($\gamma>0$)} Suppose $\mathbb{E}^{2,\f12+\delta_1}(\mathbb{P}_{e,1}^{1,\f12+2\delta_1})$  and $\mathbb{X}_{2}^{1,\f12+2\delta_1,q_1}=\mathbb{P}_{e,2}^{1,\f12+2\delta_1}\cap V^{q_1}$ are the function spaces associated to $\mathbb{W}_I^{(1)}(2,\f12+\delta_1,\varrho,\delta_1,q_1,q_2)$ and $\mathbb{W}_{II}^{(2)}(2,\f12+\delta_1,\varrho,\delta_1)$ respectively. Assume that $\mathbb{W}_{II}^{(2)}$ verify \eqref{evolcondi2}  and \ben\label{evolcondi6} &&\sup_{t\ge0}\big(V^{q_1}(g(t))+\|W_{0,-1}^{(2)}W_{2+N_s}g(t)\|^2_{H^{\f32+\delta_1}_xH^{2s+\eta_1}}+  \sum_{(m,n)\in \mathbb{I}_x(1,\f12+2\delta_1)}\|W_{m,n}^{(2)}W_{\f32\gamma+2s+4} g(t)\|^2_{H_x^{m+n\varrho}H^s}\notag\\&&\quad+\|W_{1,\f12+\delta_1}^{(2)}W_{\f32\gamma+2s+4} g(t)\|^2_{H_x^{\f32+\delta_1}H^s}+\|W_{1,\f12+2\delta_1}^{(2)}W_{\f32\gamma+2s+4} g(t)\|^2_{H_x^{\f32+2\delta_1}H^s}\big) \le \bar{\mathcal{C}},\een Moreover,  $\mathbb{W}_{I}^{(1)}$  verifies \eqref{Rstrdelta}, $W_{m,n}^{(2)}(=W_{l^{(2)}_{m,n}})\le W_{m,n}^{(1)}(=W_{l^{(1)}_{m,n}})$ if $(m,n)\in \mathbb{I}_x(1,\f12+2\delta_1)$, $W_{1,\f12+\delta_1}^{(2)}\le W_{1,\f12+\delta_1}^{(1)}$ and \ben\label{unicon2}\qquad  &&\max\{A_1(c_1,c_2)^{-\f1s},   8C_I ( \bar{\mathcal{C}}+b_2(1+N)+C(  \bar{\mathcal{C}},c_1,c_2)^{\f12},N_s+2\}  < l^{(1)}_{1,\f12+\delta_1}2^{-N_{q_2,s,2}}-N_s-2-\gamma,\een
 where $b_2, C$ are defined in Lemma \ref{refEn} and $N>0$.  Let $h_0\in\mathbb{E}^{2,\f12+\delta_1}$. 
There exists a sufficiently small constant  $\eta$ depending only on $g$ such that if   $\mathbb{P}_{e,1}^{1,\f12+2\delta_1}(h_0)\le \eta$ and $V^{q_1}(h_0)\le N$,  then  \eqref{Boltz eq}
admits a unique, non-negative  and smooth solution $f$ with the initial data $f_0=g_0+h_0$ in the space $C([0,\infty);\mathbb{E}^{2,\f12+\delta_1})$. Moreover, for any $t\ge t_0>0$,  
$\mathbb{P}_{e,1}^{1,\f12+2\delta_1}((f-g)(t))\le O( |\ln \eta|^{-\infty}).$
\end{enumerate}
\end{thm}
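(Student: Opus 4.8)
The plan is to realize the three-step scheme announced after the statement: propagate the closeness of $f=g+h$ to the reference solution $g$ over a long time window by analyzing $h=f-g$ through the linear equation \eqref{h-eq}; invoke the global dynamics of Theorem~\ref{thmdyna} to show that $g$ relaxes to its associated global Maxwellian $M_g$; then glue the two at an intermediate time $t_0$ at which $f(t_0)$ has fallen into the near-equilibrium regime of $M_f$, and close the bootstrap there for all later times. Since $g$ is smooth, non-negative, obeys \eqref{dynacondi} and $g\ge K_0\exp\{-A_0|v|^{q_0}\}$, if $\mathbb{P}_{e,1}^{1,\f12+2\delta_1}(h_0)\le\eta$ (and, when $\gamma>0$, also $V^{q_1}(h_0)\le N$) then $f=g+h$ still satisfies the density lower bound and the $H^{\f32+2\delta_1}_xL^2_{\gamma+4}$ bound in \eqref{dynacondi}, which is precisely what is required of the coefficients $f$ and $g$ in \eqref{h-eq}. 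Applying the local well-posedness Theorem~\ref{thmwepo} (whose data-dependent conditions \eqref{unicon1}/\eqref{unicon2} and \eqref{Rstrdelta} are imposed exactly so that the well-prepared weight sequences are tuned to the size of $g$) produces a unique non-negative $f\in C([0,T^*];\mathbb{E}^{N,\kappa}\cap V^q)$ (resp.\ $C([0,T^*];\mathbb{E}^{2,\f12+\delta_1})$ when $\gamma>0$).

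The quantitative gain is that the energy estimate for \eqref{h-eq} in the weaker-weight space $\mathbb{P}_{e,2}^{1,\f12+2\delta_1}$ (resp.\ $\mathbb{E}^{2,\f12+\delta_1}$, carrying also $V^{q_1}(h)$, when $\gamma>0$) closes without loss of weight---using the decomposition $Q=\f12Q+\f12(Q^\delta_+-Q^\delta_-)+\f12(Q-Q^\delta)$ and the smoothing of the positive part, the averaging lemma, the $L^1$-moment Povzner estimate and the sharp collision bounds---so that $\frac{d}{dt}\mathbb{P}_{e,2}^{1,\f12+2\delta_1}(h)\lesssim \mathbb{P}_{e,2}^{1,\f12+2\delta_1}(h)$ up to lower-order terms, the uniform-in-time bounds \eqref{evolcondi2}, \eqref{evolcondi4} and \eqref{evolcondi6} on $g$ being used to control the error equation. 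Starting from $\eta$, this keeps $\mathbb{P}_{e,2}^{1,\f12+2\delta_1}(h(t))\lesssim \eta^{1/2}$ on an interval $[0,t_1]$ with $t_1\sim|\ln\eta|$, on which $f$ continues to satisfy \eqref{dynacondi} with room to spare and, by \cite{Mouhot}, a pointwise lower bound.

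Next, $f_0-g_0=h_0$ small forces the conserved mass, momentum and energy of $f$ and $g$ to differ by $O(\eta)$, hence $|M_f-M_g|\lesssim\eta$ in every norm used. The long-time behavior in Theorem~\ref{thmdyna} (result $(1)(c)$ when $\gamma=0$, result $(2)(c)$ when $\gamma>0$) applied to $g$ gives $H(g|M_g)(t)+\mathbb{P}_{e,2}^{1,\f12+2\delta_1}((g-M_g)(t))\lesssim (1+t)^{-c}$ (and, for $\gamma>0$, decay as fast as $(1+t)^{-1/\eta'}$ for any $\eta'$, or $e^{-ct}$ if $\gamma=2$). Taking $t_0=t_1\sim|\ln\eta|$ and combining with the previous step and $|M_f-M_g|\lesssim\eta$, the triangle inequality yields $\mathbb{P}_{e,2}^{1,\f12+2\delta_1}((f-M_f)(t_0))\lesssim \eta^{1/2}+(1+|\ln\eta|)^{-c}+\eta\lesssim (1+|\ln\eta|)^{-c}$ (resp.\ $|\ln\eta|^{-\infty}$ when $\gamma>0$): at $t_0$ the solution is near $M_f$. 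Since then $\rho_f$ is near the constant density of $M_f\ge 2c_1$ while both $\|f(t_0)-M_f\|_{H^{\f32+2\delta_1}_xL^2_{\gamma+4}}$ and $\|M_f\|_{H^{\f32+2\delta_1}_xL^2_{\gamma+4}}$ are small, the bootstrap hypotheses \eqref{dynacondi} hold for $f$ at time $t_0$; re-running the proof of Theorem~\ref{thmdyna} from $t_0$---the first-order energy--entropy system \eqref{E:Energy}--\eqref{E:Hydro} together with the lower bound of \cite{Mouhot} closing the bootstrap for all $t\ge t_0$---produces a global solution with the full regularity of $\mathbb{E}^{N,\kappa}\cap V^q$ and $H(f|M_f)(t)+\mathbb{P}_{e,2}^{1,\f12+2\delta_1}((f-M_f)(t))\lesssim (1+t-t_0)^{-c}$. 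Writing $f-g=(f-M_f)+(M_f-M_g)+(M_g-g)$, for $t\ge t_0$ one gets $\mathbb{P}_{e,2}^{1,\f12+2\delta_1}((f-g)(t))\lesssim (1+t)^{-c}+\eta$, while for $0\le t\le t_0$ the uniform bound $\eta^{1/2}\lesssim (1+|\ln\eta|)^{-c}$ of the first step applies; taking the minimum gives the advertised estimate, and the $\gamma>0$ case differs only in that the faster decay of $g-M_g$ turns the glued bound into $O(|\ln\eta|^{-\infty})$ on $t\ge t_0$.

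The main obstacle is the quantitative part of the first two steps: propagating smallness of $\mathbb{P}_{e,2}^{1,\f12+2\delta_1}(h)$ (and of $V^{q_1}(h)$ for $\gamma>0$) over a window whose length grows as $\eta\to0$, which requires running the full propagation-of-regularity machinery for the variable-coefficient equation \eqref{h-eq}---the well-prepared weight sequences, the avoidance of the increase in weight via the $Q$-decomposition and the smoothing of $Q^\delta_+$, the averaging lemma, the sharp Povzner inequality and the collision bounds---while simultaneously keeping the data-dependent conditions \eqref{unicon1}/\eqref{unicon2} and \eqref{Rstrdelta} compatible with the weight sequences stay tuned to $g$ as $\eta$ shrinks, and ensuring the uniform-in-time control of $g$ in \eqref{evolcondi2}--\eqref{evolcondi6} actually holds along the dynamics. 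A subsidiary but genuine point is to verify that \eqref{dynacondi} is not merely inherited but strictly satisfied by $f$ on $[0,t_1]$ and at $t_0$, so that Theorem~\ref{thmdyna} may legitimately be restarted there; everything else amounts to assembling estimates already established in the excerpt.
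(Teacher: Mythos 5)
Your three-step outline matches the paper's strategy for Theorem~\ref{thmstab}: propagate closeness of $f$ to $g$ over a window of length $\sim|\ln\eta|$ via the refined energy estimate, use Theorem~\ref{thmdyna} for $g$ to place $f(t_1)$ near $M_f$, then continue for all time. The one place where your outline is imprecise, and where the paper is more careful, is the continuation step. You propose to ``re-run the proof of Theorem~\ref{thmdyna} from $t_0$,'' but that theorem takes \eqref{dynacondi} on all of $[0,\infty)$ as a hypothesis, so invoking it to \emph{establish} that \eqref{dynacondi} persists is circular. The paper instead introduces a bootstrap time $T_1^*$ for the quantity $\mathbb{P}_{e,2}^{1,\f12+2\delta_1}(f-M_f)$ and closes it using only Lemma~\ref{refEn} applied with reference solution $g=M_f$: this yields $\mathbb{P}_{e,2}^{1,\f12+2\delta_1}((f-M_f)(t))\lesssim e^{-c(t-t_1)}\mathbb{P}_{e,2}^{1,\f12+2\delta_1}((f-M_f)(t_1))+H(f|M_f)(t_1)$, where the source is controlled because $\|(f-M_f)(\tau)\|_{L^1}^2\lesssim H(f|M_f)(\tau)\le H(f|M_f)(t_1)\lesssim(1+|\ln\eta|)^{-c/2}$ by Csisz\'ar--Kullback--Pinsker and monotonicity of the relative entropy; the resulting bound lies strictly below the bootstrap threshold $(1+|\ln\eta|)^{-c/3}$, so $T_1^*=\infty$, \eqref{bcri} holds globally, and only \emph{then} is Theorem~\ref{thmdyna} applied to $f$ to get the $(1+t)^{-c}$ decay. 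Your invocation of the full energy--entropy system \eqref{E:Energy}--\eqref{E:Hydro} and the \cite{Mouhot} lower bound to close the bootstrap is therefore more machinery than needed (Lemma~\ref{refEn} plus monotone entropy suffice), although not wrong in principle since \eqref{E:Energy} is exactly the consequence of Lemma~\ref{refEn}. Two smaller points: $\|M_f\|_{H^{\f32+2\delta_1}_xL^2_{\gamma+4}}$ is not small --- what is used is that $M_f$ is $O(\eta)$-close to $M_g$, with $c_1,c_2$ accommodating the fixed size of $M_g$; and for $\gamma>0$ you correctly flag that \eqref{unicon1} must be preserved, but the paper's mechanism is concrete: Lemma~\ref{refEn} together with \eqref{evolcondi6} bounds $\mathbb{X}_2^{1,\f12+2\delta_1,q_1}(f(t))$ by $2(\bar{\mathcal{C}}+b_2(1+N)+C(\bar{\mathcal{C}},c_1,c_2))$, and \eqref{unicon2} then guarantees \eqref{unicon1}, so the local well-posedness continuity argument may be iterated.
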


\begin{rmk} The perturbation is performed in  function space $\mathbb{P}_{e}^{\f12+2\delta_1}$. But we still request that the perturbed solution is in the energy space  $\mathbb{E}^{2,\f12+\delta_1}$ to get the uniqueness. Condition \eqref{unicon2} is used to ensure that \eqref{unicon1} holds for the perturbed solution for all time. It will allow us to use continuity argument  to prove the global stability.         
\end{rmk}

\subsection{Organization of the paper} Section 2 is devoted to the analysis of the collision operator and the hypo-elliptic estimate for the transport equation. Then  the proof of the well-posedness for the equation with and without angular cutoff will be given in Section 3 and Section 4. In   Section 5, we will prove the global dynamics for the equation under the assumption \eqref{dynacondi}. We will prove the global-in-time stability for the equation in Section 6. Technical lemmas and auxiliary theorems will be list in the appendix.

\section{Analysis of the collision operator and the transport equation}
In this section, we will make a full analysis of the collision operator and the transport equation in order to get some key estimates for proving the main theorems.

\subsection{The function spaces related to the symbol $W^{\eps}_q$}
We will show some useful equivalences and inequalities for the function spaces related to the symbol $W^{\eps}_q$.

\begin{lem} \label{func} Let $f$ be a smooth function defined in $\R^3$. Then for $l \in \R$ and $p,m,q\ge0$,  it hold
	\ben \label{func1} |f|_{H^m_l}&\sim& |f^\phi|_{H^m_l}+|f_\phi|_{H^m_l};\\
	|f_\phi|_{H^m_l}&\lesssim& \epsilon^{-p}|f_{\phi}|_{H^{(m-p)}_l},  \quad\mbox{for} \quad p\le m; \label{func2}\\
	|W^\ep_q(D)(W_l f  )|_{H^m}&\sim& \ep^{-q}|f^\phi|_{H^m_l}+|f_\phi|_{H^{m+s}_l}\sim|W^\ep_q(D)f  |_{H^m_l};\label{func3}\\
	|W^\eps_q(D)W_l f|_{L^2}^2&\lesssim& |W^\eps_{q-\eta}(D)W_{2l} f|_{L^2}|W^\eps_{q+\eta}(D) f|_{L^2},\quad\mbox{for} \quad \eta\le q.\label{func4}
	  \een
\end{lem}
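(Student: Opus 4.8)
The plan is to derive all four equivalences from two elementary, $\ep$-uniform facts about the Fourier multipliers involved: the symbol bounds
\[
W^\eps_q(\xi)\sim\min\{\lr{\xi}^q,\ep^{-q}\},\qquad |\pa_\xi^\alpha W^\eps_q(\xi)|\lesssim\lr{\xi}^{q-|\alpha|},\qquad |\pa_\xi^\alpha\phi(\ep\xi)|+|\pa_\xi^\alpha\psi(\ep\xi)|\lesssim\lr{\xi}^{-|\alpha|},
\]
holding uniformly in $\ep\in(0,1)$, together with the pseudodifferential commutator calculus they entail, namely that $[W_l,\phi(\ep D)]$ and $[W_l,W^\eps_q(D)]$ are one order lower in $\xi$ and carry weight $W_{l-1}$, again with $\ep$-independent constants. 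The point is that every $\xi$-derivative of $\phi(\ep\xi)$ or $\psi(\ep\xi)$ produces a factor $\ep$ but is supported on the transition annulus $|\xi|\sim\ep^{-1}$ where $\ep\sim\lr{\xi}^{-1}$, while the piece $\ep^{-q}\phi(\ep\xi)$ of $W^\eps_q$ is supported where $\lr{\xi}\gtrsim\ep^{-1}$, so that $\ep^{-q}\lesssim\lr{\xi}^q$ there; hence $W^\eps_q(\xi)\in S^q_{1,0}$ and $\phi(\ep\xi),\psi(\ep\xi)\in S^0_{1,0}$ with seminorms independent of $\ep$, and the standard symbolic calculus between these multipliers and the polynomial weights $W_l=\lr{v}^l$ applies verbatim.

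Granting this, \eqref{func1} is immediate: the bound $\lesssim$ is the triangle inequality for $f=f_\phi+f^\phi$, and for $\gtrsim$ I would write $W_l\phi(\ep D)=\phi(\ep D)W_l+[W_l,\phi(\ep D)]$, use that $\phi(\ep D)$ commutes with $\lr{D}^m$ and is an $L^2$-contraction, and absorb the commutator into $|f|_{H^{m-1}_{l-1}}\le|f|_{H^m_l}$; the identical argument bounds $|f_\phi|_{H^m_l}$, and summing gives the claim. For \eqref{func3} I would decompose $W^\eps_q(D)(W_lf)$ — and separately $W^\eps_q(D)f$ — by $1=\psi(\ep D)+\phi(\ep D)$, commute $W_l$ across $\psi(\ep D),\phi(\ep D)$ and $W^\eps_q(D)$ modulo the lower-order remainders above, and then use $W^\eps_q(\xi)\sim\lr{\xi}^q$ on $\mathrm{supp}\,\psi(\ep\cdot)$ and $W^\eps_q(\xi)\sim\ep^{-q}$ on $\mathrm{supp}\,\phi(\ep\cdot)$ to recognize the two members of the middle expression; the last equivalence in \eqref{func3} is exactly the statement that $W^\eps_q(D)W_l$ and $W_lW^\eps_q(D)$ differ by an operator of order $q-1$ in $\xi$ and $l-1$ in $v$, hence negligible in the relevant norms.

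For the Bernstein-type gain \eqref{func2} the key identity is $\lr{D}^p\psi(\ep D)=\ep^{-p}g_p(\ep D)$ with $g_p(\zeta)=(\ep^2+|\zeta|^2)^{p/2}\psi(\zeta)\in S^0_{1,0}$ uniformly in $\ep$, so that $\lr{D}^p\psi(\ep D)$ equals $\ep^{-p}$ times a uniformly $L^2$-bounded operator. I would then prove \eqref{func2} by induction on $p$: commuting $\lr{D}^p$ past $W_l$ yields the main term $\ep^{-p}\lr{D}^{m-p}W_lg_p(\ep D)f$, which after moving $g_p(\ep D)$ back is $\lesssim\ep^{-p}|f_\phi|_{H^{m-p}_l}$, plus a commutator of order $m-1$ and weight $l-1$ that is $\lesssim|f_\phi|_{H^{m-1}_{l-1}}$ and is absorbed by the inductive hypothesis at $(p-1,m-1)$; non-integer $p$ follows by interpolation. (Alternatively one observes that $W_lf_\phi$ is frequency-supported in $|\xi|\lesssim\ep^{-1}$ up to an error exponentially small in $\ep^{-1}$, because $\widehat{\lr{v}^l}$ decays exponentially.) Finally \eqref{func4} is a Cauchy--Schwarz interpolation in frequency built on the symbol comparison $W^\eps_q(\xi)^2\sim W^\eps_{q-\eta}(\xi)W^\eps_{q+\eta}(\xi)$, verified separately on the low-, high-, and transition regions: by Plancherel and self-adjointness $|W^\eps_q(D)W_lf|_{L^2}^2\sim\lr{W_lW^\eps_{q-\eta}(D)W^\eps_{q+\eta}(D)W_lf,f}$, and commuting the outer $W_l$ rightward past the two multipliers turns $W_l(\cdots)W_l$ into $(\cdots)W_{2l}$ up to a lower-order term; the main term equals $\lr{W^\eps_{q-\eta}(D)W_{2l}f,W^\eps_{q+\eta}(D)f}\le|W^\eps_{q-\eta}(D)W_{2l}f|_{L^2}|W^\eps_{q+\eta}(D)f|_{L^2}$, while the remainder is dominated by the same right-hand side.

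I expect the only real obstacle to be the bookkeeping needed to keep everything \emph{uniform in $\ep$}: once the rescaled cutoffs $\psi(\ep\cdot),\phi(\ep\cdot)$ and the symbols $W^\eps_q$ are recognized as lying in the standard H\"ormander classes with $\ep$-independent seminorms, each commutator estimate between the frequency multipliers and the polynomial weights is routine and the constants do not degenerate as $\ep\to0$. The genuinely $\ep$-sensitive point is the low-frequency estimate \eqref{func2}, where $W_l$ fails to preserve frequency localization exactly and one must run the induction (or the exponential-tail argument) with a little care.
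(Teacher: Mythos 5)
Your overall strategy — place $\psi(\ep\cdot),\phi(\ep\cdot)\in S^0_{1,0}$ and $W^\eps_q\in S^q_{1,0}$ with $\ep$-independent seminorms and then drive all four claims through the pseudodifferential commutator calculus between these multipliers and the weight $W_l$ — is exactly what the paper does, routing the commutations through Lemmas~\ref{baslem1} and~\ref{baslem2} and, for \eqref{func3}, an explicit factorization $A^\eps(D)W_l=P_1P_2W_lA^\eps(D)$ into $L^2$-bounded pieces.

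The one place your sketch does not close as written is the induction for \eqref{func2}. After extracting $\ep^{-p}$ the main term is $\ep^{-p}\lr{D}^{m-p}W_lg_p(\ep D)f$. If "moving $g_p(\ep D)$ back" means commuting it rightward past $W_l$, the main contribution is $\ep^{-p}g_p(\ep D)\lr{D}^{m-p}W_lf\lesssim\ep^{-p}|f|_{H^{m-p}_l}$, with a norm of $f$ rather than of $f_\phi$, and the induction cannot recover the frequency cutoff. If instead you use $g_p(\ep D)f=\ep^p\lr{D}^pf_\phi$ literally, the expression collapses back to $\lr{D}^{m-p}W_l\lr{D}^pf_\phi$, i.e.\ to the quantity you started from. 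What closes the step is the factorization $g_p(\zeta)=a(\zeta)\psi(\zeta)$ with $a(\zeta)=(\ep^2+|\zeta|^2)^{p/2}\tilde\psi(\zeta)$ uniformly bounded, where $\tilde\psi$ is a fattened bump equal to $1$ on $\operatorname{supp}\psi$; then $g_p(\ep D)f=a(\ep D)f_\phi$ and the main term becomes $\ep^{-p}a(\ep D)\lr{D}^{m-p}W_lf_\phi$ plus a lower-order commutator, which does yield $\ep^{-p}|f_\phi|_{H^{m-p}_l}$. This fattened cutoff is precisely the paper's one-line argument: it observes $(1-\phi(\ep\xi))(1-\phi(\f12\ep\xi))=1-\phi(\ep\xi)$, hence $f_\phi=\psi(\f12\ep D)f_\phi$, and then reads \eqref{func2} off from $\ep^p\lr{\xi}^m\psi(\f12\ep\xi)\in S^{m-p}_{1,0}$ together with Lemma~\ref{baslem1}, with no induction and no separate commutator bookkeeping. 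Your parenthetical exponential-tail alternative is also shakier than you suggest: for $l\ge0$, integration by parts only gives polynomial smallness of $\widehat{\lr{v}^l}$ away from the origin, and the sharper decay would require an analytic-continuation argument outside the paper's toolbox; it is cleaner to stay with the fattened-cutoff identity.
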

\begin{proof}  (i) {\it Proof of \eqref{func1}}: By Lemma \ref{baslem2} and the definition of $f_\phi$, We infer that
	\beno |\lf{f}|_{H^m_l}&\sim &|W_l \lr{D}^m(1-\phi(\ep D))f|_{L^2}  
	\lesssim| \lr{D}^m(1-\phi(\ep D))W_lf|_{L^2}+|[W_l, \lr{D}^m(1-\phi(\ep D))]f|_{L^2}\\
	&\lesssim& | \lr{D}^m W_lf|_{L^2}+|f|_{H^{m-1}_{l-1}}\lesssim |f|_{H^m_l}.\eeno
	The same argument can be applied to $\hf{f}$ which together with the above  imply the first equivalence \eqref{func1} since the inverse inequality is trivial by the triangle inequality.

	(ii) {\it Proof of \eqref{func2}}:   Due to the facts $(1-\phi(\ep \xi))(1-\phi(\f12\ep\xi))=1-\phi(\ep \xi)$ and $ \ep^{p}\lr{\xi}^m(1-\phi(\f12\ep\xi))\in S^{m-p}_{1,0}$(see Definition \ref{psuopde}) and Lemma \ref{baslem1}, we deduce that
	\beno |\lf{f}|_{H^m_l}&\sim &|W_l \lr{D}^m(1-\phi(\ep D))f|_{L^2} 
	=|W_l \lr{D}^m(1-\phi(\f12\ep D)) f_\phi|_{L^2} \\
	&\lesssim& \ep^{-p}| \lr{D}^{m-p} W_l f_\phi|_{L^2}\sim \ep^{-p} |\lf{f}|_{H^{m-p}_l}.\eeno
	
	(iii) {\it Proof of \eqref{func3}}: We claim that
	\ben \label{wsqequi1} |W^\ep_q(D)(W_l f)|_{H^m}\sim |W^\eps_q(D)f|_{H^m_l}.  \een
	
	Let $A^\eps(\xi)\eqdefa W^\ep_q(\xi)\lr{\xi}^m$ and $B^\eps(\xi)\eqdefa \big((1-\phi(\eps\xi))\lr{\xi}^{-q}+\eps^{q}\phi(\eps\xi)\big)\lr{\xi}^{-m}$. It is easy to check that $A^\eps B^\eps, (A^\eps)^{-1}(B^\eps)^{-1}\in S^{0}_{1,0}$.
	We first prove that $| A^\eps(D)W_lf|_{L^2}\lesssim |W_lA^{\eps}(D)f|_{L^2}$. Observe that
$A^\eps(D)W_l=P_1P_2W_lA^{\eps}(D)$,
	where $P_1=A^\eps(D)W_lB^\eps(D) W_{-l}$ and $P_2=W_l(B^\eps(D))^{-1}(A^{\eps}(D))^{-1}W_{-l}$. We reduce the desired estimate  to show that
	$P_1$ and $P_2$ are bounded operators in $L^2$.
	
	Due to Lemma \ref{baslem2}, we have
	$ B^\eps(D) W_{-l}=W_{-l}B^\eps(D)+\sum\limits_{1\le\alpha\le N-1}\f{1}{\alpha!} (\pa^\alpha W_{-l})(\pa^\alpha B^\eps)(D)+r_{N}(v,D)$, 
	where $\lr{v}^{l+N}r_N(v,D)\in S^{-N}_{1,0}$.
	Then we deduce that
	\beno A^\eps(D)W_lB^\eps(D) W_{-l}=A^\eps(D)B^\eps(D)+\sum_{1\le\alpha\le N-1} \f1{\alpha!} A^\eps(D)W_l(\pa^\alpha W_{-l})(\pa^\alpha B^\eps)(D)+A^\eps(D)W_lr_{N}(v,D). \eeno
	Notice that if $N\ge m+q$,
   $ |A^\eps(D)W_lr_{N}(v,D)f|_{L^2}\lesssim |\lr{D}^{m+q}W_lr_N(v,D)f|_{L^2}
	\lesssim |f|_{L^2}. $
	Then to show that $P_1$ is a bounded operator, it suffices to prove that for $1\le\alpha\le N-1$,
  \ben\label{wsq1} |A^\eps(D)W_l(\pa^\alpha W_{-l})(\pa^\alpha B^\eps)(D)f|_{L^2}\lesssim |f|_{L^2}.\een
	By the similar expasion, we have
	\beno A^\eps(D)W_l(\pa^\alpha W_{-l})(\pa^\alpha B^\eps)(D)&=&W_l(\pa^\alpha W_{-l})A^\eps(D)(\pa^\alpha B^\eps)(D)+
	\sum_{1\le|\beta|\le N_1-1} \f1{\beta!} \pa^\beta\big(W_l(\pa^\alpha W_{-l}))\\&&\times\big(\pa^\beta A^\eps\big)(D)(\pa^\alpha B^\eps)(D)+r_{N_1}(v,D)(\pa^\alpha B^\eps)(D), \eeno
	where  $\lr{v}^{|\alpha|}r_{N_1}(v,D)\in S^{m+q-N_1}_{1,0}$.
	It implies \eqref{wsq1}.  Thus we obtain that $|P_1 f|_{L^2}\lesssim |f|_{L^2}.$
	
	Note that $(A^\eps)^{-1}(B^\eps)^{-1}\in S^0_{1,0}$. With the help of Lemma \ref{baslem1}, it is easy to check that
	$ |P_2f|_{L^2}\lesssim |f|_{L^2}, $
	which gives $| A^\eps(D)W_lf|_{L^2}\le |W_lA^{\eps}(D)f|_{L^2}$.
	
	To prove the inverse inequality of \eqref{wsqequi1}, we notice that
   $ W_lA^{\eps}=P_3P_4A^\eps(D)W_l,$
	where $P_3=W_lA^{\eps}W_{-l}B^\eps(D)$ and $P_4=(B^\eps(D))^{-1}(A^{\eps}(D))^{-1}$.
	Following the similar argument just used before, we can show that $P_3$ and $P_4$ are bounded operators in $L^2$. Thus we have $|W_lA^{\eps}(D) f|_{L^2}\le |A^\eps(D)W_lf|_{L^2},$ which completes the proof to \eqref{wsqequi1}.
	
	Now we are in a position to prove \eqref{func3}. Thanks to Lemma \ref{baslem2}, we get
	\beno W_l\lr{D}^{m+q}\phi(\eps D)W_{-l}=W_lW_{-l}\lr{D}^{m+q}\phi(\eps D)
	+\sum_{1\le|\alpha|\le N-1}\f{1}{\alpha!} W_l(\pa^\alpha W_l)
	\big(\pa^{\alpha}(\lr{\cdot}^{m+q} \phi(\eps \cdot)) \big)(D)+W_lr_N(v,D),\eeno
	where  ${v}^{l+N}r_N(v,D)\in S^{m+q-N}_{1,0}$ with $N\ge m+q$.
	It implies that
	$|f_\phi|_{H^{m+q}_l}\sim|(W_l\lr{D}^{m+q}\phi(\eps D)W_{-l})W_lf|_{L^2}
	\lesssim |W_q^\eps(D)W_lf|_{H^m}. $
	Similarly we can obtain that
	$\eps^{-q}|f^\phi|_{H^m_l}\lesssim |W_q^\eps(D)W_lf|_{H^m}.$
These two inequalities yield that
     $|f_\phi|_{H^{m+q}_l}+ \eps^{-q}|f^\phi|_{H^m_l}\lesssim |W_q^\eps(D)W_lf|_{H^m}.$
    Observe that $ |W_q^\eps(D)f|_{H^m_l}=|\lr{D}^{m+q}f_\phi+\eps^{-q}\lr{D}^mf^\phi|_{L^2_l}
 $.
     We   derive that
     \beno |W_q^\eps(D)f|_{H^m_l}\lesssim |f_\phi|_{H^{m+q}_l}+ \eps^{-q}|f^\phi|_{H^m_l}\lesssim|W_q^\eps(D)W_lf|_{H^m}\sim |W_q^\eps(D)f|_{H^m_l}, \eeno
    where we use  \eqref{wsqequi1} in the last step. It completes the proof to \eqref{func3}.

     (iv).{\it Proof of \eqref{func4}}: It is not difficult to check that
    $ \lr{W^\eps_q(D)W_lf, W^\eps_q(D)W_lf}_v=\lr{ W_lW^\eps_{q-\eta}(D)W_lf,\\ W_{-l}W^\eps_{q+\eta}(D)W_lf}_v. $
     Thanks to \eqref{func3}, we have
     \beno  |W^\eps_q(D)W_lf|^2_{L^2}&\lesssim& |W_lW^\eps_{q-\eta}(D)W_lf|_{L^2}|W_{-l}W^\eps_{q+\eta}(D)W_lf|_{L^2} 
     \lesssim | W^\eps_{q-\eta}(D)W_{2l}f|_{L^2}| W^\eps_{q+\eta}(D) f|_{L^2}. \eeno
     We complete the proof of the lemma.
\end{proof}

 \begin{cor} Suppose $\Phi(v)\in S^l_{1,0}$.   For $q\ge0$,  $A^\eps(\xi)$ and $W^\eps_{q}(\xi)$ verify $|A^\eps(\xi)|\le W^\eps_{q}(\xi)$   and
 	$ |(\pa^\alpha A^\eps)(\xi)|\le W^\eps_{(q-|\alpha|)^+}(\xi).$
 	Then we have
 	\ben\label{func5} |\Phi A^\eps(D)f|_{H^m}+| A^\eps(D)\Phi f|_{H^m}\lesssim |W^\eps_q(D)W_lf|_{H^m}.
 	\een
As applications, one has	
 	\ben\sum_{k\ge-1}|[W^\ep_q(D), \mathcal{P}_k2^{kp}W_l]f|_{L^2}^2&\lesssim& |W^\eps_{(q-1)^+}(D)W_{l+p-1}f|_{L^2}^2  ;
 	\label{func6}\\
 	|[W^\ep_{q}(D), \mathcal{U}_k]f|_{L^2}&\lesssim& |W^\eps_{(q-1)^+} (D)f|_{L^2};
 	\label{func7}\\
 	\sum_{j\le [\log_2 \f1\epsilon]} 2^{2qj}|W_l\mathfrak{F}_jf|_{L^2}^2+\sum_{j>[\log_2 \f1\epsilon] } \eps^{-2q} |W_l\mathfrak{F}_jf|_{L^2}^2&\sim& |W^\eps_{q}(D)W_lf|_{L^2}^2.\label{func8} \een
\end{cor}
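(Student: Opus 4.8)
The plan is to establish the operator bound \eqref{func5} by pseudodifferential calculus, exploiting the domination hypotheses on $A^\eps$, and then to read off \eqref{func6}--\eqref{func8} by feeding \eqref{func5} into the standard commutator expansions of Lemma \ref{baslem2} together with Littlewood--Paley orthogonality.

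\emph{Step 1: the multiplier estimate \eqref{func5}.} I would split $\Phi=(\Phi W_{-l})W_l$, so that $\Phi W_{-l}\in S^0_{1,0}$ and multiplication by $\Phi W_{-l}$ is bounded on every $H^m$. For the first summand, $|\Phi A^\eps(D)f|_{H^m}\lesssim|W_l A^\eps(D)f|_{H^m}$, and then I would compare $A^\eps(D)$ with $W^\eps_q(D)$: since $|A^\eps(\xi)|\le W^\eps_q(\xi)$ and $|(\pa^\alpha A^\eps)(\xi)|\le W^\eps_{(q-|\alpha|)^+}(\xi)$, every term produced by the Leibniz rule applied to $A^\eps(\xi)\,(W^\eps_q(\xi))^{-1}$ carries a nonnegative power of $\eps$, hence this symbol and all of its $\xi$-derivatives are bounded uniformly in $\eps$, so $A^\eps(D)(W^\eps_q(D))^{-1}$ is bounded on $H^m$ by Lemma \ref{baslem1}; this gives $|W_lA^\eps(D)f|_{H^m}\lesssim|W_lW^\eps_q(D)f|_{H^m}\sim|W^\eps_q(D)W_lf|_{H^m}$ via \eqref{wsqequi1}. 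For the second summand $|A^\eps(D)(\Phi W_{-l})(W_lf)|_{H^m}$, I would commute the bounded symbol $\Phi W_{-l}$ across $A^\eps(D)$ using Lemma \ref{baslem2}; the leading commutator has symbol of type $(\pa_\xi A^\eps)\otimes(\pa_v(\Phi W_{-l}))$, governed by $W^\eps_{(q-1)^+}$ in $\xi$ and by $S^{-1}_{1,0}$ in $v$, and the remainders are genuinely lower order, so finitely many iterations reduce everything to $|W^\eps_q(D)(W_lf)|_{H^m}$ plus $\eps$-uniformly controlled lower-order pieces that are absorbed. This is the same bookkeeping already carried out with $P_1,\dots,P_4$ in the proof of Lemma \ref{func}.

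\emph{Step 2: the applications.} For \eqref{func7} I would note that, by \eqref{defpsivarphi}, $\mathcal{U}_k$ is multiplication by $\chi_k(v)=\psi(2^{-(k+1)}v)$, which lies in $S^0_{1,0}$ uniformly in $k$ and whose derivatives satisfy $|(\pa^\alpha_v\chi_k)(v)|\lesssim 2^{-|\alpha|k}\mathbf{1}_{|v|\sim 2^k}\lesssim\lr v^{-|\alpha|}$ uniformly in $k$; expanding $[W^\eps_q(D),\chi_k]$ by Lemma \ref{baslem2} and applying \eqref{func5} with $A^\eps=\pa^\alpha_\xi W^\eps_q$ (dominated by $W^\eps_{(q-1)^+}$) and $\Phi=\pa^\alpha_v\chi_k$ (of order $\le-1$) yields $|[W^\eps_q(D),\mathcal{U}_k]f|_{L^2}\lesssim|W^\eps_{(q-1)^+}(D)W_{-1}f|_{L^2}\lesssim|W^\eps_{(q-1)^+}(D)f|_{L^2}$, the last step using $W_{-1}\le1$ and one more commutation. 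For \eqref{func6} the same expansion applies to $[W^\eps_q(D),\mathcal{P}_k2^{kp}W_l]$, whose $v$-symbol $2^{kp}\varphi(2^{-k}v)\lr v^l$ behaves like $\lr v^{l+p}$ on $|v|\sim 2^k$, so its $\pa^\alpha_v$-derivatives are of order $l+p-|\alpha|\le l+p-1$ there; \eqref{func5} then gives $|[W^\eps_q(D),\mathcal{P}_k2^{kp}W_l]f|_{L^2}\lesssim|W^\eps_{(q-1)^+}(D)W_{l+p-1}\tilde{\mathcal{P}}_kf|_{L^2}$, and squaring, summing over $k$, commuting $W^\eps_{(q-1)^+}(D)$ past $\tilde{\mathcal{P}}_k$ (again by \eqref{func5}), and invoking $\mathcal{P}_k\mathcal{P}_j=0$ for $|k-j|>N_0$ together with $\sum_k\|\tilde{\mathcal{P}}_k h\|_{L^2}^2\sim\|h\|_{L^2}^2$ produces \eqref{func6}. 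Finally \eqref{func8} is the Fourier-side restatement of $|W^\eps_q(D)(W_lf)|_{L^2}^2=\int W^\eps_q(\xi)^2|\widehat{W_lf}(\xi)|^2\,d\xi$: on the shell $|\xi|\sim2^j$ one has $W^\eps_q(\xi)\sim2^{qj}$ for $j\le[\log_2 1/\eps]$ and $W^\eps_q(\xi)\sim\eps^{-q}$ for $j>[\log_2 1/\eps]$, so Littlewood--Paley orthogonality gives the claimed sum with $\mathfrak{F}_j(W_lf)$ in place of $W_l\mathfrak{F}_jf$, and the commutator $[\mathfrak{F}_j,W_l]$ is again a lower-order correction controlled by the right-hand side via \eqref{func5}.

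\emph{Main obstacle.} The pseudodifferential calculus is routine; the delicate point is $\eps$-uniformity, namely ensuring that every term generated when one differentiates $A^\eps(W^\eps_q)^{-1}$, or composes $W^\eps_q(D)$ with the phase-space and frequency cutoffs, carries a nonnegative power of $\eps$ --- this is precisely what the hypothesis $|(\pa^\alpha A^\eps)(\xi)|\le W^\eps_{(q-|\alpha|)^+}(\xi)$ (rather than $W^\eps_{q-|\alpha|}(\xi)$) is there to guarantee. A secondary difficulty is the square-function summation for \eqref{func6}, where one must commute the $\eps$-dependent operator $W^\eps_{(q-1)^+}(D)$ past the projections $\tilde{\mathcal{P}}_k$ before using orthogonality and verify that the resulting commutator contributions are themselves summable.
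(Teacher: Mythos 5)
Your overall strategy matches the paper's: pseudodifferential factorization as in the $P_1$--$P_4$ arguments of Lemma \ref{func}, the expansion of Lemma \ref{baslem2}, and Littlewood--Paley orthogonality. The proof of \eqref{func5} and the treatment of \eqref{func7} are essentially the paper's, so I will focus on the two places where the sketch glosses over the real difficulty.

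For \eqref{func6}, the step ``\eqref{func5} then gives $|[W^\eps_q(D),\mathcal{P}_k2^{kp}W_l]f|_{L^2}\lesssim|W^\eps_{(q-1)^+}(D)W_{l+p-1}\tilde{\mathcal{P}}_kf|_{L^2}$'' is not what \eqref{func5} produces. After expanding the commutator by Lemma \ref{baslem2} and summing the $\ell^2_k$ square function via the disjointness of the supports of $\pa^\alpha(\mathcal{P}_k 2^{kp}W_l)$, \eqref{func5} gives the bound $|W_{l+p-1}W^\eps_{(q-1)^+}(D)f|_{L^2}^2+|f|_{L^2}^2$ (this is \eqref{dd} in the paper): the additive $|f|_{L^2}^2$ is the contribution of the remainder $r_N(v,D)$ and it is independent of $k$, so it cannot simply be dropped. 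Your alternative of inserting $\tilde{\mathcal{P}}_k$ and then commuting $W^\eps_{(q-1)^+}(D)$ past it leaves tails $(\pa^\alpha W^\eps_q)(D)(1-\tilde{\mathcal{P}}_k)f$ that are again uniform in $k$ and so do not square-sum without a separate decay argument. The paper's resolution is a different idea: rewrite $[W^\eps_q(D),\mathcal{P}_k2^{kp}W_l]f=[W^\eps_q(D),\mathcal{P}_k2^{kp}W_l]W_{-(l+p-1)}(W_{l+p-1}f)$, re-expand, and reapply \eqref{dd} with the weight index $-(p-1)$. This choice is exactly the one for which the ``weight gain'' $-(p-1)+(p-1)=0$ turns the tail term of \eqref{dd} into $|W_{l+p-1}f|_{L^2}^2$, which is dominated by the desired right-hand side since $W^\eps_{(q-1)^+}\ge1$. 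Without this conjugation step the argument does not close.

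For \eqref{func8}, the statement is a two-sided equivalence, and the claim that ``the commutator $[\mathfrak{F}_j,W_l]$ is a lower-order correction controlled by \eqref{func5}'' does not capture what is needed. The paper's proof replaces $f$ by $(1-\phi(\tfrac14\eps D))f$ in the sum $j\le[\log_2\tfrac1\eps]$ and by $\phi(4\eps D)f$ in the sum $j>[\log_2\tfrac1\eps]$ — noting these replacements change nothing since $\mathfrak{F}_j$ already localizes frequency — then applies Theorem \ref{baslem3} twice. The genuine work is the lower bound direction, which requires restoring the full sum over $j$ and estimating the two overlap contributions near $j\sim[\log_2\tfrac1\eps]$ together with the difference $\eps^{-2q}|W_l(\phi(4\eps D)-\phi(\tfrac14\eps D))f|_{L^2}^2$. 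These overlap estimates rely on the observation that all of these corrections are concentrated on $O(1)$ many shells $j\sim[\log_2\tfrac1\eps]$, not on the smallness of a commutator, and they are absorbed back into the left-hand side. A sketch that reduces this to a single commutator bound misses the mechanism that makes the equivalence hold rather than merely the upper bound.
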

\begin{proof} We set $B^\eps_q(\xi)=(1-\phi(\eps \xi))\lr{\xi}^{-q}+\phi(\eps \xi)\eps^q$. It is easy to check that $B^\eps_q A^\eps,(B^\eps_q)^{-1} (W^\eps_q)^{-1} \in S^{0}_{1,0}$.
	
(i). {\it Proof of \eqref{func5}}: We first note that
\beno W_l\lr{D}^m A^\eps(D)= P_5(B^\eps_q(D))^{-1}\lr{D}^mW_l, \lr{D}^m A^\eps(D)\Phi=P_6P_7W_l\lr{D}^m W^\eps_q(D) \eeno	
	where $P_5=W_l\lr{D}^m A^\eps(D) W_{-l}B^\eps_q(D)\lr{D}^{-m}$, $P_6=\lr{D}^m A^\eps(D)\Phi B^\eps_q(D)\lr{D}^{-m}W_{-l}$ and\\ $P_7=W_l(B^\eps_q(D))^{-1}(W^{\eps}_q(D))^{-1}W_{-l}$.
By using the similar argument applied to $P_i(i=1,2,3,4)$, we can prove that
$P_5,P_6$ and $P_7 $ are bounded operators in $L^2$. Thus we have
\beno &&|\Phi A^\eps(D)f|_{H^m}\lesssim |W_l \lr{D}^m A^\eps(D)f|_{L^2}\lesssim
|(B^\eps_q(D))^{-1}\lr{D}^mW_l f|_{L^2}\lesssim |W^\eps_q(D)W_lf|_{H^m},\\	
 &&| A^\eps(D)\Phi f|_{H^m}\lesssim |W_l\lr{D}^m W^\eps_q(D)f|_{L^2}\sim |W^\eps_q(D)W_lf|_{H^m}. \eeno

(ii). {\it Proof of \eqref{func6}}: We first observe that
\beno [W^\eps_q(D), \mathcal{P}_k2^{kp}W_l]&=&2^{-k}[W^\eps_q(D), \mathcal{P}_k2^{k(p+1)}W_l]\\&=&2^{-k}\big(\sum_{1\le |\alpha|\le N-1}\f{1}{\alpha!} \pa^\alpha(\mathcal{P}_k2^{k(p+1)}W_l)(\pa^\alpha W^{\eps}_q)(D)+r_N(v,D)\big), \eeno
where $\lr{v}^{N-(l+p+1)}r_N\in S^{q-N}_{1,0}$ with $N\ge l+p+1$.
Then we have
\beno |[W^\eps_q(D), \mathcal{P}_k2^{kp}W_l]f|_{L^2}\lesssim
\sum_{1\le |\alpha|\le N-1} |\pa^\alpha(\mathcal{P}_k2^{kp}W_l)(\pa^\alpha W^{\eps}_q)(D)f|_{L^2}+2^{-k}|f|_{L^2},  \eeno
which together with \eqref{func5} imply that
\ben\label{dd} \sum_{k\ge-1}|[W^\eps_q(D), \mathcal{P}_k2^{kp}W_l]f|_{L^2}^2&\lesssim& |W_{l+p-1} W^{\eps}_{(q-1)^+}(D)f|^2_{L^2}+|f|_{L^2}^2.  \een

To prove the desired result, we observe   the facts that
$ |[W^\eps_q(D), \mathcal{P}_k2^{kp}W_l]f|_{L^2}=|[W^\eps_q(D), \mathcal{P}_k2^{kp}W_l]W_{-(l+p-1)}\\(W_{l+p-1}f)|_{L^2}$
and \beno
[W^\eps_q(D), \mathcal{P}_k2^{kp}W_l]W_{-(l+p-1)}&=&[W^\eps_q(D), \mathcal{P}_k2^{kp}W_{-(p-1)}]+\sum_{1\le|\alpha|\le N-1} \f{1}{\alpha!}
\mathcal{P}_k2^{kp}W_l\pa^\alpha W_{-(l+p-1)}\\&&\times(\pa^\alpha W^\eps_q)(D)+\mathcal{P}_k2^{kp}W_lr_N(v,D),
\eeno
where $\lr{v}^{N+l+p+1}r_N(v,D)\in S^{q-N}_{1,0}$. By \eqref{func5},  we first have
\beno \sum_{k\ge-1} |[W^\eps_q(D), \mathcal{P}_k2^{kp}W_l]f|_{L^2}^2\le \sum_{k\ge-1} \|[W^\eps_q(D), \mathcal{P}_k2^{kp}W_{-(p-1)}](W_{l+p-1}f)\|_{L^2}^2+
\|W^{\eps}_{(q-1)^+}(D) W_{l+p-1}f\|_{L^2}^2. 
\eeno   Then \eqref{func6} is derived by \eqref{dd}.

 (iii). The result of \eqref{func7} is easily derived  by \eqref{func5} and the  expansion
$ [W^\ep_{q}(D), \mathcal{U}_k]=\sum_{1\le|\alpha|\le N-1} \f{1}{\alpha!}
 \pa^\alpha  \mathcal{U}_k\\(\pa^\alpha W^\eps_q)(D)+r_N(v,D), $
 where  $\lr{v}^{N}r_N(v,D)\in S^{q-N}_{1,0}$.

 (iv).  {\it Proof of \eqref{func8}:} We first  notice that
 \ben &&\sum_{j\le ã[\log_2 \f1\epsilon]} 2^{2qj}|W_l\mathfrak{F}_jf|_{L^2}^2+\sum_{j> [\log_2 \f1\epsilon] } \eps^{-2q} |W_l\mathfrak{F}_jf|_{L^2}^2\notag\\
 &&\sim\sum_{j\le [\log_2 \f1\epsilon]} 2^{2qj}|W_l\mathfrak{F}_j(1-\phi(\f14\eps  D))f|_{L^2}^2+\sum_{j>[\log_2 \f1\epsilon] } \eps^{-2q} |W_l\mathfrak{F}_j\phi(4\eps D)f|_{L^2}^2\label{iv1}\\&&
=\sum_{j\ge-1} \big(2^{2qj}|W_l\mathfrak{F}_j(1-\phi(\f14\eps  D))f|_{L^2}^2+  \eps^{-2q} |W_l\mathfrak{F}_j\phi(4\eps D)f|_{L^2}^2\big)\label{iv2}\\&&- \sum_{j\ge[\log_2 \f1\epsilon]} 2^{2qj}|W_l\mathfrak{F}_j(1-\phi(\f14\eps  D))f|_{L^2}^2-\sum_{j\le[\log_2 \f1\epsilon] } \eps^{-2q} |W_l\mathfrak{F}_j\phi(4\eps D)f|_{L^2}^2.\notag
\een
Thanks to Theorem \ref{baslem3} and \eqref{func5}, \eqref{iv1} implies that 
 \beno \sum_{j\le [\log_2 \f1\epsilon]} 2^{2qj}|W_l\mathfrak{F}_jf|_{L^2}^2+\sum_{j> [\log_2 \f1\epsilon] } \eps^{-2q} |W_l\mathfrak{F}_jf|_{L^2}^2\lesssim |(1-\phi(\f14\eps  D))f|_{H^q_l}^2+\eps^{-2q}|\phi(4\eps D)f|_{L^2_l}^2\lesssim|W^\eps_q(D)f|_{L^2_l}^2. \eeno
 
 To prove the inverse inequality,  we claim that
 \beno&& \sum_{j\le ã[\log_2 \f1\epsilon]} 2^{2qj}|W_l\mathfrak{F}_jf|_{L^2}^2+\sum_{j> [\log_2 \f1\epsilon] } \eps^{-2q} |W_l\mathfrak{F}_jf|_{L^2}^2 
 \gtrsim
 |(1-\phi(\f14\eps  D))f|_{H^q_l}^2+\eps^{-2q}(|\phi(4\eps D)f|_{L^2_{l}}^2\\
 &&+ |W_l(\phi(4\eps D)-\phi(\f14\eps D))f|_{L^2}^2)\gtrsim
 |(1-\phi(\f14\eps  D))f|_{H^q_l}^2+\eps^{-2q}|\phi(\f14\eps D)f|_{L^2_{l}}^2\sim
 |W^\eps_q(D)W_lf|_{L^2}^2.
 \eeno
We first note that if $j\sim [\log_2 \f1\epsilon]$, then
\beno &&|W_l\mathfrak{F}_j(1-\phi(\f14\eps  D))f|_{L^2}=|W_l\tilde{\mathfrak{F}}_j(1-\phi(\f14\eps  D))\mathfrak{F}_jf|_{L^2}\\
 &&\le |\tilde{\mathfrak{F}}_j(1-\phi(\f14\eps  D))W_l\mathfrak{F}_jf|_{L^2}+|[W_l,\tilde{\mathfrak{F}}_j(1-\phi(\f14\eps  D))]\mathfrak{F}_jf|_{L^2}\le (1+2^{-j})|W_l\mathfrak{F}_jf|_{L^2},\eeno
which yields that \beno &&\sum_{j\ge[\log_2 \f1\epsilon]} 2^{2qj}|W_l\mathfrak{F}_j(1-\phi(\f14\eps  D))f|_{L^2}^2=\sum_{j\sim [\log_2 \f1\epsilon]} 2^{2qj}|W_l\mathfrak{F}_j(1-\phi(\f14\eps  D))f|_{L^2}^2 \\ &&\lesssim \sum_{j\le [\log_2 \f1\epsilon]} 2^{2qj}|W_l\mathfrak{F}_jf|_{L^2}^2+\sum_{j> [\log_2 \f1\epsilon] } \eps^{-2q} |W_l\mathfrak{F}_jf|_{L^2}^2. \eeno  Similarly we can prove 
 $  \sum_{j\le[\log_2 \f1\epsilon] } \eps^{-2q} |W_l\mathfrak{F}_j\phi(4\eps D)f|_{L^2}^2\lesssim \sum_{j\le [\log_2 \f1\epsilon]} 2^{2qj}|W_l\mathfrak{F}_jf|_{L^2}^2+\sum_{j> [\log_2 \f1\epsilon] } \eps^{-2q} |W_l\mathfrak{F}_jf|_{L^2}^2. $
 From these two estimates together with \eqref{iv2} and Theorem \ref{baslem3}, we have 
   \ben\label{d} &&\sum_{j\le ã[\log_2 \f1\epsilon]} 2^{2qj}|W_l\mathfrak{F}_jf|_{L^2}^2+\sum_{j> [\log_2 \f1\epsilon] } \eps^{-2q} |W_l\mathfrak{F}_jf|_{L^2}^2 \gtrsim
 |(1-\phi(\f14\eps  D))f|_{H^q_l}^2+\eps^{-2q}|\phi(4\eps D)f|_{L^2_{l}}^2.\een 
Secondly we have
\beno |W_l(\phi(4\eps D)-\phi(\f14\eps D))f|_{L^2}&=& |W_l(\phi(4\eps D)-\phi(\f14\eps D))\sum_{[\log_2\f1{\eps}]-N_0\le j\le [\log_2\f1{\eps}]+N_0 }\mathfrak{F}_jf|_{L^2}
\\&\lesssim &| \sum_{[\log_2\f1{\eps}]-N_0\le j\le [\log_2\f1{\eps}]+N_0 }\mathfrak{F}_jf|_{L^2_l}+\eps^{}| \sum_{[\log_2\f1{\eps}]-N_0\le j\le [\log_2\f1{\eps}]+N_0 }\mathfrak{F}_jf|_{L^2_{l-1}},
\eeno
which implies that
 \beno \eps^{-2q}|W_l(\phi(4\eps D)-\phi(\f14\eps D))f|_{L^2}^2\lesssim \sum_{j\le ã[\log_2 \f1\epsilon]} 2^{2qj}|W_l\mathfrak{F}_jf|_{L^2}^2+\sum_{j> [\log_2 \f1\epsilon] } \eps^{-2q} |W_l\mathfrak{F}_jf|_{L^2}^2, \eeno
 from which together with \eqref{d}, we conclude the claim. Then
  we complete  the proof to \eqref{func8}.
\end{proof}

\subsection{Lower  and upper bounds for the collision operator $Q^\ep$} 
We will give various lower and upper bounds for the collision operator $Q^\ep$ in weighted Sobolev spaces.

\subsubsection{Lower bounds for the collision operator $Q^\ep$ } 
We begin with a useful proposition which is related to the symbol of the collision operator $Q^\ep$.
\begin{prop}\label{symbol} Suppose $ A^\ep(\xi)\eqdefa \int_{\sigma\in\SS^2} b^\epsilon(\f{\xi}{|\xi|}\cdot \sigma)\min\{ |\xi^-|^2,1\} d\sigma$,
where $\xi^-=(\xi-|\xi|\sigma)/2$. Then we have
$A^\ep(\xi)+1\sim (W^\ep_s(\xi))^2.$
\end{prop}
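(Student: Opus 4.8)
The plan is to estimate the integral $A^\ep(\xi)=\int_{\SS^2} b^\ep(\tfrac{\xi}{|\xi|}\cdot\sigma)\min\{|\xi^-|^2,1\}\,d\sigma$ directly by passing to the angular variable $\theta$ defined through $\cos\theta=\tfrac{\xi}{|\xi|}\cdot\sigma$, so that $|\xi^-|^2=\tfrac14|\xi-|\xi|\sigma|^2=\tfrac{|\xi|^2}{2}(1-\cos\theta)=|\xi|^2\sin^2(\theta/2)$. In these coordinates, using $d\sigma = \sin\theta\,d\theta\,d\phi$ and the two-sided bound \eqref{a2} on $\sin\theta\, b(\cos\theta)$ together with the cutoff $b^\ep(\cos\theta)=b(\cos\theta)(1-\psi)((\sin\tfrac\theta2)/\ep)$, one gets
\[
A^\ep(\xi)\sim \int_{c\ep}^{\pi/2}\theta^{-1-2s}\,\min\{|\xi|^2\theta^2,1\}\,d\theta,
\]
where the lower endpoint $c\ep$ reflects the support of $1-\psi$. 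So the whole statement reduces to the elementary one-variable estimate that this last integral is comparable to $(W^\ep_s(\xi))^2=\big(\lr{\xi}^s(1-\phi(\ep\xi))+\ep^{-s}\phi(\ep\xi)\big)^2$.

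The next step is to evaluate that scalar integral by splitting the range of $\theta$ at the point $\theta\sim 1/|\xi|$ where $|\xi|^2\theta^2=1$. Three regimes appear. If $|\xi|\lesssim 1$, the minimum is $|\xi|^2\theta^2$ on the whole interval and $A^\ep(\xi)\sim |\xi|^2\int_{c\ep}^{\pi/2}\theta^{1-2s}d\theta$, which is $O(1)$ (bounded above and below by constants times $|\xi|^2$), consistent with $A^\ep+1\sim 1\sim(W^\ep_s)^2$ since in this regime $\lr{\xi}^s\sim 1$ and $\ep\xi$ is small so $\phi(\ep\xi)=0$. If $1\lesssim|\xi|\lesssim 1/\ep$, then $c\ep< 1/|\xi| < \pi/2$, and we split: $\int_{c\ep}^{1/|\xi|}\theta^{-1-2s}|\xi|^2\theta^2\,d\theta\sim |\xi|^2\cdot|\xi|^{2s-2}=|\xi|^{2s}$ (here the upper endpoint dominates since $1-2s$ could be either sign — if $1-2s>0$ the upper endpoint dominates; if $1-2s<0$, one checks the lower endpoint $c\ep$ gives $|\xi|^2\ep^{2-2s}\lesssim |\xi|^{2s}$ using $\ep|\xi|\lesssim 1$; the borderline $s=1/2$ produces a harmless logarithm that is still $\lesssim|\xi|^{2s}$ up to the endpoints — actually one must be slightly careful and should keep the precise comparison), and $\int_{1/|\xi|}^{\pi/2}\theta^{-1-2s}\,d\theta\sim |\xi|^{2s}$. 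Hence $A^\ep(\xi)\sim|\xi|^{2s}\sim\lr{\xi}^{2s}\sim(W^\ep_s(\xi))^2$, since $\phi(\ep\xi)=0$ in this regime. Finally if $|\xi|\gtrsim 1/\ep$, then $1/|\xi|<c\ep$, so the minimum equals $1$ throughout $[c\ep,\pi/2]$ and $A^\ep(\xi)\sim\int_{c\ep}^{\pi/2}\theta^{-1-2s}d\theta\sim \ep^{-2s}$, matching $(W^\ep_s(\xi))^2\sim\ep^{-2s}$ because $\phi(\ep\xi)=1$ there.

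The remaining work is bookkeeping: one must handle the transition zones $|\xi|\sim 1$ and $|\xi|\sim 1/\ep$ where the cutoffs $\phi,\psi$ are between $0$ and $1$, using that $\phi,\psi$ are smooth, compactly supported, and form a partition of unity, so $(W^\ep_s(\xi))^2$ is comparable to $\max\{\lr{\xi}^{2s}\mathbf 1_{|\xi|\lesssim 1/\ep},\ \ep^{-2s}\mathbf 1_{|\xi|\gtrsim 1/\ep}\}$ up to universal constants, and this max is exactly what the three-regime analysis produces (the $+1$ on the left covers the small-$\xi$ regime where $A^\ep$ itself may be as small as $O(|\xi|^2)\to 0$). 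The main obstacle — more a nuisance than a genuine difficulty — is keeping the comparison constants uniform in $\ep$ across the case $s\ge 1/2$ versus $s<1/2$ when estimating $\int_{c\ep}^{1/|\xi|}\theta^{1-2s}d\theta$, i.e.\ making sure the lower endpoint's contribution $|\xi|^2\ep^{2-2s}$ never exceeds $C\lr{\xi}^{2s}$ for a constant $C$ independent of $\ep$; this follows from $\ep|\xi|\le C$ in the relevant regime, but it is the one inequality worth writing out carefully. Everything else is a direct change of variables plus \eqref{a2}.
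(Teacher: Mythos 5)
Your argument is correct and essentially reproduces the paper's proof: reduce to a one-dimensional integral over the deviation angle (the paper substitutes $t=\sin(\theta/2)$ and rescales by $|\xi|$, but this is cosmetic), then analyze the three regimes $|\xi|\lesssim 1$, $1\lesssim|\xi|\lesssim 1/\ep$, $|\xi|\gtrsim 1/\ep$, with the uniformity-in-$\ep$ controlled by $(\ep|\xi|)^{2-2s}\lesssim 1$ exactly as you describe. The only stray remark is that $s=1/2$ never produces a logarithm here, since $\int_{c\ep}^{1/|\xi|}\theta^{1-2s}\,d\theta$ has exponent $1-2s>-1$ for all $s\in(0,1)$; this is harmless.
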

\begin{proof} By definition, we first get
$A^\ep(\xi)=2\pi\int_0^{\pi/2} \sin\theta b(\cos\theta)\phi(\sin\f{\theta}2/\ep)\min\{|\xi|^2\sin^2(\theta/2),1\} d\theta. $
By change of variable: $t=\sin(\theta/2)$, we have
\beno A^\ep(\xi)&\sim& \int_0^\f12 t^{-1-2s}\phi(t/\ep)\min\{ |\xi|^2t^2,1\}dt 
= |\xi|^{2s} \int_0^{|\xi|/2} t^{-1-2s}\phi(\ep^{-1}t|\xi|^{-1})\min\{ t^2,1\}dt.
\eeno
It is easy to check there exist  constants $\bar{c}_1$ and $\bar{c}_2$ such that $\bar{c}_1<\bar{c}_2$ and
\beno
|\xi|^{2s}\int_{\bar{c}_2\ep|\xi|}^{|\xi|/2} t^{-1-2s} \min\{ t^2,1\}dt\lesssim A^\ep(\xi)\lesssim |\xi|^{2s}\int_{\bar{c}_1\ep|\xi|}^{|\xi|/2} t^{-1-2s} \min\{ t^2,1\} dt.
\eeno
Now we focus on the quantity
$ I(\xi)\eqdefa |\xi|^{2s}\int_{c\ep|\xi|}^{|\xi|/2} t^{-1-2s} \min\{ t^2,1\} dt.$
\begin{enumerate}
	\item For the case of $|\xi|\le2$, we have
	$I(\xi)=|\xi|^{2s} \int_{c\ep|\xi|}^{|\xi|/2} t^{1-2s}  dt\sim (1-s)^{-1}|\xi|^2.$
		\item For the case of $2<|\xi|\le (c\ep)^{-1}$, we have \beno
		I(\xi)=|\xi|^{2s} \big(\int_{c\ep|\xi|}^{1} t^{1-2s}  dt+ \int_{1}^{|\xi|/2} t^{-1-2s}  dt\big)
		\sim (1-s)^{-1}|\xi|^{2s}(1-(c\ep |\xi|)^{2-2s})+|\xi|^{2s}(1-(2|\xi|^{-1})^{2s}). \eeno
		\item For the case of $|\xi|\ge (c\ep)^{-1}$,
		we have
	$ I(\xi)=|\xi|^{2s}   \int_{c
			\ep |\xi|}^{|\xi|/2} t^{-1-2s}  dt \sim \ep^{-2s}. $
\end{enumerate}

Patching together  all the estimates, we arrive at $ A^\ep(\xi)+1\sim I(\xi)+1\sim (W^\ep_s)^2,$
which concludes the desired result.
\end{proof}

Now we can state the coercivity estimate for the  Boltzmann collision operator $Q^\ep$:
\begin{thm}\label{thmlb1} Suppose that the non-negative function $g$ verifies the conditions
\ben\label{lbcondi} |g|_{L^1}\ge c_1, |g|_{L^1_2}+|g|_{L\log L}< c_2.\een
Then for any smooth function $f$, there exist constants $\mathcal{C}_1$ and $\mathcal{C}_2$ depend only on $c_1$ and $c_2$ such that
\ben\label{lbQe}  \lr{-Q^\ep(g,f), f}_v \ge  \mathcal{C}_1(c_1,c_2)\big(\mathcal{E}_{\mu}^{0,\eps}(W_{\gamma/2}f)+|W^\ep_s(D)W_{\gamma/2} f|_{L^2}^2\big)-\mathcal{C}_2(c_1,c_2)|f|^2_{L^2_{\gamma/2}}.\een
\end{thm}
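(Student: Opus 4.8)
## Proof proposal for Theorem \ref{thmlb1}

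The plan is to follow the now-standard route of converting the bilinear quantity $\lr{-Q^\ep(g,f),f}_v$ into a Carleman-type (or equivalently, a "cancellation lemma plus quadratic difference") representation, and then to read off coercivity from the symbol $A^\ep$ computed in Proposition \ref{symbol}. First I would decompose $\lr{-Q^\ep(g,f),f}_v$ using the standard identity $\lr{Q^\ep(g,f),f}_v = \iiint g_* f(f'-f)\,b^\ep|v-v_*|^\gamma\,d\sigma dv_*dv$, and symmetrize in $v\leftrightarrow v'$ to produce the coercive quadratic form
\[
\lr{-Q^\ep(g,f),f}_v = \tfrac12\iiint g_* (f'-f)^2 b^\ep |v-v_*|^\gamma\,d\sigma dv_* dv - \tfrac12\iiint g_* (f'^2-f^2) b^\ep |v-v_*|^\gamma\,d\sigma dv_* dv,
\]
so that the first term is exactly $\mathcal{E}^{\gamma,\eps}_{g}(f)$ in the notation of \eqref{defEg} (up to moving the weight inside), and the second term is handled by the Cancellation Lemma, which trades the $\sigma$-integral of $f'^2-f^2$ against a convolution $f^2 * (\text{something involving } \Phi, b^\ep)$; since $b^\ep \le b$ and $\gamma\ge0$, this term is bounded by $C|g|_{L^1_\gamma}|f|^2_{L^2_{\gamma/2}}$, which is absorbed into the $-\mathcal{C}_2|f|^2_{L^2_{\gamma/2}}$ on the right-hand side (using $|g|_{L^1_\gamma}\lesssim |g|_{L^1_2}<c_2$ since $\gamma\le2$).

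Next I would handle the weight. The target has $W_{\gamma/2}f$ inside the dissipation but $f$ (without weight) on the left, so I need a commutator-type estimate: $\mathcal{E}^{0,\eps}_\mu(W_{\gamma/2}f) + |W^\ep_s(D)W_{\gamma/2}f|^2_{L^2} \lesssim \mathcal{E}^{\gamma,\eps}_{g}(f) + |f|^2_{L^2_{\gamma/2}}$, modulo the spectral-gap/coercivity argument. The key point is that $\mathcal{E}^{\gamma,\eps}_{g}(f)$ with the physical weight $|v-v_*|^\gamma$ controls $\mathcal{E}^{0,\eps}_{g}(W_{\gamma/2}f)$ up to lower-order terms — this is because $\langle v'\rangle \sim \langle v\rangle$ when $\theta$ is small (which is where the singularity lives), so $(W_{\gamma/2}(v')f' - W_{\gamma/2}(v)f)^2 \lesssim W_\gamma(v)(f'-f)^2 + (W_{\gamma/2}(v')-W_{\gamma/2}(v))^2 f'^2$ and the last term, integrated against $b^\ep$, gives only an $L^2_{\gamma/2}$ contribution. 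Then I reduce the lower bound for $\lr{-Q^\ep(\mu,W_{\gamma/2}f), W_{\gamma/2}f}_v$ with the fixed Maxwellian $\mu = M_{1,0,1}$ (where a clean spectral computation is available) to the general $g$ satisfying \eqref{lbcondi}; this last reduction is exactly the Alexandre–Desvillettes–Villani–Yang type argument showing that the coercivity constant depends only on the mass lower bound $c_1$ and the bound $c_2$ on $|g|_{L^1_2}+|g|_{L\log L}$ — one splits $g$ into a part close to a fixed Maxwellian (using the entropy bound to control the $L^1_2$-tail and a pointwise-type bound) plus a small remainder.

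Finally, the production of the fractional-derivative norm $|W^\ep_s(D)W_{\gamma/2}f|^2_{L^2}$ is where Proposition \ref{symbol} does the real work: taking the Fourier transform in $v$, the Maxwellian-weighted Dirichlet form $\mathcal{E}^{0,\eps}_\mu(h)$ (plus the lower-order $L^2$ term) is comparable, via Bobylev's identity, to $\int (A^\ep(\xi) + 1)|\hat h(\xi)|^2 d\xi \sim \int (W^\ep_s(\xi))^2 |\hat h(\xi)|^2 d\xi = |W^\ep_s(D)h|^2_{L^2}$ with $h = W_{\gamma/2}f$; the constants are uniform in $\eps$ precisely because $A^\ep(\xi)+1\sim (W^\ep_s(\xi))^2$ uniformly in $\eps$, which is the content of Proposition \ref{symbol}. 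I expect the main obstacle to be the weight-transfer/commutator bookkeeping — carefully showing that replacing the physical kernel weight $|v-v_*|^\gamma$ by $\langle v\rangle^\gamma$ and passing the weight $W_{\gamma/2}$ through the singular integral costs only terms controlled by $\mathcal{C}_2|f|^2_{L^2_{\gamma/2}}$ with $\mathcal{C}_2$ depending only on $c_1,c_2$ — together with making the reduction from general $g$ to $\mu$ quantitative with the stated dependence. The $\eps$-uniformity is automatic once everything is phrased through $W^\ep_s$, since $b^\ep \le b$ and the symbol estimate is uniform.
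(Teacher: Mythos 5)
Your proposal follows essentially the same route as the paper's proof: decompose $\lr{-Q^\ep(g,f),f}_v = \mathcal{E}^{\gamma,\eps}_g(f) - \mathcal{N}^{\gamma,\eps}(f)$, use the cancellation lemma \eqref{canclem} to bound $\mathcal{N}^{\gamma,\eps}$ by $|g|_{L^1_\gamma}|f|^2_{L^2_{\gamma/2}}$, transfer the kernel weight $|v-v_*|^\gamma$ into $W_{\gamma/2}$ and reduce $g$ to the fixed Maxwellian $\mu$ (the paper quotes Lemma 3.4 of \cite{HE16} for this step rather than re-deriving it), and then obtain the coercive $|W^\ep_s(D)W_{\gamma/2}f|^2_{L^2}$ via the Bobylev/ADVW Fourier computation combined with Proposition \ref{symbol}. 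The only difference is that where you sketch the weight-transfer and $g\to\mu$ reduction from scratch, the paper invokes a ready-made lemma; the ideas and the role of each ingredient are identical.
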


\begin{proof}
It is easy to derive that
$\lr{-Q^\ep(g,f), f}_v=\mathcal{E}^{\gamma,\eps}_{g}(f)-\mathcal{N}^{\gamma,\eps}(f),$
where $\mathcal{E}^{\gamma,\eps}_{g}(f)$ is defined in \eqref{defEg} and $  
\mathcal{N}^{\gamma,\eps}(f)\eqdefa\f12 \int_{\sigma,v_*,v} |v-v_*|^\gamma b^\ep(\cos\theta) g_*(f'^2-f^2) d\sigma dv_*dv.$
We recall that the cancellation lemma(see \cite{ADVW00}) can be stated as follows: if $A(v-v_*,\sigma)=A(|v-v_*|,\cos\theta)$ with $\cos\theta=\f{v-v_*}{|v-v_*|}\cdot \sigma$, then
\ben\label{canclem} 
\int_{\sigma, v} A(v-v_*,\sigma)(f'-f)d\sigma dv=(f*S)(v_*),
\een
where $S(z)=|\SS^{1}|\int_0^{\pi/2} [\cos^{-3}(\theta/2)B(|z|/\cos(\theta/2),\cos\theta)-B(|z|,\cos\theta)]\sin\theta d\theta$. 
It implies  
$| \mathcal{N}^{\gamma,\eps}(f)|\lesssim |g|_{L^1_\gamma}|f|^2_{L^2_{\gamma/2}}.$  Next we   concentrate on the functional $\mathcal{E}^{\gamma,\eps}_{g}(f)$. We begin with  the case $\gamma=0$. From the computation in \cite{ADVW00}, one has\ben\label{lb00}
\mathcal{E}^{0,\eps}_{g}(f)+|f|_{L^2}^2\gtrsim \mathcal{C}(g) \int_{\R^3} (A^\ep(\xi)+1)|\hat{f}(\xi)|^2 d\xi\ge  \mathcal{C}(c_1,c_2)|W^\ep_s(D) f|_{L^2}^2. \een
By Lemma 3.4 in \cite{HE16}, one has
$ \mathcal{E}^{\gamma,\eps}_{g}(f)+|f|^2_{L^2_{\gamma/2}}\ge C(c_1,c_2) \mathcal{E}^{0,\eps}_{\mu}(W_{\gamma/2}f).$
 From this  together with \eqref{lb00}, we complete the proof of the theorem.
\end{proof}

In order to get sharp bounds for $Q^\ep$, we perform the following decomposition: $Q=Q_\delta+Q^\ep_r$ defined by
$Q_\delta(g,h)\eqdefa \int_{v_*,\sigma} B^\ep(|v-v_*|,\sigma)\phi(\f{\sin(\theta/2)}{\delta}) (g_*'h'-g_*h)d\sigma dv_*,$ and $
Q^\ep_r(g,h)\eqdefa \int_{v_*,\sigma} B^\ep(|v-v_*|,\sigma)\big(1-\phi(\f{\sin(\theta/2)}{\delta}) \big)(g_*'h'-g_*h)d\sigma dv_*.$
By the definition of $\phi$,   for $\delta>2\ep$, one has $\phi(\f{\sin(\theta/2)}{\delta})\phi(\f{\sin(\theta/2)}{\eps})=\phi(\f{\sin(\theta/2)}{\delta})$, which implies that 
\beno Q_\delta(g,h)&=& \int_{v_*,\sigma} B(|v-v_*|,\sigma)\phi(\f{\sin(\theta/2)}{\delta}) (g_*'h'-g_*h)d\sigma dv_*. \eeno

Let $\kappa\eqdefa c_1^{\f13}(3\exp\{3c_2/c_1+3\})^{-\f13}$. Then   $Q_\delta$ we has the further decomposition:
$ Q_\delta(g,h)= Q_{\delta,\kappa}^+(g,h)-L^{\delta,\kappa}(g)h+Q^{\kappa}_\delta(g,h), $
where 
$Q_{\delta,\kappa}^+(g,h)\eqdefa \int_{v_*,\sigma} B(|v-v_*|,\sigma)\phi(\f{\sin(\theta/2)}{\delta})\phi(\f{ |v-v_*|}{2\kappa}) g_*'h'd\sigma dv_*$, 
    $L^{\delta,\kappa}(g)h\eqdefa \int_{v_*,\sigma} B(|v-v_*|,\sigma)\phi(\f{\sin(\theta/2)}{\delta})\phi(\f{ |v-v_*|}{2\kappa}) g_*hd\sigma dv_*$ and 
  $Q_{\kappa}^\delta(g,h)\eqdefa\int_{v_*,\sigma} B(|v-v_*|,\sigma)\phi(\f{\sin(\theta/2)}{\delta})(1-\phi(\f{ |v-v_*|}{2\kappa}))\\ (g_*'h'-g_*h)d\sigma dv_*$.
\smallskip

In summary, we have a new decomposition for $Q^\eps$: \ben\label{decomQeps} Q^\ep(g,h)=Q_{\delta,\kappa}^+(g,h)-L^{\delta,\kappa}(g)h+Q_{\kappa}^\delta(g,h)+Q^\ep_r(g,h).\een  In what follows, we will focus on the estimates for     $L^{\delta,\kappa}$ and $Q_{\delta,\kappa}^+$.

\begin{prop}\label{lbLdelta} Let $\delta>2\ep$. Suppose that the non-negative function $g$ verifies the condition \eqref{lbcondi}. Then there exists a constant $\mathcal{C}_3$ depending only on $c_1,c_2$ such that
\beno  L^{\delta,\kappa}(g)\ge \mathcal{C}_3(c_1,c_2) \delta^{-2s}\lr{v}^{\gamma}.\eeno
\end{prop}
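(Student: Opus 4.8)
The plan is to bound $L^{\delta,\kappa}(g)$ from below by reducing to a single point $v$ and exploiting two facts: the kernel $b^\epsilon$ with the cutoff $\phi(\sin(\theta/2)/\delta)$ produces a large constant of size $\delta^{-2s}$ after integrating over $\sigma$, and the density of $g$ localized to $|v-v_*|\le$ (a few) $\kappa$ is bounded below by a positive constant determined by $c_1,c_2$. First I would write
\[
L^{\delta,\kappa}(g)(v)=\Big(\int_{\SS^2} b(\cos\theta)\phi\big(\tfrac{\sin(\theta/2)}{\delta}\big)\,d\sigma\Big)\cdot\Big(\int_{\R^3}|v-v_*|^{\gamma}\phi\big(\tfrac{|v-v_*|}{2\kappa}\big)g(v_*)\,dv_*\Big),
\]
using that $B=b(\cos\theta)|v-v_*|^\gamma$ and that $\phi(\sin(\theta/2)/\delta)\phi(\sin(\theta/2)/\epsilon)=\phi(\sin(\theta/2)/\delta)$ for $\delta>2\epsilon$, so the $\epsilon$-cutoff drops out as already noted in the excerpt. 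The angular integral is handled by the change of variable $t=\sin(\theta/2)$ together with assumption \eqref{a2}: since $\phi=1-\psi$ vanishes near $0$ and equals $1$ for $t\gtrsim\delta$, we get $\int_{\SS^2}b\phi(\sin(\theta/2)/\delta)\,d\sigma\sim\int_0^{1/2}t^{-1-2s}\phi(t/\delta)\,dt\sim \delta^{-2s}$, with constants depending only on the structural constants $K,s$ in \eqref{a2}. This is essentially the same computation as in Proposition \ref{symbol}, restricted to the regime where the $\min\{\cdot,1\}$ factor is absent.

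Next I would bound the velocity integral below. Since $\gamma\ge0$ and $\phi(|v-v_*|/2\kappa)$ is supported in $|v-v_*|\lesssim\kappa$, a crude lower bound using only $|v-v_*|^\gamma\ge 0$ is not enough when $v$ is large; instead I would keep $|v-v_*|^\gamma$ and note that on the support of $\phi(|v-v_*|/2\kappa)$ one has $|v_*|\ge|v|-O(\kappa)$, hence $|v-v_*|^\gamma\cdot(\text{mass of }g\text{ near }v)\gtrsim \lr{v}^\gamma$ provided that the localized mass of $g$ is bounded below uniformly. Concretely, $\int|v-v_*|^\gamma\phi(|v-v_*|/2\kappa)g(v_*)\,dv_*$ — actually for a clean $\lr{v}^\gamma$ one wants $|v-v_*|$ comparable to a constant, so I would instead shift the localization: bound below by integrating over the annulus $\kappa\le|v-v_*|\le 2\kappa$ (still inside the support of a suitable bump) where $|v-v_*|^\gamma\gtrsim \kappa^\gamma$, and combine with $\lr{v}^\gamma\lesssim\lr{v_*}^\gamma$ on that set — but that would produce $\lr{v}^0$, not $\lr{v}^\gamma$. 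The correct route is: $|v-v_*|^\gamma\ge $ nothing useful, so one genuinely needs the localized mass to "follow" $v$. The standard device (used e.g. in Mouhot's lower-bound argument) is that $\int_{|v-v_*|\ge R} g(v_*)\,dv_*\ge c_1-\eta$ for $R$ large depending on $c_2$ via the $L\log L$ and second-moment bounds, so that $\int_{R\le|v-v_*|\le 2R}$-type contributions can be extracted; combined with $|v-v_*|\sim R+|v|$ when $v_*$ is confined to a bounded ball, one obtains $\int|v-v_*|^\gamma\mathbf 1_{\{|v-v_*|\le C\kappa\}}g\,dv_*$ is not the object — rather one uses $L^{\delta,\kappa}(g)(v)\ge(\text{ang. integral})\cdot\int_{|v-v_*|\le 2\kappa}|v-v_*|^\gamma g(v_*)dv_*$ and then $\lr{v}^\gamma\le C(1+|v-v_*|^\gamma+|v_*|^\gamma)$; since $g$ has bounded mass and moment near a point cannot be forced, the clean statement must use that on $|v-v_*|\le2\kappa$, $|v_*|\le|v|+2\kappa$, so $\lr{v}\gtrsim\lr{v_*}$ fails in the wrong direction. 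I would therefore localize $v_*$ to a \emph{fixed} ball $B_{R_0}$ independent of $v$: then $|v-v_*|\ge|v|-R_0\gtrsim\lr{v}$ for $|v|$ large, so $|v-v_*|^\gamma\gtrsim\lr{v}^\gamma$, and $\int_{B_{R_0}}g(v_*)\,dv_*\ge c_1/2$ once $R_0=R_0(c_1,c_2)$ is chosen large via Chebyshev applied to $|g|_{L^1_2}<c_2$. For bounded $|v|$ one uses the same mass bound and $|v-v_*|^\gamma$ bounded below on an annulus inside $B_{2\kappa}$, or simply absorbs the bounded regime into a constant. The only subtlety is matching the support constraints $|v-v_*|\le2\kappa$ (from $\phi(|v-v_*|/2\kappa)$) with $v_*\in B_{R_0}$: this forces us to also restrict $v$, so the bound $L^{\delta,\kappa}(g)(v)\gtrsim\delta^{-2s}\lr{v}^\gamma$ in the large-$|v|$ regime actually comes from $v_*$ near $v$ with $|v_*|\sim|v|$, giving $\lr{v_*}^\gamma\sim\lr{v}^\gamma$ and $\int_{|v-v_*|\le2\kappa}g(v_*)dv_*$ bounded below — and \emph{this} local mass lower bound is exactly where one invokes the lower bound on $|g|_{L^1}$ together with $|g|_{L\log L}$, which prevents $g$ from concentrating, via the argument that appears in the definition of $\kappa=c_1^{1/3}(3e^{3c_2/c_1+3})^{-1/3}$: a Jensen/entropy estimate shows $\int_{|v-v_*|\le\kappa}g\,dv_*$ cannot be too small on a set of $v$ of positive measure — but for a pointwise bound one needs it for \emph{all} $v$, which is false. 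Hence I believe the honest reading is that the conclusion is a lower bound as an operator (i.e. in the quadratic-form sense $\lr{L^{\delta,\kappa}(g)h,h}\ge\mathcal C_3\delta^{-2s}\lr{\lr{v}^\gamma h,h}$ is \emph{not} claimed either) — rather the statement $L^{\delta,\kappa}(g)\ge\mathcal C_3\delta^{-2s}\lr{v}^\gamma$ is a pointwise inequality for the multiplier $L^{\delta,\kappa}(g)(v)$, and it does hold pointwise because the correct choice of $\kappa$ (which is why the specific formula for $\kappa$ appears) guarantees $\inf_v\int_{|v-v_*|\le2\kappa}\langle v_*\rangle^\gamma\langle v\rangle^{-\gamma} g(v_*)\,dv_*>0$ via the entropy-controlled no-concentration estimate of Lemma stated in \cite{Mouhot} / \cite{ADVW00}.

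So the structure of the proof I would write is: \textbf{Step 1.} Drop the $\epsilon$-cutoff using $\delta>2\epsilon$ and factor $L^{\delta,\kappa}(g)(v)$ as (angular integral)$\times$(velocity integral). \textbf{Step 2.} Evaluate the angular integral: by $t=\sin(\theta/2)$ and \eqref{a2}, $\int_{\SS^2}b(\cos\theta)\phi(\sin(\theta/2)/\delta)\,d\sigma\sim\int_{c\delta}^{1/2}t^{-1-2s}\,dt\sim\delta^{-2s}$ with constants depending only on $K,s$. \textbf{Step 3.} Bound the velocity integral: $\int_{\R^3}|v-v_*|^\gamma\phi(|v-v_*|/2\kappa)g(v_*)\,dv_*\ge c\,\lr{v}^\gamma\int_{|v-v_*|\le\kappa}g(v_*)\,dv_*$ using $\gamma\ge0$ and $\lr{v}\lesssim\lr{v_*}+\kappa$ on the support, and then invoke the entropy/mass lower bound — precisely the computation that dictates the value $\kappa=c_1^{1/3}(3e^{3c_2/c_1+3})^{-1/3}$ — to get $\inf_v\int_{|v-v_*|\le\kappa}g(v_*)\,dv_*\ge c(c_1,c_2)>0$ (this is the standard estimate: if the localized mass were everywhere $<\kappa^3/|B_1|$ say, then $g$ would be too spread out to have mass $\ge c_1$ while keeping $|g|_{L\log L}<c_2$). \textbf{Step 4.} Multiply the bounds of Steps 2--3 to conclude $L^{\delta,\kappa}(g)(v)\ge\mathcal C_3(c_1,c_2)\delta^{-2s}\lr{v}^\gamma$. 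The main obstacle is Step 3: making rigorous the uniform-in-$v$ lower bound on the localized mass of $g$ from only the mass, energy, and entropy bounds \eqref{lbcondi}, and tracking how the specific formula for $\kappa$ arises from the Jensen-type argument controlling concentration. Everything else is a short structural computation parallel to Proposition \ref{symbol}.
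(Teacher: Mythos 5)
Your Steps~1--2 (dropping the $\epsilon$-cutoff for $\delta>2\epsilon$, factoring $L^{\delta,\kappa}(g)$, and obtaining the angular integral $\sim\delta^{-2s}$) are correct and match the paper. The proof then comes apart at Step~3 because you have read the radial cutoff $\phi(|v-v_*|/(2\kappa))$ backwards. In the paper's conventions $\phi=1-\psi$ and $\psi$ is the Littlewood--Paley bump that equals $1$ near the origin, so $\phi$ \emph{vanishes} for small arguments and equals $1$ outside a fixed ball. Hence $\phi(|v-v_*|/(2\kappa))$ is supported in $|v-v_*|\gtrsim\kappa$, not $|v-v_*|\lesssim\kappa$. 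This single sign error is what sends the rest of your argument in circles.

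Once the direction of the cutoff is corrected, your intermittent worries evaporate. The velocity integral to be bounded below is $\int_{|v-v_*|\gtrsim\kappa}|v-v_*|^\gamma g_*\,dv_*$, and the role of the $L\log L$ bound (and of the specific value of $\kappa$) is the opposite of what you conjectured: it is a no-concentration estimate that shows $g$ cannot put too much mass \emph{inside} a ball of radius $\kappa$ around $v$, so that the mass \emph{outside} (i.e.\ where $|v-v_*|\ge\kappa$, which is the support of the kernel) satisfies $\int_{|v-v_*|\ge\kappa}g_*\,dv_*\ge c_1/3$ uniformly in $v$. Your objection that a uniform-in-$v$ lower bound on the near-$v$ mass ``is false'' was correct, but that quantity never arises; it is the tail mass that matters, and its lower bound \emph{is} uniform and elementary from \eqref{lbcondi}. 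The $\langle v\rangle^\gamma$ growth for large $|v|$ is then the separate mass-localization step you touched on before abandoning it: for $|v|\ge R=\max\{\sqrt{2c_2/c_1},2\kappa\}$, restrict $v_*$ to $|v_*|\le R/2$, which carries mass $\ge c_1/2$ by Chebyshev on $|g|_{L^1_2}<c_2$, and there $|v-v_*|\ge |v|/2\ge\kappa$ automatically (no conflict with the support of $\phi$, since the cutoff keeps large $|v-v_*|$). For bounded $|v|\le R$ the entropy bound already gives a constant lower bound, which dominates $\langle v\rangle^\gamma$ since $R=R(c_1,c_2)$. So the correct Step~3 is ``mass away from $v$'' rather than ``mass near $v$,'' and the inequality you wrote, $\int|v-v_*|^\gamma\phi(|v-v_*|/(2\kappa))g_*\,dv_*\ge c\langle v\rangle^\gamma\int_{|v-v_*|\le\kappa}g_*\,dv_*$, is false on its face even under your own reading of the cutoff, since on $\{|v-v_*|\le 2\kappa\}$ one has $|v-v_*|^\gamma\le(2\kappa)^\gamma\ll\langle v\rangle^\gamma$ for $|v|$ large.
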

\begin{proof}  We first recall that
$ L^{\delta,\kappa}(g)\ge\big(\int_{|v-v_*|\ge \kappa}|v-v_*|^\gamma g_*dv_*\big)\big(\int_{\sigma} b(\cos\theta) \phi(\f{\sin(\theta/2)}{\delta})d\sigma\big)$. It implies that
$ L^{\delta,\kappa}(g)\ge \delta^{-2s} \int_{|v-v_*|\ge \kappa}|v-v_*|^\gamma g_*dv_*$. Let  $M=\exp\{3c_2/c_1+1\}$. 
 
Since $\kappa=c_1^{\f13}(3\exp\{3c_2/c_1+3\})^{-\f13}$, on one hand, it is easy to check that
\beno \int_{|v-v_*|\ge\kappa} |v-v_*|^\gamma g_*dv_*\ge \kappa^{\gamma}(|g|_{L^1}-\kappa^3M-(\log M)^{-1}|g|_{L\log L})\ge \kappa^{\gamma}\f{c_1}3.  \eeno
On the other hand, we can derive that if $R=\max\{\sqrt{2c_2/c_1}, 2\kappa\}$, then
\beno \int_{|v-v_*|\ge\kappa} |v-v_*|^\gamma g_*dv_*\mathrm{1}_{|v|\ge R}&\ge &(\f12|v|)^\gamma\mathrm{1}_{|v|\ge R}\int_{|v_*|\le R/2} g_*dv_*\\
\\&\ge&   \mathrm{1}_{|v|\ge R}(\f12|v|)^\gamma (|g|_{L^1}-R^{-2}|g|_{L^1_2})\ge \f{c_1}2\mathrm{1}_{|v|\ge R}(\f12|v|)^\gamma.\eeno
The desired result follows these two inequalities  .
\end{proof}

For   $Q_{\delta,\kappa}^+$, we will use the Randon transform to capture the smoothing property of the operator. In order to do that, we use $\omega$-representation to rewrite $Q_{\delta,\kappa}^+$ as follows:
\begin{equation}\label{D:gain-term}
Q_{\delta,\kappa}^+(f,g)(v)=\int_{v_*\in\mathbb{R}^3}\int_{\omega\in S^2}f(v')g(v'_*)\tilde{B}_\delta(|v-v_*|,\omega)\phi(\f{|v-v_*|}{2\kappa}) d\Omega(\omega)dv_*
\end{equation}
where
$v'=v-((v-v_*)\cdot\omega)\omega\;,\;v_*'=v_*+((v-v_*)\cdot\omega)\omega.$
Here  $\tilde{B}_\delta$ is of the form
\[
\tilde{B}_\delta(v-v_*,\omega)=|v-v_*|^{\gamma}b_\delta(\cos\theta)
\eqdefa|v-v_*|^{\gamma}\tilde{b}(\cos\theta)\phi(\f{\cos\theta}{\delta})\;,\;\cos\theta=\frac{v-v_*}{|v-v_*|}\cdot\omega,
\]
where the angular function $\tilde{b}$ is defined by
\begin{equation}\label{E:angular}
\tilde{b}(\cos\theta)=4(\cos\theta)b(\frac{v-v_*}{|v-v_*|}\cdot \frac{v'-v'_*}{|v'-v'_*|}).
\end{equation}
 We remark that now the singularity of the cross-section occurs near  $\theta=\pi/2 $. More precisely, one has 
 $\tilde{b}(\cos\theta)\sim (\cos\theta)^{-1-2s}$.

 \medskip
  Before giving the upper bound for $Q_{\delta,\kappa}^+$, we state a crucial lemma:
 
\begin{lem} \label{ubQ+} 
	Let $\mu>0$ be a small number.  Suppose $c_{\mu}(|x|)$ is a 
	positive smooth bump function which equals to $1$ when $|x|>\mu$ and 
	$0$ when $|x|<\mu/2$. Let
$
	Th(x)\eqdefa\int_{S^2} c_{\mu} (|x|)b_\delta(\cos\theta)h(x-(x\cdot\omega)\omega)d\Omega(\omega)$ and $\cos\theta=\lr{x,\omega}/|x|.
$
	Then  \begin{equation}\label{E:T-smooth}
	|Th|_{L^2}\lesssim \delta^{-2-2s}\mu^{-1}|h|_{H^{-1}}.
	\end{equation}
\end{lem}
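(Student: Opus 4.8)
The plan is to reduce $T$ to a weighted spherical-means operator and then exploit the classical fact that averaging against the surface measure of a sphere in $\R^3$ gains one full derivative in $L^2$, keeping careful track of the dependence on $\delta$ and $\mu$. First I would unwind the geometry of the map $\omega\mapsto u:=(x\cdot\omega)\omega$: for fixed $x\neq0$ it sends $\{\omega\in\SS^2:\hat x\cdot\omega\ge0\}$ (the support of $b_\delta$, with $\hat x=x/|x|$) bijectively onto the sphere $\Sigma_x$ through $0$ and $x$ of centre $x/2$ and radius $|x|/2$, and one checks that $|u|=|x|\cos\theta$, that $x-(x\cdot\omega)\omega=x-u\in\Sigma_x$, and that $dS(u)=|x|^2\cos\theta\,d\omega$, so that $d\omega=dS(u)/(|x|\,|u|)$. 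Hence
\[
Th(x)=c_\mu(|x|)\int_{\Sigma_x}h(x-u)\,b_\delta\!\Big(\frac{|u|}{|x|}\Big)\,\frac{dS(u)}{|x|\,|u|},
\]
and, since $\tilde b(\cos\theta)\sim(\cos\theta)^{-1-2s}$ while the cutoff $\phi(\cos\theta/\delta)$ forces $\cos\theta=|u|/|x|\gtrsim\delta$, one has $b_\delta(|u|/|x|)\lesssim(|u|/|x|)^{-1-2s}\mathbf 1_{\{|u|\gtrsim\delta|x|\}}$ and also $b_\delta\lesssim\delta^{-1-2s}$.

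Next I would decompose the $\Sigma_x$-integral dyadically in $|u|/|x|\sim2^{-k}$ ($0\le k\lesssim\log_2(1/\delta)$) and localise $|x|\sim2^{j}$ with $2^{j}\gtrsim\mu$ (this is where $c_\mu$ enters). On the $(j,k)$-piece the kernel weight is comparable to $2^{k(2+2s)}|x|^{-2}$, and the relevant part of $\Sigma_x$ is a spherical cap of radius $\sim2^{-k}|x|\sim2^{j-k}$ about the point $x$; thus, up to controlled curvature corrections, this piece of $Th$ equals $2^{k(2+2s)}|x|^{-2}$ times the average of $h$ over that cap. The factor $\mu^{-1}$ then arises from the weight $|x|^{-2}$ integrated against $dx=|x|^{2}\,d|x|\,d\omega$ over $|x|\gtrsim\mu$, i.e.\ from $\int_\mu^\infty|x|^{-2}\,d|x|\sim\mu^{-1}$, while $\delta^{-(2+2s)}$ comes from summing the geometric series in $k$, dominated by the endpoint $k\sim\log_2(1/\delta)$ (so the $O(\log(1/\delta))$ number of scales is absorbed, not picked up as an extra logarithm). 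What remains is a bound, uniform in $j$ and $k$, for the cap-averaging operators: here I would freeze the radius on the shell $|x|\sim2^{j}$ by rescaling $x\mapsto2^{-j}x$ so that all spheres $\Sigma_x$ have radius $\sim1$, compare the cap average with the full spherical mean over $\Sigma_x$ through a smooth partition of the sphere, and for that full mean invoke Plancherel together with the decay $|\widehat{d\sigma_{\SS^2}}(\xi)|\lesssim\langle\xi\rangle^{-1}$ of surface measure, which is precisely the $L^2\to H^{-1}$ gain; undoing the rescaling restores the $|x|$-weights, and an almost orthogonal sum over the disjoint shells $|x|\sim2^{j}$ (together with the $k$-sum already performed) closes the estimate.

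The main obstacle is exactly this last step. The spheres $\Sigma_x$ depend on $x$ through both centre ($x/2$) and radius ($|x|/2$), so the cap-averaging operator is not a single Fourier multiplier and cannot be diagonalised directly; the dyadic-in-$|x|$ freezing of the radius is what makes it tractable, but one must then also control the curvature of the caps (their deviation from flat disks is $O(2^{-2k}|x|)$, hence negligible against the cap radius $2^{-k}|x|$ once $k\ge1$, and the finitely many scales $k=O(1)$ can simply be kept as full curved spheres). The genuinely delicate part is the power counting, which must be arranged so that exactly $\delta^{-2-2s}\mu^{-1}$ comes out, with no spurious logarithm and no extra power of $1/\delta$. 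A viable alternative to the Fourier route is a $TT^{\ast}$ argument: estimate each piece $T_{j,k}$ by bounding the kernel of $T_{j,k}T_{j,k}^{\ast}$ and applying Schur's test, the curvature of $\Sigma_x$ supplying the integrability that makes the kernel bound uniform in $j$ and $k$.
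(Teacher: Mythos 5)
The approach you describe is genuinely different from the paper's. The paper writes $T$ as an oscillatory integral operator, computes its symbol by stationary phase to reduce to Fourier integral operators with phases $\phi_\pm(x,\xi)=\frac12(x\cdot\xi\pm|x||\xi|)$ and amplitudes of order $-1$, then invokes Cotlar--Stein/almost-orthogonality for non-degenerate FIOs (the cited Ruzhansky--Sugimoto theorem); scaling and Schur's test handle the low-frequency and boundary regions, and the $\delta^{-2-2s}$ is tracked through the stationary-phase amplitudes $c_\pm(\theta_0)$. You instead unwind the geometry to realise $T$ as a weighted spherical Radon transform over the spheres $\Sigma_x$ through $0$ and $x$, and try to import the one-derivative gain of spherical averaging via Plancherel and decay of surface measure. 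Your geometric reduction (the bijection onto $\Sigma_x$, the Jacobian $d\omega=dS(u)/(|x||u|)$) is correct, and the power counting --- kernel weight $\sim 2^{k(2+2s)}|x|^{-2}$ on the piece $|u|/|x|\sim 2^{-k}$, a geometric $k$-sum for $\delta^{-2-2s}$, almost-orthogonality over the shells $|x|\sim 2^j\gtrsim\mu$ for $\mu^{-1}$ --- is consistent with the target bound, provided the per-piece operators are $O(2^j)$ from $H^{-1}$ to $L^2$.

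There is, however, a genuine gap at the centre of the argument. The smoothing estimate you invoke --- $|\widehat{d\sigma_{\mathbb S^2}}(\xi)|\lesssim\langle\xi\rangle^{-1}$ together with Plancherel --- gives $H^{-1}\to L^2$ only for the \emph{convolution} $h\mapsto h\ast d\sigma_R$ with a fixed sphere. The operator here is not a convolution: $\Sigma_x$ has centre $x/2$ and radius $|x|/2$, both moving with $x$. Freezing $|x|$ on a dyadic shell makes the radii comparable but does not freeze the centres, so the operator remains a variable-coefficient transform (a spherical Radon transform over spheres through the origin), and Plancherel gives you nothing directly. This is exactly why the paper turns to FIO theory: the one-derivative gain is a consequence of the non-degeneracy of the family $\{\Sigma_x\}_x$ (equivalently of $\det\partial^2_{x\xi}\phi_\pm\neq0$), not of the curvature of any single sphere, and it does not follow from the Fourier decay of a fixed surface measure. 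The objection is even sharper for the small-cap pieces: for large $k$ the cap of radius $\sim2^{j-k}$ on $\Sigma_x$ is essentially flat, so its own curvature supplies no smoothing at all, and the gain must come entirely from the $x$-variation of the family --- precisely what the Plancherel argument cannot see. Your $TT^\ast$/Schur fallback is the one route that does engage the variable geometry, and it is close in spirit to the Cotlar--Stein step under the paper's FIO wrapping, but it is the "genuinely delicate part" that you explicitly leave undone. As written the key uniform $H^{-1}\to L^2$ estimate for the pieces $T_{j,k}$ is not established, so the proof is incomplete.
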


 \begin{proof}
	We rewrite $T$ as
	\begin{equation}\label{D:T}
	Th(x)=(2\pi)^{-3}\int_{\mathbb{R}^3} e^{ix\cdot\xi}a(x,\xi)\widehat{h}(\xi)d\xi
	\end{equation}
	where 
	$
	a(x,\xi)=\int_{S^2} e^{-i(x\cdot\omega)(\xi\cdot\omega)}c_{\mu}(|x|)b_\delta(\cos\theta) 
	d\Omega(\omega).
	$
	
	We split the operator $T$ to the one restricted to $|x|>1$ and the other 
	to $\mu< |x|< 3/2$ by multiplying it with suitable bump functions.

	{\bf Part I.} $|x|>1$.\par
	The operator  is denoted by $T$ again. 
	We should evaluate $a(x,\xi)$ on three different domains. Thus the operator $T$ 
	is split into three operators accordingly.
	For simplicity of representation, we will denote the function on three domains by $a(x,\xi)$ again.
	
	{\bf Case 1.} $|x|\geq 1,|\xi|\geq 1$. \par
	
	The operator $T$ restricted to this domain is again denoted by $T$ and we 
	wish to show
	\begin{equation}\label{E:T-1}
	|T h|_{L^2}\leq C(\delta)|h|_{H^{-1}}
	\end{equation}
	where $C(\delta)$ is of order $\delta^{-2-2s}$.
	
	The calculation of $a(x,\xi)$ on this domain can be done by the same method as 
	that in~\cite{JC12} which has its origin from~\cite{Lio94}. Hence
	we sketch the calculation of $a(x,\xi)$ and the estimate of $T$ quickly. 
	With these in mind, we can show the 
	estimate still holds for the case 2 by modifying the argument of case 1. This new 
	argument to the case 2 was not seen in~\cite{JC12}.     
	
	In order to prove~\eqref{E:T-1}, we further split the phase space $\{(x,\xi)||x|\geq 1,|\xi|\geq 1\}$ into cones 
	by letting $m\in\mathbb{N}$ and
  $\Gamma_0={\Big \{} (x,\xi)|\;  2\delta \leq \theta_0 \leq {\pi}-2\delta {\Big \}}$,  
	 $\Gamma_{m}={\Big \{} (x,\xi)|\; \pi-\frac{\delta}{2^{m-3}} < \theta_0 \leq \pi-\frac{\delta}{2^{m-1}} {\Big \}}$,
$ \Gamma_{-m}={\Big \{} (x,\xi)|\;  \frac{\delta}{2^{m-1}} \leq \theta_0 < \frac{\delta}{2^{m-3}} {\Big \}}$, where $\theta_0$ is the angle spanned by $x$ and $\xi$.
	
	By a similar calculation  as~\cite{JC12}, using stationary phase formula, we obtain that $a(x,\xi)$ equals 
	\begin{equation}\label{E:a-g0}
	\left\{
	\begin{array}{lll}
	e^{-i\phi_+(x,\xi)}c_{+}(\theta_0) p_+(x,\xi)&+e^{-i\phi_-(x,\xi)}c_{-}(\theta_0)
	p_-(x,\xi),&\;{\rm if}\; 
	(x,\xi)\in\Gamma_0\\
	p_{-\infty}(x,\xi)&+e^{-i\phi_-(x,\xi)}c_{-}(\theta_0)p_-(x,\xi),&\;{\rm if}\; 
	(x,\xi)\in\Gamma_m\\
	e^{-i\phi_+(x,\xi)}c_{+}(\theta_0)p_+(x,\xi)&+p_{-\infty}(x,\xi),&\;{\rm if}\; 
	(x,\xi)\in\Gamma_{-m}
	\end{array}
	\right.
	\end{equation}
	where 
	$
	\phi_{\pm}(x,\xi)=\frac{1}{2}[x\cdot\xi\pm |x||\xi|],
$
	$p_{\pm}(x,\xi)\in S_{1,0}^{-1}$ are in the class of symbol of order $-1$  and 
	$p_{-\infty}\in S^{-\infty}_{1,0}$ is the symbol of the smooth operator. Please note that the coefficients
	$c_{+}(\theta_0),c_{-}(\theta_0)$ and their derivatives with respect to $x,\xi$ 
	are bounded by $C\delta^{-(2+2s)}$. The upper bound of $c_{\pm}(\theta_0)$ and their
	derivatives tends to this order when $\theta_0$ tends to $0$ or $\pi$.  
	
	Absorbing the factor $\lr{\xi}^{-1}$ into $\widehat{h}(\xi)$ and take out 
	$\delta^{-(2+2s)}$, the proof of~\eqref{E:T-1} is then reduced to the proof of $L^2$ boundedness of 
	integral operator
$
	T_{\pm}h(x)=\int_{\mathbb{R}^3} e^{\frac{1}{2}i(x\cdot\xi\mp|x||\xi|)}
	p_{\pm}(x,\xi)\widehat{h}(\xi)d\xi
	$
	on cones $\Gamma_j,j\in\mathbb{Z}$ where $p_{\pm}(x,\xi)$ are symbols of order $0$. 
	We note that the phase functions 
	$\psi_{\pm}(x,\xi)=\frac{i}{2}(x\cdot\xi\mp|x||\xi|)$ of operators satisfy
	the non-degeneracy condition
	\begin{equation}\label{E:non-degeneracy}
	{\Big |}\det \frac{\partial^2 \psi_{\pm}(x,\xi)}{\partial x_j\partial \xi_k} {\Big |} >c>0
	\end{equation}
	on $\Gamma_0$.
	Hence the operator satisfies~\eqref{E:T-1} on the cone $\Gamma_0$ by the Theorem 2.1 of~\cite{RS06}. Its proof relies on the localization of operator $T$, integration by parts and Coltar-Stein lemma. Let $d\in C^{\infty}_{0}(\mathbb{R}^3)$ be a real valued positive function such that $\{d_{k}(x)\}_{k\in\mathbb{Z}^3}$ forms a partition of unity where 
	$d_{k}(x)=d(x-k)$. For example we can decompose the operator $T_+$ as
	\begin{equation}\label{E:decomposition_of_T}
	T_+=\sum_{(j,l)\in{\mathbb Z}^3\times{\mathbb Z}^3} T_{(j,l)}
	\end{equation}
	where $T_{(j,l)}=d_{j}T_+d_{l}$, that is
	$
	T_{(j,l)}u(x)=d_{j}(x)\int e^{i\psi_+(x,\xi)}p_+(x,\xi)d_{l}(\xi)u(\xi)d\xi.
	$
	The adjoint of $T_{j,l}$, denoted by $T^{*}_{j,l}$ is
	 $T^{*}_{(j,l)}v(\xi)=d_{l}(\xi)\int e^{-i\psi_+(y,\xi)}\overline{p_+(y,\xi)}v(y)d_{j}(y)dy.$   The non-degeneracy condition and integration by parts give
	$|T_{(j,l)}T^{*}_{(k,m)}|_{L^2\rightarrow L^2}\leq CA^2\frac{h(l-m)}{1+|j-k|^7},$
	and
	$|T^{*}_{(j,l)}T_{(k,m)}|_{L^2\rightarrow L^2}\leq CA^2\frac{h(j-k)}{1+|l-m|^7}.$
	Thus the operator $T^+$ is $L^2$ bounded on $\Gamma_0$ by Coltar-Stein lemma.

	The estimates of operator $T_{\pm}$ on cones $\Gamma_j$ is based on the estimate
	on $\Gamma_0$.  The fact that constant $c$ in the non-degeneracy 
	condition~\eqref{E:non-degeneracy}  tends to $0$ as $|j|\rightarrow\infty$ means
	the decay rate of the kernel of $T$ on $\Gamma_{|j+1|}$ is one half of that 
	on $\Gamma_{|j|}$. On the other hand,  the span of angle $\theta_0$ on  
	$\Gamma_{|j+1|}$ is one half of that on $\Gamma_{|j|}$.   
	Combine these observations and use the argument in~\cite{JC12},  we can see that 
	the bounds on $\Gamma_j$ form a geometric series as $|j|\rightarrow\infty$ and we conclude the result for this case.

	{\bf Case 2.} $|x|\geq 1, |x||\xi|\geq 2$.\par
	
	As before, we may restrict the operator $T$ on the domain $E=\{|x||\xi|>2\}-\{|x|>1,|\xi|>2\}$ by multiplying a smooth 
	bump function to $a(x,\xi)$. 
	We denote the resulting function  by $a(x,\xi)$ again for simplicity and recognize that it is restricted to the domain $E$. 
	
	We note that if the Fourier variable of a function$h$ is restricted to 
	low frequency $|\xi|<2$, then we have 
	$
	|h|_{H^{-1}}\simeq |h|_{L^2}.
$
	Hence we only have to show 
	$
	|Th|_{L^2}\leq |h|_{L^2}
$
	for $T$ restricted to $E$.
	
	By Plancherel theorem,  it 
	equals to consider $\mathcal{T}$ defined by 
	\begin{equation}\label{E:mathcal_T}
	\mathcal{T}f(x)=\int_{\mathbb{R}^3} e^{ix\cdot\xi}a(x,\xi) f(\xi)d\xi,
	\end{equation} 
	where 
	\begin{equation}\label{E:a_def_2}
	a(x,\xi)=\int_{\omega\in S^2}e^{-i(x\cdot\omega)
		(\xi\cdot\omega)} b_\delta(\cos\theta) d\Omega(\omega).
	\end{equation}
	
	Let $\sum_{i=1}^{\infty}\gamma_{i}(x)$ be a partition of unity on $\{x ||x|> 2\}$  where $\gamma_{i}(x)=\gamma_{0}(2^{-i}x)$ for 
	some nonnegative smooth $\gamma_0$ whose support is in $[1/2,2]$. Then $\sum_{i=0}^{\infty}\gamma_{-i}(\xi)$ is a partition of 
	unity on $\{\xi| |\xi|<2\}$. We decompose the operator $\mathcal{T}$ as
$
	\mathcal{T}=\sum \mathcal{T}_{(j,-l)}\;,\;(j,l)\in
	\{\mathbb{N}\times(\mathbb{N}\cup \{0\}) \}
	$
	where 
	\begin{equation}\label{E:T_mathcal}
	\mathcal{T}_{(j,-l)}f(x)=\int_{\mathbb{R}^3} e^{ix\cdot\xi} a_{(j,-l)}(x,\xi)f(\xi)d\xi
	\end{equation}
	and 
	\begin{equation}\label{E:a_j_l}
	a_{(j,-l)}(x,\xi)=\gamma_{j}(x)\gamma_{-l}(\xi)a(x,\xi).
	\end{equation}
	We note that the condition $(x,\xi)\in E$ implies that 
	$a_{(j,-l)}(x,\xi)=0$ if $j-l<0$. In other words, we must have 
	\begin{equation}\label{E:l_j_relation}
	j\geq l+1.
	\end{equation}
	As the proof of {\bf case 1}, the $L^2$ boundedness of $\mathcal{T}$ can be proved through the 
	estimates of $|\mathcal{T}_{(j,-l)}\mathcal{T}^*_{(k,-m)}|_{L^2\rightarrow L^2}$ and 
	$|\mathcal{T}^*_{(j,-l)}\mathcal{T}_{(k,-m)}|_{L^2\rightarrow L^2}$ where $\mathcal{T}^*_{(k,-m)}$ is the adjoint of $\mathcal{T}_{(-k,m)}$ and $(k,m)\in \{\mathbb{N}\times(\mathbb{N}\cup \{0\}) \},k\geq m+1$.  By symmetry, it suffices to 
	study the latter, i.e. operators of the form
	\begin{equation}\label{E:mathcal_T*_T}
	\mathcal{T}^*_{(j,-l)}\mathcal{T}_{(k,-m)}g(\xi)=\int \mathcal{K}_{(j,-l),(k,-m)}(\xi,\eta)g(\eta)d\eta
	\end{equation}
	where 
	\begin{equation}\label{E:mathcal_K}
	\mathcal{K}_{(j,-l),(k,-m)}(\xi,\eta)=\int e^{ix\cdot(\eta-\xi)}\overline{a}_{(j,-l)}(x,\xi)a_{(k,-m)}(x,\eta)dx.
	\end{equation}
	
	If $|k-j|\geq 2$ then we have $\mathcal{K}_{(j,-l),(k,-m)}(\xi,\eta)=0$ by $\gamma_{j}(x)\gamma_{k}(x)=0$ and~\eqref{E:a_j_l}. 
	Without loss of generality, we assume $k=j+1$.
	Let $x=2^{j}\tilde{x},\xi=2^{-j}\tilde{\xi},\eta=2^{-j}\tilde{\eta}$. We observe that 
	this change of variables does not change the direction of vectors $x,\xi,\eta$, thus it is easy to see that from~\eqref{E:a_def_2} 
	we have
$
	a(x,\xi)=a(\tilde{x},\tilde{\xi}).
	$
	Applying it to~\eqref{E:a_j_l}, we have 
	\begin{equation}\label{E:a_x_xi_tilde}
	\overline{a}_{(j,-l)}(x,\xi)=\overline{a}_{(0,j-l)}(\tilde{x},\tilde{\xi})\;,\;a_{(k,-m)}(x,\eta)=a_{(1,j-m)}(\tilde{x},\tilde{\eta}),
	\end{equation}
	and
	\[
	\mathcal{K}_{(j,-l),(k,-m)}(\xi,\eta)=(2^{j})^3 \mathcal{K}_{(0,j-l),(1,j-m)}(\tilde{\xi},\tilde{\eta}).
	\]
	Let $g(2^{-j}\tilde{\xi})=g_{-j}(\tilde{\xi})$, we have 
	\[
	\begin{split}
	&\mathcal{T}^*_{(j,-l)}\mathcal{T}_{(k,-m)}g(\xi)=\int \mathcal{K}_{(j,-l),(k,-m)}(\xi,\eta)g(\eta)d\eta\\
	&=\int \mathcal{K}_{(0,j-l),(1,j-m)}(\tilde{\xi},\tilde{\eta})g_{-j}(\tilde{\eta})d\tilde{\eta}=\mathcal{T}^*_{(0,j-l)}\mathcal{T}_{(1,j-m)}g_{-j}(\tilde{\xi}).
	\end{split}
	\]
	Since
	$
	|\mathcal{T}^*\mathcal{T}g_{-j}(\tilde{\xi})|_{L^2_{\tilde{\xi}}}=(2^{-j})^{-3/2}|\mathcal{T}^*\mathcal{T}g(\xi)|_{L^2_{\xi}} \;,\; |g_{-j}(\tilde{\eta})|_{L^2_{\tilde{\eta}}}=(2^{-j})^{-3/2}|g(\eta)|_{L^2_{\eta}}
	$,
	we see that\\ $|\mathcal{T}^*_{(j,-l)}\mathcal{T}_{(k,-m)}|_{L^2_{\eta}\rightarrow L^2_{\xi}}=
	|\mathcal{T}^*_{(0,j-l)}\mathcal{T}_{(1,j-m)}|_{L^2_{\tilde{\eta}}\rightarrow L^2_{\tilde{\xi}}}.$ We note that
	the bound of $|\mathcal{T}^*_{(0,j-l)}\mathcal{T}_{(1,j-m)}|_{L^2_{\tilde{\eta}}\rightarrow L^2_{\tilde{\xi}}}$ is determined by the kernel
	of operator
	$\mathcal{K}_{(0,j-l),(1,j-k)}(\tilde{\xi},\tilde{\eta})$.   
	We also note that if we define 
	\[
	\wcT_{(1,j-m)}f(\tilde{x})=\int_{\mathbb{R}^3} e^{i\tilde{x}\cdot\tilde{\xi}}
	a_{(1,j-m)}(\tilde{x},\tilde{\xi})f(\tilde{\xi})d\tilde{\xi}.
	\]
	Then the operator$\wcT^*_{(0,j-l)}\wcT_{(1,j-m)}$ has kernel 
	$\mathcal{K}_{(0,j-l),(1,j-m)}(\tilde{\xi},\tilde{\eta})$. 
	
	The purpose of this change of variables is to give new indices $0,j-l,1,j-m$ which 
	are nonnegative by~\eqref{E:l_j_relation}.
	And it suffices to 
	consider $|\wcT^*_{(\cdot,\cdot)}\wcT_{(\cdot,\cdot)}|_{L^2\rightarrow L^2}$
	with nonnegative indices.
	Thus the operators $\wcT,\wcT^*$ are 
	defined in the domain $\{|\tilde{x}|\geq 1,|\tilde{\xi}|\geq 1\}$ and  their
	kernels $a,\overline{a}$ can be estimated by the method of {\bf Case 1}.
	We note that if $j-l$ or $j-m$ is strictly larger than 1, then the support set
	of $\gamma_{j-l}(\tilde{\xi})$ or $\gamma_{j-m}(\tilde{\xi})$ 
	is larger than that of $d$ function given in~\eqref{E:decomposition_of_T}.
	Since we may take a refine decomposition to $\gamma_{-l}(\xi),\gamma_{-m}(\xi)$
	such that the size of each support is of order $2^{-j}$ and relabel
	it  before change of variable, 
	the role of $\gamma_{(\cdot)}(\tilde{\xi})$ here will be later regarded 
	as that of function $d_{({\cdot})}$ in case 1. Therefore the proof 
	of this case just follows the case 1 and enjoys the same $L^2$ bound.

	{\bf Case 3.} $|x|\geq 1 , |x||\xi|< 3$\par

	Since we have $|\xi|<3$ on this domain, to prove~\eqref{E:T-1} is equivalent to show 
	\begin{equation}\label{E:T_case_3}
	|Th|_{L^2}\leq C|h|_{L^2}.
	\end{equation}
	Recall
$
	Th(x)=\int_{\mathbb{R}^3} e^{ix\cdot\xi}a(x,\xi)\hat{h}(\xi)d\xi
	$ 
	where for $|x|>1$, we have
	\begin{equation}\label{E:a_1}
	a(x,\xi)=\int_{S^2} e^{-i(x\cdot\omega)(\xi\cdot\omega)}b_\delta(\cos\theta) 
	d\Omega(\omega).
	\end{equation}
	By Plancherel theorem, it suffices to consider the  operator $T_1$ as
	\[
	T_1h(x)=\int_{\mathbb{R}^3} e^{i x\cdot\xi}a(x,\xi) h(\xi)d\xi
	=\int_{\mathbb{R}^3} K(x,\xi) h(\xi)d\xi.
	\]
	It is clear that $K$ satisfies
	\begin{equation}\label{E:kernel}
	|K(x,\xi)|\leq C\;,\;|x||\xi|<3.
	\end{equation}
	where $C\simeq (2s)^{-1}(\delta)^{-2s}$ is a uniform constant.

	Let $p(x)=(1+|x|)^{-1}$ and $q(\xi)=|\xi|^2$. For any fixed $|x_0|\geq 1$, 
	using spherically coordinate and~\eqref{E:kernel}, we have 
	\[ 
 \int_{\{|\xi|\leq 3|x_0|^{-1}\}} |K(x_0,\xi)|q(\xi)d\xi\\
	 \leq C_1 \int_0^{3|x_0|^{-1}} r^{-2}\cdot r^2 dr\\
	 \leq C_2|x_0|^{-1}\leq C_3p(x_0).
 	\]
	And for any fixed $|\xi_0|\leq 3$, we have 
	\[ 
	 \int_{\{1\leq |x|\leq 3|\xi_0|^{-1}\}} |K(x,\xi_0)|p(x)dx\\
 \leq C_1 \int_1^{3|\xi_0|^{-1}} r^{-1}\cdot r^2 dr\\
	 \leq C_2|\xi_0|^{-2}= C_2q(\xi_0).
	\]
	By Schur's test, we conclude that $T_1$ is bounded on $L^2$ and hence 
	~\eqref{E:T_case_3} holds.

{\bf Part II.} $\mu<|x|<3/2$.

Let $\psi(x)\in C^{\infty}_0((1,2))$ be a function such that sequence $\psi_j(x)=\psi(2^jx)\;,\;j\in{\mathbb Z} $ forms partition of unity in $\mathbb R$.
Then $c_{\mu}(|x|)=\sum_{j=-\infty}^N c_{\mu,j}(|x|)
=\sum_{j=-\infty}^Nc_{\mu}(|x|)\psi_j(x)$ where $N$ depends on $\mu$.

The estimate of Part I works for the operator $T$ with $a(x,\xi)$ being replaced by 
$c_{\mu,0}(|x|)a(x,\xi)$. Indeed we have 
\begin{equation}\label{E:high}
|Th|_{L^2}\lesssim \delta^{-2-2s}|h|_{\dot{H}^{-1}}.
\end{equation}
when $c_{\mu,0}(|x|)a(x,\xi)$ is further restricted to the domain $1<|x|<2, |x||\xi|>1$ and  
we have 
\begin{equation}\label{E:low}
|Th|_{L^2}\lesssim \delta^{-2-2s}|h|_{L^2}
\end{equation}
when $c_{\mu,0}(|x|)a(x,\xi)$ is  restricted to the domain $1<|x|<2, |x||\xi|<1$.

Now we consider the operator $T$ with $a(x,\xi)$ being replaced by 
$c_{\mu,1}(|x|)a(x,\xi)$. First we consider the domain where 
$c_{\mu,1}(|x|)a(x,\xi)$ is further restricted to the domain $|x||\xi|>2$.  By
scaling argument as the case 2 of part I, enlarge $x$ and shrink $\xi$ by scale 
$2$ and $2^{-1}$ respectively, we see that this operator again enjoys
~\eqref{E:high}. For the operator defined on the domain $|x||\xi|<2$, we can 
obtain~\eqref{E:low} by Schur's test with the bound twice large. Since the 
same argument works for the operator with $a(x,\xi)$ being replaced by 
$c_{\mu,j}(|x|)a(x,\xi)$, sum up   all the estimates and conclude the result.   
\end{proof}

Now we are in a position to give the upper bound for $Q_{\delta,\kappa}^+$.
\begin{thm}\label{thmubQ+}
For smooth functions $g, h$ and $f$ and $a,b\in\R$ with $a+b=\gamma$,  we have
\beno |\lr{Q_{\delta,\kappa}^+(g, h), f}_v|\le C(\kappa) \delta^{-2-2s} (|g|_{L^1}|h|_{L^2_{a}}|f|_{H^{-1}_{b}}+|g|_{L^1_{\gamma}}|h|_{L^2}|f|_{H^{-1}}).\eeno
\end{thm}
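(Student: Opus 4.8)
The plan is to reduce, by distributing the kinetic weight $|v-v_*|^\gamma$, to the Maxwellian case $\gamma=0$, and there to deduce the bound from the $L^2$--regularization estimate of Lemma \ref{ubQ+}, with $g$ playing the role of a harmless $L^1$--density and the pair $(h,f)$ carrying the Radon--transform smoothing.

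\smallskip\noindent\textbf{Step 1 (removing the weight).} Using $|v-v_*|=|v'-v'_*|$ and $|v-v_*|^\gamma\lesssim\lr{v'}^\gamma+\lr{v'_*}^\gamma$, together with the collisional identities $v+v_*=v'+v'_*$, $v-v'=v'_*-v_*$ and the support relation $\cos\theta\gtrsim\delta$ (in the $\omega$--representation, where the angular factor $\tilde b(\cos\theta)\phi(\cos\theta/\delta)$ is bounded by $C\delta^{-1-2s}$), the factor $\lr{v'_*}^\gamma$ is absorbed into $g$ to give the term $\|g\|_{L^1_\gamma}\|h\|_{L^2}\|f\|_{H^{-1}}$, while $\lr{v'}^\gamma$ is distributed between $h$ and $f$; the general exponent pair $a+b=\gamma$ is then obtained from the two extreme distributions by interpolation in the weighted $L^2$-- and $H^{-1}$--scales. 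This reduces matters to the weightless inequality $|\lr{Q_{\delta,\kappa}^+(g,h),f}_v|\lesssim C(\kappa)\,\delta^{-2-2s}\,\|g\|_{L^1}\,\|h\|_{L^2}\,\|f\|_{H^{-1}}$ for the $\gamma=0$ kernel.

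\smallskip\noindent\textbf{Step 2 (the case $\gamma=0$).} Passing to the $\omega$--representation \eqref{D:gain-term} and performing a Carleman--type change of variables in the $(v_*,\omega)$--integral (at frozen $v$), one uses that $v-v'\parallel\omega$ with $|v-v'|=|(v-v_*)\cdot\omega|$ to promote $v'$ to an outer integration variable, whose $2$--dimensional fibre is parametrized by $v_*$ moving in a hyperplane and whose Jacobian is $\simeq 1$ on the support of $b_\delta\,\phi(\cdot/(2\kappa))$. Thus $g(v')$ comes out as an honest $L^1$--density, and one is left with $\lr{Q_{\delta,\kappa}^+(g,h),f}_v=\int_{\R^3}g(v')\,B_{v'}(h,f)\,dv'$, where $B_{v'}(h,f)$ is a bilinear form over the residual transversal integral. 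After translating $v'$ to the origin, $B_{v'}$ is exactly of the type treated in Lemma \ref{ubQ+}, with the bump function $c_\mu$ there replaced by $\phi(\cdot/(2\kappa))$, so that $\mu\simeq\kappa$ and the factor $C(\kappa)\simeq\kappa^{-1}$ appears; by duality $B_{v'}(h,f)=\lr{h,T_{v'}f}$ with $T_{v'}$ the operator of Lemma \ref{ubQ+}, whence $|B_{v'}(h,f)|\le\|h\|_{L^2}\|T_{v'}f\|_{L^2}\lesssim\kappa^{-1}\delta^{-2-2s}\|h\|_{L^2}\|f\|_{H^{-1}}$ uniformly in $v'$ (using the translation invariance of $\|\cdot\|_{H^{-1}}$). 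Integrating against $|g(v')|$ gives the bound.

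\smallskip\noindent\textbf{Main obstacle.} The crux is the reduction in Step 2 --- verifying that $v'$ may be taken as an outer variable on the support of the cutoffs, computing the accompanying Jacobian, and identifying the residual transversal operator \emph{exactly} with the model operator of Lemma \ref{ubQ+} (matching the angle and the cutoff $c_\mu\leftrightarrow\phi(\cdot/(2\kappa))$). A secondary but genuine difficulty is the weight accounting in Step 1, which must be arranged so that no extra negative power of $\delta$ beyond $\delta^{-2-2s}$, and no uncontrolled power of $|v-v_*|$, is generated.
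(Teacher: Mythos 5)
Your Step 2 is conceptually on the right track: the paper also rewrites $\lr{Q_{\delta,\kappa}^+(g,h),f}_v$ by a Carleman-type change of variables so that one factor stays as an $L^1$ density while the other two enter a bilinear form governed by the Radon-type operator of Lemma~\ref{ubQ+}, and the $\delta^{-2-2s}$ constant (and the $\kappa^{-1}$ from $\mu^{-1}$) comes out exactly as you describe. Concretely, the paper applies the pre/post-collision swap $(v,v_*)\leftrightarrow(v',v'_*)$ at fixed $\omega$, lands on $\sum_k2^{k\gamma}\int g_*\,h\,[\tau_{\mp v_*}\circ T\circ\tau_{\pm v_*}f]\,dv_*dv$, and uses translation invariance of $H^{-1}$ plus Cauchy--Schwarz in $(v_*,v)$. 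Your ``promote $v'$ to an outer variable'' formulation is the dual bookkeeping of the same idea; the details would need to be checked but the mechanism is correct.

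Step 1, however, has a genuine gap, and it is not ``secondary''. The pointwise inequality $|v-v_*|^\gamma=|v'-v'_*|^\gamma\lesssim\lr{v'}^\gamma+\lr{v'_*}^\gamma$ only lets you place weight on the \emph{collisional} arguments of $g$ and $h$; it cannot place $\lr{\cdot}^b$ on $f$, because $f$ sits at a different variable from where $\lr{v'}^\gamma$ lives. So the ``extreme'' endpoints your distribution produces are only $(a,b)=(\gamma,0)$ and the all-on-$g$ term; you never obtain $(a,b)=(0,\gamma)$, and hence cannot interpolate to $a+b=\gamma$ with $b>0$ (which is exactly the case $a=b=\gamma/2$ used later in Theorem~\ref{thmlb2}). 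One is tempted to repair this with the support restriction $\cos\theta\gtrsim\delta$, which does give $|v-v_*|\le \f{2}{\delta}|v-v'|\lesssim \delta^{-1}(\lr{v}+\lr{v'})$, but (i) that still does not give a \emph{pointwise} bound by $\lr{v}^a\lr{v'}^b$ (take $v=0$, $|v'|$ large) and (ii) it introduces an extra $\delta^{-\gamma}$, worsening $\delta^{-2-2s}$ to $\delta^{-2-2s-\gamma}$. There is also no standard multilinear interpolation statement that produces a \emph{sum} of two triple products on the right-hand side from two such sums; the claim is not justified.

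What the paper does instead, and what is actually needed, is to trade the pointwise distribution for a \emph{dyadic} one: after the collision swap and the dyadic decomposition $|v-v_*|\sim2^k$, it further decomposes $g$ dyadically in $|v_*|\sim2^j$ and observes (via triangle inequalities and $|v-v_*|\sim|v'-v_*|$) that the relative sizes of $j,k$ determine where $|v|$ and $|v'|$ sit: if $j\le k-N_0$ then $|v|\sim|v'|\sim 2^k$ and the factor $2^{k\gamma}=2^{ka}2^{kb}$ is split between $h$ and $f$; if $j\ge k+N_0$ or $|j-k|<N_0$ then $2^{k\gamma}\lesssim\lr{v_*}^\gamma$ is absorbed by $g$ giving $|g|_{L^1_\gamma}$. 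Summing in $j,k$ against the dyadic characterization of weighted Sobolev norms (Theorem~\ref{baslem3}) reproduces exactly the two terms in the statement with the announced $\delta^{-2-2s}$ factor. This extra dyadic decomposition in $|v_*|$ is the missing ingredient in your proposal, and it is what makes the arbitrary $(a,b)$ splitting possible without losing powers of $\delta$.
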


\begin{proof} By \eqref{D:gain-term}, we have 
\beno  &&\lr{Q_{\delta,\kappa}^+(g, h), f}_v=\int \phi(\f{|v-v_*|}{2\kappa})|v-v_*|^\gamma b_\delta(\cos\theta) g_*'h'fd\Omega(\omega)dv_*dv\\
 &&=\sum_{k\ge-1} 2^{k\gamma} \int   g_* h  [ \tau_{-v_*}\circ T \circ \tau_{v_*} f(v)] dv_*dv\eqdefa\sum_{k\ge-1} 2^{k\gamma} \mathcal{I}_k(g,h,f).
\eeno
where $(\tau_{v_*}h)(v)=h(v-v_*)$ and $Th(x)=\int_{S^2}   c_{\kappa}(|x|)b_\delta(\cos\theta)h(x-(x\cdot\omega)\omega)d\Omega(\omega)$ with
\, $c_{\kappa}(|x|)=2^{-k\gamma}|x|^{\gamma}\psi_k(|x|)\phi(|x|/(2\kappa))$.

Thanks to Lemma \ref{ubQ+}, we have
\beno  |\mathcal{I}_k(g,h,f)|&\lesssim& |g|_{L^1}|h|_{L^2}\sup_{v_*}|\tau_{-v_*}\circ T \circ \tau_{v_*} f|_{L^2}\lesssim\delta^{-2-2s} |g|_{L^1}|h|_{L^2}\sup_{v_*}|\tau_{v_*} f|_{H^{-1}} \\&\lesssim&\delta^{-2-2s} |g|_{L^1}|h|_{L^2}|f|_{H^{-1}}.\eeno
Suppose  $|v_*|\sim 2^j$ and $|v-v_*|\sim 2^k$. Then thanks to the fact $|v-v_*|\sim |v'-v_*|$, we have
\begin{itemize}
\item If $j\le k-N_0$, then  $|v|\sim |v'|\sim 2^k$;
\item If $j\ge k+N_0$, then $|v|\sim |v'|\sim 2^j$;
\item If $|j-k|< N_0$, then $|v|\le 2^{k+N_0}, |v'|\le 2^{k+N_0}.$
\end{itemize}
Due to this observation, we have the following decomposition:
\beno&& \lr{Q_{\delta,\kappa}^+(g, h), f}_v= \sum_{j\ge-1}\lr{Q_{\delta,\kappa}^+(\mathcal{P}_jg, h), f}_v \notag
=\sum_{k\ge N_0-1}2^{k\gamma}\mathcal{I}_k(\mathcal{U}_{k-N_0} g, \tilde{\mathcal{P}}_kh, \tilde{\mathcal{P}}_kf ) \\&&\qquad+
\sum_{j\ge k+N_0}2^{k\gamma}\mathcal{I}_k(\mathcal{P}_{j} g, \tilde{\mathcal{P}}_jh, \tilde{\mathcal{P}}_jf )+\sum_{|j-k|\le N_0}2^{k\gamma}\mathcal{I}_k( \mathcal{P}_{j} g, \mathcal{U}_{k+N_0}h, \mathcal{U}_{k+N_0}f), \eeno
which together with Theorem \ref{baslem3} imply that 
\beno | \lr{Q_{\delta,\kappa}^+(g, h), f}_v|&\lesssim& \delta^{-2-2s} |g|_{L^1}|h|_{L^2_{a}}\big[(\sum_{j\ge-1}2^{2jb}|\tilde{\mathcal{P}}_j f|^2_{H^{-1}})^{1/2}+|g|_{L^1_{\gamma}}|h|_{L^2}\sup_{k}|\mathcal{U}_{k+N_0}f|_{H^{-1}}\big]\\
&\lesssim&\delta^{-2-2s} (|g|_{L^1}|h|_{L^2_{a}}|f|_{H^{-1}_{b}}+|g|_{L^1_{\gamma}}|h|_{L^2_{}}|f|_{H^{-1}}). \eeno We complete the proof of the theorem.
\end{proof}

Combining  the previous results, we arrive at:

\begin{thm}\label{thmlb2} Suppose that the non-negative function $g$ verifies the conditions
\eqref{lbcondi} .
Then for any smooth function $f$, there exist constants $\mathcal{C}_i(i=4,5,6,7)$  depending only on $c_1$ and $c_2$ such that
\ben\label{lbQe2} (i).\, \lr{-Q^\ep(g,f), f}_v& \ge& \f13 \mathcal{E}^{\gamma,\eps}_{g}(f)+\mathcal{C}_4(c_1,c_2)( \mathcal{E}^0_{\mu}(W_{\gamma/2}f)+ |W^\ep_s(D)W_{\gamma/2} f|_{L^2}^2)+ \mathcal{C}_5(c_1,c_2)\delta^{-2s}|f|^2_{L^2_{\gamma/2}}\nonumber\\&&-
\mathcal{C}_6(c_1,c_2)\delta^{-6-6s}|f|_{L^1_{\gamma/2}}^2;\\
(ii).\,  \lr{-Q^\ep(g,f), f}_v& \ge& \f13 \mathcal{E}^{\gamma,\eps}_{g}(f)+\mathcal{C}_4(c_1,c_2)( \mathcal{E}^0_{\mu}(W_{\gamma/2}f)+ |W^\ep_s(D)W_{\gamma/2} f|_{L^2}^2)+ \mathcal{C}_5(c_1,c_2)\delta^{-2s}|f|^2_{L^2_{\gamma/2}}\nonumber\\&&-
\mathcal{C}_7(c_1,c_2)\delta^{-4-2s}|f|_{H^{-1}_{\gamma/2}}^2, \nonumber\een  where $2\epsilon<\delta<(\f{\mathcal{C}_3}{4\mathcal{C}_2})^{\f1{2s}}$.
\end{thm}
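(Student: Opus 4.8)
Looking at the statement of Theorem \ref{thmlb2}, I need to prove two lower bounds for $\langle -Q^\ep(g,f), f\rangle_v$ that combine the entropy production term $\mathcal{E}^{\gamma,\eps}_g(f)$, the coercivity gain in $H^s_{\gamma/2}$, a good sign term $\delta^{-2s}|f|^2_{L^2_{\gamma/2}}$, and a remainder controlled either by an $L^1$ norm (part (i)) or an $H^{-1}$ norm (part (ii)) of $f$.

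The plan is to exploit the decomposition \eqref{decomQeps}: $Q^\ep(g,h)=Q_{\delta,\kappa}^+(g,h)-L^{\delta,\kappa}(g)h+Q_{\kappa}^\delta(g,h)+Q^\ep_r(g,h)$. First I would write $\langle -Q^\ep(g,f),f\rangle_v = \frac23\langle -Q^\ep(g,f),f\rangle_v + \frac13\langle -Q^\ep(g,f),f\rangle_v$, apply Theorem \ref{thmlb1} to the first copy to extract $\frac23\mathcal{C}_1(\mathcal{E}^{0,\eps}_\mu(W_{\gamma/2}f)+|W^\ep_s(D)W_{\gamma/2}f|^2_{L^2})-\frac23\mathcal{C}_2|f|^2_{L^2_{\gamma/2}}$, and re-express the second copy via \eqref{decomQeps}. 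Actually, it is cleaner to start from $\langle -Q^\ep(g,f),f\rangle_v = \mathcal{E}^{\gamma,\eps}_g(f) - \mathcal{N}^{\gamma,\eps}(f)$ as in the proof of Theorem \ref{thmlb1}, keep $\frac13\mathcal{E}^{\gamma,\eps}_g(f)$ aside, and then for the remaining $\frac23\mathcal{E}^{\gamma,\eps}_g(f)$ use Lemma 3.4 of \cite{HE16} plus Proposition \ref{symbol}-type arguments (i.e. the computation behind \eqref{lb00}) to get $\mathcal{C}_4(\mathcal{E}^0_\mu(W_{\gamma/2}f)+|W^\ep_s(D)W_{\gamma/2}f|^2_{L^2})$ at the cost of a lower-order $|f|^2_{L^2_{\gamma/2}}$ term. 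Alternatively, and more in the spirit of the decomposition just introduced, I would bound $\langle -Q_\kappa^\delta(g,f),f\rangle_v$ and $\langle -Q^\ep_r(g,f),f\rangle_v$ by $C|f|^2_{L^2_{\gamma/2}}$ using the cancellation lemma \eqref{canclem} (these pieces have the singularity cut off away from $\theta=0$, except $Q^\ep_r$ where the weight $1-\phi(\sin(\theta/2)/\delta)$ removes the singular part, so they behave like cutoff operators with bounded kernels, uniformly controlled once the $|v-v_*|$ weight is absorbed by $|f|_{L^2_{\gamma/2}}$).

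The two key gain terms then come from $L^{\delta,\kappa}$ and $Q^+_{\delta,\kappa}$. From Proposition \ref{lbLdelta}, $\langle L^{\delta,\kappa}(g)f,f\rangle_v \ge \mathcal{C}_3\delta^{-2s}|f|^2_{L^2_{\gamma/2}}$, which supplies the crucial $\delta^{-2s}|f|^2_{L^2_{\gamma/2}}$ term with a large constant. For the gain term $Q^+_{\delta,\kappa}$, I would use Theorem \ref{thmubQ+}: with $a=b=\gamma/2$ for part (ii), $|\langle Q^+_{\delta,\kappa}(g,f),f\rangle_v|\le C(\kappa)\delta^{-2-2s}(|g|_{L^1}|f|_{L^2_{\gamma/2}}|f|_{H^{-1}_{\gamma/2}}+|g|_{L^1_\gamma}|f|_{L^2}|f|_{H^{-1}})$; then Young's inequality $XY\le \eta X^2 + \eta^{-1}Y^2$ splits this as (small multiple of $\delta^{-2s}|f|^2_{L^2_{\gamma/2}}$) $+ C\delta^{-4-2s}|f|^2_{H^{-1}_{\gamma/2}}$, absorbing the first piece into the $\mathcal{C}_3\delta^{-2s}$ term from $L^{\delta,\kappa}$ — this is exactly where the restriction $\delta<(\mathcal{C}_3/(4\mathcal{C}_2))^{1/2s}$ and the factor $\frac13$ vs. $\mathcal{C}_3$ balancing gets used. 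For part (i), instead I interpolate $|f|_{H^{-1}_{\gamma/2}}\lesssim |f|_{L^1_{\gamma/2}}$ (or bound the trilinear form directly keeping $|f|_{L^1}$), at the price of the worse power $\delta^{-6-6s}$ — this extra loss arises because passing from $H^{-1}$ to $L^1$ control requires reinvesting a factor of $\delta^{-1-2s}$ type from the kernel of the Radon-transform operator $T$ in Lemma \ref{ubQ+}, or equivalently estimating $|Q^+_{\delta,\kappa}(g,f)|_{L^2}$ crudely.

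The main obstacle I anticipate is the careful bookkeeping of the $\delta$-powers and the constants so that the absorption works: one must verify that the small constant produced by Young's inequality when splitting the $Q^+_{\delta,\kappa}$ contribution, combined with the $-\frac23\mathcal{C}_2|f|^2_{L^2_{\gamma/2}}$ loss from using Theorem \ref{thmlb1} (or the cancellation-lemma estimates of $Q^\delta_\kappa,Q^\ep_r$), stays strictly below $\mathcal{C}_3\delta^{-2s}$ for $\delta$ small — which forces precisely the stated threshold $2\ep<\delta<(\mathcal{C}_3/(4\mathcal{C}_2))^{1/2s}$. A secondary technical point is ensuring all estimates for $Q^\ep_r$ and $Q^\delta_\kappa$ are genuinely uniform in $\ep$ (using $\delta>2\ep$ so that $\phi(\sin(\theta/2)/\delta)\phi(\sin(\theta/2)/\ep)=\phi(\sin(\theta/2)/\delta)$, which removes $\ep$-dependence from $Q_\delta$), and that the entropy term $\mathcal{E}^{\gamma,\eps}_g(f)$ retained is genuinely nonnegative so it can be kept on the good side — both are routine given the earlier lemmas but need to be stated.
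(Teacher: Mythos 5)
Your overall plan matches the paper's proof: split $\langle -Q^\ep(g,f),f\rangle_v$ into six pieces via \eqref{decomQeps} weighted by $\tfrac13$, keep one third of $\mathcal{E}^{\gamma,\eps}_g(f)$, extract the coercivity from Theorem \ref{thmlb1}, handle $Q^\delta_\kappa$ and $Q^\ep_r$ by the cancellation lemma, use Proposition \ref{lbLdelta} for the $\delta^{-2s}|f|^2_{L^2_{\gamma/2}}$ gain, Theorem \ref{thmubQ+} for $Q^+_{\delta,\kappa}$, and Young's inequality to absorb a small multiple of $\delta^{-2s}|f|^2_{L^2_{\gamma/2}}$. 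Part (ii) of your sketch is essentially the paper's argument.

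Part (i), however, has a genuine flaw. The embedding $|f|_{H^{-1}_{\gamma/2}}\lesssim|f|_{L^1_{\gamma/2}}$ you propose is false in $\R^3$: the sharp embedding is $L^1(\R^3)\hookrightarrow\dot H^{-3/2-\epsilon}$, so $L^1$ does not control $H^{-1}$. Your two fallbacks — ``bound the trilinear form directly keeping $|f|_{L^1}$'' and ``reinvest a $\delta^{-1-2s}$ factor from the Radon transform kernel'' — are too vague to close the gap, and the second one does not account for the stated power $\delta^{-6-6s}$. What the paper actually does is interpolate
\[
|f|_{H^{-1}_{\gamma/2}}\lesssim |f|_{L^2_{\gamma/2}}^{1/3}\,|f|_{L^1_{\gamma/2}}^{2/3}
\]
(by $L^1\hookrightarrow\dot H^{-3/2}$ and real interpolation with $L^2$), so the remainder from $Q^+_{\delta,\kappa}$ becomes $\delta^{-2-2s}|f|_{L^2_{\gamma/2}}^{4/3}|f|_{L^1_{\gamma/2}}^{2/3}$, and then applies Young with exponents $(3/2,3)$ to get
\[
\delta^{-2-2s}\bigl(\eta\,|f|_{L^2_{\gamma/2}}^{2}+\eta^{-2}|f|_{L^1_{\gamma/2}}^{2}\bigr).
\]
Choosing $\eta\sim\delta^{2}$ absorbs the first piece into $\mathcal{C}_3\delta^{-2s}|f|^2_{L^2_{\gamma/2}}$ and leaves a $\delta^{-6-2s}|f|_{L^1_{\gamma/2}}^2$ remainder, which is (slightly stronger than) the $\delta^{-6-6s}$ claimed. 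This interpolation-plus-Young step is the missing ingredient in your part (i).
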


\begin{proof}   By \eqref{decomQeps}, we observe that
\beno &&\lr{-Q^\ep(g,f), f}_v=\f13\lr{-Q^\ep(g,f), f}_v+ \f13\lr{-Q^\ep(g,f), f}_v+\f13 \lr{ L^{\delta,\kappa}(g)f,f }_v\\&&\qquad+\f13\lr{-Q_{\delta,\kappa}^+(g,f),f}_v+\f13\lr{-Q_{\kappa}^\delta(g,f),f}_v+\f13\lr{-Q^\ep_r(g,f),f}_v\eqdefa\sum_{k=1}^6\mathcal{I}_k.
\eeno
For $\mathcal{I}_1$ and $ \mathcal{I}_2$, we apply Theorem \ref{thmlb1} directly. For $\mathcal{I}_5,\mathcal{I}_6$, by the proof of Theorem \ref{thmlb1}, we only need to use   \eqref{canclem} to give the upper bounds. We  have
\[ \mathcal{I}_1+ \mathcal{I}_2+\mathcal{I}_5+\mathcal{I}_6\ge \f13 \mathcal{E}^{\gamma,\eps}_{g}(f)+\f13 \mathcal{C}_1(c_1,c_2)(\mathcal{E}^0_{\mu}(W_{\gamma/2}f)+|W^\ep_s(D)W_{\gamma/2}f|_{L^2}^2)-\f43\mathcal{C}_2(c_1,c_2)|f|^2_{L^2_{\gamma/2}}.\]
Thanks to Proposition \ref{lbLdelta} and Theorem \ref{thmubQ+}, we have
\[\mathcal{I}_3+\mathcal{I}_4\ge \mathcal{C}_3(c_1,c_2)\delta^{-2s}|f|^2_{L^2_{\gamma/2}}-\delta^{-2-2s} \mathcal{C}_4(c_1,c_2)|f|_{L^2_{\gamma/2}}|f|_{H^{-1}_{\gamma/2}}. \]
Use the inequality
$ |f|_{L^2_{\gamma/2}}|f|_{H^{-1}_{\gamma/2}}\lesssim |f|_{L^2_{\gamma/2}}|f|_{L^2_{\gamma/2}}^{\f13}|f|^{\f23}_{L^1_{\gamma/2}}\lesssim \eta |f|_{L^2_{\gamma/2}}^2+\eta^{-2}|f|^{2}_{L^1_{\gamma/2}}$, then 
 we  are led to the desired result by putting together all the estimates.
\end{proof}

\subsubsection{Upper  bounds for the collision operator $Q^\epsilon$} In this subsection, we will give the upper bounds for the Boltzmann collision operator $Q^\epsilon$.

\begin{thm}\label{ubofQe}
Suppose that $g, h$ and $f$ are smooth functions. Then
\begin{enumerate}
 \item For $a,b\ge0$ with $a+b=\gamma$, 
$
|\lr{Q^\ep(g,h), f}|\lesssim |g|_{L^1_{\gamma+2s}}|W^\ep_s(D) W_{a+2s}h|_{L^2}|W^\ep_s(D) W_{b}f|_{L^2}.$
\item $ |Q^\eps(g,h)|_{L^2_2}\lesssim |g|_{L^1_{\gamma+2s+2}}|W^\eps_{2s}(D)h|_{L^2_{\gamma+2s+2}}. $
\item  If $2s\le1$, $
|\lr{Q^\ep(g,h), f}|\lesssim |g|_{L^1_{\gamma+2s}}| W_{\gamma+2s}h|_{L^2}|f|_{H^{2s}}.
$ If $2s>1$, then for $\eta>0$,
$ |\lr{Q^\ep(g,h), f}|\lesssim  \big((\eta+|g|_{L^1_{\gamma+2s}}\ep^{1-s})|W^\ep_s(D) W_{\gamma+2s}h|_{L^2}+ \eta^{-\f{2s-1}{1-s}}|g|_{L^1_{\gamma+2s}}^{\f{2s-1}{1-s}}|h|_{L^2_{\gamma+2s}}\big)|f|_{H^1}.$
\end{enumerate}
\end{thm}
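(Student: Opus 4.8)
The plan is to work with the weak formulation: by the change of variables $(v,v_*,\sigma)\mapsto(v',v'_*,\sigma)$ one has $\lr{Q^\ep(g,h),f}_v=\iiint b^\ep(\cos\theta)|v-v_*|^\gamma g_*h(f'-f)\,d\sigma\,dv_*\,dv$, and the three assertions amount to estimating this trilinear integral with $g,h,f$ in the indicated roles. The common first step is to dispose of the kinetic factor $|v-v_*|^\gamma=|v-v_*|^a|v-v_*|^b$ by distributing it among the three functions, using $|v-v_*|\lesssim\lr v\lr{v_*}$, $\lr{v'}\lesssim\lr v\lr{v_*}$ (from $|v'|^2\le|v|^2+|v_*|^2$), and cancellation errors of the form $u'\big(\tfrac{W(v)}{W(v')}-1\big)$, which are harmless because $|v-v'|=|v-v_*|\sin(\theta/2)$ vanishes with $\theta$; the commutators between the polynomial weights $W_l$ and the smoothing operators $W^\ep_q(D)$ produced along the way are controlled by Lemma~\ref{func} and its Corollary (in particular \eqref{func3}, \eqref{func6}, \eqref{func7}). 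This reduces everything to a $\gamma=0$, Maxwellian-type core estimate, in which the weight $\gamma+2s$ on $g$ is the $\gamma$ inherited from $|v-v_*|^\gamma$ plus a further $2s$ produced by the core estimate itself.

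For the core estimate I would use Bobylev's identity $\widehat{Q^\ep_0(G,H)}(\xi)=\int_{\SS^2}b^\ep(\tfrac{\xi}{|\xi|}\cdot\sigma)\big(\widehat G(\xi^-)\widehat H(\xi^+)-\widehat G(0)\widehat H(\xi)\big)\,d\sigma$ with $\xi^\pm=(\xi\pm|\xi|\sigma)/2$, split as $\widehat G(\xi^-)\big(\widehat H(\xi^+)-\widehat H(\xi)\big)+\big(\widehat G(\xi^-)-\widehat G(0)\big)\widehat H(\xi)$, and estimate the $\sigma$-integral of each piece by the symbol $A^\ep(\xi)=\int_{\SS^2}b^\ep(\cos\theta)\min\{|\xi^-|^2,1\}\,d\sigma$: in the range $|\xi^-|\ge1$ one bounds the differences crudely and uses $\int_{|\xi^-|\ge1}b^\ep\,d\sigma\le A^\ep(\xi)$ together with the change of variables $\xi\mapsto\xi^+$ (whose Jacobian is bounded below since $\theta\le\pi/2$); in the range $|\xi^-|<1$ one Taylor-expands — to first order for the $H$-difference, to second order for $\widehat G(\xi^-)-\widehat G(0)$, whose remainder $\lesssim|\xi^-|^2|G|_{L^1_2}$ interpolated against the trivial bound $2|G|_{L^1}$ over $\theta\in[\ep,\pi/2]$ produces exactly $A^\ep(\xi)\,|G|_{L^1_{2s}}$ (hence the $2s$ extra moment on $g$). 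Two Cauchy–Schwarz applications then give $|\lr{Q^\ep_0(G,H),f}|\lesssim|G|_{L^1_{2s}}\big(\int(A^\ep+1)|\widehat H|^2\big)^{1/2}\big(\int(A^\ep+1)|\widehat f|^2\big)^{1/2}$, and Proposition~\ref{symbol} turns $\int(A^\ep+1)|\widehat u|^2$ into $|W^\ep_s(D)u|_{L^2}^2$; the ``$+1$'' absorbs the low-frequency part $|\xi|\lesssim1$ by a plain $L^2\times L^2$ bound. This proves assertion~(1).

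Assertion~(2) follows by duality, $|Q^\ep(g,h)|_{L^2_2}=\sup_{|\phi|_{L^2}=1}\lr{Q^\ep(g,h),W_2\phi}$, after rearranging the Bobylev split so that \emph{both} factors $W^\ep_s(D)$ land on $h$ instead of being shared with $\phi$ — using that $W^\ep_s(D)$ is self-adjoint and the identity $W^\ep_s(\xi)^2=W^\ep_{2s}(\xi)$, which converts $|W^\ep_s(D)|^2$ into $W^\ep_{2s}(D)$; the weight $\gamma+2s+2$ accounts for the $W_2$ passed onto $h$. For assertion~(3) with $2s\le1$, the first-order Taylor term of $f'-f$ is retained, and since $\int_{\SS^2}b^\ep(\cos\theta)\min\{|\xi^-|^2,1\}\,d\sigma\lesssim\lr\xi^{2s}$ \emph{uniformly} in $\ep$ when $2s<1$, no $\ep$-loss occurs and one obtains the clean bound with $|f|_{H^{2s}}$. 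For $2s>1$ the operator is over-singular; here I would combine assertion~(1) with the elementary symbol inequalities $W^\ep_s(\xi)\le\ep^{1-s}\lr\xi$ and $W^\ep_s(\xi)^2=W^\ep_{2s}(\xi)$ — spending the $s$-gain on $f$ to produce $\ep^{1-s}|f|_{H^1}$ — together with a Young-type inequality which, at the price of a free parameter $\eta$, splits $\ep^{1-s}|W^\ep_s(D)W_{\gamma+2s}h|_{L^2}$ into the $\eta$-small term and the lower-order term $\eta^{-(2s-1)/(1-s)}|g|_{L^1_{\gamma+2s}}^{(2s-1)/(1-s)}|h|_{L^2_{\gamma+2s}}$.

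The real difficulty throughout is \emph{uniformity in $\ep$}. Everything hinges on the exact matching $A^\ep(\xi)+1\sim(W^\ep_s(\xi))^2$ of Proposition~\ref{symbol} and on never letting the ``total mass'' $\int_{\SS^2}b^\ep\,d\sigma\sim\ep^{-2s}$ appear undominated: each Cauchy–Schwarz in $\sigma$ must be balanced so that one keeps $A^\ep$ rather than $(\int b^\ep)^{1/2}(A^\ep)^{1/2}$, and the second-order Taylor expansion with its moment interpolation is precisely what avoids an $\ep^{-2s}$ at high frequency. It is the residual over-singularity for $2s>1$ that cannot be made $\ep$-free at all, forcing the $\ep^{1-s}$ loss and the $\eta$-splitting in assertion~(3); and it is this same bookkeeping that dictates carrying out every weight transfer with the $\ep$-adapted operators $W^\ep_q(D)$ and the commutator estimates \eqref{func6}–\eqref{func8}, so that no spurious power of $\ep$ slips in through the polynomial weights.
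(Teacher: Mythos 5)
Your proposal re-derives the key trilinear estimate from scratch via Bobylev's formula and the symbol bound $A^\ep(\xi)+1\sim(W^\ep_s(\xi))^2$ of Proposition~\ref{symbol}, whereas the paper's proof is much shorter because it simply \emph{cites} the sharp upper bound of Theorem~\ref{thmub} (from the earlier reference \cite{HE16}) together with the cutoff bound from \cite{mv}, and then decomposes in physical space $h=h_\phi+h^\phi$, $f=f_\phi+f^\phi$, treating the four resulting blocks separately and recombining with Lemma~\ref{func}. Your Fourier-side symbol reasoning (keeping $A^\eps$ through each Cauchy--Schwarz in $\sigma$) is essentially the dual of the paper's $\phi(\ep D)$ cutoff split, and it is more self-contained; the paper's route is modular and five lines long. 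What your sketch does gloss over is the weight bookkeeping: the conclusion of part~(1) puts $W_{a+2s}$ on $h$ and $W_b$ on $f$ with $a+b=\gamma$, so the total weight across $g,h,f$ is $2(\gamma+2s)$, which a pointwise estimate of the kinetic factor alone ($|v-v_*|\lesssim\lr v\lr{v_*}$) cannot produce — the additional $\gamma+2s$ worth of weight on $h$ and $f$, and in particular the $2s$ fractional moment on $h$, comes from a dyadic decomposition in the relative-velocity variable together with a moment interpolation inside the core estimate (exactly the accounting encapsulated in Theorem~\ref{thmub}). You describe the interpolation on $\widehat G$ to get $|G|_{L^1_{2s}}$, but the corresponding interpolation on $\widehat H$ (between the zeroth-order bound and the first-order Taylor bound, to land at order $2s$ when $2s<1$) is not spelled out, and the reduction ``to a $\gamma=0$ core'' is not a simple distribution of weights but requires either Lemma~\ref{commforweight} or the dyadic $\Phi_k^\gamma$ machinery. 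Your treatment of parts~(2) and~(3) — duality for~(2) using $W^\eps_s(\xi)^2\sim W^\eps_{2s}(\xi)$ (comparable, not equal), and for~(3) the uniform angular-integral bound when $2s\le1$ followed by the $\ep^{1-s}$ loss and $\eta$-splitting when $2s>1$ — does match the paper in outcome, though the paper again gets there by citing Theorem~\ref{thmub} and interpolating $|W^\ep_{2s-1}(D)h|_{L^2}$ between $|h|_{L^2}$ and $|W^\ep_s(D)h|_{L^2}$ via Lemma~\ref{func}.
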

\begin{proof} (i). {\it Proof of (1)}:
From Theorem \ref{thmub}, the upper bound for the collision operator with cutoff in \cite{mv} and the decomposition that
$\lr{Q^\ep(g,h), f}_v=\lr{Q^\ep(g,\lf{h}+\hf{h}),\lf{ f}+\hf{f}}_v, $
 we have
\beno &&|\lr{Q^\ep(g,\lf{h}),\lf{ f}}_v|\lesssim |g|_{L^1_{\gamma+2s}}|\lf{h}|_{H^s_{a+2s}}|\lf{f}|_{H^s_{b}},\,
 |\lr{Q^\ep(g,\lf{h}),\hf{ f}}_v|\lesssim |g|_{L^1_{\gamma+2s}}|\lf{h}|_{H^{2s}_{a+2s}}|\hf{f}|_{L^2_{b}},\\
 && |\lr{Q^\ep(g,\hf{h}),\lf{ f}}_v|\lesssim |g|_{L^1_{\gamma+2s}}|\hf{h}|_{L^2_{a+2s}}|\lf{f}|_{H^{2s}_{b}},\,
    |\lr{Q^\ep(g,\hf{h}),\hf{ f}}_v|\lesssim  \ep^{-2s}|g|_{L^1_{\gamma}}|\hf{h}|_{L^2_{a}}|\hf{f}|_{L^2_{b}}.\eeno
Thanks to Lemma \ref{func}, we get that
$|\lf{h}|_{H^{2s}_{a+2s}}\lesssim \ep^{-s} |\lf{h}|_{H^s_{a+2s}},$ which implies
\beno |\lr{Q^\ep(g,h), f}_v|&\lesssim& |g|_{L^1_{\gamma+2s}}\big(|\lf{h}|_{H^s_{a+2s}}+\ep^{-s}|\lf{h}|_{L^2_{a+2s}}\big)\big(|\lf{f}|_{H^s_{b}}+\ep^{-s}|\lf{f}|_{L^2_{b}}\big)\\ &\lesssim& |g|_{L^1_{\gamma+2s}}|W^\ep_s(D) W_{a+2s}h|_{L^2}|W^\ep_s(D) W_{b}f|_{L^2}.
\eeno It ends the proof of the first inequality.

(ii). {\it Proof of (2)}: We prove it by duality. By Theorem \ref{thmub} and the decomposition 
$ \lr{Q^\eps(g,h),f}_v=\lr{Q^\eps(g,h_\phi+h^\phi),f}_v,$  we get
\beno |\lr{Q^\eps(g,h_\phi ),f}_v|\lesssim  |g|_{L^1_{\gamma+2s+2}}|h_\phi|_{H^{2s}_{\gamma+2s+2}}|f|_{L^2_{-2}},\,
|\lr{Q^\eps(g,h^\phi ),f}_v|\lesssim \eps^{-2s} |g|_{L^1_{\gamma+2}}|h^\phi|_{L^2_{\gamma+2}}|f|_{L^2_{-2}},
 \eeno
which yields
$ |\lr{Q^\eps(g,h),f}_v|\lesssim |g|_{L^1_{\gamma+2s+2}}|W^\eps_{2s}(D)h|_{H^{2s}_{\gamma+2s+2}}|f|_{L^2_{-2}}.$

(iii). {\it Proof of (3)}:  The first inequality follows Theorem \ref{thmub}.
Thanks to Lemma \ref{func}, we deduce that if $2s>1$,
   \beno |\lr{Q^\ep(g,\lf{h}),\lf{ f}}_v|&\lesssim& |g|_{L^1_{\gamma+2s}}|\lf{h}|_{H^{2s-1}_{\gamma+2s}}|\lf{f}|_{H^1},\\
 |\lr{Q^\ep(g,\lf{h}),\hf{ f}}_v|&\lesssim& |g|_{L^1_{\gamma+2s}}|\lf{h}|_{H^{2s}_{\gamma+2s}}|\hf{f}|_{L^2_{}}\lesssim |g|_{L^1_{\gamma+2s}}|\lf{h}|_{H^{2s-1}_{\gamma+2s}}|\hf{f}|_{H^1},\\
  |\lr{Q^\ep(g,\hf{h}),\lf{ f}}_v|&\lesssim &|g|_{L^1_{\gamma+2s}}|\hf{h}|_{L^2_{\gamma+2s}}|\lf{f}|_{H^{2s}}\lesssim |g|_{L^1_{\gamma+2s}}\big(\epsilon^{-(2s-1)}|\hf{h}|_{L^2_{\gamma+2s}}\big)|\lf{f}|_{H^{1}}\\
\mbox{and} \quad   |\lr{Q^\ep(g,\hf{h}),\hf{ f}}_v|&\lesssim& \ep^{-2s}|g|_{L^1_{\gamma}}|\hf{h}|_{L^2_{\gamma}}|\hf{f}|_{L^2_{}}\lesssim |g|_{L^1_{\gamma}}\big(\ep^{-(2s-1)}|\hf{h}|_{L^2_{\gamma}}\big)|\hf{f}|_{H^1}.\eeno
 From this, we obtain that
 $ |\lr{Q^\ep(g,h), f}|\lesssim |g|_{L^1_{\gamma+2s}}|W^\ep_{2s-1}(D) W_{\gamma+2s}h|_{L^2}|f|_{H^1}.$
 By interpolation and Lemma \ref{func},  we have  \beno &&|W^\ep_{2s-1}(D) W_{\gamma+2s}h|_{L^2} \sim |\lf{h}|_{H^{2s-1}_{\gamma+2s}}+\ep^{-(2s-1)}|\hf{h}|_{L^2_{\gamma+2s}}
 \\&&\lesssim \eta^{-\f{2s-1}{1-s}}|\lf{h}|_{L^2_{\gamma+2s}}+\eta|\lf{h}|_{H^{s}_{\gamma+2s}}+\ep^{1-s} \ep^{-s}|\hf{h}|_{L^2_{\gamma+2s}}\lesssim  (\eta+\ep^{1-s})|W^\ep_s(D) W_{\gamma+2s}h|_{L^2}+ \eta^{-\f{2s-1}{1-s}}|h|_{L^2_{\gamma+2s}}.
 \eeno We get the desired result  by choosing $\eta:=\eta|g|_{L^1_{\gamma+2s}}^{-1}$. It ends the proof of the lemma.
\end{proof}

\subsection{Commutator estimates for $Q^\ep$} In this subsection, we want to give two types of estimates on the commutators.

\subsubsection{Commutator estimates for $Q^\ep$ with the  weight function $W_l$} We begin with

\begin{lem}\label{commforweight} Suppose $l>3+\gamma+s$ and $a, b\ge0$ with $a+b=\gamma$. Then for smooth functions $g, h$ and $f$, we have
 \beno  \big| \lr{Q^\eps(g, h)W_l-Q^\eps(g, hW_l) ,f}_v \big| &\lesssim& |g|_{L^1_{\gamma+2s}}|h|_{L^2_{l+a}}|f|_{L^2_b}+|g|_{L^1_{\gamma+2s}}|h|_{L^2_{l+a+2s-1}}|W^\ep_s(D)W_bf|_{L^2} \\&&+|g|_{L^2_{l+a}}|h|_{L^1_\gamma}|f|_{L^2_b}. \eeno
 Moreover, for $g\ge0$,   we   have
 \beno
 \big| \lr{Q^\eps(g, h)W_l-Q^\eps(g, hW_l) ,f}_v \big| &\lesssim&\eta \mathcal{E}^{\gamma,\eps}_{g}(f)+\eta^{-1} (|g|_{L^1_{\gamma+2s}}|h|_{L^2_{l-1}}^2+|g|_{L^1}|h|^2_{L^2_{l+\gamma/2}})\\&&+|g|_{L^1_{\gamma+2s}}|h|_{L^2_{l}}|f|_{L^2}+|g|_{L^1_\gamma}|h|_{L^2_{l+a}}|f|_{L^2_b}+|g|_{L^2_{l+a}}|h|_{L^1_\gamma}|f|_{L^2_b}.
 \eeno
\end{lem}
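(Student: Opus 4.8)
\textbf{Proof strategy for Lemma \ref{commforweight}.}

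The plan is to rewrite the commutator as a collision-type integral whose weight difference $W_l(v') - W_l(v)$ (or more precisely $W_l'/W_l - 1$ after factoring) carries a gain in the angular variable. Concretely, using the change-of-variables identity $\lr{Q^\eps(g,h),f}_v = \iiint g_* h (f'-f) B^\eps \,d\sigma dv_* dv$, one has
\[
\lr{Q^\eps(g,h)W_l - Q^\eps(g,hW_l),f}_v = \iiint g_* h\,\big(W_l - W_l'\big) f'\, B^\eps(v-v_*,\sigma)\,d\sigma dv_* dv,
\]
after symmetrizing appropriately; here $W_l' = W_l(v')$. The key structural fact is the Taylor expansion $W_l(v') - W_l(v) = (\nabla W_l)(v)\cdot(v'-v) + O(|v'-v|^2 \lr{v}^{l-2}\lr{v_*}^?)$, where $|v'-v| \lesssim |v-v_*|\sin(\theta/2)$. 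The first-order term is the dangerous one since $\int \sin\theta\, b(\cos\theta)\,\theta\, d\theta$ is only borderline; but its $\sigma$-average against $b^\eps$ can be handled by the standard trick of pairing $+\sigma$ with $-\sigma$ (the antisymmetric part integrates to a lower-order cancellation), leaving an $O(\lr{v}^{l-1}\lr{v-v_*}^{1+\gamma})$ contribution. The second-order term is integrable in $\theta$ and produces $O(\lr{v}^{l-2}\lr{v-v_*}^{2+\gamma})$ times $\int \sin\theta\,b(\cos\theta)\sin^2(\theta/2)\,d\theta < \infty$. Both contributions should be reorganized as a sum of terms, each of which is either (a) an ordinary weighted $L^1$--$L^2$--$L^2$ trilinear form controlled by Theorem \ref{thmub} / Theorem \ref{ubofQe}, or (b) a genuine non-cutoff piece retaining one derivative, which one bounds using the $W^\eps_s(D)$-type upper bounds in part (1) of Theorem \ref{ubofQe}. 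The role of the three terms on the right is: $|g|_{L^1_{\gamma+2s}}|h|_{L^2_{l+a}}|f|_{L^2_b}$ from the second-order Taylor term and the $f = f' + (f-f')$ splitting; $|g|_{L^1_{\gamma+2s}}|h|_{L^2_{l+a+2s-1}}|W^\eps_s(D)W_b f|_{L^2}$ from the surviving first-order fractional-derivative piece; and $|g|_{L^2_{l+a}}|h|_{L^1_\gamma}|f|_{L^2_b}$ from a cancellation-lemma term where the roles of $g$ and $h$ are effectively exchanged (the part where the singularity is absorbed by $g_*$ rather than by the regularity of $f$). The weight distribution $a+b=\gamma$ is arranged so that $\lr{v-v_*}^\gamma \le \lr{v}^a\lr{v_*}^{a}\cdot\lr{v}^b \lesssim \lr{v}^a\lr{v_*}^a \cdot(\text{stuff})$ can be split between the $h$-weight and the $f$-weight after estimating $\lr{v_*}$-powers into the $L^1$ norm of $g$.

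For the second, refined inequality (valid for $g \ge 0$), the plan is to revisit the term carrying a fractional derivative on $f$. Instead of using $|W^\eps_s(D)W_b f|_{L^2}$ as a standalone factor, I would recognize that for nonnegative $g$ this derivative-type quantity is controlled by the coercive functional $\mathcal{E}^{\gamma,\eps}_g(f)$ via the lower bounds in Theorem \ref{thmlb1} (which give $\mathcal{E}^{0,\eps}_\mu(W_{\gamma/2}f) + |W^\eps_s(D)W_{\gamma/2}f|^2_{L^2} \lesssim \mathcal{E}^{\gamma,\eps}_g(f) + |f|^2_{L^2_{\gamma/2}}$, or rather its companion estimate expressing $\mathcal{E}^{\gamma,\eps}_g(f)$ directly in terms of the square of the finite-difference). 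Applying Cauchy--Schwarz with a parameter $\eta$ to the bilinear-in-$(f'-f)$ structure — i.e. writing the offending term as $\iiint \sqrt{g_*}\, (\text{weighted } h)\cdot \sqrt{g_*}(f'-f) B^\eps$ and splitting via Young — absorbs $\eta\,\mathcal{E}^{\gamma,\eps}_g(f)$ and leaves $\eta^{-1}$ times a weighted $L^2$-norm of $h$ (the $|h|^2_{L^2_{l-1}}$ and $|h|^2_{L^2_{l+\gamma/2}}$ terms, the latter coming from the part of $\mathcal{E}^{\gamma,\eps}_g$ behaving like a weighted $L^2$ mass). The remaining terms $|g|_{L^1_{\gamma+2s}}|h|_{L^2_l}|f|_{L^2} + |g|_{L^1_\gamma}|h|_{L^2_{l+a}}|f|_{L^2_b} + |g|_{L^2_{l+a}}|h|_{L^1_\gamma}|f|_{L^2_b}$ are exactly the non-coercive leftovers carried over from the first inequality.

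The main obstacle I anticipate is the careful bookkeeping of weights in the $\sigma$-integration: one must track how powers of $\lr{v_*}$ are absorbed into $|g|_{L^1_{\gamma+2s}}$ versus $|g|_{L^1_\gamma}$ versus $|g|_{L^2_{l+a}}$, and simultaneously ensure that $v$ and $v'$ carry comparable weights (which requires the region $|v-v_*| \gtrsim \lr{v}$ versus $|v-v_*| \ll \lr{v}$ dichotomy, so that on the near-grazing set $\lr{v'}\sim\lr{v}$). The condition $l > 3 + \gamma + s$ is precisely what is needed so that, after the Taylor expansion and the cancellation trick, all $v_*$-integrals converge and all the $h$-weights that appear ($l+a$, $l+a+2s-1$, $l-1$, $l+\gamma/2$) stay below $l$ in the "lower order" slots — i.e. the commutator genuinely loses less weight than a full power of $W_l$. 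Getting the $2s-1$ exponent exactly right in the fractional term (rather than $2s$) is the other delicate point, and this comes from interpolating $H^{2s} = [H^s, H^1]$ or equivalently from distributing one of the two half-derivatives in the non-cutoff norm onto $h$ and keeping the other, combined with Lemma \ref{func} to convert between $\lf{h}$, $\hf{h}$ and the $W^\eps_s(D)$ norm.
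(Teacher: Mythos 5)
Your broad architecture matches the paper's: commutator written as $\iiint B^\eps g_* h\,\big(W_l'-W_l\big) f'\,d\sigma dv_* dv$, Taylor expansion of $W_l'-W_l$, symmetry cancellation for the first‐order term, and (for the second inequality) Cauchy--Schwarz with a parameter $\eta$ against the coercive functional $\mathcal{E}^{\gamma,\eps}_g(f)$. That last step is exactly the paper's treatment of $I_{1,1}$. However, there are two genuine gaps.

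First, and most importantly, the mechanism by which you propose to generate the term $|g|_{L^1_{\gamma+2s}}|h|_{L^2_{l+a+2s-1}}|W^\ep_s(D)W_b f|_{L^2}$ in the first inequality does not work as stated. You invoke the $\sigma$-cancellation to make the first-order term integrable, ``leaving an $O(\lr{v}^{l-1}\lr{v-v_*}^{1+\gamma})$ contribution'', and then expect to close by citing Theorem~\ref{ubofQe} plus interpolation. But the cancellation $\int_{\SS^2} b\,(v-v')\,d\sigma = \int_{\SS^2} b\,\tfrac{1-\cos\theta}{2}\,d\sigma\,(v-v_*)$ is a statement about the $\sigma$-average of the vector $v'-v$; it only helps after you have removed the $\sigma$-dependence from the remaining integrand, i.e.\ after replacing $f'$ by $f$. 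The residual $f'-f$ piece is where the singularity genuinely lives, and in the general (non-nonnegative-$g$) case it cannot be absorbed by $\mathcal{E}^{\gamma,\eps}_g$. The paper handles this by a further split $f'=(f_\phi)'+(f^\phi)'$, then for $(f_\phi)'-(f_\phi)$ it performs a dyadic decomposition in the $v$-frequency of $f$ paired with an angular cutoff at $\theta\sim 2^{-j}|v-v_*|^{-1}$, and balances a mean-value-theorem gain at small angles against the angular singularity at large angles. This is where the exponent $2s-1$ (rather than $2s$) and the extra weight $l+a+2s-1$ actually come from: the $-1$ is not an interpolation artifact, it is the one power of $|v'-v|\sim\theta$ already spent in the Taylor expansion, realized term by term in the Littlewood--Paley sum. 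An appeal to $H^{2s}=[H^s,H^1]$ does not produce this; it is a much more local bookkeeping.

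Second, your attribution of the term $|g|_{L^2_{l+a}}|h|_{L^1_\gamma}|f|_{L^2_b}$ is off. It is not a ``cancellation-lemma term where the roles of $g$ and $h$ are exchanged''. In the paper it arises from the regime $|v|\lesssim|v_*|$ together with non-grazing angles (the $(1-\psi(|v-v_*|\sin(\theta/2)))$ and the high-order $|v'-v|^l$ parts of the Taylor remainder), where the whole weight $W_l$ must be loaded onto $g$ through $|g|_{L^2_{l+a}}$ because neither $v$ nor $v'$ can carry it. Closing that term requires a change of variables $v_*\to v'$, the Jacobian $|\partial v_*/\partial v'|$, and the specific exponent count that needs $l>3+\gamma+s$ to make the angular integral converge. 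You correctly noted that the condition on $l$ and the $|v|\lessgtr|v_*|$ dichotomy are essential, but you left that entire regime undeveloped, and it is precisely the regime that produces this third term.

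In short: the skeleton (Taylor expansion, cancellation, coercive Cauchy--Schwarz) is right, but the fractional-derivative term needs the angular/frequency dyadic decomposition, not interpolation, and the $|g|_{L^2}$ term needs the large-relative-velocity regime worked through explicitly; neither is present in the proposal as written.
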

\begin{proof}   We perform  the following decomposition that
\beno &&\big| \lr{Q^\eps(g, h)W_l-Q^\eps(g, hW_l) ,f}_v |= \int |v-v_*|^\gamma b^\eps g_*hf'\big((W_l)'-W_l\big) (\mathrm{1}_{|v|\ge 4|v_*|}+\psi(|v-v_*|\sin(\theta/2))\\&&\qquad\times\mathrm{1}_{|v|\le 4|v_*|})d\sigma dv_*dv+\int |v-v_*|^\gamma b^\eps g_*hf'\big((W_l)'-W_l\big) (1-\psi(|v-v_*|\sin\f{\theta}2))\mathrm{1}_{|v|\le 4|v_*|}d\sigma dv_*dv\\
&&\qquad\eqdefa I+II.\eeno
 Let us give the estimates term by term. Before that, we remark that the following estimate will be frequently used in the proof: if $\kappa(v)=v+(1-\kappa)(v'-v)$ with $\kappa\in[0,1]$, then
 \ben\label{changev} \mathrm{1}_{|v|\ge 4|v_*|} |v|\sim \mathrm{1}_{|v|\ge 4|v_*|} |v-v_*|\sim  \mathrm{1}_{|v|\ge 4|v_*|} |\kappa(v)|.\een

{\it Step 1: Estimate of $I$.} Observe that 
$ (W_l)'-W_l=(\na W_l)\cdot(v'-v)+\f12\int_0^1 (1-\kappa)(\na^2 W_l)(\kappa(v)): (v'-v)\otimes(v'-v)d\kappa,$
where $\kappa(v)=v+(1-\kappa)(v'-v)$. Then $I$ has the further decomposition that $I=I_1+I_2$ where  
\beno I_1&=& \int |v-v_*|^\gamma b^\eps g_*h f'(\na W_l)\cdot (v'-v)  (\mathrm{1}_{|v|\ge 4|v_*|}+\psi(|v-v_*|\sin\f{\theta}2)\mathrm{1}_{|v|\le 4|v_*|}) d\sigma dv_* dv, \\ I_2&=&\f12\int(1-\kappa) |v-v_*|^\gamma b^\eps g_*h f'(\na^2 W_l)(\kappa(v)): (v'-v)\otimes(v'-v)  (\mathrm{1}_{|v|\ge 4|v_*|}\\&&+\psi(|v-v_*|\sin\f{\theta}2)\mathrm{1}_{|v|\le 4|v_*|}) d\sigma dv_* dvd\kappa 
\eqdefa I_2^1+I_2^2. \eeno

{\it Step1.1: Estimates of $I_1$.}
We will give two types of the estimates for $I_1$ due to the different property of the function $g$.
\smallskip

 (i). For the case of $g\ge0$, we see that
\beno  I_1&=&\int |v-v_*|^\gamma b^\eps g_*h (f'-f)(\na W_l)\cdot (v'-v) (\mathrm{1}_{|v|\ge 4|v_*|}+\psi(|v-v_*|\sin\f{\theta}2)\mathrm{1}_{|v|\le 4|v_*|}) d\sigma dv_* dv\\
&&+\int |v-v_*|^\gamma b^\eps g_*h f(\na W_l)\cdot (v'-v)  (\mathrm{1}_{|v|\ge 4|v_*|}+\psi(|v-v_*|\sin\f{\theta}2)\mathrm{1}_{|v|\le 4|v_*|})d\sigma dv_* dv\\
&\eqdefa& I_{1,1}+I_{1,2}.\eeno

{\it \underline{Estimate of $I_{1,1}$.}} By Cauchy-Schwartz inequality and \eqref{changev},  one has
\beno  |I_{1,1}|&\le& \big(\mathcal{E}^{\gamma,\eps}_{g}(f)\big)^{1/2}\big(\int |v-v_*|^\gamma b^\eps g_*h^2|(\na W_l)\cdot (v'-v)|^2  (\mathrm{1}_{|v|\ge 4|v_*|}+\psi(|v-v_*|\sin\f{\theta}2)\mathrm{1}_{|v|\le 4|v_*|})^2 d\sigma dv_* dv\big)^{1/2}\\
&\lesssim &\eta \mathcal{E}^{\gamma,\eps}_{g}(f)+\eta^{-1} (|g|_{L^1}|h|^2_{L^2_{l+\gamma/2}}+|g|_{L^1_{\gamma+2s}}|h|_{L^2_{l-1}}^2).\eeno

{\it \underline{ Estimate of $I_{1,2}$.}} By the facts that $|v-v_*|\sin\f{\theta}2=|v-v'|$ and 
\beno &&\int_{\SS^2}  b(
\frac{v-v_*}{|v-v_*|}\cdot \sigma )(v-v')\psi(|v-v'|)d\sigma=\int_{\SS^2}  b(  \frac{v-v_*}{|v-v_*|}\cdot\sigma )
\f{1-\langle
	\frac{v-v_*}{|v-v_*|},\sigma\rangle}{2} \psi (|v-v'|)d\sigma
(v-v_*),
\eeno
 we derive that
$ |I_{1,2}|\lesssim |g|_{L^1_{\gamma+2s}}|h|_{L^2_{l-1}}|f|_{L^2}+|g|_{L^1}|h|_{L^2_{l+a}}|f|_{L^2_b}. $

Now putting together these two estimates, we conclude that 
\[ |I_1|\lesssim  \eta \mathcal{E}^{\gamma,\eps}_{g}(f)+\eta^{-1} (|g|_{L^1}|h|^2_{L^2_{l+\gamma/2}}+|g|_{L^1_{\gamma+2s}}|h|_{L^2_{l-1}}^2)+ |g|_{L^1_{\gamma+2s}}|h|_{L^2_{l-1}}|f|_{L^2}+|g|_{L^1}|h|_{L^2_{l+a}}|f|_{L^2_b}.\]

(ii). For the general case,  $I_1$ is decomposed into tow parts:
\beno  I_1&=&\int |v-v_*|^\gamma b^\eps g_*h (f_\phi)'(\na W_l)\cdot (v'-v) (\mathrm{1}_{|v|\ge 4|v_*|}+\psi(|v-v_*|\sin\f{\theta}2)\mathrm{1}_{|v|\le 4|v_*|}) d\sigma dv_* dv\\&&+\int |v-v_*|^\gamma b^\eps g_*h (f^\phi)'(\na W_l)\cdot (v'-v)  (\mathrm{1}_{|v|\ge 4|v_*|}+\psi(|v-v_*|\sin\f{\theta}2)\mathrm{1}_{|v|\le 4|v_*|})d\sigma dv_* dv\\
&\eqdefa& I_{1,3}+I_{1,4}.\eeno

{\it \underline{ Estimate of $I_{1,4}$.}} Thanks to \eqref{changev}, one has \beno&& |v-v_*|^\gamma  |(\na W_l)\cdot (v'-v)|(\mathrm{1}_{|v|\ge 4|v_*|}+\psi(|v-v_*|\sin\f{\theta}2)\mathrm{1}_{|v|\le 4|v_*|})\\&&\lesssim  \mathrm{1}_{|v|\ge 4|v_*|} W_{l+a}(v)W_{b}(v') \theta+\mathrm{1}_{|v|\le 4|v_*|} \psi(|v-v_*|\sin\f{\theta}2)W_{l-1}|v-v_*|^{\gamma}(|v-v_*|\sin\theta).\eeno
Then by Cauchy-Schwartz inequality, we have
\beno  |I_{1,4}| &\lesssim& (|\log\eps|\mathrm{1}_{2s\le1}+\eps^{1-2s}\mathrm{1}_{2s>1})(|g|_{L^1_{\gamma+\min\{1,2s\}}}|h|_{L^2_{l-1}}|f^\phi|_{L^2}+ |g|_{L^1_\gamma}|h|_{L^2_{l+a}}|f^\phi|_{L^2_b}).\eeno

 {\it \underline{ Estimate of $I_{1,3}$.}} We split $I_{1,3}$ into two parts: $I_{1,3}=A_1+A_2$ where
\beno
A_1&\eqdefa& \int |v-v_*|^\gamma b^\eps g_*h( (f_\phi)'-f_\phi)(\na W_l)\cdot (v'-v) (\mathrm{1}_{|v|\ge 4|v_*|}+\psi(|v-v_*|\sin\f{\theta}2)\mathrm{1}_{|v|\le 4|v_*|}) d\sigma dv_*dv,\\
A_2&\eqdefa& \int |v-v_*|^\gamma b^\eps g_*h f_\phi(\na W_l)\cdot (v'-v) (\mathrm{1}_{|v|\ge 4|v_*|}+\psi(|v-v_*|\sin\f{\theta}2)\mathrm{1}_{|v|\le 4|v_*|}) d\sigma dv_* dv.
\eeno
We remark that the structure of   $A_2$ is very similar to that of  $I_{1,2}$. Thus we have
\beno |A_2|&\lesssim& |g|_{L^1_{ \gamma+2s}}|h|_{L^2_{l-1}}|f_\phi|_{L^2}+|g|_{L^1}|h|_{L^2_{l+a}}|f_\phi|_{L^2_b}.
\eeno
In the next we will give the estimate to $A_1$. Observe that
\beno A_1&=&\sum_{j\ge-1} \int |v-v_*|^\gamma b^\eps g_*h( (f_\phi)_j'-(f_\phi)_j)(\na W_l)\cdot (v'-v)(\mathrm{1}_{|v|\ge 4|v_*|}+\psi(|v-v_*|\sin\f{\theta}2)\mathrm{1}_{|v|\le 4|v_*|})  d\sigma dv_*dv
\\&=&\sum_{j\ge-1}  \bigg(\int |v-v_*|^\gamma b^\eps g_*h( (f_\phi)_j'-(f_\phi)_j)(\na W_l)\cdot (v'-v)\mathrm{1}_{\theta\le 2^{-j}|v-v_*|^{-1}}(\mathrm{1}_{|v|\ge 4|v_*|}\\
&&+\psi(|v-v_*|\sin\f{\theta}2)\mathrm{1}_{|v|\le 4|v_*|})  d\sigma dv_*dv+\int |v-v_*|^\gamma b^\eps g_*h( (f_\phi)_j'-(f_\phi)_j)(\na W_l)\cdot (v'-v) \mathrm{1}_{\theta>2^{-j}|v-v_*|^{-1}}\\
&&\times(\mathrm{1}_{|v|\ge 4|v_*|}+\psi(|v-v_*|\sin\f{\theta}2)\mathrm{1}_{|v|\le 4|v_*|}) d\sigma dv_*dv\bigg)
\eqdefa \sum_{j\ge-1} (A_{1,1}^j+A_{1,2}^j).
\eeno

For  $A_{1,1}^j$, due to  the fact $ |(\na W_l)\cdot (v'-v)|\lesssim W_{l-1}(v)|v-v_*|\theta$ and the mean value theorem, we have
\beno  |A_{1,1}^j|&\lesssim& \int |v-v_*|^{\gamma+2}\theta^2 b^\eps |g_*||W_{l-1}h||\big(\na (f_\phi)_j\big)(\kappa(v))| \mathrm{1}_{\theta\le 2^{-j}|v-v_*|^{-1}} d\sigma dv_*dvd\kappa\\
&\lesssim& \int |v-v_*|^{2-2s}(\mathrm{1}_{4|v_*|\le |v|} W_{2s+a}(v)W_{b}(\kappa(v))+ \mathrm{1}_{4|v_*|\ge |v|}  W_{\gamma+2s}(v_*))\theta^2 b^\eps |g_*|
\\&&\times|W_{l-1}h||\big(\na (f_\phi)_j\big)(\kappa(v))| \mathrm{1}_{\theta\le 2^{-j}|v-v_*|^{-1}} d\sigma dv_*dvd\kappa\\
&\lesssim&\big(\int  |v-v_*|^{2-2s} \theta^2 b^\eps\big(|g_*||W_{l+a+2s-1}h|^2  +|gW_{\gamma+2s}||W_{l-1}h|_{L^2}^2  \big)  \mathrm{1}_{\theta\le 2^{-j}|v-v_*|^{-1}}  d\sigma dv_*dvd\kappa\big)^{1/2} \\&&\times\big(\int |v-v_*|^{2-2s} \theta^2 b^\eps \big(|g_*||\big(\na (f_\phi)_jW_{b}\big)(\kappa(v))|^2 +|gW_{\gamma+2s}||\big(\na (f_\phi)_j\big)(\kappa(v))|^2 \big) \\&&\times \mathrm{1}_{\theta\le 2^{-j}|v-v_*|^{-1}} d\sigma dv_*dvd\kappa\big)^{1/2}
\lesssim 2^{(2s-2)j}|g|_{L^1_{\gamma+2s}}|h|_{L^2_{l+a+2s-1}} | (f_\phi)_j |_{H^{1}_b}.\eeno

For  $A_{1,2}^j$, we first have  
\beno && |A_{1,2}^j| \lesssim \int  |v-v_*|^{\gamma} b^\eps |g_*||W_{l-1}h|( |(f_\phi)_j'|+|(f_\phi)_j)|) |v-v_*|\sin\theta \mathrm{1}_{|v_*|\le |v|} \mathrm{1}_{\theta\ge 2^{-j}|v-v_*|^{-1}}d\sigma dv_*dv \\&&
+\int  |v-v_*|^{\gamma} b^\eps |g_*||W_{l-1}h|( |(f_\phi)_j'|+|(f_\phi)_j)|) (|v-v_*|\sin\theta )\mathrm{1}_{|v|\le 4|v_*|} \psi(|v-v_*|\sin\f{\theta}2) \mathrm{1}_{\theta\ge 2^{-j}|v-v_*|^{-1}}d\sigma dv_*dv. \eeno
By Cauchy-Schwartz inequality and  the upper bounds for the collision operator in \cite{mv}, one has\beno
&&|A_{1,2}^j|\lesssim  \big(\int  (\mathrm{1}_{2s>1}|v-v_*|^{2s+1}+\mathrm{1}_{2s\le1} |\log |v-v_*||) b^\eps |v-v_*|^{2a}|g_*||W_{l-1}h|^2\sin\theta  \mathrm{1}_{|v_*|\le |v|}\\&&\quad\times\mathrm{1}_{\theta\ge 2^{-j}|v-v_*|^{-1}} d\sigma dv_*dv\big)^{\f12}
\big(\int  (\mathrm{1}_{2s>1}|v-v_*|^{1-2s}+\mathrm{1}_{2s\le1}|\log |v-v_*||^{-1}) b^\eps |v-v_*|^{2b} |g_*| ( |(f_\phi)_j'|^2\\&&\quad+|(f_\phi)_j)|^2)  \sin\theta \mathrm{1}_{|v_*|\le |v|}\mathrm{1}_{\theta\ge 2^{-j}|v-v_*|^{-1}} d\sigma dv_*dv\big)^{\f12}
 +\big(\int  |v-v_*|^{\gamma} b^\eps |g_*||W_{l-1}h|^2 (\mathrm{1}_{2s>1}|v-v_*|\sin\theta \\&&\quad+\mathrm{1}_{2s\le1} |v-v_*|^{2s}\sin\theta^{2s}) \mathrm{1}_{|v|\le 4|v_*|} \mathrm{1}_{|v-v_*|^{-1}\theta\ge 2^{-j}|v-v_*|^{-1}}d\sigma dv_*dv\big)^{\f12}
  \big(\int  |v-v_*|^{\gamma} b^\eps |g_*| ( |(f_\phi)_j'|^2+|(f_\phi)_j)|^2)  \\&&\quad\times(\mathrm{1}_{2s>1}|v-v_*|\sin\theta+\mathrm{1}_{2s\le1} |v-v_*|^{2s}\sin\theta^{2s})\mathrm{1}_{|v|\le 4|v_*|} \mathrm{1}_{|v-v_*|^{-1}\theta\ge 2^{-j}|v-v_*|^{-1}}d\sigma dv_*dv\big)^{\f12}
\\ &&\quad\lesssim \mathrm{1}_{2s>1}2^{(2s-1)j}(|g|_{L^1_{\gamma}}|h|_{L^2_{l+a+2s-1}}|(f_\phi)_j|_{L^2_b} +|g|_{L^1_{\gamma+2s}}|h|_{L^2_{l-1}}|(f_\phi)_j|_{L^2})
+\mathrm{1}_{2s\le 1}j(|g|_{L^1_{\gamma}}|h|_{L^2_{l+a-1}} |(f_\phi)_j|_{L^2_b} \\&&\qquad+|g|_{L^1_{\gamma+2s}}|h|_{L^2_{l-1}}|(f_\phi)_j|_{L^2}).\eeno

Thanks to Theorem \ref{baslem3}, we have
$|A_{1}| \lesssim  |g|_{L^1_{\gamma+2s}}|h|_{L^2_{l+a+2s-1}} |f_\phi|_{H^s_b},$
  which together with the estimate of $A_2$ yield that $ |I_{1,3}|\lesssim |g|_{L^1_{ \gamma+2s}}|h|_{L^2_{l-1}}|f_\phi|_{L^2}+|g|_{L^1_{\gamma+2s}}|h|_{L^2_{l+a+2s-1}}|f_\phi|_{H^{s}_b} $.
Now combining with the  estimates to $I_{1,3}$ and $I_{1,4}$, we have 
\beno |I_1|\lesssim |g|_{L^1_{\gamma+2s}}|h|_{L^2_{l+a}}|f|_{L^2_b}+|g|_{L^1_{\gamma+2s}}|h|_{L^2_{l+a+2s-1}}|W^\ep_s(D)W_bf|_{L^2}. \eeno

\smallskip

{\it Step1.2: Estimates of $I_2$.} For the term $I_2$, it is easy to check that \beno |(\na^2 W_l)(\kappa(v)): (v'-v)\otimes(v'-v)|  
&\le& C_l( W_{l-2}(v)|v-v'|^2+|v-v'|^l). \eeno 

{\it \underline{ Estimate of $I_{2}^1$.}} Thanks to \eqref{changev}, we have
  \beno  \mathrm{1}_{|v|\ge 4|v_*|}|v-v_*|^\gamma ( W_{l-2}(v)|v-v'|^2+|v-v'|^l)\lesssim   W_{l+a}(v)W_b(v')\theta^2,\eeno
  which implies that
  $ |I_2^1|\lesssim  |g|_{L^1_b}|h|_{L^2_{l+a}}|f|_{L^2_b}.$

{\it \underline{ Estimate of $I_2^2$.}}
Observe that 
  \beno    &&\mathrm{1}_{|v|\le 4|v_*|}\psi(|v-v_*|\sin\f{\theta}2)|v-v_*|^\gamma ( W_{l-2}(v)|v-v'|^2+|v-v'|^l)\\
  &&\lesssim \mathrm{1}_{|v|\le 4|v_*|}\psi(|v-v_*|\sin\f{\theta}2)|v-v_*|^{\gamma+2}\theta^2W_{l-2}+|v-v_*|^{l+a}\theta^{l-b}(W_b(v)+W_b(v')).
  \eeno
Then one has
$  |I_2^2|\lesssim |g|_{L^1_{\gamma+2s}}|h|_{L^2_{l-2}}|f|_{L^2} +\int   b^\eps\theta^{l-b} |(gW_{l+a})_*||hW_b||(fW_b)'|d\sigma dv_* dv. $
If we denote the integration in the above  by $B$, then by Cauchy-Schwartz's inequality, we have
\beno B&\lesssim& \big(\int   b^\eps\theta^{2} |(gW_{l+a})_*|^2|hW_b| d\sigma dv_* dvd\kappa\big)^{1/2} \big( \int   b^\eps\theta^{2l-2b-2}  |hW_b|| (fW_b) '|^2d\sigma dv_* dvd\kappa\big)^{1/2}. \eeno
 Let $\cos\tilde{\theta}=\frac{v'-v}{|v'-v|}\cdot \sigma$, then we have $\tilde{\theta}+\theta/2=\pi/2$. It implies that $b^\eps\theta^{2l-2b-2}\sim (\cos\tilde{\theta})^{2l-2b-4-2s}$.
   Use the facts $l>3+\gamma+s$ and $\big|\f{\pa v_*}{\pa v'}\big|=\frac{4}{1-\frac{v-v_*}{|v-v_*|}\cdot \sigma},$
then we have
\beno B\lesssim  |g|_{L^1_{l+a}}|h|_{L^1}^{\f12}\bigg(\int (\cos\tilde{\theta})^{2l-2\gamma-6-2s}|hW_b|| (fW_b)'|^2d\tilde{\theta} dv'dv\bigg)^{\f12} \lesssim |g|_{L^1_{l+a}}|h|_{L^1_b}|f|_{L^2_b}^2.\eeno
It implies that
$|I_2^2|\lesssim |g|_{L^1_{\gamma+2s}}|h|_{L^2_{l-2}}|f|_{L^2}+|g|_{L^2_{l+a}}|h|_{L^1_b}|f|_{L^2_b}.$

Finally we derive that
\beno  |I_2|\lesssim  |g|_{L^1_b}|h|_{L^2_{l+a}}|f|_{L^2_b}+|g|_{L^1_{\gamma+2s}}|h|_{L^2_{l-2}}|f|_{L^2}+|g|_{L^2_{l+a}}|h|_{L^1}|f|_{L^2}.\eeno

{\it Step 2: Estimate of $II$.} 
We have the decomposition that $II=II_1+II_2$ where \beno 
II_1&\eqdefa&\int  |v-v_*|^\gamma b^\eps g_*hf' W_l  (1-\psi(|v-v_*|\sin\f{\theta}2))\mathrm{1}_{|v|\le 4|v_*|}d\sigma dv_*dv,\\ II_2&\eqdefa&\int  |v-v_*|^\gamma b^\eps g_*hf' (W_l)'  (1-\psi(|v-v_*|\sin\f{\theta}2))\mathrm{1}_{|v|\le 4|v_*|}d\sigma dv_*dv.\eeno
Let us give a short proof to the estimates of $II_1$ and $II_2$.

{\it  Step 2.1: Estimate of $II_1$.}  By Cauchy-Schwartz inequality, we have
\beno  |II_1|
&\lesssim&\big(\int |v-v_*|^\gamma b^\eps |g_*|h^2W_l^2(1-\psi(|v-v_*|\sin\f{\theta}2))\mathrm{1}_{|v|\le 4|v_*|}d\sigma dv_*dv\big)^{\f12}\\
&&\times\big(\int |v-v_*|^\gamma b^\eps |g_*|(f')^2 (1-\psi(|v-v_*|\sin\f{\theta}2))\mathrm{1}_{|v|\le 4|v_*|}d\sigma dv_*dv\big)^{\f12}\\
&\lesssim&  |g|_{L^1_{\gamma+2s}}|h|_{L^2_l}|f|_{L^2}, \eeno
where we use the change of variables and the fact $|v-v_*|\sim|v'-v_*|\lesssim |v_*|$.

{\it   Step 2.2: Estimate of $II_2$.} 
Observe that $(W_l)'\lesssim W_l+|v-v'|^l$ and
   \beno &&\mathrm{1}_{|v|\le 4|v_*|} |v-v_*|^\gamma |v-v'|^{l} \\
&\lesssim&  \mathrm{1}_{|v|\le 4|v_*|}|v-v_*|^{a+b}(|v'|^b+|v|^b)|v-v'|^{l-b}\lesssim |v_*|^{l+a} \theta^{l-b}(|v'|^b+|v|^b).\eeno
Then following the argument applied to $I_2^2$, we get that
\beno   |II_2|&\lesssim&   |g|_{L^1_{\gamma+2s}}|h|_{L^2_l}|f|_{L^2}+
|g|_{L^2_{l+a}}|h|_{L^1_{\gamma}}|f|_{L^2_b}.\eeno
 
Finally summing up all the estimates, we  derive the desired inequalities in the lemma.
\end{proof}

Combining the above estimates, we are led to the following corollary.

\begin{cor}\label{basic-estimates} Suppose $g, h$ and $f$ are smooth functions. We have
\beno  &&(i). \,|\lr{Q^\eps(g,h)W_l, f}_v|\lesssim|g|_{L^1_{\gamma+2s}}|W^\ep_s(D)W_{l+\gamma/2+2s} h|_{L^2}|W^\ep_s(D)W_{\gamma/2} f|_{L^2}
+|g|_{L^2_{l+\gamma/2}}|h|_{L^1_2}|f|_{L^2_{\gamma/2}};\\  
&&(ii).\, |\lr{Q^\eps(g,h)W_l, f}_v|\lesssim \big(|g|_{L^1_{\gamma+2s}}|W^\ep_s(D)W_{l+\gamma+2s}h|_{L^2}
+|g|_{L^2_{l+\gamma}}|h|_{L^1_2}\big)|W^\ep_s(D)f|_{L^2}; 
\\&&(iii). \,  |\lr{Q^\eps(g,h)W_l, f}_v|\lesssim \big( \mathrm{1}_{2s>1}(\eta+\eps^{1-s}|g|_{L^1_{\gamma+2s}})|W^\ep_s(D)W_{l+\gamma+2s}h|_{L^2}+
\eta^{-\f{2s-1}{1-s}}|g|_{L^1_{\gamma+2s}}^{\f{2s-1}{1-s}}|h|_{L^2_{l+\gamma+2s}}\\&&\qquad\qquad\qquad\qquad\qquad\qquad+|g|_{L^1_{\gamma+2s}}|h|_{L^2_{l+\gamma+2s}}
+|g|_{L^2_{l+\gamma}}|h|_{L^1_2}\big)|f|_{H^1}. \eeno
Moreover, if $2\epsilon<\delta<(\min\{\f{\mathcal{C}_3}{4\mathcal{C}_2},\f{\mathcal{C}_5}{12c_2}\})^{\f1{2s}}$ and $g$ is a non-negative function verifying the condition \eqref{lbcondi}, then it hold
\ben
(iv).\,\lr{Q^\eps(g,h)W_l, hW_l}_v&\lesssim& -\f16\mathcal{E}_{g}^\gamma(hW_l)-\mathcal{C}_4(c_1,c_2)(|W^\ep_s(D)W_{l+\gamma/2}h|_{L^2}^2+\mathcal{E}_{\mu}^{0,\eps}(W_{l+\gamma/2}h))-\f12\mathcal{C}_5(c_1,c_2)\nonumber\\&&\times\delta^{-2s}|h|_{L^2_{l+\gamma/2}}^2+\mathcal{C}_{6}(c_1, c_2)\delta^{-6-6s} |h|_{L^1_{2l}}|h|_{L^1_{\gamma}}+|g|^2_{L^1_{\gamma+2s}}|h|_{L^2_{l}}^2+|g|_{L^2_{l+\gamma/2}}^2|h|_{L^1_2}^2;\nonumber\\
(v).\,\lr{Q^\eps(g,h)W_l, hW_l}_v&\lesssim& -\f16\mathcal{E}_{g}^\gamma(hW_l)-\mathcal{C}_4(c_1,c_2)(|W^\ep_s(D)W_{l+\gamma/2}h|_{L^2}^2+\mathcal{E}_{\mu}^{0,\eps}(W_{l+\gamma/2}h))-\f12\mathcal{C}_5(c_1,c_2)\nonumber\\&&\times\delta^{-2s}|h|_{L^2_{l+\gamma/2}}^2+\mathcal{C}_7(c_1,c_2)\delta^{-4-2s}|h|_{H^{-1}_{\gamma/2}}^2+|g|^2_{L^1_{\gamma+2s}}|h|_{L^2_{l}}^2+|g|_{L^2_{l+\gamma/2}}^2|h|_{L^1_2}^2; \nonumber\\
 (vi).\, \lr{Q^\eps(g,h)W_l, hW_l}_v&\lesssim& -\f13\mathcal{E}_{g}^\gamma(hW_l)-\mathcal{C}_1(c_1,c_2)(|W^\ep_s(D)W_{l+\gamma/2}h|_{L^2}^2+\mathcal{E}_{\mu}^{0,\eps}(W_{l+\gamma/2}h))\nonumber\\
&&+
\mathcal{C}_2(c_1,c_2)|h|_{L^2_{l+\gamma/2}}^2+|g|^2_{L^1_{\gamma+2s}}|h|_{L^2_{l}}^2+|g|_{L^2_{l+\gamma/2}}^2|h|_{L^1_2}^2.\label{corbaesti1}
\een
\end{cor}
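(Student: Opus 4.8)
The plan is to prove Corollary \ref{basic-estimates} by combining the weight-commutator identity of Lemma \ref{commforweight}, the sharp upper bounds of Theorem \ref{ubofQe} and the sharp lower bounds of Theorem \ref{thmlb2} (together with Theorem \ref{thmlb1} for item $(vi)$). The unifying device is the algebraic splitting
\[
\lr{Q^\eps(g,h)W_l,f}_v=\lr{Q^\eps(g,hW_l),f}_v+\big(\lr{Q^\eps(g,h)W_l,f}_v-\lr{Q^\eps(g,hW_l),f}_v\big),
\]
so that the first term is handled by the already-established bounds for $Q^\eps$ and the second term is exactly the object controlled by Lemma \ref{commforweight}. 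For items $(i)$--$(iii)$ we take $f$ arbitrary and use the first inequality of Lemma \ref{commforweight}; for items $(iv)$--$(vi)$ we take $f=hW_l$ and use the second (absorbing) inequality of Lemma \ref{commforweight}, where the term $\eta\,\mathcal{E}^{\gamma,\eps}_g(hW_l)$ on the right will later be swallowed by the gain $-\tfrac13\mathcal{E}^{\gamma,\eps}_g(hW_l)$ coming from the coercivity estimate.

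\textbf{Items $(i)$--$(iii)$.} For $(i)$ I would apply part $(1)$ of Theorem \ref{ubofQe} to $\lr{Q^\eps(g,hW_l),f}_v$ with the splitting $a+b=\gamma$ chosen symmetrically, i.e. $a=b=\gamma/2$, using $|W^\eps_s(D)W_{\gamma/2+2s}(hW_l)|_{L^2}\sim|W^\eps_s(D)W_{l+\gamma/2+2s}h|_{L^2}$ from Lemma \ref{func} (equivalence \eqref{func3}); the commutator term from Lemma \ref{commforweight} contributes $|g|_{L^1_{\gamma+2s}}|h|_{L^2_{l+\gamma/2}}|f|_{L^2_{\gamma/2}}+|g|_{L^1_{\gamma+2s}}|h|_{L^2_{l+\gamma/2+2s-1}}|W^\eps_s(D)W_{\gamma/2}f|_{L^2}+|g|_{L^2_{l+\gamma/2}}|h|_{L^1_\gamma}|f|_{L^2_{\gamma/2}}$, each piece of which is dominated by the right-hand side of $(i)$ after noting $|h|_{L^2_{l+\gamma/2+2s-1}}\lesssim|W^\eps_s(D)W_{l+\gamma/2+2s}h|_{L^2}$ and $|h|_{L^1_\gamma}\lesssim|h|_{L^1_2}$. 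Item $(ii)$ is the same computation but keeping $b=0$, $a=\gamma$ (so the dual norm is $|W^\eps_s(D)f|_{L^2}$) and using part $(1)$ of Theorem \ref{ubofQe} with that choice; item $(iii)$ is identical in structure but invokes part $(3)$ of Theorem \ref{ubofQe} (the $H^1$-type bound), distinguishing $2s\le 1$ and $2s>1$ exactly as there, and again absorbing the Lemma \ref{commforweight} remainder — here one uses $|h|_{L^2_{l+\gamma/2+2s-1}}\le|h|_{L^2_{l+\gamma+2s}}$ and interpolates the low-order commutator contributions into $\eta|W^\eps_s(D)W_{l+\gamma+2s}h|_{L^2}+\eta^{-(2s-1)/(1-s)}|g|_{L^1_{\gamma+2s}}^{(2s-1)/(1-s)}|h|_{L^2_{l+\gamma+2s}}$ when $2s>1$, and directly into $|h|_{L^2_{l+\gamma+2s}}|f|_{H^1}$ when $2s\le1$.

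\textbf{Items $(iv)$--$(vi)$.} Here I set $f=hW_l$ and write $\lr{Q^\eps(g,h)W_l,hW_l}_v=\lr{-(-Q^\eps(g,hW_l)),hW_l}_v+\text{comm}$. For $(iv)$ apply part $(i)$ of Theorem \ref{thmlb2} to $\lr{-Q^\eps(g,hW_l),hW_l}_v$ with the weight absorbed, giving $-\tfrac13\mathcal{E}^{\gamma,\eps}_g(hW_l)-\mathcal{C}_4(|W^\eps_s(D)W_{l+\gamma/2}h|_{L^2}^2+\mathcal{E}^{0,\eps}_\mu(W_{l+\gamma/2}h))-\mathcal{C}_5\delta^{-2s}|h|_{L^2_{l+\gamma/2}}^2+\mathcal{C}_6\delta^{-6-6s}|hW_l|_{L^1_{\gamma/2}}^2$ on the favourable side, where $|hW_l|_{L^1_{\gamma/2}}^2\lesssim|h|_{L^1_{2l}}|h|_{L^1_\gamma}$ by splitting the weight $\lr{v}^{2l+\gamma}\le\lr{v}^{2l}\cdot\lr{v}^{\gamma}$ in the product and Cauchy–Schwarz on $L^1$; the commutator from the second inequality of Lemma \ref{commforweight} contributes the reserve $\eta\,\mathcal{E}^{\gamma,\eps}_g(hW_l)$ (take $\eta$ small, absorbed into $-\tfrac13\to-\tfrac16$), plus $\eta^{-1}(|g|_{L^1_{\gamma+2s}}|h|_{L^2_{l-1}}^2+|g|_{L^1}|h|_{L^2_{l+\gamma/2}}^2)$ and three cross terms, the $|h|_{L^2_{l-1}}$ and $|h|_{L^2_{l+\gamma/2}}$ pieces being absorbed by $-\mathcal{C}_4|W^\eps_s(D)W_{l+\gamma/2}h|_{L^2}^2$ and $-\tfrac12\mathcal{C}_5\delta^{-2s}|h|_{L^2_{l+\gamma/2}}^2$ after choosing $\delta$ small (this is where the constraint $\delta<(\mathcal{C}_3/4\mathcal{C}_2)^{1/2s}$ enters, ensuring the lower bound of Theorem \ref{thmlb2} applies, and $\delta<(\mathcal{C}_5/12c_2)^{1/2s}$ lets the $c_2$-size term $|g|_{L^1}|h|^2$ with $|g|_{L^1}\le c_2$ be absorbed), and the genuinely cross terms $|g|_{L^1_{\gamma+2s}}|h|_{L^2_l}|f|_{L^2}$ etc. being Young-split into $|g|^2_{L^1_{\gamma+2s}}|h|_{L^2_l}^2+|g|^2_{L^2_{l+\gamma/2}}|h|_{L^1_2}^2$ plus absorbable $|h|_{L^2_{l+\gamma/2}}^2$-terms. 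Item $(v)$ is the same with part $(ii)$ of Theorem \ref{thmlb2} replacing part $(i)$ (so $\delta^{-6-6s}\|\cdot\|_{L^1}^2$ becomes $\delta^{-4-2s}\|\cdot\|_{H^{-1}_{\gamma/2}}^2$, using $|hW_l|_{H^{-1}_{\gamma/2}}\lesssim|h|_{H^{-1}_{\gamma/2}}$ which holds since $W_l$ with $l\ge0$ is not problematic at negative regularity — more carefully one notes $\lr{D}^{-1}W_l\lr{D}$ is bounded modulo lower order by Lemma \ref{baslem2}); item $(vi)$ uses Theorem \ref{thmlb1} directly, which needs no $\delta$-decomposition and produces the cruder $+\mathcal{C}_2|h|_{L^2_{l+\gamma/2}}^2$ with no gain absorbed, so the commutator reserve $\eta\mathcal{E}^{\gamma,\eps}_g(hW_l)$ is absorbed into $-\tfrac13\mathcal{E}^{\gamma,\eps}_g(hW_l)$ and the $|h|_{L^2}$-type remainders are simply collected into the $\mathcal{C}_2|h|_{L^2_{l+\gamma/2}}^2$ term (redefining $\mathcal{C}_2$).

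\textbf{Main obstacle.} The bookkeeping of the absorption in $(iv)$ and $(v)$ is the delicate point: one must check that every low-order term produced by the commutator lemma — in particular the $\eta^{-1}|g|_{L^1_{\gamma+2s}}|h|_{L^2_{l-1}}^2$ term and the $H^{-1}$-into-$L^2$ interpolation $|h|_{H^{-1}_{\gamma/2}}|h|_{L^2_{\gamma/2}}\lesssim\eta|h|_{L^2_{\gamma/2}}^2+\eta^{-2}|h|_{L^1_{\gamma/2}}^2$ hidden inside Theorem \ref{thmlb2} — carries a weight strictly below $\lr{v}^{l+\gamma/2}$ (or a strictly weaker $|D_v|$ order), so that it can be soaked up by the coercive gains $\mathcal{C}_4|W^\eps_s(D)W_{l+\gamma/2}h|_{L^2}^2+\mathcal{C}_5\delta^{-2s}|h|_{L^2_{l+\gamma/2}}^2$ after choosing $\eta$ small and then $\delta$ small, \emph{without} circularity in the order of choosing $\eta,\delta$. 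Concretely $\eta$ is fixed first (depending only on $\mathcal{C}_4$, the implied constants of Lemma \ref{commforweight}, and $c_1,c_2$), and then $\delta$ is constrained by $2\eps<\delta<(\min\{\mathcal{C}_3/4\mathcal{C}_2,\mathcal{C}_5/12c_2\})^{1/2s}$; one should verify this interval is nonempty, which is immediate since $\mathcal{C}_i$ depend only on $c_1,c_2$ and $\eps$ is taken small. Everything else is a routine application of the three cited theorems plus the space equivalences in Lemma \ref{func}.
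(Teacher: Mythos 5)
Your overall strategy — split $\lr{Q^\eps(g,h)W_l,f}_v$ into the main term $\lr{Q^\eps(g,hW_l),f}_v$ plus the weight commutator, then estimate the main term with Theorem \ref{ubofQe} (for items $(i)$--$(iii)$) or with Theorems \ref{thmlb1}/\ref{thmlb2} applied to $f=hW_l$ (for items $(iv)$--$(vi)$), and control the commutator by Lemma \ref{commforweight} — is exactly what the paper means by ``Combining the above estimates.'' The bookkeeping for items $(i)$--$(iv)$ and $(vi)$ is sound modulo routine constant-chasing, and the $L^1$ Cauchy--Schwarz step $|hW_l|_{L^1_{\gamma/2}}^2\le|h|_{L^1_{2l}}|h|_{L^1_\gamma}$ in $(iv)$ is correct.

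The one genuine gap is in item $(v)$. You claim $|hW_l|_{H^{-1}_{\gamma/2}}\lesssim|h|_{H^{-1}_{\gamma/2}}$ for $l\ge 0$ and cite Lemma \ref{baslem2} for support. That inequality is false: unwinding the norms, $|hW_l|_{H^{-1}_{\gamma/2}}=|\lr{D}^{-1}W_{l+\gamma/2}h|_{L^2}$ and $|h|_{H^{-1}_{\gamma/2}}=|\lr{D}^{-1}W_{\gamma/2}h|_{L^2}$, so the claimed bound amounts to $L^2$-boundedness of $\lr{D}^{-1}W_l\lr{D}$. This is a zeroth-order pseudodifferential operator whose principal symbol is $\lr{v}^l$ — an \emph{unbounded} multiplier when $l>0$. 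Lemma \ref{baslem2} only controls the commutator $[\lr{D}^{-1},W_l]$, which is of lower order in regularity and weight; it does nothing to tame the principal part $W_l$, which is still unbounded. What a direct application of Theorem \ref{thmlb2}(ii) to $f=hW_l$ actually yields is $\mathcal{C}_7(c_1,c_2)\delta^{-4-2s}|h|_{H^{-1}_{l+\gamma/2}}^2$, i.e., with weight $l+\gamma/2$. Since that quantity \emph{dominates} $|h|_{H^{-1}_{\gamma/2}}^2$ for $l>0$, the step you propose runs in the wrong direction. You should either record $|h|_{H^{-1}_{l+\gamma/2}}^2$ as the output (the stated form of $(v)$ appears to carry a typo, or is intended only for $l=0$, which is how it is invoked in Proposition \ref{Enfvq}), or supply a genuinely different argument; as written, the justification for dropping the weight $l$ does not hold.
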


\subsubsection{Commutator estimates for $Q^\ep$ with the symbol $W^\ep_q(D)$}
By setting \ben\label{DefPhi} \Phi_k^\gamma(v)\eqdefa
\left\{\begin{aligned} & |v|^\gamma \varphi(2^{-k}|v|), \quad\mbox{if}\quad k\ge0;\\
& |v|^\gamma \psi( |v|),\quad\mbox{if}\quad k=-1.\end{aligned}\right.\een
  we  derive that $
\langle Q^\ep(g, h), f \rangle_v=\sum_{k=-1}^\infty \langle Q^\ep_k(g, h), f \rangle_v,  $
where
$
  Q^\ep_{k}(g, h)=\iint_{\sigma\in \SS^2,v_*\in \R^3} \Phi_k^\gamma(|v-v_*|)b^\eps(\cos\theta) (g'_*h'-g_*h)d\sigma dv_*. $

By Bony's decomposition,  we have
\beno Q^\ep_{k}(g, h)=\sum_{p\ge-1} \big[Q^\ep_{k}(\mathcal{S}_{p-N_0}g, \mathfrak{F}_p h)+Q^\ep_{k}(\mathfrak{F}_p g,\mathcal{S}_{p-N_0} h)\big]+\sum_{|p-p'|\le N_0}Q^\ep_{k}(\mathfrak{F}_{p'} g,\mathfrak{F}_{p}h),
 \eeno
where $N_0$ is a integer such that $\mathfrak{F}_p\mathfrak{F}_m=0$ if $|p-m|>N_0$.
We recall that   the Bobylev's formula of the operator can be stated as
\ben\label{bobylev}&& \qquad\langle\mathfrak{F}\big( Q_k(g, h)\big), \mathfrak{F}f \rangle\\&&=\iint_{\sigma\in \SS^2, \eta,\xi\in \R^3} b^\eps(\f{\xi}{|\xi|}\cdot \sigma)\big[ \mathfrak{F}(\Phi_k^\gamma ) (\eta-\xi^{-})-\mathfrak{F}(\Phi_k^\gamma)(\eta)\big](\mathfrak{F}g)(\eta)(\mathfrak{F}h)(\xi-\eta)\overline{(\mathfrak{F}f)}(\xi)d\sigma d\eta d\xi,\nonumber \een
where $\mathfrak{F}f$ denotes the Fourier transform of $f$ and $\xi^-\eqdefa\f{\xi-|\xi|\sigma}2$.   Then we get the following decomposition:
\ben  && \lr{\mathfrak{F}_{j}Q_k^\ep(g,h)-Q_k^\ep(g,\mathfrak{F}_{j}h), \mathfrak{F}_{j}f}_v \nonumber\\
&=& \sum_{|p-j|\le 2N_0}  \lr{\mathfrak{F}_{j}Q_k^\ep(\mathcal{S}_{p-N_0}g, \mathfrak{F}_p h)-Q_k^\ep(\mathcal{S}_{p-N_0}g,  \mathfrak{F}_{j}\mathfrak{F}_{p}h), \mathfrak{F}_{j}f}_v    +\sum_{|p-j|\le 2N_0} \lr{\mathfrak{F}_{j}Q_k^\ep(\mathfrak{F}_{p}g, \mathcal{S}_{p-N_0}h),  \mathfrak{F}_{j}f}_v\nonumber \\\label{Qkdp}&&
\quad+\sum_{|p-j|\le 2N_0}\sum_{|p-p'|\le N_0}  \lr{\mathfrak{F}_{j}Q_k^\ep(\mathfrak{F}_{p'} g, \mathfrak{F}_{p}h)-Q_k^\ep(\mathfrak{F}_{p'}g,  \mathfrak{F}_{j}\mathfrak{F}_{p}h),  \mathfrak{F}_{j}f}_v  \\&&\quad+\sum_{p>j+2N_0}\sum_{|p-p'|\le N_0}  \lr{\mathfrak{F}_{j}Q_k^\ep(\mathfrak{F}_{p'} g, \mathfrak{F}_{p}h), \mathfrak{F}_{j}f}_v
\eqdefa\sum_{i=1}^4 \mathcal{T}_i^j.\nonumber\een

We first have
\begin{prop}\label{comforsym1}    For smooth functions $g, h$ and $f$, we have
\begin{enumerate}
\item if $2^{j}\ge \f1{\ep}$, \beno  |\lr{ \mathfrak{F}_{j}Q_k^\ep( g, h)-Q_k^\ep( g,  \mathfrak{F}_{j}h), \mathfrak{F}_{j}f}_v|&\lesssim &\left\{\begin{aligned} &  2^{k(\gamma+\f32)}(\ep^{-2s+1}\mathrm{1}_{2s>1}+1_{2s=1}|\log \ep|+1_{2s<1})|g|_{L^2}|h|_{L^2}|\mathfrak{F}_jf|_{L^2};\\
&   2^{k(\gamma+\f32)}2^{-j}\ep^{-2s} |g|_{H^1}|h|_{L^2}|\mathfrak{F}_jf|_{L^2}. \end{aligned}\right.
 \eeno
  \item if $2^{j}\le \f1{\ep}$, \beno &&|\lr{ \mathfrak{F}_{j}Q_k^\ep( g, h)-Q_k^\ep( g,  \mathfrak{F}_{j}h), \mathfrak{F}_{j}f}_v|\\&&\lesssim
 2^{k(\gamma+\f52)}  |g|_{L^2}|h|_{L^2}( \mathrm{1}_{2s>1}|\mathfrak{F}_jf|_{H^{2s-1}}+1_{2s=1}j|\mathfrak{F}_jf|_{L^2}+1_{2s<1}|\mathfrak{F}_jf|_{L^2}).
\eeno
\end{enumerate}
\end{prop}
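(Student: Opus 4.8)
The plan is to insert the $j$-th frequency localization into the Bobylev identity \eqref{bobylev} for $Q_k^\ep$ and control the resulting commutator through the four pieces $\mathcal{T}_1^j,\dots,\mathcal{T}_4^j$ of the decomposition \eqref{Qkdp}. In the Fourier side, the kernel carries the difference $\mathfrak{F}(\Phi_k^\gamma)(\eta-\xi^-)-\mathfrak{F}(\Phi_k^\gamma)(\eta)$, and the gain from the commutator structure is precisely an extra factor comparable to $\min\{|\xi^-|,1\}$ (or a derivative of $\mathfrak{F}(\Phi_k^\gamma)$ evaluated at an intermediate point) rather than a bare $\mathfrak{F}(\Phi_k^\gamma)$. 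First I would record the elementary bounds on the truncated kinetic symbol: $\Phi_k^\gamma$ is supported in $|v|\sim 2^k$, so $\|\mathfrak{F}(\Phi_k^\gamma)\|_{L^1}\lesssim 2^{k(\gamma+3)}$ and $\|\nabla \mathfrak{F}(\Phi_k^\gamma)\|_{L^1}\lesssim 2^{k(\gamma+4)}$, while the $L^2$-type weights in the statement come from keeping one more power, i.e. $\|\langle\cdot\rangle^{1/2}\mathfrak{F}(\Phi_k^\gamma)\|_{L^1}\lesssim 2^{k(\gamma+\frac72)}$ and similarly $2^{k(\gamma+\frac92)}$ with a gradient; these account for the exponents $\gamma+\frac32,\gamma+\frac52$ in the two regimes after one absorbs a factor $2^{-3k/2}$ coming from Plancherel on the $2^k$-ball.

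Next I would treat the angular integral. Using $\cos\theta=\frac{\xi}{|\xi|}\cdot\sigma$ and $|\xi^-|\sim |\xi|\sin(\theta/2)$, the mean value theorem gives $|\mathfrak{F}(\Phi_k^\gamma)(\eta-\xi^-)-\mathfrak{F}(\Phi_k^\gamma)(\eta)|\lesssim |\xi^-|\,\|\nabla\mathfrak{F}(\Phi_k^\gamma)\|_{L^\infty}$, and one has the dichotomy that on $\mathcal{T}_1^j,\mathcal{T}_3^j$ (the paraproduct terms where the output frequency $\xi$ and the localized input $\mathfrak{F}_p h$ have comparable size $\sim 2^j$) the commutator kills the $|\xi^-|\le 1$ part, so the angular weight $\sin\theta\, b^\ep(\cos\theta)\min\{|\xi|^2\sin^2(\theta/2),1\}$ is integrable with value comparable to $A^\ep(\xi)+1$; this is exactly Proposition \ref{symbol}, which yields $A^\ep(\xi)+1\sim (W^\ep_s(\xi))^2$. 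When $2^j\ge 1/\ep$ this is $\sim \ep^{-2s}$, explaining the $\ep^{-2s}$ in case (1), and the alternative bound with $\ep^{-2s+1}\mathbf{1}_{2s>1}+|\log\ep|\mathbf{1}_{2s=1}+\mathbf{1}_{2s<1}$ comes from not using the full quadratic truncation but only $\min\{|\xi|\sin(\theta/2),1\}$ — i.e. a single power of $|\xi^-|$ — which is what is available when one wants to spend regularity on $g$ rather than on the test function. For case (2), $2^j\le 1/\ep$, the singular part of the angular integral is genuinely active and produces $\int_{2^{-j}}^{\pi/2}\theta^{-2s}\,d\theta$, i.e. $2^{(2s-1)j}$ for $2s>1$, a factor $j$ for $2s=1$, and $O(1)$ for $2s<1$; multiplying the frequency-side estimate $\lesssim \|\mathfrak{F}_j f\|_{L^2}$ by $2^{(2s-1)j}$ is exactly $\|\mathfrak{F}_j f\|_{H^{2s-1}}$ on a single dyadic block, giving the stated three cases.

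For the terms $\mathcal{T}_2^j$ (low-high, $g$ at frequency $p$, $h$ at low frequency, no commutator needed since $h$ is unlocalized by $\mathfrak{F}_j$ up to a harmless $\widetilde{\mathfrak{F}}_j$) and $\mathcal{T}_4^j$ (high-high, $p>j+2N_0$), one does not have the cancellation but the output frequency is forced to be small compared to the inputs, so summation in $p$ converges and one simply uses $\|\mathfrak{F}(\Phi_k^\gamma)\|_{L^1}$ together with almost-orthogonality; the loss of $2^{-3k/2}$ is recovered by a clean application of Plancherel on $B(0,2^k)$. Putting the pieces together and summing the dyadic estimate $\sum_{|p-j|\le 2N_0}$ (a finite sum) yields the displayed inequalities; the $2^{-j}\ep^{-2s}|g|_{H^1}$ alternative in case (1) is obtained by the same argument but moving one derivative onto $g$ via $\mathfrak{F}_p g$ at frequency $2^p\sim 2^j$, trading $2^{-j}$ against $|g|_{H^1}$. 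The main obstacle I anticipate is bookkeeping the interplay between the three small parameters — the dyadic index $k$ on the kinetic factor, the dyadic index $j$ (equivalently $p$) on the regularity side, and the cutoff scale $\ep$ — so that the $\ep^{-2s}$ versus $2^{(2s-1)j}$ dichotomy is applied in the correct regime $2^j\gtrless 1/\ep$ and no spurious logarithm or power of $k$ is introduced; in particular one must be careful that in case (1) the angular integral is cut at scale $\ep$ (not at $2^{-j}$) precisely because $2^j\ge 1/\ep$, and conversely in case (2) the binding scale is $2^{-j}$. Everything else is a routine (if lengthy) combination of Bobylev's formula, the support estimates on $\widehat{\Phi_k^\gamma}$, Proposition \ref{symbol}, and Cauchy--Schwarz.
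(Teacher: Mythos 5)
There is a genuine gap in your proposal: you misidentify where the commutator cancellation comes from. In the Bobylev representation of $\lr{\mathfrak{F}_j Q_k^\ep(g,h)-Q_k^\ep(g,\mathfrak{F}_j h),\mathfrak{F}_j f}_v$, the integrand picks up the factor $\varphi(2^{-j}\xi)-\varphi(2^{-j}(\xi-\eta))$, coming from $\mathfrak{F}_j$ acting on the output versus on $h$; the crucial gain is $|\varphi(2^{-j}\xi)-\varphi(2^{-j}(\xi-\eta))|\lesssim 2^{-j}|\eta|$, a power of the $g$-frequency. You instead attribute the cancellation to $\mathfrak{F}(\Phi_k^\gamma)(\eta-\xi^-)-\mathfrak{F}(\Phi_k^\gamma)(\eta)$, which is the ordinary Bobylev cancellation present in $Q_k^\ep$ itself and has nothing to do with the commutator. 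Without the $2^{-j}|\eta|$ factor you cannot obtain either of the two Case (1) estimates (the $2^{-j}|g|_{H^1}$ bound comes literally from $2^{-j}|\eta||\hat g(\eta)|$; the first bound comes from splitting $|\eta|\le|\eta-\xi^-|+|\xi^-|$ and using $2^{-j}|\xi^-|\lesssim 2^{-j}|\xi|\sin(\theta/2)\lesssim\sin(\theta/2)$ to cancel one power of the angular singularity, giving the $\ep^{1-2s}/|\log\ep|/O(1)$ trichotomy). Likewise in Case (2) the paper expands $\mathfrak{F}(\Phi_k^\gamma)(\eta-t\xi^-)$ to second order only on the region $2|\xi^-|\le\lr{\eta}$, and on $2|\xi^-|>\lr{\eta}$ uses $\sin(\theta/2)\gtrsim\lr{\eta}/|\xi|$ to trade angular regularity for $\xi$-regularity of $f$; the passage to $|\mathfrak{F}_j f|_{H^{2s-1}}$ is through a weighted Cauchy--Schwarz in $\xi$, not through an integrated $\int_{2^{-j}}^{\pi/2}\theta^{-2s}d\theta$.

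A secondary problem: you propose to run the argument through the four pieces $\mathcal{T}_1^j,\dots,\mathcal{T}_4^j$ of the paraproduct decomposition \eqref{Qkdp}. But $\mathcal{T}_1^j$ and $\mathcal{T}_3^j$ are themselves of exactly the form treated by Proposition \ref{comforsym1} (commutator with dyadic pieces of $g$ and $h$ inserted), and the paper proves Lemma \ref{comforsym2} by applying Proposition \ref{comforsym1} to those pieces. Invoking \eqref{Qkdp} to prove the present proposition inverts that order and would be circular; the proposition is proved for general (unlocalized) $g$ and $h$ directly from Bobylev's formula, Cauchy--Schwarz, and the elementary bounds $\|\Phi_k^\gamma\|_{L^2}\lesssim 2^{k(\gamma+3/2)}$, $\int|\mathfrak{F}(\Phi_k^\gamma)(\xi)|^2|\xi|^2d\xi\lesssim 2^{2k(\gamma+1/2)}$, with no $p$-sum and no almost-orthogonality.
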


\begin{proof}
By Bobylev's formula, we observe that
 \beno  \lr{ \mathfrak{F}_{j}Q_k^\ep( g, h)-Q_k^\ep( g,  \mathfrak{F}_{j}h), \mathfrak{F}_{j}f}_v 
 &=&
\int_{\sigma\in \SS^2, \eta,\xi\in \R^3} b^\eps(\f{\xi}{|\xi|}\cdot \sigma)\big[ \mathfrak{F}(\Phi_k^\gamma ) (\eta-\xi^{-})-\mathfrak{F}(\Phi_k^\gamma)(\eta)\big](\mathfrak{F}g)(\eta)(\mathfrak{F}h)(\xi-\eta)\\&&\quad\times\varphi(2^{-j}\xi)\overline{(\mathfrak{F}f)}(\xi)(\varphi(2^{-j}\xi)-\varphi(2^{-j}(\xi-\eta))d\sigma d\eta d\xi\eqdefa \mathcal{A}. \eeno
We split the estimates into two cases.

{\it Case 1: $2^{j}\ge \f1{\ep}$.}
We have  \beno
 |\mathcal{A}|&\lesssim& \int_{\R^6\times\SS^2} 2^{-j}|\eta|b^\eps(\f{\xi}{|\xi|}\cdot \sigma)\big[ |\mathfrak{F}(\Phi_k^\gamma ) (\eta-\xi^{-})|+|\mathfrak{F}(\Phi_k^\gamma)(\eta)|\big]|(\mathfrak{F}g)(\eta)| |(\mathfrak{F}h)(\xi-\eta)||\varphi(2^{-j}\xi)(\mathfrak{F}f)(\xi)|d\sigma d\eta d\xi \\
 &\lesssim& \int_{\R^6\times\SS^2} 2^{-j}b^\eps(\f{\xi}{|\xi|}\cdot \sigma)\big[ |\mathfrak{F}(\Phi_k^\gamma ) (\eta-\xi^{-})||\eta-\xi^-|+|\mathfrak{F}(\Phi_k^\gamma)(\eta)||\eta|\big]|(\mathfrak{F}g)(\eta)|\\&&\quad\times|(\mathfrak{F}h)(\xi-\eta)||\varphi(2^{-j}\xi)(\mathfrak{F}f)(\xi)|d\sigma d\eta d\xi  +\int_{\R^6\times\SS^2} 2^{-j}b^\eps(\f{\xi}{|\xi|}\cdot \sigma) |\xi|\sin\f{\theta}2|\mathfrak{F}(\Phi_k^\gamma ) (\eta-\xi^{-})||\eta-\xi^-|\\&&\quad\times|(\mathfrak{F}g)(\eta)||(\mathfrak{F}h)(\xi-\eta)||\varphi(2^{-j}\xi)(\mathfrak{F}f)(\xi)|d\sigma d\eta d\xi.
 \eeno
 By Cauchy-Schwartz inequality, it holds
 \beno
  |\mathcal{A}|&\lesssim&2^{-j}\big( \int_{\R^6\times\SS^2}  b^\eps(\f{\xi}{|\xi|}\cdot \sigma)[|\mathfrak{F}(\Phi_k^\gamma ) (\eta-\xi^{-})||\eta-\xi^-|+|\mathfrak{F}(\Phi_k^\gamma)(\eta)||\eta|\big]|^2   |(\mathfrak{F}f)(\xi)\varphi(2^{-j}\xi)|^2d\sigma d\eta d\xi\big)^{\f12}\\&&\times \big( \int_{\R^6\times\SS^2} b^\eps(\f{\xi}{|\xi|}\cdot \sigma) |(\mathfrak{F}g)(\eta)|^2|(\mathfrak{F}h)(\xi-\eta)|^2 d\sigma d\eta d\xi\big)^{\f12} +
  \big(\int_{ \R^6\times\SS^2}  b^\eps(\f{\xi}{|\xi|}\cdot \sigma)\theta|\mathfrak{F}(\Phi_k^\gamma ) (\eta-\xi^{-})|^2\\&&\times |(\mathfrak{F}f)(\xi)\varphi(2^{-j}\xi)|^2d\sigma d\eta d\xi\big)^{\f12}  \big( \int_{\R^6\times\SS^2} b^\eps(\f{\xi}{|\xi|}\cdot \sigma)\theta |(\mathfrak{F}g)(\eta)|^2|(\mathfrak{F}h)(\xi-\eta)|^2 d\sigma d\eta d\xi\big)^{\f12}\\
&\lesssim& \ep^{-2s}2^{-j} 2^{k(\gamma+\f12)}|g|_{L^2}|h|_{L^2}|\mathfrak{F}_jf|_{L^2} + 2^{k(\gamma+\f32)}(\ep^{-2s+1}\mathrm{1}_{2s>1}+\mathrm{1}_{2s=1}|\log \epsilon|+\mathrm{1}_{2s<1})|g|_{L^2}|h|_{L^2}|\mathfrak{F}_jf|_{L^2}.
\eeno
Here we use the fact  $\| \Phi_k^\gamma\|_{L^2}\lesssim 2^{k(\gamma+\f32)}$ and
$ \int_{\R^3} |\mathfrak{F}(\Phi_k^\gamma)(\xi)|^2|\xi|^2d\xi\lesssim 2^{2k(\gamma+\f12)}.$ It completes the proof of the first estimate. If  we do not split $|\eta|$ into $|\eta-\xi^-|$ and $\xi^-$, then the second estimate in $(1)$   can be obtained directly by Cauchy-Schwartz inequality.

{\it Case 2: $2^{j}\le \f1{\ep}$.}
We split the domain into two parts: $2|\xi^-|\le \lr{\eta}$ and $2|\xi^-|> \lr{\eta}$. Then $\mathcal{A}$ can be decomposed into two parts:  $\mathcal{A}_1$ and  $\mathcal{A}_2$, which denote the integration  of $\mathcal{A}$ over the domains $2|\xi^-|\le \lr{\eta}$ and $2|\xi^-|> \lr{\eta}$ respectively. In what follows, we give the proof to the case that $2s\ge1$.

In the region $2|\xi^-|\le \lr{\eta}$, we have $\sin(\theta/2)\le \lr{\eta}/|\xi|$ and $\lr{\eta-t\xi^-}\sim \lr{\eta}$ for $t\in[0,1]$. By Taylor expansion,
 we have
 \beno
 &&|\mathcal{A}_1|\le \big|\int_{2|\xi^-|\le \lr{\eta}} b^\eps(\f{\xi}{|\xi|}\cdot \sigma)  (\na\mathfrak{F}(\Phi_k^\gamma ) )(\eta)\cdot \xi^- (\mathfrak{F}g)(\eta)(\mathfrak{F}h)(\xi-\eta) \varphi(2^{-j}\xi)\overline{(\mathfrak{F}f)}(\xi)(\varphi(2^{-j}\xi)\\&&\quad-\varphi(2^{-j}(\xi-\eta))d\sigma d\eta d\xi\big| +\big|\int_0^1\int_{2|\xi^-|\le \lr{\eta}} b^\eps(\f{\xi}{|\xi|}\cdot \sigma) (\na^2(\mathfrak{F}(\Phi_k^\gamma ))(\eta-t\xi^-):\xi^-\otimes\xi^-)(\mathfrak{F}g)(\eta)(\mathfrak{F}h)(\xi-\eta)\\&&\quad\times\varphi(2^{-j}\xi)\overline{(\mathfrak{F}f)}(\xi)(\varphi(2^{-j}\xi)-\varphi(2^{-j}(\xi-\eta))d\sigma d\eta d\xi dt\big|.
 \eeno
Since it hold $| (\na\mathfrak{F}(\Phi_k^\gamma ) )(\eta)|\lesssim 2^{k(\gamma+4)} \lr{2^k\eta}^{-(\gamma+4)}$ and 
 $|\na^2 (\mathfrak{F}(\Phi_k^\gamma ) )(\eta-t\xi^-)|\lesssim2^{k(\gamma+5)} \lr{2^k\eta}^{-(\gamma+5)},$
we get
 \beno
 &&|\mathcal{A}_1|\lesssim 2^{-j}2^{k(\gamma+4)}\int_{\xi,\eta} |\eta|\big(\lr{2^k\eta}^{-(\gamma+4)}|\xi|\min\{1,(\lr{\eta}/|\xi|
 )^{2-2s}\}+2^k\lr{2^k\eta}^{-(\gamma+5)}|\xi|^2(\lr{\eta}/|\xi|
 )^{2-2s}\big)|(\mathfrak{F}g)(\eta)|\\&&\qquad\times|(\mathfrak{F}h)(\xi-\eta)||\varphi(2^{-j}\xi)(\mathfrak{F}f)(\xi)|d\eta d\xi 
 \lesssim  2^{k(\gamma+\f52)}  |g|_{L^2}|h|_{L^2}|\mathfrak{F}_jf|_{H^{2s-1}}.
\eeno
In the region $2|\xi^-|> \lr{\eta}$, we have $\sin(\theta/2)\gtrsim \lr{\eta-\xi^-}/(3|\xi|)$ and  $\sin(\theta/2)\ge \lr{\eta}/(2|\xi|)$. We have
\beno
 |\mathcal{A}_2|&\lesssim&2^{-j}\big( \int_{2|\xi^-|> \lr{\eta}}  b^\eps(\f{\xi}{|\xi|}\cdot \sigma) (|\mathfrak{F}(\Phi_k^\gamma ) (\eta-\xi^{-})|^2 +|\mathfrak{F}(\Phi_k^\gamma)(\eta)|^2) |\eta|^{2-2s}  |(\mathfrak{F}f)(\xi)\varphi(2^{-j}\xi)|^2\\&&\quad\times\lr{\xi}^{2s} d\sigma d\eta d\xi\big)^{\f12} \big( \int_{2|\xi^-|> \lr{\eta}} b^\eps(\f{\xi}{|\xi|}\cdot \sigma) |(\mathfrak{F}g)(\eta)|^2|(\mathfrak{F}h)(\xi-\eta)|^2 \lr{\xi}^{-2s}|\eta|^{2s} d\sigma d\eta d\xi\big)^{\f12}\\
 &\lesssim& 2^{k(\gamma+\f32)}  |g|_{L^2}|h|_{L^2}(\mathrm{1}_{2s>1}|\mathfrak{F}_jf|_{H^{2s-1}}+\mathrm{1}_{2s=1}j|\mathfrak{F}_jf|_{L^2} ),
\eeno
where we use   change of variables from $(\eta, \xi)$ to $(\eta-\xi^-, \xi )$ if needed.

We conclude that for $2s\ge1$,
$ |\mathcal{A}| \lesssim 2^{k(\gamma+\f52)}  |g|_{L^2}|h|_{L^2}(\mathrm{1}_{2s>1}|\mathfrak{F}_jf|_{H^{2s-1}}+\mathrm{1}_{2s=1}j|\mathfrak{F}_jf|_{L^2}).$
We remark that the case of $2s<1$ can be treated in a similar way. We complete the proof of the proposition.
\end{proof}

Next we want to prove:
\begin{lem}\label{comforsym2}  Suppose $g, h$ and $f$ are smooth functions and $q\ge s$. We have  
 \beno &&(i). \sum_{j\le -\log_2 \epsilon} 2^{2qj} |\lr{\mathfrak{F}_{j}Q^\ep_k(g,h)-Q^\ep_k(g,\mathfrak{F}_{j}h), \mathfrak{F}_{j}f}_v|+\sum_{j\ge -\log_2 \epsilon} \ep^{-2q} |\lr{\mathfrak{F}_{j}Q^\ep_k(g,h)-Q^\ep_k(g,\mathfrak{F}_{j}h), \mathfrak{F}_{j}f}_v|\\
 &&\lesssim  2^{k(\gamma+\f52)}|g|_{L^2}( \mathrm{1}_{2s>1}|W^\ep_{q+s-1}(D)h|_{L^2}+\mathrm{1}_{2s=1}|W^\ep_{q-s+\log}(D)h|_{L^2}+\mathrm{1}_{2s<1}|W^\ep_{q-s}(D)h|_{L^2})\\&&\quad\times|W^\ep_{q+s}(D)f|_{L^2}+ \mathrm{1}_{k=-1} |W^\ep_{q+s-1 }(D)g|_{L^2}|h|_{L^2}|W^\ep_{q+s}(D)f|_{L^2}+\mathrm{1}_{k\ge0}|g|_{L^1}|h|_{L^2}|f|_{L^2}, \\
 &&(ii). \sum_{j\ge-1} 2^{2qj} |\lr{\mathfrak{F}_{j}Q^\ep_k(g,h)-Q^\ep_k(g,\mathfrak{F}_{j}h), \mathfrak{F}_{j}f}_v|\\ &&\lesssim   2^{k(\gamma+\f52)}(|g|_{H^1} |\lr{D}^{q-1}W^\ep_{s}(D)h|_{L^2}+|g|_{L^2}(\mathrm{1}_{2s=1} |W^\ep_{q-s+\log}(D)h|_{L^2}+\mathrm{1}_{2s<1}| W^\ep_{q-s}(D)h|_{L^2}))\\&&\quad\times|\lr{D}^qW^\ep_{s}(D)f|_{L^2}+ \mathrm{1}_{k=-1} |\lr{D}^{q-1}W^\ep_{s}(D)g|_{L^2}|h|_{L^2}|\lr{D}^qW^\ep_{s}(D)f|_{L^2}+\mathrm{1}_{k\ge0}|g|_{L^1}|h|_{L^2}|f|_{L^2}.
\eeno 

\end{lem}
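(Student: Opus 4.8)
The plan is to prove Lemma \ref{comforsym2} by starting from the dyadic-in-frequency decomposition \eqref{Qkdp} of the commutator $\mathfrak{F}_j Q^\ep_k(g,h) - Q^\ep_k(g,\mathfrak{F}_j h) = \sum_{i=1}^4 \mathcal{T}_i^j$, and estimating each of the four pieces separately, then summing the weighted sums $\sum_j 2^{2qj}$ (resp. the split sum with $\eps^{-2q}$ for $2^j \ge \eps^{-1}$) over $j$. The key point is that $\mathcal{T}_1^j$, $\mathcal{T}_3^j$ are genuine commutator terms for which Proposition \ref{comforsym1} applies (with $g$ replaced by the relevant Littlewood--Paley block $\mathcal{S}_{p-N_0}g$ or $\mathfrak{F}_{p'}g$), whereas $\mathcal{T}_2^j$, $\mathcal{T}_4^j$ are not commutators but ordinary collision terms where the low-frequency projection $\mathcal{S}_{p-N_0}h$ (for $\mathcal{T}_2^j$) or the high-frequency constraint $p > j + 2N_0$ (for $\mathcal{T}_4^j$) supplies the needed gain. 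I would treat $\mathcal{T}_2^j$ via Bobylev's formula \eqref{bobylev} directly, using that $\mathfrak{F}(\Phi_k^\gamma)$ and its first two derivatives decay like $2^{k(\gamma+N)}\lr{2^k\eta}^{-(\gamma+N)}$, exactly as in the proof of Proposition \ref{comforsym1}; this is where the factor $2^{k(\gamma+5/2)}$ and the $|g|_{H^1}$ (resp. $|W^\ep_{q+s-1}(D)g|_{L^2}$ in part (i)) come from, since one derivative must land on $g$ when there is no difference of cutoffs to exploit.

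For the main estimate I would proceed as follows. First, for the commutator pieces $\mathcal{T}_1^j$ and $\mathcal{T}_3^j$: since $|p - j| \le 2N_0$, the almost-orthogonality lets me replace $2^{qj}$ by $2^{qp}$ up to a constant, and then apply Proposition \ref{comforsym1} with $g \rightsquigarrow \mathcal{S}_{p-N_0}g$ (resp. $\mathfrak{F}_{p'}g$), $h \rightsquigarrow \mathfrak{F}_p h$, $f \rightsquigarrow \mathfrak{F}_j f$. I split the $j$-sum into $2^j \le \eps^{-1}$ (use case (2) of Proposition \ref{comforsym1}, which produces $|\mathfrak{F}_j f|_{H^{2s-1}}$ for $2s>1$, the logarithmic loss for $2s=1$, and $|\mathfrak{F}_j f|_{L^2}$ for $2s<1$) and $2^j \ge \eps^{-1}$ (use case (1), which gives $\eps^{-2s+1}$, $|\log\eps|$, or $1$ respectively). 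Multiplying by $2^{2qj}$ (resp. $\eps^{-2q}$) and summing in $j$, the $f$-factors reassemble via \eqref{func8} and the definitions of $W^\ep_{q+s}(D)$, $W^\ep_{q-s}(D)$, $W^\ep_{q+s-1}(D)$, $W^\ep_{q-s+\log}(D)$ into exactly the norms appearing in the statement; here one should be careful that $2s - 1$ versus $s$ in the exponent on the $h$-factor matches $W^\ep_{q+s-1}$ versus $W^\ep_{q-s}$, which is why the indicator functions $\mathbf{1}_{2s>1}$, $\mathbf{1}_{2s=1}$, $\mathbf{1}_{2s<1}$ are needed. The $k$-dependence $2^{k(\gamma+5/2)}$ is uniform in $j$, so it simply factors out of the sum. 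The Bony paraproduct structure ($\mathcal{S}_{p-N_0}g$ has all its frequencies below $2^{p-N_0} \sim 2^j$) is what lets me pass from $|\mathcal{S}_{p-N_0}g|_{L^2}$ to $|g|_{L^2}$ without loss in part (i), and from $|\mathcal{S}_{p-N_0}g|_{H^1}$ controlled by $2^j |\mathcal{S}_{p-N_0}g|_{L^2}$ — the $2^j$ being absorbed into a $2^{(q-1)j} \cdot 2^j = 2^{qj}$ in part (ii) — to give the $|\lr{D}^{q-1}W^\ep_s(D)g|_{L^2}$ term.

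For $\mathcal{T}_4^j = \sum_{p > j+2N_0}\sum_{|p-p'|\le N_0} \lr{\mathfrak{F}_j Q^\ep_k(\mathfrak{F}_{p'}g,\mathfrak{F}_p h),\mathfrak{F}_j f}_v$ I would again use Bobylev's formula. The crucial observation is that the output frequency $\xi$ is localized at $2^j$ while the input $\xi - \eta$ (corresponding to $h$) is localized at $2^p \gg 2^j$, so $\eta$ must also be localized at $\sim 2^p$; this forces $|\xi^-| = |\xi|\sin(\theta/2) \lesssim 2^j$ hence $\sin(\theta/2) \lesssim 2^{j-p}$, which confines the angular integral and yields a gain of $2^{(j-p)\cdot 2s}$ (or the corresponding $\eps$-truncated version). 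Summing over $p > j$ of $2^{2qj}$ times this — and using $2^{qp} |\mathfrak{F}_p h|_{L^2}$ to absorb the frequency weight on $h$, with the geometric factor $2^{2q(j-p)}$ coming from rewriting $2^{2qj} = 2^{2qp}\cdot 2^{2q(j-p)}$ and convergence of the resulting geometric series in $p - j$ provided $q > 0$ (and $2s > 0$) — gives the stated bound. The $\mathbf{1}_{k\ge 0}$ term $|g|_{L^1}|h|_{L^2}|f|_{L^2}$ arises from the part where $\mathfrak{F}_{p'}g$ is estimated in $L^1$ rather than in $L^2$ (possible when $k\ge 0$ because then $\Phi_k^\gamma$ has enough decay that a crude $L^1$ bound on $g$ suffices), and $\mathbf{1}_{k=-1}$ forces the $L^2$-based $W^\ep$-norms on $g$ in the singular-kernel regime. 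I expect the main obstacle to be the bookkeeping of exponents when reassembling the split sums over $j$ into the $W^\ep_q(D)$-type norms — in particular verifying uniformly in $\eps$ that the pieces with $2^j \ge \eps^{-1}$ (which contribute $\eps^{-2q}$ times a power $\eps^{\alpha}$ of $\eps$ from the angular gain) combine with the pieces $2^j \le \eps^{-1}$ to reproduce exactly $|W^\ep_{q+s}(D)f|_{L^2}$ etc. via \eqref{func8}, and checking that the threshold case $2s = 1$ (where a factor $j$ appears in Proposition \ref{comforsym1}, case (2)) indeed only costs a $\log\lr{\xi}$, i.e. matches $W^\ep_{q-s+\log}(D)$; the subcritical case $2s < 1$ is strictly easier and I would handle it by remarking that the same argument applies with the $H^{2s-1}$-norm on $f$ replaced by $L^2$ and all logarithmic losses absent.
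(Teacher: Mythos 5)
Your outline follows the paper's proof essentially step for step: both start from the Bony-type frequency decomposition \eqref{Qkdp} into $\mathcal{T}_1^j,\dots,\mathcal{T}_4^j$, both feed $\mathcal{T}_1^j,\mathcal{T}_3^j$ through Proposition \ref{comforsym1} with the same split between the regimes $2^j\lessgtr \eps^{-1}$, and both handle $\mathcal{T}_2^j,\mathcal{T}_4^j$ as genuine collision (not commutator) terms via Bobylev, using the frequency imbalance to produce a geometric gain. The only procedural difference is that the paper does not re-derive the bounds for $\mathcal{T}_2^j,\mathcal{T}_4^j$ from scratch; it observes that these terms have exactly the structure of $\mathfrak{W}^1_{k,p,l}$ and $\mathfrak{W}^4_{k,p,l,m}$ and cites Lemma \ref{lemub1} (imported from \cite{HE16}), whereas you propose to reproduce that analysis inline. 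Either way is fine, and the key dyadic bookkeeping you describe (replacing $2^{qj}$ by $2^{qp}$ for $|p-j|\le 2N_0$, geometric summation in $p-j$ for $\mathcal{T}_4^j$, and re-assembling via \eqref{func8}) is the right computation.

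One point in your narrative that should be corrected before you write the details out, because it will otherwise lead you to put estimates in the wrong place: you attribute the $2^{k(\gamma+5/2)}$ prefactor, and the $|g|_{H^1}$ appearing in part~(ii), to the non-commutator piece $\mathcal{T}_2^j$, and you pair the $|g|_{H^1}$ in (ii) with the $\mathrm{1}_{k=-1}|W^\ep_{q+s-1}(D)g|_{L^2}$ term of (i) as "respective" versions of one another. That pairing is wrong. In the paper, the whole expression $2^{k(\gamma+5/2)}\bigl(|g|_{H^1}|\lr{D}^{q-1}W^\ep_s(D)h|_{L^2}+|g|_{L^2}(\dots)\bigr)|\lr{D}^qW^\ep_s(D)f|_{L^2}$ comes from $\mathcal{T}_1^j+\mathcal{T}_3^j$, with $|g|_{H^1}$ entering through the second estimate of Proposition \ref{comforsym1}(1) on the high-frequency range $2^j\gtrsim\eps^{-1}$, while the residual term $\mathrm{1}_{k=-1}|W^\ep_{q+s-1}(D)g|_{L^2}|h|_{L^2}|W^\ep_{q+s}(D)f|_{L^2}+\mathrm{1}_{k\ge0}|g|_{L^1}|h|_{L^2}|f|_{L^2}$ is precisely the output of $\mathcal{T}_2^j+\mathcal{T}_4^j$ (from the $2^{-p}$ gain in $\mathfrak{W}^1_{k,p,l}$, $\mathfrak{W}^4_{k,p,l,m}$). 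These two $g$-norms sit with structurally different $h$- and $f$-weights — $|g|_{H^1}$ multiplies a derivative on $h$, $|W^\ep_{q+s-1}(D)g|_{L^2}$ multiplies plain $|h|_{L^2}$ — so keeping the sources straight is necessary to land on the stated inequality rather than a weaker one.
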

 \begin{proof}  We only give the proof in the case $2s>1$. The other cases can be proved by the same argument. Due to \eqref{Qkdp}, it suffices to  give the estimates to $\mathcal{T}^j_i(i=1,2,3,4)$ term by term. For $\mathcal{T}^j_1$ and $\mathcal{T}^j_3$, by Proposition \ref{comforsym1}, we have
 \beno &&\sum_{j\le -\log_2 \epsilon} 2^{2qj}(\mathcal{T}^j_1+\mathcal{T}^j_3)+\sum_{j\ge -\log_2 \epsilon} \ep^{-2q}(\mathcal{T}^j_1+\mathcal{T}^j_3)
 \lesssim(2^{k(\gamma+\f32)}+2^{k(\gamma+2s+\f12)})\bigg( \sum_{j\le -\log_2 \epsilon} 2^{(2q+2s-1)j}\\&&\qquad\times\big[  \sum_{|p-j|\le 2N_0}   |
 \mathcal{S}_{p-N_0}g|_{L^2}|\mathfrak{F}_ph|_{L^2}|\mathfrak{F}_jf|_{L^2}+\sum_{|p-j|\le 2N_0}\sum_{|p-p'|\le N_0}  |\mathfrak{F}_{p'}g|_{L^2}|\mathfrak{F}_ph|_{L^2}|\mathfrak{F}_jf|_{L^2} \big]\eeno\beno&&\qquad +\sum_{j\ge -\log_2 \epsilon}\ep^{-2q-2s+1}\big[ \sum_{|p-j|\le 2N_0}   | \mathcal{S}_{p-N_0}g|_{L^2}|\mathfrak{F}_ph|_{L^2}|\mathfrak{F}_jf|_{L^2}+\sum_{|p-j|\le 2N_0}\sum_{|p-p'|\le N_0}  |\mathfrak{F}_{p'}g|_{L^2}|\mathfrak{F}_ph|_{L^2}|\\&&\qquad\times\mathfrak{F}_jf|_{L^2}  \bigg)
 \lesssim   2^{k(\gamma+\f52)}|g|_{L^2}|W^\ep_{q+s-1}(D)h|_{L^2}|W^\ep_{q+s}(D)f|_{L^2}.
 \eeno
 Next we turn to the terms $\mathcal{T}^j_2$ and $\mathcal{T}^j_4$. For $\mathcal{T}^j_2$,
 Thanks to \eqref{bobylev}, if $|p-j|\le 2N_0$ and $m\le p-N_0$, then
 \beno \lr{\mathfrak{F}_{j}Q_k^\ep(\mathfrak{F}_{p}g, \mathfrak{F}_{m}h),  \mathfrak{F}_{j}f}_v= \iint_{\sigma ,v_*,v} \big(\tilde{ \mathfrak{F}}_{p}\Phi_k^\gamma\big)(|v-v_*|)b(\cos\theta) (\mathfrak{F}_{p}g)_*(\mathfrak{F}_{m}h)\big[
 ( (\mathfrak{F}^2_{j}f)'-
 \mathfrak{F}_{j}^2f\big]d\sigma dv_* dv, \eeno
 which enjoys the same structure as that for $\mathfrak{M}_{k,p,l}^1$ defined in Lemma \ref{lemub1}. Then
 we conclude that in this case,  on one hand,
 \beno
 &&| \lr{\mathfrak{F}_{j}Q_k^\ep(\mathfrak{F}_{p}g, \mathfrak{F}_{m}h),  \mathfrak{F}_{j}f}_v|\lesssim \mathrm{1}_{k=-1}2^{2sm} 2^{-p}|\mathfrak{F}_{p}g|_{L^2}|\mathfrak{F}_{m}h|_{L^2}|\mathfrak{F}_{j}f|_{L^2} 
 +\mathrm{1}_{k\ge0}C_N2^{-pN}|\mathfrak{F}_{p}g|_{L^1}|\mathfrak{F}_{m}h|_{L^2}|\mathfrak{F}_{j}f|_{L^2}.
 \eeno 
 On the other hand, for $k=-1$, we may use the fact   $\|\tilde{ \mathfrak{F}}_{p}\Phi_k^\gamma\|_{L^2}\lesssim 2^{-p(\f32+\gamma)}$ and the Cauchy-Schwartz inequality to get $  
 | \lr{\mathfrak{F}_{j}Q_{-1}^\ep(\mathfrak{F}_{p}g, \mathfrak{F}_{m}h),  \mathfrak{F}_{j}f}_v|\lesssim  \eps^{-2s} 2^{-(\f32+\gamma)p}|\mathfrak{F}_{p}g|_{L^2}|\mathfrak{F}_{m}h|_{L^2}|\mathfrak{F}_{j}f|_{L^2}.$

 For $\mathcal{T}^j_4$, in the cae of $|p-p'|\le N_0$ and $p>j+2N_0$,  by \eqref{bobylev}, it is easy to see that the structure of $\lr{\mathfrak{F}_{j}Q_k^\ep(\mathfrak{F}_{p'} g, \mathfrak{F}_{p}h), \mathfrak{F}_{j}f}_v$ is   as the same as that for $\mathfrak{M}_{k,p,l,m}^4$ in Lemma \ref{lemub1}. Then for any $N\in \N$, one has
\beno &&|\lr{\mathfrak{F}_{j}Q_k^\ep(\mathfrak{F}_{p'} g, \mathfrak{F}_{p}h), \mathfrak{F}_{j}f}_v|\lesssim \mathrm{1}_{k=-1}2^{2sj}2^{-p}|\mathfrak{F}_{p'}g|_{L^2}|\mathfrak{F}_{p}h|_{L^2}|\mathfrak{F}_{j}f|_{L^2}+\mathrm{1}_{k\ge0}C_N2^{-pN}|\mathfrak{F}_{p'}g|_{L^2}|\mathfrak{F}_{p}h|_{L^2}|\mathfrak{F}_{j}f|_{L^2},\\
 &&|\lr{\mathfrak{F}_{j}Q_k^\ep(\mathfrak{F}_{p'} g, \mathfrak{F}_{p}h), \mathfrak{F}_{j}f}_v|\lesssim \mathrm{1}_{k=-1} \eps^{-2s}2^{-(\f32+\gamma)p}|\mathfrak{F}_{p'}g|_{L^2}|\mathfrak{F}_{p}h|_{L^2}|\mathfrak{F}_{j}f|_{L^2}+\mathrm{1}_{k\ge0}C_N2^{-pN}|\mathfrak{F}_{p'}g|_{L^1}|\mathfrak{F}_{p}h|_{L^2}|\mathfrak{F}_{j}f|_{L^2}.
 \eeno

Now putting together all the estimates, we infer that  
 \beno  &&\sum_{j\le -\log_2 \epsilon} 2^{2qj}(\mathcal{T}^j_2+\mathcal{T}^j_4)+\sum_{j\ge -\log_2 \epsilon} \ep^{-2q}(\mathcal{T}^j_2+\mathcal{T}^j_4)\\
 &&\lesssim \sum_{j\le -\log_2 \epsilon} \big[\sum_{|p-j|\le N_0}\sum_{m\le p-N_0} 2^{2qj}(\mathrm{1}_{k=-1}2^{2sm} 2^{-p}|\mathfrak{F}_{p}g|_{L^2}|\mathfrak{F}_{m}h|_{L^2}|\mathfrak{F}_{j}f|_{L^2}
+\mathrm{1}_{k\ge0}C_N2^{-pN}|\mathfrak{F}_{p}g|_{L^1}|\mathfrak{F}_{m}h|_{L^2}\\ &&\times|\mathfrak{F}_{j}f|_{L^2})
+\sum_{p>j+2N_0}\sum_{|p-p'|\le N_0} 2^{2qj}( \mathrm{1}_{k=-1}2^{2sj}2^{-p}|\mathfrak{F}_{p'}g|_{L^2}|\mathfrak{F}_{p}h|_{L^2}|\mathfrak{F}_{j}f|_{L^2}+\mathrm{1}_{k\ge0}C_N2^{-pN}|\mathfrak{F}_{p'}g|_{L^2}|\mathfrak{F}_{p}h|_{L^2}\\ &&\times|\mathfrak{F}_{j}f|_{L^2})\big]
 +\sum_{j\ge -\log_2 \epsilon}  \big[\sum_{|p-j|\le N_0}\sum_{m\le p-N_0} \eps^{-2q} ( \mathrm{1}_{k=-1} \eps^{-2s} 2^{-\f32p}|\mathfrak{F}_{p}g|_{L^2}|\mathfrak{F}_{m}h|_{L^2}|\mathfrak{F}_{j}f|_{L^2}+\mathrm{1}_{k\ge0}C_N2^{-pN}\\ &&\times|\mathfrak{F}_{p}g|_{L^1}|\mathfrak{F}_{m}h|_{L^2}|\mathfrak{F}_{j}f|_{L^2}) 
 +\sum_{p>j+2N_0}\sum_{|p-p'|\le N_0} \eps^{-2q}(\mathrm{1}_{k=-1} \eps^{-2s}2^{-\f32p}|\mathfrak{F}_{p'}g|_{L^2}|\mathfrak{F}_{p}h|_{L^2}|\mathfrak{F}_{j}f|_{L^2} +\mathrm{1}_{k\ge0}C_N\\ &&\times2^{-pN}|\mathfrak{F}_{p'}g|_{L^2}|\mathfrak{F}_{p}h|_{L^2}|\mathfrak{F}_{j}f|_{L^2})\big] 
\lesssim \mathrm{1}_{k=-1} |W^\ep_{q+s-1}(D)g|_{L^2}|h|_{L^2}|W^\ep_{q+s}(D)f|_{L^2}+\mathrm{1}_{k\ge0}|g|_{L^1}|h|_{L^2}|f|_{L^2}.\eeno

Combining the above estimates, we will derive the first result in the lemma. The second one can be obtained by the similar argument and we skip the details here. 
\end{proof}

Now we can state the main result in this subsection:

\begin{lem}\label{comforsym3}  Suppose $g, h$ and $f$ are smooth functions and $q\ge s$. We have
 \beno &&(i).\,\sum_{j\le -\log_2 \epsilon} 2^{2qj} |\lr{\mathfrak{F}_{j}Q^\ep(g,h)-Q^\ep(g,\mathfrak{F}_{j}h), \mathfrak{F}_{j}f}_v|+\sum_{j\ge -\log_2 \epsilon} \ep^{-2q} |\lr{\mathfrak{F}_{j}Q^\ep(g,h)-Q^\ep(g,\mathfrak{F}_{j}h), \mathfrak{F}_{j}f}_v|\\
 &&\lesssim  |g|_{L^2}( \mathrm{1}_{2s>1}|W^\ep_{q+s-1}(D)W_{\gamma/2+\f52}h|_{L^2}+\mathrm{1}_{2s=1}|W^\ep_{q-s+\log}(D)W_{\gamma/2+\f52}h|_{L^2}+\mathrm{1}_{2s<1}|W^{\eps}_{q-s}(D)W_{\gamma/2+\f52}h|_{L^2})\\ &&\times |W^\ep_{q+s}(D)W_{\gamma/2}f|_{L^2}
 + |g|_{L^2_{\gamma+3}}( \mathrm{1}_{2s>1}| W^\ep_{q+s-1}(D)h|_{L^2}+\mathrm{1}_{2s=1} W^\ep_{q-s+\log}(D)h|_{L^2}\\&&+\mathrm{1}_{2s<1}| W^\eps_{q-s}(D)h|_{L^2})| W^\ep_{q+s}(D)f|_{L^2}
 + |W^\ep_{q+s-1 }(D)g|_{L^2}|h|_{L^2}|W^\ep_{q+s}(D)f|_{L^2}
 +|g|_{L^1}|h|_{L^2}|f|_{L^2};\\
  &&(ii).\,\sum_{j\ge-1} 2^{2qj} |\lr{\mathfrak{F}_{j}Q^\ep(g,h)-Q^\ep(g,\mathfrak{F}_{j}h), \mathfrak{F}_{j}f}_v| \lesssim (|W_{\gamma+3}g|_{H^1} |\lr{D}^{q-1}W^\ep_{s}(D)h|_{L^2}+|g|_{H^1} \\&&\qquad\times|\lr{D}^{q-1}W^\ep_{s}(D)W_{\gamma/2+\f52}h|_{L^2})|W^\ep_{q+s}(D)W_{\gamma/2}f|_{L^2} + |g|_{L^2}(  \mathrm{1}_{2s=1}|W^\ep_{q-s+\log}(D)W_{\gamma/2+\f52}h|_{L^2}\\&&\qquad+\mathrm{1}_{2s<1}|W^{\eps}_{q-s}(D)W_{\gamma/2+\f52}h|_{L^2})|W^\ep_{q+s}(D)W_{\gamma/2}f|_{L^2}
 + |g|_{L^2_{\gamma+3}}( \mathrm{1}_{2s=1} W^\ep_{q-s+\log}(D)h|_{L^2}\\&&\qquad +\mathrm{1}_{2s<1}| W^\eps_{q-s}(D)h|_{L^2})| W^\ep_{q+s}(D)f|_{L^2}  
+ |\lr{D}^{q-1+\eta}W^\ep_{s}(D)g|_{L^2}|h|_{L^2}|\lr{D}^qW^\ep_{s}(D)f|_{L^2}
 +|g|_{L^1}|h|_{L^2}|f|_{L^2}.\eeno
\end{lem}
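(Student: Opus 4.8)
\medskip

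The plan is to deduce Lemma~\ref{comforsym3} from Lemma~\ref{comforsym2} by summing over the dyadic pieces $Q^\ep_k$ of the kinetic factor and trading the growth factor $2^{k(\gamma+\f52)}$ for polynomial weights. Starting from the identity $\lr{\mathfrak F_jQ^\ep(g,h)-Q^\ep(g,\mathfrak F_jh),\mathfrak F_jf}_v=\sum_{k\ge-1}\lr{\mathfrak F_jQ^\ep_k(g,h)-Q^\ep_k(g,\mathfrak F_jh),\mathfrak F_jf}_v$, I would first record the elementary geometry on $\mathrm{supp}\,\Phi^\gamma_k$: there $|v-v_*|\sim2^k$, $|v'-v_*|=|v-v_*|\cos(\theta/2)\le2^k$ and $|v-v'|=|v-v_*|\sin(\theta/2)\le2^k$, whence $\max\{|v|,|v'|\}\lesssim2^k+|v_*|$ and $2^k\lesssim\max\{\lr{v},\lr{v_*}\}$. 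Splitting $Q^\ep_k$ into its contribution over $\{|v-v_*|\lesssim\lr{v}\}$ and over the complementary set $\{|v-v_*|\lesssim\lr{v_*}\}$ (via a phase-space $\mathcal P_a$ decomposition of $g,h,f$, exactly as in the proof of Lemma~\ref{lemub1}), in the first region one has $2^{k(\gamma+\f52)}\lesssim\lr{v}^{\gamma+\f52}$ and $\lr{v'}\lesssim\lr{v}$, so this power splits off as the weight $W_{\gamma/2+\f52}$ on the $h$--slot and $W_{\gamma/2}$ on the $f$--slot; moving these weights past the symbols by \eqref{func3} and then invoking Lemma~\ref{comforsym2} produces the term $|g|_{L^2}(\cdots)|W^\ep_{q+s}(D)W_{\gamma/2}f|_{L^2}$. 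In the complementary region every argument is $\lesssim\lr{v_*}$, so $2^{k(\gamma+\f52)}\le\lr{v_*}^{\gamma+3}$ is absorbed into the $g$--slot alone, giving the $|g|_{L^2_{\gamma+3}}$ term with $h,f$ unweighted.

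For the piece $k=-1$ the kinetic factor is bounded, so no weight is needed; in the branch of Bony's decomposition where $g$ carries frequency $2^p$ one uses $\|\tilde{\mathfrak F}_p\Phi^\gamma_{-1}\|_{L^2}\lesssim2^{-p(3/2+\gamma)}$ to convert this decay into the factor $|W^\ep_{q+s-1}(D)g|_{L^2}$, just as in the estimates of $\mathcal T^j_2$ and $\mathcal T^j_4$ in the proof of Lemma~\ref{comforsym2}. For $k\ge0$ the remaining non-resonant contributions collapse to $|g|_{L^1}|h|_{L^2}|f|_{L^2}$, and in part~(ii) summing the borderline geometric series in $k$ forces a loss of $\eta$, which is why the $g$--regularity there reads $|\lr{D}^{q-1+\eta}W^\ep_s(D)g|_{L^2}$ rather than $|\lr{D}^{q-1}W^\ep_s(D)g|_{L^2}$. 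Part~(ii) is otherwise the verbatim analogue of part~(i) with $W^\ep_q(D)$ replaced by $\lr{D}^q$ and the $\ep^{-2q}$ tail of the $j$--sum replaced by its $2^{2qj}$ continuation, and it uses part~(ii) of Lemma~\ref{comforsym2} as input.

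The hard part will be the bookkeeping in the gain term of the commutator, where $h$ sits at $v'$ (equivalently, after Bobylev, where the weight $W_{\gamma/2+\f52}$ becomes a derivative-type operator on $\mathfrak Fh$): one must check that the weights and the dyadic localizations transform consistently under the collision map, so that after the $k$-- and phase-space summations the three regimes $2s>1$, $2s=1$, $2s<1$ recombine into the single (logarithmic) symbols $W^\ep_{q+s-1}(D)$, $W^\ep_{q-s+\log}(D)$, $W^\ep_{q-s}(D)$ appearing in the statement. This is precisely where the hypothesis $q\ge s$ is used, keeping all the resulting series summable; but every individual estimate needed is of the same type as those already established for the $k$--localized operators in Lemma~\ref{comforsym2} and Lemma~\ref{lemub1}, so no genuinely new inequality is required.
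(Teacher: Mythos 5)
Your proposal follows essentially the same route as the paper: both start from the dyadic kinetic decomposition $Q^\ep=\sum_{k\ge -1}Q^\ep_k$, introduce a phase-space decomposition $g=\sum_m\mathcal P_m g$ (and the corresponding restriction of $h,f$), organize the double sum over $(m,k)$ by the relative sizes of the velocity arguments so that the factor $2^{k(\gamma+\f52)}$ from Lemma~\ref{comforsym2} can be traded for polynomial weights on the appropriate slots, and then invoke Lemma~\ref{comforsym2} termwise. The paper makes this precise via the three regimes in \eqref{psdecom} ($m\le k-N_0$, $m\ge k+N_0$, $|m-k|\le N_0$) and closes the argument with the commutator estimates of Lemma~\ref{baslem2} together with \eqref{func6}--\eqref{func8}, which control $[\mathfrak F_j,\mathcal P_m]$, $[W^\ep_q(D),\mathcal U_k]$, and the passage back to the global symbols $W^\ep_q(D)$. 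That last step is the one your sketch treats rather vaguely (``moving these weights past the symbols by \eqref{func3}'' covers the weight side, but not the $[\mathfrak F_j,\mathcal P_m]$ commutators which are what \eqref{func6}--\eqref{func7} are for), though you do flag the bookkeeping as the hard part.

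One concrete inaccuracy: your explanation of the $\eta$-loss in part~(ii), that ``summing the borderline geometric series in $k$'' forces it, is not right. The contribution involving $g$ in Lemma~\ref{comforsym2} arises only at $k=-1$ (no $k$-sum at all), while the $k\ge0$ pieces decay as $C_N 2^{-pN}$ for arbitrary $N$ and produce the harmless $|g|_{L^1}|h|_{L^2}|f|_{L^2}$ term. The extra $\lr D^{\eta}$ on $g$ in $(ii)$ instead stems from the reassembly over the phase-space index $m$ and the unbounded $j$-sum (compare $\sum_{j\le -\log_2\eps}$ in $(i)$ versus $\sum_{j\ge -1}$ in $(ii)$), where the borderline case requires a small extra derivative on $g$ to make the resulting series summable. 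This does not change the overall correctness of your approach, but the mechanism you cite for the $\eta$ is not the operative one.
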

\begin{proof}  We observe that
 \ben\label{psdecom} &&\lr{Q(g,h), f}_v\nonumber\\&&=\sum_{k\ge-1}\sum_{m\ge-1}\langle Q_k(\mathcal{P}_mg, h), f \rangle_v\notag =\sum_{m\le k-N_0}\langle Q_k(\mathcal{P}_m g, \tilde{\mathcal{P}}_kh), \tilde{\mathcal{P}}_kf \rangle_v +
\sum_{m\ge k+N_0}\langle Q_k(\mathcal{P}_{m} g, \tilde{\mathcal{P}}_mh), \tilde{\mathcal{P}}_mf \rangle_v\notag\\&&\quad+\sum_{|m-k|\le N_0}\langle Q_k( \mathcal{P}_{m} g, \mathcal{U}_{k+N_0}h), \mathcal{U}_{k+N_0}f \rangle_v=\sum_{k\ge N_0-1}\langle Q_k(\mathcal{U}_{k-N_0} g, \tilde{\mathcal{P}}_kh), \tilde{\mathcal{P}}_kf \rangle_v \\&&\quad+
\sum_{m\ge k+N_0}\langle Q_k(\mathcal{P}_{m} g, \tilde{\mathcal{P}}_mh), \tilde{\mathcal{P}}_mf \rangle_v\notag +\sum_{|m-k|\le N_0}\langle Q_k( \mathcal{P}_{m} g, \mathcal{U}_{k+N_0}h), \mathcal{U}_{k+N_0}f \rangle_v. \notag \een
Then by  Lemma \ref{baslem2}, Lemma \ref{comforsym2}, \eqref{func6}, \eqref{func7} and \eqref{func8} we have
 \beno &&\sum_{j\le -\log_2 \epsilon} 2^{2qj} |\lr{\mathfrak{F}_{j}Q^\ep(g,h)-Q^\ep(g,\mathfrak{F}_{j}h), \mathfrak{F}_{j}f}_v|+\sum_{j\ge -\log_2 \epsilon} \ep^{-2q} |\lr{\mathfrak{F}_{j}Q^\ep(g,h)-Q^\ep(g,\mathfrak{F}_{j}h), \mathfrak{F}_{j}f}_v|\\
 &&\lesssim  |g|_{L^2}( \mathrm{1}_{2s>1}|W_{\gamma/2+\f52}W^\ep_{q+s-1}(D)h|_{L^2}+\mathrm{1}_{2s=1}|W_{\gamma/2+\f52}W^\ep_{q-s+\log}(D)h|_{L^2}+\mathrm{1}_{2s<1}|W_{\gamma/2+\f52}\\&&\times W^\ep_{q-s}(D)h|_{L^2})|W_{\gamma/2}W^\ep_{q+s}(D)f|_{L^2}
 + |g|_{L^2_{\gamma+3}}( \mathrm{1}_{2s>1}| W^\ep_{q+s-1}(D)h|_{L^2}+\mathrm{1}_{2s=1} W^\ep_{q-s+\log}(D)h|_{L^2} \\
 &&\quad+\mathrm{1}_{2s<1}|W^\ep_{q-s}(D) h|_{L^2})| W^\ep_{2s}(D)f|_{L^2}
+|W^\ep_{q+s-1}(D)g|_{L^2}|h|_{L^2}|W^\ep_{q+s}(D)f|_{L^2}
 +|g|_{L^1}|h|_{L^2}|f|_{L^2}.
\eeno
  This ends the proof of the first result of the lemma. The second result can be obtained by the similar argument and we skip the details.
\end{proof}

 \subsection{Trilinear estimates in $\TT^3\times\R^3$}

We will give the estimates to the nonlinear terms involved in the energy estimates. We begin with a useful lemma:
\begin{lem} \label{dsums}
Suppose $0<\delta_2\ll1$ and $a,b\ge0$ with $a+b=\f32+\delta_2$. Then we have
\ben\label{dsum1} |\sum_{k\in\ZZ^3}\sum_{p\in\ZZ^3}
    A_pB_{k-p}C_k|\lesssim \big(\sum_{p\in \ZZ^3} |p|^{2a} A_p^2\big)^{\f12}\big(\sum_{p\in \ZZ^3} |p|^{2b} B_p^2\big)^{\f12}\big(\sum_{p\in \ZZ^3} C_p^2\big)^{\f12}. \een
As an application, suppose
\beno \mathcal{I}&\eqdefa&\sum_{k\in\ZZ^3}\sum_{p\in\ZZ^3}
(|p|^{a} A_p)( |k-p|^{b}B_{k-p})C_k,
\eeno
 where $a,b\ge0$ with $a+b=m+n\varrho$. For  $0\le \delta_3\le a$,   we have
\begin{enumerate}
\item  Case1: $m+n\varrho\le \f32$.   It holds
  \beno |\mathcal{I}|&\le&  \left\{\begin{aligned}  &\big(\big(\sum_{p\in \ZZ^3} |p|^{2(\f32+\delta_2+\delta_3)} A_p^2\big)^{\f12}\big(\sum_{p\in \ZZ^3} |p|^{2(m+n\varrho-\delta_3)} B_p^2\big)^{\f12}\big(\sum_{p\in \ZZ^3} C_p^2\big)^{\f12},\\
   &\big(\sum_{p\in \ZZ^3} |p|^{2(m+n\varrho)} A_p^2\big)^{\f12}\big(\sum_{p\in \ZZ^3} |p|^{2(\f32+\delta_2)} B_p^2\big)^{\f12}\big(\sum_{p\in \ZZ^3} C_p^2\big)^{\f12}.\end{aligned} \right.
\eeno

\item Case 2: $m+n\varrho> \f32$.  For $\tilde{\delta}\le \f32+\delta_2$, it holds

\beno|\mathcal{I}|&\le&\big(\sum_{p\in \ZZ^3} |p|^{2(\f32+\delta_2+\delta_3)} A_p^2\big)^{\f12}\big(\sum_{p\in \ZZ^3} |p|^{2(m+n\varrho-\delta_3)} B_p^2\big)^{\f12}\big(\sum_{p\in \ZZ^3} C_p^2\big)^{\f12}\\&&+\big(\sum_{p\in \ZZ^3} |p|^{2(m+n\varrho+\tilde{\delta})} A_p^2\big)^{\f12}\big(\sum_{p\in \ZZ^3} |p|^{2(\f32+\delta_2-\tilde{\delta})} B_p^2\big)^{\f12}\big(\sum_{p\in \ZZ^3} C_p^2\big)^{\f12}.\eeno
\end{enumerate}
\end{lem}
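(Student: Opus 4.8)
\textbf{Proof plan for Lemma \ref{dsums}.}

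The plan is to prove the basic inequality \eqref{dsum1} first, and then deduce the two cases by splitting the sum according to which of the two frequencies $|p|$ and $|k-p|$ dominates. For \eqref{dsum1}, the natural route is to exploit the elementary frequency dichotomy on $\ZZ^3$: whenever $k = p + (k-p)$, one has $|k| \lesssim \max\{|p|, |k-p|\}$, hence at least one of $|p| \gtrsim |k|$ or $|k-p| \gtrsim |k|$ holds. Since $a + b = \f32 + \delta_2 > \f32$, the weight $|k|^{0}$ on $C_k$ can be absorbed trivially, but the point is rather to use the summability of $\sum_{p} |p|^{-3-2\delta_2}$ over $\ZZ^3\setminus\{0\}$. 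Concretely, I would write $A_p = |p|^{-a} \tilde A_p$ with $\tilde A_p = |p|^a A_p$ (and similarly $\tilde B_{k-p} = |k-p|^b B_{k-p}$), so that the left-hand side becomes $\big|\sum_{k,p} |p|^{-a}|k-p|^{-b} \tilde A_p \tilde B_{k-p} C_k\big|$; then apply Cauchy–Schwarz in $k$, followed by Young's convolution inequality $\ell^2 * \ell^1 \hookrightarrow \ell^2$ after checking that the kernel $|p|^{-a}|k-p|^{-b}$, summed in $p$, is controlled by a constant uniformly in $k$. The uniform bound on $\sum_p |p|^{-a}|k-p|^{-b}$ is exactly where $a+b > 3$ enters: split into $|p| \le |k|/2$, $|k-p| \le |k|/2$, and the bulk region, and in each piece one factor is bounded below by $|k|$ to a power, leaving a convergent tail sum because $a + b = \f32 + \delta_2 > 3$... wait — here $a+b = \f32+\delta_2$, which is \emph{not} larger than $3$, so instead the convergence must come from splitting symmetrically and using that $\sum_{|p|\le|k|}|p|^{-a}$ and $\sum_{|p|\le|k|}|k-p|^{-b}$ grow like $|k|^{3-a}$, $|k|^{3-b}$, matched against the gain from the dominated factor; carrying out this bookkeeping carefully is the one genuinely delicate point, but it is standard harmonic-analysis-on-$\ZZ^3$ and reduces to the fact that in $3$ dimensions the Bessel-type kernel with total homogeneity $\f32+\delta_2$ is an $\ell^1(\ZZ^3)$–bounded convolution operator on $\ell^2$ after the Cauchy–Schwarz step peels off a square root of each weight.

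Granting \eqref{dsum1}, the application is routine repackaging. Write $a_p = |p|^a A_p$ type notation absorbed, and decompose $\mathcal I = \mathcal I_{\mathrm{low}\,p} + \mathcal I_{\mathrm{low}\,(k-p)}$ according to $|p| \le |k-p|$ versus $|p| > |k-p|$. In the region $|p| \le |k-p|$ we have $|k| \sim |k-p|$, so we may freely trade powers: redistribute the total weight $m+n\varrho$ on the pair $(p,k-p)$ by moving $\delta_3$ of it (or $\tilde\delta$, in Case 2) from one factor to the other, and then invoke \eqref{dsum1} with the split $(\f32+\delta_2+\delta_3, m+n\varrho-\delta_3)$ (respectively $(m+n\varrho+\tilde\delta, \f32+\delta_2-\tilde\delta)$), checking that the exponents are nonnegative, which is exactly the hypotheses $0\le\delta_3\le a$ and $\tilde\delta \le \f32+\delta_2$. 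The complementary region $|p| > |k-p|$ gives $|k|\sim|p|$ and is handled symmetrically, producing the other term of the claimed bound in Case 2, and in Case 1 (where $m+n\varrho \le \f32$) collapsing into the two displayed alternatives depending on whether one keeps the weight on $A$ or on $B$. The distinction between Case 1 and Case 2 is precisely whether, after the redistribution, the "excess" weight $m+n\varrho$ can be fully placed on a single factor while keeping the other exponent $\ge 0$: when $m+n\varrho\le\f32$ this is automatic, when $m+n\varrho>\f32$ one is forced to keep some weight on the small factor, which is why Case 2 carries the extra parameter $\tilde\delta$ and the second summand.

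The main obstacle I anticipate is the uniform kernel bound $\sup_{k\in\ZZ^3}\sum_{p\in\ZZ^3\setminus\{0,k\}} |p|^{-a}|k-p|^{-b} < \infty$ underlying \eqref{dsum1} — more precisely, getting the right constant-in-$k$ behavior when $a+b = \f32+\delta_2$ is only slightly above $\f32$, since then neither factor alone is summable over $\ZZ^3$ and one must genuinely use the three-region split plus the dyadic estimate $\sum_{|p|\sim 2^j, |p|\le|k|} |p|^{-a} \sim 2^{j(3-a)}$ combined with $|k-p|^{-b}\sim|k|^{-b}$ there. Everything else — the two Cauchy–Schwarz applications, the Young inequality, and the weight redistributions in Cases 1 and 2 — is mechanical once that kernel estimate is in hand. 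I would also remark that an alternative, slightly cleaner route to \eqref{dsum1} is to recognize the trilinear sum as a discrete analogue of the product estimate $\|uv\|_{H^{-(3/2+\delta_2)}} \lesssim \|u\|_{H^{-a}}\|v\|_{H^{-b}}$ (equivalently $H^a \cdot H^b \hookrightarrow H^{3/2+\delta_2 - \text{something}}$ by duality), transported to $\TT^3$ via Fourier series; if the paper already has such a Sobolev product rule available one could simply cite it, but since the statement here is purely about sequences I expect the direct kernel argument to be the intended proof.
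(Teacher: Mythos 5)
Your proposal converges, after a visible mid-course correction, on essentially the same three-region argument the paper uses, so the overall plan is sound; but two points deserve to be sharpened before this could be called a proof.

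First, the opening route you propose for \eqref{dsum1} — Cauchy–Schwarz in $k$ and then Young's $\ell^1*\ell^2\hookrightarrow\ell^2$ applied to the convolution kernel $|p|^{-a}|k-p|^{-b}$ after verifying $\sup_k\sum_p|p|^{-a}|k-p|^{-b}<\infty$ — is not merely delicate, it is false when $a+b=\f32+\delta_2<3$. In the bulk region $|p|\lesssim|k|/2$ one has $|k-p|\sim|k|$, and a dyadic count gives $\sum_{|p|\le|k|/2}|p|^{-a}|k-p|^{-b}\sim|k|^{3-a-b}=|k|^{\f32-\delta_2}\to\infty$, so the kernel is not uniformly summable and that form of Young's inequality cannot be the mechanism. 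You do flag this yourself, but your fallback (``the Bessel-type kernel with total homogeneity $\f32+\delta_2$ is an $\ell^1$-bounded convolution operator after Cauchy–Schwarz peels off a square root of each weight'') is too vague to stand on its own. The clean version, which is exactly what the paper does, is: split into $2|p|\le|k|$, $2|k|\le|p|$, and the remaining bulk $|k|/2<|p|<2|k|$. In the first region one has $|p|\lesssim|k-p|$, hence $|p|^{-a}|k-p|^{-b}\le|p|^{-a-b}=|p|^{-\f32-\delta_2}$, so the summand is dominated by $|p|^{-\f32-\delta_2}\,\tilde A_p\,\tilde B_{k-p}\,|C_k|$ with $\tilde A_p=|p|^a|A_p|$, $\tilde B_q=|q|^b|B_q|$; then Cauchy–Schwarz in $k$ followed by Young's with kernel $\gamma_p=|p|^{-\f32-\delta_2}\tilde A_p$ and a second Cauchy–Schwarz $\|\gamma\|_{\ell^1}\le\big(\sum_p|p|^{-3-2\delta_2}\big)^{1/2}\|\tilde A\|_{\ell^2}$ closes the estimate because $-3-2\delta_2<-3$. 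The bulk region is handled identically with $|p|$ and $|k-p|$ swapped (there $|k-p|\lesssim|p|$ so one puts the full $-\f32-\delta_2$ on $|k-p|$), and the third region by symmetry in $p\leftrightarrow k-p$. The decisive move is putting the entire weight $\f32+\delta_2$ onto the \emph{smaller} of $|p|$, $|k-p|$, not controlling the mixed kernel by a constant.

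Second, on the application: the two alternatives in Case 1 do not come one from each region. They are two \emph{independent} direct applications of \eqref{dsum1}, no region split needed: writing $\hat A_p=|p|^aA_p$, $\hat B_q=|q|^bB_q$, one applies \eqref{dsum1} with the internal split $(a',b')=(\f32+\delta_2+\delta_3-a,\,a-\delta_3)$ for the first alternative and $(a',b')=(b,\,\f32+\delta_2-b)$ for the second; both pairs are nonnegative and sum to $\f32+\delta_2$ precisely because $m+n\varrho\le\f32$ and $0\le\delta_3\le a$. It is only in Case 2 ($m+n\varrho>\f32$) that one or the other of these exponents can go negative, which forces the genuine two-region split you describe and produces the two-term bound with the free parameter $\tilde\delta$. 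Your text attributes the dichotomy to ``keeping the weight on $A$ or on $B$'' after a region split even in Case 1, which is a slight misreading, though it would still yield the stated inequality. These are fixable points rather than a wrong approach, but as written the proposal does not quite constitute a proof of the kernel step.
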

\begin{proof}
It is easy to check that \beno \sum_{k\in\ZZ^3}\sum_{p\in\ZZ^3}
    A_pB_{k-p}C_k=(\sum_{2|p|\le|k|}+ \sum_{2|k|\le|p|}+\sum_{|k|/2<|p|<2|k|})A_pB_{k-p}C_k.\eeno
In the case of $2|p|\le|k|$, we have $|p|\lesssim |k|\sim|k-p|$ which implies that
\beno |\sum_{2|p|\le|k|}A_pB_{k-p}C_k|&\lesssim& \sum_{2|p|\le|k|} |p|^{-\f32-\delta_2} (|p|^a |A_p|)(|k-p|^{b}|B_{k-p}|)|C_k|\\&\lesssim&
\big(\sum_{p\in \ZZ^3} |p|^{2a} A_p^2\big)^{\f12}\big(\sum_{p\in \ZZ^3} |p|^{2b} B_p^2\big)^{\f12}\big(\sum_{p\in \ZZ^3} C_p^2\big)^{\f12}. \eeno
By the symmetric structure, we can handle the case $2|k|\le|p|$. Next we turn to the case $|k|/2<|p|<2|k|$. In this situation, we have $|k-p|\lesssim|k|\sim|p|$. It yileds
\beno |\sum_{2|p|\le|k|}A_pB_{k-p}C_k|&\lesssim& \sum_{2|p|\le|k|} |k-p|^{-\f32-\delta_2} (|p|^a |A_p|)(|k-p|^{b}|B_{k-p}|)|C_k|\\&\lesssim&
\big(\sum_{p\in \ZZ^3} |p|^{2a} A_p^2\big)^{\f12}\big(\sum_{p\in \ZZ^3} |p|^{2b} B_p^2\big)^{\f12}\big(\sum_{p\in \ZZ^3} C_p^2\big)^{\f12}. \eeno
It completes the proof of \eqref{dsum1}.

The estimates for $\mathcal{I}$  follows   the similar argument.
It ends the proof of the lemma.
\end{proof}

Let us introduce the  translation  and finite difference operators for $x$ variable: 
\ben\label{transop}  && T_k^n h\eqdefa \left\{\begin{aligned} & \ T_kh\eqdefa h(\cdot+k), \quad\mbox{if}\quad n>0;\\
	&  h,\quad\mbox{if}\quad n=0.\end{aligned}\right. \\ &\label{fdop}&\bar{\triangle}^{n\varrho}_k h\eqdefa \left\{\begin{aligned} & \big(T_kh(t,x,v)-h(t,x,v)\big)|k|^{-3/2-n\varrho}, \quad\mbox{if}\quad n>0;\\
	&  h,\quad\mbox{if}\quad n=0.\end{aligned}\right.  
\een
Combining the lower bounds in Corollary \ref{basic-estimates} and Lemma \ref{dsums}, we have

\begin{cor}\label{basic-estimates1} Suppose $g, h$ and $f$ are smooth functions and  let $\delta_2\ll1$ and $|\alpha|=m$ with $m\in \N$.  If $g$ is a non-negative function verifying \eqref{lbcondi}, we   have the coercivity estimates:
\beno   &&\sum_{|\alpha|=m}\int_{\TT^6} \lr{Q^\eps(g,\bar{\triangle}^{n\varrho}_k\pa_x^\alpha h) W_{m,n},  \bar{\triangle}^{n\varrho}_k\pa_x^\alpha h) W_{m,n}}_vdxdk
\lesssim -\f13\int_{\TT^3}\mathcal{E}_{g}^\gamma(W_{m,n}  |D_x|^{m+n\varrho} h )dx-\mathcal{C}_1(c_1,c_2)\\&& \times(\|W^\ep_s(D)W_{\gamma/2}W_{m,n}   |D_x|^{m+n\varrho} h\|_{L^2}^2
+\int_{\TT^3}\mathcal{E}_{\mu}^{0,\eps}(W_{\gamma/2}W_{m,n}   |D_x|^{m+n\varrho} h)dx)+
\mathcal{C}_2(c_1,c_2)\|W_{\gamma/2}W_{m,n}  h\|_{H^{m+n\varrho}_xL^2}^2\\&&+\|g\|_{H^{\f32+\delta_2}_xL^2_{\gamma+4}}^2\|W_{m,n} h\|_{H^{m+n\varrho}_xL^2}^2 +\mathrm{1}_{m+n\varrho\le\f32}\|W_{\gamma/2}W_{m,n}g\|_{H_x^{m+n\varrho}L^2}^2\|  h\|_{H_x^{\f32+\delta_2}L^2_4}^2\\&&
+\mathrm{1}_{m+n\varrho>\f32}\|W_{\gamma/2}W_{m,n}g\|_{H_x^{\f32+\delta_2}L^2}^2\|  h\|_{H_x^{m+n\varrho}L^2_4}^2.
\eeno
\end{cor}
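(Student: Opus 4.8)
\textbf{Proof strategy for Corollary \ref{basic-estimates1}.}

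The plan is to reduce the multi-index, finite-difference statement on $\TT^3\times\R^3$ to the pointwise-in-$x$ velocity estimate (vi) of Corollary \ref{basic-estimates}, and then sum the resulting Fourier-side expressions using the discrete Schur-type inequality of Lemma \ref{dsums}. First I would fix $|\alpha|=m$, apply the translation/finite-difference operators $\bar{\triangle}^{n\varrho}_k\pa_x^\alpha$ to the equation-level bilinear form, and expand the commutator: writing $\bar{\triangle}^{n\varrho}_k\pa_x^\alpha Q^\eps(g,h) = Q^\eps(g,\bar{\triangle}^{n\varrho}_k\pa_x^\alpha h) + \sum (\text{commutator terms in }x)$. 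The leading term $Q^\eps(g,\bar{\triangle}^{n\varrho}_k\pa_x^\alpha h)$ is exactly the object to which one applies (vi) of Corollary \ref{basic-estimates} with $l$ replaced by the index of $W_{m,n}$, and $f=h=\bar{\triangle}^{n\varrho}_k\pa_x^\alpha h$. Integrating that pointwise bound over $x\in\TT^3$ and over $k$ (against the finite-difference measure $|k|^{-3-2n\varrho}dk$, which by the identity $\int_{\TT^3}|e^{iq\cdot k}-1|^2|k|^{-3-2n\varrho}dk\sim|q|^{2n\varrho}$ converts $\bar{\triangle}^{n\varrho}_k\pa_x^\alpha$ on the Fourier side into the multiplier $|q|^{m+n\varrho}$) produces precisely the coercive terms $-\tfrac13\int\mathcal{E}^\gamma_g(W_{m,n}|D_x|^{m+n\varrho}h) - \mathcal{C}_1(\|W^\eps_s(D)W_{\gamma/2}W_{m,n}|D_x|^{m+n\varrho}h\|_{L^2}^2 + \int\mathcal{E}^{0,\eps}_\mu(\cdots)) + \mathcal{C}_2\|W_{\gamma/2}W_{m,n}h\|_{H^{m+n\varrho}_xL^2}^2$ together with the error $\|g\|^2_{L^1_{\gamma+2s}}\|W_{m,n}|D_x|^{m+n\varrho}h\|^2 + \|g\|^2_{L^2_{l_{m,n}+\gamma/2}}\|h\|^2_{L^1_2}$, where all $x$-Sobolev norms arise by bounding the $x$-frequency localizations.

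Next I would handle the two ``error'' terms from (vi), namely $|g|^2_{L^1_{\gamma+2s}}|h|^2_{L^2_{l}}$ and $|g|_{L^2_{l+\gamma/2}}^2|h|^2_{L^1_2}$, which after the $x$-transform become a convolution in the $x$-frequency $q$: schematically $\sum_{q}(\sum_{p} \widehat{g}(p)\,\widehat{(W\cdot h)}(q-p)\,\ldots)^2$ with weights $|p|^{a}$ on $g$ and $|q-p|^{b}$ on $h$ (or vice versa), $a+b=m+n\varrho$. This is exactly the setup of Lemma \ref{dsums}: in Case 1 ($m+n\varrho\le\tfrac32$) one distributes the derivatives so that $g$ carries $\tfrac32+\delta_2$ (controlled by $\|g\|_{H^{3/2+\delta_2}_xL^2_{\gamma+4}}$) and $h$ carries $m+n\varrho$, giving $\|g\|^2_{H^{3/2+\delta_2}_xL^2_{\gamma+4}}\|W_{m,n}h\|^2_{H^{m+n\varrho}_xL^2}$ plus the symmetric allocation $\mathbf{1}_{m+n\varrho\le3/2}\|W_{\gamma/2}W_{m,n}g\|^2_{H^{m+n\varrho}_xL^2}\|h\|^2_{H^{3/2+\delta_2}_xL^2_4}$; in Case 2 ($m+n\varrho>\tfrac32$) the second option of Lemma \ref{dsums} forces $g$ to absorb more than $\tfrac32$ derivatives in $x$, yielding the term $\mathbf{1}_{m+n\varrho>3/2}\|W_{\gamma/2}W_{m,n}g\|^2_{H^{3/2+\delta_2}_xL^2}\|h\|^2_{H^{m+n\varrho}_xL^2_4}$. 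The weight bookkeeping — ensuring $L^2_{\gamma+4}$ and $L^1_2$ are absorbed by the weights $W_{m,n}$ in the well-prepared sequence — is routine given conditions (W-1)--(W-6).

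The main obstacle will be the $x$-commutator terms I discarded in the first paragraph: when $\bar{\triangle}^{n\varrho}_k\pa_x^\alpha$ acts on the product $Q^\eps(g,h)$, the Leibniz expansion produces terms $Q^\eps(\pa_x^{\alpha_1}g,\, \bar{\triangle}^{n\varrho}_k\pa_x^{\alpha_2}h)$ with $|\alpha_1|\ge1$, plus genuine finite-difference cross terms $Q^\eps(T_k g - g,\ldots)$. These are not coercive and must be bounded by the dissipation plus lower-order energy; the delicate point is that one must split the $x$-derivatives between $g$ and $h$ so that $g$ never needs more than $H^{3/2+\delta_2}_x$ regularity (that is the whole purpose of the ``high regularity equipped with low weight'' design), while the velocity-side norms on $h$ stay within what $D^{m,n,\eps}_2$ and $\mathcal{E}^{\gamma,\eps}_g(W_{m,n}|D_x|^{m+n\varrho}h)$ can pay for — here one invokes (i) or (iv)--(vi) of Corollary \ref{basic-estimates} again at lower order, together with the commutator bounds of Lemma \ref{commforweight} for the weight. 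Interpolation in the $x$-frequency (Lemma \ref{dsums}, Case 2 with the free parameter $\tilde\delta$) is what makes the split possible when $m+n\varrho$ is large. Once every piece is placed into one of the six output terms, absorbing the genuinely coercive contributions on the left and collecting the rest on the right finishes the proof.
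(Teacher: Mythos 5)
Your first two paragraphs correctly identify the paper's mechanism: apply (vi) of Corollary~\ref{basic-estimates} pointwise in $x$, integrate against the finite-difference measure $|k|^{-3-2n\varrho}dk$ (which by \eqref{gainRegu} converts $\bar\triangle^{n\varrho}_k\pa_x^\alpha$ into the multiplier $|q|^{m+n\varrho}$ on the Fourier side), and then control the two ``error'' terms $|g|^2_{L^1_{\gamma+2s}}|h|^2_{L^2_l}$ and $|g|^2_{L^2_{l+\gamma/2}}|h|^2_{L^1_2}$ with Lemma~\ref{dsums}. This is essentially the paper's proof. One point you gesture at but do not make precise: the second error term cannot be Fourier-transformed in $x$ directly, because $|g|^2_{L^2_{l+\gamma/2}}$ and $|h|^2_{L^1_2}$ are nonlinear in $g,h$; the paper instead returns to the specific trilinear integrals $I_2^2$ and $II_2$ in the proof of Lemma~\ref{commforweight} (where $g$, $h$ and the test function are still coupled under a single $v$-integral), applies Plancherel in $x$ at \emph{that} level, and only then invokes the $v$-side upper bounds together with Lemma~\ref{dsums}. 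Without that step the $L^1_v$-norm on $h$ blocks the $x$-Fourier argument.

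Your third paragraph, however, is based on a misreading of the statement and should be removed. The left-hand side of the corollary is already $Q^\eps(g,\bar\triangle^{n\varrho}_k\pa_x^\alpha h)W_{m,n}$ paired with itself — the first argument of $Q^\eps$ is just $g$, with no $\pa_x^{\alpha_1}$ or finite-difference falling on it. There is no Leibniz expansion to perform and hence no ``discarded $x$-commutator terms'' and no ``genuine finite-difference cross terms $Q^\eps(T_kg-g,\ldots)$'' to control. Those cross terms (with $|\alpha_1|\ge1$ on $g$, or with the translation/difference applied to $g$) are precisely the content of Corollary~\ref{basic-estimates3} and Corollary~\ref{basic-estimates2}, which appear later in the same subsection and are proved by a closely related but not identical argument ((i)--(iii) of Corollary~\ref{basic-estimates} plus Lemma~\ref{dsums}). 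By declaring a nonexistent obstacle to be ``the main obstacle'' and leaving it unresolved, you make your own proof look incomplete; in fact, after fixing the Plancherel-at-the-trilinear-level detail above, paragraphs one and two already contain the full argument.
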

\begin{proof} For simplicity, we denote $W_{m,n}$ by $W_l$.
We first observe that
\ben\label{gainRegu}\int_{\TT^3}  \f{|e^{iq\cdot k}-1|^2}{|k|^{3+2\varrho}}dk\sim |q|^{2\varrho},\een
which implies that
$ \int_{\TT^3\times\TT^3} (\bar{\triangle}_k^\varrho F-\bar{\triangle}_k^\varrho G)^2 dxdk\sim \sum_{q\in \ZZ^3} |q|^{2\varrho} (\hat{F}(q)-\hat{G}(q))^2
 = ||D_x|^\varrho (F-G)|^2_{L^2_x},$
 where we use the notation that $\hat{f}$ denotes the Fourier transform with respect to $x$ variables.
 Then 
we arrive at
 $\int_{\TT^6} \mathcal{E}^\gamma_g(W_l \bar{\triangle}_k^\varrho h)dx dk  \sim  \int_{\TT^3}  \mathcal{E}^\gamma_g(W_l |D_x|^\varrho  h) dx$,  
$\int_{\TT^3}\|W^\eps_s(D) W_{l}\bar{\triangle}_k^\varrho h\|_{L^2}^2 dk\sim \|W^\eps_s(D) W_{l}|D_x|^\varrho h\|_{L^2}^2$ and $ 
\int_{\TT^6}  \mathcal{E}^{0,\eps}_\mu(W_l \bar{\triangle}_k^\varrho h) dxdk  \sim   \int_{\TT^3} \mathcal{E}^{0,\eps}_\mu(W_l |D_x|^\varrho  h)dx. $

Thanks to these facts, by comparing desired results to the coercivity estimate  \eqref{corbaesti1} obtained in Corollary \ref{basic-estimates}, we only need to take care of the terms:
$|g|^2_{L^1_{\gamma+2s}}|h|_{L^2_{l}}^2$ and $|g|_{L^2_{l+\gamma/2}}^2|h|_{L^1_2}^2$ appearing in \eqref{corbaesti1}. It is easy to check that  the first term can be estimated by
$$\int_{\TT^3}|g|^2_{L^1_{\gamma+2s}}|h|_{L^2_{l}}^2dx\lesssim \|g\|_{H^{\f32+\delta_2}_xL^2_{\gamma+4}}^2\|h\|_{L^2_l}^2.$$
Notice that the second  term comes from the estimates for
$I_2^2$ and $II_2$ in the proof of Lemma \ref{commforweight}. Thus to get the desired results, we should first apply Plancherel formula with respect to $x$ variable to $I_2^2$ and $II_2$ and then use the upper bounds for  $v$ variable. Then we get the desired results thanks to Lemma \ref{dsums} and ends the proof of the corollary.
\end{proof}

 Moreover, we have the following upper bounds:
 \begin{cor}\label{basic-estimates3}  Suppose $g, h$ and $f$ are smooth functions and  let $\delta_2\ll1$ and $\alpha=\alpha_1+\alpha_2$ with $|\alpha|=m\in \N$.   
 	
 (i).	If $0\le \delta_3\le m-|\alpha_2|$ and $\tilde{\delta}\le \f32+\delta_2$, then
 \beno   &&\big|\int_{\TT^3}\int_{\TT^3}\lr{Q^\eps(\pa^{\alpha_1}_xg,\pa^{\alpha_2}_x\bar{\triangle}^{n\varrho}_kh)W_{m,n},  \bar{\triangle}^{n\varrho}_kf}_v dxdk\big| \lesssim
  \big(\|g\|_{H_x^{\f32+\delta_2+\delta_3}L^2_{\gamma+4}}\|W^\ep_s(D)W_{m,n}W_{\gamma/2+2s} h\|_{H_x^{m+n\varrho-\delta_3}L^2}\\&&+
   \mathrm{1}_{m+n\varrho>\f32}\|g\|_{H_x^{m+n\varrho+\tilde{\delta}}L^2_{\gamma+4}} \|W^\ep_s(D)W_{m,n}W_{\gamma/2+2s} h\|_{H_x^{\f32+\delta_2-\tilde{\delta}}L^2} +\mathrm{1}_{m+n\varrho>\f32} \|W_{m,n}W_{\gamma/2}g\|_{H^{\f32+\delta_2+\delta_3}} \\&&\times \|  h\|_{H^{m+n\varrho}_xL^2_4}
 +\|W_{m,n}W_{\gamma/2}g\|_{H^{m+n\varrho}_xL^2} \|h\|_{H^{\f32+\delta_2}_xL^2_4}\big)\|W^\ep_s(D)W_{\gamma/2} f\|_{H^{n\varrho}_xL^2},
 \eeno
 
 (ii). If $0\le \delta_3\le m+n\varrho-|\alpha_2|$ and $\tilde{\delta}\le \f32+\delta_2$, then \beno
 &&|\int_{\TT^3}\int_{\TT^3}\lr{Q^\eps(\pa^{\alpha_1}_x\bar{\triangle}^{n\varrho}_kg,T^n_k\pa^{\alpha_2}_xh)W_{m,n},  \bar{\triangle}^{n\varrho}_kf}_v dxdk|
 \lesssim \big(\|g\|_{H_x^{\f32+\delta_2+\delta_3}L^2_{\gamma+4}}\|W^\ep_s(D)W_{m,n}W_{\gamma/2+2s} h\|_{H_x^{m+n\varrho-\delta_3}L^2}\\&&+ \mathrm{1}_{m+n\varrho>\f32}\|g\|_{H_x^{m+n\varrho+\tilde{\delta}}L^2_{\gamma+4}} \|W^\ep_s(D)W_{m,n}W_{\gamma/2+2s} h\|_{H_x^{\f32+\delta_2-\tilde{\delta}}L^2} +\mathrm{1}_{m+n\varrho>\f32} \|W_{m,n}W_{\gamma/2}g\|_{H^{\f32+\delta_2+\delta_3}}\\&&\times \|  h\|_{H^{m+n\varrho}_xL^2_4}
 +\|W_{m,n}W_{\gamma/2}g\|_{H^{m+n\varrho}_xL^2} \|h\|_{H^{\f32+\delta_2}_xL^2_4}\big)\|W^\ep_s(D)W_{\gamma/2} f\|_{H^{n\varrho}_xL^2}.
 \eeno
 \end{cor}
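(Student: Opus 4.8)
\textbf{Plan of proof for Corollary \ref{basic-estimates3}.}

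The plan is to reduce both estimates to the bilinear commutator and upper bounds for $Q^\eps$ in $v$ established in Corollary \ref{basic-estimates} (parts (i) and (ii)) together with the discrete summation Lemma \ref{dsums}. The key structural observation, as in the proof of Corollary \ref{basic-estimates1}, is that the finite difference operator $\bar\triangle^{n\varrho}_k$ in the $x$-variable is, after integrating in $k\in\TT^3$, equivalent to the fractional derivative $|D_x|^{n\varrho}$, thanks to the identity $\int_{\TT^3}\tfrac{|e^{iq\cdot k}-1|^2}{|k|^{3+2n\varrho}}dk\sim|q|^{2n\varrho}$ recorded in \eqref{gainRegu}. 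Hence the $x$-dependence of the trilinear form collapses to a sum over frequencies $q,p\in\ZZ^3$ of the form $\sum_{q}\sum_{p}(|p|^{|\alpha_1|}\widehat{g}(p))\,(|q-p|^{|\alpha_2|+n\varrho}\widehat{h}(q-p))\,\widehat{f}(q)$ (modulo the weight $W_{m,n}$ and the symbol $W^\eps_s(D)$, both acting only on $v$), which is exactly the object Lemma \ref{dsums} is designed to control with the split $a+b=m+n\varrho$.

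\textbf{Main steps.} First I would apply Plancherel in $x$ to write the left-hand side of (i) as $\sum_{q,p}\langle Q^\eps(\widehat{\pa^{\alpha_1}_x g}(p),\widehat{\pa^{\alpha_2}_x h}(q-p))W_{m,n},\widehat{f}(q)\rangle_v$ weighted by the Fourier multiplier $|q|^{n\varrho}$ coming from $\bar\triangle^{n\varrho}_k$, using \eqref{gainRegu} to convert $\bar\triangle^{n\varrho}_k$ on the third slot $f$ into $|D_x|^{n\varrho}$. Next, for each fixed pair of frequencies, I would invoke the $v$-variable upper bound of Corollary \ref{basic-estimates}(ii) (for part (i) of this corollary) — namely $|\langle Q^\eps(g,h)W_l,f\rangle_v|\lesssim(|g|_{L^1_{\gamma+2s}}|W^\eps_s(D)W_{l+\gamma+2s}h|_{L^2}+|g|_{L^2_{l+\gamma}}|h|_{L^1_2})|W^\eps_s(D)f|_{L^2}$ — with $l=l_{m,n}$; then absorb the low-order $L^1$/$L^2$ factors in $g$ into $\|g\|_{H^{3/2+\delta_2}_xL^2_{\gamma+4}}$ type norms via Sobolev embedding $H^{3/2+\delta_2}_x\hookrightarrow L^\infty_x$. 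Finally I would feed the resulting frequency sum into Lemma \ref{dsums}: in Case 1 ($m+n\varrho\le 3/2$) this produces the two options with $\tfrac32+\delta_2+\delta_3$ on $g$ or $\tfrac32+\delta_2$ on $h$; in Case 2 ($m+n\varrho>3/2$) it produces the two-term bound with the extra $\tilde\delta$-shifted term (whence the indicator $\mathrm{1}_{m+n\varrho>3/2}$ in the statement). Since $Q^\eps$ is linear in each argument and the Littlewood–Paley/Bony pieces only redistribute frequencies, the same scheme applies verbatim to (ii) after noting that the translation operator $T^n_k$ on $h$ does not change $|\widehat{h}|$, so the frequency bookkeeping is identical; the only difference is that $\bar\triangle^{n\varrho}_k$ now also hits $g$, which after \eqref{gainRegu} just moves a factor $|p|^{n\varrho}$ onto $\widehat{g}(p)$, i.e. one uses $a+b=m+n\varrho$ with the $n\varrho$ share reassigned to $g$.

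\textbf{Expected main obstacle.} The genuinely delicate point is the bookkeeping of the weight $W_{m,n}$ through the commutator: the collision operator does not commute with $W_{m,n}$, so one must pass from $Q^\eps(\cdot,\cdot)W_{m,n}$ to $Q^\eps(\cdot,\cdot W_{m,n})$ and control the commutator using Lemma \ref{commforweight} / Corollary \ref{basic-estimates}, which is where the weight increase $W_{l+\gamma+2s}$ versus $W_{l}$ on $h$ and the appearance of the symbol $W^\eps_s(D)$ rather than $W^\eps_{2s}(D)$ must be tracked carefully so that the final right-hand side has exactly the advertised weights ($W_{m,n}W_{\gamma/2+2s}$ on $h$, $W_{m,n}W_{\gamma/2}$ on $g$) and regularity indices. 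A secondary technical nuisance is matching the range $0\le\delta_3\le m-|\alpha_2|$ (resp. $m+n\varrho-|\alpha_2|$) with the split $a+b$ in Lemma \ref{dsums}: one must choose the derivative share $a=|\alpha_1|+\delta_3$, $b=|\alpha_2|+n\varrho-\delta_3$ (in case (i), $n\varrho$ staying on $h$) or $a=|\alpha_1|+n\varrho+\delta_3$ (in case (ii)), and check that the resulting exponents are nonnegative so that Lemma \ref{dsums} is applicable — this is exactly why $\delta_3$ is constrained by $m-|\alpha_2|$ (resp. $m+n\varrho-|\alpha_2|$). Once these two pieces of bookkeeping are in place, the estimate follows by combining the $v$-upper bounds with Lemma \ref{dsums} as above, with no further analytic input.
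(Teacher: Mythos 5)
Your overall scheme---Plancherel in $x$ via \eqref{gainRegu}, a pointwise $v$-estimate per frequency pair, then the discrete convolution Lemma \ref{dsums}---is exactly the route the paper takes. However, you invoke the wrong piece of the $v$-estimate: you cite and write out Corollary \ref{basic-estimates}\,(ii), which gives
\[
|\langle Q^\eps(g,h)W_l,f\rangle_v|\lesssim\big(|g|_{L^1_{\gamma+2s}}|W^\eps_s(D)W_{l+\gamma+2s}h|_{L^2}+|g|_{L^2_{l+\gamma}}|h|_{L^1_2}\big)|W^\eps_s(D)f|_{L^2},
\]
i.e.\ full weight $W_{l+\gamma+2s}$ on $h$ and no weight on $f$. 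The target estimate of Corollary \ref{basic-estimates3} has $W_{m,n}W_{\gamma/2+2s}$ on $h$ and $W^\eps_s(D)W_{\gamma/2}$ on $f$, and this weight split is exactly what Corollary \ref{basic-estimates}\,(i) produces:
\[
|\langle Q^\eps(g,h)W_l,f\rangle_v|\lesssim |g|_{L^1_{\gamma+2s}}|W^\eps_s(D)W_{l+\gamma/2+2s}h|_{L^2}\,|W^\eps_s(D)W_{\gamma/2}f|_{L^2}+|g|_{L^2_{l+\gamma/2}}|h|_{L^1_2}|f|_{L^2_{\gamma/2}}.
\]
If you run your argument with part (ii) instead of part (i), the final norms on $h$ and $f$ have the wrong $v$-weights and do not reduce to the claimed estimate; this is precisely the "delicate weight bookkeeping" you flagged, and it is where your write-up actually goes wrong.

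A second, smaller point: you suggest Sobolev embedding $H^{3/2+\delta_2}_x\hookrightarrow L^\infty_x$ is used to convert the $L^1_v$/$L^2_v$ factors of $g$ into $\|g\|_{H^{3/2+\delta_2}_xL^2_{\gamma+4}}$ norms. That is not what happens. The $v$-variable embedding $|\hat g(p)|_{L^1_{\gamma+2s}}\lesssim|\hat g(p)|_{L^2_{\gamma+4}}$ is applied frequency-by-frequency, and the $\tfrac32+\delta_2$ exponent (and the $\tfrac32+\delta_2+\delta_3$ / $m+n\varrho-\delta_3$ redistribution, together with the extra $\tilde\delta$-shifted term and the indicator $\mathrm 1_{m+n\varrho>3/2}$) all come from the discrete convolution estimates in Lemma \ref{dsums}, not from an $L^\infty_x$ embedding. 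Your description of the finite-difference bookkeeping for (ii) (the $n\varrho$ share migrating to the $g$-slot, matching the constraint $\delta_3\le m+n\varrho-|\alpha_2|$) is correct.
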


\begin{proof}
Thanks to the Plancherel theorem, we have
$ \int_{\TT^3} g(x)h(x)f(x)dx=\sum_{q\in\ZZ^3}\sum_{p\in\ZZ^3} \hat{g}(p) \hat{h} (q-p)\bar{\hat{f}}(q), $ where
$\hat{f}$ denotes the Fourier transform with respect to $x$ variables.
Then 
   we get \beno   &&\int_{\TT^3}\lr{Q^\eps(\pa^{\alpha_1}_xg,\pa^{\alpha_2}_x\bar{\triangle}^{n\varrho}_kh)W_{m,n},  \bar{\triangle}^{n\varrho}_kf}_v dx\\
   &&= \sum_{q\in\ZZ^3}\sum_{p\in\ZZ^3} \bigg(\int_{\TT^3}(\mathrm{1}_{n\neq0}\f{(e^{i(q-p)k}-1)(e^{iqk}-1)}{|k|^{3+2n\varrho}}+\mathrm{1}_{n=0})dk\bigg)
   p^{\alpha_1}(q-p)^{\alpha_2}  \lr{Q^\eps(\hat{g}(p),\hat{h} (q-p))W_{m,n}, \bar{\hat{f}}(q)}_v\eeno
By $(i)$ of Corollary \ref{basic-estimates} and \eqref{gainRegu},    we deduce that
   \beno   &&\big|\int_{\TT^3}\int_{\TT^3}\lr{Q^\eps(\pa^{\alpha_1}_xg,\pa^{\alpha_2}_x\bar{\triangle}^{n\varrho}_kh)W_{m,n},  \bar{\triangle}^{n\varrho}_kf}_v dxdk\big|\\&&\lesssim\sum_{q\in\ZZ^3}\sum_{p\in\ZZ^3}
   |p|^{|\alpha_1|}|q-p|^{|\alpha_2|+n\varrho} |q|^{n\varrho}\big(  |\hat{g}(p)|_{L^2_{\gamma+4}} |W^\ep_s(D)W_{m,n}W_{\gamma/2+2s} \hat{h} (q-p)|_{L^2}|W^\ep_s(D)W_{\gamma/2} \hat{f} (q)|_{L^2} \\&&\quad+|W_{m,n}W_{\gamma/2}\hat{g}(p)|_{L^2}|\hat{h}(q-p)|_{L^2_4}|\hat{f}(q)|_{L^2_{\gamma/2}}\big) \eqdefa \mathcal{S}.\eeno
Then due to Lemma \ref{dsums},
if $m+n\varrho\le\f32$, we have  
\beno
\mathcal{S}&\lesssim& \big(
\|g\|_{H_x^{\f32+\delta_2+\delta_3}L^2_{\gamma+4}}\|W^\ep_s(D)W_{m,n}W_{\gamma/2+2s} h\|_{H_x^{m+n\varrho-\delta_3}L^2}\\&&\quad   +\|W_{m,n}W_{\gamma/2}g\|_{H^{m+n\varrho}_xL^2} \|h\|_{H^{\f32+\delta_2}_xL^2_4}\big)\|W^\ep_s(D)W_{\gamma/2} f\|_{H^{n\varrho}_xL^2}.
\eeno
And if $m+n\varrho>\f32$, we have
\beno
&&\mathcal{S}\lesssim \big(\|g\|_{H_x^{m+n\varrho+\tilde{\delta}}L^2_{\gamma+4}} \|W^\ep_s(D)W_{m,n}W_{\gamma/2+2s} h\|_{H_x^{\f32+\delta_2-\tilde{\delta}}L^2}+\|g\|_{H_x^{\f32+\delta_2+\delta_3}L^2_{\gamma+4}}\\&&\times\|W^\ep_s(D)W_{m,n}W_{\gamma/2+2s} h\|_{H_x^{m+n\varrho-\delta_3}L^2}+ \|W_{m,n}W_{\gamma/2}g\|_{H^{\f32+\delta_2}} \|  h\|_{H^{m+n\varrho}_xL^2_4}
\\&&+\|W_{m,n}W_{\gamma/2}g\|_{H^{m+n\varrho}_xL^2} \|h\|_{H^{\f32+\delta_2}_xL^2_4}\big)\|W^\ep_s(D)W_{\gamma/2} f\|_{H^{n\varrho}_xL^2}.
\eeno
Combining estimates for $\mathcal{S}$ in the above, we get $(i)$. The second result can be derived in a similar way.
 We ends the proof of the upper bounds.
\end{proof}

\begin{cor}\label{basic-estimates2}  Suppose $g, h$ and $f$ are smooth functions and  let $\delta_2\ll1, a\ge0$ and $\alpha=\alpha_1+\alpha_2$ with $|\alpha|=m\in \N$.  
	
	(i). If $0\le \delta_3\le m-|\alpha_2|$, then
 \beno &&\|Q^\eps(\pa^{\alpha_1}_xg, \bar{\triangle}^{n\varrho}_k\pa^{\alpha_2}_xh)W_{m,n+1}W_{\gamma/2+a}\|_{L^2_{x,k}H^{-1}_v}^2
\lesssim\mathrm{1}_{m+n\rho>\f32}\|W_{m,n+1}W_{\f32\gamma+a} g\|^2_{H_x^{\f32+\delta_2}L^2}\|  h\|^2_{H_x^{m+n\varrho}L^2_4}
\\&&\qquad +\|g\|^2_{H_x^{\f32+\delta_2+\delta_3}L^2_{\gamma+4}}\|W^\ep_s(D)W_{m,n+1}W_{\f32\gamma+2s+a} h\|^2_{H_x^{m+n\varrho-\delta_3}L^2} +\mathrm{1}_{m+n\rho>\f32}\|g\|^2_{H_x^{m+n\varrho}L^2_{\gamma+4}}\\&&\qquad \times\|W^\ep_s(D)W_{m,n+1}W_{\f32\gamma+2s+a} h\|^2_{H_x^{\f32+\delta_2}L^2}
+  \|W_{m,n+1}W_{\f32\gamma+a}g\|^2_{H^{m+n\varrho}_xL^2} \|h\|^2_{H^{\f32+\delta_2}_xL^2_4}.\eeno   
 (ii). If $0\le \delta_3\le m+n\varrho-|\alpha_2|$, then
\beno
&& \|Q^\eps(\bar{\triangle}^{n\varrho}_k\pa^{\alpha_1}_xg, T_k^n\pa^{\alpha_2}_xh)W_{m,n+1}W_{\gamma/2+a}\|_{L^2_{x,k}H^{-1}_v}^2
\lesssim\mathrm{1}_{m+n\rho>\f32}\|W_{m,n+1}W_{\f32\gamma+a} g\|^2_{H_x^{\f32+\delta_2}L^2}\|  h\|^2_{H_x^{m+n\varrho}L^2_4}
\\&&\qquad+\|g\|^2_{H_x^{\f32+\delta_2+\delta_3}L^2_{\gamma+4}}\|W^\ep_s(D)W_{m,n+1}W_{\f32\gamma+2s+a} h\|^2_{H_x^{m+n\varrho-\delta_3}L^2}+\mathrm{1}_{m+n\rho>\f32}\|g\|^2_{H_x^{m+n\varrho}L^2_{\gamma+4}}\\&& \qquad\times\|W^\ep_s(D)W_{m,n+1}W_{\f32\gamma+2s+a} h\|^2_{H_x^{\f32+\delta_2}L^2}
+  \|W_{m,n+1}W_{\f32\gamma+a}g\|^2_{H^{m+n\varrho}_xL^2} \|h\|^2_{H^{\f32+\delta_2}_xL^2_4}.\eeno
 (iii). If $\alpha_1=0$, then
 \beno &&\|Q^\eps(g, \bar{\triangle}^{n\varrho}_k\pa^{\alpha}_xh)W_{m,n+1}W_{\gamma/2+a}\|_{L^2_{x,k}H^{-1}_v}^2 \lesssim   (\eta_1^2+\epsilon^{2(1-s)} \|g\|^2_{H_x^{\f32+\delta_2}L^2_{\gamma+4}})\times\|W^\ep_s(D)W_{m,n+1}W_{\f32\gamma+2s+a}h\|^2_{H_x^{m+n\varrho}L^2} \\ &&\quad + \eta_1^{-\f{2(2s-1)}{1-s}} \|g\|^{\f{2(2s-1)}{1-s}}_{H_x^{\f32+\delta_2}L^2_{\gamma+4}}\|W_{m,n+1}W_{\f32\gamma+2s+a}h\|^2_{H_x^{m+n\varrho}L^2}
  +
 \|g\|^2_{H_x^{\f32+\delta_2}L^2_{\gamma+4}}\| W_{m,n+1}W_{\f32\gamma+2s+a} h\|^2_{H_x^{m+n\varrho}L^2}\\&&\quad +\mathrm{1}_{m+n\varrho>\f32}\|W_{m,n+1}W_{\f32\gamma+a} g\|^2_{H_x^{\f32+\delta_2}L^2}\|  h\|^2_{H_x^{m+n\varrho}L^2_4}+\|W_{m,n+1}W_{\f32\gamma+a} g\|^2_{H_x^{m+n\varrho}L^2}\|  h\|^2_{H_x^{\f32+\delta_2}L^2_4}.  \eeno
\end{cor}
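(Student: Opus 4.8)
The plan is to deduce all three inequalities from the velocity‑variable bilinear bounds of Corollary \ref{basic-estimates} by one and the same mechanism: duality for the $H^{-1}_v$ norm, Plancherel in $x$, and the dyadic summation Lemma \ref{dsums}. First I set $l\eqdefa l_{m,n+1}+\gamma/2+a$, so that $W_{m,n+1}W_{\gamma/2+a}=W_l$, and I observe that since $|W^\ep_s(\xi)|\lesssim\lr\xi$ one has $|W^\ep_s(D)\phi|_{L^2}\lesssim|\phi|_{H^1}$; hence it suffices to bound $\int_{\TT^3}\int_{\TT^3}\langle Q^\eps(\cdots)W_l,f\rangle_v\,dx\,dk$ for a test function $f(x,k,v)$ with $\|f\|_{L^2_{x,k}H^1_v}\le1$. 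Applying Plancherel in $x$ and the bilinearity of $Q^\eps$ in its velocity arguments, I rewrite this as a double sum over Fourier modes $p,q\in\ZZ^3$ of pairings $\langle Q^\eps(\hat g(p),\hat h(q-p))W_l,\,\cdot\,\rangle_v$, the last slot carrying the $x$‑multipliers from $\pa^\alpha_x$, from $\bar\triangle^{n\varrho}_k$ (and from $T^n_k$ in part $(ii)$) together with $\hat f(q,k)$. Integrating in $k$ and using the Cauchy--Schwarz inequality there with $\int_{\TT^3}|k|^{-3-2n\varrho}|e^{iw\cdot k}-1|^2dk\sim|w|^{2n\varrho}$ turns the finite difference into a factor $|q-p|^{n\varrho}$ in part $(i)$ and, after combining the two phases of $\bar\triangle^{n\varrho}_k$ and $T^n_k$, into $|p|^{n\varrho}$ in part $(ii)$; putting $\tilde f(q,v)\eqdefa(\int_{\TT^3}|\hat f(q,k,v)|^2dk)^{1/2}$, so that $\sum_q|\tilde f(q)|_{H^1_v}^2\le1$, one is left with a weighted convolution sum of total $x$‑homogeneity $m+n\varrho$.

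For parts $(i)$ and $(ii)$ I then apply bound $(ii)$ of Corollary \ref{basic-estimates} to each inner velocity pairing, recognizing the weights via $W_{l+\gamma+2s}=W_{m,n+1}W_{\f32\gamma+2s+a}$ and $W_{l+\gamma}=W_{m,n+1}W_{\f32\gamma+a}$, and bounding $|W^\ep_s(D)\tilde f(q)|_{L^2}\lesssim|\tilde f(q)|_{H^1}$. Summing the resulting bilinear form in $(p,q)$ by Lemma \ref{dsums} — the first or second alternative of Case 1 when $m+n\varrho\le\f32$, and Case 2 with the free parameters $\delta_3$ and $\tilde\delta$ when $m+n\varrho>\f32$ — and embedding $L^1_{\gamma+2s}\hookrightarrow L^2_{\gamma+4}$ and $L^1_2\hookrightarrow L^2_4$ in $v$, produces exactly the stated right‑hand sides, the indicators $\mathrm{1}_{m+n\varrho>\f32}$ marking the terms in which the high $x$‑regularity is placed on $g$. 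For part $(iii)$, where $\alpha_1=0$ and $g$ carries no $x$‑derivative, I instead use bound $(iii)$ of Corollary \ref{basic-estimates} with the same $l$: its term $\mathrm{1}_{2s>1}(\eta+\ep^{1-s}|g|_{L^1_{\gamma+2s}})|W^\ep_s(D)W_{l+\gamma+2s}h|_{L^2}$, with $\eta$ renamed $\eta_1$, yields the prefactor $\eta_1^2+\ep^{2(1-s)}\|g\|^2_{H^{3/2+\delta_2}_xL^2_{\gamma+4}}$; its term $\eta^{-\f{2s-1}{1-s}}|g|^{\f{2s-1}{1-s}}_{L^1_{\gamma+2s}}|h|_{L^2_{l+\gamma+2s}}$ yields, after squaring, the negative power of $\eta_1$; and the remaining two terms are handled as before, the $\f32+\delta_2$ derivatives now always being moved onto $g$ in Lemma \ref{dsums}. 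Squaring the bounds and taking the supremum over $f$ concludes.

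The step I expect to be the main obstacle is purely bookkeeping: tracking the polynomial weights through the velocity bounds of Corollary \ref{basic-estimates} (the ``increase in weight'' from $W_l$ to $W_{l+\gamma+2s}$) so that they match $W_{m,n+1}W_{\f32\gamma+2s+a}$ and $W_{m,n+1}W_{\f32\gamma+a}$ precisely, and, inside Lemma \ref{dsums}, choosing the allocation of $x$‑regularity and the auxiliary exponents $\delta_2,\delta_3,\tilde\delta$ so that every term on the right‑hand side is reproduced with the correct indicator $\mathrm{1}_{m+n\varrho>\f32}$. The only genuinely new technical point, compared with Corollaries \ref{basic-estimates}--\ref{basic-estimates3}, is the treatment of the translation operator $T^n_k$ in part $(ii)$, for which the two phases coming from $\bar\triangle^{n\varrho}_k$ and $T^n_k$ must be combined before estimating the $L^2_k$‑norm of the resulting kernel; everything else is routine.
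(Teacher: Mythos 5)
Your approach—duality in $H^{-1}_v$, Plancherel in $x$, Cauchy--Schwarz in $k$ (so that $T^n_k$ contributes only a unit-modulus phase and the $|p|^{n\varrho}$ factor lands on the $g$-slot in part $(ii)$), then Corollary~\ref{basic-estimates}$(ii)$/$(iii)$ on the inner velocity pairing followed by Lemma~\ref{dsums} on the resulting $(p,q)$-convolution—coincides with the paper's proof, which displays exactly this computation for part $(i)$ and treats parts $(ii)$ and $(iii)$ as ``similar''. The one caveat is that in part $(iii)$ the coefficients $\eta_1$, $\ep^{1-s}|g|_{L^1_{\gamma+2s}}$ and the fractional power $|g|_{L^1_{\gamma+2s}}^{\f{2s-1}{1-s}}$ are not bilinear in $g$ and so do not enter a $(p,q)$-convolution; one simply pulls $\|g\|_{L^\infty_xL^1_{\gamma+2s}}\lesssim\|g\|_{H^{\f32+\delta_2}_xL^2_{\gamma+4}}$ out of the $L^2_{x,k}$-norm, so ``one and the same mechanism'' is a slight overstatement in phrasing, not a gap in the argument.
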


\begin{proof} We prove the desired results by duality.  In fact,  by $(ii)$ of Corollary \ref{basic-estimates}, we have  \beno
&&\bigg| \int_{\TT^3}\int_{\TT^3}\lr{Q^\eps(\pa^{\alpha_1}_xg, \bar{\triangle}^{n\varrho}_k\pa^{\alpha_2}_xh)W_{m,n+1}W_{\gamma/2+a},   f}_v dxdk\bigg|\\
   &&=\bigg| \sum_{q\in\ZZ^3}\sum_{p\in\ZZ^3} \bigg(\int_{\TT^3}(\mathrm{1}_{n\neq0}\f{(e^{i(q-p)k}-1) }{|k|^{3/2+n\varrho}}+\mathrm{1}_{n=0})dk\bigg)
   p^{\alpha_1}(q-p)^{\alpha_2}  \lr{Q^\eps(\hat{g}(p),\hat{h} (q-p))W_{m,n+1}W_{\gamma/2+a}, \bar{\hat{f}}(q)}_v\bigg|   \\&&\lesssim  \sum_{q\in\ZZ^3}\sum_{p\in\ZZ^3} |p|^{\alpha_1}|q-p|^{\alpha_2+n\varrho} \big(|\hat{g}(p)|_{L^2_{\gamma+4}}|W^\ep_s(D)W_{m,n+1}W_{\gamma/2+a}W_{\gamma+2s}\hat{h}(q-p)|_{L^2}
\\&&\quad+|W_{m,n+1}W_{\gamma/2+a}\hat{g}(p)|_{L^2_{\gamma}}|\hat{h}(q-p)|_{L^2_4}\big)|W^\ep_s(D)\hat{f}(q)|_{L^2_{k,v}}.
\eeno
   Then due to  Lemma \ref{dsums}, we get the first result. The last two results can be obtained in a similar way thanks to $(ii)$ and $(iii)$ of Corollary \ref{basic-estimates} and Lemma \ref{dsums}.  We skip the details here and complete the proof of the corollary.
 \end{proof}

\subsection{Hypo-elliptic estimate for the transport equation}

In this subsection we will show the hypo-elliptic estimates for the transport equation which reads:
\begin{equation}
\partial_t f(t,x,v)+v\cdot\nabla_x f(t,x,v)=g(t,x,v)\label{treq}.
\end{equation}

\begin{lem}\label{hypoellipticity}
Let $f\in L^2([0,T]\times\mathbb{T}^3\times\mathbb{R}^3)$ be a
solution of the transport equation \eqref{treq}. Suppose that $g\in L^2([0,T]\times\mathbb{T}^3;H^{-1}(\mathbb{R}^3_v))$. If we further assume that $f_\phi\in
L^2([0,T]\times\mathbb{T}^3;H^s(\mathbb{R}^3_v))$ for some
$0<s<1$, then  for any $l<-\frac{3}{2}$ and $\eta>0$, we have 
\begin{equation}
\int_0^T \|W_l f_\phi\|^2_{H^{\frac{s}{4(4+s)}}_xL^2_v}dt\lesssim\eta^{-8} \|f|_{t=0}\|_{L^2}^2+\eta^{2s}
\|f_\phi\|^2_{L^2([0,T]; L^2_xH^s_v)}+\eta^{-8}(\|g\|_{L^2([0,T]; L^2_xH^{-1}_v)}^2+\|f\|_{L^2([0,T]; L^2)}^2).
\label{xre}
\end{equation}
\end{lem}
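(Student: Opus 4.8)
The plan is to exploit the classical velocity-averaging/hypoellipticity mechanism for the kinetic transport operator, adapted to the truncated piece $f_\phi = (1-\phi(\eps D))f$ which carries genuine fractional regularity $s$ in velocity. The starting point is the observation that $f_\phi$ solves the same transport equation with source $g_\phi + [\text{commutator}]$; since $\phi(\eps D)$ acts only on $v$ and commutes with $v\cdot\nabla_x$ only up to a first-order-in-$\nabla_x$ term, I would first record that $\partial_t f_\phi + v\cdot\nabla_x f_\phi = g_\phi - (\nabla_\xi\phi)(\eps D)\cdot\nabla_x f \cdot\eps$, so the extra term is $O(\eps)\nabla_x f$, harmless at the frequency scales that matter (it is supported in $|D_v|\sim \eps^{-1}$ and there the weight and the $H^s$ norm are comparable up to $\eps$-powers). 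Alternatively one can simply work with $f$ directly, use the transport equation for $f$, and only at the end insert $\phi(\eps D)$ and absorb commutators into the $\eta^{-8}$-type terms; I expect the latter bookkeeping to be cleaner.

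\textbf{Key steps, in order.} First, localize in the Fourier variable dual to $x$: write $f = \sum_{q} \hat f(q) e^{2\pi i q\cdot x}$ and, for each fixed $q\neq 0$, study $\partial_t \hat f(q) + 2\pi i (v\cdot q)\hat f(q) = \hat g(q)$ in the $v$-variable on a time interval. Second, apply the standard averaging lemma estimate: test against a velocity cutoff, integrate in time, and use that the multiplier $(v\cdot q)$ is ``non-degenerate'' to trade integrability in $v$ for a gain of a fractional $x$-derivative. Concretely, for a smooth compactly supported $\varphi(v)$, the bound $\int_0^T\big|\int \hat f(q) \varphi\, dv\big|^2 dt \lesssim |q|^{-\alpha}(\cdots)$ follows from splitting into the region $|v\cdot \hat q|$ small (small measure, controlled by $H^s_v$ regularity of $f_\phi$ via a trace/interpolation estimate) and $|v\cdot\hat q|$ large (integrate by parts in $t$ using the equation, picking up $\|f|_{t=0}\|$, $\|g\|_{H^{-1}_v}$, $\|f\|_{L^2}$). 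Third, remove the velocity cutoff by paying polynomial weights: for $l<-3/2$ the weight $W_l$ is square-integrable in $v$, so summing dyadic velocity shells and using that on shell $|v|\sim 2^k$ one loses only $2^{k\cdot\text{const}}$, which is then reabsorbed because $\sum_k 2^{k(l+\text{const})}<\infty$; this is where the precise exponent $\frac{s}{4(4+s)}$ is produced by optimizing the split parameter against $|q|$ and the shell index. Fourth, sum over $q\in\mathbb{Z}^3$ with weights $|q|^{2s/(4(4+s))}$, use Plancherel in $x$ to recognize $\|W_l f_\phi\|^2_{H^{s/(4(4+s))}_x L^2_v}$ on the left, and Young's inequality with the free parameter $\eta$ to distribute the three right-hand-side contributions into the $\eta^{2s}\|f_\phi\|^2_{L^2_xH^s_v}$ term (from the small-$|v\cdot\hat q|$ region) versus the $\eta^{-8}$ terms (from the large region and the initial data), the power $8$ coming from the $s/(4(4+s))\le s/16$ scaling balance.

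\textbf{Main obstacle.} The delicate point is Step three combined with the weight: the averaging lemma natively gives a gain of $x$-regularity only for $f$ integrated against bounded velocity test functions, so extending it to $W_l f_\phi$ with $l<-3/2$ requires carefully tracking how the gain degrades as $|v|\to\infty$ and verifying that the degradation is slower than the decay $W_l$ provides — this is exactly why one needs $l$ strictly below $-3/2$ and why the gained exponent is the somewhat unusual $\frac{s}{4(4+s)}$ rather than the optimal $\frac{s}{2+s}$ of the unweighted, cutoff case. A secondary technical nuisance is handling the commutator $[\phi(\eps D), v\cdot\nabla_x]$ and ensuring all its contributions land in the $\eps$-free right-hand side with constants independent of $\eps$; I would dispatch this by noting the commutator is $\eps\,(\nabla\phi)(\eps D)\cdot\nabla_x$, bounded on $L^2_x L^2_v$ by $\|f\|_{L^2([0,T];L^2)}$ up to a factor that stays bounded as $\eps\to0$, hence absorbable into the last term of \eqref{xre}. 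Everything else — the time integration by parts, the interpolation between $L^2_v$ and $H^s_v$ on low-measure sets, the Plancherel bookkeeping — is routine once the scaling is fixed.
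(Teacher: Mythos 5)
Your plan has the right high-level ingredients (Fourier localization in $x$, an averaging-lemma gain of order $|q|^{-1/2}$ from the non-degeneracy of $v\cdot q$, and a balancing argument between that gain and the $H^s_v$ regularity of $f_\phi$), but it has a structural gap precisely at the point you call ``the delicate point''. The averaging lemma, in any of its forms — whether proved via Fourier in $t$ or, as in the Bouchut--Desvillettes version the paper invokes, via the split $|v\cdot\hat q|$ small/large — gains $x$-regularity only on \emph{velocity averages} $\int \hat f(q,v)\varphi(v)\,dv$ against a fixed test function $\varphi$. Your Step 3 proposes to pass from such averages to the $L^2_v$-norm $\|W_l f_\phi\|_{L^2_v}$ by ``summing dyadic velocity shells''. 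That does not close the gap: $\sum_k \big|\int \hat f\, \varphi_k\,dv\big|^2$ is not comparable to $\|\hat f\|_{L^2_v}^2$, no matter how the shells are weighted, because control on an average against $\varphi_k$ gives no pointwise-in-$v$ information inside the shell. Something extra is needed to turn ``averages are regular in $x$'' into ``$f_\phi$ is regular in $x$ in $L^2_v$''.

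The paper supplies exactly that extra ingredient, and it is not a dyadic partition in velocity but a \emph{mollification} in velocity: write $\hat f_\phi = \big(\hat f_\phi - \hat f_\phi\ast_v\chi_\eta\big) + \hat f_\phi\ast_v\chi_\eta$. The first piece is bounded, pointwise after integration in $v$, by $\eta^{2s}|\hat f_\phi|^2_{H^s_v}$ — this is where the assumed $H^s_v$ regularity of $f_\phi$ actually enters, not in the small-$|v\cdot\hat q|$ region as you suggest (there, the classical argument uses only the smallness of the measure and Cauchy--Schwarz). The second piece $(\hat f_\phi\ast_v\chi_\eta)(v)$ is, for each \emph{fixed} $v$, a velocity average of $\hat f_\phi$ against the translated kernel $\chi_\eta(v-\cdot)$, so the averaging lemma applies for each $v$ separately and yields the gain $|m|^{-1/2}$ with a constant growing like $\eta^{-3}(1+|v|^2)$. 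Because this pointwise-in-$v$ bound carries the factor $(1+|v|^2)$, integrating in $v$ against $W_l^2=\langle v\rangle^{2l}$ converges exactly when $l<-3/2$. Finally one optimizes the mollification scale per Fourier mode, $\eta\mapsto \eta|m|^{-1/(4(4+s))}$, so that the mollification error $\eta^{2s}$ and the averaging constant $\eta^{-8}|m|^{-1/2}$ balance; this, and not a shell-by-shell optimization, is the source of the exponent $\frac{s}{4(4+s)}$ and of the powers $\eta^{-8}$ and $\eta^{2s}$ in the statement. Your remark about the commutator of $\phi(\eps D)$ with transport is a non-issue here, because the paper applies the lemma with a known right-hand side $g$, so one simply treats $f_\phi$ and $g$ through the averaging lemma directly without ever commuting $\phi(\eps D)$ past $v\cdot\nabla_x$.
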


\begin{proof}  Recall  that $
T_k f_\phi(t,x,v)=f_\phi(t,x+k,v)$ and 
 $
\bar{\triangle}_k^{\varrho} f_\phi(t,x,v)=\big(T_k f_\phi(t,x,v)-f_\phi(t,x,v)\big)|k|^{-3/2-\varrho},
$ with $\varrho=\frac{s}{4(4+s)}$.
Using \eqref{gainRegu}, we observe that
\begin{eqnarray}
W_l f_\phi\in L^2([0,T];H^{\varrho}_xL^2_v)&\Leftrightarrow 
&\|W_lf_\phi\|_{L^2([0,T];L^2)}^2+\int_{0}^T\int_{\mathbb{R}^3}\int_{\mathbb{T}^3}\int_{\mathbb{T}^3}\langle
v\rangle^{2l}|\bar{\triangle}_k^{\varrho}
f_\phi|^2dtdvdxdk<+\infty
\nonumber\\
&\Leftrightarrow &\|W_lf_\phi\|_{L^2([0,T];L^2)}^2+\int_{0}^T\int_{\mathbb{R}^3}\langle
v\rangle^{2l}\left(\sum_{m\in\mathbb{Z}^3}|m|^{2\varrho}\left|\hat{f_\phi}(t,m,v)\right|^2\right)
dtdv<+\infty,\label{equv}
\end{eqnarray}
where $\hat{f_\phi}(t,m,v)$ is the Fourier transform of $f_\phi$ with respect to  $x$ variable.
\par
We now turn to prove \eqref{equv}. Let $\chi (v)=\mathfrak{F}_v^{-1}(1-\phi)$ where
$\mathfrak{F}_v$ and $\mathfrak{F}_v^{-1}$ are the Fourier transform for $v$ variable and its inverse.  Then it is easy to check $\int_{\mathbb{R}^3}\chi(v)dv=1$. For any
$\eta>0$, we denote the regularizing sequence $\chi_{\eta}$
by
$\chi_{\eta}(v)=\eta^{-3}\chi\left(\frac{v}{\eta}\right)$
and write
\begin{equation}
\hat f_\phi(t,m,v)=[\hat f_\phi(t,m,v)- (\hat f_\phi(t,m,\cdot)\ast_v
\chi_{\eta})(v)]+ (\hat f_\phi(t,m,\cdot)\ast_v \chi_{\eta })
(v).\label{dec}
\end{equation}
We point out   that $\eta$   will be
chosen later and it will depend on $|m|$.
\par
We use Minkowski and Cauchy-Schwartz inequalities to get
\begin{eqnarray*}
&&\int_{\mathbb{R}^3}\langle v\rangle^{2l}|\hat f_\phi(t,m,v)- (\hat f_\phi(t,m,\cdot)\ast_v \chi_{\eta})(v)|^2 dv 
\lesssim \int_{\mathbb{R}^3}\left|\int_{\mathbb{R}^3}[\hat f_\phi(t,m,v)-\hat f_\phi(t,m,v-u)]\,\chi_{\eta }(u)du\right|^2 dv\\
&\lesssim& \left(\int_{\mathbb{R}^3}\left(\int_{\mathbb{R}^3}|\hat f_\phi(t,m,v)-\hat f_\phi(t,m,v-u)|^2dv\right)^{1/2}\chi_{\eta }(u)du\right)^2 \\
&\lesssim& \left(\int_{\mathbb{R}^3}\chi_{\eta}^2(u)|u|^{3+2s}du\right)\left(\int_{\mathbb{R}^6}\frac{|\hat f_\phi(t,m,v)-\hat f_\phi(t,m,v-u)|^2}{|u|^{3+2s}}dudv\right) 
\lesssim \eta
^{2s}\left|\hat{f_\phi}(t,m,\cdot)\right|^2_{H^s}.
\end{eqnarray*}
Then it gives
\begin{eqnarray}
&&\int_{0}^T\int_{\mathbb{R}^3}\langle v\rangle^{2l}\left(\sum_{m\in\mathbb{Z}^3}|m|^{2\varrho}\left|\hat f_\phi(t,m,v)- (\hat f_\phi(t,m,\cdot)\ast_v \chi_{\eta })(v)\right|^2\right)dtdv\nonumber\\
&\lesssim&\int_{0}^T\left(\sum_{m\in\mathbb{Z}^3}|m|^{ 2\varrho}\eta^{2s}\left|\hat{f_\phi}(t,m,\cdot)\right|^2_{H^s}\right)dt.\label{est1}
\end{eqnarray}
\par
For the second term of the right-hand side of \eqref{dec}, we recall
that $g\in L^2([0,T]\times\mathbb{T}^3;H^{-1}(\mathbb{R}^3_v))$
implies that
$
g(t,x,v)=g_0(t,x,v)+\sum_{j=1}^3\partial_{v_j}h_j(t,x,v),
$
where $g_0(t,x,v)=\mathfrak{F}^{-1}[(1+|\xi|)^{-1}\mathfrak{F}
g](t,x,v)$ and $h_j(t,x,v)=-R_j g_0(t,x,v)$, $j=1,2,3$. Here
$R_j$
is the Riesz transform with respect to $v$ variable. Then, one has $g_0,h_j\in
L^2([0,T]\times\mathbb{T}^3\times\mathbb{R}^3)$,\,\,$j=1,2,3.$
Following the proof of (2.16) in  Theorem 2.1 (averaging lemma) of \cite{bd3} and the fact $\hat f_\phi=(1-\phi(\epsilon D)) \hat f=\hat f\ast_v\chi_{\epsilon}$,
we can deduce that
\begin{eqnarray*}
&& \int_{0}^T|(\hat f_\phi(t,m, \cdot)\ast_v\chi_{\eta})(v)|^2dt
=\int_{0}^T|(\hat f(t,m, \cdot)\ast_v(\chi_{\epsilon}\ast\chi_{\eta}))(v)|^2dt\\
&\lesssim&|m|^{-\frac{1}{2}}\left(\|(\chi_{\epsilon}\ast\chi_{\eta})(v-u)(1+|u|^2)\|_{L^{\infty}_u}
+\|\nabla (\chi_{\epsilon}\ast\chi_{\eta})(v-u)(1+|u|^2)\|_{L^{\infty}_u}\right)^2\\
&&\times\left[ |\hat f(0,m, \cdot)|^2_{L^2}+\int_{0}^{T}(|\hat f(t,m, \cdot)|^2_{L^2}+|\hat
g_0(t,m,\cdot)|^2_{L^2}
+\sum_{j=1}^3|\hat
h_j(t,m,\cdot)|^2_{L^2 })dt\right].
\end{eqnarray*}
From the facts that if $\eps\ll\eta$, $\chi_{\epsilon}\ast\chi_{\eta}=\chi_{\eta}$ and if $\eta\ll\eps$, $\chi_{\epsilon}\ast\chi_{\eta}=\chi_{\eps}$,
together with the estimate $\| \chi_{\epsilon}(v-u)(1+|u|^2)\|_{L^{\infty}_u}\lesssim\eps^{-3}(1+|v|^2)$,
we get that
$\|(\chi_{\epsilon}\ast\chi_{\eta})(v-u)(1+|u|^2)\|_{L^{\infty}_u}\lesssim\eta^{-3}(1+|v|^2)$, which yields
\ben
&&\qquad\int_{0}^T\int_{\mathbb{R}^3}\langle v\rangle^{2l}\left(\sum_{m\in\mathbb{Z}^3}|m|^{ 2\varrho}\left|(\hat f_\phi(t,m,\cdot)\ast_v \chi_{\eta })(v)\right|^2\right)dtdv\\
&\lesssim&\sum_{m\in\mathbb{Z}^3}|m|^{2\varrho-\frac{1}{2}}(\eta^{-6}+\eta^{-8})\bigg[ |\hat f(0,m, \cdot)|^2_{L^2}+\int_{0}^{T}(|\hat f(t,m, \cdot)|^2_{L^2}+|\hat
g_0(t,m,\cdot)|^2_{L^2}
 +\sum_{j=1}^3|\hat h_j(t,m,\cdot)|^2_{L^2 })dt\bigg]\nonumber.\label{est2}
\een
\par
Thanks to the fact $\|W_lf_\phi\|_{L^2([0,T];L^2)}\le \|f\|_{L^2([0,T];L^2)}$, we complete the
proof of Lemma \ref{hypoellipticity} if we choose $\eta:=\eta|m|^{-\frac{1}{4(4+s)}}$ in \eqref{est1} and \eqref{est2}.
\end{proof}

\section{ {\it A priori} estimates for the linear equation}

In this section, we will focus on {\it a priori} estimates for the linear equation: 
\begin{equation}\label{EAPLN:h-eqaution} \left\{
    \begin{aligned}
&\partial_t h+v\cdot\nabla_x h=Q^\ep(f,h)+Q^\ep(h,g)\\
&h|_{t=0}=h_0.\end{aligned}\right.
\end{equation}
In what follows, we assume that $f$ and $g$ are smooth and bounded functions.

\subsection{$L^1$ and $L^2$ moment estimates for the equation} In this subsection, we will  give the estimates on the propagation of the moments.

\subsubsection{$L^1$ moment estimates to the equation} We begin with two useful lemmas which are related to the Povnzer's inequality.
\begin{lem}\label{Povnzer1}  Suppose   that $l\in \R^+$, $\ep\le l^{-1/2-a}$ with $a>0$ and $E(\theta)\eqdefa \lr{v}^2\cos^2(\theta/2)+\lr{v_*}^2\sin^2(\theta/2)$
Then there exists a constant $c\ge0$ such that
\ben\label{Povnzer-ineq1}
\int_{\sigma\in \SS^2} \big((E(\theta))^{l/2}-\lr{v}^l\big)b^\epsilon(\cos\theta) d\sigma&\lesssim& -cl^s\lr{v}^l+l^s\big(\lr{v_*}^2\lr{v}^{l-2}+\lr{v}^2\lr{v_*}^{l-2}\big)+l^{-1} \lr{v_*}^l.
\een
\end{lem}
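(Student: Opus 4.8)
The plan is to reduce the spherical average to a one–dimensional weighted integral in the variable $t\eqdefa\sin^2(\theta/2)$, and then to estimate that integral by transporting it onto the segment joining $\lr{v}^2$ and $\lr{v_*}^2$ and using elementary bounds for $r\mapsto r^{l/2}$.

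First, since $b$ is supported in $\{0\le\theta\le\pi/2\}$, one has $\int_{\SS^2}G(\cos\theta)\,d\sigma=2\pi\int_0^{\pi/2}G(\cos\theta)\sin\theta\,d\theta$, and after the substitution $t=\sin^2(\theta/2)$ (so that $\sin\theta\,d\theta=2\,dt$, $\cos^2(\theta/2)=1-t$, $\theta\sim\sqrt t$) the left–hand side becomes
\beno \int_{\SS^2}\big((E(\theta))^{l/2}-\lr{v}^l\big)b^\epsilon(\cos\theta)\,d\sigma=\int_0^{1/2}\big((x(1-t)+yt)^{l/2}-x^{l/2}\big)\,\beta_\epsilon(t)\,dt, \eeno
where $x\eqdefa\lr{v}^2$, $y\eqdefa\lr{v_*}^2$, and by \eqref{a2} and the definition of $b^\epsilon$ the weight obeys $\beta_\epsilon(t)\le C\,t^{-1-s}$ on $(0,1/2]$ and $\beta_\epsilon(t)\ge c\,t^{-1-s}$ on $[C'\epsilon^2,1/2]$ (on $[0,C'\epsilon^2]$ one simply discards the product with $\beta_\epsilon$ when its sign is favourable). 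The hypothesis $\epsilon\le l^{-1/2-a}$ is used exactly here: it forces $(\epsilon^2)^{1-s}\le\tfrac12\,l^{-(1-s)}$ for $l$ large, so that the lower cut–off at $t\sim\epsilon^2$ does not destroy the estimate $\int_{\epsilon^2}^{1/l}t^{-s}\,dt\sim l^{s-1}$ that produces the sharp exponent $l^{s}$.

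Next I would change variables $r=E(\theta)=x-t(x-y)$, which maps $t\in[0,1/2]$ onto the segment between $x$ and $\tfrac{x+y}{2}$, so that the integral becomes a constant multiple of $|x-y|^{s}\int (r^{l/2}-x^{l/2})\,|r-x|^{-1-s}\,dr$, and after the further rescaling $r=x(1\mp u)$ a universal one–dimensional integral $\int_{0}^{U}\big((1\mp u)^{l/2}-1\big)u^{-1-s}\,du$ with $U\sim|x-y|/x$, the sign depending on whether $y\le x$ or $y\ge x$. The core estimates for this integral are: (i) near $u=0$, $\big|(1\mp u)^{l/2}-1\big|\lesssim l\,u$ for $u\le 1/l$; (ii) on $[\epsilon^2,1/l]$ one has $\int u^{-s}\,du\sim l^{s-1}$, which is where the exponent $l^{s}$ (rather than $l$) comes from; (iii) for $1/l\lesssim u\le 1/2$ one has $(1-u)^{l/2}-1\le-c_0<0$, while for $u\gtrsim 1$ one has $(1+u)^{l/2}-1\le(2u)^{l/2}$, the resulting factor $t^{l/2}\le2^{-l/2}$ then converting an apparent $\lr{v_*}^{l}$ loss into the harmless $l^{-1}\lr{v_*}^{l}$. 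Feeding these back and distinguishing $y\le x$ from $y\ge x$, the negative part reorganises as $-c\,l^{s}x^{l/2}$ whenever $|x-y|$ is comparable to $x$, and all the positive remainders are bounded by $l^{s}\big(y\,x^{l/2-1}+x\,y^{l/2-1}\big)+l^{-1}y^{l/2}$, which is the claimed right–hand side. The cases $l<2$ and $l<2s$, where $r\mapsto r^{l/2}$ is concave and the above sign/monotonicity arguments degenerate, are covered by the single first–order estimate $\big|(x-t(x-y))^{l/2}-x^{l/2}\big|\le\tfrac l2\,x^{l/2-1}|x-y|\,t$ together with $l\le l^{s}$ for $l\le1$.

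The main obstacle is the regime $\lr{v_*}\gg\lr{v}$: there any bound on $(x(1-t)+yt)^{l/2}$ that does not retain the cancellation against $x^{l/2}$ throws out factors exponential in $l$ (such as $(3/2)^{l/2}$), so the whole computation must be carried through with the difference $(\,\cdot\,)^{l/2}-x^{l/2}$ kept intact; the change of variables onto $[x,(x+y)/2]$ and the use of $t\le 1/2$ in the form $t^{l/2}\le 2^{-l/2}$ are precisely what makes the $\lr{v_*}$–dependence come out as $l^{-1}\lr{v_*}^{l}$. A secondary subtlety is that $c$ must be chosen small: when $\lr{v_*}\sim\lr{v}$ the left–hand side is only of size $\lesssim-\lr{v}^{l}$, not $\lesssim-l^{s}\lr{v}^{l}$, and one relies on the positive terms $l^{s}\lr{v}^{2}\lr{v_*}^{l-2}$, etc., on the right to absorb the difference.
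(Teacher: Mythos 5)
Your change-of-variables route is genuinely different from the paper's, which expands $(E(\theta))^{l/2}$ via a binomial-type inequality into the cross terms $K_1=\sum_{k\ge1}\binom{l/2}{k}(\ldots)$, a diagonal term $K_2=\big(\cos^l(\theta/2)-1\big)\lr{v}^l$ and a tail $K_3=\sin^l(\theta/2)\lr{v_*}^l$, and then sums the coefficients of $K_1$ via Beta--Gamma identities and Stirling to extract $l^s$; the negative term comes from $K_2$ restricted to $\theta\sim\eta l^{-1/2}$, and $K_3$ gives $l^{-1}\lr{v_*}^l$. In outline your approach can be carried through, but as written there is a material gap precisely in the regime $\lr{v_*}>\lr{v}$ that you yourself flag as the obstacle.

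With $x=\lr{v}^2$, $y=\lr{v_*}^2>x$, $\lambda=y/x$, and $r=x(1+u)$, your range of $u$ is $[0,U]$ with $U=(\lambda-1)/2$. Your items (i)--(iii) cover $u\le1/l$ (first-order bound), $u\in[1/l,1/2]$ \emph{for the minus sign only}, and $u\gtrsim1$ for the plus sign (and there $(2u)^{l/2}u^{-1-s}$ does integrate to the claimed $l^{-1}\lr{v_*}^l$). The range $u\in[1/l,\min\{U,1\}]$ for the plus sign is never addressed, and it is nonempty as soon as $\lambda>1+2/l$ and is the whole integration range when $1+2/l<\lambda\le3$. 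On that range the naive bound $(1+u)^{l/2}-1\le2^{l/2}$ produces an uncontrollable factor $2^{l/2}$ against a right-hand side whose dominant term $l^s\lr{v}^2\lr{v_*}^{l-2}=l^s\lambda^{l/2-1}x^{l/2}$ is only $(3/2)^{l/2}x^{l/2}l^s$ when, say, $\lambda=3/2$. To close this you must retain the $\lambda$-dependence: on $[0,U]$ one has $(1+u)^{l/2}\le\big((\lambda+1)/2\big)^{l/2}$, which integrated against $u^{-1-s}$ over $[1/l,U]$ gives $\lesssim l^s\big((\lambda+1)/2\big)^{l/2}$, and then one must verify that $(\lambda-1)^s\lambda\big((\lambda+1)/2\big)^{l/2}\lesssim\lambda^{l/2-1}$ for $l$ large, i.e.\ $\lambda^3\big(\tfrac12+\tfrac1{2\lambda}\big)^{l/2}\lesssim1$ uniformly for $\lambda\ge1$. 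This is exactly the nontrivial step your sketch skips; the paper's organization (every cross term $a^kb^{l/2-k}+a^{l/2-k}b^k$ with $1\le k\le l/4$ is dominated by the extremal $k=1$ pair) is designed so that no constant-base exponential in $l$ ever appears, which is why it does not encounter this issue. The rest of your outline — the reduction to a one-dimensional $t^{-1-s}$-weighted integral, the role of $\eps\le l^{-1/2-a}$, the first-order argument for $l\le 2$, and the negative term from $(1-u)^{l/2}-1\le-c_0$ on $[1/l,1/2]$ — is sound.
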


\begin{proof}
We first recall that  Gamma function $\Gamma(x)$ and   Beta function $B(x,y)$ are defined by
$\Gamma(x)=\int_0^\infty t^{x-1}e^{-t}dt$and $ B(x,y)=\int_0^1 t^{x-1}(1-t)^{y-1}dt.$
Observe that for $p\ge1$ and $k_p=[(p+1)/2]$,  
\beno &&\sum_{k=0}^{k_p-1} \frac{\Gamma(p+1)}{\Gamma(k+1)\Gamma(p+1-k)} (x^ky^{p-k}+x^{p-k}y^k) 
 \le (x+y)^p
 \le  \sum_{k=0}^{k_p} \frac{\Gamma(p+1)}{\Gamma(k+1)\Gamma(p+1-k)}  (x^ky^{p-k}+x^{p-k}y^k),  \eeno
where $x,y\ge0$(see \cite{LuMouhot}),  we get that
\beno
&&(E(\theta))^{l/2}-\lr{v}^l\le \sum_{k=1}^{k_{l/2}} \frac{\Gamma(l/2+1)}{\Gamma(k+1)\Gamma(l/2+1-k)} \big[ \big( \lr{v}^2\cos^2(\theta/2)\big)^k\big(\lr{v_*}^2\sin^2(\theta/2)\big)^{l/2-k}\\&&\qquad+\big( \lr{v}^2\cos^2(\theta/2)\big)^{l/2-k}\big(\lr{v_*}^2\sin^2(\theta/2)\big)^k\big]+\big((\cos^2(\theta/2))^{l/2}-1\big)\lr{v}^l+(\sin^2(\theta/2))^{l/2}\lr{v_*}^l\\
&&\qquad\eqdefa K_1+K_2+K_3.
\eeno

Suppose $I_i\eqdefa\int_{\sigma\in\SS^2} K_i b^\epsilon d\sigma.$   For the term $I_1$, by direct calculation, we have
\beno  &&I_1\lesssim \sum_{k=1}^{k_{l/2}}\frac{\Gamma(l/2+1)}{\Gamma(k+1)\Gamma(l/2+1-k)} \int_0^{\pi/2}
[(\cos^2(\theta/2))^k(\sin^2(\theta/2))^{l/2-k}\\&&\qquad+(\cos^2(\theta/2))^{l/2-k}(\sin^2(\theta/2))^{k}]\sin\theta^{-1-2s} d\theta
\big(\lr{v_*}^2\lr{v}^{l-2}+\lr{v}^2\lr{v_*}^{l-2}\big)\\&&\lesssim\bigg(\sum_{k=1}^{k_{l/2}}\frac{\Gamma(l/2+1)}{\Gamma(k+1)\Gamma(l/2+1-k)}  \int_0^{\f12}
[(1-t)^{k-s-1}t^{l/2-k-s-1}+t^{k-s-1}(1-t)^{l/2-k-s-1}]dt\bigg)  \big(\lr{v_*}^2\lr{v}^{l-2}\\
&&+\lr{v}^2\lr{v_*}^{l-2}\big)\lesssim \bigg(\sum_{k=1}^{k_{l/2}}\frac{\Gamma(l/2+1)}{\Gamma(k+1)\Gamma(l/2+1-k)} \frac{\Gamma(k-s)\Gamma(l/2-k-s)}{\Gamma(l/2-2s)}\bigg)\big(\lr{v_*}^2\lr{v}^{l-2}+\lr{v}^2\lr{v_*}^{l-2}\big)\\
&& \lesssim l^s  \big(\lr{v_*}^2\lr{v}^{l-2}+\lr{v}^2\lr{v_*}^{l-2}\big).
\eeno
 We remark that in the above inequalities we use the facts
\beno  \int_0^1 t^k(1-t)^{p}dt=B(k+1,p+1)=\frac{\Gamma(k+1)\Gamma(p+1)}{\Gamma(k+p+2)} \mbox{
and}\, B(x,y)\sim \sqrt{2\pi}\f{x^{x-\f12}y^{y-\f12}}{(x+y)^{x+y-\f12}},\eeno  which can be derived by Stirling's approximation.

Note that $(\cos^2(\theta/2))^{l/2}\le 1$ and the condition $\ep\le l^{-1/2-a}$. For sufficiently small $\eta>0$,  we get
\beno I_2&\lesssim& \int_{\theta\sim \eta l^{-\f12}} \big((\cos^2(\theta/2))^{l/2}-1\big) \sin\theta^{-1-2s} d\theta \lr{v}^l \lesssim \bigg[-\int_{\theta\sim \eta l^{-\f12}} (\sin(\theta/2))^2(\sin\theta)^{-1-2s} d\theta \\&&+\sum_{k=2}^{[l/2]}
\frac{\Gamma(l/2+1)}{\Gamma(k+1)\Gamma(l/2+1-k)} \int_{\theta\sim \eta l^{-\f12}}(\sin(\theta/2))^{2k}(\sin\theta)^{-1-2s}d\theta\bigg] \lr{v}^l
\lesssim -C\eta^{2-2s}l^s(1-C\eta^2)  \lr{v}^l.\eeno
Finally, it is easy to check that
$  I_3\lesssim \int_0^{\pi/2} (\sin(\theta/2))^{l-1-2s}d\theta \lr{v_*}^l\lesssim l^{-1} \lr{v_*}^l.$

Putting together the estimates for $I_i$(i=1,2,3), we are led to the desired result.
\end{proof}

\begin{lem}\label{Povnzer2} Under the conditions in Lemma \ref{Povnzer1},  it holds
\beno  \int_{\sigma\in \SS^2} (\lr{v'}^l-\lr{v}^l)b^\epsilon(\cos\theta) d\sigma
&\lesssim& -l^s\lr{v}^l+l^s\big(\lr{v_*}^2\lr{v}^{l-2}+\lr{v}^2\lr{v_*}^{l-2}\big)+l^{-1} \lr{v_*}^l\\&&
+l^2a_l \big(\lr{v_*}^2+\lr{v}^{2}\big)^{l/2-2}\big(|v_*|^2|v|^2-(v_*\cdot v)^2\big),
\eeno
where $a_l\eqdefa \int_0^{\pi/2} \big(\int_0^1 t(1-\f{t}4\sin^2\theta)^{l/2-2}dt\big)b^\ep(\cos\theta)\sin^3\theta d\theta$.
\end{lem}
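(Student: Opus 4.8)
\textbf{Proof proposal for Lemma \ref{Povnzer2}.}

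The plan is to reduce the estimate for $\lr{v'}^l$ to the estimate for $(E(\theta))^{l/2}$ already established in Lemma \ref{Povnzer1}, since $(E(\theta))^{l/2}$ is the ``symmetrized'' version of $\lr{v'}^l$ and the two differ only by a term that vanishes upon integration against $b^\epsilon$ to leading order. First I would recall the elementary identity $|v'|^2 = |v|^2\cos^2(\theta/2) + |v_*|^2\sin^2(\theta/2) + (v\cdot v_*)\sin\theta$ (which follows from the $\sigma$-representation \eqref{sigma-rep} combined with an averaging in the azimuthal angle; more precisely, after integrating over the azimuthal part of $\sigma$, the cross term $(v-v_*)\cdot(v+v_*)$-type contribution reduces to $(v\cdot v_*)\sin\theta$). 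Hence $\lr{v'}^2 = E(\theta) + (v\cdot v_*)\sin\theta$, and the task becomes controlling $\lr{v'}^l - (E(\theta))^{l/2}$ after integration.

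The key step is a Taylor expansion of $x\mapsto x^{l/2}$ around $x = E(\theta)$: writing $\lr{v'}^2 = E(\theta) + r$ with $r = (v\cdot v_*)\sin\theta$, we have
\[
\lr{v'}^l - (E(\theta))^{l/2} = \tfrac{l}{2}(E(\theta))^{l/2-1} r + \tfrac{l}{2}\big(\tfrac{l}{2}-1\big)\int_0^1 (1-u)\big(E(\theta)+ur\big)^{l/2-2}\,du\; r^2.
\]
Upon integrating against $b^\epsilon(\cos\theta)\,d\sigma$, the first-order term $\tfrac{l}{2}(E(\theta))^{l/2-1}(v\cdot v_*)\sin\theta$ integrates to zero by the symmetry $\sigma \mapsto$ (reflection through the $(v-v_*)$-axis reversed), exactly the standard cancellation that makes the Povzner inequality work: $\int_{\SS^2} (v\cdot v_*)\sin\theta\, \Psi(\cos\theta)\,d\sigma = 0$ for any scalar function $\Psi$ because $\sin\theta$ is odd under $\theta \mapsto -\theta$ in the appropriate parametrization while $(v\cdot v_*)$ does not see the azimuthal variable. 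Thus only the quadratic remainder survives, and since $r^2 = (v\cdot v_*)^2\sin^2\theta \le |v|^2|v_*|^2\sin^2\theta$ and $E(\theta) + ur \ge c(\lr{v}^2 + \lr{v_*}^2)$ uniformly — or more carefully $E(\theta)+ur \ge \lr{v}^2\cos^2(\theta/2) \cdot(1 - \tfrac14 u\sin^2\theta\cdot(\text{bounded}))$, which is where the factor $(1 - \tfrac{t}{4}\sin^2\theta)^{l/2-2}$ in the definition of $a_l$ comes from — I would bound the remainder by $l^2 a_l (\lr{v_*}^2+\lr{v}^2)^{l/2-2}(|v_*|^2|v|^2 - (v_*\cdot v)^2)$. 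Here one must be slightly careful: the cleanest bound replaces $|v|^2|v_*|^2\sin^2\theta$ by $|v|^2|v_*|^2\sin^2\theta - (v\cdot v_*)^2 \cdot(\text{something})$; writing $r^2 = (v\cdot v_*)^2\sin^2\theta$ and noting $(v\cdot v_*)^2 = |v|^2|v_*|^2 - (|v|^2|v_*|^2 - (v\cdot v_*)^2)$, one uses that the isotropic part $|v|^2|v_*|^2\sin^2\theta$ combined with the $E(\theta)^{l/2-1}$ weight recombines (via the same symmetrization used to obtain Lemma \ref{Povnzer1}) into lower-order contributions already accounted for in the $l^s(\lr{v_*}^2\lr{v}^{l-2} + \lr{v}^2\lr{v_*}^{l-2})$ term, while the genuinely anisotropic part $|v|^2|v_*|^2 - (v\cdot v_*)^2 = |v\wedge v_*|^2$ produces the last term.

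Finally I would add the estimate of Lemma \ref{Povnzer1} for $\int_{\SS^2}\big((E(\theta))^{l/2} - \lr{v}^l\big)b^\epsilon\,d\sigma$ to the remainder estimate above, yielding the claimed inequality. The main obstacle I anticipate is making the symmetrization/cancellation of the first-order term fully rigorous together with the bookkeeping that absorbs the isotropic piece of $r^2$ into the already-present lower-order terms rather than creating a spurious $l^2$-weighted contribution there; the singular weight $b^\epsilon \sim \theta^{-1-2s}$ means the $\sin^2\theta = 4\sin^2(\theta/2)\cos^2(\theta/2)$ gain in $r^2$ is exactly what renders $a_l = \int_0^{\pi/2}(\int_0^1 t(1-\tfrac{t}{4}\sin^2\theta)^{l/2-2}dt)\,b^\epsilon(\cos\theta)\sin^3\theta\,d\theta$ finite and of the right size, so verifying the convergence and the $l$-dependence of $a_l$ is the delicate quantitative point. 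The constraint $\epsilon \le l^{-1/2-a}$ enters, as in Lemma \ref{Povnzer1}, to ensure the angular cutoff does not destroy the $\theta \sim l^{-1/2}$ region where the leading negative contribution $-l^s\lr{v}^l$ is generated.
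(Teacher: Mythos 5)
Your high-level plan coincides with the paper's: extract $(E(\theta))^{l/2}$ from $\lr{v'}^l$ by Taylor expansion, kill the first-order term by an azimuthal cancellation, bound the quadratic remainder via the kernel $a_l$, and add the estimate of Lemma \ref{Povnzer1}. However, the central identity you invoke is incorrect, and this produces a genuine gap. From the $\sigma$-representation one computes directly that
\[
|v'|^2 = |v|^2\cos^2(\theta/2)+|v_*|^2\sin^2(\theta/2)+\sin\theta\,(\mathbf{j}\cdot\omega)\,\sqrt{|v|^2|v_*|^2-(v\cdot v_*)^2},
\]
where $\omega\in\SS^1(\mathbf{n})$ is the azimuthal direction and $\mathbf{j}$ is the projection of $\mathbf{h}=(v+v_*)/|v+v_*|$ onto the plane orthogonal to $\mathbf{n}=(v-v_*)/|v-v_*|$. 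The remainder is therefore $r=\tilde h\,\sin\theta\,(\mathbf{j}\cdot\omega)$ with $\tilde h=|v\wedge v_*|$; it is \emph{not} $(v\cdot v_*)\sin\theta$. (After azimuthal averaging the remainder is identically zero, so your stated identity cannot be recovered that way either.)

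This matters because your claimed cancellation of the first-order term fails: $\theta$ ranges over $[0,\pi/2]$ with $\sin\theta\ge0$ and the symmetrized kernel $b^\ep(\cos\theta)$ is supported there, so there is no ``$\theta\mapsto-\theta$'' parity available, and $\int_{\SS^2}(v\cdot v_*)\sin\theta\,\Psi(\cos\theta)\,d\sigma\neq0$ in general. The correct cancellation is $\int_{\SS^1(\mathbf{n})}(\mathbf{j}\cdot\omega)\,d\omega=0$, which kills $\f l2(E(\theta))^{l/2-1}\tilde h\sin\theta(\mathbf{j}\cdot\omega)$ exactly. As a consequence, the bookkeeping you describe to recover $|v\wedge v_*|^2$ — subtracting an ``isotropic part'' from $(v\cdot v_*)^2$ and absorbing it — is an artifact of the wrong $r$; with the correct remainder, $r^2=\tilde h^2\sin^2\theta(\mathbf{j}\cdot\omega)^2$ carries the factor $|v\wedge v_*|^2=\tilde h^2$ directly and there is no isotropic piece to absorb. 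The rest of your outline (bounding $E(\theta)+tr$ by $(\lr{v}^2+\lr{v_*}^2)(1-c(1-t)\sin^2\theta)$ for the positive power $l/2-2$, and noting that the $\sin^2\theta$ gain makes $a_l$ finite despite the singularity $b^\ep\sim\theta^{-1-2s}$) matches the paper's argument once the identity is fixed.
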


\begin{proof}
We follow the notations used in \cite{LuMouhot} to set $\mathbf{h}= (v+v_*)/|v+v_*|$, $\mathbf{n}=(v-v_*)/|v-v_*|$ and $\mathbf{j}=\frac{\mathbf{h}-(\mathbf{h}\cdot\mathbf{n})\mathbf{n}}{\sqrt{1-(\mathbf{h}\cdot\mathbf{n})}}$. Then we have
$ \sigma=\cos\theta \mathbf{n}+\sin\theta \omega,$ with $\omega\in \SS^1(\mathbf{n})$.  This implies that
$\mathbf{h}\cdot\sigma=(\mathbf{h}\cdot\mathbf{n})\cos\theta+\sqrt{1-(\mathbf{h}\cdot\mathbf{n})}\sin\theta(\mathbf{j}\cdot \omega). $

Thanks to the $\sigma$-representation \eqref{sigma-rep}, we derive that
$  \lr{v'}^2=E(\theta)+\sin\theta(\mathbf{j}\cdot\omega)\tilde{h}$ and $ \lr{v_*'}^2=E(\pi-\theta)-\sin\theta(\mathbf{j}\cdot\omega)\tilde{h}
$
where $\tilde{h}=\sqrt{|v_*|^2|v|^2-(v_*\cdot v)^2}$.
By Taylor expansion, it yields that
\beno \lr{v'}^l&=&\big(E(\theta)+\sin\theta(\mathbf{j}\cdot\omega)\tilde{h}\big)^{l/2}=(E(\theta))^{l/2}+\f{l}2(E(\theta))^{\f{l}2-1}\tilde{h}\sin\theta(\mathbf{j}\cdot\omega)\nonumber\\&&+\f{l}2\f{l-2}{2}\int_0^1 (1-t)\big(E(\theta)+t\tilde{h}\sin\theta (\mathbf{j}\cdot\omega)\big)^{\f{l}2-2}dt
\tilde{h}^2\sin^2\theta(\mathbf{j}\cdot\omega)^2.\eeno
Since $E(\theta)+t\tilde{h}\sin\theta (\mathbf{j}\cdot\omega)\le E(\theta)+tE(\pi-\theta)\le (\lr{v_*}^2+\lr{v}^2)(1-\f{1-t}2\sin^2\theta)$,   we deduce that
\beno  \lr{v'}^l-\lr{v}^l&\le& (E(\theta))^{l/2}-\lr{v}^l+\f{l}2(E(\theta))^{\f{l}2-1}\tilde{h}\sin\theta(\mathbf{j}\cdot\omega)\\&&+l^2 \bigg(\int_0^1 t(1-\f{t}4\sin^2\theta)^{l/2-2}dt \bigg)\big(\lr{v_*}^2+\lr{v}^{2}\big)^{l/2-2}\tilde{h}^2(\mathbf{j}\cdot\omega)^2 \sin^2\theta. \eeno
Now using  Lemma \ref{Povnzer1} and the fact $\int_{\SS^1(\mathbf{n})} (\mathbf{j}\cdot \omega) d\omega=0$, we arrive at the result.
\end{proof}

Now we are in a position to state the estimates for the moments.
\begin{lem}\label{L1md} Let $l\in \R^+$  verify $l>N_s+2$(recalling that $N_s=\f{2s}{1-s}$) and $\ep\le l^{-1/2-a}$ with $a>0$.  Assume that $h$ is a solution to  \eqref{EAPLN:h-eqaution}. Then there exits a  universal constant $c$ such that  for $\eta,\eta_1>0$,
\beno  &&\f{d}{dt} \|h\|_{L^1_l}+cl^s\int f_*|h|\lr{v}^l |v-v_*|^\gamma dv_*dvdx
\lesssim   \int \bigg(2^l\big(|h|_{L^1_{l+\gamma-2}}|f|_{L^1_{\gamma+2}} +
|h|_{L^1_{\gamma+2}} |f|_{L^1_{l+\gamma-2}}\big)
+l^{-1} |f|_{L^1_{l+\gamma}}|h|_{L^1_\gamma} \\
 &&+\eta^{-2s}(|h|_{L^1_\gamma}  |W^\eps_{2s+\eta_1}(D)g |_{L^2_{l+\gamma+2+N_s}}+|W^\eps_{2s+\eta_1}(D)g|_{L^2_{l+2}}|h|_{L^1_{N_s+\gamma}})+|h|_{L^1_{l+\gamma}}(\eta^{2-2s}|W^\eps_{2s+\eta_1}(D)g |_{L^2_{2}}
 \\&&+l^{-1}|W^\eps_{\eta_1}(D)g|_{L^2_{2}})+2^l(|h|_{L^1_{\gamma+2}}|W^\eps_{\eta_1}(D)g|_{L^2_{l+\gamma}}+|h|_{L^1_{l+\gamma-2}}|W^\eps_{\eta_1}(D)g|_{L^2_{\gamma+4}})+l^{-1}|W^\eps_{\eta_1}(D)g|_{L^2_{\gamma+2}}|h|_{L^1_l}\bigg)dx.\eeno
\end{lem}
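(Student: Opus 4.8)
The plan is to test the equation \eqref{EAPLN:h-eqaution} against $\sgn(h)W_l$ (or rather against $W_l$ times a smooth approximation of $\sgn(h)$, to be justified by the standard renormalization/density argument), which kills the transport term after integration over $\TT^3\times\R^3$ and produces on the right-hand side the two bilinear quantities $\int \sgn(h)W_l\,Q^\ep(f,h)\,dvdx$ and $\int \sgn(h)W_l\,Q^\ep(h,g)\,dvdx$. For the first term, the structure $Q^\ep(f,h)$ with the test function depending on $h$ is exactly the setting where the Povnzer-type estimate bites: using the weak formulation $\lr{Q^\ep(f,h),\varphi}_v=\iiint f_*h(\varphi'-\varphi)B^\ep d\sigma dv_*dv$ with $\varphi=\sgn(h)W_l$, one bounds $\sgn(h')(W_l)'-\sgn(h)W_l\le |(W_l)'-W_l|\sgn(h)+\cdots$; more precisely one writes $\sgn(h)h(\sgn(h')(W_l)'-\sgn(h)W_l)\le |h|((W_l)'-W_l)$ pointwise (since $\sgn(h)\sgn(h')h\le |h|$), so that after integration in $\sigma$ one is left exactly with $\int f_* |h|\big(\int_{\SS^2}((W_l)'-W_l)b^\ep\,d\sigma\big)|v-v_*|^\gamma$. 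Now Lemma \ref{Povnzer2} controls $\int_{\SS^2}(\lr{v'}^l-\lr{v}^l)b^\ep\,d\sigma$ by $-cl^s\lr{v}^l$ plus lower-order terms $l^s(\lr{v_*}^2\lr{v}^{l-2}+\lr{v}^2\lr{v_*}^{l-2})+l^{-1}\lr{v_*}^l+l^2 a_l(\cdots)$, and one checks that $l^2 a_l\lesssim l^s$ (the Beta-function asymptotics used in Lemma \ref{Povnzer1}), so the last term also gets absorbed into the lower-order bucket of the form $l^s(\lr{v_*}^2\lr{v}^{l-2}+\lr{v}^2\lr{v_*}^{l-2})$. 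Multiplying by $f_*|v-v_*|^\gamma$ and integrating gives the claimed coercive term $-cl^s\int f_*|h|\lr{v}^l|v-v_*|^\gamma$ together with the $2^l$-type remainder terms $|h|_{L^1_{l+\gamma-2}}|f|_{L^1_{\gamma+2}}+|h|_{L^1_{\gamma+2}}|f|_{L^1_{l+\gamma-2}}$ and the $l^{-1}|f|_{L^1_{l+\gamma}}|h|_{L^1_\gamma}$ term; here the $2^l$ factor arises from estimating $|v-v_*|^{\gamma}\lesssim 2^\gamma(\lr{v}^\gamma+\lr{v_*}^\gamma)$ together with binomial bookkeeping, and the restriction $l>N_s+2$ (equivalently the power $l-2$ in these moments being nonnegative and large enough) is what makes the lower-order moments genuinely lower order.

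For the second bilinear term $\int \sgn(h)W_l\,Q^\ep(h,g)\,dvdx$, the roles of $h$ and $g$ are swapped: now $h$ plays the role of the ``kernel'' argument and $g$ the role of the ``transported'' argument, so there is no sign cancellation to exploit and we must simply use the upper bounds for $Q^\ep$. I would start from the commutator identity packaged in Lemma \ref{commforweight} (the non-$g\ge0$ version), writing $\lr{Q^\ep(h,g)W_l,\sgn(h)}_v=\lr{Q^\ep(h,gW_l),\sgn(h)}_v+\lr{Q^\ep(h,g)W_l-Q^\ep(h,gW_l),\sgn(h)}_v$. The main piece $\lr{Q^\ep(h,gW_l),\sgn(h)}_v$ is estimated by the sharp upper bound in Theorem \ref{ubofQe}(1) (or its $H^{-1}$ dual form, since $\sgn(h)\in L^\infty$ needs to be handled by duality against $L^2$ or via the cancellation lemma \eqref{canclem} for the loss term): one peels off the loss part $-L(h)gW_l$ using \eqref{canclem}, contributing $|h|_{L^1_\gamma}|gW_l|_{L^1}$-type terms, and the gain part is bounded using $|W^\ep_s(D)W_{a+2s}(gW_l)|_{L^2}|W^\ep_s(D)W_b(\sgn h)|_{L^2}$; but since $\sgn h$ is not in $H^s$, one instead dualizes the whole thing, using that $Q^\ep$ maps into negative-order spaces, i.e. one uses $|\lr{Q^\ep(h,g)W_l,f}_v|\lesssim |h|_{L^1_{\ldots}}|W^\ep_{2s}(D)(gW_l)|_{L^2_{\ldots}}|f|_{L^2}$ and takes $f=\sgn h$, bounded trivially by $\|h\|_{L^1}^0\cdot(\text{measure of support})$ — actually the cleanest route is Theorem \ref{ubofQe}(2), $|Q^\ep(h,g)|_{L^2_2}\lesssim|h|_{L^1_{\gamma+2s+2}}|W^\ep_{2s}(D)g|_{L^2_{\gamma+2s+2}}$, after commuting $W_l$ inside, giving $\||Q^\ep(h,gW_l)|\|_{L^1}\lesssim |h|_{L^1}|W^\ep_{2s}(D)(gW_l)|_{L^2_{\ldots}}$. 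Then Lemma \ref{func3} converts $|W^\ep_{2s}(D)(gW_l)|_{L^2_m}$ into $|W^\ep_{2s}(D)g|_{L^2_{l+m}}$, and the interpolation $|W^\ep_{2s}(D)g|\lesssim \eta^{2-2s}|W^\ep_{s}(D)g|^{?}+\cdots$ — more precisely the splitting $W^\ep_{2s}\le \eta^{-2s}W^\ep_{0}+\eta^{2-2s}(W^\ep_{s})^2/W^\ep_0$-type Young inequality in frequency — produces the $\eta^{-2s}$, $\eta^{2-2s}$, and $l^{-1}$ weighted $g$-norms $|W^\ep_{2s+\eta_1}(D)g|_{L^2_{\ldots}}$ and $|W^\ep_{\eta_1}(D)g|_{L^2_{\ldots}}$ appearing on the right of the claimed inequality, with the extra $+\eta_1$ in the symbol order absorbing the logarithmic loss when $2s=1$. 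The $N_s$ weights that appear ($L^1_{N_s+\gamma}$, $L^2_{l+\gamma+2+N_s}$, etc.) come from trading a power of the singularity $\theta^{-1-2s}$ for moments via the Povnzer bookkeeping on this term too, exactly as in the proof of Lemma \ref{commforweight}; this is where the hypothesis $l>N_s+2$ is used a second time.

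The commutator remainder $\lr{Q^\ep(h,g)W_l-Q^\ep(h,gW_l),\sgn(h)}_v$ is handled directly by Lemma \ref{commforweight}: its general (non-sign-definite) bound gives $|h|_{L^1_{\gamma+2s}}|g|_{L^2_{l+a}}\cdot(\ldots)+|g|_{L^2_{l+a}}|h|_{L^1_\gamma}\cdot(\ldots)$-type terms which, after the same Lemma \ref{func}-conversion and interpolation, fold into the same family of $2^l$-weighted and $l^{-1}$-weighted terms. Assembling all contributions — the coercive Povnzer term from $Q^\ep(f,h)$, its lower-order moment remainders, and the full upper-bound expansion of $Q^\ep(h,g)$ — yields exactly the displayed differential inequality. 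I expect the main obstacle to be two-fold and essentially bookkeeping in nature: (i) verifying that $l^2 a_l\lesssim l^s$ (so the quadratic-in-$l$ term of Lemma \ref{Povnzer2} is genuinely subordinate), which requires a careful Beta-function/Stirling estimate of $a_l=\int_0^{\pi/2}\big(\int_0^1 t(1-\tfrac t4\sin^2\theta)^{l/2-2}dt\big)b^\ep\sin^3\theta\,d\theta\sim \int_0^{\pi/2}\tfrac{1}{l^2}\big(1-(1-\tfrac14\sin^2\theta)^{l/2-1}\big)b^\ep\sin^3\theta\,d\theta$ and then recognizing the surviving integral behaves like $l^s$; and (ii) organizing the weight exponents so that every remainder term really is controlled by a norm of $h$ of order $\le l+\gamma$ in $L^1$ times a norm of $f$ or $g$ of bounded order (i.e. $\le$ a fixed constant plus $N_s$), which is precisely the ``high moment on $h$, low moment on the coefficients'' principle and forces all the specific exponents $\gamma+2$, $l+\gamma-2$, $N_s+\gamma$, $l+\gamma+2+N_s$, $\gamma+4$ that appear in the statement. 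The handling of $\sgn h$ (non-smooth test function) is routine: approximate by $\beta_\delta(h)$ with $\beta_\delta$ smooth, bounded, $\beta_\delta\to\sgn$, use that $\beta_\delta'\ge0$ to keep the entropy-type inequality in the right direction, and pass to the limit.
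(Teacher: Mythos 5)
Your treatment of the first term, $\int (\sgn h) W_l\, Q^\ep(f,h)$, is essentially the paper's: test against $(\sgn h)W_l$, bound $h\big((\sgn h')(W_l)'-(\sgn h)W_l\big)\le |h|\big((W_l)'-W_l\big)$ pointwise, apply Lemma~\ref{Povnzer2}. One note: your worry~(i) that one must verify $l^2a_l\lesssim l^s$ is unnecessary. The quartic term in Lemma~\ref{Povnzer2} carries the factor $(\lr{v_*}^2+\lr{v}^2)^{l/2-2}\le 2^{l/2-2}(\lr{v_*}^{l-4}+\lr{v}^{l-4})$, so after bounding $|v_*|^2|v|^2-(v_*\cdot v)^2\le\lr{v_*}^2\lr{v}^2$ it produces $l^2a_l 2^{l/2}(\lr{v_*}^{l-2}\lr{v}^2+\lr{v}^{l-2}\lr{v_*}^2)$, which is swallowed by the $2^l$ prefactor in the lower-moment bucket using only the crude bound $l^2a_l\lesssim 2^{l/2}$. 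No Beta-function asymptotics for $a_l$ are needed.

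The gap is in your treatment of the second term, $II=\int (\sgn h)W_l\,Q^\ep(h,g)$. Your proposed routes via Lemma~\ref{commforweight} or duality both require the test function $f$ to lie in a weighted $L^2$ space: the commutator bound involves $|f|_{L^2_b}$ and $|W^\eps_s(D)W_b f|_{L^2}$ with $b\ge 0$ and $a+b=\gamma$, and on $\R^3$ these norms are infinite when $f=\sgn h$ (your remark that $\sgn h$ is ``bounded trivially by $\|h\|_{L^1}^0\cdot(\text{measure of support})$'' is not available since the support is generically all of $\R^3$). Your ``cleanest route'' through Theorem~\ref{ubofQe}(2) does avoid the test-function problem, but it only produces a single term of the shape $|h|_{L^1_{\gamma+2s+2}}\,|W^\ep_{2s}(D)(gW_l)|_{L^2_{\gamma+2s+2}}$. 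This does not match the claimed conclusion: there is no $\eta^{-2s}/\eta^{2-2s}/l^{-1}$ split, the $h$-moment $\gamma+2s+2$ is not dominated by any of the $h$-moments in the statement when $s$ is small, and the critical structural feature of the lemma --- that $|h|_{L^1_{l+\gamma}}$ appears on the right only with the small prefactors $\eta^{2-2s}$ or $l^{-1}$, so it can be absorbed by the coercive term in Proposition~\ref{L1i} --- is lost. This fine structure comes from a mechanism your proposal entirely bypasses: the paper stays with the bilinear weak form $\int h_*\,g\,\big((\sgn h')\lr{v'}^l-(\sgn h)\lr{v}^l\big)B^\ep$, performs a Littlewood--Paley decomposition $g_\phi=\sum_j \mathfrak{F}_j g_\phi$, and introduces a $j$-dependent angular threshold $\theta\lesssim (2^{-j}\eta)/\lr{v-v_*}^{1/(1-s)}$. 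On the small-angle region it applies a second-order Taylor expansion of $\mathfrak{F}_j g_\phi$ (the first-order term vanishes by a symmetry in $\sigma$), producing the $\eta^{2-2s}2^{(2s-2)j}$ factor; on the large-angle region it applies the Povnzer estimates, producing the $\eta^{-2s}2^{2sj}$ and $l^{-1}$ factors; and the cancellation lemma~\eqref{canclem} handles the remaining $\sgn h$-paired piece. Summing over $j$ converts the dyadic frequency localizations of $g$ into the $|W^\eps_{2s+\eta_1}(D)g|$ and $|W^\eps_{\eta_1}(D)g|$ norms of the statement, the $\eta_1>0$ absorbing the logarithmic loss exactly as you anticipated. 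So while your intuition about where the $N_s$-moments and the $\eta_1$-loss come from is right, the lemma you point to cannot produce them in this $L^1$, $\sgn h$-tested setting, and the missing idea is the frequency-localized, $j$-scaled angular decomposition of the $Q^\ep(h,g)$ bilinear form.
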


\begin{proof}
By multiplying both sides of \eqref{EAPLN:h-eqaution}  by $(\sgn h)\lr{v}^l$ and integrating over $v$ and $x$, we
obtain that
\begin{equation}\label{E:L1}
\frac{d}{dt}\|h\|_{L^1_l}=\iint (\sgn h)\lr{v}^l Q^\ep(f,h)  dvdx+\iint (\sgn h)\lr{v}^l
Q^\ep(h,g) dvdx.
\end{equation}
Observing that $\iint (\sgn h)\lr{v}^l Q^\ep(f,h)  dvdx\le
 \iiiint f_* |h|(\lr{v'}^l-\lr{v}^l)B^\ep d\sigma dv_*dvdx,
$
we get that 
\ben\label{E:L1-upper-bound}
 \frac{d}{dt}\|h\|_{L^1_l}&\leq & \int f_* |h|(\lr{v'}^l-\lr{v}^l)B^\ep d\sigma dv_*dvdx
 +\int h_*g((\sgn h') \lr{v'}^l-(\sgn h)\lr{v}^l) B^\ep d\sigma dv_*dvdx\notag\\
 &\eqdefa& I+II.
\een
We will give estimates for $I$ and $II$ term by term.

{\it  Step 1: Estimate of  $I$.}
Thanks to Lemma \ref{Povnzer2}, we derive that
\beno  I&\lesssim&  \iiint_{\R^9} f_*|h||v-v_*|^\gamma\bigg(-l^s\lr{v}^l+l^s\big(\lr{v_*}^2\lr{v}^{l-2}+\lr{v}^2\lr{v_*}^{l-2}\big)+l^{-1} \lr{v_*}^l\nonumber\\&&
+l^2a_l \big(\lr{v_*}^2+\lr{v}^{2}\big)^{l/2-2}\big(|v_*|^2|v|^2-(v_*\cdot v)^2\big)\bigg)dv_*dvdx\\
&\lesssim& -l^s\int f_*|h|\lr{v}^l |v-v_*|^\gamma dv_*dvdx+ \int 2^l\big(|h|_{L^1_{l+\gamma-2}}|f|_{L^1_{\gamma+2}} +
|h|_{L^1_{\gamma+2}} |f|_{L^1_{l+\gamma-2}}\big)
+l^{-1} |f|_{L^1_{l+\gamma}}|h|_{L^1_\gamma}dx.
 \eeno

 {\it Step 2:  Estimate of  II. }
We split $II$ into two parts that $II=II_1+II_2$ where
$II_1\eqdefa\int h_*g_\phi(\sgn h '\lr{v'}^l-\sgn h\lr{v}^l) B^\ep  d\sigma dv_*dvdx$ and $
  II_2\eqdefa
 \int h_* g^\phi(\sgn h '\lr{v'}^l-\sgn h\lr{v}^l) B^\ep  d\sigma dv_*dvdx.$

{\it Step 2.1: Estimate of $II_1$}. Set $g_\phi^j\eqdefa \mathfrak{F}_jg_\phi$. For any fixed $v,v_*$, $\eta>0$, we write $B^\ep =B^{\ep}_{>}+B^{\ep}_{\leq}$ where 
$B^\ep_{\le}=B^\ep(1-\phi (\frac{2^j\sin\theta}{\eta/\lr{v-v_*}^{\alpha}}))$ and $B^\ep_{>}=B^\ep \phi (\frac{2^j\sin\theta}{\eta/\lr{v-v_*}^{\alpha}})$ with  $\alpha=(1-s)^{-1}$. 
We remark that $B^{\ep}_{>}$ and $ B^{\ep}_{\leq}$ denote the
kernels with the restriction that the derivation angle $\theta$ is bigger and not bigger than
$ (2^{-j}\eta)/\lr{v-v_*}^{\alpha}$ respectively.  Then we have \beno II_1&=&\sum_{j\le [\log_2 \f1{\epsilon}]+1} \iiiint h_*g^j_\phi(\sgn h '\lr{v'}^l-\sgn h\lr{v}^l) B^\ep  d\sigma dv_*dvdx\\
&=&\sum_{j\le [\log_2 \f1{\epsilon}]+1}\bigg( \iiiint h_*\big(g^j_\phi\sgn h \lr{v}^l\big)'-g^j_\phi\sgn h\lr{v}^l) B^\ep  d\sigma dv_*dvdx
 +\iiiint h_*\big(g^j_\phi-(g^j_\phi)'\big)(\sgn h \lr{v}^l)' \\&&\times B^\ep_{\leq}  d\sigma dv_*dvdx+\iiiint h_* g^j_\phi (\sgn h \lr{v}^l)' B^\ep_{>}  d\sigma dv_*dvdx
 -\iiiint h_*\big (g^j_\phi)' (\sgn h \lr{v}^l)' B^\ep_{>}  d\sigma dv_*dvdx\\
&\eqdefa&\sum_{j\le [\log_2 \f1{\epsilon}]+1}\sum_{i=1}^4 II_{1,i}^j.
\eeno

{\it \underline{Estimate of $II_{1,1}^j$.}}Thanks to \eqref{canclem}, we have
\begin{equation*}
\begin{split}
 II_{1,1}^j&=2\pi\iint (g_\phi^j(\sgn h)\lr{v}^l)  \int_0^{\frac{\pi}{2}}{\Big [}
\frac{1}{\cos^3(\theta/2)}B^{\ep} (\frac{|v-v_*|}{\cos(\theta/2)},\cos\theta)
-B^{\ep} (|v-v_*|,\cos\theta) {\Big ]}\sin\theta d\theta dv_* dv.
\end{split}
\end{equation*}
From the fact
 $ \bigg|\frac{1}{\cos^3(\theta/2)}B^{\ep}(\frac{|v-v_*|}{\cos(\theta/2)},\cos\theta)
-B^{\ep}(|v-v_*|,\cos\theta)\bigg|\lesssim  \theta^{-2s}|v-v_*|^\gamma,$ we derive that
$ |II_{1,1}^j|\lesssim  \int |h|_{L^1_{\gamma}}|g^j_\phi|_{L^1_{l+\gamma}}dx.$

{\it \underline{Estimate of $II_{1,2}^j$.}}
 We use Taylor expansion
\begin{equation}\label{E:Taylor-g}
\begin{split}
g(v)-g(v')=(v-v')\cdot \nabla g(v')
+|v-v'|^2\int_0^1 (1-\kappa)D^2g(v'+\kappa(v-v'))\cdot(\frac{v-v'}{|v-v'|},\frac{v-v'}{|v-v'|}) d\kappa,
\end{split}
\end{equation}
to write $II_{1,2}^j=II_{1,21}^j+II_{1,22}^j$ where
\begin{equation}
\begin{split}
II_{1,21}^j&\eqdefa\iiiint h_*(v-v')\cdot\nabla g^j_\phi(v')(\sgn h\lr{v}^l)' B^{\ep}_{\leq}(v-v_*,\sigma)d\sigma dv_*dvdx.\end{split}
\end{equation}
We claim that $II_{1,21}^j=0$. To see that,
for each $v_*$ and $\sigma$, let $\psi_{\sigma}(v')$ represent the inverse transform
$v'\to\psi_{\sigma}(v')=v$ (see~\cite{ADVW00}) one has
 $
{\Big |}\frac{dv'}{dv}{\Big |}=\frac{1}{4}\lr{\frac{v'-v_*}{|v'-v_*|},\sigma}^2
$
and
\begin{equation}
\begin{split}
&\iint (v-v')\cdot\nabla g_\phi^j(v')(\sgn h\lr{v}^l)' B^{\ep}_{\leq}(v-v_*,\sigma)  d\sigma dv \\
&=4\iint \frac{(\psi_{\sigma}(v)-v)}{\lr{\frac{v'-v_*}{|v'-v_*|},\sigma}^2}
\cdot\nabla g_\phi^j(v)(\sgn h\lr{v}^l)B^{\ep}_{\leq}(\psi_{\sigma}(v)-v_*,\sigma)  d\sigma dv=0
\end{split}
\end{equation}
The last equality comes from the symmetry property of $\psi_{\sigma}(v)$ with respect to
$\sigma$.  

To estimate $II_{1,22}^j$, we will use  change of variable $u=\kappa v'+(1-\kappa)v$.
From~\cite{ADVW00}, we know that
$
{\Big |} \frac{du}{dv} {\Big |}=(1-\frac{\kappa}{2})^2{\Big \{}(1-\frac{\kappa}{2})-
\frac{\kappa}{2}\lr{\frac{v-v_*}{|v-v_*|},\sigma}  {\Big \}}
$
is bounded above and below. For the kernel $B^{\ep}_{\leq}$, we have
\beno
 |v'-v_*|\leq |u-v_*|\leq |v-v_*| \;,\;|v-v_*|\leq (1+\ep)|v'-v_*|,\,
\frac{\theta}{2}\leq \tilde{\theta}\leq \theta,
\eeno
where $\cos\tilde{\theta}=|u-v_*|^{-1}\lr{u-v_*,\sigma}$.  Combining above observations,
we have
\beno
&&|II_{1,2}^j|=|II_{1,22}^j|=\big| \int_0^1(1-\kappa)\iiiint  h_* |v-v'|^2 D^2g_\phi^j(u)(\sgn h\lr{v}^l)' B^{\ep}_{\leq}(v-v_*,\sigma) {\Big |} \frac{du}{dv} {\Big |}  d\sigma dv_*dudxd\kappa\big|\\
&&\lesssim \eta^{2-2s}2^{(2s-2)j}\iiint |h_*||u-v_*|^{2+\gamma-\alpha(2-2s)} |D^2g_\phi^j(u)|\max\{\lr{v_*}^l,\lr{u}^l\} dv_*dudx\\
&&\lesssim  \eta^{2-2s}2^{(2s-2)j}\int |h|_{L^1_{l+\gamma}} |D^2 g_\phi^j|_{L^1}+|h|_{L^1}|D^2g_\phi^j|_{L^1_{l+\gamma}}dx.
\eeno

{\it\underline{ Estimate of $II_{1,3}^j$.}} We observe that
\beno
II_{1,3}^j&=&\iiiint h_*g^j_\phi(\sgn h\lr{v}^l)'B^{\ep}_{\ge}d\sigma dv_*dvdx\\
&\le &\iiiint |h_*||g^j_\phi|\big[ (\lr{v}^l)'-\lr{v}^l\big]B^{\ep}_{\ge}d\sigma dv_*dvdx +\iiiint |h_*||g^j_\phi| \lr{v}^l B^{\ep}_{\ge}d\sigma dv_*dvdx.
\eeno
Following the proof of Lemma   \ref{Povnzer1} and  Lemma  \ref{Povnzer2}, we conclude that
\beno
II_{1,3}^j&\lesssim& \int \big(2^l|g_\phi^j|_{L^1_{l+\gamma-2}}|h|_{L^1_{\gamma+2}}
+2^l|g_\phi^j|_{L^1_{\gamma+2}}|h|_{L^1_{l+\gamma-2}}+l^{-1}(|g_\phi^j|_{L^1}|h|_{L^1_{l+\gamma}}+|g_\phi^j|_{L^1_\gamma}|h|_{L^1_{l}}) \big) dx\\
&&\quad+ \eta^{-2s}2^{2sj}\int \big(|g_\phi^j|_{ L^1_{l+\gamma+N_s}}|h|_{L^1}+|g_\phi^j|_{L^1_{l}}
|h|_{L^1_{N_s+\gamma}} \big)dx.
\eeno

{\it\underline{ Estimate of $II_{1,4}^j$.}} Direct calculation gives
 \beno
II_{1,4}^j\lesssim\eta^{-2s}2^{2sj}\int \big(|g_\phi^j|_{ L^1_{l+\gamma+2\alpha s}}|h|_{L^1}+|g_\phi^j|_{L^1_{l}}
|h|_{L^1_{\gamma+2\alpha s}} \big)dx 
\lesssim \eta^{-2s}2^{2sj}\int \big(|g_\phi^j|_{ L^1_{l+\gamma+N_s}}|h|_{L^1}+|g_\phi^j|_{L^1_{l}}
|h|_{L^1_{N_s+\gamma}} \big)dx.
\eeno

Thanks to Theorem \ref{baslem3} and the fact that $g_\phi^j=\mathfrak{F}_j g_\phi$,
patching together all the above estimates will give
\beno II_1&\lesssim& \int \bigg(\eta^{-2s}(|h|_{L^1_\gamma}  |g_\phi|_{H^{2s+\eta_1}_{l+\gamma+2+N_s}}+|g_\phi|_{H^{2s+\eta_1}_{l+2}}|h|_{L^1_{N_s+\gamma}})+|h|_{L^1_{l+\gamma}}(\eta^{2-2s}|g_\phi|_{H^{2s+\eta_1}_{2}}
+l^{-1}|g_\phi|_{H^{\eta_1}_{2}})\\&&+2^l(|h|_{L^1_{\gamma+2}}|g_\phi|_{H^{\eta_1}_{l+\gamma}}+|h|_{L^1_{l+\gamma-2}}|g_\phi|_{H^{\eta_1}_{\gamma+4}})+l^{-1}|g_\phi|_{H^{\eta_1}_{\gamma+2}}|h|_{L^1_l}\bigg)dx.\eeno

{\it Step 2.2: Estimate of $II_2$.} Following the argument applied to $II_{1,3}^j$, we get that
\beno
&&II_2\le \iiint \big[|h_*||g^\phi|(\lr{v'}^l-\lr{v}^l)+2|h_*||g^\phi|\lr{v}^l\big] B^\eps d\sigma dv_*dvdx\lesssim\int\big(2^l|g^\phi|_{L^1_{l+\gamma-2}}|h|_{L^1_{\gamma+2}}\\
&&
+2^l|g^\phi|_{L^1_{\gamma+2}}|h|_{L^1_{l+\gamma-2}}+l^{-1}(|g^\phi|_{L^1}|h|_{L^1_{l+\gamma}}+|g^\phi|_{L^1_\gamma}|h|_{L^1_{l}}) \big) dx  +\eps^{-2s}\int |h|_{L^1_{\gamma}}|g^\phi|_{L^1_{l+\gamma}}dx.
\eeno

Combine the estimates for $II_1$ and $II_2$, then we finally arrive at
\beno
II&\lesssim& \int \bigg(\eta^{-2s}(|h|_{L^1_\gamma}  |W^\eps_{2s+\eta_1}(D)g |_{L^2_{l+\gamma+2+N_s}}+|W^\eps_{2s+\eta_1}(D)g|_{L^2_{l+2}}|h|_{L^1_{N_s+\gamma}})+|h|_{L^1_{l+\gamma}}(\eta^{2-2s}|W^\eps_{2s+\eta_1}(D)g |_{L^2_{2}}
\\&&+l^{-1}|W^\eps_{\eta_1}(D)g|_{L^2_{2}})+2^l(|h|_{L^1_{\gamma+2}}|W^\eps_{\eta_1}(D)g|_{L^2_{l+\gamma}}+|h|_{L^1_{l+\gamma-2}}|W^\eps_{\eta_1}(D)g|_{L^2_{\gamma+4}})+l^{-1}|W^\eps_{\eta_1}(D)g|_{L^2_{\gamma+2}}|h|_{L^1_l}\bigg)dx,
\eeno
which is enough to get the desired result thanks to the estimate in {\it Step 1}.
 \end{proof}

Then $L^1$-moment estimate for the equation can be stated as follows.
\begin{prop}\label{L1i} Suppose that $W_{-1,0}\in \mathbb{W}_I(N,\kappa,\varrho,\delta_1,q_1,q_2)(\,or\,\, \mathbb{W}_{II}(N,\kappa,\varrho,\delta_1))$ with $W_{-1,0}=W_{l_1}$ and $\ep\le l_1^{-1/2-a}$. Then there exists a constant $A _1(c_1,c_2)$ which is in proportion to $\mathcal{C}_3(c_1,c_2)$ defined in Proposition \ref{lbLdelta} such that for $\eta,\eta_1>0$,
\beno&&(i).\, \|h(t)\|_{L^1_{l_1}}+ \int_0^t (l_1^sA _1(c_1,c_2)-\eta^{2-2s}\|W^\eps_{2s+\eta_1}(D)g \|_{H^{\f32+\delta_1}_xL^2_{2}}
-2l_1^{-1}\|W^\eps_{\eta_1}(D)g\|_{H^{\f32+\delta_1}_xL^2_{\gamma+2}})\|h\|_{L^1_{l_1+\gamma}}d\tau \\
&&\le \|h_0\|_{L^1_{l_1}}+C(l_1) \bigg[\int_0^t   \big(\|h\|_{L^1_{l_1+\gamma-2}}(\|f\|_{L^\infty_xL^1_{\gamma+2}}+\|W^\eps_{\eta_1}(D)g\|_{H^{\f32+\delta_1}_xL^2_{\gamma+4}})  + \|h\|_{L^\infty_xL^1_{\gamma+2}}( \|f\|_{L^1_{l_1+\gamma-2}}\\&&\quad+\|W^\eps_{\eta_1}(D)g\|_{L^2_{l_1+\gamma}})   \big)d\tau\bigg]+
\int_0^t ( Cl_1^{-1}\|h\|_{L^\infty_xL^1_\gamma}\|f\|_{L^1_{l_1+\gamma}}+\eta^{-2s}(\|h\|_{L^\infty_xL^1_\gamma}  \|W^\eps_{2s+\eta_1}(D)g \|_{L^2_{l_1+\gamma+2+N_s}}\\&&\quad+\|W^\eps_{2s+\eta_1}(D)g\|_{L^2_{l_1+2}}\|h\|_{L^\infty_x L^1_{N_s+\gamma}})  ) d\tau;\\
 &&(ii).\,\|h(t)\|^2_{L^1_{l_1-\gamma}}+ \int_0^t l_1^sA _1(c_1,c_2)-\eta^{2-2s}\|W^\eps_{2s+\eta_1}(D)g \|_{H^{\f32+\delta_1}_xL^2_{2}}
	-2l_1^{-1}\|W^\eps_{\eta_1}(D)g\|_{H^{\f32+\delta_1}_xL^2_{\gamma+2}})\\
	&&\times\|h\|_{L^1_{l_1}}\|h\|_{L^1_{l_1-\gamma}}d\tau \le \|h_0\|_{L^1_{l_1-\gamma}}^2+C(l_1) \bigg[\int_0^t   \big(\|h\|_{L^1_{l_1-\gamma}}\|h\|_{L^1_{l_1-2}}(\|f\|_{L^\infty_xL^1_{\gamma+2}}+\|W^\eps_{\eta_1}(D)g\|_{H^{\f32+\delta_1}_xL^2_{\gamma+4}})\\&&\quad  + \|h\|_{L^1_{l_1-\gamma}}\|h\|_{L^\infty_xL^1_{\gamma+2}}( \|f\|_{L^1_{l_1-2}}+\|W^\eps_{\eta_1}(D)g\|_{L^2_{l_1}})   \big)d\tau\bigg]+
	\int_0^t ( Cl_1^{-1}\|h\|_{L^1_{l_1-\gamma}}\|h\|_{L^\infty_xL^1_\gamma}\|f\|_{L^1_{l_1 }}+\eta^{-2s} \\&&\quad\times \|h\|_{L^1_{l_1-\gamma}} (\|h\|_{L^\infty_xL^1_\gamma}\|W^\eps_{2s+\eta_1}(D)g \|_{L^2_{l_1 +2+N_s}}+\|W^\eps_{2s+\eta_1}(D)g\|_{L^2_{l_1-\gamma+2}}\|h\|_{L^\infty_x L^1_{N_s+\gamma}}) ) d\tau.
	\eeno
	\end{prop}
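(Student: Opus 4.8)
The statement packages the previously established pointwise Povnzer estimate (Lemma \ref{Povnzer2}) and the $L^1$-moment differential inequality (Lemma \ref{L1md}) into a single integrated inequality for the weight $W_{l_1}=W_{-1,0}$ coming from a well-prepared sequence. The plan is first to prove part $(i)$ essentially by integrating the inequality of Lemma \ref{L1md} in time and matching the various terms on the right-hand side against the structure displayed in the proposition; part $(ii)$ then follows by the same circle of ideas applied to $\|h\|_{L^1_{l_1-\gamma}}^2$ rather than $\|h\|_{L^1_{l_1}}$, with an extra factor of $\|h\|_{L^1_{l_1-\gamma}}$ produced by the chain rule $\frac{d}{dt}\|h\|_{L^1_{l_1-\gamma}}^2 = 2\|h\|_{L^1_{l_1-\gamma}}\frac{d}{dt}\|h\|_{L^1_{l_1-\gamma}}$.

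First I would apply Lemma \ref{L1md} with $l=l_1$, which is legitimate since $W_{-1,0}\in\mathbb{W}_I$ (or $\mathbb{W}_{II}$) forces $l_1 > N_s+2$ via condition (W-1), and the hypothesis $\eps\le l_1^{-1/2-a}$ is exactly the admissibility requirement for $a>0$ there. This yields a differential inequality of the form $\frac{d}{dt}\|h\|_{L^1_{l_1}} + c\,l_1^s \iint f_*|h|\langle v\rangle^{l_1}|v-v_*|^\gamma\,dv_*dv\,dx \lesssim (\text{RHS})$. The key coercive step is to convert the gain term $c\,l_1^s\iint f_*|h|\langle v\rangle^{l_1}|v-v_*|^\gamma$ into the clean dissipation $l_1^s A_1(c_1,c_2)\|h\|_{L^1_{l_1+\gamma}}$: here I would invoke the lower-bound mechanism of Proposition \ref{lbLdelta}, namely that $\int_{|v-v_*|\ge\kappa}|v-v_*|^\gamma f_*\,dv_*\gtrsim \langle v\rangle^\gamma$ whenever $f$ satisfies the mass/entropy control \eqref{lbcondi} (which follows from the assumed density lower bound $c_1$ and the $L\log L\cap L^1_2$ bound encoded in $c_2$), with the explicit constant $A_1(c_1,c_2)$ proportional to $\mathcal{C}_3(c_1,c_2)$. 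This is where the constant $A_1$ in the statement is manufactured. The terms on the RHS of Lemma \ref{L1md} involving $\eta^{2-2s}|W^\eps_{2s+\eta_1}(D)g|$ and $l_1^{-1}|W^\eps_{\eta_1}(D)g|$ multiplying $|h|_{L^1_{l_1+\gamma}}$ are moved to the left-hand side (after bounding the velocity-Sobolev norm of $g$ by its $H^{3/2+\delta_1}_x$-counterpart via Sobolev embedding $H^{3/2+\delta_1}(\mathbb{T}^3)\hookrightarrow L^\infty_x$), producing the bracketed coefficient $(l_1^sA_1 - \eta^{2-2s}\|\cdots\| - 2l_1^{-1}\|\cdots\|)$ multiplying $\|h\|_{L^1_{l_1+\gamma}}$; the factor $2$ is a harmless absorption of two lower-order pieces into one. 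All remaining terms — the $2^{l_1}$ mixed moment terms, the $l_1^{-1}$ terms, and the $\eta^{-2s}$ terms — are integrated in time and collected, again replacing $L^\infty_x$ norms of $g$ by $H^{3/2+\delta_1}_x$ norms where needed, and relabelling $2^{l_1}$ and the Gamma-function constants from Lemma \ref{Povnzer1}--\ref{Povnzer2} into a single constant $C(l_1)$; this reproduces the stated RHS of $(i)$ verbatim after integrating $\int_0^t$ and using $\|h(0)\|_{L^1_{l_1}}=\|h_0\|_{L^1_{l_1}}$.

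For part $(ii)$ I would repeat the argument with $l=l_1-\gamma$ in Lemma \ref{L1md}; note $l_1-\gamma>N_s+2$ still holds because (W-1) gives a generous margin (or one adjusts $l_1$ accordingly — this is built into the well-prepared framework). Multiplying the resulting inequality by $2\|h\|_{L^1_{l_1-\gamma}}$ and using $\frac{d}{dt}\|h\|_{L^1_{l_1-\gamma}}^2=2\|h\|_{L^1_{l_1-\gamma}}\frac{d}{dt}\|h\|_{L^1_{l_1-\gamma}}$, the coercive gain term $l_1^sA_1\|h\|_{L^1_{l_1}}$ — which appears because the weight $l_1-\gamma$ plus the collisional gain of $\gamma$ powers from $|v-v_*|^\gamma$ lands back at $l_1$ — is multiplied by $\|h\|_{L^1_{l_1-\gamma}}$, exactly matching the $\|h\|_{L^1_{l_1}}\|h\|_{L^1_{l_1-\gamma}}$ on the left-hand side of $(ii)$; every RHS term similarly acquires one factor of $\|h\|_{L^1_{l_1-\gamma}}$. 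The only subtlety is keeping track of which pieces get absorbed versus which are integrated, and this is purely bookkeeping.

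\textbf{Main obstacle.} The substantive point, and the step I expect to be hardest to pin down cleanly, is the precise form of the coercivity constant: one must verify that the negative contribution $-cl_1^s\iint f_*|h|\langle v\rangle^{l_1}|v-v_*|^\gamma\,dv_*dv\,dx$ from Lemma \ref{Povnzer2} genuinely dominates, after using the Proposition \ref{lbLdelta}-type lower bound, to give a clean $l_1^sA_1(c_1,c_2)\|h\|_{L^1_{l_1+\gamma}}$ with $A_1\propto\mathcal{C}_3$ — in particular tracking the interplay between the small parameter $\eta$ in the Povnzer lemma (hidden in the universal constant $c$) and the separate small parameter $\eta$ appearing in the statement multiplying $\|W^\eps_{2s+\eta_1}(D)g\|$. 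One must also be careful that the $g$-terms genuinely carry a factor that is either small ($\eta^{2-2s}$, or $\eps^{1-2s}$-type) or of lower order ($l_1^{-1}$, so that for $l_1$ large enough and the reference solution bound $c_2$ fixed they do not overwhelm $l_1^sA_1$), since it is precisely this that allows the bracket $(l_1^sA_1-\cdots)$ to stay positive — a fact that is later used, together with \eqref{unicon1}--\eqref{Rstrdelta}, in the continuity argument. Everything else — the commutator manipulations, the Sobolev embeddings $H^{3/2+\delta_1}_x\hookrightarrow L^\infty_x$, and the relabelling of constants into $C(l_1)$ and $A_1$ — is routine given Lemmas \ref{Povnzer1}, \ref{Povnzer2}, \ref{L1md} and Proposition \ref{lbLdelta}.
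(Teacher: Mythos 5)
Your proposal is correct and takes exactly the (implicit) route the paper intends: Proposition \ref{L1i} is stated without proof as a direct consequence of Lemma \ref{L1md}, obtained by bounding the gain term $cl_1^s\int f_*|h|\lr{v}^{l_1}|v-v_*|^\gamma$ from below via the mechanism inside Proposition \ref{lbLdelta} (yielding $A_1\propto\mathcal{C}_3$), absorbing the two $g$-terms carrying $|h|_{L^1_{l+\gamma}}$ into the left side after the embedding $H^{3/2+\delta_1}_x\hookrightarrow L^\infty_x$ (which is the origin of the factor $2$), integrating in time, and for part $(ii)$ running the same argument with $l=l_1-\gamma$ and multiplying through by $2\|h\|_{L^1_{l_1-\gamma}}$. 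Your flagged obstacle (tracking the coercivity constant and keeping the bracket positive via \eqref{unicon1}--\eqref{Rstrdelta}) is indeed the point that matters downstream, and you have identified it correctly.
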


\subsubsection{$L^2$ moment estimates for the equation}
In  this subsection, we will give the basic energy estimates in $L^2_l$ space. We first recall
\beno  \pa_t(hW_l)+v\cdot\na_x (hW_l)=Q^\eps(f, h)W_l +Q^\eps(h, g)W_l.\eeno
By multiplying $hW_l$ and taking inner product, we have
\beno
\f{d}{dt}\|h\|_{L^2_l}^2=\int \lr{Q^\eps(f, h)W_l, hW_l}_v+\lr{Q^\eps(h,g)W_l, hW_l}_v dx.
\eeno
Applying $(i)$ and $(iv)$ of  Corollary \ref{basic-estimates} to the righthand side of the equality, we have
\beno &&\f{d}{dt}\|h\|_{L^2_l}^2+  \f16\int_{\TT^3}\mathcal{E}_{f}^\gamma(hW_l)dx+\mathcal{C}_4(c_1,c_2)(\|W^\ep_s(D)W_{l+\gamma/2}h\|_{L^2}^2+\int_{\TT^3}\mathcal{E}_{\mu}^{0,\eps}(W_{l+\gamma/2}h)dx+\f12\mathcal{C}_5(c_1,c_2)\delta^{-2s}\\&&\quad\times\|h\|_{L^2_{l+\gamma/2}}^2\lesssim\int \big[\mathcal{C}_{6}(c_1, c_2)\delta^{-6-6s} |h|_{L^1_{2l}}|h|_{L^1_{\gamma}}+|f|_{L^1_{\gamma+2s}}^2|h|_{L^2_{l}}^2+|f|_{L^2_{l+\gamma/2}}^2|h|_{L^1_2}^2 +|h|_{L^1_{\gamma+2s}}^2\\&&\quad \times|W^\ep_s(D)W_{l+\gamma/2+2s} g|_{L^2}^2+
|h|_{L^1_{\gamma+2s}}^2|g|_{L^2_{l}}^2
+|h|_{L^2_{l+\gamma/2}}^2|g|_{L^1_2}^2\big]dx.
 \eeno

 We arrive at
 \begin{prop}\label{L2m}   Suppose that $W_{0,0}\in \mathbb{W}_I(N,\kappa,\varrho,\delta_1,q_1,q_2)(\,or\,\, \mathbb{W}_{II}(N,\kappa,\varrho,\delta_1))$ with $W_{0,0}=W_{l_2}$. Then there exist constants  $A_2(c_1,c_2)\sim \mathcal{C}_4(c_1,c_2),  A_3(c_1,c_2)\sim \mathcal{C}_5(c_1,c_2), A_4(c_1,c_2)\sim \mathcal{C}_6(c_1,c_2)$ such that
 \beno &&\|h(t)\|_{L^2_{l_2}}^2+\f16\int_0^t\int_{\TT^3} \mathcal{E}_f^\gamma(hW_{l_2})dxd\tau
+A_2(c_1,c_2)
 \int_0^t  \bigg[\big(\|W^\ep_s(D)W_{l_2+\gamma/2}h\|_{L^2}^2+\int_{\TT^3}\mathcal{E}_{\mu}^{0,\eps}\big(W_{l_2+\gamma/2}h\big)dx\bigg]d\tau \\&&\qquad +A_3(c_1,c_2) \delta^{-2s}\int_0^t \|h\|_{L^2_{l_2+\gamma/2}}^2 d\tau
\\&&\le \|h_0\|_{L^2_{l_2}}^2+A_4(c_1, c_2)\delta^{-6-6s}\int_0^t  \|h\|_{L^1_{2l_2}}\|h\|_{L^\infty_xL^1_{\gamma}}d\tau
+C_E\int_0^t \big(\|f\|_{L^\infty_xL^1_{\gamma+2s}}^2\|h\|_{L^2_{l_2}}^2+\|f\|_{L^2_{l_2+\gamma/2}}^2\|h\|_{L^\infty_xL^1_2}^2\\&&\quad
+\|h\|_{L^\infty_xL^1_{\gamma+2s}}^2\|W^\ep_s(D)W_{l_2+\gamma/2+2s} g\|_{L^2}^2+
\|h\|_{L^\infty_xL^1_{\gamma+2s}}^2\|g\|_{L^2_{l_2}}^2
 +\|h\|_{L^2_{l_2+\gamma/2}}^2\|g\|_{L^\infty_xL^1_2}^2)d\tau.\eeno  
 \end{prop}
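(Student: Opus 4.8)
The plan is to derive Proposition \ref{L2m} by integrating in time the differential inequality displayed just before its statement, so the bulk of the work is already organized: we need only (a) justify that differential inequality rigorously, and (b) integrate it and rename constants. First I would fix a smooth solution $h$ of \eqref{EAPLN:h-eqaution} on $[0,T]$ and apply $W_l=W_{l_2}$ to the equation, obtaining $\pa_t(hW_{l_2})+v\cdot\na_x(hW_{l_2})=Q^\eps(f,h)W_{l_2}+Q^\eps(h,g)W_{l_2}$. Taking the $L^2_{x,v}$ inner product with $hW_{l_2}$, the transport term $\int (v\cdot\na_x)(hW_{l_2})\cdot hW_{l_2}\,dxdv$ vanishes by integration by parts on $\TT^3$ (no boundary), leaving
\beno \f12\f{d}{dt}\|h\|_{L^2_{l_2}}^2=\int_{\TT^3}\lr{Q^\eps(f,h)W_{l_2},hW_{l_2}}_v\,dx+\int_{\TT^3}\lr{Q^\eps(h,g)W_{l_2},hW_{l_2}}_v\,dx.\eeno
(Here one should note the factor $\f12$, which is harmless — it only rescales the constants $A_i$ and $C_E$ in the final statement.)

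Next I would estimate the two right-hand terms. For the first term, the coercivity is exactly $(iv)$ of Corollary \ref{basic-estimates} applied with $g\leftarrow f$, $h\leftarrow h$: this requires $f\ge0$ and \eqref{lbcondi}, which hold by the standing hypotheses on $f$ in this section together with the restriction $2\eps<\delta<(\min\{\mathcal{C}_3/(4\mathcal{C}_2),\mathcal{C}_5/(12c_2)\})^{1/2s}$ imposed in Corollary \ref{basic-estimates}. This produces the good terms $-\f16\mathcal{E}^\gamma_f(hW_{l_2})$, $-\mathcal{C}_4(\|W^\eps_s(D)W_{l_2+\gamma/2}h\|_{L^2}^2+\mathcal{E}^{0,\eps}_\mu(W_{l_2+\gamma/2}h))$, $-\f12\mathcal{C}_5\delta^{-2s}\|h\|_{L^2_{l_2+\gamma/2}}^2$ and the error terms $\mathcal{C}_6\delta^{-6-6s}|h|_{L^1_{2l_2}}|h|_{L^1_\gamma}+|f|_{L^1_{\gamma+2s}}^2|h|_{L^2_{l_2}}^2+|f|_{L^2_{l_2+\gamma/2}}^2|h|_{L^1_2}^2$. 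For the second term I would use $(i)$ of Corollary \ref{basic-estimates} with the roles $g\leftarrow h$, $h\leftarrow g$, $f\leftarrow hW_{l_2}/W_{l_2}$ — more precisely, $|\lr{Q^\eps(h,g)W_{l_2},hW_{l_2}}_v|\lesssim |h|_{L^1_{\gamma+2s}}|W^\eps_s(D)W_{l_2+\gamma/2+2s}g|_{L^2}|W^\eps_s(D)W_{\gamma/2}(hW_{l_2}/W_{l_2})|_{L^2}+\dots$; then absorb the factor carrying $\|W^\eps_s(D)W_{l_2+\gamma/2}h\|_{L^2}$ into the good dissipation term using Young's inequality $ab\le \eta a^2+\eta^{-1}b^2$ with $\eta$ a small absolute constant (half of $\mathcal{C}_4$, say), which is legitimate since the remaining factor $|h|_{L^1_{\gamma+2s}}^2|W^\eps_s(D)W_{l_2+\gamma/2+2s}g|_{L^2}^2$ and $|h|_{L^1_{\gamma+2s}}^2|g|_{L^2_{l_2}}^2$ and $|h|_{L^2_{l_2+\gamma/2}}^2|g|_{L^1_2}^2$ (the last also absorbed into $\mathcal{C}_5\delta^{-2s}\|h\|^2_{L^2_{l_2+\gamma/2}}$ with a further small-constant split) are of lower order. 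Collecting the bounds and integrating over $x$ yields exactly the pre-statement display.

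Then I would integrate in $t$ from $0$ to $t$, bound $\|f\|_{L^2_{l_2+\gamma/2}}$ (resp. $\|h\|_{L^1_2}$ etc.) appearing as coefficients by their $L^\infty_x$ counterparts where the statement writes $\|\cdot\|_{L^\infty_x L^1_\bullet}$, and rename $\f12\mathcal{C}_1$-type multiples as $A_2\sim\mathcal{C}_4$, $A_3\sim\mathcal{C}_5$, $A_4\sim\mathcal{C}_6$, letting $C_E$ absorb all the remaining absolute constants from the Young splittings; this is the displayed inequality of Proposition \ref{L2m}. The one genuinely delicate point — the ``main obstacle'' — is bookkeeping the smoothing/weight factor in the $Q^\eps(h,g)$ term: one must make sure that the piece paired against $\|W^\eps_s(D)W_{l_2+\gamma/2}h\|_{L^2}$ is the \emph{only} one containing that dissipative norm of $h$, so that it can be absorbed, while every other contribution is controlled by $\|h\|_{L^1_{\gamma+2s}}$ or $\|h\|_{L^2_{l_2+\gamma/2}}$ (the latter absorbable into the $\mathcal{C}_5\delta^{-2s}$ term), and to check the weight exponents match: $W_{l_2+\gamma/2+2s}$ on $g$ in the gain-of-regularity term versus $W_{l_2}$ on $g$ in the lower-order term. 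All of this is routine given Corollary \ref{basic-estimates}, so no separate lemma is needed; the remaining computations (the explicit $C_E$, the exact small constants in the Young inequalities) are standard and I would not write them out in detail.
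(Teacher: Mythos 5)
Your proposal is correct and follows essentially the same route as the paper: the paper itself derives the proposition by the $L^2_{l_2}$ energy identity for the linear equation, applies Corollary \ref{basic-estimates}$(iv)$ (with $g\leftarrow f$) to the $Q^\eps(f,h)$ term for coercivity and $(i)$ (with roles swapped and test function $hW_{l_2}$, using $|W^\eps_s(D)W_{\gamma/2}(hW_{l_2})|_{L^2}\sim|W^\eps_s(D)W_{l_2+\gamma/2}h|_{L^2}$ from Lemma \ref{func}) to the $Q^\eps(h,g)$ term, absorbs the dissipative factor by Young's inequality, and integrates in time. The only cosmetic quibble is the stray ``$hW_{l_2}/W_{l_2}$'' in your write-up, which should simply read $hW_{l_2}$; the subsequent absorption you describe is consistent with the intended term $\|W^\eps_s(D)W_{l_2+\gamma/2}h\|_{L^2}$.
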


\subsection{Gain and propagation of  derivatives for $x$ and $v$ variables} In this subsection, we will show the gain of fractional derivatives for $x$ variable due to hypo-elliptic property of the transport equation and also the propagation of   derivatives for $v$ variable.
\subsubsection{Gain of fractional derivatives for $x$ variable}
By $(iii)$ of Corollary \ref{basic-estimates},  for sufficiently small $\eta$, we have
\beno  && |Q(f, h)W_l +Q(h, g)W_l|_{H^{-1}}\lesssim (\eta+ \ep^{1-s}|f|_{L^1_{\gamma+2s}})|W^\ep_s(D)W_{l+\gamma+2s}h|_{L^2}+\eta^{-\f{2s-1}{1-s}}|f|_{L^1_{\gamma+2s}}^{\f{2s-1}{1-s}}|h|_{L^2_{l+\gamma+2s}}\\&&\qquad\qquad\qquad\qquad+|f|_{L^1_{\gamma+2s}}|h|_{L^2_{l+\gamma}}
+|f|_{L^2_{l+\gamma}}|h|_{L^1_2}+|h|_{L^1_{\gamma+2s}}|W^\ep_s(D)W_{l+\gamma+2s}g|_{L^2}
+|h|_{L^2_{l+\gamma}}|g|_{L^1_2}.\eeno
From this together with  Lemma \ref{hypoellipticity}, we get that
\beno && \|W_{-d_1}(hW_l)_\phi\|^2_{L^2_TH^{\f{s}{4(4+s)}}_xL^2}
\lesssim \eta^{-8}\|h_0\|_{L^2_l}^2+\eta^{2s}\|(hW_l)_\phi\|_{L^2_TL^2_xH^s}^2  +\eta^{-8}
\bigg(\int_0^T\int_{\TT^3}\big[|h|_{L^2_l}^2+ \mathrm{1}_{2s>1}(\eta_1^2\\&&\quad+ \ep^{2(1-s)}|f|^2_{L^1_{\gamma+2s}}) |W^\ep_s(D)W_{l+\gamma+2s}h|^2_{L^2}+\eta_1^{-2\f{2s-1}{1-s}}|f|_{L^1_{\gamma+2s}}^{2\f{2s-1}{1-s}}|h|^2_{L^2_{l+\gamma+2s}}+\big(|f|^2_{L^1_{\gamma+2s}}|h|^2_{L^2_{l+\gamma}}
+|f|^2_{L^2_{l+\gamma}}|h|^2_{L^1_2}\\&&\quad +|h|^2_{L^1_{\gamma+2s}}|W^\ep_s(D)W_{l+\gamma+2s}g|^2_{L^2}
+|h|^2_{L^2_{l+\gamma}}|g|^2_{L^1_2}\big)\big]dxdt\bigg).
\eeno

We arrive at
\begin{prop}\label{smx01} Suppose that $W_{m,n}\in \mathbb{W}_I(N,\kappa,\varrho,\delta_1,q_1,q_2)(\,or\,\, \mathbb{W}_{II}(N,\kappa,\varrho,\delta_1))$. Then
\beno && \int_0^t \|W_{-d_1}(W_{0,1}W_{\gamma/2+d_1+d_2}h)_\phi\|^2_{H^{\varrho}_xL^2}d\tau 
 \lesssim \eta^{-8}\|W_{0,1}W_{\gamma/2+d_1+d_2}h_0\|_{L^2}^2+ \eta^{-8}\int_0^t \|W_{0,1}W_{\gamma/2+d_1+d_2}h\|_{L^2}^2d\tau\\&&+\eta^{2s}\int_0^t \|W^\eps_s(D)(W_{0,1}W_{\gamma/2+d_1+d_2}h)\|_{L^2}^2d\tau
 +\eta^{-8}\bigg( \int_0^t \mathrm{1}_{2s>1} (\eta_1^2+ \ep^{2(1-s)} \|f\|^2_{L^\infty_xL^1_{\gamma+2s}})\|W^\ep_s(D)W_{0,1}\\&&\times W_{\f32\gamma+2s+d_1+d_2}h\|^2_{L^2}d\tau+\int_0^t \big[\eta_1^{-2\f{2s-1}{1-s}}(\|f\|_{L^\infty_xL^1_{\gamma+2s}}^{2\f{2s-1}{1-s}}+\|f\|_{L^\infty_xL^1_{\gamma+2s}}^{2} +\|g\|_{L^\infty_xL^1_2})\|W_{0,1}W_{\f32\gamma+2s+d_1+d_2}h\|^2_{L^2}\\&& 
+\|W_{0,1}W_{\f32\gamma+d_1+d_2}f\|^2_{L^2}\|h\|^2_{L^\infty_xL^1_2} +\|h\|^2_{L^\infty_xL^1_{\gamma+2s}}\|W^\ep_s(D)W_{0,1}W_{\f32\gamma+2s+d_1+d_2}g\|^2_{L^2} \big]d\tau\bigg).
\eeno
\end{prop}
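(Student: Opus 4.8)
\textbf{Proof proposal for Proposition \ref{smx01}.}

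The plan is to apply the hypo-elliptic estimate of Lemma \ref{hypoellipticity} to the equation satisfied by $H\eqdefa W_{0,1}W_{\gamma/2+d_1+d_2}h$, after identifying the source term in the transport equation and bounding it in $L^2_xH^{-1}_v$. First I would derive the equation for $H$: from \eqref{EAPLN:h-eqaution}, multiplying by the weight $W_{0,1}W_{\gamma/2+d_1+d_2}$, which is a function of $v$ only and hence commutes with the transport operator $\partial_t+v\cdot\nabla_x$, one gets
\beno \partial_t H+v\cdot\nabla_x H=\big(Q^\ep(f,h)+Q^\ep(h,g)\big)W_{0,1}W_{\gamma/2+d_1+d_2}\eqdefa G. \eeno
Lemma \ref{hypoellipticity} is stated for $l<-\tfrac32$ and requires $H_\phi\in L^2_t(L^2_xH^s_v)$ and $G\in L^2_t(L^2_xH^{-1}_v)$; I would apply it with the weight $W_{-d_1}$ absorbed appropriately, i.e. run Lemma \ref{hypoellipticity} on $H$ itself and then note $\|W_{-d_1}H_\phi\|_{H^\varrho_xL^2}\le \|H_\phi\|_{H^\varrho_xL^2_{-d_1}}$ is exactly the quantity appearing in the averaging-lemma output, with $\varrho=\tfrac{s}{4(4+s)}$ matching (P-1). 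This produces the three ``benign'' terms $\eta^{-8}\|H_0\|_{L^2}^2$, $\eta^{-8}\int_0^t\|H\|_{L^2}^2\,d\tau$ and $\eta^{2s}\int_0^t\|H_\phi\|^2_{L^2_xH^s}\,d\tau$, which are precisely the first three terms on the right of the claimed inequality (note $\|H_\phi\|_{L^2_xH^s}\lesssim \|W^\ep_s(D)H\|_{L^2}$ by \eqref{func3}), plus the term $\eta^{-8}\int_0^t\|G\|^2_{L^2_xH^{-1}}\,d\tau$.

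The core of the argument is then the estimate of $\|G\|_{L^2_xH^{-1}_v}$. Here I would invoke $(iii)$ of Corollary \ref{basic-estimates} with $l$ replaced by $0$ and the functions $h,g$ replaced by $W_{\gamma/2+d_1+d_2}h$ and $g$ (so that $W_l\rightsquigarrow W_{0,1}$ acts as the weight in that corollary, and the extra weight $W_{\gamma/2+d_1+d_2}$ is carried inside), pairing against an arbitrary $f\in H^1_v$ of unit norm to extract the $H^{-1}_v$ norm. For the $Q^\ep(f,h)$ piece this gives, in the case $2s>1$,
\beno &&|Q^\ep(f,h)W_{0,1}W_{\gamma/2+d_1+d_2}|_{H^{-1}_v}\lesssim \big(\eta_1+\ep^{1-s}|f|_{L^1_{\gamma+2s}}\big)|W^\ep_s(D)W_{0,1}W_{\tfrac32\gamma+2s+d_1+d_2}h|_{L^2}\\&&\qquad+\eta_1^{-\tfrac{2s-1}{1-s}}|f|_{L^1_{\gamma+2s}}^{\tfrac{2s-1}{1-s}}|W_{0,1}W_{\tfrac32\gamma+2s+d_1+d_2}h|_{L^2}+|f|_{L^1_{\gamma+2s}}|W_{0,1}W_{\tfrac32\gamma+d_1+d_2}h|_{L^2}+|f|_{L^2_{0,1}W_{\tfrac32\gamma+d_1+d_2}}|h|_{L^1_2}, \eeno
and symmetrically for the $Q^\ep(h,g)$ piece with $f\rightsquigarrow h$, $h\rightsquigarrow g$. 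Squaring, integrating in $x$, and using the embedding $H^{\tfrac32+\delta_2}_x\hookrightarrow L^\infty_x$ to pull the ``coefficient'' functions out in $L^\infty_x$ (so e.g. $\int_{\TT^3}|f|^2_{L^1_{\gamma+2s}}|W^\ep_s(D)\cdots h|^2_{L^2}\,dx\le \|f\|^2_{L^\infty_xL^1_{\gamma+2s}}\|W^\ep_s(D)\cdots h\|^2_{L^2}$), and finally combining the cutoff weights via $W_{\tfrac32\gamma+d_1+d_2}\le W_{\tfrac32\gamma+2s+d_1+d_2}$, yields exactly the displayed right-hand side, after grouping the $\eta_1^{-2\tfrac{2s-1}{1-s}}\|f\|^{2\tfrac{2s-1}{1-s}}$, $\|f\|^{2}$ and $\|g\|_{L^\infty_xL^1_2}$ contributions into the common bracket with coefficient $\|W_{0,1}W_{\tfrac32\gamma+2s+d_1+d_2}h\|^2_{L^2}$. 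I would also remark that the cases $2s=1$ and $2s<1$ are handled identically using the corresponding alternatives in Corollary \ref{basic-estimates}, producing terms that are no worse (in fact with $W^\ep_s(D)$ replaced by weaker symbols), so they are absorbed into the stated bound.

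The main obstacle I anticipate is bookkeeping rather than analysis: one must check that every weight exponent produced by Corollary \ref{basic-estimates} and Lemma \ref{hypoellipticity}, when composed with the chosen weight $W_{0,1}W_{\gamma/2+d_1+d_2}$ and the loss $W_{-d_1}$ from the averaging lemma, lands inside the weight $W_{\tfrac32\gamma+2s+d_1+d_2}$ advertised in the statement — i.e. that $d_1>\tfrac32$, $d_2>\gamma$ as in (W-5) give enough room for the $H^{-1}_v$-to-$L^2_v$ weight transfer and for the commutators hidden in Lemma \ref{func} (in particular \eqref{func3}) to close. A secondary technical point is the legitimacy of applying Lemma \ref{hypoellipticity}, whose hypothesis $H_\phi\in L^2_t L^2_xH^s_v$ is exactly the quantity appearing with the small factor $\eta^{2s}$ on the right; this is not circular because in the eventual a priori estimate that term will be absorbed by the coercive dissipation $\mathbb{D}_2$, but at the level of this proposition it is simply kept on the right-hand side, so no absorption is needed here and the statement follows directly once the source term is bounded.
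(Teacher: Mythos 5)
Your proposal is correct and follows essentially the same route as the paper: form $H=W_{0,1}W_{\gamma/2+d_1+d_2}h$, observe it solves the same transport equation with source $G=(Q^\ep(f,h)+Q^\ep(h,g))W_{0,1}W_{\gamma/2+d_1+d_2}$, apply Lemma \ref{hypoellipticity} with the weight $W_{-d_1}$ (using $d_1>\tfrac32$ from (W-5)), and bound $G$ in $L^2_xH^{-1}_v$ via Corollary \ref{basic-estimates}$(iii)$ followed by the $H^{\f32+\delta_2}_x\hookrightarrow L^\infty_x$ embedding to split the $x$-products. That is exactly the paper's argument (the paper displays the identical $H^{-1}_v$ bound and then inserts it into Lemma \ref{hypoellipticity}), so no gap remains.
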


\subsubsection{Propagation of  the regularity for $v$ variable.} We have two results. The first one reads: 
\begin{prop}\label{Envq}
	For $q\ge s$ and $\eta>0$, it holds
\beno
&& V^{q,\eps}(h(t))+\mathcal{C}_1(c_1,c_2)\int_0^t \|W^{\ep}_{q+s}(D) W_{\gamma/2}h\|^2_{L^2}d\tau 
 \le  V^{q,\eps}(h_0)+ \mathcal{C}_2(c_1,c_2)\\&&\times\int_0^t\|W^\ep_q(D)W_{\gamma/2}h\|_{L^2}^2d\tau+C_E\int_0^t \bigg(\|W^{\ep}_{(q-1)^+}(D)h\|_{H^1_xL^2}^2
+\|h\|_{L^\infty_xL^1_{\gamma+2s}}^2 \|W^{\ep}_{q+s}(D)W_{\gamma/2+2s}g\|_{L^2}^2\\&&+ \|f\|_{L^\infty_xL^2}^2( \mathrm{1}_{2s>1}\|W^\ep_{q+s-1}(D)W_{\gamma/2+\f52}h\|^2_{L^2}+\mathrm{1}_{2s=1}\|W^\ep_{q-s+\log}(D)W_{\gamma/2+\f52}h\|^2_{L^2}+ \| f\|_{L^\infty_xL^1}^2 \|h\|_{L^2}^2\\&&+\mathrm{1}_{2s<1}\|W^{\eps}_{q-s}(D)W_{\gamma/2+\f52}h\|_{L^2}^2)
+ \|f\|_{L^\infty_xL^2_{\gamma+3}}^2( \mathrm{1}_{2s>1}\| W^\ep_{q+s-1}(D)h\|^2_{L^2}  +\mathrm{1}_{2s=1} \|W^\ep_{q-s+\log}(D)h\|_{L^2}^2 +\mathrm{1}_{2s<1} \\&&\times\|W^\eps_{q-s}(D) h\|_{L^2}^2)  + \|W^\ep_{q+s-1+\eta}(D)f\|_{L^2}^2 \| h\|_{L^\infty_xL^2}^2+ \|h\|_{L^{\infty}_xL^2}^2( \mathrm{1}_{2s>1}\|W^\ep_{q+s-1}(D)W_{\gamma/2+\f52}g\|^2_{L^2}+\mathrm{1}_{2s=1}\\&&\times \|W^\ep_{q-s+\log}(D)W_{\gamma/2+\f52}g\|^2_{L^2}+\mathrm{1}_{2s<1}\|W^\eps_{q-s}(D)W_{\gamma/2+\f52}g\|_{L^2}^2)
 + \|h\|_{L^\infty_xL^2_{\gamma+3}}( \mathrm{1}_{2s>1}\| W^\ep_{q+s-1}(D)g\|^2_{L^2}\\&&+\mathrm{1}_{2s=1} \|W^\ep_{q-s+\log}(D)g\|_{L^2}^2+\mathrm{1}_{2s<1}\|W^\eps_{q-s}(D)g\|_{L^2}^2)   + \|W^\ep_{q+s-1+\eta}(D)h\|_{L^2}^2 \| g\|_{L^\infty_xL^2}^2+ \| h\|_{L^2_2}^2 \|g\|_{L^\infty_xL^2}^2\bigg)d\tau.
\eeno
\end{prop}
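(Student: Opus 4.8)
\textbf{Proof proposal for Proposition \ref{Envq}.}

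The plan is to perform an energy estimate directly on the dyadic-localized equation in the frequency space for the $v$ variable. Starting from \eqref{EAPLN:h-eqaution}, I would apply $\mathfrak{F}_j$ to both sides and take the $L^2$ inner product (in $x$ and $v$) with $\mathfrak{F}_j h$ weighted by the appropriate dyadic factor, then sum against the weights $2^{2qj}$ for $j\le|\log\eps|$ and $\eps^{-2q}$ for $j\ge|\log\eps|$. Using the equivalence $V^{q,\eps}(h)\sim\|W^\eps_q(D)h\|_{L^2}^2$ together with \eqref{func8}, the left-hand side produces $\frac12\frac{d}{dt}V^{q,\eps}(h)$. The transport term $v\cdot\nabla_x\mathfrak{F}_jh$ integrates to zero after summation since it is a skew-symmetric operator commuting with $\mathfrak{F}_j$. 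It remains to control the two collision contributions $\sum_j w_j^2\lr{\mathfrak{F}_jQ^\eps(f,h),\mathfrak{F}_jh}_v$ and $\sum_j w_j^2\lr{\mathfrak{F}_jQ^\eps(h,g),\mathfrak{F}_jh}_v$, where $w_j$ denotes the appropriate weight.

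For the first collision term, I would write $\mathfrak{F}_jQ^\eps(f,h)=Q^\eps(f,\mathfrak{F}_jh)+[\mathfrak{F}_j,Q^\eps(f,\cdot)]h$. The commutator piece is exactly what Lemma \ref{comforsym3} (with the roles $g\leftrightarrow f$) is designed to estimate after summation against $\sum_j w_j^2$, and it yields all the terms on the right-hand side of the stated inequality that involve $\|f\|_{L^\infty_xL^2}$, $\|f\|_{L^\infty_xL^2_{\gamma+3}}$, $\|W^\eps_{q+s-1+\eta}(D)f\|_{L^2}$ and $\|f\|_{L^\infty_xL^1}$, after Cauchy–Schwarz in $x$. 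The main term $\sum_j w_j^2\lr{Q^\eps(f,\mathfrak{F}_jh)W^\eps_q-\text{weight},\ldots}$ is handled by the coercivity estimate of Theorem \ref{thmlb1}: applying $\lr{-Q^\eps(f,\mathfrak{F}_jh),\mathfrak{F}_jh}_v\ge\mathcal{C}_1(\mathcal{E}^{0,\eps}_\mu(W_{\gamma/2}\mathfrak{F}_jh)+|W^\eps_s(D)W_{\gamma/2}\mathfrak{F}_jh|_{L^2}^2)-\mathcal{C}_2|\mathfrak{F}_jh|_{L^2_{\gamma/2}}^2$ to each $j$ and summing, then using \eqref{func3} and \eqref{func8} to recognize $\sum_j w_j^2|W^\eps_s(D)W_{\gamma/2}\mathfrak{F}_jh|_{L^2}^2\sim\|W^\eps_{q+s}(D)W_{\gamma/2}h\|_{L^2}^2$, gives the $\mathcal{C}_1\|W^\eps_{q+s}(D)W_{\gamma/2}h\|_{L^2}^2$ dissipation and the $\mathcal{C}_2\|W^\eps_q(D)W_{\gamma/2}h\|_{L^2}^2$ absorbed term. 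Here I must be careful that the weight $W^\eps_q(D)$ does not commute with $Q^\eps(f,\cdot)$ exactly, so a secondary commutator $[W^\eps_q(D),\cdot]$ appears, controlled via \eqref{func6}–\eqref{func7} and Corollary \ref{basic-estimates}, producing the $\|W^\eps_{(q-1)^+}(D)h\|_{H^1_xL^2}^2$ term (the $H^1_x$ coming from the $x$-integration after Cauchy–Schwarz combined with the $L^\infty_x$ bounds on $f$).

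For the second collision term $\sum_j w_j^2\lr{\mathfrak{F}_jQ^\eps(h,g),\mathfrak{F}_jh}_v$, there is no coercivity to exploit, so I would simply split $\mathfrak{F}_jQ^\eps(h,g)=Q^\eps(h,\mathfrak{F}_jg)+[\mathfrak{F}_j,Q^\eps(h,\cdot)]g$ and bound everything by the upper bounds of Theorem \ref{ubofQe} and Lemma \ref{comforsym3} (now with $g\leftrightarrow h$), which after summation and Cauchy–Schwarz in $x$ produce precisely the terms involving $\|h\|_{L^\infty_xL^1_{\gamma+2s}}$, $\|h\|_{L^\infty_xL^2}$, $\|h\|_{L^\infty_xL^2_{\gamma+3}}$, $\|W^\eps_{q+s-1+\eta}(D)h\|_{L^2}$ and $\|h\|_{L^2_2}$ multiplying the various weighted Sobolev norms of $g$. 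Finally, integrating in time on $[0,t]$ and collecting the dissipative term $\mathcal{C}_1\int_0^t\|W^\eps_{q+s}(D)W_{\gamma/2}h\|_{L^2}^2d\tau$ on the left yields the stated estimate, with $C_E$ an absolute constant bounding all the implicit constants from the trilinear and commutator lemmas. I expect the main obstacle to be the bookkeeping of the double commutator structure — first localizing in frequency via $\mathfrak{F}_j$ and then reconstructing the symbol $W^\eps_q(D)$ — and ensuring every error term genuinely falls into one of the categories listed on the right-hand side; this is where the dichotomy $2s>1$, $2s=1$, $2s<1$ in Lemma \ref{comforsym3} must be tracked faithfully, since it is exactly the source of the $W^\eps_{q+s-1}$, $W^\eps_{q-s+\log}$, $W^\eps_{q-s}$ alternatives in the conclusion.
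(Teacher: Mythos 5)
Your overall strategy (localize in $v$-frequency via $\mathfrak{F}_j$, pair with $\mathfrak{F}_jh$, sum against the dyadic weights $2^{2qj}$ / $\eps^{-2q}$, extract coercivity from $Q^\eps(f,\mathfrak{F}_jh)$, and control the two collision commutators $\mathfrak{F}_jQ^\eps-Q^\eps\mathfrak{F}_j$ via Lemma \ref{comforsym3}) matches the paper's decomposition into the five terms $R_1,\dots,R_5$. However, there is a genuine error in your treatment of the transport term.

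You assert that ``the transport term $v\cdot\nabla_x\mathfrak{F}_jh$ integrates to zero after summation since it is a skew-symmetric operator commuting with $\mathfrak{F}_j$.'' But $\mathfrak{F}_j$ is a Fourier multiplier in the $v$-variable, while $v\cdot\nabla_x$ involves multiplication by the physical variable $v$; these do \emph{not} commute. Applying $\mathfrak{F}_j$ to $v\cdot\nabla_xh$ yields $v\cdot\nabla_x(\mathfrak{F}_jh)+[\mathfrak{F}_j,v\cdot\nabla_x]h$, and while the first piece does vanish against $\mathfrak{F}_jh$ by skew-symmetry in $x$, the commutator piece does not. The paper keeps this as $R_1=[v\cdot\nabla_x,\mathfrak{F}_j]h$ and estimates it via $|[v_i,\mathfrak{F}_j]h|_{L^2}\le 2^{-j}|(\partial_i\varphi)(2^{-j}D)h|_{L^2}$; after pairing with $\partial_{x_i}$ and summing with the dyadic weights, this is \emph{precisely} the origin of the term $\|W^\eps_{(q-1)^+}(D)h\|_{H^1_xL^2}^2$ on the right-hand side. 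Your alternative explanation for that term — a ``secondary commutator $[W^\eps_q(D),Q^\eps(f,\cdot)]$'' — is spurious in this setting: once you have localized with $\mathfrak{F}_j$ and sum against the dyadic weights, the symbol $W^\eps_q(D)$ is reconstituted by \eqref{func8}, not applied as a separate operator, so no such secondary commutator arises. You therefore both dropped a real, non-vanishing term and misattributed its contribution to a term that isn't present, so the bookkeeping of your proof as written does not close.
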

Let us sketch the proof of the proposition. By frequency localization, we first  observe that
\beno  \pa_t (\mathfrak{F}_j h) +v\cdot\na_x(\mathfrak{F}_j h)&=&[v\cdot \na_x, \mathfrak{F}_j]h+Q^\ep(f, \mathfrak{F}_j h)+\big[\mathfrak{F}_j Q^\ep(f, h)-Q^\ep(f, \mathfrak{F}_j h)\big] \\
&&+Q^\ep(h,\mathfrak{F}_j g)+\big[\mathfrak{F}_j Q^\ep(h,g)-Q^\ep(h,\mathfrak{F}_j g)\big]\eqdefa \sum_{i=1}^5 R_i.\eeno
For $R_1$, by using the  estimates $ |[v_i,\mathfrak{F}_j]h|_{L^2}\le 2^{-j}|(\pa_i\varphi)(2^{-j} D)h|_{L^2}$ and  $ \|W^\ep_q(D)h\|_{L^2}^2\sim \sum\limits_{j\le |\log \epsilon|}2^{2qj} \|\mathfrak{F}_j h\|_{L^2}^2+\sum\limits_{j\ge |\log \epsilon|} \ep^{-2q} \|\mathfrak{F}_j h\|_{L^2}^2$, we  derive that
$$\sum_{j\le |\log \epsilon|}2^{2qj}| ([v_i,\mathfrak{F}_j]\pa_{x_i}h,\mathfrak{F}_j h)| +\sum_{j\ge |\log \epsilon|} \ep^{-2q}| ([v_i,\mathfrak{F}_j]\pa_{x_i}h,\mathfrak{F}_j h)|\lesssim \|W^{\ep}_{(q-1)^+}h\|_{H^1_xL^2}\|W^\ep_q(D)h\|_{L^2}.$$
For $R_2$, we may apply $(vi)$ of Corollary \ref{basic-estimates}   and \eqref{func8} to get the coercivity estimate.  For $R_3,R_4$ and $R_5$, we may treat them by
$(i)$ of Corollary \ref{basic-estimates}, $(i)$ of Lemma \ref{comforsym3} and \eqref{func8}.  Summarizing  all the estimates, we  get   the propagation of  the partial regularity for $v$ variable.

Next we want to show that the regularity for $v$ variable can be propagated. To get that, we only need to modify the proof of the previous proposition. More precisely, we shall apply $(v)$ of  Corollary \ref{basic-estimates} to $R_2$  and apply $(ii)$ of Lemma \ref{comforsym3} to  $R_3$ and $R_5$ to renew the corresponding estimates.  Finally by the interpolation inequality $|f|_{H^{q-1}}\le |f|_{L^2}^{\f1{q+1}}|f|_{H^q}^{\f{q}{q+1}}$, we will get   

\begin{prop}\label{Enfvq}
	For $q\ge2$ and $\eta>0$, it holds
\beno
&& V^q(h(t))+\mathcal{C}_1(c_1,c_2)\int_0^t \|W^{\ep}_{s}(D) W_{\gamma/2}h\|^2_{L^2_xH^{q}}d\tau+\delta^{-2s}\|h\|_{L^2_xH^{q}_{\gamma/2}}^2\\
&&\le V^q(h_0)+ \mathcal{C}_6(c_1,c_2)\delta^{-4-2s-4(q-1)}\int_0^t\|h\|_{L^2_{\gamma/2}}^2d\tau+C_E\int_0^t \bigg(\|h\|_{H^1_xH^{q-1}}^2
+\int_{\TT^3} (|W_{\gamma+3}f|_{H^1}^2 \\&&\times |W^\ep_{s}(D)h|^2_{H^{q-1}}+|f|_{H^1}^2 |W^\ep_{s}(D)W_{\gamma/2+\f52}h|_{H^{q-1}}^2)dx+ \|f\|_{L^\infty_xL^2}^2( \mathrm{1}_{2s=1}\|W^\ep_{q-s+\log}(D)W_{\gamma/2+\f52}h\|^2_{L^2}\\&&+\mathrm{1}_{2s<1}\|W^{\eps}_{q-s}(D)W_{\gamma/2+\f52}h\|_{L^2}^2) 
+ \|f\|_{L^\infty_xL^2_{\gamma+3}}^2( \mathrm{1}_{2s=1} \|W^\ep_{q-s+\log}(D)h\|_{L^2}^2 +\mathrm{1}_{2s<1}  \|W^\eps_{q-s}(D) h\|_{L^2}^2) \\&& + \|W^\ep_{s}(D)f\|_{L^2_xH^{q-1+\eta}}^2 \| h\|_{L^\infty_xL^2}^2 + \| f\|_{L^\infty_xL^1}^2 \|h\|_{L^2}^2+\|h\|_{L^\infty_xL^1_{\gamma+2s}}^2 \|W^{\ep}_{s}(D)W_{\gamma/2+2s}g\|_{L^2_xH^{q}}^2
\\&&+\int_{\TT^3} (|W_{\gamma/2+3}h|_{H^1}^2 |W^\ep_{s}(D)g|^2_{H^{q-1}}+|h|_{H^1}^2 |W^\ep_{s}(D)W_{\gamma/2+\f52}g|_{H^{q}})^2dx+ \|W^\ep_{s}(D)h\|_{L^2_xH^{q-1+\eta}}^2 \| g\|_{L^\infty_xL^2}^2\\&&+ \| h\|_{L^2_2}^2 \|g\|_{L^\infty_xL^2}^2+ \|h\|_{L^{\infty}_xL^2}^2(\mathrm{1}_{2s=1}\|W^\ep_{q-s+\log}(D)W_{\gamma/2+\f52}g\|^2_{L^2}+\mathrm{1}_{2s<1}\|W^\eps_{q-s}(D)W_{\gamma/2+\f52}g\|_{L^2}^2)
 \\&&+ \|h\|_{L^\infty_xL^2_{\gamma+3}} (\mathrm{1}_{2s=1} \|W^\ep_{q-s+\log}(D)g\|_{L^2}^2+\mathrm{1}_{2s<1}\|W^\eps_{q-s}(D)g\|_{L^2}^2) \bigg)d\tau.
\eeno 
\end{prop}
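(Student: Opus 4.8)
The plan is to follow the proof of Proposition~\ref{Envq} line by line, replacing the estimates for the source terms by sharper ones so that the \emph{full} $v$-regularity $V^q$ is recovered rather than the partial regularity $V^{q,\eps}$. First I localize \eqref{EAPLN:h-eqaution} in frequency: applying $\mathfrak{F}_j$ and commuting it with the transport operator yields
\begin{multline*}
\pa_t (\mathfrak{F}_j h) +v\cdot\na_x(\mathfrak{F}_j h)=[v\cdot \na_x, \mathfrak{F}_j]h+Q^\ep(f, \mathfrak{F}_j h)+\big[\mathfrak{F}_j Q^\ep(f, h)-Q^\ep(f, \mathfrak{F}_j h)\big]\\
+Q^\ep(h,\mathfrak{F}_j g)+\big[\mathfrak{F}_j Q^\ep(h,g)-Q^\ep(h,\mathfrak{F}_j g)\big]\eqdefa\sum_{i=1}^5 R_i ,
\end{multline*}
exactly as in the sketch preceding the statement. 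Taking the $L^2_{x,v}$ inner product with $\mathfrak{F}_j h$, multiplying by $2^{2qj}$ and summing over $j\ge-1$ — so that $\sum_{j\ge-1}2^{2qj}\|\mathfrak{F}_j h\|_{L^2}^2\sim V^q(h)\sim\|\lr{D}^q h\|_{L^2}^2$ — reduces the task to controlling, after summation, the five contributions of $R_1,\dots,R_5$.

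For the transport commutator $R_1$ I use $|[v_i,\mathfrak{F}_j]h|_{L^2}\le 2^{-j}|(\pa_i\varphi)(2^{-j}D)h|_{L^2}$, which after the weighted summation is bounded by $\|h\|_{H^1_xH^{q-1}}\|\lr{D}^q h\|_{L^2}$. The decisive change concerns $R_2=Q^\ep(f,\mathfrak{F}_j h)$ together with its commutator: here I apply part $(v)$ of Corollary~\ref{basic-estimates} in place of part $(vi)$ used in Proposition~\ref{Envq}. At each frequency this supplies the coercive quantities $\mathcal{E}^\gamma_f$, $\|W^\ep_s(D)W_{\gamma/2}\cdot\|_{L^2}^2$, $\mathcal{E}^{0,\eps}_\mu$ and $\delta^{-2s}\|\cdot\|_{L^2_{\gamma/2}}^2$ while paying only the harmless error $\mathcal{C}_7\delta^{-4-2s}|\cdot|_{H^{-1}_{\gamma/2}}^2$ (plus the lower-order products $\|f\|_{L^\infty_xL^1}^2\|h\|_{L^2}^2$ and $\|f\|_{L^\infty_xL^2_{\gamma/2}}^2\|h\|_{L^1_2}^2$); after summation with the weights $2^{2qj}$ and an application of \eqref{func8} this yields exactly the left-hand side $\mathcal{C}_1\|W^\ep_s(D)W_{\gamma/2}h\|_{L^2_xH^q}^2+\delta^{-2s}\|h\|_{L^2_xH^q_{\gamma/2}}^2$. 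The commutator part of $R_2$, and the commutator terms in $R_3$ and $R_5$, are estimated by part $(ii)$ of Lemma~\ref{comforsym3}, while the diagonal terms $Q^\ep(h,\mathfrak{F}_j g)$ in $R_3$--$R_5$ are handled by part $(i)$ of Corollary~\ref{basic-estimates}; throughout, \eqref{func8} is used to repackage the dyadic sums into the norms $\|W^\ep_{q-s}(D)\cdot\|$, $\|W^\ep_{q-s+\log}(D)\cdot\|$ and the like, the weights $W_{\gamma/2+\f52}$, $W_{\gamma+3}$ forced on $h$ being precisely those dictated by Lemma~\ref{comforsym3}.

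It remains to absorb the contributions carrying one fewer $v$-derivative than the dissipation: the $\|W^\ep_{q+s-1}(D)\cdot\|_{L^2}$-type terms and the error $|h|_{H^{-1}_{\gamma/2}}^2$ from Corollary~\ref{basic-estimates}$(v)$. For these I invoke the interpolation inequality $|f|_{H^{q-1}}\le |f|_{L^2}^{\f1{q+1}}|f|_{H^q}^{\f{q}{q+1}}$ (and its $W^\ep_q$-weighted analogues obtained from \eqref{func3}--\eqref{func4}) together with Young's inequality, so that each such term splits into a small multiple of the $H^q$-dissipation plus a $\delta$-dependent multiple of $\|h\|_{L^2_{\gamma/2}}^2$; iterating this $q-1$ times and cashing the $\delta^{-4-2s}$ error at the bottom generates precisely the factor $\delta^{-4-2s-4(q-1)}$ in front of $\int_0^t\|h\|_{L^2_{\gamma/2}}^2$. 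Summing all the estimates and integrating in $t$ then gives the claimed inequality. The main obstacle I anticipate is the bookkeeping of the derivative loss in the $g$-commutator term $R_5$: Lemma~\ref{comforsym3}$(ii)$ returns $\lr{D}^{q-1+\eta}W^\ep_s(D)h$ paired against $g$ (and, symmetrically, $\lr{D}^{q-1+\eta}W^\ep_s(D)g$ paired against $h$), so one must verify that the extra $\eta$ is reabsorbed by interpolation and that the order-$q$ weighted norms of $f,g,h$ appearing on the right are genuinely controlled by the energy functional; aside from this verification the argument is a routine modification of the proof of Proposition~\ref{Envq}.
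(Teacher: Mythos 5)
Your argument follows the paper's intended route step by step: the same $R_1,\dots,R_5$ frequency-localized decomposition, replacement of Corollary~\ref{basic-estimates}$(vi)$ by part $(v)$ for $R_2$ to extract the $\delta^{-2s}$-weighted $L^2_{\gamma/2}$-dissipation at the price of the error $\mathcal{C}_7\delta^{-4-2s}|\cdot|_{H^{-1}_{\gamma/2}}^2$, Lemma~\ref{comforsym3}$(ii)$ for the two commutators $R_3,R_5$ so that the dyadic sums run over all $j\ge-1$, Corollary~\ref{basic-estimates}$(i)$ for $R_4$, and interpolation to absorb the lower-order errors, which is precisely the paper's proof. Two small imprecisions, neither fatal: the factor $\delta^{-4-2s-4(q-1)}$ is produced already by a single interpolation $|f|_{H^{q-1}}\le|f|_{L^2}^{1/q}|f|_{H^q}^{1-1/q}$ followed by Young's inequality rather than by ``iterating $q-1$ times,'' and the phrase ``the commutator part of $R_2$'' should of course refer to $R_3$ (the term $R_2=Q^\eps(f,\mathfrak{F}_jh)$ has no commutator part).
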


\subsection{High order energy estimates}
 We recall  notations:
$W_{m,n}=\lr{v}^{l_{m,n}}$ and $|\alpha|=m\le N$.
 Observe that if $h$ is a solution to \eqref{EAPLN:h-eqaution}, then $W_{m,n} \bar{\triangle}^{n\varrho}_k\pa_x^\alpha h$  and  $W_{m,n+1} W_{\gamma/2+2} \bar{\triangle}^{n\varrho}_k\pa_x^\alpha h$ solve  the   equations:
\beno &&(1).\,\pa_t(W_{m,n} \bar{\triangle}^{n\varrho}_k\pa_x^\alpha h)+v\cdot \na_x(W_{m,n} \bar{\triangle}^{n\varrho}_k\pa_x^\alpha h)\\
&&= Q^\eps(f,\bar{\triangle}^{n\varrho}_k\pa_x^\alpha h) W_{m,n} +\sum_{|\alpha_1|\ge1;\alpha_1+\alpha_2=\alpha} Q^\eps(\pa_x^{\alpha_1}f,  \bar{\triangle}^{n\varrho}_k\pa_x^{\alpha_2}h)W_{m,n}+\sum_{\alpha_1+\alpha_2=\alpha} Q^\eps(\bar{\triangle}^{n\varrho}_k\pa_x^{\alpha_1}f,  T_k^n\pa_x^{\alpha_2}h)W_{m,n}\\
&&+\sum_{\alpha_1+\alpha_2=\alpha}Q^\eps(\pa_x^{\alpha_1}h, \bar{\triangle}^{n\varrho}_k\pa_x^{\alpha_2}g)W_{m,n}+\sum_{\alpha_1+\alpha_2=\alpha}Q^\eps(\bar{\triangle}^{n\varrho}_k\pa_x^{\alpha_1}h,  T_k^n\pa_x^{\alpha_2}g)W_{m,n}\eqdefa \sum_{i=1}^5 R_i;
 \\ &&(2).\,\pa_t(W_{m,n+1} W_{\gamma/2+2} \bar{\triangle}^{n\varrho}_k\pa_x^\alpha h)+v\cdot \na_x(W_{m,n+1} W_{\gamma/2+2}\bar{\triangle}^{n\varrho}_k\pa_x^\alpha h) = Q^\eps(f,\bar{\triangle}^{n\varrho}_k\pa_x^\alpha h) W_{m,n+1} W_{\gamma/2+2}\\&& +\sum_{|\alpha_1|\ge1;\alpha_1+\alpha_2=\alpha} Q^\eps(\pa_x^{\alpha_1}f,  \bar{\triangle}^{n\varrho}_k\pa_x^{\alpha_2}h)W_{m,n+1} W_{\gamma/2+2} 
 +\sum_{\alpha_1+\alpha_2=\alpha} Q^\eps(\bar{\triangle}^{n\varrho}_k\pa_x^{\alpha_1}f,  T_k^n\pa_x^{\alpha_2}h)W_{m,n+1} W_{\gamma/2+2}\\&& +\sum_{\alpha_1+\alpha_2=\alpha}Q^\eps(\pa_x^{\alpha_1}h, \bar{\triangle}^{n\varrho}_k\pa_x^{\alpha_2}g)W_{m,n+1} W_{\gamma/2+2}+\sum_{\alpha_1+\alpha_2=\alpha}Q^\eps(\bar{\triangle}^{n\varrho}_k\pa_x^{\alpha_1}h,  T_k^n\pa_x^{\alpha_2}g)W_{m,n+1} W_{\gamma/2+2}\eqdefa \sum_{i=6}^{10} R_i.
 \eeno
  where  $T_k^n$ and $\bar{\triangle}^{n\varrho}_k$ are defined in \eqref{transop} and \eqref{fdop}.  In what follows, we want to derive the high order estimates for \eqref{EAPLN:h-eqaution} from these two equations.

    To obtain the propagation of $\|W_{m,n} h\|_{H^{m+n\varrho}_xL^2}$ , the key point is to give the bounds for $\int R_iW_{m,n} \bar{\triangle}^{n\varrho}_k\pa_x^\alpha h dxdkdv$\\($i=1,\dots,5$). For the term involving $R_1$, one may apply Corollary \ref{basic-estimates1} to obtain the coercivity estimate. We can apply $(i)$ of Corollary \ref{basic-estimates3} to the terms involving $R_2$ and $R_4$ and apply $(ii)$ of Corollary \ref{basic-estimates3} to the terms involving $R_3$ and $R_5$ to get the upper bounds.  To get the hypo-elliptic estimate of $W_{m,n+1} W_{\gamma/2+2} \bar{\triangle}^{n\varrho}_k\pa_x^\alpha h$,
  we will employee $(iii)$ of Corollary \ref{basic-estimates2} to handle $R_6$, (i) of Corollary \ref{basic-estimates2} to bound $R_7$ and $R_9$ and $(ii)$ of Corollary \ref{basic-estimates2} to treat $R_8$ and $R_{10}$. Finally applying directly  Lemma \ref{hypoellipticity} to the equation of $W_{m,n+1} W_{\gamma/2+2} \bar{\triangle}^{n\varrho}_k\pa_x^\alpha h$ will yield the desired result.  Our results can be summarized as follows:

\subsubsection{Propagation and gain of regularity for $x$ variable in the case of $\{m=0,0< n\le N_{\varrho,1}\}$, $\{m=1, n=0\}$ and $\{m=1, 0<n\le [1/2\varrho]\}$}

 \begin{prop}\label{Enxmn1}  Suppose that $W_{m,n}\in \mathbb{W}_I(N,\kappa,\varrho,\delta_1,q_1,q_2)(\,or\,\, \mathbb{W}_{II}(N,\kappa,\varrho,\delta_1))$. Then
\beno && \|W_{m,n}    h(t)\|^2_{H^{m+n\varrho}_xL^2}+\f{c_o}3\int_0^t\bigg(\int_{\TT^3} \mathcal{E}_f^\gamma(W_{m,n}    |D_x|^{m+n\varrho} h)dx\\&&\qquad+c_oA_5(c_1, c_2)\big(\|W^\epsilon_s(D)W_{\gamma/2}W_{m,n}   h\|_{H^{m+n\varrho}_xL^2}^2+\int_{\TT^3} \mathcal{E}^{0,\eps}_\mu(W_{\gamma/2}W_{m,n}  |D_x|^{m+n\varrho} h)dx\big)\bigg)d\tau\\
&&\le  \|W_{m,n}  h_0\|^2_{H^{m+n\varrho}_xL^2}+A_6(c_1,c_2)\int_0^t\|W_{\gamma/2}W_{m,n}  h\|_{H^{m+n\varrho}_xL^2}^2d\tau+C_E\int_0^t \bigg(\|f\|_{H^{\f32+\delta_1}_xL^2_{\gamma+4}}^2 \|W_{m,n}  h\|_{H^{m+n\varrho}_xL^2}^2\\&&
+\|W_{\gamma/2}W_{m,n}f\|_{H_x^{m+n\varrho}L^2}^2\|  h\|_{H_x^{\f32+\delta_1 }L^2_4}^2
+\|W^\ep_s(D)W_{m,n}W_{\gamma/2+2s} h\|^2_{H_x^{ m+n\varrho-\delta_1}L^2}\|f\|_{H_x^{\f32+2\delta_1}L^2_{\gamma+4}}^2  \\&&+ \|h\|^2_{H_x^{\f32+\delta_1}L^2_{\gamma+4}}\|W^\ep_s(D)W_{m,n}W_{\gamma/2+2s} g\|^2_{H_x^{m+n\varrho}L^2}
 +\| W_{m,n}W_{\gamma/2}h\|^2_{H^{m+n\varrho}_xL^2}\|g\|^2_{H^{\f32+\delta_1}_xL^2_4}\bigg)d\tau,\eeno
  where $A_5(c_1,c_2)\sim \mathcal{C}_1(c_1,c_2), A_6(c_1,c_2)\sim \mathcal{C}_2(c_1,c_2)$ and $c_o$ is a small and universal constant.
 \end{prop}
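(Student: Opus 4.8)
\textbf{Proof proposal for Proposition \ref{Enxmn1}.} The plan is to run the $(m+n\varrho)$-th step of the inductive energy scheme on the two auxiliary equations for $W_{m,n}\bar{\triangle}^{n\varrho}_k\pa_x^\alpha h$ and $W_{m,n+1}W_{\gamma/2+2}\bar{\triangle}^{n\varrho}_k\pa_x^\alpha h$ that are displayed just above the statement. First I would take the first equation ($\pa_t(W_{m,n}\bar{\triangle}^{n\varrho}_k\pa_x^\alpha h)+v\cdot\na_x(\cdots)=\sum_{i=1}^5 R_i$), pair it with $W_{m,n}\bar{\triangle}^{n\varrho}_k\pa_x^\alpha h$, integrate over $\TT^3_x\times\TT^3_k\times\R^3_v$, and sum over $|\alpha|=m$. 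The transport term drops by integration by parts. For the $R_1$-contribution I would invoke Corollary \ref{basic-estimates1} (the coercivity estimate), which produces the entropy production term $\int_{\TT^3}\mathcal{E}^\gamma_f(W_{m,n}|D_x|^{m+n\varrho}h)dx$, the gain terms $\|W^\eps_s(D)W_{\gamma/2}W_{m,n}h\|^2_{H^{m+n\varrho}_xL^2}+\int_{\TT^3}\mathcal{E}^{0,\eps}_\mu(W_{\gamma/2}W_{m,n}|D_x|^{m+n\varrho}h)dx$ with a favorable sign, the harmless lower-order term $\|W_{\gamma/2}W_{m,n}h\|^2_{H^{m+n\varrho}_xL^2}$, and three quadratic remainders controlled by $\|f\|_{H^{3/2+\delta_2}_xL^2_{\gamma+4}}$ (we take $\delta_2=\delta_1$) and by $\|W_{\gamma/2}W_{m,n}f\|$-norms of $f$. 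For the remaining four pieces I would use the upper bounds of Corollary \ref{basic-estimates3}: part $(i)$ on the $R_2,R_4$ terms (derivatives landing on $f$, resp. on $g$ without a difference quotient) and part $(ii)$ on the $R_3,R_5$ terms (difference quotient on $f$, resp. on $h$), each giving a product of an $H^{m+n\varrho}$-type norm of the differentiated factor against $\|W^\eps_s(D)W_{\gamma/2}f\|_{H^{n\varrho}_xL^2}$-type norms; these are then absorbed by Young's inequality into the gain terms on the left, the small constant $c_o$ accounting for the absorption margin.

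The second display ($\sum_{i=6}^{10}R_i$) is there to upgrade the $L^2$ bound via the hypoelliptic estimate of Lemma \ref{hypoellipticity}. I would first bound $\|\sum_{i=6}^{10}R_i\|_{L^2_{x,k}H^{-1}_v}$ using Corollary \ref{basic-estimates2}: part $(iii)$ on $R_6$ (the $Q^\eps(f,\cdot)$ term with no $x$-derivative on $f$, where the $\eta_1$/$\eps^{1-s}$ splitting is essential for $2s>1$), part $(i)$ on $R_7,R_9$, and part $(ii)$ on $R_8,R_{10}$. Plugging this into \eqref{xre} of Lemma \ref{hypoellipticity} applied to $W_{m,n+1}W_{\gamma/2+2}\bar{\triangle}^{n\varrho}_k\pa_x^\alpha h$ yields control of $\int_0^t\|W_{-d_1}(W_{m,n+1}W_{\gamma/2+d_1+d_2}\cdots)_\phi\|^2_{H^\varrho_xL^2}$ in terms of initial data, the quantities already estimated, and an $\eta^{2s}\|(\cdots)_\phi\|^2_{L^2_xH^s}$ term that is reabsorbed for $\eta$ small; this is the mechanism by which the minus term in the coercivity estimate \eqref{boundWSP} — here the $\mathcal{C}_2$/$A_6$ term — is tamed in the next inductive step, and it is why the weight drop from $W_{m,n}$ to $W_{m,n+1}$ in property (W-2)/(W-4) is exactly what is needed. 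Combining the two displays and integrating in time from $0$ to $t$ gives the stated inequality, with the constants $A_5\sim\mathcal{C}_1$, $A_6\sim\mathcal{C}_2$ and universal $C_E$ read off from the coercivity and upper-bound constants.

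A few bookkeeping points I would be careful about. The passage between difference quotients $\bar{\triangle}^{n\varrho}_k$ and fractional $x$-derivatives $|D_x|^{m+n\varrho}$ uses the elementary identity $\int_{\TT^3}|e^{iq\cdot k}-1|^2|k|^{-3-2\varrho}dk\sim|q|^{2\varrho}$ (equation \eqref{gainRegu}), exactly as in the proof of Corollary \ref{basic-estimates1}; all the entropy-production and $\mathcal{E}^{0,\eps}_\mu$ terms transform accordingly. The weighted norms appearing on the right must be checked to be dominated by the chosen well-prepared sequence: the factors $W_{\gamma/2+2s}$, $W_{\gamma+4}$, $W_4$ attached to $h$, $f$, $g$ are all consistent with conditions (W-4)–(W-5), so nothing beyond the budget already encoded in $\mathbb{W}_I$ (or $\mathbb{W}_{II}$) is used. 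I expect the genuine obstacle to be uniformity in $\eps$: several of the Corollary \ref{basic-estimates2}–\ref{basic-estimates3} bounds carry factors like $\eps^{1-s}$ or $\eps^{-(2s-1)}$ hidden inside $W^\eps_q(D)$, and verifying that after the Young-inequality absorptions every $\eps$-dependent coefficient multiplying a non-absorbed term stays bounded (indeed tends to a finite limit as $\eps\to0$) — which is what makes the same proof work simultaneously for \eqref{NepsB} and, at $\eps=0$, for \eqref{Boltz eq} — is the delicate part; the restrictions on $\eps$ in \eqref{Rstreps} are precisely what guarantee it. Everything else is a routine, if lengthy, assembly of the lemmas quoted above.
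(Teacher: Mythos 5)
Your first paragraph is indeed the proof of Proposition \ref{Enxmn1}, and it matches the paper's sketch exactly: test the first auxiliary equation (for $W_{m,n}\bar{\triangle}^{n\varrho}_k\pa_x^\alpha h$) against itself, apply Corollary \ref{basic-estimates1} to the $R_1$-contribution to produce the $\mathcal{E}^\gamma_f$, $\|W^\eps_s(D)W_{\gamma/2}W_{m,n}h\|$ and $\mathcal{E}^{0,\eps}_\mu$ terms on the left and the $A_6$-term plus two $f$-remainders on the right, and apply parts $(i)$ and $(ii)$ of Corollary \ref{basic-estimates3} to $R_2,R_4$ and $R_3,R_5$ respectively, closing by Young's inequality. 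Since $m+n\varrho\le\f32$ on the range covered by this proposition, only the $\mathrm{1}_{m+n\varrho\le\f32}$ branch of the coercivity estimate survives, which is why exactly the two $f$-remainder products appear.

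The second paragraph, however, is misplaced: the estimate for $\sum_{i=6}^{10}R_i$ combined with Lemma \ref{hypoellipticity} is the proof of the \emph{companion} Proposition \ref{smxmn1}, not of \ref{Enxmn1}. It produces a bound on $\int_0^t\|W_{-d_1}(W_{m,n+1}W_{\gamma/2+d_1+d_2}h)_\phi\|^2_{H^{m+(n+1)\varrho}_xL^2}d\tau$, and that quantity appears nowhere in the inequality you are asked to prove. So the closing line ``Combining the two displays $\dots$ gives the stated inequality'' is incorrect: Proposition \ref{Enxmn1} follows from the first display alone, with the $A_6$-term and the $\|W^\eps_s(D)W_{m,n}W_{\gamma/2+2s}h\|_{H^{m+n\varrho-\delta_1}_xL^2}$-term simply left on the right-hand side. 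The hypoelliptic output of \ref{smxmn1} is a separate ingredient, fed in only later (via Propositions \ref{intx1} and \ref{intx2}) to dispose of those dangling terms when the full system of energy inequalities is closed. A minor bookkeeping slip while you are at it: $R_4=Q^\eps(\pa_x^{\alpha_1}h,\bar{\triangle}^{n\varrho}_k\pa_x^{\alpha_2}g)W_{m,n}$ has the $x$-derivative on $h$ and the difference quotient on $g$, not the other way round; it is paired with $R_2$ because the first argument of $Q^\eps$ carries no difference quotient, which is the trigger for part $(i)$ of Corollary \ref{basic-estimates3}.
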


 \begin{prop}\label{smxmn1}  Suppose that $W_{m,n}\in \mathbb{W}_I(N,\kappa,\varrho,\delta_1,q_1,q_2)(\,or\,\, \mathbb{W}_{II}(N,\kappa,\varrho,\delta_1))$. Then
\beno  &&\int_0^t \|W_{-d_1}(W_{m,n+1}W_{\gamma/2+d_1+d_2}    h)_\phi\|_{H^{m+(n+1)\varrho}_xL^2}^2 d\tau 
 \lesssim \eta^{-8}\|W_{m,n+1}W_{\gamma/2+d_1+d_2}   h_0\|^2_{H^{m+n\varrho}_xL^2}+ \eta^{2s}\\&&\times\int_0^t  \|W^\ep_s(D)W_{m,n+1}W_{\gamma/2+d_1+d_2}  h\|_{H^{m+n\varrho}_xL^2}^2 d\tau +\eta^{-8}\int_0^t  \| W_{m,n+1}W_{\gamma/2+d_1+d_2}   h\|_{H^{m+n\varrho}_xL^2}^2 d\tau
  +\eta^{-8}\\&&\times\bigg[\int_0^t\bigg(
 \mathrm{1}_{2s>1}(\eta_1^2+\epsilon^{2(1-s)} \|f\|^2_{H_x^{\f32+\delta_1}L^2_{\gamma+4}}) \|W^\ep_s(D)W_{m,n+1}W_{\f32\gamma+2s+d_1+d_2}h\|^2_{H_x^{m+n\varrho}L^2}\notag  + \eta_1^{-\f{2(2s-1)}{1-s}} \\&&\times\|f\|^{\f{2(2s-1)}{1-s}}_{H_x^{\f32+\delta_1}L^2_{\gamma+4}}\|W_{m,n+1}W_{\f32\gamma+2s+d_1+d_2}h\|^2_{H_x^{m+n\varrho}L^2} +
 \|f\|^2_{H_x^{\f32+\delta_1}L^2_{\gamma+4}}\| W_{m,n+1}W_{\f32\gamma+2s+d_1+d_2} h\|^2_{H_x^{m+n\varrho}L^2} \\&&  + \|W_{m,n+1}W_{\f32\gamma+d_1+d_2}f\|^2_{H^{m+n\varrho}_xL^2} \|h\|^2_{H^{\f32+\delta_1}_xL^2_4}+
\|f\|^2_{H_x^{\f32+2\delta_1}L^2_{\gamma+4}}\|W^\ep_s(D)W_{m,n+1}W_{\f32\gamma+2s+d_1+d_2} h\|^2_{H_x^{m+n\rho-\delta_1}L^2}\\&&  + \|h\|^2_{H_x^{\f32+\delta_1}L^2_{\gamma+4}}\|W^\ep_s(D)W_{m,n+1}W_{\f32\gamma+3+2s} g\|_{H_x^{m+n\varrho}L^2}^2+\| W_{m,n+1}W_{\f32\gamma+d_1+d_2}h\|^2_{H^{m+n\varrho}_xL^2}  \|g\|^2_{H^{\f32+\delta_1}_xL^2_4}\bigg)d\tau\bigg].
  \eeno \end{prop}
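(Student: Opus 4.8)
\textbf{Proof proposal for Proposition \ref{smxmn1}.}

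The plan is to apply the hypo-elliptic estimate of Lemma \ref{hypoellipticity} directly to the transport equation satisfied by $W_{m,n+1}W_{\gamma/2+d_1+d_2}\bar{\triangle}^{n\varrho}_k\pa_x^\alpha h$, namely equation $(2)$ above with $W_{\gamma/2+2}$ replaced by $W_{\gamma/2+d_1+d_2}$ and source $\sum_{i=6}^{10}R_i$. In the notation of Lemma \ref{hypoellipticity}, the unknown is $f=W_{m,n+1}W_{\gamma/2+d_1+d_2}\bar{\triangle}^{n\varrho}_k\pa_x^\alpha h$ and the source $g$ there is $\sum_{i=6}^{10}R_i$ (the terms on the right of equation $(2)$). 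Lemma \ref{hypoellipticity} then gives, after integrating in $k$ and summing over $|\alpha|=m$ and using $\int_{\TT^3}|e^{iq\cdot k}-1|^2|k|^{-3-2\varrho}dk\sim |q|^{2\varrho}$ (equation \eqref{gainRegu}) to convert the finite-difference quotients into $|D_x|^{(n+1)\varrho}$ norms, the bound
\begin{equation*}
\begin{split}
&\int_0^t \|W_{-d_1}(W_{m,n+1}W_{\gamma/2+d_1+d_2}h)_\phi\|_{H^{m+(n+1)\varrho}_xL^2}^2 d\tau
\lesssim \eta^{-8}\|W_{m,n+1}W_{\gamma/2+d_1+d_2}h_0\|^2_{H^{m+n\varrho}_xL^2}\\
&\quad+\eta^{2s}\int_0^t \|W^\ep_s(D)W_{m,n+1}W_{\gamma/2+d_1+d_2}h\|_{H^{m+n\varrho}_xL^2}^2 d\tau
+\eta^{-8}\int_0^t \|W_{m,n+1}W_{\gamma/2+d_1+d_2}h\|_{H^{m+n\varrho}_xL^2}^2 d\tau\\
&\quad+\eta^{-8}\sum_{i=6}^{10}\int_0^t\sum_{|\alpha|=m}\int_{\TT^3}\|R_i\|^2_{L^2_kH^{-1}_v}\,dx\,d\tau,
\end{split}
\end{equation*}
where I have used $\|W_{-d_1}(W_{m,n+1}W_{\gamma/2+d_1+d_2}h)_\phi\|^2_{H^{m+(n+1)\varrho}_xL^2}$ in place of the left-hand norm $\|W_lf_\phi\|^2_{H^{s/(4(4+s))}_xL^2}$ of Lemma \ref{hypoellipticity} after noting that $\varrho\le s/(4(4+s))$ (condition (P-1)), that $-d_1<-3/2$ since $d_1>3/2$ (so $W_{-d_1}$ supplies the negative weight required there), and applying the lemma with the weight $W_{-d_1}$ and $x$-regularity level $m+n\varrho$ obtained by first taking $\pa_x^\alpha$ and the difference quotient $\bar{\triangle}^{n\varrho}_k$. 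The first three terms on the right are exactly the first three terms of the claimed inequality.

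The remaining work is to estimate $\sum_{|\alpha|=m}\int_{\TT^3}\|R_i\|^2_{L^2_kH^{-1}_v}dx$ for $i=6,\dots,10$ by the quantities appearing in the proposition, and this is where all the combinatorics and the careful bookkeeping of weights lives; however each individual estimate is already packaged in the corollaries. Concretely: for $R_6=Q^\eps(f,\bar{\triangle}^{n\varrho}_k\pa_x^\alpha h)W_{m,n+1}W_{\gamma/2+d_1+d_2}$ (the $\alpha_1=0$ case) I apply $(iii)$ of Corollary \ref{basic-estimates2} with $a=d_1+d_2-2$ (so that $\gamma/2+a=\gamma/2+d_1+d_2-2$, matching the weight after absorbing the harmless $W_2$), producing the $\eta_1^2+\epsilon^{2(1-s)}\|f\|^2_{H^{3/2+\delta_1}_xL^2_{\gamma+4}}$ term, the $\eta_1^{-2(2s-1)/(1-s)}\|f\|^{2(2s-1)/(1-s)}_{\cdots}$ term, the $\|f\|^2_{H^{3/2+\delta_1}_xL^2_{\gamma+4}}$ term with $W_{m,n+1}W_{\frac32\gamma+2s+d_1+d_2}h$, and the last two terms involving $\|W_{m,n+1}W_{\frac32\gamma+d_1+d_2}f\|$ and $\|W_{m,n+1}W_{\frac32\gamma+d_1+d_2}g\|$; for $R_7$ (the $|\alpha_1|\ge1$ pieces) and $R_9$ (all the $Q^\eps(\pa_x^{\alpha_1}h,\bar{\triangle}^{n\varrho}_k\pa_x^{\alpha_2}g)$ pieces) I apply $(i)$ of Corollary \ref{basic-estimates2} with $\delta_3=\delta_1$ when $m+n\varrho>\frac32$ (to gain the $\delta_1$ of $x$-regularity that is traded against the weight, yielding the $\|f\|_{H^{3/2+2\delta_1}_xL^2_{\gamma+4}}^2\|W^\ep_s(D)W_{m,n+1}W_{\frac32\gamma+2s+d_1+d_2}h\|^2_{H_x^{m+n\rho-\delta_1}L^2}$ term) and $\delta_3=0$ otherwise; for $R_8$ and $R_{10}$ (the two difference-hitting-the-first-argument pieces) I apply $(ii)$ of Corollary \ref{basic-estimates2} in the same fashion. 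Summing these five contributions and using the triangle inequality on the finitely many multi-indices $\alpha_1+\alpha_2=\alpha$, together with the monotonicity $W_{l}\le W_{l'}$ for $l\le l'$ to absorb the lower-weight pieces into the listed ones, gives precisely the right-hand side of the proposition.

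The main obstacle is the bookkeeping in the case $m+n\varrho>\frac32$: there the corollaries split each trilinear form into a "low-high'' piece controlled by $\|g\|_{H^{3/2+\delta_2+\delta_3}}$ and a "high-low'' piece controlled by $\|g\|_{H^{m+n\varrho+\tilde\delta}}$, and one must choose $\delta_3=\delta_1$, $\tilde\delta=0$ (or $\delta_2=\delta_1$) consistently so that the $\tilde\delta$-piece lands on $\|f\|_{H_x^{m+n\varrho}L^2_{\gamma+4}}$ (which is controlled, being part of the energy of index $m+n\varrho$) rather than forcing an index $>m+n\varrho$ that is not yet available in the induction — this is exactly the "high order regularity is equipped with the low order weight'' rule flagged in the introduction, and it is the reason the weights $W_{\frac32\gamma+2s+d_1+d_2}$ (with the $\frac32\gamma$ rather than $\gamma/2$) appear: the commutator estimates in Lemma \ref{commforweight}/Corollary \ref{basic-estimates} cost an extra $\gamma$ in weight compared with the naive count. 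Once the right choices of $\delta_3,\tilde\delta$ are fixed, every term produced matches one on the right-hand side of the statement, and the proof concludes by collecting terms. A secondary point to check is that the constant $\eta^{2s}$ multiplying $\|W^\ep_s(D)W_{m,n+1}W_{\gamma/2+d_1+d_2}h\|_{H^{m+n\varrho}_xL^2}^2$ is genuinely small (so that in the later closing of the energy estimates this term can be absorbed by the dissipation $\mathbb{D}_2$), which is guaranteed because $\eta$ is a free small parameter in Lemma \ref{hypoellipticity}.
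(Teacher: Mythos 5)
Your proposal is correct and matches the paper's strategy exactly: apply Lemma \ref{hypoellipticity} to the transport equation satisfied by $W_{m,n+1}W_{\gamma/2+d_1+d_2}\bar{\triangle}^{n\varrho}_k\pa_x^\alpha h$, with the source terms $R_6$--$R_{10}$ bounded via $(iii)$, $(i)$, $(ii)$, $(i)$, $(ii)$ of Corollary \ref{basic-estimates2} respectively (and, when $m+n\varrho>\frac{3}{2}$, $\delta_3=\delta_1$ chosen to generate the $H_x^{m+n\varrho-\delta_1}$ term) --- which is precisely what the paper sketches in the paragraph preceding the proposition. The only correction needed is clerical: Corollary \ref{basic-estimates2} should be invoked with $a=d_1+d_2$, not $a=d_1+d_2-2$, so that $W_{\gamma/2+a}$ matches the source weight $W_{\gamma/2+d_1+d_2}$ and the output weight $W_{\frac{3}{2}\gamma+2s+a}$ reproduces $W_{\frac{3}{2}\gamma+2s+d_1+d_2}$ directly; there is no stray $W_2$ factor to absorb.
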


 \subsubsection{Propagation  of regularity for $x$ variable in the case of  $\{m=1, n\varrho=\f12+\delta_1, \f12+2\delta_1\}$.}
 \begin{prop}\label{Enx32} Suppose that $W_{1,\f12+\delta_1}\in \mathbb{W}_I(N,\kappa,\varrho,\delta_1,q_1,q_2)(\,or\,\, \mathbb{W}_{II}(N,\kappa,\varrho,\delta_1))$. Then
		\beno &&(i).\, \|W_{1,\f12+\delta_1}    h(t)\|^2_{H^{\f32+\delta_1}_xL^2}+\f{c_o}3\int_0^t\bigg(\int_{\TT^3} \mathcal{E}_f^\gamma(W_{1,\f12+\delta_1}  |D_x|^{\f32+\delta_1} h)dx\\&&\qquad+c_oA_5(c_1, c_2)\big(\|W^\epsilon_s(D)W_{\gamma/2}W_{1,\f12+\delta_1}     h\|_{H^{\f32+\delta_1}_xL^2}^2+\int_{\TT^3} \mathcal{E}^{0,\eps}_\mu(W_{\gamma/2}W_{1,\f12+\delta_1}   |D_x|^{\f32+\delta_1} h)dx\big)\bigg)d\tau\\
	&&\le  \|W_{1,\f12+\delta_1}  h_0\|^2_{H^{\f32+\delta_1}_xL^2}+A_6(c_1,c_2)\int_0^t\|W_{\gamma/2}W_{1,\f12+\delta_1}  h\|_{H^{\f32+\delta_1}_xL^2}^2d\tau+C_E\int_0^t \bigg(
	\|W_{\gamma/2}W_{1,\f12+\delta_1}f\|_{H_x^{\f32+\delta_1}L^2}^2\\&&\times\|  h\|_{H_x^{\f32+\delta_1 }L^2_4}^2 +\|f\|_{H^{\f32+\delta_1}_xL^2_{\gamma+4}}^2\|W_{1,\f12+\delta_1}  h\|_{H^{\f32+\delta_1}_xL^2}^2
	+ \|W^\ep_s(D)W_{1,\f12+\delta_1}W_{\gamma/2+2s} h\|^2_{H_x^{\f32}L^2}
	\|f\|_{H_x^{\f32+2\delta_1}L^2_{\gamma+4}}^2 \\&& +\|h\|^2_{H_x^{\f32+\delta_1}L^2_{\gamma+4}}\|W^\ep_s(D)W_{1,\f12+\delta_1}W_{\gamma/2+2s} g\|^2_{H_x^{\f32+\delta_1}L^2}+\| W_{1,\f12+\delta_1}W_{\gamma/2}h\|^2_{H^{\f32+\delta_1}_xL^2}\|g\|^2_{H^{\f32+\delta_1}_xL^2_4} \bigg)d\tau.\eeno\beno
 &&(ii).\, \|W_{\gamma+4}    h(t)\|^2_{H^{\f32+2\delta_1}_xL^2}+\f{c_o}3\int_0^t\bigg(\int_{\TT^3} \mathcal{E}_f^\gamma(W_{\gamma+4}  \  |D_x|^{\f32+2\delta_1} h)dx\\&&\qquad+c_oA_5(c_1, c_2)\big(\|W^\epsilon_s(D)W_{\gamma/2}W_{\gamma+4}    h\|_{H^{\f32+2\delta_1}_xL^2}^2+\int_{\TT^3} \mathcal{E}^{0,\eps}_\mu(W_{\gamma/2}W_{\gamma+4}     |D_x|^{\f32+2\delta_1} h)dx\big)\bigg)d\tau\\
&&\le  \|W_{\gamma+4} h_0\|^2_{H^{\f32+2\delta_1}_xL^2}+A_6(c_1,c_2)\int_0^t\|W_{\gamma/2}W_{\gamma+4}  h\|_{H^{\f32+2\delta_1}_xL^2}^2d\tau+C_E\int_0^t \bigg(\|f\|_{H^{\f32+2\delta_1}_xL^2_{\gamma+4}}^2\|W_{\gamma+4} h\|_{H^{\f32+2\delta_1}_xL^2}^2
\\&&+\|W_{\gamma/2}W_{\gamma+4}f\|_{H_x^{\f32+2\delta_1}L^2}^2\|  h\|_{H_x^{\f32+2\delta_1 }L^2_4}^2  
+ \|f\|_{H_x^{\f32+2\delta_1}L^2_{\gamma+4}}^2\|W^\ep_s(D)W_{\gamma+4}W_{\gamma/2+2s} h\|^2_{H_x^{ \f32+\delta_1}L^2}  + \|h\|^2_{H_x^{\f32+2\delta_1}L^2_{\gamma+4}}\\&&\times\|W^\ep_s(D)W_{\gamma+4}W_{\gamma/2+2s} g\|^2_{H_x^{\f32+2\delta_1}L^2}
 +\| W_{\gamma+4}W_{\gamma/2}h\|^2_{H^{\f32+2\delta_1}_xL^2}\|g\|^2_{H^{\f32+2\delta_1}_xL^2_4}\bigg)d\tau.\eeno
 \end{prop}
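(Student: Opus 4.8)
\textbf{Proof proposal for Proposition \ref{Enx32}.} The plan is to run the standard high-order energy scheme on the two auxiliary equations satisfied by $W_{1,\f12+\delta_1}\bar{\triangle}^{n\varrho}_k\pa_x^\alpha h$ (with $|\alpha|=1$, $n\varrho=\f12+\delta_1$) and by $W_{\gamma+4}\bar{\triangle}^{n\varrho}_k\pa_x^\alpha h$ (with $|\alpha|=1$, $n\varrho=\f12+2\delta_1$), exactly as sketched before this proposition for the generic pair $(m,n)$. Writing $h$ for the solution of \eqref{EAPLN:h-eqaution}, I would first record that $W_{1,\f12+\delta_1}\bar{\triangle}^{n\varrho}_k\pa_x^\alpha h$ solves the equation of the form $\pa_t(\cdot)+v\cdot\na_x(\cdot)=\sum_{i=1}^{5}R_i$ displayed at the start of this subsection (the case $m=1$), and similarly with $W_{\gamma+4}$ in place of $W_{m,n}$ for the second estimate. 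Then I take the $L^2_{x,k,v}$-inner product with $W_{1,\f12+\delta_1}\bar{\triangle}^{n\varrho}_k\pa_x^\alpha h$ (resp. $W_{\gamma+4}\bar{\triangle}^{n\varrho}_k\pa_x^\alpha h$), sum over $|\alpha|=1$, and integrate in $t$; the left side produces $\|W_{1,\f12+\delta_1}h\|^2_{H^{\f32+\delta_1}_xL^2}$ and the coercive dissipation via the equivalence $\int_{\TT^3\times\TT^3}(\bar{\triangle}_k^\varrho F)^2\,dxdk\sim \||D_x|^\varrho F\|^2_{L^2_x}$ recorded in \eqref{gainRegu}.

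The main term $\int R_1(\cdot)\,dxdkdv$ is handled by Corollary \ref{basic-estimates1} (coercivity with $g=f$ there, $m=1$, $n\varrho=\f12+\delta_1$ resp. $\f12+2\delta_1$), which yields the $-\f13\mathcal{E}_f^\gamma$ term, the $-\mathcal{C}_1$ smoothing terms, and the lower-order remainders $\mathcal{C}_2\|W_{\gamma/2}W_{1,\f12+\delta_1}h\|^2_{H^{\f32+\delta_1}_xL^2}$, $\|f\|^2_{H^{\f32+\delta_2}_xL^2_{\gamma+4}}\|W_{1,\f12+\delta_1}h\|^2$, and (since $\f32+\delta_1>\f32$) the term $\|W_{\gamma/2}W_{1,\f12+\delta_1}f\|^2_{H_x^{\f32+\delta_2}L^2}\|h\|^2_{H_x^{\f32+\delta_2}L^2_4}$; choosing $\delta_2=\delta_1$ identifies these with the claimed right-hand side after renaming $\mathcal{C}_1\sim A_5$, $\mathcal{C}_2\sim A_6$ and setting $c_o$ as the universal absolute constant from that corollary. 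For $R_2$ and $R_4$ I apply (i) of Corollary \ref{basic-estimates3} with the split $\delta_3=\delta_1$, $\tilde\delta=\delta_1$, which produces the $\|f\|_{H_x^{\f32+2\delta_1}L^2_{\gamma+4}}\|W^\ep_s(D)W_{1,\f12+\delta_1}W_{\gamma/2+2s}h\|_{H_x^{\f32}L^2}$ term (note $m+n\varrho-\delta_3=\f32$) together with the $\|h\|_{H_x^{\f32+\delta_1}L^2_{\gamma+4}}\|W^\ep_s(D)W_{1,\f12+\delta_1}W_{\gamma/2+2s}g\|_{H_x^{\f32+\delta_1}L^2}$ contribution from $R_4$; $R_3$ and $R_5$ are controlled by (ii) of Corollary \ref{basic-estimates3} with the same parameters, giving the same bounds. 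All the resulting dissipative pieces on the left of the corollary estimates with the small weight $\eta$ absorbed, while the $C_E$-factors collect into the integral on the right; the commutator between $W_l$ and $\bar\triangle_k^{n\varrho}\pa_x^\alpha$ contributes only lower-order terms already present. The second inequality is proved verbatim, replacing $W_{1,\f12+\delta_1}$ by $W_{\gamma+4}$ and $\f32+\delta_1$ by $\f32+2\delta_1$, and noting that here too $m+n\varrho=\f32+2\delta_1>\f32$, so the same branch of each corollary applies and $m+n\varrho-\delta_1=\f32+\delta_1$ reproduces the exponent in the stated term $\|W^\ep_s(D)W_{\gamma+4}W_{\gamma/2+2s}h\|_{H_x^{\f32+\delta_1}L^2}$.

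The step I expect to be the genuine obstacle is bookkeeping the weight balance so that every factor that appears on the right-hand side of Corollaries \ref{basic-estimates} and \ref{basic-estimates3} is dominated by a weight already present in the energy/dissipation functional — in particular making sure that applying $(iii)$/$(i)$/$(ii)$ of Corollary \ref{basic-estimates} (which demand the extra weights $W_{\gamma/2+2s}$, $W_{\gamma+4}$, $W_{\gamma/2+\f52}$ on $h$ and $g$) is consistent with conditions (W-1)–(W-5) of Definition \ref{ws1}/\ref{ws2}, so that no genuine increase in weight leaks out. Concretely one must verify $W_{1,\f12+\delta_1}=W_{1,N_{\varrho,2}+1}W_{d_2/2}$ and the inequality in (W-5) is exactly what lets the term $\|W^\ep_s(D)W_{1,\f12+\delta_1}W_{\gamma/2+2s}h\|$ be reabsorbed at the level of the previous-step dissipation $W_{1,N_{\varrho,2}}$; and for $W_{\gamma+4}=W_{1,\f12+2\delta_1}$, (W-5) again provides the needed closure. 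The hypoelliptic gain $\mathcal E^0_\mu$ and $W^\ep_s(D)$ pieces on the left come for free from Theorem \ref{thmlb1} via Corollary \ref{basic-estimates1}; the only care is to route the small parameter $\eta$ in Corollary \ref{basic-estimates3} to absorb the cross terms into $\f{c_o}3\mathcal E_f^\gamma+c_oA_5(\cdots)$ on the left. Once these weight inequalities are in place, collecting terms and integrating in time gives precisely the two displayed estimates.
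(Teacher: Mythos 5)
Your proposal is correct and follows exactly the scheme the paper lays out at the start of Section~3.3: test the equation satisfied by $W_{m,n}\bar{\triangle}^{n\varrho}_k\pa_x^\alpha h$ against itself, use Corollary~\ref{basic-estimates1} for the coercive $R_1$ contribution and Corollary~\ref{basic-estimates3} for the commutator-type terms $R_2,\ldots,R_5$, then identify the $x$-regularity via \eqref{gainRegu}. The paper gives no separate proof of Proposition~\ref{Enx32} beyond this generic recipe, so your sketch is essentially the intended argument. One bookkeeping point worth being careful about: the free parameter $\delta_2$ in Corollary~\ref{basic-estimates1} and the pair $(\delta_2,\delta_3,\tilde\delta)$ in Corollary~\ref{basic-estimates3} are \emph{independent} per application, and they must be chosen differently in the two cases. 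For case~(i), $m+n\varrho=\tfrac32+\delta_1$, your choices $\delta_2=\delta_1$, $\delta_3=\tilde\delta=\delta_1$ make the two $W^\eps_s(D)$ cross-terms coincide and reproduce the stated $H_x^{3/2}$ exponent. For case~(ii), $m+n\varrho=\tfrac32+2\delta_1$, matching the stated $\|f\|_{H^{3/2+2\delta_1}_x}\,\|W^\eps_s(D)W_{\gamma+4}W_{\gamma/2+2s}h\|_{H^{3/2+\delta_1}_x}$ requires $\delta_2=\delta_1$, $\delta_3=\delta_1$ in Corollary~\ref{basic-estimates3}, whereupon $\tilde\delta=0$ (not $\delta_1$) makes the two cross-terms coincide; meanwhile the $R_1$ term $\|W_{\gamma/2}W_{\gamma+4}f\|_{H_x^{3/2+2\delta_1}}\|h\|_{H_x^{3/2+2\delta_1}L^2_4}$ needs $\delta_2=2\delta_1$ in Corollary~\ref{basic-estimates1}. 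So ``verbatim'' is slightly too strong for case~(ii), but the fix is a routine re-tuning of these free parameters and does not change the structure. Also, the weight-balance concerns you flag at the end (consistency with (W-5), absorption of the $W_{\gamma/2+2s}$ overhang) are not obstacles inside the proof of this proposition — those terms are simply carried to the right-hand side here; the absorption happens later, in Remark~\ref{intxbod1}/\ref{intxbod2} and the proof of Theorem~\ref{En-Priori}.
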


 \subsubsection{Propagation and gain of regularity for $x$ variable in the case of  $\{m=1, [1/2\varrho]\le  n\le N_{\varrho,1}\}$ and  $\{2\le m\le N, 0\le n\le N_{\varrho,\kappa}\}$}
We have

  \begin{prop}\label{Enxmn2} Suppose that $W_{m,n}\in \mathbb{W}_I(N,\kappa,\varrho,\delta_1,q_1,q_2)(\,or\,\, \mathbb{W}_{II}(N,\kappa,\varrho,\delta_1))$. Then
 \beno && \|W_{m,n} h(t)\|^2_{H^{m+n\varrho}_xL^2}+\f{c_o}3\int_0^t\bigg(\int_{\TT^3} \mathcal{E}_f^\gamma(W_{m,n} |D_x|^{m+n\varrho}   h)dx\\&&\qquad+c_oA_5(c_1, c_2)\big(\|W^\epsilon_s(D)W_{\gamma/2}W_{m,n}  h\|_{H^{m+n\varrho}_xL^2}^2+\int_{\TT^3} \mathcal{E}^{0,\eps}_\mu(W_{\gamma/2}W_{m,n} |D_x|^{m+n\varrho}   h)dx\big)\bigg)d\tau\\
&&\le  \|W_{m,n}   h_0\|^2_{H^{m+n\varrho}_xL^2}+A_6(c_1,c_2)\int_0^t\|W_{\gamma/2}W_{m,n}    h\|_{H^{m+n\varrho}_xL^2}^2d\tau+C_E\int_0^t \bigg(\|W_{m,n}   h\|_{H^{m+n\varrho}_xL^2}^2\\
&& \times\|f\|^2_{H^{\f32+\delta_1}_xL^2_{\gamma+4}} +\|W_{m,n}W_{\gamma/2}f\|^2_{H^{\f32+\delta_1}_xL^2} \|  h\|_{H^{m+n\varrho}_xL^2_4}^2
+\|f\|^2_{H_x^{\f32+2\delta_1}L^2_{\gamma+4}}\|W^\ep_s(D)W_{m,n}W_{\gamma/2+2s} h\|^2_{H_x^{m+n\varrho-\delta_1}L^2}\\&&+ \|f\|^2_{H_x^{m+n\varrho}L^2_{\gamma+4}}\|W^\ep_s(D)W_{m,n}W_{\gamma/2+2s} h\|^2_{H_x^{\f32+\delta_1}L^2}+\|W_{m,n}W_{\gamma/2}f\|^2_{H^{m+n\varrho}_xL^2} \|h\|^2_{H^{\f32+\delta_1}_xL^2_4}+
\|h\|^2_{H_x^{\f32+\delta_1}L^2_{\gamma+4}}\\&&\times\|W^\ep_s(D)W_{m,n}W_{\gamma/2+2s} g\|^2_{H_x^{m+n\varrho}L^2}+ \|h\|^2_{H_x^{m+n\varrho}L^2_{\gamma+4}}\|W^\ep_s(D)W_{m,n}W_{\gamma/2+2s} g\|^2_{H_x^{\f32+\delta_1}L^2}\\ &&+\|W_{m,n}W_{\gamma/2}h\|_{H^{m+n\varrho}_xL^2}^2 \|g\|^2_{H^{\f32+\delta_1}_xL^2_4}
 +\|W_{m,n}W_{\gamma/2}h\|^2_{H^{\f32+\delta_1}_xL^2} \|g\|^2_{H^{m+n\varrho}_xL^2_4}
  \bigg)d\tau.\eeno    \end{prop}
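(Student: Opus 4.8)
\textbf{Proof strategy for Proposition \ref{Enxmn2}.} The plan is to run the energy estimate on the two auxiliary equations satisfied by $W_{m,n}\bar{\triangle}^{n\varrho}_k\pa_x^\alpha h$ and then combine the resulting inequalities via the equivalence $\|f\|_{H^n_x H^m_l}^2 \sim \sum_{q\in\ZZ^3}(1+|q|^2)^n |\hat f(q)|_{H^m_l}^2$ together with $\int_{\TT^3}|e^{iq\cdot k}-1|^2|k|^{-3-2n\varrho}dk\sim |q|^{2n\varrho}$ (used already in \eqref{gainRegu}). Concretely, for $|\alpha|=m$ I would multiply equation (1) above by $W_{m,n}\bar{\triangle}^{n\varrho}_k\pa_x^\alpha h$, integrate over $\TT^3_x\times\TT^3_k\times\R^3_v$, and sum over $|\alpha|=m$. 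The left-hand side produces $\tfrac12\tfrac{d}{dt}\|W_{m,n}h\|_{H^{m+n\varrho}_xL^2}^2$ (the transport term $v\cdot\na_x$ drops after integration by parts in $x$, since $W_{m,n}$ and the difference quotient act trivially on the streaming operator), plus the contributions of $R_1,\dots,R_5$.

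The key steps, in order: (i) for the term $\int R_1 \cdot W_{m,n}\bar{\triangle}^{n\varrho}_k\pa_x^\alpha h\, dxdkdv$, apply the coercivity estimate of Corollary \ref{basic-estimates1} with $g$ replaced by $f$; since in this regime $m+n\varrho>\tfrac32$ we land in the branch of that corollary producing the $\mathcal{C}_1$ dissipation terms $\|W^\eps_s(D)W_{\gamma/2}W_{m,n}|D_x|^{m+n\varrho}h\|_{L^2}^2 + \int_{\TT^3}\mathcal{E}^{0,\eps}_\mu(\cdots)dx$ and $\int_{\TT^3}\mathcal{E}^\gamma_f(W_{m,n}|D_x|^{m+n\varrho}h)dx$, minus $\mathcal{C}_2\|W_{\gamma/2}W_{m,n}h\|_{H^{m+n\varrho}_xL^2}^2$, minus the $\|f\|_{H^{3/2+\delta_2}_xL^2_{\gamma+4}}^2\|W_{m,n}h\|_{H^{m+n\varrho}_xL^2}^2$ and the $\|W_{\gamma/2}W_{m,n}f\|_{H^{3/2+\delta_2}_xL^2}^2\|h\|_{H^{m+n\varrho}_xL^2_4}^2$ nonlinear remainders (here one sets $\delta_2=\delta_1$); (ii) for $R_2$ and $R_4$ apply part (i) of Corollary \ref{basic-estimates3} with the choices $\delta_3=\delta_1$, $\tilde\delta$ suitably small, splitting the derivative count $\alpha=\alpha_1+\alpha_2$ so that $|\alpha_2|\le m-\delta_1$ holds (the top-order piece $|\alpha_1|=m$ does not appear in $R_2$ since $|\alpha_1|\ge 1$ is balanced against $|\alpha_2|\le m-1$, and in $R_4$ the difference quotient carries the $n\varrho$ regularity, freeing up a full derivative); (iii) for $R_3$ and $R_5$ apply part (ii) of Corollary \ref{basic-estimates3}, again with $\delta_3=\delta_1$; (iv) bound the output $\|W^\eps_s(D)W_{\gamma/2}f\|_{H^{n\varrho}_xL^2}$-type factors appearing on the right of Corollary \ref{basic-estimates3} by absorbing them—via Young's inequality with a small constant—into the $\mathcal{C}_1$-dissipation of step (i), and collect all remaining products into the displayed right-hand side; (v) integrate in time from $0$ to $t$, and replace $\mathcal{C}_1,\mathcal{C}_2$ with the named constants $A_5(c_1,c_2)\sim\mathcal{C}_1(c_1,c_2)$, $A_6(c_1,c_2)\sim\mathcal{C}_2(c_1,c_2)$, extracting the universal small factor $c_o$ from the Young absorption in step (iv).

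The main obstacle I expect is bookkeeping the distribution of derivatives and weights so that, after invoking Corollary \ref{basic-estimates3}, every term on the right genuinely has either (a) one index strictly below $m+n\varrho$ in $x$ (so it is a lower-order term already controlled in the inductive scheme) paired with the key quantity $\|f\|_{H^{3/2+\delta_1}_xL^2_{\gamma+4}}$ or $\|h\|_{H^{3/2+\delta_1}_xL^2_{\gamma+4}}$, or (b) exactly index $m+n\varrho$ paired with a quantity of regularity $\le 3/2+2\delta_1$, never both top-order simultaneously. This is precisely what conditions (W-1)--(W-5) of the well-prepared sequence are designed to guarantee: the inequalities $W_{m,n+1}W_{\f32\gamma+2s+d_1+d_2}W_{d_3}\le W_{m,n}$ etc. let one trade the extra $2s$ weight coming from the upper bound for the collision operator against the decreasing weight order, while the constraint $3\delta_1\le N_d/2$ keeps the exponents $m+n\varrho\pm\delta_1$ inside the admissible window. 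A secondary technical point is that $\mathcal{E}^\gamma_f$ terms generated as upper bounds in Corollary \ref{basic-estimates3} (through the commutator analysis of Lemma \ref{commforweight}, whose $I_2^2, II_2$ pieces produce the $\|f\|_{L^2}\|h\|_{L^1}$-type factors) must be re-absorbed into the $\tfrac{c_o}{3}\int_{\TT^3}\mathcal{E}^\gamma_f(\cdots)dx$ on the left; this is legitimate because the corresponding coefficient is $O(\eta)$ and $\eta$ is chosen small depending only on $c_1,c_2$. Once these absorptions are arranged the inequality follows by a direct Gronwall-free integration, since the statement is an a priori estimate with all genuinely nonlinear feedback terms left explicit on the right.
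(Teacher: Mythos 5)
Your proposal follows exactly the paper's own route: the paper sketches the proof of Propositions~\ref{Enxmn1}--\ref{EnNk} in the paragraph preceding them by testing the equation for $W_{m,n}\bar{\triangle}^{n\varrho}_k\pa_x^\alpha h$ against itself, applying Corollary~\ref{basic-estimates1} to $R_1$ for coercivity, Corollary~\ref{basic-estimates3}(i) to $R_2,R_4$ and Corollary~\ref{basic-estimates3}(ii) to $R_3,R_5$ for the upper bounds, then absorbing the $\|W^\eps_s(D)W_{\gamma/2}\cdot\|_{H^{n\varrho}_xL^2}$ factors into the dissipation via Young's inequality — which is precisely your steps (i)--(v). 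A small imprecision: in $R_2$ it is the piece $|\alpha_2|=m$ (not $|\alpha_1|=m$) that is excluded by the constraint $|\alpha_1|\ge1$, and in $R_4$ the $\alpha_1=0$ piece forces $\delta_3=0$ rather than "freeing up" a derivative, but neither slip affects the validity of the argument.
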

  \begin{prop}\label{smxmn2}  Suppose that $W_{m,n}\in \mathbb{W}_I(N,\kappa,\varrho,\delta_1,q_1,q_2)(\,or\,\, \mathbb{W}_{II}(N,\kappa,\varrho,\delta_1))$. Then
 \beno  &&\int_0^t \|W_{-d_1}(W_{m,n+1}W_{\gamma/2+d_1+d_2}    h)_\phi\|_{H^{m+(n+1)\varrho}_xL^2}^2 d\tau
  \lesssim \eta^{-8}\|W_{m,n+1}W_{\gamma/2+d_1+d_2}    h_0\|^2_{H^{m+n\varrho}_xL^2}+\eta^{2s}\\&&\times \int_0^t  \|W^\ep_s(D)W_{m,n+1}W_{\gamma/2+d_1+d_2}    h\|_{H^{m+n\varrho}_xL^2}^2 d\tau +\eta^{-8} \int_0^t  \|W_{m,n+1}W_{\gamma/2+d_1+d_2}    h\|_{H^{m+n\varrho}_xL^2}^2 d\tau
+\eta^{-8}\\&&\times\bigg[\int_0^t \bigg( \mathrm{1}_{2s>1}(\eta_1^2+\epsilon^{2(1-s)} \|f\|^2_{H_x^{\f32+\delta_1}L^2_{\gamma+4}})\|W^\ep_s(D)W_{m,n+1}W_{\f32\gamma+2s+d_1+d_2}h\|^2_{H_x^{m+n\varrho}L^2}  + \eta_1^{-\f{2(2s-1)}{1-s}} \|f\|^{\f{2(2s-1)}{1-s}}_{H_x^{\f32+\delta_1}L^2_{\gamma+4}}\\&&\times\|W_{m,n+1}W_{\f32\gamma+2s+d_1+d_2}h\|^2_{H_x^{m+n\varrho}L^2}+
 \|f\|^2_{H_x^{\f32+\delta_1}L^2_{\gamma+4}}\| W_{m,n+1}W_{\f32\gamma+2s+d_1+d_2} h\|^2_{H_x^{m+n\varrho}L^2} +\|  h\|^2_{H_x^{m+n\varrho}L^2_4}\\&&\times\|W_{m,n+1}W_{\f32\gamma+d_1+d_2} f\|^2_{H_x^{\f32+\delta_1}L^2}
 + \|f\|^2_{H_x^{\f32+2\delta_1}L^2_{\gamma+4}}\|W^\ep_s(D)W_{m,n+1}W_{\f32\gamma+2s+d_1+d_2} h\|^2_{H_x^{m+n\varrho-\delta_1}L^2}\\&&  +\|W^\ep_s(D)W_{m,n+1}W_{\f32\gamma+2s+d_1+d_2} h\|^2_{H_x^{\f32+\delta_1}L^2} \|f\|^2_{H_x^{m+n\varrho}L^2_{\gamma+4}}
 +  \|W_{m,n+1}W_{\f32\gamma+d_1+d_2}f\|^2_{H^{m+n\varrho}_xL^2} \|h\|^2_{H^{\f32+\delta_1}_xL^2_4}\\&&+\|W^\ep_s(D)W_{m,n+1}W_{\f32\gamma+2s+d_1+d_2} g\|^2_{H_x^{m+n\varrho}L^2} 
\|h\|^2_{H_x^{\f32+\delta_1}L^2_{\gamma+4} }+\|W^\ep_s(D)W_{m,n+1}W_{\f32\gamma+2s+d_1+d_2} g\|^2_{H_x^{\f32+\delta_1}L^2} \\&&\times\|h\|^2_{H_x^{m+n\varrho}L^2_{\gamma+4}}+  \|W_{m,n+1}W_{\f32\gamma+d_1+d_2}h\|^2_{H^{m+n\varrho}_xL^2} \|g\|^2_{H^{\f32+\delta_1}_xL^2_4}
+ \| W_{m,n+1}W_{\f32\gamma+d_1+d_2}h\|^2_{H^{\f32+\delta_1}_xL^2}\\&&\times \|g\|^2_{H^{m+n\varrho}_xL^2_4}   \bigg)d\tau\bigg].\eeno
  \end{prop}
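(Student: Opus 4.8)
\textbf{Proof proposal for Proposition \ref{smxmn2}.}

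The plan is to apply the hypo-elliptic estimate of Lemma \ref{hypoellipticity} directly to the second of the two transport equations satisfied by $h$, namely the equation for $W_{m,n+1}W_{\gamma/2+2}\bar{\triangle}^{n\varrho}_k\pa_x^\alpha h$ (more precisely, the version with weight $W_{\gamma/2+d_1+d_2}$ in place of $W_{\gamma/2+2}$, with the shift by $W_{-d_1}$ absorbed into the test-function structure so that the $\phi$-localization $(\cdot)_\phi$ makes sense). First I would identify the right-hand side of that equation as $\sum_{i=6}^{10}R_i$, freeze $k$, and estimate the $L^2_{x,k}H^{-1}_v$ norm of each $R_i$ using the trilinear corollaries already established: $(iii)$ of Corollary \ref{basic-estimates2} handles the ``diagonal'' term $R_6=Q^\eps(f,\bar{\triangle}^{n\varrho}_k\pa_x^\alpha h)W_{m,n+1}W_{\gamma/2+d_1+d_2}$ (this is where the $\eta_1$-splitting and the factor $\epsilon^{2(1-s)}$ come from, via the $2s>1$ branch); $(i)$ of Corollary \ref{basic-estimates2} handles $R_7$ and $R_9$, the commutator terms carrying at least one $x$-derivative on $f$ or $g$; and $(ii)$ of Corollary \ref{basic-estimates2} handles the ``translation'' terms $R_8$ and $R_{10}$. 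Summing over $\alpha$ with $|\alpha|=m$ and integrating the $k$-variable against the kernel $|k|^{-3-2n\varrho}$ — using the equivalence $\int_{\TT^3}|e^{iq\cdot k}-1|^2|k|^{-3-2a}dk\sim|q|^{2a}$ recorded before \eqref{gainRegu}, and its translation-operator analogue — converts the finite-difference/translation quantities into honest $H^{m+n\varrho}_x$ norms, yielding a bound for $\|\,\cdot\,\|_{L^2([0,T];L^2_{x,k}H^{-1}_v)}$ by exactly the collection of terms listed on the right-hand side of the proposition.

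Next I would feed this $H^{-1}_v$-source bound, together with the trivial bound $\|(W_{m,n+1}W_{\gamma/2+d_1+d_2}\bar{\triangle}^{n\varrho}_k\pa_x^\alpha h)_\phi\|_{L^2_{x,v}}\le\|W_{m,n+1}W_{\gamma/2+d_1+d_2}\bar{\triangle}^{n\varrho}_k\pa_x^\alpha h\|_{L^2_{x,v}}$ and the hypothesis that $f_\phi\in L^2_tL^2_xH^s_v$ (here played by $(W_{m,n+1}W_{\gamma/2+d_1+d_2}\bar{\triangle}^{n\varrho}_k\pa_x^\alpha h)_\phi\in L^2_tL^2_xH^s_v$, which is controlled by the $D^{m,n,\eps}_2$-type dissipation), into Lemma \ref{hypoellipticity}. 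That lemma produces an $H^{\varrho}_x$-gain in the $x$-variable at the level $W_{-d_1}(\cdots)_\phi$, with the three structural terms $\eta^{-8}\|\text{initial data}\|^2$, $\eta^{2s}\|(\cdots)_\phi\|^2_{L^2_tL^2_xH^s}$, and $\eta^{-8}(\|\text{source}\|^2_{L^2_tL^2_xH^{-1}}+\|\cdots\|^2_{L^2_tL^2})$. Translating the $H^\varrho_x$ of the already-$\pa_x^\alpha$-differentiated, $\bar{\triangle}^{n\varrho}_k$-shifted object back into $H^{m+(n+1)\varrho}_x$ (again by \eqref{gainRegu}) gives precisely the left-hand side $\int_0^t\|W_{-d_1}(W_{m,n+1}W_{\gamma/2+d_1+d_2}h)_\phi\|^2_{H^{m+(n+1)\varrho}_xL^2}d\tau$, and collecting all terms yields the stated inequality. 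The bookkeeping is parallel to Proposition \ref{smxmn1}; the only genuine difference is that now $m+n\varrho$ may exceed $\tfrac32$, so the indicator-split ($\mathrm{1}_{m+n\varrho>\f32}$-type) versions of the trilinear estimates in Corollary \ref{basic-estimates2} with a spare derivative $\delta_3$ or $\tilde\delta$ are invoked, which is exactly why the right-hand side carries both the $H^{m+n\varrho-\delta_1}_x$ and the $H^{\f32+\delta_1}_x$ flavours of each norm.

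The main obstacle I anticipate is not any single estimate but the careful tracking of weights and regularity indices so that every term produced genuinely closes against the dissipation functional $\mathbb{D}^{m,n,\eps}_3$ (and against the lower-order energy), in accordance with the ``high regularity is equipped with low weight'' rule built into Definition \ref{ws1}: one must verify that the weights $W_{\f32\gamma+2s+d_1+d_2}$, $W_{\f32\gamma+d_1+d_2}$ appearing after the commutator estimates are dominated by $W_{m,n}$ (conditions (W-4), (W-5)), and that the spare $x$-derivative in the ``$m+n\varrho>\f32$'' branch can always be placed on the lower-order factor. A secondary technical point is the treatment of the $\phi$-localization: the operator $(\cdot)_\phi$ does not commute exactly with $W_l$ or with $\bar{\triangle}^{n\varrho}_k$, so one needs the commutator bounds of Lemma \ref{func} and Corollary following it (estimates \eqref{func1}, \eqref{func5}–\eqref{func8}) to move $(\cdot)_\phi$ past these operators at the cost of lower-order terms, which are then absorbed. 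Once these consistency checks are in place, the proof is a routine assembly, so I would present it compactly by citing Corollary \ref{basic-estimates2} and Lemma \ref{hypoellipticity} and indicating the summation over $|\alpha|=m$ and integration in $k$, exactly as in the proof of Proposition \ref{smxmn1}.
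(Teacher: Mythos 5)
Your proposal is correct and follows essentially the same route the paper takes: you write the transport equation for $W_{m,n+1}W_{\gamma/2+d_1+d_2}\bar{\triangle}^{n\varrho}_k\pa_x^\alpha h$ with right-hand side $\sum_{i=6}^{10}R_i$, bound $R_6$ via $(iii)$ of Corollary \ref{basic-estimates2}, $R_7,R_9$ via $(i)$, and $R_8,R_{10}$ via $(ii)$, then apply Lemma \ref{hypoellipticity} and use \eqref{gainRegu} to pass back to the stated $H^{m+(n+1)\varrho}_x$ norms — exactly the scheme the paper announces before the high-order propositions. Your observations about the $m+n\varrho>\f32$ branch forcing the indicator-split estimates and about the weight bookkeeping governed by (W-4), (W-5) are also the only substantive differences from the proof of Proposition \ref{smxmn1}, as you note.
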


\subsubsection{Propagation   of regularity for  $x$ variable in the case of  $\{m=N, n\varrho=\kappa\}$} We have
\begin{prop}\label{EnNk} Suppose that $W_{N,\kappa}\in \mathbb{W}_I(N,\kappa,\varrho,\delta_1,q_1,q_2)(\,or\,\, \mathbb{W}_{II}(N,\kappa,\varrho,\delta_1))$. Then
		\beno && \|W_{N,\kappa}   h(t)\|^2_{H^{N+\kappa}_xL^2}+\f{c_o}3\int_0^t\bigg(\int_{\TT^3} \mathcal{E}_f^\gamma(W_{N,\kappa}     |D_x|^{N+\kappa} h)dx\eeno\beno&&\qquad+c_oA_5(c_1, c_2)\big(\|W^\epsilon_s(D)W_{\gamma/2}W_{N,\kappa}   h\|_{H^{N+\kappa}_xL^2}^2+\int_{\TT^3} \mathcal{E}^{0,\eps}_\mu(W_{\gamma/2}W_{N,\kappa}  |D_x|^{N+\kappa} h)dx\big)\bigg)d\tau\\
	&&\le  \|W_{N,\kappa}  h_0\|^2_{H^{N+\kappa}_xL^2}+A_6(c_1,c_2)\int_0^t\|W_{\gamma/2}W_{N,\kappa}  h\|_{H^{N+\kappa}_xL^2}^2d\tau+C_E\int_0^t \bigg(\|f\|_{H^{\f32+\delta_1}_xL^2_{\gamma+4}}^2\\&&\times\|W_{N,\kappa}  h\|_{H^{N+\kappa}_xL^2}^2\
	+\|W_{\gamma/2}W_{N,\kappa}f\|_{H_x^{\f32+\delta_1}L^2}^2\|  h\|_{H_x^{N+\kappa}L^2_4}^2
	+ \|f\|_{H_x^{N+\kappa}L^2_{\gamma+4}}^2\|W^\ep_s(D)W_{N,\kappa}W_{\gamma/2+2s} h\|^2_{H_x^{ \f32+\delta_1}L^2}  \\&& 	+ \|f\|_{H_x^{\f32+2\delta_1}L^2_{\gamma+4}}^2\|W^\ep_s(D)W_{N,\kappa}W_{\gamma/2+2s} h\|^2_{H_x^{N+\kappa-\delta_1 }L^2}  + \|W_{\gamma/2}W_{N,\kappa}f\|_{H_x^{N+\kappa}L^2}^2\|  h\|_{H_x^{\f32+\delta_1 }L^2_4}^2\\&& + \|h\|^2_{H_x^{\f32+\delta_1}L^2_{\gamma+4}}\|W^\ep_s(D)W_{N,\kappa}W_{\gamma/2+2s} g\|^2_{H_x^{N+\kappa}L^2}
	+\|h\|^2_{H_x^{N+\kappa}L^2_{\gamma+4}}\|W^\ep_s(D)W_{N,\kappa}W_{\gamma/2+2s} g\|^2_{H_x^{\f32+\delta_1}L^2} \\&& +\| W_{N,\kappa}W_{\gamma/2}h\|^2_{H^{N+\kappa}_xL^2}\|g\|^2_{H^{\f32+\delta_1}_xL^2_4}+\| W_{N,\kappa}W_{\gamma/2}h\|^2_{H^{\f32+\delta_1}_xL^2}\|g\|^2_{H^{ N+\kappa}_xL^2_4}\bigg)d\tau.\eeno
\end{prop}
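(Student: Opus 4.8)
\textbf{Proof plan for Proposition \ref{EnNk}.}
The plan is to carry out the high order energy estimate for the top-order quantity $\|W_{N,\kappa}h\|_{H^{N+\kappa}_xL^2}^2$ exactly in the scheme outlined in Section 3.4, using the equation for $W_{N,\kappa}\bar{\triangle}^{n\varrho}_k\pa_x^\alpha h$ with $n\varrho=\kappa$, $|\alpha|=N$ (this is the case labelled ``$\{m=N,n\varrho=\kappa\}$''). First I would multiply that equation by $W_{N,\kappa}\bar{\triangle}^{n\varrho}_k\pa_x^\alpha h$, integrate over $x,k,v$, and observe that the transport term $v\cdot\na_x$ produces no contribution after integration in $x$; what remains is the time derivative of $\|W_{N,\kappa}h\|^2_{H^{N+\kappa}_xL^2}$ (up to the equivalence $\|f\|^2_{H^n_xH^m_l}\sim\sum_q(1+|q|^2)^n|\hat f(q)|^2_{H^m_l}$ and the identity $\int_{\TT^3}|e^{iq\cdot k}-1|^2|k|^{-3-2n\varrho}dk\sim|q|^{2n\varrho}$ already recorded in the excerpt) plus the five collision contributions $\int R_iW_{N,\kappa}\bar{\triangle}^{n\varrho}_k\pa_x^\alpha h\,dxdkdv$, $i=1,\dots,5$.

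The heart of the argument is then to insert the right trilinear estimate into each $R_i$. For $R_1=Q^\eps(f,\bar{\triangle}^{n\varrho}_k\pa_x^\alpha h)W_{N,\kappa}$ I would apply Corollary \ref{basic-estimates1}, which gives the coercive lower bound $-\tfrac13\int\mathcal{E}_f^\gamma(W_{N,\kappa}|D_x|^{N+\kappa}h)dx-\mathcal{C}_1(\|W^\eps_s(D)W_{\gamma/2}W_{N,\kappa}|D_x|^{N+\kappa}h\|_{L^2}^2+\int\mathcal{E}_\mu^{0,\eps}(\cdots)dx)$ together with the controllable error terms; here $N+\kappa>\tfrac32$, so the ``$m+n\varrho>\tfrac32$'' branch of that corollary is the relevant one, yielding the $\|W_{\gamma/2}W_{N,\kappa}f\|^2_{H_x^{\f32+\delta_1}L^2}\|h\|^2_{H_x^{N+\kappa}L^2_4}$ type term. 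The small universal constant $c_o$ appears precisely because one only retains a fraction of the coercivity after absorbing the commutators between $W_{N,\kappa}$, $\bar{\triangle}^{n\varrho}_k\pa_x^\alpha$ and $Q^\eps$; I would take $c_o$ as in Proposition \ref{Enmix} / equation \eqref{e321}. For $R_2$ and $R_4$ (the pieces where at least one $x$-derivative falls on $f$, resp. the difference quotient lands on $g$) I would use $(i)$ of Corollary \ref{basic-estimates3} with the splitting parameter $\delta_3=\delta_1$, and for $R_3$ and $R_5$ (the terms with $\bar{\triangle}^{n\varrho}_k\pa_x^{\alpha_1}$ on $f$ or $h$ and the translate $T_k^n$ on the other factor) I would use $(ii)$ of the same corollary; in every case the output is bounded by $\|W^\eps_s(D)W_{\gamma/2}f\|_{H^{n\varrho}_xL^2}=\|W^\eps_s(D)W_{\gamma/2} (\text{top factor})\|_{H^{N+\kappa-N}_x L^2}$ paired with the dissipation-compatible norms listed in the statement. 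Collecting everything, moving the coercive terms to the left, renaming $\mathcal{C}_1\rightsquigarrow A_5$, $\mathcal{C}_2\rightsquigarrow A_6$, and integrating in $t\in[0,t]$ gives exactly the displayed inequality, where the universal constant in front of the error integrals is the $C_E$ of Proposition \ref{Enmix}.

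The main obstacle, as in all the $x$-regularity propagation steps, is the \emph{increase in weight}: the upper bounds of Corollary \ref{basic-estimates3} cost two extra powers of velocity weight ($W_{\gamma/2+2s}$) and, worse, in the ``$m+n\varrho>\tfrac32$'' regime they cost $N+\kappa+\tilde\delta$ derivatives in $x$ on $f$ while only $\tfrac32+\delta_2-\tilde\delta$ on $h$ (and vice versa) — one must arrange, via the defining relations (W-1)--(W-5) of the well-prepared sequence $\mathbb{W}_I$ (or $\mathbb{W}_{II}$), that the weight $W_{N,\kappa}\ge W_{\gamma+4}$ attached to the top $x$-regularity is low enough that the resulting terms $\|W^\eps_s(D)W_{N,\kappa}W_{\gamma/2+2s}h\|^2_{H_x^{N+\kappa-\delta_1}L^2}$, $\|W^\eps_s(D)W_{N,\kappa}W_{\gamma/2+2s}h\|^2_{H_x^{\f32+\delta_1}L^2}$, etc., are dominated by quantities already present in the dissipation functional $\mathbb{D}_2^{N,\kappa,\eps}$ or by already-controlled lower-order energies. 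The point is that this step is genuinely the \emph{last} one of the induction on $(m,n)$, so the ``$H_x^{\f32+\delta_1}$-type'' factors on the right are supplied by the already-established bounds for $E^{1,\f12+\delta_1,\eps}$ and the key quantity $\|f\|_{H^{\f32+2\delta_1}_xL^2_{\gamma+4}}$, while the genuinely top-order factors always appear paired with the dissipation norm $\|W^\eps_s(D)W_{\gamma/2}\,\cdot\,\|_{H^{N+\kappa}_xL^2}$; verifying that the weight bookkeeping closes is a routine but delicate check against Definition \ref{ws1} rather than a new analytic difficulty. Handling the $\eps$-dependence is automatic here since all the collision estimates used (Corollaries \ref{basic-estimates}, \ref{basic-estimates3} and the commutator lemmas) are stated uniformly in $\eps$ through the symbols $W^\eps_q$.
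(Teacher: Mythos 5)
Your proposal is correct and follows essentially the same scheme the paper itself outlines in the preamble to Section 3.4: energy estimate on $W_{N,\kappa}\bar{\triangle}^{\kappa}_k\pa_x^{\alpha}h$ ($|\alpha|=N$), with the five pieces $R_1,\dots,R_5$ handled by Corollary \ref{basic-estimates1} (for $R_1$, using the $m+n\varrho>\tfrac32$ branch), $(i)$ of Corollary \ref{basic-estimates3} for $R_2,R_4$, and $(ii)$ of Corollary \ref{basic-estimates3} for $R_3,R_5$, followed by the identity $\int_{\TT^3}|e^{iq\cdot k}-1|^2|k|^{-3-2\kappa}\,dk\sim|q|^{2\kappa}$ to pass from the difference-quotient formulation to the $H^{N+\kappa}_x$ norm. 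Your identification of where the constants $c_o$, $A_5\sim\mathcal{C}_1$, $A_6\sim\mathcal{C}_2$ come from and your discussion of the weight bookkeeping via Definition \ref{ws1} are both accurate; the one slightly imprecise phrase is attributing $c_o$ to "absorbing commutators" — more precisely it is the fraction of the coercivity retained after splitting into the five pieces and absorbing the small $\eps^{2s}$-weighted error terms — but this does not affect the argument.
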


 \subsubsection{Mixed energy estimates for the equation} It is easy to check that 
  $\pa_x^\alpha\bar{\triangle}^{a}_k\mathfrak{F}_j h$ with $a\in (0,1)$ solves
  \beno && \pa_t (\bar{\triangle}^{a}_k\pa_x^\alpha\mathfrak{F}_j h) +v\cdot\na_x(\bar{\triangle}^{a}_k\pa_x^\alpha \mathfrak{F}_j h)=[v\cdot \na_x, \mathfrak{F}_j]\bar{\triangle}^{a}_k\pa_x^\alpha h+Q^\ep(f, \bar{\triangle}^{a}_k\pa_x^\alpha \mathfrak{F}_j h)\\&&\quad+\sum_{|\alpha_1|\ge1;\alpha_1+\alpha_2=\alpha}  Q^\ep(\pa_x^{\alpha_1} f, \bar{\triangle}^{a}_k\pa_x^{\alpha_2} \mathfrak{F}_j h)+\sum_{ \alpha_1+\alpha_2=\alpha} Q^\ep(\pa_x^{\alpha_1} \bar{\triangle}^{a}_kf,  \pa_x^{\alpha_2}T_k\mathfrak{F}_j h)\\\qquad&&+\sum_{ \alpha_1+\alpha_2=\alpha}
  \bigg( \big[\mathfrak{F}_j Q^\ep(\pa_x^{\alpha_1}f, \bar{\triangle}^{a}_k\pa_x^{\alpha_2} h)-Q^\ep(\pa_x^{\alpha_1}f, \bar{\triangle}^{a}_k\pa_x^{\alpha_2} \mathfrak{F}_j h)\big]+ \big[\mathfrak{F}_j Q^\ep(\pa_x^{\alpha_1}\bar{\triangle}^{a}_kf, \pa_x^{\alpha_2}T_k h)\\\qquad&&-Q^\ep(\pa_x^{\alpha_1}\bar{\triangle}^{a}_k f, \pa_x^{\alpha_2}T_k \mathfrak{F}_j h)\big] +Q^\ep(\pa_x^{\alpha_1} h,\bar{\triangle}^{a}_k\pa_x^{\alpha_2} \mathfrak{F}_j g)+Q^\ep(\pa_x^{\alpha_1}\bar{\triangle}^{a}_k h,\pa_x^{\alpha_2}T_k\mathfrak{F}_j g)+\big[\mathfrak{F}_j Q^\ep(\pa_x^{\alpha_1}h,\bar{\triangle}^{a}_k\pa_x^{\alpha_2} g)\\\qquad&&-Q^\ep(\pa_x^{\alpha_1}h,\bar{\triangle}^{a}_k\pa_x^{\alpha_2} \mathfrak{F}_j g)\big]+\big[\mathfrak{F}_j Q^\ep(\pa_x^{\alpha_1}\bar{\triangle}^{a}_k h,\pa_x^{\alpha_2}T_k g)-Q^\ep(\pa_x^{\alpha_1}\bar{\triangle}^{a}_k h,\mathfrak{F}_j \pa_x^{\alpha_2}T_k g)\big]\bigg)
  \eqdef \sum_{i=1}^{10}R_i.\eeno
Then we have    the propagation of  $\|W^\eps_{q}(D)h\|_{H^{\f32+\delta_1}_xL^2}$:
 \begin{prop}\label{Enmix} For $q\ge s$ and $\eta>0$, we have
 \beno
 && \|W^\ep_q(D)h(t)\|_{H^{\f32+\delta_1}_xL^2}^2+\mathcal{C}_1(c_1,c_2)\int_0^t \|W^{\ep}_{q+s}(D) W_{\gamma/2}h\|^2_{H^{\f32+\delta_1}_xL^2}d\tau\\
 &&\le \|W^\ep_q(D)h_0\|_{H^{\f32+\delta_1}_xL^2}^2+ \mathcal{C}_2(c_1,c_2)\int_0^t\|W^\ep_q(D)W_{\gamma/2}h\|_{H^{\f32+\delta_1}_xL^2}^2d\tau+C_E\int_0^t \bigg(\|W^{\ep}_{(q-1)^+}(D)h\|_{H^{\f52+\delta_1}_xL^2}^2
 \\&&\quad+\|f\|_{H^{\f32+2\delta_1}_xL^2_{\gamma+4}}^2\|W^{\ep}_{q+s}(D)W_{\gamma/2+2s}h\|_{H^{\f32}_xL^2}^2+\|h\|_{H^{\f32+\delta_1}_xL^2_{\gamma+4}}^2\|W^{\ep}_{q+s}(D)W_{\gamma/2+2s}g\|_{H^{\f32+\delta_1}_xL^2}^2\\&&\quad+ \|f\|_{H^{\f32+\delta_1}_xL^2}^2( \mathrm{1}_{2s>1}\|W^\ep_{q+s-1}(D)W_{\gamma/2+\f52}h\|^2_{H^{\f32+\delta_1}_xL^2}+\mathrm{1}_{2s=1}\|W^\ep_{q-s+\log}(D)W_{\gamma/2+\f52}h\|^2_{H^{\f32+\delta_1}_xL^2}\\&&\quad+\mathrm{1}_{2s<1}\|W^{\eps}_{q-s}W_{\gamma/2+\f52}h\|_{H^{\f32+\delta_1}_xL^2}^2)
 + \|f\|_{H^{\f32+\delta_1}_xL^2_{\gamma+3}}( \mathrm{1}_{2s>1}\| W^\ep_{q+s-1}(D)h\|^2_{H^{\f32+\delta_1}_xL^2}\\
&&\quad+\mathrm{1}_{2s=1} \|W^\ep_{q-s+\log}(D)h\|_{H^{\f32+\delta_1}_xL^22}^2+\mathrm{1}_{2s<1}\|W^\eps_{q-s}(D) h\|_{H^{\f32+\delta_1}_xL^2}^2)  + \|W^\ep_{q+s-1+\eta}(D)f\|_{H^{\f32+\delta_1}_xL^2}^2\\&&\quad\times \| h\|_{H^{\f32+\delta_1}_xL^2}^2+  \| f\|_{H^{\f32+\delta_1}_xL^2_2}^2 \|h\|_{H^{\f32+\delta_1}_xL^2}^2
 +\|h\|_{H^{\f32+\delta_1}_xL^2}^2( \mathrm{1}_{2s>1}\|W^\ep_{q+s-1}(D)W_{\gamma/2+\f52}g\|^2_{H^{\f32+\delta_1}_xL^2}\\&&\quad+\mathrm{1}_{2s=1}\|W^\ep_{q-s+\log}(D)W_{\gamma/2+\f25}g\|^2_{H^{\f32+\delta_1}_xL^2}+\mathrm{1}_{2s<1}\|W^\eps_{q-s}(D)W_{\gamma/2+\f52}g\|_{H^{\f32+\delta_1}_xL^2}^2)
 \\&&\quad+ \|h\|_{H^{\f32+\delta_1}_xL^2_{\gamma+3}}( \mathrm{1}_{2s>1}\| W^\ep_{q+s-1}(D)g\|^2_{H^{\f32+\delta_1}_xL^2}+\mathrm{1}_{2s=1} \|W^\ep_{q-s+\log}(D)g\|_{H^{\f32+\delta_1}_xL^2}^2\\ &&\quad+\mathrm{1}_{2s<1}\|W^\eps_{q-s}(D)g\|_{H^{\f32+\delta_1}_xL^2}^2)
+ \|W^\ep_{q+s-1+\eta}(D)h\|_{H^{\f32+\delta_1}_xL^2}^2 \| g\|_{H^{\f32+\delta_1}_xL^2}^2+ \| h\|_{H^{\f32+\delta_1}_xL^2}^2 \|g\|_{H^{\f32+\delta_1}_xL^2}^2\bigg)d\tau.
 \eeno
\end{prop}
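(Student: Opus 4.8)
\textbf{Proof proposal for Proposition \ref{Enmix}.}

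The plan is to run an $L^2$-energy estimate on the localized-in-frequency equation for $\bar{\triangle}^{a}_k\pa_x^\alpha\mathfrak{F}_j h$ displayed just above the statement, with $|\alpha|=1$, $a=\frac12+\delta_1$, test against $\bar{\triangle}^{a}_k\pa_x^\alpha\mathfrak{F}_j h$ weighted appropriately, and then sum over $j$ with the dyadic weights $2^{2qj}$ (for $2^j\le 1/\eps$) and $\eps^{-2q}$ (for $2^j\ge 1/\eps$) so that the $W^\ep_q(D)$-norm is reconstructed via \eqref{func8}, and integrate over $k\in\TT^3$ with the gain-of-regularity identity \eqref{gainRegu} to reconstruct the $H^{\frac32+\delta_1}_x$-norm. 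Concretely: first I would write $\frac{d}{dt}\|W^\ep_q(D)h\|_{H^{\frac32+\delta_1}_xL^2}^2$ as a sum $\sum_{i=1}^{10}$ of terms $\int R_i\cdot(\text{test function})$, mirroring the term splitting $R_1,\dots,R_{10}$ in the equation.

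The estimate of each group of terms then follows by invoking results already proved earlier in the excerpt, exactly as the proofs of Proposition \ref{Envq} and Proposition \ref{Enfvq} are sketched. For the commutator $R_1=[v\cdot\na_x,\mathfrak{F}_j]$-term, I would use $|[v_i,\mathfrak{F}_j]h|_{L^2}\le 2^{-j}|(\pa_i\varphi)(2^{-j}D)h|_{L^2}$ together with \eqref{func8} to absorb it into $\|W^\ep_{(q-1)^+}(D)h\|_{H^{\frac52+\delta_1}_xL^2}$ times the $W^\ep_q(D)$-norm; note the gain of one $x$-derivative in $\na_x$ is what produces the $\frac52+\delta_1$ exponent. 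For $R_2=Q^\ep(f,\bar{\triangle}^a_k\pa_x^\alpha\mathfrak{F}_jh)$, I would apply the coercivity estimate $(vi)$ of Corollary \ref{basic-estimates} (or rather its $x$-integrated dyadic version, Corollary \ref{basic-estimates1}) plus \eqref{func8} to produce the dissipation $\mathcal{C}_1\|W^\ep_{q+s}(D)W_{\gamma/2}h\|_{H^{\frac32+\delta_1}_xL^2}^2$ minus the lower-order $\mathcal{C}_2\|W^\ep_q(D)W_{\gamma/2}h\|^2$ term. The remaining terms $R_3,\dots,R_{10}$ — the $x$-derivative-loaded collision terms, the commutators between $\mathfrak{F}_j$ and $Q^\ep$, and the $g$-terms — I would control by $(i)$ of Corollary \ref{basic-estimates} combined with Lemma \ref{comforsym3} (the commutator estimates for $Q^\ep$ with $W^\ep_q(D)$) and the $x$-sum trilinear Lemma \ref{dsums} to distribute the $\frac32+\delta_1$ and $\frac32+2\delta_1$ derivatives between the three factors. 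Each of these contributes one of the many upper-bound terms on the right-hand side of the statement; the $g$-dependent terms come out of the two blocks $Q^\ep(h,\bar{\triangle}^a_k\pa_x^{\alpha_2}\mathfrak{F}_jg)$ and $Q^\ep(\bar{\triangle}^a_k h,\pa_x^{\alpha_2}T_k\mathfrak{F}_jg)$ and their commutator counterparts, handled identically by symmetry in the roles of $h$ and $g$ in the relevant lemmas. Finally I would integrate in $t$ from $0$ to $t$.

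The main obstacle I expect is bookkeeping rather than any single hard estimate: one must keep careful track of (a) how the weights $W_{\gamma/2+2s}$, $W_{\gamma/2+\frac52}$, $W_{\gamma+3}$, $W_{\gamma+4}$ get assigned when the upper bounds from Lemma \ref{comforsym3} and Corollary \ref{basic-estimates} are applied — the rule being that whenever $f$ (or $g$) sits in an $H^{\frac32+\delta_1}_x$ (respectively $H^{\frac32+2\delta_1}_x$) factor it must be paired with the lower velocity weight $L^2_{\gamma+4}$, while the factor carrying the full $x$-regularity $m+n\varrho$ takes the lower weight on the $v$-side — and (b) making sure every application of Lemma \ref{dsums} stays in the admissible case ($m+n\varrho\le\frac32$ versus $>\frac32$), which for $m=1$, $n\varrho=\frac12+\delta_1$ or $\frac12+2\delta_1$ lands us in the first case $m+n\varrho=\frac32+\delta_1\le\frac32+2\delta_1$ but close to the borderline, so the split into $2|p|\le|k|$, $2|k|\le|p|$, $|k|/2<|p|<2|k|$ must be done with the $\frac32+\delta_2$ (here $\delta_2=\delta_1$) decay reserve from \eqref{dsum1}. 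A secondary technical point is the uniformity in $\eps$: all the dyadic sums must be split at $j=[\log_2\frac1\eps]$ and the $\eps^{-2q}$-weighted tail treated by \eqref{func8}, exactly as in Lemma \ref{comforsym3}, so that no constant blows up as $\eps\to 0$. Once these are in place the rest is the routine Cauchy–Schwarz and interpolation ($|f|_{H^{q-1}}\le|f|_{L^2}^{1/(q+1)}|f|_{H^q}^{q/(q+1)}$ when needed to produce the $\eta$-dependent terms, as in Proposition \ref{Enfvq}) that the statement already anticipates through its long list of right-hand-side terms.
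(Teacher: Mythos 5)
Your proposal follows exactly the paper's sketched proof: you run the energy estimate on the $\mathfrak{F}_j$-localized, $\bar{\triangle}^{a}_k\pa_x^\alpha$-differenced equation with $|\alpha|=1$, $a=\tfrac12+\delta_1$, rebuild the $W^\ep_q(D)$-norm and the $H^{\f32+\delta_1}_x$-norm through \eqref{func8} and \eqref{gainRegu}, treat $R_1$ by the $[v\cdot\na_x,\mathfrak{F}_j]$-commutator bound as in Proposition \ref{Envq}, $R_2$ by the coercivity of Corollary \ref{basic-estimates1} together with \eqref{func8}, the $x$-derivative-loaded collision terms $R_3,R_4,R_7,R_8$ by Corollary \ref{basic-estimates3} (which packages $(i)$ of Corollary \ref{basic-estimates}, Plancherel in $x$, and Lemma \ref{dsums}), and the $\mathfrak{F}_j$-commutator terms $R_5,R_6,R_9,R_{10}$ by part $(i)$ of Lemma \ref{comforsym3} with Plancherel in $x$ — precisely the paper's four-sentence sketch. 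One small slip worth flagging: with $m=1$ and $n\varrho=\tfrac12+\delta_1$ you have $m+n\varrho=\tfrac32+\delta_1>\tfrac32$, so Lemma \ref{dsums} (equivalently Corollary \ref{basic-estimates3}) puts you in Case 2, not Case 1 as you wrote; since Corollary \ref{basic-estimates3} already covers both cases and the paper applies it wholesale, this does not affect the validity of the argument.
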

Let us sketch the proof of the proposition. We first remark that $R_1$ can be handled by copying the argument in Proposition \ref{Envq}. The coercivity estimate for $R_2$ can be derived from Corollary \ref{basic-estimates1} and \eqref{func8}. Next we apply Corollary \ref{basic-estimates3} and \eqref{func8} to $R_3,R_4,R_7$ and $R_8$ to get the corresponding estimates. Finally  $R_5, R_6, R_9$ and $R_{10}$ can be treated by Plancherel equality with respect to $x$ variable and also by $(i)$ of Lemma \ref{comforsym3}.

 \section{Well-posedness of the Boltzmann equation in weighted Sobolev spaces}
 In this section, we will give   rigorous proofs to the well-posedness for the Botlzmann equations with angular cutoff and without cutoff. To do that, we first 
 show that the linear Boltzmann equation admits a unique and non-negative solution in weighted Sobolev spaces. Meanwhile we show that the energy estimates can be closed in the function space $\mathbb{E}^{N,\kappa,\eps}$. Next
 by the standard Picard iteration scheme and the estimates obtained for the linear equation, we  give the proof of the well-posedness for the nonlinear Boltzmann equation with angular cutoff. Based on the uniform bounds obtained from the previous steps, we finally get the well-posedness for the equation without angular cutoff.

\subsection{Well-posedness of the linear Boltzmann equation} Suppose that $\mathbb{E}^{N,\kappa,\eps}$ and $\mathbb{E}^{1,\f12+2\delta_1,\eps}$ are function spaces associated to $\mathbb{W}_{I}(N,\kappa,\varrho,\delta_1,q_1,q_2)$ where  $  \mathbb{W}_{I}(N,\kappa,\varrho,\delta_1,q_1,q_2)=\{W_{1.\f12+\delta_1}, W_{1.\f12+2\delta_1}\}\cup \{W_{m,n}\}_{(m,n)\in \mathbb{I}_x(N,
	\kappa)}$ with $N+\kappa\ge \f52+\delta_1$ and $ W_{0,-1}=W_{l_1},  W_{0,0}=W_{l_2}$. In this subsection we will prove the existence of non-negative solutions to the linear Boltzmann equation 
\ben\label{EpLinB}  \left\{
\begin{aligned}
	& \pa_t f+v\cdot \na_x f =Q^{\ep}(g,f),\\
	&f|_{t=0}=f_0,\end{aligned}\right. \een
where $g$ is a non-negative function verifying the conditions \ben\label{lubforg}
&&\inf_{x\in\TT^3, t\in[0,T]}|g|_{L^1}\ge  c_1; \sup_{ t\in[0,T]}\|g\|_{H^{\f32+2\delta_1}_xL^2_{\gamma+4}}^2\le c_2; \sup_{ t\in[0,T]}V^{q_1,\eps}(g(t))\le 4V^{q_1,\eps}(g(0));\\
&& \sup_{t\in[0,T]} \mathbb{E}^{1,\f12+2\delta_1,\eps}(g(t))+A_7\int_0^T\mathbb{D}_2^{1,\f12+2\delta_1,\eps}(g(\tau))d\tau\nonumber\\&&\quad+A_8\int_0^T\mathbb{D}_1^{1,\f12+2\delta_1,\eps}(g(\tau))d\tau\le 4M_1;
\int_0^T \mathbb{D}^{1,\f12+2\delta_1,\eps}_3(g(\tau))d\tau \le 4A_{9} M_1\label{322regu};\\
&& \sup_{t\in[0,T]} \mathbb{E}^{N,\kappa,\eps}(g(t)) +A_7\int_0^T\mathbb{D}_2^{N, \kappa,\eps}(g(\tau))d\tau \le 4M_2;
\int_0^T \mathbb{D}^{N,\kappa,\eps}_3(g(\tau))d\tau \le 4A_{9} M_2;\label{52regu} \een
where $T\le \mathfrak{L}$, \ben\label{defiAi}\quad &&A_7\eqdefa \f12\min\{A_2,\mathcal{C}_1,c_oA_5\}, A_8\eqdefa\min\{A_1,A_3\}, A_9\eqdefa C_E \bigg(\f{c_0A_7A_5}{20(N_{\varrho,1}+2)(2C_Ec_0A_5+1)}\bigg)^{-\f{4}{s}}+1.\een

To achieve the goal, 
 our strategy lies in the  
construction of the approximate equations  and   {\it a priori} estimates for the linear equation \eqref{EAPLN:h-eqaution}.

\subsubsection{Well-posedness for the approximate equation (I)} We first want to solve the  equation:
\ben\label{NepLinB}  \left\{
    \begin{aligned}
 & \pa_t f+v\cdot \na_x f+\eta\lr{v}^{\gamma+2s}f=Q^{\ep}_N(g,f),\\
&f|_{t=0}=f_0,\end{aligned}\right. \een
where $Q^{\ep}_N(g,f)\eqdefa\int_{\sigma,v_*} |v-v_*|^{\gamma}\mathrm{1}_{|v-v_*|\le N} b^{\eps}(\cos\theta)(g_*'f'-g_*f)d\sigma dv_*\eqdefa Q^{\ep+}_N(g,f)-L_N^\ep(g)f.$
We have the proposition:
\begin{prop} Let  $W_{l_1},W_{l_2},W_{2,0}\in\mathbb{W}_{I}(N,\kappa,\varrho,\delta_1,q_1,q_2)$. Suppose $g$ is a non-negative function verifying the conditions \eqref{lubforg},\eqref{322regu} and \eqref{52regu}.  Then \eqref{NepLinB} admits a unique and non-negative solution in  $L^\infty([0,T]; L^1_xL^1_{l_1}\cap L^2_xL^2_{l_2}\cap H^2_xL^2_{l_{2,0}})\cap L^1([0,T]; L^1_xL^1_{l_1+\gamma})\cap L^2([0,T]; L^2_xL^2_{l_2+\gamma/2}\cap H^2_xL^2_{l_{2,0}+\gamma/2})$ if $f_0\in L^1_xL^1_{l_1}\cap L^2_xL^2_{l_2}\cap H^2_xL^2_{l_{2,0}}$.
\end{prop}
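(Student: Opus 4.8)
The plan is to solve the regularized, truncated linear equation \eqref{NepLinB} by a fixed-point / approximation argument, exploiting the fact that with the cutoff $|v-v_*|\le N$ and the damping term $\eta\lr{v}^{\gamma+2s}$ the operator $Q^{\ep+}_N$ is a bounded, order-zero gain operator and $L^\ep_N(g)$ is a multiplication operator with a bounded (in fact controllable) weight. First I would set up the Duhamel formulation along the characteristics of the transport operator $\pa_t+v\cdot\na_x$: writing $S(t)$ for the transport semigroup on $\TT^3\times\R^3$ and absorbing the damping $\eta\lr{v}^{\gamma+2s}$ and the loss term $-L^\ep_N(g)f$ into an explicit positive multiplier $e^{-\int_0^t(\eta\lr{v}^{\gamma+2s}+L^\ep_N(g)(\tau))d\tau}$ (which is legitimate since $L^\ep_N(g)\ge0$ whenever $g\ge0$ and $L^\ep_N(g)$ is bounded on the truncated support by $\sup_t\|g\|_{L^\infty_xL^1_\gamma}$, itself controlled by $c_2$ via \eqref{lubforg} and Sobolev embedding), the equation becomes
\[
f(t)=e^{-\Lambda(t)}S(t)f_0+\int_0^t e^{-(\Lambda(t)-\Lambda(\tau))}S(t-\tau)\,Q^{\ep+}_N(g,f)(\tau)\,d\tau,
\]
with $\Lambda$ the accumulated multiplier. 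The map $f\maps$ the right-hand side is monotone (it preserves non-negativity, since $Q^{\ep+}_N$ has a non-negative kernel and $g\ge0$) and, on the weighted space $L^2_xL^2_{l_2}$, a contraction on short time intervals: the gain operator satisfies $|Q^{\ep+}_N(g,f)|_{L^2_{l_2}}\lesssim_N |g|_{L^1_{l_2+\gamma}}|f|_{L^2_{l_2}}$ by the truncation and the cancellation lemma \eqref{canclem}, with constant uniform in $t$ thanks to \eqref{lubforg}. A Banach fixed-point argument on $[0,\tau_0]$ with $\tau_0$ depending only on $N,\eta$ and the bounds on $g$ then yields a unique solution, which is iterated to cover $[0,T]$ since the estimate is linear and does not deteriorate.

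The non-negativity follows from the monotone iteration: starting from $f^{(0)}\equiv0\le$ anything, the sequence $f^{(k+1)}=$(RHS applied to $f^{(k)}$) is non-negative at every step because $e^{-\Lambda}>0$, $S(t)$ preserves positivity, and $Q^{\ep+}_N(g,\cdot)$ maps non-negative functions to non-negative functions; passing to the limit preserves $f\ge0$. Alternatively one runs the contraction on the closed convex cone of non-negative functions and notes it is invariant.

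For the regularity part I would derive the a priori estimates directly from the equation, using the bounds already assembled in Section 3 (specialized to $f=g$-role being played by the given $g$ and the unknown by $f$): the $L^1_{l_1}$ bound from Proposition \ref{L1i} (the Povzner/moment estimate of Lemma \ref{L1md}, where the damping $\eta\lr{v}^{\gamma+2s}$ only helps), the $L^2_{l_2}$ bound from Proposition \ref{L2m}, and the $H^2_xL^2_{l_{2,0}}$ bound from the high-order estimate of Proposition \ref{Enxmn1}/\ref{Enxmn2} with $(m,n)=(2,0)$; the coercivity terms on the left and the truncation on the kernel make all right-hand sides controllable by $\|f\|$ in the same norm times a constant depending on the $g$-bounds, so Gronwall closes each estimate on $[0,T]$ uniformly. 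The truncation $|v-v_*|\le N$ and the damping guarantee no increase-in-weight obstruction arises at this stage — that difficulty is deferred to the passage $N\to\infty$, $\eta\to0$ later in the paper. Uniqueness follows by taking the difference of two solutions, which solves the same linear equation with zero data, and applying the $L^2_{l_2}$ estimate (a Gronwall inequality with vanishing initial term).

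\textbf{Main obstacle.} The genuinely delicate point is not the fixed point — which is routine once the kernel is truncated — but verifying that the constants in the contraction and in the a priori estimates are \emph{uniform in time} on $[0,T]$ (and ultimately tolerably behaved in $N$ and $\eta$, although $N,\eta$-uniformity is only needed later). This forces one to track carefully that every norm of $g$ appearing on the right-hand sides — $|g|_{L^1_l}$, $\|g\|_{H^{3/2+2\delta_1}_xL^2_{\gamma+4}}$, $V^{q_1,\ep}(g)$, and the high-order energy/dissipation functionals — is indeed controlled by the standing hypotheses \eqref{lubforg}, \eqref{322regu}, \eqref{52regu}, invoking the Sobolev embeddings $H^{3/2+2\delta_1}_x\hookrightarrow L^\infty_x$ and the interpolations between the spaces in $\mathbb{W}_I$. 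The rest is bookkeeping: assembling the three a priori estimates into the claimed product space $L^\infty_T(L^1_xL^1_{l_1}\cap L^2_xL^2_{l_2}\cap H^2_xL^2_{l_{2,0}})\cap L^2_T(\cdots L^2_{\cdot+\gamma/2}\cdots)$ and checking continuity in time, which comes from the Duhamel representation and dominated convergence.
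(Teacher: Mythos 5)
Your overall strategy — Duhamel formulation along characteristics, non-negativity by positivity-preservation of the gain term and the exponential multiplier, contraction/iteration to get existence, Gronwall to close the a priori bounds and pass to $[0,T]$ — is the same as the paper's, which solves \eqref{NepLinB} via the explicit iteration \eqref{ANepLinB}. Two points, however, need attention.

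First, the Duhamel formula as you have written it,
\[
f(t)=e^{-\Lambda(t)}S(t)f_0+\int_0^t e^{-(\Lambda(t)-\Lambda(\tau))}S(t-\tau)\,Q^{\ep+}_N(g,f)(\tau)\,d\tau,
\]
is not literally correct, because the multiplier $\eta\lr{v}^{\gamma+2s}+L_N^\ep(g)(t,x,v)$ depends on $(x,v)$ and therefore does not commute with the free transport $S(t-\tau)$. The accumulated exponent has to be integrated \emph{along the characteristics}, i.e.\ $\Lambda(t,\tau;x,v)=\int_\tau^t\big[\eta\lr{v}^{\gamma+2s}+L_N^\ep(g)(\sigma,x-(t-\sigma)v,v)\big]\,d\sigma$, and the formula must be written in Lagrangian coordinates. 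This is exactly what the paper's iteration scheme \eqref{ANepLinB} encodes: at step $n$ the loss term $L_N^\ep(g)f^n$ sits on the left, so each step is a genuine ODE along characteristics with Duhamel solution, while $f^{n-1}$ enters only through the gain. The error is repairable, but as stated your representation formula is wrong.

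Second, for the a priori estimates you propose to invoke Propositions \ref{L1i}, \ref{L2m}, \ref{Enxmn1}/\ref{Enxmn2} from Section 3. Those propositions are stated for the untruncated operator $Q^\ep$ and rely on its coercivity (Theorems \ref{thmlb1}–\ref{thmlb2}); the truncated operator $Q^\ep_N$ in \eqref{NepLinB} does not satisfy the same lower bounds, since the kernel is cut off at $|v-v_*|\le N$. The upper-bound halves transfer, but the coercivity halves do not, so you cannot quote those propositions wholesale. Fortunately at this stage no coercivity from $Q^\ep_N$ is needed: the damping $\eta\lr{v}^{\gamma+2s}$ supplies the dissipation (hence the $L^1_T L^1_{l_1+\gamma}$ and $L^2_T L^2_{l_2+\gamma/2}$ integrability in the statement), and the truncation makes $Q^\ep_N$ bounded with operator norms of size $N^\gamma\ep^{-2s}$. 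The paper therefore derives the $L^1_{l_1}$, $L^2_{l_2}$ and $H^2_xL^2_{l_{2,0}}$ bounds directly from elementary estimates on the truncated kernel plus $\lr{v'}^l\lesssim\lr{v}^l+\theta^l(\lr{v}^l+\lr{v_*}^l)$; this is both simpler and correctly scoped. Your Gronwall closure and the convergence/uniqueness argument in $L^\infty_T L^2$ are otherwise consistent with the paper.
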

\begin{proof} To prove the proposition, we introduce the approximate equation to \eqref{NepLinB}:
\ben\label{ANepLinB}  \left\{
    \begin{aligned}
 & \pa_t f^n+v\cdot \na_x f^n+\eta\langle v\rangle^{\gamma+2s}f^n+L_N^\ep(g)f^n=Q^{\ep+}_N(g,f^{n-1}),\\
&f^n|_{t=0}=f_0,\\
&f^0=f_0.\end{aligned}\right. \een
The equation \eqref{ANepLinB} is easily solved by characteristic method in frequency space. Moreover, by Duhamel formula,  we get for each $n$, $f^n\ge0$. Next we will prove the well-posedness for \eqref{NepLinB}. The proof falls into two steps.

{\it Step 1: Uniform bounds for $\{f^n\}_{n\in \N}$.}
We shall give the uniform bounds for $\{f^n\}_{n\in \N}$ in the space $L^\infty([0,T]; L^1_xL^1_{l_1}\cap L^2_xL^2_{l_2}\cap H^2_xL^2_{l_{2,0}})$.

{\it  \underline{$L^1_l$-estimate.}} It is not difficult to check that
\beno   \f{d}{dt} \|f^n\|_{L^1_{l_1}}+\eta \|f^n\|_{L^1_{l_1+\gamma+2s}} 
 \le\bigg|\int_{} |v-v_*|^\gamma\mathrm{1}_{|v-v_*|\le N}b^\ep(\cos\theta)g_*f^{n-1}\lr{v'}^{l_1}d\sigma dv_*dvdx \bigg|.\eeno
By using $\lr{v'}^l\lesssim \lr{v}^l+|v'-v|^l\lesssim \lr{v}^l+\theta^l(\lr{v}^l+\lr{v_*}^l)$, we infer that
\beno   \f{d}{dt} \|f^n\|_{L^1_{l_1}}+\eta \|f^n\|_{L^1_{l_1+\gamma+2s}} 
 &\lesssim& N^\gamma (\ep^{-2s}\|g\|_{L^\infty_xL^1}\|f^{n-1}\|_{L^1_{l_1}}+\|g\|_{L^1_{l_1}}\|f^{n-1}\|_{L^\infty_xL^1}
) \\
&\lesssim& N^\gamma (\ep^{-2s}\|g\|_{H^2_xL^2_2}\|f^{n-1}\|_{L^1_{l_1}}+\|g\|_{L^\infty([0,T];L^1_{l_1})}^2+\|f^{n-1}\|_{H^2_xL^2_2}^2
).\eeno

{\it \underline{ $L^2_l$-estimate.}} Using the fact $(L_N^\ep(g)f^nW_{l_2},f^nW_{l_2})\ge0$, we first have
\beno  \f12\f{d}{dt} \|f^n\|_{L^2_{l_2}}^2+\eta \|f^n\|_{L^2_{l_2+\gamma/2+s}}^2 
&\le&\bigg|\int_{} |v-v_*|^\gamma\mathrm{1}_{|v-v_*|\le N}b^\ep(\cos\theta)g_*f^{n-1}W_{l_2}'(fW_{l_2})'d\sigma dv_*dvdx \bigg|\eeno

 Using the fact $\lr{v'}^l\lesssim \lr{v}^l+\theta^l(\lr{v}^l+\lr{v_*}^l)$ and the argument applied to $B$ in {\it Step 1.2} of the proof of Lemma \ref{commforweight}, we derive that
\beno   \f{d}{dt} \|f^n\|_{L^2_{l_2}}^2+\eta \|f^n\|_{L^2_{l_2+\gamma/2+s}}^2 
&\lesssim& N^\gamma (\ep^{-2s}\|g\|_{L^\infty_xL^1}\|f^{n-1}\|_{L^2_{l_2}}\|f^{n}\|_{L^2_{l_2}}+\|g\|_{L^2_{l_2}}\|f^{n-1}\|_{L^\infty_xL^1}\|f^{n}\|_{L^2_{l_2}})\\
&\lesssim& N^\gamma (\ep^{-2s}\|g\|_{H^2_xL^2_2}\|f^{n-1}\|_{L^2_{l_2}}\|f^{n}\|_{L^2_{l_2}}+\|g\|_{L^2_{l_2}}\|f^{n-1}\|_{H^2_xL^2_2}\|f^{n}\|_{L^2_{l_2}}).
\eeno

{\it \underline{$H_x^2L^2_{l_{2,0}}$-estimate.}}  It is easy to check
\beno &&\pa_t \pa^\alpha_xf^n+v\cdot \na_x \pa^\alpha_xf^n+\eta\lr{v}^{\gamma+2s}\pa^\alpha_xf^n+L_N^\ep(g)\pa^\alpha_xf^n
\\&&=\sum_{|\alpha_1|\ge1;
\alpha_1+\alpha_2=\alpha} L_N^\ep(\pa^{\alpha_1}g)\pa^{\alpha_2}_xf^n+\sum_{\alpha_1+\alpha_2=\alpha}Q^{\ep+}_N(\pa^{\alpha_1}_xg, \pa^{\alpha_2}_xf^{n-1}).\eeno
By applying the similar argument used in   $L^2_l$ estimate, we have
\beno  && \f{d}{dt} \|f^n\|_{H^2_xL^2_{l_{2,0}}}^2+\eta \|f^n\|_{H^2_xL^2_{l_{2,0}+\gamma/2+s}}^2 
\lesssim  \sum_{|\alpha_1|\ge1;
\alpha_1+\alpha_2=\alpha} N^\gamma \int |\pa_x^{\alpha_1}g_*|b^\ep(\cos\theta) |\pa_x^{\alpha_2}f^{n-1}| (W'_{2,0})^2\\&&\qquad\times|(\pa_x^\alpha f^{n}|)'d\sigma dv_*dvdx 
  +   \sum_{
\alpha_1+\alpha_2=\alpha} N^\gamma \int |\pa_x^{\alpha_1}g_*|b^\ep(\cos\theta)  |\pa_x^{\alpha_2}f^{n-1}| (W_{2,0})^2|\pa_x^\alpha f^{n}|d\sigma dv_*dvdx \\
 &&\qquad\lesssim N^\gamma\big(\ep^{-2s}\|g\|_{H^2_xL^2_2}\|f^{n-1}\|_{H^2_xL^2_{l_{2,0}}}+\|g\|_{H^2_xL^2_{l_{2,0}}}\|f^{n-1}\|_{H^2_xL^2_{2}}\big)\|f^n\|_{H^2_xL^2_{l_{2,0}}}.
\eeno

{\it \underline{Closing the energy estimates}.} We set
$ E^n(f)(t)\eqdefa\|f^n(t)\|_{L^1_{l_1}}+\|f^n(t)\|_{L^2_{l_2}}^2+\|f^n(t)\|_{H^2_xL^2_{l_{2,0}}}^2+\|g\|_{L^\infty([0,T];L^1_{l_1})}^2$ and $ E_0\eqdefa\|f_0\|_{L^1_{l_1}}+\|f_0\|_{L^2_{l_2}}^2+\|f_0\|_{H^2_xL^2_{l_{2,0}}}^2+\|g\|_{L^\infty([0,T];L^1_{l_1})}^2$.
It is easy to see $E^0(f)(t)=E^n(f)(0)= E_0$. The estimates in the above can be summarized as follows:
\beno \f{d}{dt} E^n(t)\lesssim C(N,\ep, \|g\|_{L^\infty([0,T]; L^2_{l_2}\cap H^2_xL^2_{l_{2,0}}}) (E^n(t)+E^{n-1}(t)).\eeno
which together with \eqref{322regu} and \eqref{52regu} imply that for $t\le T$,
\ben\label{NepLinBub}  E^n(t)\lesssim e^{CT}E_0\sum_{i=0}^n \f{(e^{CT})^i }{i!}\lesssim C(T,N,\ep, \|g\|_{L^\infty([0,T]; L^2_{l_2}\cap H^2_xL^2_{l_{2,0}}}).\een

{\it Step 2: Convergence of  $\{f^n\}_{n\in \N}$. } We want to prove that $\{f^n\}_{n\in \N}$ is a Cauchy sequence in $L^\infty([0,T]; L^2)$. By setting $h^n=f^n-f^{n-1}$, we have 
\beno  \left\{
    \begin{aligned}
 & \pa_t h^n+v\cdot \na_x h^n+\eta\lr{v}^{\gamma+2s}h^n+L_N^\ep(g)h^n=Q^{\ep+}_N(g,h^{n-1}),\\
&h^n|_{t=0}=0. \end{aligned}\right. \eeno
Due to the energy estimates, we have
\beno  \f{d}{dt}\|h^n\|_{L^2}^2+\eta \|h^n\|^2_{L^2_{\gamma/2+s}}\lesssim N^\gamma \ep^{-2s}\|g\|_{H^2_xL^2_2}\|h^{n-1}\|_{L^2}\|h^n\|_{L^2},\eeno
from which  together with \eqref{322regu} and \eqref{52regu} yield that for $t\le T$,
$ \|h^n(t)\|_{L^2}^2\lesssim \f{(C(T, \ep, N,\|g\|_{L^\infty([0,T];H^2_xL^2_2)}T)^{n-1}}{(n-1)!}.$
Thus we deduce that $\{f^n\}_{n\in \N}$ is a Cauchy sequence in $L^\infty([0,T]; L^2)$.  This together with \eqref{NepLinBub} imply the existence   of non-negative solutions to the equation \eqref{NepLinB}.
\end{proof}

\subsubsection{Well-posedness for the approximate equation (II)} We  want to solve the  equation:
\ben\label{EepLinB}  \left\{
    \begin{aligned}
 & \pa_t f+v\cdot \na_x f+\eta\lr{v}^{\gamma+2s}f=Q^{\ep}(g,f),\\
&f|_{t=0}=f_0.\end{aligned}\right. \een
We have the proposition:
\begin{prop}\label{wepoepl} Let  $W_{l_1},W_{l_2},W_{2,0}\in\mathbb{W}_{I}(N,\kappa,\varrho,\delta_1,q_1,q_2)$.  Suppose $g$ is a non-negative function verifying conditions \eqref{lubforg},\eqref{322regu} and \eqref{52regu}. Then \eqref{EepLinB} admits a unique and non-negative solution in  $L^\infty([0,T]; L^1_xL^1_{l_1}\cap L^2_xL^2_{l_2}\cap H^2_xL^2_{l_{2,0}})\cap L^1([0,T]; L^1_xL^1_{l_1+\gamma})\cap L^2([0,T]; L^2_xL^2_{l_2+\gamma/2}\cap H^2_xL^2_{l_{2,0}+\gamma/2})$ if $f_0\in L^1_xL^1_{l_1}\cap L^2_xL^2_{l_2}\cap H^2_xL^2_{l_{2,0}}$.
\end{prop}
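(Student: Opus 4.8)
The strategy is to transfer the solvability of \eqref{EepLinB} from that of the truncated equation \eqref{NepLinB} by a limiting argument in the truncation parameter $N$, combined with uniform-in-$N$ energy estimates that are now available because the collision operator $Q^\eps$ (rather than the cut-off $Q^\eps_N$) satisfies the sharp lower and upper bounds established in Section~2. First I would fix $N\in\N$ and let $f^N$ denote the non-negative solution to \eqref{NepLinB} provided by the previous proposition. The point is that the energy estimates for $f^N$ should be run not with the crude bounds $|v-v_*|^\gamma\mathbf 1_{|v-v_*|\le N}\lesssim N^\gamma$ used there (which blow up as $N\to\infty$), but with the genuine bilinear estimates: the $L^1_{l_1}$ moment estimate from Lemma~\ref{L1md} and Proposition~\ref{L1i}, the $L^2_{l_2}$ estimate from Proposition~\ref{L2m}, and the $H^2_xL^2_{l_{2,0}}$ estimate obtained from Corollary~\ref{basic-estimates1} together with the commutator estimate Lemma~\ref{commforweight}. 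Since $Q^\eps_N(g,\cdot)$ is obtained from $Q^\eps(g,\cdot)$ by replacing $\Phi(|v-v_*|)=|v-v_*|^\gamma$ by $|v-v_*|^\gamma\mathbf 1_{|v-v_*|\le N}$, all these upper and lower bounds hold for $Q^\eps_N$ with constants independent of $N$ (truncating the kinetic factor only improves the estimates, and the damping term $\eta\lr v^{\gamma+2s}f$ helps absorb the positive weight-gaining terms). This yields a bound
\[
\sup_{t\le T}E^N(t)+\int_0^T D^N(\tau)\,d\tau\le C\bigl(\eta,c_1,c_2,M_1,M_2,\mathbb W_I,E_0\bigr),
\]
uniform in $N$, where $E^N$ is the $L^1_xL^1_{l_1}\cap L^2_xL^2_{l_2}\cap H^2_xL^2_{l_{2,0}}$-energy of $f^N$ and $D^N$ the corresponding dissipation (including the $\eta\lr v^{\gamma+2s}$-weighted norms), valid on $[0,T]$ with $T\le\mathfrak L$.

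Next I would pass to the limit $N\to\infty$. From the uniform bound one extracts a subsequence converging weakly-$*$ in the relevant energy spaces and, using the equation to control $\pa_tf^N$ in a negative-order space plus an Aubin--Lions type compactness argument (or, more elementarily, estimating $f^N-f^M$ directly), one upgrades to strong convergence of $f^N$ in $L^\infty([0,T];L^2)$. Concretely, setting $w^{N,M}=f^N-f^M$ (with $N>M$), $w^{N,M}$ solves a transport equation with right-hand side $Q^\eps(g,w^{N,M})+\bigl(Q^\eps-Q^\eps_N\bigr)(g,f^N)-\bigl(Q^\eps-Q^\eps_M\bigr)(g,f^M)$; the first term is handled by the coercivity estimate of Theorem~\ref{thmlb1}, and the difference terms are $O$ of the tails $\int_{|v-v_*|\ge M}|v-v_*|^\gamma g_*\,dv_*$, which are controlled by the moments of $g$ and tend to $0$ as $M\to\infty$ thanks to the uniform moment bounds on $f^N$. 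This gives an $L^\infty([0,T];L^2)$-Cauchy estimate of the form $\|w^{N,M}(t)\|_{L^2}^2\lesssim \omega(M)$ with $\omega(M)\to0$, hence a strong limit $f$. Lower semicontinuity of norms transfers all the uniform bounds to $f$, so $f$ lies in the asserted space $L^\infty([0,T];L^1_xL^1_{l_1}\cap L^2_xL^2_{l_2}\cap H^2_xL^2_{l_{2,0}})\cap L^1([0,T];L^1_xL^1_{l_1+\gamma})\cap L^2([0,T];L^2_xL^2_{l_2+\gamma/2}\cap H^2_xL^2_{l_{2,0}+\gamma/2})$, and $f\ge0$ as a strong limit of the non-negative $f^N$. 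Passing to the limit in the weak formulation (the nonlinear-in-$f$-but-linear, with $g$ frozen, collision term is continuous along the strong convergence) shows $f$ solves \eqref{EepLinB}. Uniqueness follows from the same energy argument applied to the difference of two solutions, where now the coercivity estimate of Theorem~\ref{thmlb1} plus a Gronwall inequality (the weight-gaining terms being absorbed by $\eta\lr v^{\gamma+2s}$ and the dissipation) force the difference to vanish.

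The main obstacle I anticipate is establishing the $N$-uniform $H^2_xL^2_{l_{2,0}}$-estimate: after differentiating in $x$, the commutator $\lr{\pa_x^\alpha Q^\eps_N(g,f)-Q^\eps_N(g,\pa_x^\alpha f),\cdot}$ produces terms of the form $Q^\eps_N(\pa_x^{\alpha_1}g,\pa_x^{\alpha_2}f)$ with $|\alpha_1|\ge1$, and controlling these requires both the sharp upper bound (Theorem~\ref{ubofQe} / Corollary~\ref{basic-estimates}) and the product/Sobolev-embedding estimates of Lemma~\ref{dsums} in $\TT^3\times\R^3$, together with the commutator-with-weight estimate Lemma~\ref{commforweight} to move the weight $W_{l_{2,0}}$ past the operator; one must check that every such term is either absorbed by the coercive dissipation $\mathcal E^{\gamma,\eps}_g$ and the $\delta^{-2s}$-term, or bounded by $\|g\|_{H^{3/2+2\delta_1}_xL^2_{\gamma+4}}^2$ (which is $\le c_2$ by \eqref{lubforg}) times the energy, so that Gronwall closes. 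A secondary technical point is that the damping coefficient $\eta$ appears in the constants, so this proposition only gives existence for the regularized equation; the removal of $\eta$ (and the promotion to the full energy space $\mathbb E^{N,\kappa,\eps}$) is deferred to the subsequent {\it a priori} estimates for \eqref{EAPLN:h-eqaution} and the continuity argument. I would also note that all constants are allowed to depend on $\ep$ and $N$ at the level of the truncated problem but must be $N$-independent once we use the true kinetic factor; keeping this bookkeeping straight is where most of the care goes.
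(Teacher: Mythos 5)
Your overall strategy is the same as the paper's: approximate \eqref{EepLinB} by the velocity‑truncated problem \eqref{NepLinB} (the paper's \eqref{AEepLinB} is literally this, re-indexed by $n$), obtain $n$‑uniform bounds by not using the crude $N^\gamma$ bound, and pass to the limit via a direct Cauchy estimate on the tail of the kernel. That is exactly the skeleton of the paper's argument, so the route is correct. Three implementation differences are worth flagging.

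First, the paper does not invoke the coercivity of $Q^\eps(g,\cdot)$ (Theorem~\ref{thmlb1}) at all in this proof: all dissipation comes from the fixed damping $\eta\lr{v}^{\gamma+2s}$, and the collision term is handled by the very crude but $n$-uniform upper bound \eqref{estimateforqneps}, $\big|\lr{ Q_n^\eps( g, h), W_lf}_v\big|\lesssim \ep^{-2s} |g|_{L^2_{\gamma+2}}|h|_{L^2_{l+\gamma/2}}|f|_{L^2_{\gamma/2}}+|g|_{L^2_{l+\gamma/2}}|h|_{L^2_{\gamma+2}}|f|_{L^2_{\gamma/2}}$, with the weight growth absorbed into $\eta\lr{v}^{\gamma+2s}$ by the interpolation $\|f\|^2_{L^2_{l+\gamma/2}}\lesssim R^\gamma\|f\|^2_{L^2_l}+R^{-2s}\|f\|^2_{L^2_{l+\gamma/2+s}}$. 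Your plan to use the sharp bilinear/coercivity machinery would also work, but is heavier than what is needed here — the $\eta$-damping makes the regularized problem much more forgiving than the eventual $\eta=0$ problem treated in Theorem~\ref{En-Priori}. Second, you propose to cite Lemma~\ref{L1md} / Propositions~\ref{L1i}, \ref{L2m}, \ref{basic-estimates1} directly; those are stated for the full operator $Q^\eps$ in \eqref{EAPLN:h-eqaution}, and while the truncation $\mathbf 1_{|v-v_*|\le n}$ only removes a positive piece of the kernel (so nothing goes wrong), one would still need to check that the cancellation-lemma / change-of-variable steps in those proofs survive the truncation; the paper sidesteps this by deriving the needed $L^1$ moment bound from Lemma~\ref{Povnzer2} directly for $Q^\eps_n$ and the $L^2$ bound ad hoc. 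Third, the Cauchy step: the paper works with consecutive differences $h^n=f^n-f^{n-1}$ in $L^\infty_t L^1$, using that on the annulus $\{n-1\le|v-v_*|\le n\}$ one can trade two extra powers of $|v-v_*|$ for a factor $n^{-2}$, so $\|h^n\|_{L^\infty_t L^1}\lesssim n^{-2}$ is summable; your $f^N-f^M$ tail estimate in $L^2$ is the same idea with a slightly different bookkeeping. In short: correct route, somewhat heavier toolkit than the paper needs.
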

\begin{proof} To prove the result, we introduce the following approximate equation
\ben\label{AEepLinB}  \left\{
    \begin{aligned}
 & \pa_t f^n+v\cdot \na_x f^n+\eta\lr{v}^{\gamma+2s}f^n=Q^{\ep}_n(g,f^n)\eqdefa Q^{\ep+}_n(g,f^n)-Q^{\ep-}_n(g,f^n),\\
 &f^n|_{t=0}=f_0.\end{aligned}\right. \een
 To get the result, it suffices to prove the uniform bounds and the convergence for the sequence $\{f^n\}_{n\in\N}$.

 {\it Step 1: Uniform bounds for $\{f^n\}_{n\in\N}$ }. We want to give the uniform bounds for $\{f^n\}_{n\in\N}$ in the space $L^\infty([0,T]; L^1_{l_1}\cap L^2_{l_2}\cap H^2_xL^2_{l_{2,0}})$.
 
{\it \underline{$L^1_{l_1}$-estimate.}} By change of variables, we get
 \beno \f{d}{dt}\|f^n\|_{L^1_{l_1}}+\eta \|f^n\|_{L^1_{l_1}}=\int_{\TT^3\times\R^6\times \SS^2} b^\eps(\cos\theta) |v-v_*|^\gamma \mathrm{1}_{|v-v_*|\le n}g_*f^n(\lr{v'}^{l_1}-\lr{v}^{l_1})d\sigma dv_*dvdx.\eeno
 Thanks to Lemma \ref{Povnzer2}, we infer that
 \beno \f{d}{dt}\|f^n\|_{L^1_{l_1}}+\eta \|f^n\|_{L^1_{l_1+\gamma+2s}}\lesssim l^{-1}_1\|g\|_{L^1_{l_1+\gamma}}\|f^n\|_{L^\infty_xL^1_\gamma}+C(l_1)
\big( \|g\|_{L^1_{l_1}}\|f^n\|_{L^\infty_xL^1_{\gamma+2}}+\|f^n\|_{L^1_{l_1}}\|g\|_{L^\infty_xL^1_{\gamma+2}}\big).\eeno

{\it \underline{$L^2_{l_2}$-estimate}.}  
We first claim that
\ben\label{estimateforqneps} &&\big|\lr{ Q_n^\eps( g, h), W_lf}_v\big|  \lesssim \ep^{-2s} |g |_{ L^2_{\gamma+2}}|h|_{ L^2_{l+\gamma/2}}|f|_{L^2_{\gamma/2}}+|g|_{L^2_{l+\gamma/2}}|h|_{L^2_{\gamma+2}}|f|_{L^2_{\gamma/2}}.\een
In fact, it is easy to check that $\big|\lr{ Q_n^{\eps-}( g, h), W_lf}_v\big|  \lesssim \ep^{-2s} |g _{ L^2_{\gamma+2}}|h|_{ L^2_{l+\gamma/2}}|f|_{L^2_{\gamma/2}}$. Observe that  
\beno |\lr{Q^{\ep+}_n(g,h),W_lf}|= \bigg|\int b^\epsilon |v-v_*|^\gamma\mathrm{1}_{|v-v_*|\le n}g_*hf'(W_l)'d\sigma dv_*dv \bigg|. \eeno
By using the fact $\lr{v'}^l\lesssim \lr{v}^l+|v'-v|^l\lesssim (1+\theta^l)\lr{v}^l+ \lr{v_*}^l$, we copy the argument used  for $B$ in {\it Step 1.2} of the proof of Lemma \ref{commforweight} to the righthand side of the equality. From this, we conclude the claim. Now we derive that
 \beno \f{d}{dt}\|f^n\|_{L^2_{l_2}}^2+\eta\|f^n\|_{L^2_{l_2+\gamma/2+s}}^2\lesssim \|g\|_{L^\infty_xL^2_{\gamma+2}}\|f^n\|_{L^2_{l_2+\gamma/2}}^2+\|g\|_{L^2_{l_2+\gamma/2}}\|f^n\|_{L^\infty_xL^2_{\gamma+2}}\|f^n\|_{L^2_{l_2+\gamma/2}}.\eeno
By the interpolation inequality that $ \|f\|^2_{L^2_{l+\gamma/2}}\lesssim R^\gamma\|f\|_{L^2_l}^2+R^{-2s}\|f\|_{L^2_{l+\gamma/2+s}}^2,$
 we deduce that
 \beno  \f{d}{dt}\|f^n\|_{L^2_{l_2}}^2+\eta\|f^n\|_{L^2_{l_2+\gamma/2+s}}^2\lesssim (\|g\|_{L^\infty_xL^2_{\gamma+2}}+1)(R^{-2s}\|f^n\|_{L^2_{l_2+\gamma/2+s}}^2+R^\gamma\|f^n\|_{L^2_{l_2}}^2)+\|g\|^2_{L^2_{l_2+\gamma/2}}\|f^n\|_{L^\infty_xL^2_{\gamma+2}}^2.\eeno
 Choose $ C(\|g\|_{L^\infty_xL^2_{\gamma+2}}+1)R^{-2s}=\eta/2$, then we  derive that
  \beno  \f{d}{dt}\|f^n\|_{L^2_{l_2}}^2+\eta\|f^n\|_{L^2_{l_2+\gamma/2+s}}^2\lesssim  \|g\|^2_{L^2_{l_2+\gamma/2}}\|f^n\|_{H^2_xL^2_4}^2+C(\eta,\|g\|_{L^\infty([0,T];H^2_xL^2_{\gamma+2})})\|f^n\|_{L^2_{l_2}}^2.\eeno

 {\it \underline{High order estimate.}} It is easy to check that for $|\alpha|\le 2$,
\beno &&\pa_t \pa^\alpha_xf^n+v\cdot \na_x \pa^\alpha_xf^n+\eta\lr{v}^{\gamma+2s}\pa^\alpha_xf^n
 =Q^\ep_n(g,\pa^\alpha_xf^n)+\sum_{|\alpha_1|\ge1;
\alpha_1+\alpha_2=\alpha} Q_n^\ep(\pa^{\alpha_1}_xg,\pa^{\alpha_2}_xf^n).\eeno
Thanks to Plancheral equality with respect to $x$ variable, \eqref{estimateforqneps} as well as Lemma \ref{dsums}, we can derive that  for $|\alpha|\le 2$, it holds
\beno &&\big|\int_{\TT^3\times\R^3} Q_n^{\ep}(\pa^{\alpha_1}_xg,\pa^{\alpha_2}_xh)f dvdx\big|\lesssim \ep^{-2s}\|g\|_{H^2_xL^2_{\gamma+2}}\|h\|_{H^2_xL^2_{l+\gamma/2}}\|f\|_{L^2_{\gamma/2}}+\|g\|_{H^2_xL^2_{l+\gamma/2}}\|h\|_{H^2_xL^2_{\gamma+2}}\|f\|_{H^2_{\gamma/2}}.\eeno
By applying this estimate and the fact $ \|f\|^2_{L^2_{l+\gamma/2}}\lesssim R^\gamma\|f\|_{L^2_l}^2+R^{-2s}\|f\|_{L^2_{l+\gamma/2+s}}^2,$
we get
\beno   \f{d}{dt} \|f^n\|_{H^2_xL^2_{l_{2,0}}}^2+\eta \|f^n\|_{H^2_xL^2_{l_{2,0}+\gamma/2+s}}^2 \lesssim \big[C(\ep,\eta, \|g\|_{L^\infty([0,T];H^2_xL^2_{\gamma+2})})+\|g\|^2_{H^2_xL^2_{l_{2,0}+\gamma/2}}\big]\|f^n\|_{H^2_xL^2_{l_{2,0} }}^2. \eeno

{\it \underline{Closing the energy estimates}.} 
By  setting
$  E^n(t)\eqdefa\|f^n(t)\|_{L^1_{l_1}}+\|f^n(t)\|_{L^2_{l_2}}^2+\|f^n(t)\|_{H^2_xL^2_{l_{2,0}}}^2\\+\|g\|_{L^1([0,T];L^1_{l_1+\gamma})}+\|g\|^2_{L^\infty([0,T];L^1_{l_1})},$
 we obtain that for $t\le T$,
\beno \f{d}{dt}E^n\lesssim \big[C(\ep,\eta, \|g\|_{L^\infty([0,T];L^1_{l_1})}, \|g\|_{L^\infty([0,T];H^2_xL^2_{\gamma+2})})+(\|g\|_{L^1_{l_1+\gamma}}+\|g\|^2_{L^2_xL^2_{l_2+\gamma/2}}+\|g\|_{H^2_xL^2_{l_{2,0}+\gamma/2}}^2)\big]E^n.\eeno
It implies
\beno &&E^n\lesssim  C(\ep,\eta, \|g\|_{L^\infty([0,T];L^1_{l_1})}, \|g\|_{L^\infty([0,T];H^2_xL^2_{\gamma+2})},\|g\|_{L^1([0,T]; L^1_{l_1+\gamma})}, \|g\|_{L^2([0,T];L^2_xL^2_{l_2+\gamma/2})},\\&&\qquad\quad\|g\|_{L^2([0,T];H^2_xL^2_{l_{2,0}+\gamma/2})})\eqdefa \tilde{C}_e.  \eeno

{\it Step 2: Convergence of $\{f^n\}_{n\in \N}$.} We will prove that the sequence $\{f^n\}_{n\in \N}$ is a Cauchy sequence in the space $L^\infty([0,T];L^1)$. Let $h^n=f^n-f^{n-1}$. Then it solves
\beno  \left\{
    \begin{aligned}
 & \pa_t h^n+v\cdot \na_x h^n+\eta\lr{v}^{\gamma+2s}h^n=Q^{\ep}_n(g,h^n)+Q^{\ep}_{n,n-1}(g, f^{n-1}),\\
 &h^n|_{t=0}=0,\end{aligned}\right. \eeno
where $Q^{\ep}_{n,n-1}(g, f^{n-1})=\int_{  \SS^2\times \R^3} b^\ep(\cos\theta)|v-v_*|^{\gamma}\mathrm{1}_{n-1\le|v-v_*|\le n} (g_*'(f^{n-1})'-g_*f^{n-1})d\sigma dv_*$.
 It is easy to check
 \beno \f{d}{dt}\|h^n\|_{L^1}+\eta\|h^n\|_{L^1_{\gamma+s}}&\lesssim& \int_{\TT^3\times\R^6\times\SS^2} b^\ep(\cos\theta)|v-v_*|^{\gamma+2}\mathrm{1}_{n-1\le|v-v_*|\le n} g_*f^{n-1}d\sigma dv_*dvdx\\&\lesssim& C(\eps)\f1{n^2} \|g\|_{H^2_xL^2_{\gamma+4}}\|f^{n-1}\|_{L^1_{\gamma+4}}.\eeno
 Then we deduce that for $t\le T$,
$  \|h^n\|_{L^\infty([0,T];L^1)}\lesssim \tilde{C}_e\f{1}{n^2},$
 which is enough to prove that $\{f^n\}_{n\in \N}$ is a Cauchy sequence in $L^\infty([0,T];L^1)$.

Combining the results in {\it Step 1} and {\it Step 2}, we complete the proof of the proposition.
 \end{proof}

\subsubsection{Energy estimates to the linear equation \eqref{EpLinB} in function space $\mathbb{E}^{N,\kappa,\eps}$}  In this subsection, we want to close the energy estimates in $\mathbb{E}^{N,\kappa,\eps}$ for the linear Boltzmann equation \eqref{EpLinB}.
Before stating our main results, we give several propositions which will be used frequently.

\begin{prop}\label{intx1}  Suppose that $W_{m,n}\in \mathbb{W}_I(N,\kappa,\varrho,\delta_1,q_1,q_2)(\,or\,\, \mathbb{W}_{II}(N,\kappa,\varrho,\delta_1))$. Then for any smooth function $h$,  it holds
	\beno \|W_{m,n}W_{\gamma/2}h\|_{H^{m+n\varrho}_xL^2}^2&\le& ((W_{m,n})^2W_{\gamma})(\eta^{-1/(2d_2)})\|h\|_{H^{m+n\varrho}_xL^2}^2+
	\eta\|W_{-d_1}(W_{m,n}W_{\gamma/2}W_{d_1+d_2}h)_\phi\|_{H^{m+n\varrho}_xL^2}^2\\&&
	+\epsilon^{2s}\|W^\eps_s(D)W_{m,n}W_{\gamma/2}h\|_{H^{m+n\varrho}_xL^2}^2.
	\eeno
	In other words, we have
	\beno \|W_{m,n}W_{\gamma/2}h\|_{H^{m+n\varrho}_xL^2}^2&\lesssim & C(\eta^{-1}, W_{m,n}) E^{m,n,\eps}(h)+\eta D^{m,n-1,\eps}_3(h)+\eps^{2s} D^{m,n,\eps}_2(h).
	\eeno
\end{prop}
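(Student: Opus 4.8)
\textbf{Proof strategy for Proposition \ref{intx1}.} The plan is to decompose the weight $W_{m,n}W_{\gamma/2}h$ into its low-frequency part $(\cdot)_\phi$ and high-frequency part $(\cdot)^\phi$ using the Littlewood--Paley cutoff $\phi(\eps D)$, estimate each piece separately, and then interpolate away the surplus velocity weight $W_\gamma$ on the low-frequency contribution. More precisely, I would write
\begin{equation*}
\|W_{m,n}W_{\gamma/2}h\|_{H^{m+n\varrho}_xL^2}^2\lesssim \|(W_{m,n}W_{\gamma/2}h)_\phi\|_{H^{m+n\varrho}_xL^2}^2+\|(W_{m,n}W_{\gamma/2}h)^\phi\|_{H^{m+n\varrho}_xL^2}^2,
\end{equation*}
which is legitimate by \eqref{func1} (applied in the $v$ variable, then summed against the $x$-Sobolev norm). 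For the high-frequency term, since $(W_{m,n}W_{\gamma/2}h)^\phi=\phi(\eps D)W_{m,n}W_{\gamma/2}h$ and $\phi(\eps\xi)\lesssim \eps^{2s}\lr{\xi}^{2s}$ on the support of $\phi(\eps\cdot)$, one has $\|(W_{m,n}W_{\gamma/2}h)^\phi\|_{H^m}\lesssim \eps^{s}\|\lr{D}^s W_{m,n}W_{\gamma/2}h\|_{H^m}$, which is exactly controlled by $\eps^{s}$ times the $W^\eps_s(D)$-piece of $D^{m,n,\eps}_2(h)$ after using \eqref{func3} to pass the weight through the symbol; squaring gives the $\eps^{2s}D^{m,n,\eps}_2(h)$ term.

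For the low-frequency term I would use the elementary pointwise interpolation of Japanese brackets: for any $\eta>0$,
\begin{equation*}
W_{m,n}W_{\gamma/2}=\lr{v}^{l_{m,n}+\gamma/2}\le \eta^{-1/(2d_2)}\,\lr{v}^{l_{m,n}}+\eta^{1/2}\lr{v}^{l_{m,n}+\gamma/2+d_1+d_2}\lr{v}^{-d_1}\lr{v}^{-d_2}\cdots
\end{equation*}
— more cleanly, split $\lr{v}^{\gamma/2}\le \eta^{-1/(2d_2)}+\eta^{1/2}\lr{v}^{d_2}$ (valid since $d_2>\gamma>\gamma/2$, by Young's inequality with the exponents $d_2/(\gamma/2)$ and its conjugate, absorbing constants), so that
\begin{equation*}
\|(W_{m,n}W_{\gamma/2}h)_\phi\|_{H^{m+n\varrho}_xL^2}\lesssim \eta^{-1/(2d_2)}\|(W_{m,n}h)_\phi\|_{H^{m+n\varrho}_xL^2}+\eta^{1/2}\|(W_{m,n}W_{\gamma/2+d_2}h)_\phi\|_{H^{m+n\varrho}_xL^2}.
\end{equation*}
The first summand is bounded by $\|W_{m,n}h\|_{H^{m+n\varrho}_xL^2}=E^{m,n,\eps}(h)^{1/2}$ (the low-frequency projection is bounded on $L^2$, uniformly in $\eps$, by \eqref{func1}). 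The second summand needs the extra weight $W_{-d_1}$ to match the definition of $D^{m,n-1,\eps}_3(h)=\|W_{-d_1}(W_{m,n}W_{\gamma/2+d_1+d_2}h)_\phi\|^2_{H^{m+n\varrho}_xL^2}$ (note $m+(n-1+1)\varrho=m+n\varrho$), so I would insert $\lr{v}^{d_1}\lr{v}^{-d_1}$ and observe $\lr{v}^{d_1}$ on the frequency side is again absorbed, up to the constant depending on $W_{m,n}$; this produces $\eta D^{m,n-1,\eps}_3(h)$ after squaring. The explicit constant $((W_{m,n})^2W_\gamma)(\eta^{-1/(2d_2)})$ in the statement is just a bookkeeping device recording that the $E^{m,n,\eps}$ coefficient is polynomial in $\eta^{-1/(2d_2)}$ of the same degree as $W_{m,n}^2W_\gamma$.

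The one genuinely delicate point — and the step I expect to be the main obstacle — is making the splitting of $\phi(\eps D)$ versus $(1-\phi(\eps D))$ commute correctly with the \emph{polynomial velocity weights} $W_{m,n}W_{\gamma/2+d_2}$, uniformly in $\eps$, and with the fractional $x$-derivative $|D_x|^{m+n\varrho}$ baked into the $H^{m+n\varrho}_x$ norm. Since $\phi(\eps D)$ acts in $v$ while $|D_x|^{m+n\varrho}$ acts in $x$, the two commute exactly, so the $x$-regularity index is inert and one may work fibrewise in $q\in\ZZ^3$; the remaining task is purely a $v$-variable pseudodifferential commutator estimate, which is precisely what Lemma \ref{func} (equivalences \eqref{func1}, \eqref{func3}) and the corollary after it provide, with constants independent of $\eps$. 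I would therefore invoke those equivalences rather than re-derive anything, and the proof reduces to the two Young-type inequalities above plus the $\eps^{2s}$ bound on $\phi(\eps\xi)\lr{\xi}^{-2s}$. Collecting the three contributions yields the stated inequality, and the final ``in other words'' form follows by reading off the definitions of $E^{m,n,\eps}, D^{m,n-1,\eps}_3, D^{m,n,\eps}_2$ in Section \ref{notspace}.
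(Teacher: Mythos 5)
Your overall strategy is the right one, and it is in fact essentially the paper's: split the velocity variable into low and high $|v|$ at a scale $R\sim\eta^{-1/(2d_2)}$, split into low/high frequency with the $\phi(\eps D)$ cutoff, bound the low-$|v|$ part pointwise, pay $R^{-d_2}$ on the high-$|v|$ part to reach the $D^{m,n-1,\eps}_3$ term, and pay $\eps^{s}$ on the high-frequency part to reach $D^{m,n,\eps}_2$. However, there are several execution slips that are genuine gaps as written.

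\textbf{Order of the two decompositions.} You do the frequency split first and then interpolate the weight \emph{inside} the bracket, claiming
$\|(W_{m,n}W_{\gamma/2}h)_\phi\|\lesssim \eta^{-1/(2d_2)}\|(W_{m,n}h)_\phi\|+\eta^{1/2}\|(W_{m,n}W_{\gamma/2+d_2}h)_\phi\|$.
This is not a direct consequence of the pointwise bound on $\lr{v}^{\gamma/2}$: the Fourier multiplier $1-\phi(\eps D)$ does not commute with multiplication by $W_{\gamma/2}$, so you cannot redistribute weights under the bracket without commutator terms. Each such move must go through Lemma~\ref{baslem2}, producing $O(\eps)$ errors, and you would need at least two of these (to move $W_{\gamma/2}$ out, split, and move $W_{d_1+d_2}$ back in). The paper avoids this compounding of commutators by doing the \emph{velocity} split first (via a radial cutoff $\psi_R$, $1-\psi_R$), bounding $\|\psi_R W_{m,n}W_{\gamma/2}h\|\le (W_{m,n}W_{\gamma/2})(R)\|h\|$, and only then introducing $W_{-(d_1+d_2)}W_{d_1+d_2}$ and the $\phi$-split on the far-field piece. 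Only a single commutator $[W_{d_1+d_2},\phi(\eps D)]$ is needed, bounded by $\eps\|W_{m,n}W_{\gamma/2-1}h\|$. Your version can be made to work, but you must actually carry the commutators; the claimed single-line implication is false as stated.

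\textbf{The weight interpolation inequality.} You write $\lr{v}^{\gamma/2}\le\eta^{-1/(2d_2)}+\eta^{1/2}\lr{v}^{d_2}$ and then magically arrive at a bound with $W_{\gamma/2+d_2}$ on the right. Applying your split to $W_{m,n}W_{\gamma/2}h$ would give $W_{m,n}W_{d_2}h$ in the second term, not $W_{m,n}W_{\gamma/2+d_2}h$; the exponent $\gamma/2+d_2$ is what's needed to hit the effective weight in $D^{m,n-1,\eps}_3(h)$, but your cited inequality does not deliver it. The correct device is to write $1\le\psi_R(v)+\big((1-\psi_R)W_{-d_2}\big)W_{d_2}$ (or simply $\mathrm{1}_{|v|\le R}+R^{-d_2}W_{d_2}$), so the $W_{\gamma/2}$ stays inside and the high-$|v|$ piece acquires the extra $W_{d_2}$ factor.

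\textbf{The high-frequency bound.} You bound $\|(W_{m,n}W_{\gamma/2}h)^\phi\|\lesssim\eps^{s}\|\lr{D}^{s}W_{m,n}W_{\gamma/2}h\|$ and then claim that $\lr{D}^{s}$ ``is exactly controlled by'' the $W^\eps_s(D)$ piece. That claimed control is false: for $|\xi|\gg\eps^{-1}$, $W^\eps_s(\xi)=\eps^{-s}$ is capped while $\lr{\xi}^s$ is unbounded, so $\|\lr{D}^s f\|\not\lesssim\|W^\eps_s(D)f\|$. The right argument is more direct: on the support of $\phi(\eps\xi)$ one has $W^\eps_s(\xi)\ge c\eps^{-s}$, so $\phi(\eps\xi)\le C\eps^s W^\eps_s(\xi)$ pointwise, hence $\|\phi(\eps D)f\|\le C\eps^s\|W^\eps_s(D)f\|$; there is no need for $\lr{D}^s$ at all, and passing through it introduces a step you cannot undo.

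None of these is a conceptual obstruction — once repaired, the proof collapses to the paper's — but the claims as stated do not follow, and the commutator bookkeeping you deferred to Lemma~\ref{func} is precisely what the paper's ordering is designed to minimize.
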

\begin{proof}
	It is easy to check
	\beno
	\|W_{m,n}W_{\gamma/2}h\|_{H^{m+n\varrho}_xL^2}&\le& \|\psi_R W_{m,n}W_{\gamma/2}h\|_{H^{m+n\varrho}_xL^2}+\|(1-\psi_R)W_{m,n}W_{\gamma/2}h\|_{H^{m+n\varrho}_xL^2}\\&\le&
	W_{m,n}W_{\gamma/2}(R)\|h\|_{H^{m+n\varrho}_xL^2}+
	\|(1-\psi_R)W_{-(d_1+d_2)}\big(W_{m,n}W_{\gamma/2}W_{d_1+d_2}h\big)_\phi\|_{H^{m+n\varrho}_xL^2}\\&&+\|(W_{m,n}W_{\gamma/2}h)^\phi\|_{H^{m+n\varrho}_xL^2}+
	\|(1-\psi_R)[W_{d_1+d_2}, \phi(\eps D)]W_{m,n}W_{\gamma/2}W_{d_1+d_2}h\|_{H^{m+n\varrho}_xL^2}
	\eeno
	By Lemma \ref{baslem2}, we have  $|[W_{-(d_1+d_2)}, \phi(\eps D)]h|_{L^2}\lesssim \eps|h|_{L^2_{-(d_1+d_2)-1}}$, which implies 
	\beno \|W_{m,n}W_{\gamma/2}h\|_{H^{m+n\varrho}_xL^2}&\le&
	(W_{m,n}W_{\gamma/2})(R)\|h\|_{H^{m+n\varrho}_xL^2}+R^{-d_2}\|W_{-d_1}(W_{m,n}W_{\gamma/2}W_{d_1+d_2}h)_\phi\|_{H^{m+n\varrho}_xL^2}\\&&
	+\epsilon^{s}\|W^\eps_s(D)W_{m,n}W_{\gamma/2}h\|_{H^{m+n\varrho}_xL^2}
	+\eps\|W_{m,n}W_{\gamma/2-1}h\|_{H^{m+n\varrho}_xL^2}.
	\eeno	
	By choosing $\eta^{\f12}=R^{-d_2}$ and using the notations introduced in Section \ref{notspace}, we derive the desired results.
\end{proof}

\begin{rmk}\label{intxbod1}
	By the similar argument, we also have
	\beno &&(i).\,\|W_{1,\f12+\delta_1}W_{\gamma/2}h\|_{H^{\f32+\delta_1}_xL^2}^2+
	\|W_{\gamma+4}W_{\gamma/2}h\|_{H^{\f32+2\delta_1}_xL^2}^2\\&&\lesssim C(\eta^{-1}, W_{m,n}) (E^{1,\f12+\delta_1,\eps}(h)+E^{1,\f12+2\delta_1,\eps}(h))+\eta D_3^{1,N_{\varrho,2},\eps}(h)+\eps^{2s}(D^{1,\f12+\delta_1,\eps}_2(h)+D^{1,\f12+2\delta_1,\eps}_2(h));\\
	&&(ii).\,\|W_{N,\kappa}W_{\gamma/2}h\|_{H^{N+\kappa}_xL^2}^2\lesssim  C(\eta^{-1}, W_{m,n}) E^{N,\kappa,\eps}(h)+\eta D_3^{1,N_{\varrho,\kappa},\eps}(h)+\eps^{2s}D^{N,\kappa,\eps}_2(h).
	\eeno
	Indeed,  using (W-5) of Definition \ref{ws1}  
	and the fact $ \f32+\delta_1=1+(N_{\varrho,2}+1)\varrho+\delta_1-N_d\le1+(N_{\varrho,2}+1)\varrho-N_d/2$, we can prove that  
	\beno
	&&\|W_{1,\f12+\delta_1}W_{\gamma/2}h\|_{H^{\f32+\delta_1}_xL^2}\le
	W_{1,\f12+\delta_1}W_{\gamma/2}(R)\|h\|_{H^{\f32+\delta_1}_xL^2}+
	R^{-d_2/2}\| W_{-d_1}\big(W_{1,\f12+\delta_1}W_{\gamma/2}W_{d_1+d_2/2}h\big)_\phi\|_{H^{\f32+\delta_1}_xL^2}\\&&\quad+\epsilon^{2s}\|W^\eps_s(D)W_{1,\f12+\delta_1}W_{\gamma/2}h\|_{H^{\f32+\delta_1}_xL^2}^2
	+\eps^2\|W_{1,\f12+\delta_1}W_{\gamma/2-1}h\|_{H^{\f32+\delta_1}_xL^2}^2,\eeno
	which is enough to get the first result. We remark that the second one can be obtained by the same way.	
\end{rmk}

\begin{prop}\label{intx2}  Suppose that $W_{m,n}\in \mathbb{W}_I(N,\kappa,\varrho,\delta_1,q_1,q_2)(\,or\,\, \mathbb{W}_{II}(N,\kappa,\varrho,\delta_1))$. Then for any smooth function $h$  we have
	\beno  &&\|W^\eps_s(D)W_{m,n}W_{\gamma/2+2s}h\|_{H^{m+n\varrho-\delta_1}_xL^2} \lesssim \eta(\|W^\eps_s(D)W_{m,n}W_{\gamma/2}h\|_{H^{m+n\varrho}_xL^2}+\|W^\eps_s(D)W_{m,n-1}W_{\gamma/2}h\|_{L^2}
	\\&&\qquad+\|W^\eps_s(D)W_{m,n-1}W_{\gamma/2 -d_3}h\|_{H^{m+(n-1)\varrho}_xL^2}
	)+ \f{\eta^{-2^{N_{\varrho,\delta_1}}}}{2^{N_{\varrho,\delta_1}}}(\eta^{-1-2^{N_{\varrho,\delta_1}}}2^{-N_{\varrho,\delta_1}})^{\f{m+(n-1)\varrho-\delta_1}{\delta_1}}\\&&\qquad\times(W_{\gamma/2-d_3}W_{m,n-1})\big((\eta^{-1-2^{N_{\varrho,\delta_1}}}2^{-N_{\varrho,\delta_1}})^{\f{m+(n-1)\varrho-\delta_1}{\delta_1 d_3}}\big)\|W^\eps_s(D)h\|_{L^2}.\eeno
	In other words, we have
	\beno \|W^\eps_s(D)W_{m,n}W_{\gamma/2+2s}h\|_{H^{m+n\varrho-\delta_1}_xL^2}^2\lesssim C(\eta^{-1},W_{m,n}) \|W^\eps_s(D)h\|_{L^2}^2+\eta(D^{m,n-1,\eps}_2(h)+D^{m,n,\eps}_2(h)). \eeno
\end{prop}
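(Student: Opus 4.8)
\textbf{Proof plan for Proposition \ref{intx2}.} The statement is a weighted‐interpolation estimate: one wants to trade the extra weight $W_{2s}$ (present on the left) for a loss of $\delta_1$ derivatives in $x$, plus a remainder that is controlled by $\|W^\eps_s(D)h\|_{L^2}$ alone (with a huge but fixed constant). The plan is to run a dyadic induction in the number of steps needed to remove $\delta_1$ of spatial regularity, iterating the one‑step inequality
\[
\|W^\eps_s(D)W_{m,n}W_{\gamma/2+2s}h\|_{H^{m+n\varrho-\delta_1}_xL^2}
\lesssim \eta\,\|W^\eps_s(D)W_{m,n}W_{\gamma/2}h\|_{H^{m+n\varrho}_xL^2}
+\eta^{-1}\|W^\eps_s(D)W_{m,n}W_{\gamma/2+2s-d_3}h\|_{H^{m+n\varrho-2\delta_1}_xL^2},
\]
which is just a Young‑type splitting in the frequency variable $D_x$ between $|D_x|\gtrsim R$ and $|D_x|\lesssim R$, the cutoff radius $R$ being chosen so that $R^{-\delta_1}\sim\eta$. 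Here $N_{\varrho,\delta_1}=[\log_2(\varrho/\delta_1+2)]+1$ (defined in Section \ref{wswf}) is precisely the number of iterations needed so that after peeling off $\delta_1$-chunks of $x$-regularity $2^{N_{\varrho,\delta_1}}$ times, the residual spatial order has dropped from $m+n\varrho$ down below $m+(n-1)\varrho$; this is why the exponent $2^{N_{\varrho,\delta_1}}$ appears on $\eta^{-1}$ and on $W_{2s}$ in the final remainder.

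\textbf{Step 1.} First I would record the basic one‑step frequency splitting. For any $R>0$, decompose $W^\eps_s(D)W_{m,n}W_{\gamma/2+2s}h = \psi(R^{-1}D_x)(\cdots) + (1-\psi(R^{-1}D_x))(\cdots)$. On the high‑frequency piece, $|D_x|^{m+n\varrho-\delta_1}\le R^{-\delta_1}|D_x|^{m+n\varrho}$, so it is bounded by $R^{-\delta_1}\|W^\eps_s(D)W_{m,n}W_{\gamma/2+2s}h\|_{H^{m+n\varrho}_xL^2}$; to remove the remaining $W_{2s}$ I absorb it into $W_{m,n}$ at the cost of lowering to $W_{m,n-1}$ or $W_{m,n-1}W_{-d_3}$ — here condition (W-4), namely $W_{m,n+1}W_{\f32\gamma+2s+d_1+d_2}W_{d_3}\le W_{m,n}$ and the companion inequality with $(W_{2s})^{2^{N_{\varrho,\delta_1}}}$, is exactly what guarantees that after all the iterations the accumulated weight loss is still dominated by $W_{m,n-1}$. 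On the low‑frequency piece, $|D_x|^{m+n\varrho-\delta_1}\le R^{\,m+(n-1)\varrho-\delta_1}|D_x|^{m+(n-1)\varrho-\delta_1}$ (using $m+n\varrho-\delta_1\le m+(n-1)\varrho-\delta_1 + \varrho$ would be wrong direction; instead note $|D_x|^{m+n\varrho-\delta_1}\le R^{\varrho}|D_x|^{m+(n-1)\varrho-\delta_1}$ on $|D_x|\lesssim R$), which produces a term with two fewer $\delta_1$'s of $x$-regularity, ready for the next iteration; and the $W^\eps_s(D)W_{\gamma/2+2s}$ weight on the lowest frequencies is harmlessly bounded by $R^{\gamma/2+2s}(W_{\gamma/2-d_3})$-type constants times $\|W^\eps_s(D)h\|_{L^2}$.

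\textbf{Step 2.} Then I iterate Step 1 exactly $N_{\varrho,\delta_1}$ times, choosing the cutoff radius at stage $k$ so that $R_k^{-\delta_1}\sim\eta$ uniformly, which makes the high‑frequency contributions form a geometric-type sum $\sum_k \eta\,(\text{dissipation terms})$ collapsing to $\eta\bigl(D^{m,n,\eps}_2(h)+D^{m,n-1,\eps}_2(h)\bigr)$ once one recognizes, via Proposition \ref{intx1}'s accompanying notation in Section \ref{notspace}, that $\|W^\eps_s(D)W_{m,n}W_{\gamma/2}h\|^2_{H^{m+n\varrho}_xL^2}\le D^{m,n,\eps}_2(h)$ and the lowered‑weight, lowered‑order analogues sit inside $D^{m,n-1,\eps}_2(h)$. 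The low‑frequency residuals compound into a single term of the form $C\,R_{\mathrm{final}}^{\ast}\|W^\eps_s(D)h\|_{L^2}$, and tracking the powers of $\eta^{-1}2^{-N_{\varrho,\delta_1}}$ accumulated through the iteration reproduces the precise constant $\tfrac{\eta^{-2^{N_{\varrho,\delta_1}}}}{2^{N_{\varrho,\delta_1}}}(\eta^{-1-2^{N_{\varrho,\delta_1}}}2^{-N_{\varrho,\delta_1}})^{(m+(n-1)\varrho-\delta_1)/\delta_1}(W_{\gamma/2-d_3}W_{m,n-1})(\cdots)$ in the first displayed bound; relabelling $\eta$ and using Young's inequality to square everything gives the clean second form.

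\textbf{Main obstacle.} The routine frequency splitting is easy; the delicate part is \emph{bookkeeping the weights and the constants} through $N_{\varrho,\delta_1}$ iterations so that (a) the extra velocity weight $W_{2s}$, amplified to $(W_{2s})^{2^{N_{\varrho,\delta_1}}}$ after all iterations, is still absorbed by the drop from $W_{m,n}$ to $W_{m,n-1}$ — this is precisely why (W-4) is stated with that exponent — and (b) the blown‑up constant is exactly the $\eta$-dependent quantity claimed, not merely "some constant depending on $\eta^{-1}$". Verifying (a) requires invoking (P-1)–(P-3) and the structure of $\mathbb{I}_x(N,\kappa)$ to be sure that $n-1\ge -1$ and that $W_{m,n-1}$ is a legitimate member of the well‑prepared sequence, i.e. that one never "falls off" the bottom of the sequence; when $n=0$ one uses (W-3), $W_{m,-1}=W_{m-1,N_{\varrho,1}}$, to continue. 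Once the weight ledger is closed, the estimate follows, and I would end by remarking that the $\varrho/\delta_1\notin\N$ hypothesis (built into (P-1)–(P-2)) is what makes the iteration count $N_{\varrho,\delta_1}$ strictly sufficient rather than borderline.
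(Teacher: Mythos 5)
The paper's proof is shorter and cleaner than what you propose: it opens with a single two‑variable interpolation inequality
\[
\|W_l h\|_{H^{m-\delta_1}_xL^2}\;\lesssim\;\|W_{2^J l}h\|_{H^{m-(2^J-1)\delta_1}_xL^2}^{\,2^{-J}}\,\|h\|_{H^m_xL^2}^{\,1-2^{-J}},
\]
applied once with $J=N_{\varrho,\delta_1}$, then passes through Young's inequality, (W-4), a single additive $x$-frequency split, and a single $v$-weight split. Your plan of iterating a one-step estimate is close in spirit but contains a genuine gap. The one‑step estimate you write down,
\[
\|W^\eps_s(D)W_{m,n}W_{\gamma/2+2s}h\|_{H^{m+n\varrho-\delta_1}_xL^2}
\lesssim \eta\,\|W^\eps_s(D)W_{m,n}W_{\gamma/2}h\|_{H^{m+n\varrho}_xL^2}
+\eta^{-1}\|W^\eps_s(D)W_{m,n}W_{\gamma/2+2s-d_3}h\|_{H^{m+n\varrho-2\delta_1}_xL^2},
\]
is asserted to be "just a Young-type splitting in the frequency variable $D_x$", but an $x$-frequency cutoff cannot change the velocity weight: on the high-frequency piece one still carries $W_{\gamma/2+2s}$, not $W_{\gamma/2}$, and on the low-frequency piece the weight does not drop by $d_3$. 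The inequality as stated does not follow from the argument you give, and this is not cosmetic — it is exactly where the weight–regularity exchange has to happen.

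More structurally, your bookkeeping cannot reproduce the exponent $(W_{2s})^{2^{N_{\varrho,\delta_1}}}$ that (W-4) is calibrated to absorb. An additive frequency split applied $J$ times only shifts the weight linearly (at best it accumulates a factor like $(W_{2s})^{J}$, or you burn a full drop in $n$ at each step and end at $W_{m,n-J}$, far below the claimed $W_{m,n-1}$). The exponential accumulation $2^{J}$ comes only from a multiplicative interpolation in which the weight doubles at each step — precisely what the paper's one-shot inequality with fractional exponents $2^{-J}$ and $1-2^{-J}$ encodes. If you wish to keep an iterative presentation, the correct one-step is the genuine log-convexity estimate
\[
\|W_l h\|_{H^{a}_xL^2}\;\lesssim\;\|W_{2l}h\|_{H^{a-b}_xL^2}^{1/2}\,\|h\|_{H^{a+b}_xL^2}^{1/2},
\]
whose $J$-fold composition recovers the paper's inequality with regularity deficit $(2^J-1)\delta_1$ and weight $2^J l$. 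Replacing your "frequency-splitting" lemma by this would close the gap; as written, Steps 1–2 do not establish what is needed.
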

\begin{proof}
	We observe  that for any $J\in\N$,
	\beno \|W_l h\|_{H^{m-\delta_1}_xL^2}\lesssim \|(W_{2^Jl})h\|_{H^{m-(2^J-1)\delta_1}_xL^2}^{2^{-J}L^2}\|h\|_{H^m_xL^2}^{1-2^{-J}L^2}. \eeno
	By choosing that $(2^J-1)\delta_1\ge \varrho+\delta_1$, that is, $J\ge [\log_2(\varrho/\delta_1+2)]+1=N_{\varrho,\delta_1}$, we have
	\beno &&\|W^\eps_s(D)W_{m,n}W_{\gamma/2+2s}h\|_{H^{m+n\varrho-\delta_1}_xL^2} \lesssim \f{\eta^{-2^{N_{\varrho,\delta_1}}}}{2^{N_{\varrho,\delta_1}}}\|W^\eps_s(D)W_{m,n}W_{\gamma/2}(W_{2s})^{2^{N_{\varrho,\delta_1}}}h\|_{H^{m+(n-1)\varrho-\delta_1}_xL^2}\\&&\qquad\qquad\qquad\qquad+
	\eta \|W^\eps_s(D)W_{m,n}W_{\gamma/2}h\|_{H^{m+n\varrho}_xL^2}.
	\eeno
	By   (W-4) of Definition \ref{ws1},
	we get that
	\beno &&\|W^\eps_s(D)W_{m,n}W_{\gamma/2}(W_{2s})^{2^{N_{\varrho,\delta_1}}}h\|_{H^{m+(n-1)\varrho-\delta_1}_xL^2}\lesssim \|W^\eps_s(D)W_{m,n-1}W_{\gamma/2}W_{-d_3}h\|_{H^{m+(n-1)\varrho-\delta_1}_xL^2}\\
	&&\lesssim K^{-\delta_1}\|W^\eps_s(D)W_{m,n-1}W_{\gamma/2}W_{-d_3}h\|_{H^{m+(n-1)\varrho}_xL^2}
	+K^{m+(n-1)\varrho-\delta_1}\|W^\eps_s(D)W_{m,n-1}W_{\gamma/2-d_3}h\|_{L^2}.
	\eeno
	Next we focus on the term $\|W^\eps_s(D)W_{m,n-1}W_{\gamma/2}W_{-d_3}h\|_{L^2}$. It is easy to check that
	\beno \|W^\eps_s(D)W_{m,n-1}W_{\gamma/2-d_3}h\|_{L^2}\lesssim (W_{\gamma/2-d_3}W_{m,n-1})(R)\|W^\eps_s(D)h\|_{L^2}+R^{-d_3}\|W^\eps_s(D)W_{m,n-1}W_{\gamma/2}h\|_{L^2}^2.
	\eeno
	Choose $R^{-d_3}K^{m+(n-1)\varrho-\delta_1}=1$, then we have
	\beno
	&&\|W^\eps_s(D)W_{m,n}W_{\gamma/2}(W_{2s})^{2^{N_{\varrho,\delta_1}}}h\|_{H^{m+(n-1)\varrho-\delta_1}_xL^2}\lesssim K^{-\delta_1}(\|W^\eps_s(D)W_{m,n-1}W_{\gamma/2 -d_3}h\|_{H^{m+(n-1)\varrho}_xL^2}\\&&\qquad+
	\|W^\eps_s(D)W_{m,n-1}W_{\gamma/2}h\|_{L^2})+K^{m+(n-1)\varrho-\delta_1}(W_{\gamma/2-d_3}W_{m,n-1})(K^{\f{m+(n-1)\varrho}{d_3}})\|W^\eps_s(D)h\|_{L^2}.
	\eeno
	The desired result is concluded by choosing $K^{-\delta_1}\f{\eta^{-2^{N_{\varrho,\delta_1}}}}{2^{N_{\varrho,\delta_1}}}=\eta$ and combining all the estimates. We end the proof of the proposition. 
\end{proof}	
\begin{rmk}\label{intxbod2} Thanks to (W-4) and (W-5) of Definition \ref{ws1}, by the similar argument, we have
	\beno &&\|W^\eps_s(D)W_{1,\f12+\delta_1}W_{\gamma/2+2s}h\|_{H^{\f32}_xL^2}^2+
	\|W^\eps_s(D)W_{\gamma+4}W_{\gamma/2+2s}h\|_{H^{\f32+\delta_1}_xL^2}^2\\&&\lesssim  C(\eta^{-1},W_{m,n}) \|W^\eps_s(D)h\|_{L^2}^2+\eta(D^{1,N_{\varrho,2},\eps}_2(h)+D^{1,\f12+\delta_1,\eps}_2(h)+D^{1,\f12+2\delta_1,\eps}_2(h)),\\
	&&\|W^\eps_s(D)W_{N,\kappa}W_{\gamma/2+2s}h\|_{H^{N+\kappa}_xL^2}^2\lesssim C(\eta^{-1},W_{m,n}) \|W^\eps_s(D)h\|^2_{L^2}+\eta(D^{N,N_{\varrho,\kappa,\eps}}_2(h)+D^{N,\kappa,\eps}_2(h)). \eeno
\end{rmk}

Now we state our estimates for \eqref{EpLinB} in function space $\mathbb{E}^{N,\kappa,\eps}$.
\begin{thm}\label{En-Priori}  Suppose that function spaces $\mathbb{E}^{N,\kappa,\eps}$and   $\mathbb{E}^{1,\f12+2\delta_1,\eps}$, the well-prepared sequence $\mathbb{W}_{I}(N,\kappa,\varrho,\delta_1,q_1,q_2)$  and $f_0$ verify all the conditions stated in Theorem \ref{thmwepo}.  Let $g$ verify \eqref{lubforg}, \eqref{322regu} and \eqref{52regu} and $f$ be a solution to \eqref{EpLinB} with the initial data $f_0$.  
	\begin{enumerate}
		\item If   $ \eps\le\min\{[(\f{A_5c_o  A_7}{200A_6(N_{\varrho,1}+2)(2C_Ec_0A_5+1)})^{1+4/s}c_2^{-1}]^{1/(2(1-s))}, l_1^{-\f12-\eta},(\f{A_7}{20(N_{\varrho,1}+2)A_6})^{\f1{2s}}\}$ with $\eta>0,$  	   then there exists a   constant $\mathcal{C}_{E,1}=C(c_1,c_2, M_1, \mathbb{W}_{I},\mathfrak{L})$   defined in \eqref{e321}    such that  
		\beno  &&\sup_{t\in[0,T]} \mathbb{E}^{1,\f12+2\delta_1,\eps}(f(t))+\int_0^T \mathbb{D}^{1,\f12+2\delta_1,\eps}(f(\tau))d\tau \le \mathcal{C}_{E,1},\\
		&&   \sup_{t\in[0,T]} \mathbb{E}^{N,\kappa,\eps}(f(t))+\int_0^{T} \mathbb{D}^{N,\kappa,\eps}(f(\tau))d\tau \le C(T, C_{E,1}, c_1,c_2, M_1, M_2, \mathbb{W}_{I}). \eeno
		
		\item Suppose that $\epsilon$ verifies \eqref{Rstreps}. There exists a time
		$T^*=T^*(c_1,c_2, M_1, M_2, \mathbb{W}_{I})\le T$
		such that  for $t\in [0,T^*]$, $f$ verifies  \eqref{lubforg}, \eqref{322regu} and \eqref{52regu}.
	\end{enumerate}
\end{thm}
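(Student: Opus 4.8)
The plan is to prove Theorem~\ref{En-Priori} by combining the high-order energy estimates obtained in Section~3 (Propositions~\ref{L1i} through \ref{Enmix}) in a careful hierarchical order, and then running a continuity/bootstrap argument to produce the lifespan $T^*$. The key point is that the hierarchy of weighted Sobolev norms built into $\mathbb{E}^{N,\kappa,\eps}$ is arranged so that, at each step $(m,n)$ of the induction on the number of $x$-derivatives, the coercivity in $\mathbb{D}_2^{m,n,\eps}$ and $\mathbb{D}_3^{m,n,\eps}$ dominates all the ``bad'' terms coming from the lower bounds \eqref{boundWSP}, \emph{provided} the weight attached to that step is large enough (conditions (W-1)--(W-6)) and $\eps$ is small enough to make $\eps^{2s}$, $\eps^{1-s}$, etc., absorbable. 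Concretely, for part (1) I would: first close the $L^1_{l_1}$ and $L^2_{l_2}$ estimates using Proposition~\ref{L1i} and Proposition~\ref{L2m}, where the coercive term $l_1^s A_1(c_1,c_2)\|h\|_{L^1_{l_1+\gamma}}$ (resp.\ $A_3\delta^{-2s}\|h\|_{L^2_{l_2+\gamma/2}}^2$) beats the linear-in-$g$ losses thanks to \eqref{unicon1}, \eqref{Rstrdelta}; then, by induction on $(m,n)\in\mathbb{I}_x(N,\kappa)$, combine Proposition~\ref{Enxmn1}/\ref{Enxmn2}/\ref{EnNk} (propagation for $x$) with Proposition~\ref{smxmn1}/\ref{smxmn2} (gain of fractional $x$-regularity), using Proposition~\ref{intx1} and Proposition~\ref{intx2} (together with Remarks~\ref{intxbod1}, \ref{intxbod2}) to re-express the terms $\|W_{m,n}W_{\gamma/2}h\|_{H^{m+n\varrho}_x L^2}^2$ and $\|W^\eps_s(D)W_{m,n}W_{\gamma/2+2s}h\|_{H^{m+n\varrho-\delta_1}_x L^2}^2$ appearing on the right as a small multiple of $\mathbb{D}_2$, $\mathbb{D}_3$ plus controllable lower-order energy; finally close the $v$-regularity estimate $V^{q_1,\eps}$ via Proposition~\ref{Envq}, and the sharp mixed estimate on $\|W^\eps_{q_2}(D)f\|_{H^{3/2+\delta_1}_x L^2}$ via Proposition~\ref{Enmix}. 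Summing these inequalities with the weights $A_7,A_8,A_9$ from \eqref{defiAi}, all dissipative terms survive with a definite fraction of their coefficient, and Gr\"onwall on the torus (exploiting the conservation of mass and the assumed a priori bounds \eqref{lubforg}--\eqref{52regu} on $g$) yields the constant $\mathcal{C}_{E,1}=C(c_1,c_2,M_1,\mathbb{W}_I,\mathfrak{L})$ and then, feeding $\mathcal{C}_{E,1}$ back in, the bound on $\mathbb{E}^{N,\kappa,\eps}$.

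\textbf{The delicate bookkeeping.} The whole difficulty is that the right-hand sides of Propositions~\ref{Enxmn1}--\ref{Enmix} contain, besides genuinely lower-order quantities, three structurally dangerous families: (i) $\|W^\eps_s(D)W_{m,n}W_{\gamma/2+2s}h\|^2$ at \emph{full} $x$-regularity $m+n\varrho$ (the increase in weight in the upper bound of \eqref{boundWSP}); (ii) $\|W_{m,n+1}W_{\cdots}h\|^2$ type terms at one extra $\varrho$ of $x$-regularity, tied to $\mathbb{D}_3$; and (iii) the $\eps$-dependent terms $\eps^{2(1-s)}$ and $\eta_1^{-2(2s-1)/(1-s)}$ in the $2s>1$ case. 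Family (i) is handled by Proposition~\ref{intx2}: the well-prepared weight condition (W-4) ($W_{d_3}W_{m,n+1}(W_{2s})^{2^{N_{\varrho,\delta_1}}}\le W_{m,n}$) lets one trade $2s$ units of weight at regularity $m+n\varrho$ for a loss of $d_3$ units at regularity $m+(n-1)\varrho$, which is already controlled in the induction. Family (ii) is exactly what $D_3^{m,n,\eps}$ produces, and it feeds the \emph{next} stage of the induction — this is the reason the hierarchy must be traversed in the order dictated by $\mathbb{I}_x(N,\kappa)$ and why (W-2), (W-3) glue consecutive layers. Family (iii): choosing $\eta_1$ a small universal constant and then $\eps$ below the first threshold in the hypothesis of part (1), $[(\tfrac{A_5c_oA_7}{200A_6(N_{\varrho,1}+2)(2C_Ec_0A_5+1)})^{1+4/s}c_2^{-1}]^{1/(2(1-s))}$, forces $\eps^{2(1-s)}\|g\|^2_{H^{3/2+\delta_1}_xL^2_{\gamma+4}}\le \eps^{2(1-s)}c_2$ to be small relative to $A_5c_o$, so the corresponding term is absorbed by the coercivity; similarly $\eps\le l_1^{-1/2-\eta}$ is precisely what Lemma~\ref{L1md} and Proposition~\ref{L1i} require, and $\eps\le(A_7/(20(N_{\varrho,1}+2)A_6))^{1/2s}$ kills the residual $\eps^{2s}$ from Propositions~\ref{intx1}. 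I expect the main obstacle to be organizing this absorption \emph{uniformly in $\eps$ and in the number of layers $N_{\varrho,1}$}, i.e.\ verifying that the finitely many constants $A_1,\dots,A_9$, the universal $c_o, C_E, C_{E,1}$, and the combinatorial counts $N_{\varrho,1}, N_{\varrho,\kappa}, N_{\varrho,\delta_1}$ can be chosen so that every one of the $O(N_{\varrho,1})$ inequalities leaves a fixed positive fraction of its dissipation; this is where the precise numerical conditions \eqref{unicon1}, \eqref{Rstrdelta} and the thresholds on $\eps$ are used, and where one must be careful not to lose a factor growing with the number of layers.

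\textbf{From a priori bounds to the lifespan.} For part (2), with $\eps$ now satisfying the stronger restriction \eqref{Rstreps} (which in particular includes the thresholds of part (1) and the extra smallness needed for the $D_3$-coefficient and the $V^{q_1,\eps}$ estimate), I would set up a standard continuity argument. Define $T^*$ as the supremum of times $t\le T$ on which the five bootstrap inequalities of Theorem~\ref{thmwepo}(i) hold with the stated constants ($c_1$ for the density lower bound, $c_2$ for $\|f\|^2_{H^{3/2+2\delta_1}_xL^2_{\gamma+4}}$, $4V^{q_1,\eps}(f_0)$, $4M_1$, $4M_2$). On $[0,T^*]$ the a priori estimates of part (1) apply (with $g$ replaced by $f$ itself, since $f$ satisfies the same bounds), giving $\mathbb{E}^{1,1/2+2\delta_1,\eps}(f(t))\le \mathcal{C}_{E,1}$ and $\mathbb{E}^{N,\kappa,\eps}(f(t))\le C(T^*,\mathcal{C}_{E,1},\dots)$; the lower bound on the density is propagated from $\inf|f_0|_{L^1}\ge 2c_1$ using the $L^1$ estimate of Proposition~\ref{L1i}(i) combined with the averaging/transport control (the density solves a transport equation with a bounded loss term, so on a short time it stays $\ge c_1$); and the key quantity $\|f\|^2_{H^{3/2+2\delta_1}_xL^2_{\gamma+4}}\le \|W_{\gamma+4}f\|^2_{H^{3/2+2\delta_1}_xL^2}$ is controlled by $E^{1,1/2+2\delta_1,\eps}(f)$, which starts at $\le M_1$ and, by the continuity of $t\mapsto \mathbb{E}^{1,1/2+2\delta_1,\eps}(f(t))$ and the differential inequality $\frac{d}{dt}\mathbb{E}^{1,1/2+2\delta_1,\eps}\le C\mathbb{E}^{1,1/2+2\delta_1,\eps}(1+\mathbb{E}^{1,1/2+2\delta_1,\eps})$ extracted from the summed estimates, cannot reach its target value $c_2/\,$(appropriate constant) before a time $T^*=T^*(c_1,c_2,M_1,M_2,\mathbb{W}_I)$ depending only on the data. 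Choosing $T^*$ to be the minimum of all these waiting times gives a strictly positive lifespan on which all of \eqref{lubforg}, \eqref{322regu}, \eqref{52regu} hold for $f$, which is the assertion. The one technical wrinkle here is that $T^*$ must be shown independent of $\eps$ (so that the limit $\eps\to 0$ in Theorem~\ref{thmwepo}(ii),(iii) makes sense); this follows because all the constants entering the differential inequalities are either universal or depend only on $c_1,c_2,M_1,M_2,\mathbb{W}_I$ once $\eps$ is below the fixed thresholds, the $\eps$-dependent terms having been absorbed rather than carried.
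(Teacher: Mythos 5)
Your overall architecture matches the paper's: close the $L^1_{l_1}$ and $L^2_{l_2}$ moment estimates via Propositions~\ref{L1i}, \ref{L2m}; induct over $(m,n)\in\mathbb{I}_x(N,\kappa)$ combining Propositions~\ref{Enxmn1}--\ref{EnNk} with the hypo-elliptic gains of Propositions~\ref{smxmn1}, \ref{smxmn2}; convert the dangerous weight-increase terms into controllable pieces via Propositions~\ref{intx1}, \ref{intx2} and the conditions (W-4), (W-5); absorb the $\eps^{2(1-s)}$ and $\eps^{2s}$ contributions under the coercivity using the three stated thresholds on $\eps$; and then run Gr\"onwall and a continuity argument. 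The paper does exactly this, in a two-stage form (first closing in $\mathbb{E}^{1,\f12+2\delta_1,\eps}$ so that the constant $\mathcal{C}_{E,1}$ depends on $\mathfrak{L}$ rather than $T$, and then feeding that into the $\mathbb{E}^{N,\kappa,\eps}$ estimate). One small misdirection: the remark ``with $g$ replaced by $f$ itself'' does not belong here --- Theorem~\ref{En-Priori} is entirely about the linear equation with $g$ fixed, and the replacement $g\mapsto f$ happens only in the nonlinear iteration (Theorem~\ref{thmwepo}, Part I), after this theorem has established that $f$ inherits the bootstrap bounds from $g$.

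The genuine gap is the treatment of the density lower bound. Proposition~\ref{L1i} bounds the \emph{global} weighted moment $\|f\|_{L^1_{l_1}}=\int_{\TT^3\times\R^3}|f|\lr{v}^{l_1}$ and gives no pointwise-in-$x$ information on $\int_{\R^3}f\,dv$; and the density satisfies the continuity equation $\pa_t\rho+\na_x\cdot(\rho u)=\int_{\R^3}Q^\eps(g,f)\,dv$, whose source term has no sign and is not in $L^\infty_{t,x}$ without paying a weight. ``The density solves a transport equation with a bounded loss term'' is therefore not available. The paper's Step~3.3 instead estimates $\f{d}{dt}\,|f(t,x,\cdot)|_{L^1(\R^3_v)}$ directly from below using the upper bound $|Q^\eps(g,f)|_{L^1}\lesssim\|g\|_{H^{\f32+\delta_1}_xL^2_{\gamma+2s+4}}|W^\eps_{2s}(D)f|_{L^2_{\gamma+2s+2}}$ of Theorem~\ref{ubofQe}, and then interpolates
\[
|W^\eps_{2s}(D)f|_{L^2_{\gamma+2s+2}}\lesssim \|W_{l_2}f\|_{L^2}^{2^{-N_{q_1,s,2}}}\,\|W^\eps_{q_1+s}(D)W_{\gamma/2}f\|_{L^2}^{1-2^{-N_{q_1,s,2}}},
\]
which is precisely where condition (W-6) and the index $N_{q_1,s,2}=[\log_2(\tfrac{q_1+s}{q_1-s})]+1$ enter, giving $|f(t)|_{L^1}\ge|f_0|_{L^1}-2C_E\sqrt{M_2}(t+2\sqrt{M_1}\sqrt{t})$ and hence a lifespan $T^*$ depending only on $c_1,c_2,M_1,M_2,\mathbb{W}_I$. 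Without this interpolation one cannot control the $L^1$ loss uniformly in $\eps$, so the bootstrap for $\inf_x|f|_{L^1}\ge c_1$ does not close as written in your proposal.
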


\begin{proof}  We divide our proof into several steps.
	
	{\it Step 1: Closing   the energy estimates in space $\mathbb{E}^{1,\f12+2\delta_1,\eps}$.} We first show that the energy estimates can be closed in $\mathbb{E}^{1,\f12+2\delta_1,\eps}$. To do that, we first revisit the estimates obtained in the previous section.
	
	{\it Step 1.1: $L^1$ moment estimates.}
	Thanks to Proposition \ref{L1i}, the $L^1$ moment estimates can be summarized as follows:
	\beno&&(i).\, \|f(t)\|_{L^1_{l_1}}+ l_1^sA _1(c_1,c_2)\int_0^t \|f\|_{L^1_{l_1+\gamma}}d\tau\le \|f_0\|_{L^1_{l_1}} +C(l_1) \bigg[\int_0^t \big((c_2+4M_1)\mathbb{E}^{1,\f12+2\delta_1,\eps}(f(\tau))d \tau\bigg]\\
	&&\qquad\qquad\qquad\qquad\qquad+C_El_1^{-1-s} 
	\int_0^t  \|f\|_{H^{\f32+2\delta_1}_xL^2_{\gamma+4}} \mathbb{D}_1^{1,\f12+2\delta_1,\eps}(g(\tau)) d\tau;
	\\ &&(ii).\, \|f(t)\|_{L^1_{l_1-\gamma}}^2+ l_1^sA _1(c_1,c_2)\int_0^t   \|f\|_{L^1_{l_1}}\|f\|_{L^1_{l_1-\gamma}}d\tau \\
	&&\le \|f_0\|_{L^1_{l_1-\gamma}}^2+C(l_1) \bigg[\int_0^t (c_2+M_1+l^{-1})\mathbb{E}^{1,\f12+2\delta_1,\eps}(f(\tau)) d\tau\bigg].
	\eeno
	
	{\it Step 1.2: $L^2$ moment estimates.} Thanks to Proposition \ref{L2m}, we have
	\beno  &&\|f(t)\|_{L^2_{l_2}}^2+\f16\int_0^t\int_{\TT^3} \mathcal{E}_g^{\gamma,\eps}(fW_{l_2})dxd\tau
	+A_2(c_1,c_2)
	\int_0^t  \bigg[\big(\|W^\ep_s(D)W_{l_2+\gamma/2}f\|_{L^2}^2 \\&&\qquad +\int_{\TT^3}\mathcal{E}_{\mu}^{0,\eps}\big(W_{l_2+\gamma/2}f\big)dx\bigg]d\tau+A_3(c_1,c_2) \delta^{-2s}\int_0^t \|f\|_{L^2_{l_2+\gamma/2}}^2 d\tau\le \|f_0\|_{L^2_{l_2}}^2+(A_4(c_1, c_2)\delta^{-6-6s}
	\\&&\qquad+C_Ec_2)\int_0^t \mathbb{E}^{1,\f12+2\delta_1,\eps}(f(\tau))   d\tau
	+C_E\delta^{2s} \int_0^t \mathbb{D}_1^{1,\f12+2\delta_1,\eps}(g(\tau)) \|f\|_{H^{\f32+2\delta_1}_xL^2_{\gamma+4}}^2
	d\tau.
	\eeno
	
	{\it Step 1.3: Propagation of the regularity with the symbol $W^\eps_q(D)$.} Due  to \eqref{func4} and Proposition \ref{Envq},
	we have
	\beno
	&& V^{q_1,\eps}(f(t))+\mathcal{C}_1(c_1,c_2)\int_0^t \|W^{\ep}_{q_1+s}(D) W_{\gamma/2}f\|^2_{L^2}d\tau \le   V^{q_1,\eps}(f_0)+ \mathcal{C}_2(c_1,c_2)\int_0^t\|W^\ep_{q_1}(D)W_{\gamma/2}f\|_{ L^2}^2d\tau\\&&\,+C_E\int_0^t \bigg(\|W^\eps_{(q_1-1)^+}(D)f\|_{H^{1}_xL^2}^2
	+ \|g\|_{H^{\f32+\delta_1}_xL^2}^2( \mathrm{1}_{2s>1}\|W^\ep_{q_1+s-1}(D)W_{\gamma/2+ \f52}f\|^2_{ L^2}+ \|W^\ep_{q_1-s+\log}(D)W_{\gamma/2+\f52}f\|^2_{L^2} \\&&\,\times\mathrm{1}_{2s=1}+\mathrm{1}_{2s<1}\|W^{\eps}_{q_1-s}(D)W_{\gamma/2+\f52}f\|_{ L^2}^2)
	+ \|g\|_{H^{\f32+\delta_1}_xL^2_{\gamma+3}}^2( \mathrm{1}_{2s>1}\| W^\ep_{q+s-1}(D)f\|^2_{ L^2} +\mathrm{1}_{2s=1} \|W^\ep_{q_1-s+\log}(D)f\|_{ L^2}^2\\&&\,+\mathrm{1}_{2s<1}\|W^\eps_{q_1-s}(D) f\|_{L^2}^2)  + \|W^\ep_{q_1+s-1+\eta}(D)g\|_{ L^2}^2  \| f\|_{H^{\f32+\delta_1}_xL^2}^2+  \| g\|_{H^{\f32+\delta_1}_xL^2_2}^2 \|f\|_{L^2}^2
	\bigg)d\tau.
	\eeno
	
	It is not difficult to check that
	\beno    |W^\eps_{q_1}(D)f|_{L^2_l}&\lesssim& |f|_{L^2_l}^{\f{s}{q_1+s}}|W^\eps_{q_1+s}(D)f|_{L^2_l}^{\f{q_1}{q_1+s}};\\
	\|W^\eps_{(q_1-1)^+}(D)f\|_{H^1_xL^2}&\lesssim&\|W^\eps_{\big((q_1-1)^+-(2^{n_1}-1)(1+s-q_1)\big)^+}(D)f\|_{H^1_xL^2}^{2^{-n_1}}\|W^\eps_{s}(D)f\|_{H^1_xL^2}^{1-2^{-n_1}};\eeno\beno
	|W^\ep_{q_1+s-1}(D)W_{\gamma/2+\f52}f|^2_{L^2}&\lesssim&
	|W^\ep_{(q_1+s-2^{n_2})^+}(D)W_{\gamma/2}W_{\f522^{n_2}}f|^{2^{-n_2}}_{L^2}|W^\ep_{q_1+s}(D)W_{\gamma/2} f|^{1-2^{-n_2}}_{L^2};\\
	|W^\ep_{q_1-s+\eta}(D)W_{\gamma/2+\f52}f|^2_{L^2}&\lesssim&
	|W^\ep_{(q_1-s+\eta-(2^{n_3}-1)(2s-\eta))^+}(D)W_{\gamma/2}\f52W_{2^{n_3}}f|^{2^{-n_3}}_{L^2}|W^\ep_{q_1+s}(D)W_{\gamma/2} f|^{1-2^{-n_3}}_{L^2}.
	\eeno
	Here $\eta$ can be chosen to be sufficiently small. 
	By choosing $n_1=[\log_2(\f{s}{1+s-q_1} )]+1$ if $1<q_1<1+s$ and  $n_2=n_3=N_{q_1,s,1}=\max\{[\log_2(q_1+s)]+1,[\log_2\big(\f{q_1+s}{2s}(1+\delta_1)\big)]+1\}$, and using (W-6) of Definition \ref{ws1}, we conclude that	
	\ben\label{Eforq1}
	&&  V^{q_1,\eps}(f(t))+\f12\mathcal{C}_1(c_1,c_2)\int_0^t \|W^{\ep}_{q_1+s}(D) W_{\gamma/2}f\|^2_{L^2}d\tau\notag\\
	&&\le V^{q_1,\eps}(f_0)+\eta\int_0^t \mathbb{D}_2^{1,\f12+\delta_1,\eps}(f)d\tau +C(C_E, c_1, c_2, M_1, \eta^{-1}, \mathbb{W}_{I})\int_0^t \mathbb{E}^{1,\f12+2\delta_1,\eps}(f(\tau))   d\tau.
	\een
	
	{\it  Step 1.4: High order estimates.} Thanks to Proposition \ref{Enxmn1}, we get
	that \beno && \|W_{m,n}  f(t)\|^2_{H^{m+n\varrho}_xL^2}+\f{c_o}3\int_0^t\bigg(\int_{\TT^3} \mathcal{E}_g^{\gamma,\eps}(W_{m,n}     |D_x|^{m+n\varrho} f)dx\\&&\qquad+c_oA_5(c_1, c_2)\big(\|W^\epsilon_s(D)W_{\gamma/2}W_{m,n}      f\|_{H^{m+n\varrho}_xL^2}^2+\int_{\TT^3} \mathcal{E}^{0,\eps}_\mu(W_{\gamma/2}W_{m,n}    |D_x|^{m+n\varrho} f)dx\big)\bigg)d\tau\\
	&&\le  \|W_{m,n}  f_0\|^2_{H^{m+n\varrho}_xL^2}+A_6(c_1,c_2)\int_0^t\big( C(\eta_1^{-1}, \mathbb{W}_{I}) \mathbb{E}^{1,\f12+2\delta_1,\eps}(f)+\eta_1\mathbb{D}^{1,\f12+2\delta_1,\eps}_3(f)+\eps^{2s}\mathbb{D}^{1,\f12+2\delta_1,\eps}_2(f)\big)d\tau\\&&\quad+C_E\int_0^t \bigg( c_2\mathbb{E}^{1,\f12+2\delta_1,\eps}(f)
	+ \big(C(\eta_2^{-1}, \mathbb{W}_{I}) M_1+\eta_2\mathbb{D}^{1,\f12+2\delta_1,\eps}_3(g)+\eps^{2s}\mathbb{D}^{1,\f12+2\delta_1,\eps}_2(g)\big)\|f\|_{H_x^{\f32+\delta_1 }L^2_4}^2 \\&&\quad
	+ c_2(\eta_3\mathbb{D}^{1,\f12+2\delta_1,\eps}_2(f)+C(\eta_3^{-1},\mathbb{W}_{I}) \mathbb{E}^{1,\f12+2\delta_1,\eps}(f)\bigg)d\tau.
	\eeno
	
	Then we conclude that if $W_{m,n}\in \mathbb{W}_{I}(N,\kappa,\varrho,\delta_1,q_1,q_2)$ with $m+n\rho\le \f32+2\delta_1$,
	\beno &&(i).\quad \|W_{m,n}  f(t)\|^2_{H^{m+n\varrho}_xL^2}+\f{c_o}3\int_0^t\bigg(\int_{\TT^3} \mathcal{E}_g^{\gamma,\eps}(W_{m,n}     |D_x|^{m+n\varrho} f)dx\\&&\qquad+c_oA_5(c_1, c_2)\big(\|W^\epsilon_s(D)W_{\gamma/2}W_{m,n}    f\|_{H^{m+n\varrho}_xL^2}^2+\int_{\TT^3} \mathcal{E}^{0,\eps}_\mu(W_{\gamma/2}W_{m,n}   |D_x|^{m+n\varrho} f)dx\big)\bigg)d\tau\\
	&&\le \|W_{m,n}  f_0\|^2_{H^{m+n\varrho}_xL^2}+C( \mathbb{W}_{I},\eta_1^{-1},\eta_2^{-1},\eta_3^{-1},c_1,c_2,M_1)\int_0^t \mathbb{E}^{1,\f12+2\delta_1,\eps}(f)d\tau
	+A_6\eta_1 \int_0^t \mathbb{D}^{1,\f12+2\delta_1,\eps}_3(f)d\tau\\&&\quad+(A_6\eps^{2s}+C_Ec_2\eta_3)\int_0^t \mathbb{D}^{1,\f12+2\delta_1,\eps}_2(f)d\tau +\int_0^t(\eta_2\mathbb{D}^{1,\f12+2\delta_1,\eps}_3(g)+\eps^{2s}\mathbb{D}^{1,\f12+2\delta_1,\eps}_2(g)\big)\mathbb{E}^{1,\f12+2\delta_1,\eps}(f)d\tau,\\ 
  &&(ii).  \|W_{1,\f12+\delta_1}   f(t)\|^2_{H^{\f32+\delta_1}_xL^2}+\|W_{\gamma+4}   f(t)\|^2_{H^{\f32+2\delta_1}_xL^2}
	 +\f{c_o}3\int_0^t\bigg(\int_{\TT^3} \big(\mathcal{E}_g^{\gamma,\eps}(W_{1,\f12+\delta_1}  |D_x|^{\f32+\delta_1} f)\\&& +\mathcal{E}_g^{\gamma,\eps}(W_{\gamma+4}  \  |D_x|^{\f32+2\delta_1} f) \big)dx +c_oA_5(c_1, c_2)\big(\|W^\epsilon_s(D)W_{\gamma/2}W_{1,\f12+\delta_1}   \|_{H^{\f32+\delta_1}_xL^2}^2+\|W^\epsilon_s(D)W_{\gamma/2}W_{\gamma+4}  f\|_{H^{\f32+2\delta_2}_xL^2}^2\\&& +\int_{\TT^3}( \mathcal{E}^{0,\eps}_\mu(W_{\gamma/2}W_{1,\f12+\delta_1}   |D_x|^{\f32+\delta_1} f)+\mathcal{E}^{0,\eps}_\mu(W_{\gamma/2}W_{\gamma+4}    |D_x|^{\f32+2\delta_1} f) )dx\big)\bigg)d\tau\le  \|W_{1,\f12+2\delta_1}  f_0\|^2_{H^{\f32+\delta_1}_xL^2}\\
	&&+\|W_{\gamma+4}  f_0\|^2_{H^{\f32+2\delta_1}_xL^2}+C( \mathbb{W}_{I},\eta_1^{-1},\eta_2^{-1},\eta_3^{-1},c_1,c_2,M_1)\int_0^t \mathbb{E}^{1,\f12+2\delta_1,\eps}(f)d\tau
	+A_6\eta_1 \int_0^t \mathbb{D}^{1,\f12+2\delta_1,\eps}_3(f)d\tau\\&&\quad+(A_6\eps^{2s}+C_Ec_2\eta_3)\int_0^t \mathbb{D}^{1,\f12+2\delta_1,\eps}_2(f)d\tau +\int_0^t(\eta_2\mathbb{D}^{1,\f12+2\delta_1,\eps}_3(g)+\eps^{2s}\mathbb{D}^{1,\f12+2\delta_1,\eps}_2(g)\big)\mathbb{E}^{1,\f12+2\delta_1,\eps}(f)d\tau.\eeno
	
	{\it Step 1.5:  Smoothing estimates for $x$ variable.} We rewrite the smoothing estimates for $x$ variable. By taking
	$\eta\sim (c_o \eta_2 \f{A_5}{A_6})^{1/(2s)}, \eta_1\sim (c_o \eta_2 \f{A_5}{A_6})^{1/2+2/s},\eps\le [(c_o \eta_2 \f{A_5}{A_6})^{1+4/s}c_2^{-1}]^{1/(2(1-s))}$ in Proposition \ref{smxmn1} , which imply that $\eta^{-8}\sim (c_o \eta_2 \f{A_5}{A_6})^{-4/s}$,
	we  have
	\beno  &&2A_6(c_1,c_2)\int_0^t \|W_{-d_1}(W_{m,n+1}W_{\gamma/2+d_1+d_2}    f)_\phi\|_{H^{m+(n+1)\varrho}_xL^2}^2 d\tau\eeno\beno
	&& \le 2A_6C_E(c_o \eta_2 \f{A_5}{A_6})^{-4/s}\|W_{m,n}W_{-d_3}   f_0\|^2_{H^{m+n\varrho}_xL^2}+ 2C_Ec_0\eta_2A_5\int_0^t  \|W^\ep_s(D)W_{m,n}W_{\gamma/2}W_{-d_3}  f\|_{H^{m+n\varrho}_xL^2}^2 d\tau\\&&\quad + C_E(c_o \eta_2 \f{A_5}{A_6})^{-4/s} \big(1+c_2^{\f{(2s-1)}{1-s}}((c_o \eta_2 \f{A_5}{A_6})^{-\f{(s+4)(2s-1)}{s(1-s)}}+c_2+4M_1\big)\int_0^t  \| W_{m,n}W_{-d_3}    f\|_{H^{m+n\varrho}_xL^2}^2 d\tau
	\\&&\quad+C_E(c_o \eta_2 \f{A_5}{A_6})^{-4/s} \int_0^t
	(c_2\eta_3\mathbb{D}^{1,\f12+2\delta_1,\eps}_2(f)+C(\eta^{-1}_3,\mathbb{W}_{I})\|W^\eps(D)f\|_{L^2}^2) d\tau .
	\eeno
	Choose $C_E(c_o \eta_2 \f{A_5}{A_6})^{-4/s}c_2\eta_3=\eta_2$, then for $m+n\varrho\le 1+N_{\varrho,1}\varrho$, we conclude that
	\ben\label{esti-d3}
	&&2A_6(c_1,c_2)\int_0^t \|W_{-d_1}(W_{m,n+1}W_{\gamma/2+d_1+d_2}    f)_\phi\|_{H^{m+(n+1)\varrho}_xL^2}^2 d\tau\nonumber\\
	&& \le 2A_6C_E(c_o \eta_2 \f{A_5}{A_6})^{-4/s}\|W_{m,n}W_{-d_3}   f_0\|^2_{H^{m+n\varrho}_xL^2}+ (2C_Ec_0A_5+1)\eta_2\int_0^t \mathbb{D}^{1,\f12+2\delta_1,\eps}_2(f)d\tau\\&&\quad+C(\eta_2^{-1},\mathbb{W}_{I},c_1,c_2,M_1)\int_0^t\mathbb{E}^{1,\f12+2\delta_1,\eps}(f)  d\tau.\nonumber
	\een
	It implies that if $(2C_Ec_0A_5+1)\eta_2(N_{\varrho,1}+2)2\le A_7/100$, then  
	\beno &&2A_6(c_1,c_2)\int_0^t \|W_{-d_1}(W_{m,n+1}W_{\gamma/2+d_1+d_2}    f)_\phi\|_{H^{m+(n+1)\varrho}_xL^2}^2 d\tau \le C(c_1,c_2)\|W_{m,n}W_{-d_3}   f_0\|^2_{H^{m+n\varrho}_xL^2}\\&&\quad+  \f{A_7}{200(N_{\varrho,1}+2)}\int_0^t \mathbb{D}^{1,\f12+2\delta_1,\eps}_2(f)d\tau+C(W_{m,n},c_1,c_2,M_1)\int_0^t\mathbb{E}^{1,\f12+2\delta_1,\eps}(f)  d\tau.\eeno
	Moreover the conditions $\eps\le [(c_o \eta_2 \f{A_5}{A_6})^{1+4/s}c_2^{-1}]^{1/(2(1-s))}$ and $(2C_Ec_0A_5+1)\eta_2(N_{\varrho,1}+2)2\le A_7/100$ yield that
	\ben\label{aumforeps1}\eps\le
	[(\f{A_5c_o  A_7}{200A_6(N_{\varrho,1}+2)(2C_Ec_0A_5+1)})^{1+4/s}c_2^{-1}]^{1/(2(1-s))}.\een

	{\it Step 1.6: Closing the estimates.}
	Now patch together all the estimates from {\it Step 1.1} to {\it Step 1.4}, then for $\eta\ll1$, we  arrive at
	\ben\label{continousE1}
	&&(i).\,\mathbb{E}^{1,\f12+2\delta_1,\eps}(f(t))+\f{c_o}3\int_0^t\mathbb{D}_g^{1,\f12+2\delta_1,\eps}(f(\tau))d\tau+A_7\int_0^t\mathbb{D}_2^{1,\f12+2\delta_1,\eps}(f(\tau))d\tau +A_8\int_0^t\mathbb{D}_1^{1,\f12+2\delta_1,\eps}(f(\tau))d\tau \nonumber\\
	&&\le   \mathbb{E}^{1,\f12+2\delta_1,\eps}(f_0)+C(\delta,\eta^{-1},c_1,c_2,W_{m,n},M_1)\int_0^t
	\mathbb{E}^{1,\f12+2\delta_1,\eps}(f)d\tau
	+C_El_1^{-1-s} 
	\int_0^t  \|f\|_{H^{\f32+2\delta_1}_xL^2_{\gamma+4}}\nonumber \\&&\qquad\times\mathbb{D}_1^{1,\f12+2\delta_1,\eps}(g(\tau)) d\tau
	+C_E \delta^{2s}\int_0^t \mathbb{D}_1^{1,\f12+2\delta_1,\eps}(g(\tau)) \|f\|_{H^{\f32+2\delta_1}_xL^2_{\gamma+4}}^2
	d\tau+ 2(N_{\varrho,1}+2) \\&&\qquad\times\int_0^t\bigg[\big(C(\eta^{-1}, \mathbb{W}_{I}) M_1+\eta\mathbb{D}^{1,\f12+2\delta_1,\eps}(g)+\eps^{2s}\mathbb{D}^{1,\f12+2\delta_1,\eps}_2(g)\big)\mathbb{E}^{1,\f12+2\delta_1,\eps}(f)+A_6\eta \mathbb{D}^{1,\f12+2\delta_1,\eps}_3(f)\bigg]d\tau. \notag \een 
	From this together with the estimate in {\it Step 1.5}, if \eqref{aumforeps1} and  $A_6\eps^{2s}\le A_7/(N_{\rho,2}+2)$ hold, we drive that \beno
	&&\mathbb{E}^{1,\f12+2\delta_1,\eps}(f(t))+\f{c_o}3\int_0^t\mathbb{D}_g^{1,\f12+2\delta_1,\eps}(f(\tau))d\tau+A_7\int_0^t\mathbb{D}_2^{1,\f12+2\delta_1,\eps}(f(\tau))d\tau +A_8\int_0^t\mathbb{D}_1^{1,\f12+2\delta_1,\eps}(f(\tau))d\tau\\&&+\f34A_6\int_0^t\mathbb{D}_3^{1,\f12+2\delta_1,\eps}(f(\tau))d\tau \le C(c_1,c_2)\mathbb{E}^{1,\f12+2\delta_1,\eps}(f_0)+C(c_1,c_2,\mathbb{W}_{I},M_1)\int_0^t
	\mathbb{E}^{1,\f12+2\delta_1,\eps}(f)d\tau\\&&+
	C(c_1,c_2) \int_0^t   \bigg (\mathbb{D}_1^{1,\f12+2\delta_1,\eps}(g(\tau)) +\mathbb{D}_2^{1,\f12+2\delta_1,\eps}(g(\tau))+\mathbb{D}_3^{1,\f12+2\delta_1,\eps}(g(\tau)\bigg) \mathbb{E}^{1,\f12+2\delta_1,\eps}(f)d\tau.  \nonumber
	\eeno
	By Gronwall inequality, we deduce that for $t\in [0,T]$ with $T\le \mathfrak{L}$,
	\ben\label{e321} &&\mathbb{E}^{1,\f12+2\delta_1,\eps}(f(t))+\int_0^t \mathbb{D}^{1,\f12+2\delta_1,\eps}(f(\tau))d\tau\nonumber\\ &&\le
	2\exp\{C(c_1,c_2,\mathbb{W}_{I},M_1)\mathfrak{L}+C(c_1,c_2)4M_1\}\tilde{C}(c_1,c_2,\mathbb{W}_{I},M_1,\mathfrak{L})
	\eqdefa C_{E,1}. \een
	It gives the proof of the first result of  $(1)$ in the theorem.
	
	{\it Step 2: Continuous bounds with respect to the initial data.}
	We want to prove that    the continuous bounds with respect to the initial data can be obtained if  we shrink the time interval.  
	
	We first give the new bound for $\|f\|_{H^{\f32+2\delta_1}_xL^2_{\gamma+4}}$. Thanks to Proposition \ref{Enx32}, Remark \ref{intxbod1} and the bounds obtained in  {\it Step 1},
	we derive that for $t\in[0,T]$,
	\beno
	\|f(t)\|_{H^{\f32+2\delta_1}_xL^2_{\gamma+4}}^2&\le& c_2/2+
	(A_6+c_2C_{E,1})\big(C(\eta_1,\mathbb{W}_{I})C_{E,1}t+(\eta_1+\eps^{2s})C_{E,1}\big)+C_Ec_2C_{E,1}t\\&&+c_2\big(C(\eta_2,\mathbb{W}_{I})C_{E,1}t+\eta_2C_{E,1}\big).
	\eeno
	Then with the help of \eqref{Rstreps}, it implies that  there exists a time $T_1=T_1(C_{E,1},W_{m,n},c_1,c_2)\le T$ such that
	for $t\in [0,T_1]$, \ben\label{esf32}\|f(t)\|_{H^{\f32+2\delta_1}_xL^2_{\gamma+4}}^2\le c_2.\een
	
	Now we can improve the estimate for $\mathbb{E}^{1,\f12+2\delta_1,\eps}(f(t))$. Use conditions \eqref{e321},\eqref{esf32}  and \eqref{Rstrdelta},
	then we obtain that  $\int_0^t \mathbb{E}^{1,\f12+2\delta_1,\eps}(f(\tau))d\tau\le C_{E,1}t$, $C_El_1^{-1-s}
	\int_0^t  \|f\|_{H^{\f32+2\delta_1}_xL^2_{\gamma+4}} \mathbb{D}_1^{1,\f12+2\delta_1,\eps}(g(\tau)) d\tau\le  C_El_1^{-1-s}\sqrt{c_2}\\\times A_8^{-1}4M_1\le M_1/10, C_E\delta^{2s}\int_0^t \mathbb{D}_1^{1,\f12+2\delta_1,\eps}(g(\tau)) \|f\|_{H^{\f32+2\delta_1}_xL^2_{\gamma+4}}^2
	d\tau\le C_E\delta^{2s}c_2A_8^{-1}4M_1\le M_1/10. $ 
	Then  \eqref{Rstreps} and  \eqref{continousE1}  imply that  there exists a time $T_2=T_2(C_{E,1},W_{m,n},c_1,c_2)\le T_1$   such that for $t\le T_2$,
	\ben\label{e322}
	&&\mathbb{E}^{1,\f12+2\delta_1,\eps}(f(t)) +A_7\int_0^t\mathbb{D}_2^{1,\f12+2\delta_1,\eps}(f(\tau))d\tau +A_8\int_0^t\mathbb{D}_1^{1,\f12+2\delta_1,\eps}(f(\tau))d\tau \le 4M_1.\een
	From this together with \eqref{Eforq1}, we may also derive that $\sup_{t\in[0,T_2]}V^{q_1,\eps}(f(t))\le 4V^{q_1,\eps}(f_0)$.
	
	Next we go back to \eqref{esti-d3} to improve the estimate of $\int_0^t\mathbb{D}_3^{1,\f12+2\delta_1,\eps}(f(\tau))d\tau$. Choose $\eta_2$ verifying that
	$ 2(N_{\varrho,1}+2)(2A_6)^{-1}(2C_Ec_0A_5+1)\eta_24A_{7}^{-1}= 1/5, $ we have
	\beno && \int_0^t \|W_{-d_1}(W_{m,n+1}W_{\gamma/2+d_1+d_2}    f)_\phi\|_{H^{m+(n+1)\varrho}_xL^2}^2 d\tau\\
	&& \le A_9\|W_{m,n}W_{-d_3}   f_0\|^2_{H^{m+n\varrho}_xL^2}+ \f{1} {10(N_{\varrho,1}+2)}M_1+C(\eta_2^{-1},\mathbb{W}_{I},c_1,c_2,M_1)\int_0^t\mathbb{E}^{1,\f12+2\delta_1,\eps}(f)  d\tau.
	\eeno
 It implies that there exists a time $T_3=T_3(W_{m,n}, c_1,c_2)\le T_2$ such that
	$ \int_0^{T_3} \mathbb{D}^{1,\f12+2\delta_1,\eps}_3(f(\tau))d\tau \le 4A_9 M_1
	$. 
	From this together with \eqref{e322} imply that\eqref{322regu} holds for $f$ in the time interval $[0, T_3]$.  
	
	{\it Step 3: Energy estimates in $\mathbb{E}^{N,\kappa,\eps}$ with $N+\kappa\ge \f52+\delta_1$.}  We will use the inductive method to prove the propagation of the smoothness.
	
	{\it Step 3.1: Propagation of the regularity in   space $E^{m,n,\eps}$ with $m+n\varrho\ge 1+(N_{\varrho,2}+1)\varrho$.}
	Assume that for   $t\in [0,T^*_{m,n-1}=T^*_{m,n-1}(T,c_1,c_2, M_1, M_2, W_{m,n})]\le T_3$,
	it hold
	\beno && \mathbb{E}^{1,\f12+2\delta_1,\eps}(f(t))\le 4M_1\le 4M_2; \int_0^t D^{m,n-1,\eps}_3(f(\tau))d\tau\le A_9M_2+\f{A_9}{N(N_{\rho,1}+2)}M_2;\\
	&&E^{m,n-1,\eps}(f(t))+A_7\int_0^t D^{m,n-1,\eps}_2(f(\tau))d\tau\le E^{m,n-1,\eps}(f_0)+\f{1}{N(N_{\rho,1}+2)}M_2\le 4M_2.
	\eeno
	Next we will show   that there exists a time $T^*_{m,n}=T^*_{m,n}(T,c_1,c_2, M_1, M_2, W_{m,n})\le T^*_{m,n-1}$ such that
	\ben &&\label{in1}   \int_0^t D^{m,n,\eps}_3(f(\tau))d\tau\le A_9M_2+\f{A_9}{N(N_{\rho,1}+2)}M_2;\\
	&&\label{in2}E^{m,n,\eps}(f(t))+A_7\int_0^t D^{m,n,\eps}_2(f(\tau))d\tau\le E^{m,n,\eps}(f_0)+\f{1}{N(N_{\rho,1}+2)}M_2\le 4M_2.
	\een
	
	Thanks to Proposition \ref{Enxmn2} and the inductive assumptions, we first have
	\beno  &&E^{m,n,\eps}(f(t))+c_oA_5\int_0^t D^{m,n,\eps}_2(f)d\tau+\f{c_o}3\int_0^t D_g^{m,n,\eps}(f)d\tau\\
	&&\le E^{m,n,\eps}(f_0)+(A_6+C_Ec_2)\int_0^t\big[C(\eta^{-1}_1,\mathbb{W}_{I})E^{m,n,\eps}(f)+\eta_1D_3^{m,n-1,\eps}(f)+\eps^{2s}D^{m,n,\eps}_2(f)\big]d\tau+C_E(c_2\\&&+4M_1)\int_0^tE^{m,n,\eps}(f)d\tau+C_E(c_2+4M_2)
	\int_0^t\big[C(\eta_2^{-1},\mathbb{W}_{I})\|W^\eps_s(D)f\|_{L^2}^2+\eta_2(D_2^{m,n,\eps}(f)+D_2^{m,n-1,\eps}(f))\big]d\tau\eeno\beno
	&&\le E^{m,n,\eps}(f_0)+(A_6+C_Ec_2)\eta_14A_9M_2+\big((A_6+C_Ec_2)\eps^{2s}+C_E(c_2+4M_1)\eta_2\big)\int_0^t D_2^{m,n,\eps}(f)d\tau\\&&+\big[(A_6+C_Ec_2)C(\eta^{-1}_1,\mathbb{W}_{I})+C_E(c_2+4M_1)\big]\int_0^t
	E^{m,n,\eps}(f)d\tau+C_E(c_2+4M_2)C(\eta_2^{-1},\mathbb{W}_{I})4M_1t\\&&+C_{E}(c_2+4M_2)\eta_24M_2.
	\eeno
	Thanks to \eqref{Rstreps}, we may choose $\eta_2$ sufficiently small such that  $(A_6+C_Ec_2)\eps^{2s}+C_E(c_2+4M_1)\eta_2\le A_7$. Then  Gronwall's inequality implies that for $t\in[0,T^*_{m,n-1}]$,
$E^{m,n,\eps}(f(t))+A_7\int_0^t D^{m,n,\eps}_2(f)d\tau+\f{c_o}3\int_0^t D_g^{m,n,\eps}(f)d\tau \le C(c_1,c_2,M_1,M_2,\mathbb{W}_{I})\eqdefa C_{E,2}.$
	It yields \beno
	&&E^{m,n,\eps}(f(t))+A_7\int_0^t D^{m,n,\eps}_2(f)d\tau+\f{c_o}3\int_0^t D_g^{m,n,\eps}(f)d\tau\\
	&&\le E^{m,n,\eps}(f_0)+(A_6+C_Ec_2)\eta_14A_9M_2+ \big[(A_6+C_Ec_2)C(\eta^{-1}_1,\mathbb{W}_{I})+C_E(c_2+4M_1)\big] C_{E,2}t\\&&+C_E(c_2+4M_2)C(\eta_2^{-1},\mathbb{W}_{I})4M_1t+C_{E}(c_2+4M_2)\eta_24M_2. \eeno
	Then there exists a time $T^*_{m,n,1}=T^*_{m,n,1}(T,c_1,c_2, M_1, M_2, \mathbb{W}_{I})\le T^*_{m,n-1}$ such that \eqref{in2} holds.

	Next we give the estimate for $\int_0^t D^{m,n,\eps}_3(f)d\tau$. Thanks to Proposition \ref{smxmn2}, we have
	\beno  \int_0^t D_3^{m,n,\eps}(f)d\tau&
	\le &C_E\eta^{-8}E^{m,n,\eps}(f_0)+C_E\eta^{2s}\int_0^t D_2^{m,n,\eps}(f)d\tau+C_E\eta^{-8}\int_0^t E^{m,n,\eps}(f)d\tau\\&&+C_E\eta^{-8}(\eta_1^2+\eps^{2(1-s)}c_2)D_2^{m,n,\eps}(f)d\tau+C_E\eta^{-8-\f{2(2s-1)}{1-s}}c_2^{\f{2s-1}{1-s}}\int_0^t E^{m,n,\eps}(f)d\tau\\&&+C_E\eta^{-8}(c_2+4M_1)\int_0^t E^{m,n,\eps}(f)d\tau+C_E\eta^{-8}c_24M_2t\\&&+C_E\eta^{-8}(c_2+4M_2)\int_0^t\big[C(\eta_3^{-1},\mathbb{W}_{I})  \|W^\eps_s(D)f\|_{L^2}^2+\eta_3(D_2^{m,n,\eps}(f)+D_2^{m,n-1,\eps}(f))\big]d\tau.
	\eeno
	By following the argument applied in {\it Step 1.5}, we derive that for $t\in [0,T^*_{m,n,1}]$
	\beno
	\int_0^t D_3^{m,n,\eps}(f)d\tau&
	\le & A_9E^{m,n,\eps}(f_0)+(2C_Ec_0A_5+1)\eta_2\int_0^t (D_2^{m,n,\eps}(f)+D_2^{m,n-1,\eps}(f))d\tau\\&&\quad+C(\eta_2^{-1},\mathbb{W}_{I},c_1,c_2,M_2)4M_2t. \eeno
	This implies that there exists a time $T^*_{m,n}=T^*_{m,n}(T,c_1,c_2, M_1, M_2, \mathbb{W}_{I})\le T^*_{m,n,1}$ such that \eqref{in1} holds. We complete the   the inductive argument.
	
	{\it Step 3.2: Propagation of the mixed regularity.}  Thanks to Proposition \ref{Enmix},
	we have
	\beno
	&& \|W^\ep_{q_2}(D)f(t)\|_{H^{\f32+\delta_1}_xL^2}^2+\mathcal{C}_1(c_1,c_2)\int_0^t \|W^{\ep}_{q_2+s}(D) W_{\gamma/2}f\|^2_{H^{\f32+\delta_1}_xL^2}d\tau\\
	&&\le \|W^\ep_{q_2}(D)f_0\|_{H^{\f32+\delta_1}_xL^2}^2+ \mathcal{C}_2(c_1,c_2)\int_0^t\|W^\ep_{q_2}(D)W_{\gamma/2}f\|_{H^{\f32+\delta_1}_xL^2}^2d\tau+C_E\int_0^t \bigg(\|W^\eps_{(q_2-1)^+}(D)f\|_{H^{\f52+\delta_1}_xL^2}^2
	\\&&\quad+\|g\|_{H^{\f32+2\delta_1}_xL^2_{\gamma+4}}^2\|W^{\ep}_{q_2+s}(D)W_{\gamma/2+2s}f\|_{H^{\f32}_xL^2}^2 + \|g\|_{H^{\f32+\delta_1}_xL^2}^2( \mathrm{1}_{2s>1}\|W^\ep_{q_2+s-1}(D)W_{\gamma/2+\f52}f\|^2_{H^{\f32+\delta_1}_xL^2}\\&&\quad+\mathrm{1}_{2s=1}\|W^\ep_{q_2-s+\log}(D)W_{\gamma/2+\f52}f\|^2_{H^{\f32+\delta_1}_xL^2}+\mathrm{1}_{2s<1}\|W^{\eps}_{q_2-s}(D)W_{\gamma/2+\f52}f\|_{H^{\f32+\delta_1}_xL^2}^2)\\&&\quad
	+ \|g\|^2_{H^{\f32+\delta_1}_xL^2_{\gamma+3}}( \mathrm{1}_{2s>1}\| W^\ep_{q_2+s-1}(D)f\|^2_{H^{\f32+\delta_1}_xL^2} +\mathrm{1}_{2s=1} \|W^\ep_{q_2-s+\log}(D)f\|_{H^{\f32+\delta_1}_xL^2}^2+\mathrm{1}_{2s<1}\\&&\quad\times\|W^\eps_{q_2-s}(D) f\|_{H^{\f32+\delta_1}_xL^2}^2)  + \|W^\ep_{q_2+s-1+\eta}(D)g\|_{H^{\f32+\delta_1}_xL^2}^2 \| f\|_{H^{\f32+\delta_1}_xL^2}^2+  \| g\|_{H^{\f32+\delta_1}_xL^2_2}^2 \|f\|_{H^{\f32+\delta_1}_xL^2}^2
	\bigg)d\tau.
	\eeno
	
	It is not difficult to check that
	\beno |W^\eps_{q_2}(D)f|_{L^2_l}&\lesssim& |f|_{L^2_l}^{\f{s}{q_2+s}}|W^\eps_{q_2+s}(D)f|_{L^2_l}^{\f{q_2}{q_2+s}};\\
	\|W^\eps_{(q_2-1)^+}(D)f\|_{H^{\f52+\delta_1}_xL^2}&\lesssim&\|W^\eps_{\big((q_2-1)^+-(2^{n_0}-1)(1+s-q_2)\big)^+}(D)f\|_{H^{\f52+\delta_1}_xL^2}^{2^{-n_0}}\|W^\eps_{s}(D)f\|_{H^{\f52+\delta_1}_xL^2}^{1-2^{-n_0}};\eeno\beno
	\|W^\eps_{q_2+s}(D)W_{\gamma/2+2s}f\|_{H^{\f32}_xL^2}&\lesssim& \|W^\eps_{q_2+s}(D)W_{\gamma/2}W_{2^{n_1}2s}f\|_{H^{\f32- (2^{n_1}-1)\delta_1}_xL^2}^{2^{-n_1}}\|W^\eps_{q_2+s}(D)W_{\gamma/2}f\|_{H^{\f32+\delta_1}_xL^2}^{1-2^{n_1}};\\
	\|W^\eps_{q_2+s}(D)W_{\gamma/2}W_{2^{n_1}2s}f\|_{L^2}&\lesssim& \|W^\eps_{(q_2+s-(q_1-q_2)(2^{n_2}-1))^+}(D)W_{\gamma/2}W_{2^{n_1+n_2}2s}f\|_{ L^2}^{2^{-n_2}}\|W^\eps_{q_1+s}(D)W_{\gamma/2}f\|_{L^2}^{1-2^{-n_2}};\\
	|W^\ep_{q_2+s-1}(D)W_{\gamma/2+\f52}f|^2_{L^2}&\lesssim&
	|W^\ep_{(q_2+s-2^{n_3})^+}(D)W_{\gamma/2}W_{\f522^{n_3} }f|^{2^{-n_3}}_{L^2}|W^\ep_{q_2+s}(D)W_{\gamma/2} f|^{1-2^{-n_3}}_{L^2};\\
	|W^\ep_{q_2-s+\eta}(D)W_{\gamma/2+\f52}f|^2_{L^2}&\lesssim&
	|W^\ep_{(q_2-s+\eta-(2^{n_4}-1)(2s-\eta))^+}(D)W_{\gamma/2}W_{\f522^{n_4}}f|^{2^{-n_4}}_{L^2}|W^\ep_{q_2+s}(D)W_{\gamma/2} f|^{1-2^{-n_4}}_{L^2}.
	\eeno
	
	By choosing $n_0=[\log_2(\f{s}{1+s-q_2})]+1$, $n_1=N_{\delta_1}=[\log_2(3(2\delta_1)^{-1}+1)]+1, n_2=N_{q_1,q_2,s}=[\log_2(\f{q_2+s}{q_1-q_2}+1)]+1$ and $n_3=n_4=N_{q_2,s,1}=\max\{[\log_2(q_2+s)]+1,[\log_2\big(\f{q_2+s}{2s}(1+\delta_1)\big)]+1\}$, and using (W-6) of Definition \ref{ws1}, we conclude that
	\beno
	&& \|W^\ep_{q_2}(D)f(t)\|_{H^{\f32+\delta_1}_xL^2}^2+\mathcal{C}_1(c_1,c_2)\int_0^t \|W^{\ep}_{q_2+s}(D) W_{\gamma/2}f\|^2_{H^{\f32+\delta_1}_xL^2}d\tau\\
	&&\le \|W^\ep_{q_2}(D)f_0\|_{H^{\f32+\delta_1}_xL^2}^2+
	\eta_1\int_0^t \|W^{\ep}_{q_2+s}(D) W_{\gamma/2}f\|^2_{H^{\f32+\delta_1}_xL^2}d\tau
	+C(\eta_1,c_1,c_2)4M_2t+C_E\big(4M_2t+
	\\&&\quad+\int_0^tc_2(\eta_2\|W^{\ep}_{q_2+s}(D) W_{\gamma/2}f\|^2_{H^{\f32+\delta_1}_xL^2}+\eta_34M_2t+C(\eta_2,\eta_3)4M_2t)+
	c_24M_2t+c_2^2t\big).
	\eeno
	Then it yields that there exists a time $T_{q_2}=T_{q_2}(c_1,c_2,M_2,\mathbb{W}_{I})\le T_{N,\kappa}$ such that
	\beno \|W^\ep_{q_2}(D)f(t)\|_{H^{\f32+\delta_1}_xL^2}^2+\f12\mathcal{C}_1(c_1,c_2)\int_0^t \|W^{\ep}_{q_2+s}(D) W_{\gamma/2}f\|^2_{H^{\f32+\delta_1}_xL^2}d\tau
	\le 4\|W^\ep_{q_2}(D)f_0\|_{H^{\f32+\delta_1}_xL^2}^2\le 4M_2.\eeno

	{\it Step 3.3: The lower bound of the density. } By Theorem \ref{ubofQe}, it is easy to see
	\beno \f{d}{dt}|f|_{L^1}&\ge& -C_E\|f\|_{H^{\f52+\delta_1}_xL^2_3}-|Q^{\eps}(g,f)|_{L^1}\\
	&\ge&-C_E(\|f\|_{H^{\f52+\delta_1}_xL^2_3}+\|g\|_{H_x^{\f32+\delta_1}L^2_{\gamma+2s+4}}
	\|W^{\eps}_{2s}(D)f\|_{L^2_{\gamma+2s+2}}).
	\eeno
	
	Observe that
	$ \|W^{\eps}_{2s}(D)f\|_{L^2_{\gamma+2s+2}}\lesssim \|W^{\eps}_{(2s-(2^J-1)(q_1-s))^+}(D)W_{\gamma}W_{2^J(2s+2)}f\|_{L^2}^{2^{-J}}\|W^{\eps}_{q_1+s}(D)W_{\gamma/2}f\|_{L^2}^{1-2^{-J}}. $
	By choosing $J\ge \big[\log_2(\f{q_1+s}{q_1-s})\big]+1=N_{q_1,s,2}$ and using (W-6) of Definition \ref{ws1}, we have
	\beno \|W^{\eps}_{2s}(D)f\|_{L^2_{\gamma+2s+2}}&\lesssim& \| W_{\gamma}W_{2^{N_{q_1,s,2}}(2s+2)}f\|_{L^2}^{2^{-N_{q_1,s,2}}}\|W^{\eps}_{q_1+s}(D)W_{\gamma/2}f\|_{L^2}^{1-2^{-N_{q_1,s,2}}}\\&\lesssim&
	\|W_{l_2}f\|_{L^2}^{2^{-N_{q_1,s,2}}}\|W^{\eps}_{q_1+s}(D)W_{\gamma/2}f\|_{L^2}^{1-2^{-N_{q_1,s,2}}}.\eeno
	It implies that for $t\in [0,T_{N,\kappa}]$,
	$|f(t)|_{L^1}\ge |f_0|_{L^1}-2C_E\sqrt{M_2}(t+2\sqrt{M_1}\sqrt{t}).$
	
	Let $T^*=\min\{T_{q_2}, \f{c_1}{4C_E\sqrt{M_2}}, \big(\f{c_1}{8C_EM_2}\big)^2\}$. Then for $t\in[0,T^*]$,
	$|f(t)|_{L^2}\ge c_1$. We remark that \eqref{esf32} and \eqref{322regu} are proved in {\it Step 2}.  \eqref{52regu} can be obtained by combining the results from {\it Step 3.1} and  {\it Step 3.2}. 
	Then we complete the proof of $(2)$ in the theorem.
	
	Finally let us complete the proof of the second result in $(1)$. Actually it is easily obtained by the similar argument applied in {\it Step 3.1} and  {\it Step 3.2}. We skip the details here.
\end{proof}

 Now we are in a position to prove the well-posedness for the linear equation \eqref{EepLinB}.

\begin{thm}\label{Wepo-epslin}
	  Suppose that function spaces $\mathbb{E}^{N,\kappa,\eps}$and   $\mathbb{E}^{1,\f12+2\delta_1,\eps}$, the well-prepared sequence $\mathbb{W}_{I}(N,\kappa,\varrho,\delta_1,q_1,q_2)$  and $f_0$ verify all the conditions stated in Theorem \ref{thmwepo}.  Let $g$ verify \eqref{lubforg}, \eqref{322regu} and \eqref{52regu}.  
	 Then if $\eps$ verifies \eqref{Rstreps},   \eqref{EepLinB} admits a unique and non-negative solution in  $L^\infty([0,T];  
	 \mathbb{E}^{N,
\kappa,\eps})$  Moreover, there exists a time $T^*=T^*(T,c_1,c_2,M_1,M_2,\mathbb{W}_{I})\le T$ such that for $t\in[0,T^*]$, $f$ verifies \eqref{lubforg}, \eqref{322regu} and \eqref{52regu}.
 \end{thm}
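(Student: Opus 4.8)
The plan is to combine the two approximate-equation propositions (well-posedness for $Q^\ep_N$ and then for $Q^\ep_n$ with the full non-cutoff kernel) with the uniform energy estimates of Theorem \ref{En-Priori} to pass to the limit. The key observation is that Proposition \ref{wepoepl} already gives, for each fixed $\eps$, a unique non-negative solution $f^\eta$ of the damped equation \eqref{EepLinB} living in the ``rough'' space $L^\infty([0,T];L^1_xL^1_{l_1}\cap L^2_xL^2_{l_2}\cap H^2_xL^2_{l_{2,0}})$; the only thing missing is to remove the artificial damping term $\eta\lr{v}^{\gamma+2s}f$ and to upgrade the regularity to the full energy space $\mathbb{E}^{N,\kappa,\eps}$.

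First I would establish uniform (in $\eta$) bounds for $f^\eta$ in $\mathbb{E}^{N,\kappa,\eps}$. This is where Theorem \ref{En-Priori} does the real work: the {\it a priori} estimates proved there are insensitive to the presence of the good damping term $\eta\lr{v}^{\gamma+2s}f$ (it only helps, since $\lr{Q^\ep\text{-type terms}}$ are bounded exactly as before and the damping contributes a favorable sign after pairing against $hW_l$, $W^\ep_q(D)f$, etc.), so one re-runs Steps 1--3 of the proof of Theorem \ref{En-Priori} verbatim with $f$ replaced by $f^\eta$. This yields, for $\eps$ satisfying \eqref{Rstreps} and $t\in[0,T^*]$, the bounds \eqref{lubforg}, \eqref{322regu}, \eqref{52regu} for $f^\eta$ with constants independent of $\eta$, together with the dissipation bounds $\int_0^{T^*}\mathbb{D}^{N,\kappa,\eps}(f^\eta)\,d\tau\lesssim 1$. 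Then I would extract a weakly-$*$ convergent subsequence $f^\eta\rightharpoonup f$ in $L^\infty([0,T^*];\mathbb{E}^{N,\kappa,\eps})$; the transport structure plus the hypoelliptic/averaging estimate (Lemma \ref{hypoellipticity}, already embedded in $\mathbb{D}_3$) gives compactness in $x$, and a standard velocity-averaging plus the $L^2$ dissipation gives enough strong convergence (e.g. in $L^2_{loc}$ on $[0,T^*]\times\TT^3\times\R^3$) to pass to the limit in the bilinear collision term $Q^\ep(g,f^\eta)\to Q^\ep(g,f)$ and in $\eta\lr{v}^{\gamma+2s}f^\eta\to 0$. Lower semicontinuity of norms under weak-$*$ limits then transfers \eqref{lubforg}, \eqref{322regu}, \eqref{52regu} to $f$; non-negativity is preserved since each $f^\eta\ge0$ and convergence is a.e. along a further subsequence. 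Continuity in time, $f\in C([0,T^*];\mathbb{E}^{N,\kappa,\eps})$, follows from the equation (giving weak continuity) upgraded to strong continuity by the standard ``weak convergence $+$ norm convergence'' argument, using that $t\mapsto \mathbb{E}^{N,\kappa,\eps}(f(t))$ is continuous from the energy identity.

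For uniqueness I would take two solutions $f_1,f_2\in L^\infty([0,T^*];\mathbb{E}^{N,\kappa,\eps})$ with the same data, set $h=f_1-f_2$, which solves $\pa_t h+v\cdot\na_x h=Q^\ep(g,h)$, and run the $L^1_{l}$ estimate of Lemma \ref{L1md} (with $f$ there taken to be $g$ and the $Q^\ep(h,\cdot)$-term absent). Choosing $l=l_1$ large enough that the gain term $c l_1^s A_1(c_1,c_2)\|h\|_{L^1_{l_1+\gamma}}$ dominates every error term — here the flexibility of taking $l_1$ arbitrarily large is essential, as noted in the introduction around the Pozner inequality discussion — one obtains $\frac{d}{dt}\|h\|_{L^1_{l_1}}\le C\|h\|_{L^1_{l_1}}$ with $C$ depending on the (finite) norms of $g$, hence $h\equiv0$ by Gronwall. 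The main obstacle I anticipate is the limit passage: ensuring enough strong compactness of $f^\eta$ in $v$ to justify convergence of the quadratic-type term $Q^\ep(g,f^\eta)$ while only having weak-$*$ bounds — this is exactly what the velocity-averaging lemma plus the uniform $\mathbb{D}_2$ (fractional $v$-regularity) and $\mathbb{D}_3$ ($x$-regularity) dissipation estimates are designed to circumvent, so the work is in carefully assembling those into an Aubin--Lions-type compactness statement rather than in any new estimate.
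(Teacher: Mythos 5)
Your proposal is correct and follows the same overall structure as the paper --- approximate by the damped equation of Proposition \ref{wepoepl}, establish uniform bounds by re-running Theorem \ref{En-Priori} (the damping has a good sign), then remove the damping and recover the bounds --- but you diverge at the limit passage. The paper takes $\eta = 1/n$ and proves that $\{f^n\}$ is a \emph{strong} Cauchy sequence in $L^\infty([0,T];L^1_l)$: the difference $h^n = f^{n+1}-f^n$ itself solves the linear damped equation with a source term of size $O(1/n^2)$, so Proposition \ref{L1i} plus Gronwall gives a summable bound directly; weak compactness and velocity averaging are never invoked. This is cleaner precisely because \eqref{EepLinB}/\eqref{EpLinB} are \emph{linear} in $f$. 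Your concern about ``ensuring enough strong compactness of $f^\eta$ in $v$ to justify convergence of the quadratic-type term $Q^\ep(g,f^\eta)$'' overstates the difficulty: $g$ is a fixed coefficient, so $f\mapsto Q^\ep(g,f)$ is a bounded linear operator and weak convergence of $f^\eta$ in the dissipation space already passes the limit against test functions --- the Aubin--Lions machinery is unnecessary overhead here. Your plan would still work, but it trades a one-line contraction estimate for a compactness argument that the linearity lets one avoid. One small technical point the paper flags that you do not: when transferring the dissipation bounds to the limit $f$ by weak lower semicontinuity, the nonlocal quadratic form $\int_{\TT^3}\mathcal{E}^{0,\eps}_\mu(W_{N,\kappa}W_{\gamma/2}|D_x|^{N+\kappa}f)\,dx$ inside $\mathbb{D}_2^{N,\kappa,\eps}$ cannot be recovered directly from Fatou; the paper handles this by returning to the energy identity for $W_{m,n}\bar{\triangle}^{n\varrho}_k\pa_x^\alpha f$ with $|\alpha|=N-1$ once $f$ is known to solve the equation. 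Your uniqueness argument via Lemma \ref{L1md} (with the $Q^\ep(h,\cdot)$ contribution absent and $l=l_1$ large enough to absorb errors into the coercive gain term) is correct and is essentially the same contraction estimate the paper uses for the Cauchy sequence.
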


\begin{proof} We construct the approximate  equation as follows:
 \beno  \left\{
 \begin{aligned}
 	& \pa_t f^n+v\cdot \na_x f+\f{1}{n}\lr{v}^{\gamma+2s}f^n=Q^{\ep}(g,f^n),\\
 	&f^n|_{t=0}=f_0.\end{aligned}\right. \eeno
 Thanks to Proposition \ref{wepoepl}, we know that the approximate equation admits a unique and non-negative solution. By the observation that the term $\f{1}{n}\lr{v}^{\gamma+2s}f^n$  plays no harmful role in  the energy estimates and hypo-elliptic estimates in the proof of Theorem \ref{En-Priori}, we   obtain that for each $n$,
 \beno \sup_{t\in[0,T]} \mathbb{E}^{1,\f12+2\delta_1,\eps}(f^n(t))+  \mathbb{E}^{N,\kappa,\eps}(f^n(t))\le C(c_1,c_2,M_1,M_2,\mathbb{W}_{I},\mathfrak{L}),\eeno
 and there exists a time $T^*=T^*(\mathfrak{L},c_1,c_2,M_2,\mathbb{W}_{I})$ such that
 \eqref{lubforg}, \eqref{322regu} and \eqref{52regu} hold uniformly for $f^n$ in the interval $t\in [0,T^*]$.

 Next we prove that $\{f^n\}_{n\in \N}$ is a Cauchy sequence in $L^1_{l}$ space where $l$ satisfies that $l+\gamma\le l_{1,\f12+\delta_1}$. Set $h^n\eqdefa f^{n+1}-f^n$, then $h^n$ solves
 \beno \pa_t h^n+v\cdot \na_x h^n+\f1{n}\lr{v}^{\gamma+2s}h^n=Q^\eps(g, h^n)+\f1{n(n+1)}\lr{v}^{\gamma+2s}f^{n+1}.\eeno
 By Proposition \ref{L1i}, one has
 \beno \f{d}{dt}\|h^n\|_{L^1_l}+c(c_1,c_2)\|h^n\|_{L^1_{l+\gamma}}
 \lesssim C(\|g\|_{L^\infty_xL^1_{l+\gamma}})\|h^n\|_{L^1_l}+\f{1}{n^2}\|f^{n+1}\|_{L^1_{l+\gamma+2s}}.\eeno
  Gronwall's inequality  implies that for $t\in[0,T]$,
$\|h^n\|_{L^\infty([0,T];L^1_{l})}\lesssim \f1{n^2}C(c_1,c_2,M_1,M_2,\mathbb{W}_{I}), $
 which yields that $\{f^n\}_{n\in \N}$ is a Cauchy sequence in $L^\infty([0,T];L^1_{l})$ space.
   In other words, we get the existence of the solution $f$ to the linear equation \eqref{EepLinB}.
  Finally let us check  that  \eqref{lubforg}, \eqref{322regu} and \eqref{52regu} hold for $f$ in the interval $t\in [0,T^*]$. From the fact that $f^n$ strongly converges to $f$
  in $L^\infty([0,T];L^1_{l})$ together with Fatou Lemma, interpolation inequality and uniform boundedness principle,  the uniform bounds for $f^n$ will yield the desired results except the term
  $\int_{\TT^3} \mathcal{E}^{0,\eps}_\mu(W_{N,\kappa}W_{\gamma/2}|D_x|^{N+\kappa}f)dx$ in $\mathbb{D}_2^{N, \kappa,\eps}$. However, this estimate can be achieved if we go back to the equation $W_{m,n} \bar{\triangle}^{n\varrho}_k\pa_x^\alpha f$  with $|\alpha|=N-1$. In this situation, $W_{m,n} \bar{\triangle}^{n\varrho}_k\pa_x^\alpha f$ can be chosen as a test function to the equation of $W_{m,n} \bar{\triangle}^{n\varrho}_k\pa_x^\alpha f$. Then the standard energy estimate will yield the control of $\int_{\TT^3} \mathcal{E}^{0,\eps}_\mu(W_{N,\kappa}W_{\gamma/2}|D_x|^{N+\kappa}f)dx$.
We complete the proof of the theorem.
 \end{proof}

\subsection{Proof of  Theorem \ref{thmwepo}(Part I)} Based on the estimates for the linear equation, we will use Picard iteration scheme to get the well-posedness for the non-linear equation \eqref{NepsB}.
 
\begin{proof}[Proof of  Theorem \ref{thmwepo}(Part I)] We split the proof into three steps.

{\it Step 1: Iteration scheme and uniform bounds.}	
Let $f^0=c_1M_{1,0,1}$ and
\beno  \left\{
\begin{aligned}
	& \pa_t f^n+v\cdot \na_x f^n =Q^{\ep}(f^{n-1},f^n),\\
	&f^n|_{t=0}=f_0.\end{aligned}\right. \eeno
It is easy to check that
\beno &&|f^0|_{L^1}=c_1, |f^0|_{L^1_2\cap L\log L}\le c_3\\ &&\mathbb{E}^{N,\kappa,\eps}(f^0)+A_7\int_0^t\mathbb{D}_2^{N, \kappa,\eps}(f^0)d\tau  +A_8\int_0^t\mathbb{D}_1^{N, \kappa,\eps}(f^0)d\tau
\\
&&=\mathbb{E}^{N,\kappa,\eps}(f^0)+A_7\mathbb{D}_2^{N, \kappa,\eps}(f^0)t  +A_8 \mathbb{D}_1^{N, \kappa,\eps}(f^0)t \le  C(l_1,l_2).
\eeno

Then by the proof of Theorem \ref{En-Priori}, we deduce that there exists a time $T_1=T_1(M_2,c_1,c_2,\mathbb{W}_{I})$ such that for $t\in [0, T_1]$,
\beno &&\inf_{x\in\TT^3, t\in[0,T_1]}|f^1|_{L^1}\ge  c_1; \sup_{ t\in[0,T_1]}\|f^1\|_{H^{\f32+2\delta_1}_xL^2_{\gamma+4}}^2\le c_2;\sup_{ t\in[0,T_1]}V^{q_1,\eps}(f^1(t))\le 4V^{q_1,\eps}(f_0);\\
&& \sup_{t\in[0,T_1]} \mathbb{E}^{1,\f12+2\delta_1,\eps}(f^1(t))+A_7(c_1,c_3)\int_0^{T_1}\mathbb{D}_2^{1,\f12+2\delta_1,\eps}(f^1(\tau))d\tau\nonumber\\&&\quad+A_8(c_1,c_3)\int_0^{T_1}\mathbb{D}_1^{1,\f12+2\delta_1,\eps}(f^1(\tau))d\tau\le 4M_1;
\int_0^{T_1} \mathbb{D}^{1,\f12+2\delta_1,\eps}_3(f^1(\tau))d\tau \le 4A_{9}(c_1,c_3) M_1;\\
&& \sup_{t\in[0,T_1]} \mathbb{E}^{N,\kappa,\eps}(f^1(t)) +A_7(c_1,c_3)\int_0^{T_1}\mathbb{D}_2^{N, \kappa,\eps}(f^1(\tau))d\tau\nonumber\\&&\quad+A_8(c_1,c_3)\int_0^{T_1}\mathbb{D}_1^{N, \kappa,\eps}(f^1(\tau))d\tau\le 4M_2;
\int_0^{T_1} \mathbb{D}^{N,\kappa,\eps}_3(f^1(\tau))d\tau \le 4A_{9}(c_1,c_3) M_2.
\eeno
We emphasize that $c_3$ actually is a universal constant which is not related to the size of the initial data. Thus in the next procedure, we deduce that there exists a time $T_2=T_2(M_2,c_1,c_2,\mathbb{W}_{I})\le T_1$ such that for $t\in [0, T_2]$,
\beno &&\inf_{x\in\TT^3, t\in[0,T_2]}|f^2|_{L^1}\ge  c_1; \sup_{ t\in[0,T_2]}\|f^2\|_{H^{\f32+2\delta_1}_xL^2_{\gamma+4}}^2\le c_2; \sup_{ t\in[0,T_2]}V^{q_1,\eps}(f^2(t))\le 4V^{q_1,\eps}(f_0);\\
&& \sup_{t\in[0,T_2]} \mathbb{E}^{1,\f12+2\delta_1,\eps}(f^2(t))+A_7(c_1,c_2)\int_0^{T_2}\mathbb{D}_2^{1,\f12+2\delta_1,\eps}(f^2(\tau))d\tau\nonumber\\&&\quad+A_8(c_1,c_2)\int_0^{T_2}\mathbb{D}_1^{1,\f12+2\delta_1,\eps}(f^2(\tau))d\tau\le 4M_1;
\int_0^{T_2} \mathbb{D}^{1,\f12+2\delta_1,\eps}_3(f^2(\tau))d\tau \le 4A_{9}(c_1,c_2) M_1;\\
&& \sup_{t\in[0,T_2]} \mathbb{E}^{N,\kappa,\eps}(f^2(t)) +A_7(c_1,c_2)\int_0^{T_2}\mathbb{D}_2^{N, \kappa,\eps}(f^2(\tau))d\tau \\&&\quad+A_8(c_1,c_2)\int_0^{T_2}\mathbb{D}_1^{N, \kappa,\eps}(f^2(\tau))d\tau\le 4M_2;
\int_0^{T_2} \mathbb{D}^{N,\kappa,\eps}_3(f^2(\tau))d\tau \le 4A_{9}(c_1,c_2) M_2.
\eeno

It is obvious that $f^2$ verifies \eqref{lubforg}, \eqref{322regu} and \eqref{52regu} in the time interval $[0,T_2]$.   Then thanks to Theorem \ref{Wepo-epslin},
 we derive that there exists a time  $T_3=T_3(T_2,c_1,c_2,M_1,M_2,\mathbb{W}_{I})\le T_2$,
 such that \eqref{lubforg}, \eqref{322regu} and \eqref{52regu} hold for $f^3$ in the time interval $[0,T_3]$.

Now let us focus on the lifespan $T_3$. From the proof of Theorem \ref{En-Priori}, the estimate \eqref{e321} shows  that $C_{E,1}$ can be chosen to be independent of $T_2$ if we use the condition $T_2\le T_1$. As a direct consequence, we get that there exists a time $T_u=T_u(c_1,c_2,M_1,M_2, \mathbb{W}_{I},T_1)$ such that  $T_3=\min\{T_u,T_2\}$ if we carefully check the argument of {\it Step 2} and {\it Step 3} in the proof of Theorem \ref{En-Priori}. By the iteration scheme and the condition $T_3\le T_1$, the similar argument yields that there exists a time
$T_4=\min\{T_u,T_3\}$ such that \eqref{lubforg}, \eqref{322regu} and \eqref{52regu} hold for $f^4$ in the time interval $[0,T_4]$. It is easy to see that $T_4=\min\{T_u,T_3\}=T_3$.
 It implies that there exists a common lifespan $T_3$ such that  \eqref{lubforg}, \eqref{322regu} and \eqref{52regu} hold for the sequence $\{f^n\}_{n\ge3}$  in the time interval $[0,T_3]$.
 \medskip

{\it Step 2: Existence and uniqueness.}  Thanks to \eqref{unicon1},  there exists a constant $l$ verifies
 \beno \max\{A_1(c_1,c_2)^{-\f1s},  \mathrm{1}_{\gamma\neq0}8C_I (c_2^{\f12}+V^{q_1,\eps}(f_0)),N_s+2\}<l\le l_{1,\f12+\delta_1}2^{-N_{q_2,s,2}}-N_s-2-\gamma.\eeno  In other words, $l$ satisfies
 $N_s+2\le l$, $2^{N_{q_2,s,2}}(l+\gamma+2+N_s)\le l_{1,\f12+\delta_1}$, $\mathrm{1}_{\gamma\neq0}8C_Il^{-1}  (c_2^{\f12}+V^{q_1,\eps}(f_0))\le 1$ and $l^sA_1(c_1,c_2)>1$. Next we will prove that $\{f^n\}_{n\ge3}$ is a Cauchy sequence in the space $L^\infty([0,T_3];L^1_l)$.

Let $h^n\eqdefa f^{n+1}-f^n$, then   $h^n$ solves
\beno \pa_t h^n+v\cdot \na_x h^n=Q^\eps(f^{n+1}, h^n)+Q^\eps(h^{n-1},f^{n}).\eeno
By  Lemma \ref{L1md}, one has
\beno
&&\f{d}{dt}\|h^n\|_{L^1_l}+l^sA_1(c_1,c_2)\|h^n\|_{L^1_{l+\gamma}}
 \le  C(l)\|h^n\|_{L^1_l}\|f^{n+1}\|_{L^\infty_xL^1_{l+\gamma}}+
 C(l,\eta^{-1})\|h^{n-1}\|_{L^1_l}\\&&\times\|W^\eps_{2s+\eta_1}(D)f^n\|_{H^{\f32+\delta_1}_xL^2_{l+\gamma+2+N_s}}+C\|h^{n-1}\|_{L^1_{l+\gamma}}(\eta^{2-2s}\|W^\eps_{2s+\eta_1}(D)f^n \|_{H^{\f32+\delta_1}_xL^2_{2}}
 +l^{-1}\|W^\eps_{\eta_1}(D)f^n
\|_{H^{\f32+\delta_1}_xL^2_{ 2}}).
 \eeno
Notice that if $2^n\ge \f{q_2}{q_2-2s-\eta_1}$, that is, $n\ge N_{q_2,s,2}$, one has
\ben &&(i).\,\|W^\eps_{2s+\eta_1}(D)f^n\|_{H^{\f32+\delta_1}_xL^2_{l+\gamma+2+N_s}}
\lesssim \|W^\eps_{\big(2s+\eta_1-(2^{n}-1)(q_2-2s-\eta_1)\big)^+}(D)W_{2^n(l+\gamma+2+N_s)}f^n\|_{H^{\f32+\delta_1}_xL^2_{}}^{2^{-n}}\notag\\&&\quad
 \times
\|W^\eps_{q_2}(D)f^n\|_{H^{\f32+\delta_1}_xL^2_{}}^{1-2^{-n}}\lesssim
\|W_{1,\f12+\delta_1}f^n\|_{H^{\f32+\delta_1}_xL^2_{}}^{2^{-n}} 
\|W^\eps_{q_2}(D)f^n\|_{H^{\f32+\delta_1}_xL^2_{}}^{1-2^{-n}}\lesssim 2\sqrt{M_2},\notag\\
  &&(ii).\, Cl^{-1}\|W^\eps_{\eta_1}(D)f^n
\|_{H^{\f32+\delta_1}_xL^2_{2}} 
\le C_{I}l^{-1} \| f^n
\|_{H^{\f32+\delta_1}_xL^2_{\gamma+4}}^{\f12}\|W^\eps_{2\eta_1}(D)f^n
\|_{H^{\f32+\delta_1}_xL^2_{ }}^{\f12}\le C_{I}l^{-1} \| f^n
\|_{H^{\f32+\delta_1}_xL^2_{\gamma+4}}^{\f12}\notag\\
&&\qquad\times(\|W^\eps_{q_1}(D)f^n
\|_{L^2}+\| f^n
\|_{H^{\f32+2\delta_1}_xL^2})^{\f12}\le  C_Il^{-1} (c_2^{\f12}+  2V^{q_1,\eps}(f_0)) \le 1,\label{defici} \een
where $C_I$ is a universal constant independent of $\eps$(see Lemma \ref{func}).
 Denote $L=l^sA_1(c_1,c_2)>1$, then we have
  \ben\label{wbwc1}
 &&\quad\f{d}{dt}\|h^n\|_{L^1_l}+L\|h^n\|_{L^1_{l+\gamma}} 
 \le  C (l,M_2)\|h^n\|_{L^1_l} +
 C(l,M_2)\|h^{n-1}\|_{L^1_l} +\|h^{n-1}\|_{L^1_{l+\gamma}}.
 \een
Let $X^{n}(t)=e^{-C(l,M_2)t}\|h^n\|_{L^1_l}$ and $Y^{n}(t)=e^{-C(l,M_2)t}\|h^n\|_{L^1_{l+\gamma}}$.  Then one has
\beno \f{d}{dt} X^n+LY^n\le CX^{n-1}+Y^{n-1}.\eeno
 Now set $S^n=\sum_{i=0}^{n-4} L^{-i}X^{n-i}$ with $n\ge4$, then we infer that
\beno S^n(t)&\le& C\int_0^t S^{n-1}(\tau)d\tau+L^{-(n-4)}\int_{0}^{t} (CX^3+Y^3)d\tau \le C\int_0^t S^{n-1}(\tau)d\tau+L^{-(n-4)}\tilde{C}t,
 \eeno
which implies that
\beno \sum_{n\ge4}S^n(t)&\le&\sum_{n\ge4}\bigg( \big(\sup\limits_{t\in[0,T^*]} X^4(t)\big)C^{n-4}\f{t^{n-4}}{(n-4)!}+(C+1)\tilde{C}\sum_{i=1}^{n-4}L^{-(n-3-i)}\f{t^i}{i!}\bigg)
\\&\le& \sum_{n\ge4}  \tilde{C} \f{(Ct)^{n-4}}{(n-4)!}+(C+1)\tilde{C}\sum_{n\ge4}L^{-(n-3)}\sum_{i=1}^{n-4}\f{(Lt)^i}{i!}. \eeno
We deduce that for $t\le T_3$, $\sum\limits_{n\ge3}X^{n}(t)<\infty$ since $X^n(t)\le S^n(t)$. Thus we get that $\{h^n\}_{n\ge3}$  is a Cauchy sequence in $L^\infty([0,T_3];L^1_l)$. Suppose that for $t\le T_3$,
$ \lim_{n\rightarrow \infty}\|f^n(t)-f(t)\|_{L^1_l}=0. $  Then $f$ is a solution to \eqref{NepsB}.  Thanks to the uniform bounds obtained from {\it Step 1},  the similar argument applied in Theorem \ref{Wepo-epslin} will imply that  \eqref{lubforg}, \eqref{322regu} and \eqref{52regu} hold for $f$ in the time interval $[0,T_3]$.   

For the uniqueness of the equation, if set $h=f_1-f_2$ where $f_1$ and $f_2$ denote two solutions to \eqref{NepsB} with the same initial data, then thanks to \eqref{wbwc1}, we arrive at
$   \f{d}{dt}\|h\|_{L^1_l}
 \le  C (l,M_2)\|h\|_{L^1_l}. $
 From this together with Gronwall inequality, we are led to the uniqueness. \smallskip
 
 {\it Step 3: Propagation of the regularity for $v$ variable.} Suppose $q\le N+\kappa$. Then by the interpolation inequalities 
$ |f|_{H^{q-1}_l}\le |f|_{L^2_{ql}}^{\f1q}|f|_{H^q}^{1-\f1q}$ and $|f|_{H^{1}_l}\le |f|_{L^2_{\f{q}{q-1}l}}^{1-\f1q}|f|_{H^q}^{\f1q}$, we have 
\beno  &&\int_{\TT^3}(|W_{\gamma/2+\f52}f|_{H^1}^2|W^\ep_{s}(D)f|^2_{H^{q-1}}+|f|_{H^1}^2 |W^\ep_{s}(D)W_{\gamma/2+\f52}f|_{H^{q-1}}^2) dx\\
&&\lesssim \eta^{-q+1}(\|W^\eps_s(D)f\|_{H^{\f32+\delta_1}_xL^2}^2+\|W^{\eps}_s(D)W_{(\gamma/2+\f52)q}f\|_{H^{\f32+\delta_1}_xL^2}^2)\|f\|_{L^2_xH^q}^2+\eta \|W_{\gamma+5}f\|_{H^{\f32+\delta_1}_xL^2}^2\|W^{\eps}_s(D)f\|_{L^2_xH^q}^2\\
&&\le \f{\mathcal{C}_1(c_1,c_2)}{100} \|W^\eps_s(D)W_{\gamma/2}f\|_{L^2_xH^q}^2+C(c_1,c_2)\|W_{\gamma+5}f\|_{H^{\f32+\delta_1}_xL^2}^{2(q-1)}
(\|W^\eps_{q_1}(D)f\|_{H^{\f32+\delta_1}_xL^2}^2\\
&&\quad+\|W^{\eps}_{q_1}(D)f\|_{H^{\f32+\delta_1}_xL^2}\|W_{(\gamma+5)q}f\|_{H^{\f32+\delta_1}_xL^2})\|f\|_{L^2_xH^q}^2. \eeno 
From  the conditions $\f522^{N_{q,s,1}} \le l_{2}$ and $(\gamma+5)q\le l_{1,\f12+\delta_1}$,  the Proposition \ref{Enfvq} as well as the interpolation inequalities used in Step 1.1.3 of the proof of Theorem \ref{En-Priori},  we deduce from that for any $t\in [0,T_3]$,
 \beno  &&V^{q}(f(t))+\mathcal{C}_1(c_1,c_2)\int_0^t \big(\|W^\eps_s(D)W_{\gamma/2}f\|_{L^2_xH^q}^2+\delta^{-2s}\|W_{\gamma/2}f\|_{L^2_xH^q}^2)d\tau\\
 &&\le V^q(f_0)\int_0^t  (\mathcal{C}_6(c_1,c_2)\delta^{-4-2s-4(q-1)}\|f\|_{L^2_{\gamma/2}}^2+C_E\|f\|_{H^{N+\kappa}_xL^2}^2)d\tau\\&&+ C(c_1,c_2) \int_0^t (1+\mathbb{E}^{N,\kappa,\eps}(f))^{q}\|f\|_{L^2_xH^q}^2 d\tau+\int_0^t \mathbb{E}^{N,\kappa,\eps}(f)\mathbb{D}^{N,\kappa,\eps}_2(f)d\tau.
 \eeno
 Gronwall inequality will yield that $f\in C([0,T_3]; V^q)$.
It ends the proof of the first part of the theorem.
\end{proof}

\subsection{Proof of Theorem \ref{thmwepo}(Part II)}
Now we are in a position to complete the proof of Theorem \ref{thmwepo} thanks to the uniform bounds obtained from the previous result.

\begin{proof}[Proof of Theorem \ref{thmwepo}(Part II)] The proof falls into three steps.
	
{\it Step 1: Existence of the solution for the equation without cutoff.}	Suppose that $f^n$ is a solution to
\beno  \left\{
\begin{aligned}
	& \pa_t f+v\cdot \na_x f =Q^{\f1{n}}(f,f),\\
	&f|_{t=0}=f_0.\end{aligned}\right. \eeno
	Then by the first part of  Theorem \ref{thmwepo}, we arrive at that there exists a common lifespan $T^*$ such that  if $\f1n$ verifies \eqref{Rstreps} then $f^n$ has uniform bounds as follows
	\beno
	 &&\inf_{x\in\TT^3, t\in[0,T^*]}|f^n(t)|_{L^1}\ge  c_1; \sup_{ t\in[0,T^*]}\|f^n(t)\|_{H^{\f32+2\delta_1}_xL^2_{\gamma+4}}^2\le c_2; \sup_{ t\in[0,T^*]}V^{q_1,\f1{n}}(f^n(t))\le 4V^{q_1,\f1{n}}(f_0);\\
	&& \sup_{t\in[0,T_*]} \mathbb{E}^{1,\f12+2\delta_1,\f1{n}}(f^n(t))+A_7(c_1,c_2)\int_0^{T_*}\mathbb{D}_2^{1,\f12+2\delta_1,\f1{n}}(f^n(\tau))d\tau\nonumber\eeno\beno&&\quad+A_8(c_1,c_2)\int_0^{T_*}\mathbb{D}_1^{1,\f12+2\delta_1,\f1{n}}(f^n(\tau))d\tau\le 4M_1;
	\int_0^{T_*} \mathbb{D}^{1,\f12+2\delta_1,\f1{n}}_3(f^n(\tau))d\tau \le 4A_{9}(c_1,c_2) M_1;\\
	&& \sup_{t\in[0,T_*]} \mathbb{E}^{N,\kappa,\f1{n}}(f^n(t)) +A_7(c_1,c_2)\int_0^{T_*}\mathbb{D}_2^{N, \kappa,\f1{n}}(f^n(\tau))d\tau \\&&\quad+A_8(c_1,c_2)\int_0^{T_*}\mathbb{D}_1^{N, \kappa,\f1{n}}(f^n(\tau))d\tau\le 4M_2;
	\int_0^{T_*} \mathbb{D}^{N,\kappa,\f1{n}}_3(f^n(\tau))d\tau \le 4A_{9}(c_1,c_2) M_2.
	\eeno

	 Thanks to the renormalized theory for the equation(see \cite{dl}), we infer that there exists a sub-sequence $\{n_k\}$ and $f(t)$ such that
$\lim_{k\rightarrow\infty}\|f^{n_k}-f\|_{L^1((0,T^*)\times \TT^3\times\R^3)}=0. $
	It is easy to check that for any test function $\Psi$,
	\beno
	&&\bigg|\int_0^t\int_{\TT^3} \lr{Q^{\f1{n_k}}(f^{n_k},f^{n_k})-Q(f,f),\Psi}_v dxd\tau\bigg| 
 \lesssim \int_0^t\|f^{n_k}-f\|_{L^1_{\gamma+2}}\big(\|\Psi\|_{L^\infty_xW^{2,\infty}}+\|\Psi\|_{L^\infty_xH^{2s}}\big)\\&&\qquad\times(\|f^{n_k}\|_{L^\infty_xL^2}+\|f\|_{L^\infty_xL^1_{\gamma+2}})d\tau +n_k^{2s-2}\|f\|_{L^1_{\gamma+2}}\|f\|_{L^\infty_xL^1_{\gamma+2}}\|\Psi\|_{L^\infty_xW^{2,\infty}}t.
	\eeno
	It implies that $f$ is a non-negative weak solution to the nonlinear Boltzmann equation in the time interval $[0,T^*]$. Thanks to the uniform bounds obtained for $f^n$,  the  argument applied in Theorem \ref{Wepo-epslin} will yield  that $f$ verifies all the estimates in the theorem.	This means that $f$ is a classical solution to the equation.
	
{\it Step 2: Uniqueness.}	Suppose that $f_1,f_2$ are two solutions to the equation with the same initial data $f_0$, then $h\eqdefa f_1-f_2$ solves
	\beno \pa_t h+v\cdot \na_xh =Q(f_1,h)+Q(h,f_2). \eeno
Thanks to \eqref{unicon1},  there exists a constant $l$ verifies $N_s+2\le l$, $2^{N_{q_2,s,2}}(l+\gamma+2+N_s)\le l_{1,\f12+\delta_1}$, $\mathrm{1}_{\gamma\neq0}8C_Il^{-1}  (c_2^{\f12}+V^{q_1}(f_0))\le 1$ and $l^sA_1(c_1,c_2)>1$.
Due to Lemma \ref{L1md}, we have  
	\beno
	&&\f{d}{dt}\|h\|_{L^1_l}+(l^sA_1(c_1,c_2)-\eta^{2-2s}\| f_2 \|_{H^{\f32+\delta_1}_xH^{2s+\eta}_{2}}
	-l^{-1}\| f_2
	\|_{H^{\f32+\delta_1}_xH^\eta_{ 2}})\|h\|_{L^1_{l+\gamma}}
	\\&&\le  C(l)\|h\|_{L^1_l}\|f_1\|_{L^\infty_xL^1_{l+\gamma}}+
	C(l,\eta^{-1})\|h\|_{L^1_l}\|f_2\|_{H^{\f32+\delta_1}_xH^{2s+\eta}_{l+\gamma+2+N_s}}.
	\eeno Then by the   argument   used  to get \eqref{wbwc1}, we deduce that
	$\f{d}{dt}\|h\|_{L^1_l}\le C\|h\|_{L^1_l}. $
	Then  Gronwall inequality implies the uniqueness result.
	
{\it Step 3: Asymptotic formula.}	    Set $R^\eps=\eps^{2s-2}(f-f^\eps)$ and then it solves
	\beno \pa_tR^\eps+v\cdot \na_x R^\eps=Q^\eps(f,R^\eps)+Q^\eps(R^\eps, f^\eps)+\eps^{2s-2}(Q(f,f)-Q^\eps(f,f)).\eeno
	Thanks to Lemma \ref{L1md}, it is not difficult to check that
	\beno
	&&\f{d}{dt}\|R^\eps\|_{L^1_l}+(l^sA_1(c_1,c_2)-\eta^{2-2s}\| f_2 \|_{H^{\f32+\delta_1}_xH^{2s+\eta}_{2}}
	-l^{-1}\| f_2
	\|_{H^{\f32+\delta_1}_xH^\eta_{ 2}})\|R^\eps\|_{L^1_{l+\gamma}}
	\\&&\lesssim C(l)\|R^\eps\|_{L^1_l}\|f\|_{L^\infty_xL^1_{l+\gamma}}+
	C(l,\eta^{-1})\|R^\eps\|_{L^1_l}\|W^\eps_{2s+\eta}(D)f^\eps\|_{H^{\f32+\delta_1}_xL^2_{l+\gamma+2+N_s}} +\|f\|_{L^\infty_xL^1_{l+\gamma}}\|f\|_{L^1_{l+\gamma}}.
	\eeno
	Choose that $l$ verifies \eqref{unicon1}, then we obtain that
$ \|f-f^\epsilon\|_{L^\infty([0,T^*];L^1_l)}=O(\eps^{2-2s})$. It ends the proof of the theorem.
	\end{proof}

\section{Global dynamics of the Boltzmann equation}
In this section, we will consider the global dynamics of the Boltzmann equation \eqref{Boltz eq} under the  assumption  that the solution $f$  verifies \eqref{dynacondi}.  Our key observation lies in the energy-entropy method. To carry out our strategy, we divide our proof into four parts. In the first part, we will derive the entropy dissipation inequality following the argument due to Villani(see \cite{villani}). In the second part, we will investigate the dissipation estimates for the hydrodynamical fields. In the third part, we will show that under the condition
 \eqref{dynacondi}, regularity of the solution can be propagated. In the last part, we will show the new  mechanism for convergence to the equilibrium.
 
  Without loss of the generality, we assume that 
 the conserved quantities verify
\beno \int_{\TT^3\times\R^3} fdvdx=1; \int_{\TT^3\times\R^3} fvdvdx=0; \int_{\TT^3\times\R^3} f\f{|v|^2}2dvdx=\f32,\eeno
which imply that the hydrodynamical fields satisfy
\ben\label{normalized}  \int_{\TT^3} \rho dx=1; \int_{\TT^3} \rho udx=0; \int_{\TT^3} (\rho T+\f13\rho|u|^2)dx=1,\een
 and $M_{f}=M_{1,0,1}\eqdefa M$.

  \subsection{Entropy dissipation inequality}  In this subsection, we  will prove the following lemma:
 \begin{lem}\label{etrdiss} Suppose $f$ is a solution to the equation \eqref{Boltz eq} verifying $f\ge K_0 \exp\{-A_0 |v|^{q_0}\}$. Then for any $\delta, m,R>0$ and $0<a<1$, it holds
 \beno  D(f)&\eqdefa& \iint (f_*'f'-f_*f)\log\f{f_*'f'}{f_*f}|v-v_*|^\gamma b(\cos\theta)d\sigma dv_* dv dx\\
 &\ge& K^{-1}(\pi/2)^{-1-2s}\bigg[ \delta^{\gamma}R^{-(2-\gamma)}C_{K_0} H(f| M^f_{\rho,u,T})-\delta^{\gamma}R^{-(2m+4-\gamma)}\big(H(f|M)+\|f-M\|_{L^2_xL^2_{q_0+2m+4}}^2 \\
 &&\quad\times (1+\|f\|_{L^\infty_xL^2_{5q_0+4m+6}})+H(f|M)^a(1+\|f\|_{L^2_xL^2_{2(1-a)^{-1}(q_0+m+1)+q_0+2}})^{1-a}\big)\\&&-\delta^{\gamma+\f32}(\|f-M\|_{L^2_{q_0}}^2+\|f-M\|_{L^{\infty}_xL^2_{2q_0}}H(f|M))\bigg].
\eeno
When $\gamma=2$, then $a=1$.
 \end{lem}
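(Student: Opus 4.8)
The plan is to follow the treatment of Cercignani's conjecture due to Villani (see~\cite{villani}), localized in the space variable. Since $D(f)=\int_{\TT^3}D_x\big(f(x,\cdot)\big)\,dx$ with $D_x$ the homogeneous entropy production of the slice $f(x,\cdot)$, and since \eqref{dynacondi} together with the normalization \eqref{normalized} forces the hydrodynamic fields $(\rho(x),u(x),T(x))$ to be comparable to $(1,0,1)$ uniformly in $x$ and $t$ (upper bounds from the Sobolev embedding $H^{3/2+2\delta_1}_x\hookrightarrow L^\infty_x$ applied to the weighted $L^2$ bound, lower bound on $\rho$ by hypothesis and on $T$ by the classical argument using $\rho$ bounded below and finite relative entropy), it suffices to prove the corresponding lower bound for $D_x$ at each fixed $x$ and then integrate in $x$. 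First I would discard the near‑diagonal and far‑field parts of the collision integral: using the elementary inequality $(X-Y)\log(X/Y)\ge 4(\sqrt X-\sqrt Y)^2$ with $X=f_*'f'$, $Y=f_*f$, and the lower bound $\sin\theta\,b(\cos\theta)\ge K\theta^{-1-2s}\ge K(\pi/2)^{-1-2s}\sin\theta$ coming from \eqref{a2}, one is reduced to bounding from below the truncated quantity $K^{-1}(\pi/2)^{-1-2s}\int_{\delta\le|v-v_*|,\;|v|,|v_*|\le R}|v-v_*|^\gamma b(\cos\theta)(\sqrt{f_*'f'}-\sqrt{f_*f})^2\,d\sigma\,dv_*\,dv$; on this region $|v-v_*|^\gamma\gtrsim\delta^\gamma$, so the kinetic factor contributes the stated power $\delta^\gamma$ and one is left with a bounded‑kernel, Maxwellian‑molecule dissipation restricted to the ball $\{|v|\le R\}$.

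Next I would invoke Villani's functional inequality for that bounded‑kernel dissipation: given the pointwise Gaussian‑type lower bound $f\ge K_0\exp\{-A_0|v|^{q_0}\}$ and the moment bounds contained in \eqref{dynacondi}, the truncated dissipation on $\{|v|\le R\}$ is bounded below by $C_{K_0}\,R^{-(2-\gamma)}$ times the relative entropy of $f(x,\cdot)$ with respect to its own local Maxwellian, computed on that ball — the constant $C_{K_0}$ depending only on $K_0,A_0,q_0$ and the collision kernel, and the polynomial degradation in $R$ coming from combining Villani's ball‑truncation constant with the $\gamma$‑homogeneity of the kinetic factor. This is exactly the over‑Maxwellian ("sometimes true") case of Cercignani's conjecture, available here because of the immediate pointwise lower bound, and it is what produces a coefficient linear in $H$ rather than $H^{1+\epsilon}$ as in the general estimate~\eqref{DispE}. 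Thus at each $x$ one gets $D_x(f(x,\cdot))\ge K^{-1}(\pi/2)^{-1-2s}\delta^\gamma R^{-(2-\gamma)}C_{K_0}\,H_R\big(f(x,\cdot)\,|\,M^f_{\rho,u,T}(x)\big)-\big(\text{error}(x)\big)$, where $H_R$ is the relative entropy restricted to $\{|v|\le R\}$ and the error at $x$ consists of the part of the entropy flux supported on $\{|v|>R\}$ together with the discrepancy between the local Maxwellian $M^f_{\rho(x),u(x),T(x)}$ and the global one $M=M_{1,0,1}$.

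To replace $H_R$ by the full relative entropy $H(f|M^f_{\rho,u,T})$ I would estimate the tail $\int_{|v|>R}\big(f\log(f/M^f_{\rho,u,T})-f+M^f_{\rho,u,T}\big)\,dv$: the lower bound gives $|\log f|\lesssim 1+|v|^{q_0}$, and $(\rho,u,T)$ comparable to constants gives $|\log M^f_{\rho,u,T}|\lesssim 1+|v|^2$, so by a Markov‑type argument the tail is dominated by $R^{-(2m+4-\gamma)}$ times a polynomially weighted combination of $\int f(1+|v|)^{q_0+2m+4}$, of $H(f|M)$, and — after interpolating between the relative entropy and the weighted $L^2$ moments, which produces precisely the exponents $2(1-a)^{-1}(q_0+m+1)+q_0+2$ and $5q_0+4m+6$ appearing in the statement — of $H(f|M)^a$ and $\|f-M\|_{L^2_{q_0+2m+4}}^2$. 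Comparing $M^f_{\rho,u,T}$ with $M$ through the Csiszár–Kullback–Pinsker inequality and an $L^2$ bound on $(\rho-1,u,T-1)$ contributes the terms $H(f|M)$ and $\|f-M\|_{L^2_{q_0}}^2$ weighted by the small parameter $\delta^{\gamma+3/2}$, the power $\tfrac32$ coming from the $L^2$‑in‑$x$ and a ball‑volume factor $|B_\delta|^{1/2}$. Integrating everything in $x\in\TT^3$ and collecting terms then yields the asserted inequality. For $\gamma=2$ I would not truncate the kinetic factor at all: writing $|v-v_*|^2=|v|^2+|v_*|^2-2v\cdot v_*$ and using the $|v|^2$‑weight directly gives Villani's genuinely linear over‑Maxwellian estimate, so no $H(f|M)^a$ correction is needed and $a=1$ is admissible.

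The main obstacle is the second step: the quantitative form of Villani's lower bound for the bounded‑kernel entropy production in terms of the relative entropy with respect to the \emph{local} Maxwellian, with the dependence of the constant on the Gaussian lower bound $(K_0,A_0,q_0)$ and on the truncation radius $R$ made fully explicit. Everything else — the elementary inequality, the angular lower bound, the tail and local‑versus‑global Maxwellian comparisons, and the interpolation producing the polynomial weights — is routine bookkeeping, but it must be carried through carefully so that each error term appears with exactly the claimed powers of $\delta$ and $R$.
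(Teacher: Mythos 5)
Your outline correctly identifies the shape of the argument — truncate the kinetic factor, use the Gaussian lower bound to obtain a linear‑in‑$H$ main term, push everything else into moment‑weighted errors — but the way you propose to produce the main term contains a genuine gap. You pass through $(X-Y)\log(X/Y)\ge 4(\sqrt X-\sqrt Y)^2$ and a truncation to $\{|v|,|v_*|\le R\}$, and then claim a ``Villani functional inequality'' bounding the resulting bounded‑kernel, square‑root dissipation on a ball below by $C_{K_0}R^{-(2-\gamma)}H(f|M^f_{\rho,u,T})$. No such result is available. The passage to the square‑root functional strictly discards the logarithmic strength of $D(f)$, and, more fatally, truncating to a ball reduces to a kernel that is merely bounded — i.e.\ a Maxwellian‑molecule‑type kernel — which is exactly the case in which Cercignani's conjecture is known to be \emph{false} in general (Bobylev's counterexamples), and the Gaussian pointwise lower bound does not rescue it since those counterexamples already satisfy such a bound. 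The ``sometimes true'' linear estimate (Villani's Theorem~2.1) depends crucially on the kernel dominating $(1+|v-v_*|^2)$; your reduction throws precisely that growth away.

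What the paper actually does is decompose the kinetic factor itself: $|v-v_*|^\gamma\ge\delta^\gamma R^{-(2-\gamma)}\big[(1+|v-v_*|^2)-(1+|v-v_*|^2)\mathrm{1}_{|v-v_*|\ge R}\big]-\delta^\gamma(1+\delta)^\gamma\mathrm{1}_{|v-v_*|\le\delta}$, applied to the \emph{full} logarithmic entropy production. The good piece $D_1(f)$ is an over‑Maxwellian dissipation with kernel $(1+|v-v_*|^2)$; Theorem~2.1 of \cite{villani} gives $D_1(f)\gtrsim T_*(f)\,H(f|M^f_{\rho,u,T})$, and $T_*(f)\ge C_{K_0}$ follows from the Gaussian lower bound. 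The two correction pieces ($|v-v_*|\ge R$ and $|v-v_*|\le\delta$) are then controlled using Lemma~4.3 of \cite{villani}, a pointwise inequality $(X-Z)\log(X/Z)\le C\max\{1,\log(X/Z),\log(Z/X)\}\big((X-Y)\log(Y/Z),(Y-Z)\log(Y/Z)\big)$ applied with $Y=M_*M$, together with $|\log(f/M)|\lesssim 1+|v|^{q_0}$ and the moment/interpolation bookkeeping you sketch (which is broadly correct). But that bookkeeping cannot be made to work until you repair the main term by keeping the $(1+|v-v_*|^2)$ growth and the logarithmic dissipation rather than discarding them.
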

 \begin{proof} The proof is inspired by the work \cite{villani} due to Villani.  We first notice that
 \beno |v-v_*|^\gamma b(\cos\theta)&\ge& K^{-1}|v-v_*|^\gamma\theta^{-1-2s}\ge K^{-1}(\pi/2)^{-1-2s}|v-v_*|^\gamma\\
 &\ge& K^{-1}(\pi/2)^{-1-2s}\big[\delta^{\gamma}R^{-(2-\gamma)}\big((1+|v-v_*|^2)-(1+|v-v_*|^2)\mathrm{1}_{|v-v_*|\ge R}\big)\\&&\quad-\delta^\gamma(1+\delta)^\gamma\mathrm{1}_{|v-v_*|\le\delta}\big].\eeno

 Suppose $
 D_1(f)\eqdefa  \iint (f_*'f'-f_*f)\log\f{f_*'f'}{f_*f}(1+|v-v_*|^2)d\sigma dv_* dv dx$,
 $D_2(f)\eqdefa  \iint (f_*'f'-f_*f)\log\f{f_*'f'}{f_*f}(1+|v-v_*|^2)\mathrm{1}_{|v-v_*|\ge R}d\sigma dv_* dv dx$ and
$D_3(f)\eqdefa  \iint (f_*'f'-f_*f)\log\f{f_*'f'}{f_*f}\mathrm{1}_{|v-v_*|\le\delta}d\sigma dv_* dv dx$.
 Then we get
$ D(f)\gtrsim D_1(f)-D_2(f)-D_3(f).$

 {\it Step 1: Estimate of $D_1(f)$}. Thanks to Theorem 2.1  in \cite{villani},  one has
 $ D_1(f)\gtrsim  T_*(f)H(f|M_{\rho,u,T}^f), $
 where $T_*(f)=\min\limits_{e\in \SS^2,x\in\TT^3} \int_{\R^3} f (v\cdot e)^2dv$.
 It is easy to check that
 \beno T_*(f)&\ge& K_0\int_{\R^3} \exp\{-K_0|v|^{q_0}\}  (v\cdot e)^2dv 
 \ge\f23K_0\int_0^\infty r^4\exp\{-K_0r^{q_0}\} dr\eqdefa C_{K_0}. \eeno

 {\it Step 2: Estimate of $D_2(f)$}.
From the lower bound condition and the fact(see Lemma 4.3 in \cite{villani}),
$  (X-Z)\log \f{X}{Z}\le C\max\{1, \log \f{X}{Z}, \log \f{Z}{X}\}((X-Y)\log \f{Y}{Z}, (Y-Z)\log \f{Y}{Z}),$ we easily deduce that
 \beno  (f_*'f'-f_*f)\log\f{f_*'f'}{f_*f}&\lesssim& (1+\log K_0+A(\lr{v'}^{q_0}+\lr{v_*'}^{q_0})(f_*'f'-M'_*M')\log\f{f_*'f'}{M'_*M'} \\
&& +(1+\log K_0+A(\lr{v}^{q_0}+\lr{v_*}^{q_0})(f_*f-M_*M)\log\f{f_*'f'}{M_*M}.  \eeno
  From this together with the symmetric property, we get
 \beno
 D_2(f)&\lesssim& R^{-2m-2} \int_{|v-v_*|\ge R} (\lr{v}+ \lr{v_*})^{q_0+2m+2}(f_*f-M_*M)\log \f{ff_*}{M_*M} dvdv_*dx\\
 &\lesssim&2R^{-2m-2} \int_{|v-v_*|\ge R} (\lr{v}+ \lr{v_*})^{q_0+2m+2}(f_*f-M_*M)\log \f{f}{M} dvdv_*dx.
 \eeno
 Due to the  decomposition
 \beno (f_*f-M_*M)\log \f{f}{M}&=&(f-M)_*\log \f{f}{M}(f-M)+M_*\log \f{f}{M}(f-M)\eeno\beno&&+(f-M)_*(M\log \f{M}{f}-M+f)+(f-M)_*(f-M), \eeno
the estimate of $D_2(f)$ will be split into four terms denoted by $I_i(i=1,2,3,4)$.  We first observe that
\beno I_1&\eqdefa&  \int_{|v-v_*|\ge R} (\lr{v}+ \lr{v_*})^{q_0+2m+2}(f-M)_*\log \f{f}{M}(f-M) dvdv_*dx\eeno\beno
&=& \int_{|v-v_*|\ge R} (\lr{v}+ \lr{v_*})^{q_0+2m+2} (f-M)_*[f\log \f{f}{M}-f+M+M\log \f{M}{f}-M+f]dvdv_*dx\\
&\lesssim&\|f-M\|_{L^\infty_xL^2_{q_0+2m+4}}\iint  \lr{v}^{q_0+2m+2}[f\log \f{f}{M}-f+M+M\log \f{M}{f}-M+f]dvdx.
\eeno
Using the lower bound condition and the inequality
$ M\log \f{M}{f}-M+f\le C\max\{1,\log\f{M}{f}\}(f\log \f{f}{M}-f+M),$
 we infer that
$ I_1\lesssim \|f-M\|_{L^2_xL^2_{q_0+2m+4}}H(f|M)^{\f12}(1+\|f\|_{L^\infty_xL^2_{5q_0+4m+6}})^{\f12}.
$

 Let $I_2\eqdefa \int_{|v-v_*|\ge R} (\lr{v}+ \lr{v_*})^{q_0+2m+2} M_*\log \f{f}{M}(f-M) dvdv_*dx$,
 $I_3\eqdefa \int_{|v-v_*|\ge R} (\lr{v}+ \lr{v_*})^{q_0+2m+2}(f-M)_*(M\log \f{M}{f}-M+f) dvdv_*dx$ and
 $I_4 \eqdefa \int_{|v-v_*|\ge R} (\lr{v}+ \lr{v_*})^{q_0+2m+2}(f-M)_*(f-M) dvdv_*dx$.
 Then by similar argument which is used to handle $I_1$, we may derive
that
\beno I_2+I_3+I_4&\lesssim& H(f|M)^{a}(1+\|f\|_{L^2_xL^2_{(1-a)^{-1}(2q_0+2m+2)+q_0+2}})^{1-a}\\&&+\|f-M\|_{L^2_xL^2_{q_0+2m+4}}H(f|M)^{\f12}(1+\|f\|_{L^\infty_xL^2_{5q_0+4m+6}})^{\f12}+\|f-M\|_{L^2_xL^2_{q_0+2m+4}}^2,
\eeno
which implies that
\beno D_2(f)&\lesssim& H(f|M)^{a}(1+\|f\|_{L^2_xL^2_{a^{-1}(2q_0+2m+2)+q_0+2}})^{1-a}\\&&+\|f-M\|_{L^2_xL^2_{q_0+2m+4}}H(f|M)^{\f12}(1+\|f\|_{L^\infty_xL^2_{5q_0+4m+6}})^{\f12}+\|f-M\|_{L^2_xL^2_{q_0+2m+4}}^2. \eeno

 {\it Step 3: Estimate of $D_3(f)$}. The argument used to the estimate of $D_2(f)$  can be applied to give the upper bound for $D_3(f)$. The only difference is that integral domain verifies $\lr{v}\sim \lr{v_*}$ thanks to  the condition  $|v-v_*|\le \delta$. We are led to
 \beno
 D_3(f)\lesssim \delta^{\f32}\big(\|f-M\|_{L^2_{q_0}}^2+\|f-M\|_{L^\infty_xL^2_{2q_0}}H(f|M)\big).
  \eeno

Combine all the estimates and then we will conclude the result. It  ends the proof of the lemma.
 \end{proof}

 \subsection{Dissipation estimates for the hydrodynamical fields}
By formal hydrodynamics, we first recall that $\rho, u$ and $T$ defined in \eqref{hydrofield} verify
\ben\label{dynamic1}
&\pa_t \rho +u\cdot\na_x \rho+\rho\na_x\cdot u=0;\\
&\pa_t  u+u\cdot\na_x u+\na_x T+\f{T\na_x\rho}{\rho}+\f{\na_x\cdot D}{\rho} =0, \label{dynamic2}
\\
&\pa_t T+u\cdot\na_x T+\f23T\na_x\cdot u+\f23\f{1}\rho (\na_xu:D+\na_x\cdot R)=0,\label{dynamic3}
\een
where  $R(x)=\f12\int_{\R^3} f|v-u|^2 (v-u)dv$ and $D=(D_{i,j})_{3\times3}$ with
$ D_{ij}=\int_{\R^3} f((v-u)_i(v-u)_j-\f13|v-u|^2\delta_{ij})dv.$

Let $M_{\rho u T}$ be any smooth local Maxwellian with parameters $\rho,u$ and $T$ which may depend on $t$ and $x$. Then we obtain that
\beno 
&&\pa_t(f-M_{\rho u T})+v\cdot \na_x(f-M_{\rho u T})\\&&=-M_{\rho u T}\bigg\{\bigg[\f{\pa_t\rho+u\cdot \na_x \rho}{\rho}-\f32\f{\pa_tT+u\cdot\na_x T}{T}\bigg]+\f{v-u}{\sqrt{T}}\cdot\bigg[\sqrt{T}\f{\na_x \rho}{\rho}-\f32\f{\na_x T}{\sqrt{T}}+
\f{\pa_tu+u\cdot\na_x u}{\sqrt{T}}\bigg]\\&&\quad+
\sum_{1\le i<j\le 3}\bigg(\f{v-u}{\sqrt{T}}\bigg)_i \bigg(\f{v-u}{\sqrt{T}}\bigg)_j[\pa_{x_i}u_j+\pa_{x_j}u_i]+\sum_{1\le i\le 3}\bigg(\f{v-u}{\sqrt{T}}\bigg)_i^2\bigg[\pa_{x_i}u_i+\f12\f{\pa_tT+u\cdot\na_x T}{T}\bigg]\\&&\quad+
\bigg|\f{v-u}{\sqrt{T}}\bigg|^2\f{v-u}{\sqrt{T}}\cdot \f{\na_x T}{2\sqrt{T}}
\bigg\}+Q(f, f-M_{\rho u T})+Q(f-M_{\rho u T},M_{\rho u T}).
\eeno

Let us abuse the notations to set  $M_{\rho,  u, \lr{T}_x}^f\eqdefa\f{\rho e^{-\f{|v-u|^2}{2\lr{T}_x}}}{(2\pi \lr{T}_x)^{\f32}} $ and  $M_{\rho,  \lr{u}_x, \lr{T}_x}^f\eqdefa\f{\rho e^{-\f{|v-\lr{u}_x|^2}{2\lr{T}_x}}}{(2\pi \lr{T}_x )^{\f32}} $ where $\rho, u$ are the hydrodynamical fields  associated to $f$, $\lr{T}_x= \int_{\TT^3} T dx$ and $\lr{u}_x= \int_{\TT^3} u dx$. Then thanks to (\ref{dynamic1}-\ref{dynamic3}), we infer that 
\beno 
&&(i).\,\pa_t(f-M^f_{\rho, u, T})+v\cdot \na_x(f-M^f_{\rho, u, T})\\&&=-M^f_{\rho, u, T}\mathbf{P}_1\bigg(\f{v-u}{\sqrt{T}}\bigg)+Q(f, f-M^f_{\rho, u, T})+Q(f-M^f_{\rho, u, T},M^f_{\rho, u, T}),\\
&&(ii).\,\pa_t(f-M^f_{\rho, u, \lr{T}_x})+v\cdot \na_x(f-M^f_{\rho, u, \lr{T}_x})\eeno\beno&&=-M^f_{\rho, u, \lr{T}_x}\mathbf{P}_2\bigg(\f{v-u}{\sqrt{\lr{T}_x}}\bigg)+Q(f, f-M^f_{\rho, u, \lr{T}_x})+Q(f-M^f_{\rho, u, \lr{T}_x},M^f_{\rho, u, \lr{T}_x}),\\ 
&&(iii).\,\pa_t(f-M^f_{\rho, \lr{u}_x, \lr{T}_x})+v\cdot \na_x(f-M^f_{\rho, \lr{u}_x, \lr{T}_x})\\
&&
=-M^f_{\rho, \lr{u}_x, \lr{T}_x}\mathbf{P}_3\bigg(\f{v-\lr{u}_x}{\sqrt{\lr{T}_x}}\bigg)
+Q(f, f-M^f_{\rho, \lr{u}_x, \lr{T}_x})+Q(f-M^f_{\rho,\lr{u}_x,\lr{T}_x},M^f_{\rho,\lr{u}_x,\lr{T}_x}),
\eeno
where
\beno \mathbf{P}_1\bigg(\f{v-u}{\sqrt{T}}\bigg)&\eqdefa& \bigg[\f{\na_x u: D}{\rho T}+\f{\na_x\cdot R}{\rho T}\bigg]+\f{v-u}{\sqrt{T}}\cdot\bigg[-\f52\f{\na_x T}{\sqrt{T}}- \f{\na_x\cdot D}{\rho\sqrt{T}} \bigg]+
\sum_{1\le i<j\le 3}\bigg(\f{v-u}{\sqrt{T}}\bigg)_i \bigg(\f{v-u}{\sqrt{T}}\bigg)_j\\&&\quad\times[\pa_{x_i}u_j+\pa_{x_j}u_i]+\sum_{1\le i\le 3}\bigg(\f{v-u}{\sqrt{T}}\bigg)_i^2\bigg[\pa_{x_i}u_i-\f13\na_x\cdot u-\f{\na_x u: D}{3\rho T}-\f{\na_x\cdot R}{3\rho T}\bigg] \\&&\quad+
\bigg|\f{v-u}{\sqrt{T}}\bigg|^2\f{v-u}{\sqrt{T}}\cdot \f{\na_x T}{2\sqrt{T}},\\
\mathbf{P}_2\bigg(\f{v-u}{\sqrt{\lr{T}_x}}\bigg)&\eqdefa&  \bigg(-\na_x\cdot u+\f32\f{\lr{-\f13T\na_x\cdot u+\f23\f1{\rho}(\na_xu:D+\na_x\cdot R)}_x}{\lr{T}_x}\bigg)+\f{v-u}{\sqrt{\lr
{T}_x}}\cdot \bigg[\sqrt{\lr{T}_x}\f{\na_x\rho}{\rho}-\f{\na_x T}{\sqrt{\lr{T}_x}}\\&&-\f{T\na_x \rho+\na_x\cdot D}{\rho\sqrt{\lr{T}_x}} \bigg]+
\sum_{1\le i<j\le 3}\f{(v-u)_i}{\sqrt{\lr{T}_x}} \f{(v-u)_j}{\sqrt{\lr{T}_x}}[\pa_{x_i}u_j+\pa_{x_j}u_i]+\sum_{1\le i\le 3}\bigg(\f{(v-u)_i}{\sqrt{\lr{T}_x}}\bigg)^2\\&&
\times\bigg(\pa_{x_i}u_i -\f12\f{\lr{-\f13T\na_x\cdot u+\f23\f1{\rho}(\na_xu:D+\na_x\cdot R)}_x}{\lr{T}_x}\bigg), \\
\mathbf{P}_3\bigg(\f{v-\lr{u}_x}{\sqrt{\lr{T}_x}}\bigg)&\eqdefa& \bigg(-\na_x\cdot u+\f32\f{\lr{-\f13T\na_x\cdot u+\f23\f1{\rho}(\na_xu:D+\na_x\cdot R)}_x}{\lr{T}_x}\bigg)+\f{v-\lr{u}_x}{\sqrt{\lr
{T}_x}}\cdot \bigg[\sqrt{\lr{T}_x}\f{\na_x\rho}{\rho}\eeno\beno-\f{\lr{u\cdot\na_x u+\na_x T+\f1{\rho}(T\na_x \rho+\na_x\cdot D)}_x}{\sqrt{\lr{T}_x}}  \bigg]  +\sum_{1\le i\le 3}\bigg(\f{(v-u)_i}{\sqrt{\lr{T}_x}}\bigg)^2\bigg( -\f12\f{\lr{-\f13T\na_x\cdot u+\f23\f1{\rho}(\na_xu:D+\na_x\cdot R)}_x}{\lr{T}_x}\bigg).
  \eeno
Here we use the notation: $A:B\eqdefa\sum\limits_{i,j=1}^3a_{ij}b_{ij}$ if $A=(a_{ij})_{3\times3}$ and $B=(b_{ij})_{3\times3}$.

  Before showing the dissipation estimates for the hydrodynamical fileds, we first give the estimates on  the hydrodynamical fields $\rho, u, T$ and $D, R$ in (\ref{dynamic1}-\ref{dynamic3}). 

\begin{prop}\label{Conput} Suppose that the solution $f$ to the equation \eqref{Boltz eq} verifies \eqref{dynacondi}. Then we have
\beno  &&(i). \quad T\ge c(c_1,c_2)>0; \|\rho-1\|_{H^{\f32+\delta_1}}+\|u\|_{H^{\f32+\delta_1}}+\|T-1\|_{H^{\f32+\delta_1}}\le  C(c_1,c_2); \\
&&(ii). \quad  \| D\|_{H^{\f32+\delta_1}}\le  C(c_1,c_2); \|D\|_{L^2}\le C(c_1,c_2) \|f-M^f_{\rho,u,T}\|_{L^2_xL^2_4};\\
&&(iii). \quad \| R\|_{H^{\f32+\delta_1}}\le  C(c_1,c_2); \|R\|_{L^2}\le C(c_1,c_2) \|f-M^f_{\rho,u,T}\|_{L^2_xL^2_5};\\
&&(iv). \quad \|(\na_x a)\na |D_x|^{-2} (b-\lr{b}_x))\|_{L^2}+\|a\na^2|D_x|^{-2}(b-\lr{b}_x)\|_{L^2}\le C\|a\|_{H^{\f32+\delta_1}}\|b-\lr{b}_x\|_{L^2};\\
&&(v).\quad \| |D_x|^{-1}(a\na b- \lr{a\na b}_x)\|_{L^2}\le C(  \|a\|_{H^{\f32+\delta_1}} \|b\|_{L^2}+\|a\|_{L^2}\|b\|_{H^{\f32+\delta_1}});\\ &&(vi). \quad |M_{\rho u T}\mathbf{P}(\f{v-u}{\sqrt{T}})|_{H^{2s}_{4}}\le C(c_1,c_2);\\
 &&(vii).\quad |\big(Q(f, f-M_{\rho u T}), M_{\rho u T}\mathbf{P}(\f{v-u}{\sqrt{T}})F(\rho, u, T)\big)|+|\big(Q( f-M_{\rho u T}, M_{\rho u T}), M_{\rho u T}\mathbf{P}(\f{v-u}{\sqrt{T}})F(\rho, u, T)\big)|\\
 &&\qquad\qquad\le  C(c_1,c_2)
 \|f-M_{\rho u T}\|_{L^2_{\gamma+4}}\|F(\rho, u, T)\|_{L^2},
\eeno
where $a,b$ are functions depending on $x$ variable, $\mathbf{P}$ is a polynomial function and $F(\rho, u, T)$ is a function depending only on $\rho, u, T$.
\end{prop}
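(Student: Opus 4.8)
I would prove the seven estimates in the order $(i)$, then $(ii)$--$(v)$, then $(vi)$, then $(vii)$, since $(ii)$--$(v)$ use the field bounds of $(i)$ and $(vii)$ uses $(vi)$. The estimates $(i)$--$(iii)$ are about extracting pointwise and Sobolev control of the macroscopic quantities from \eqref{dynacondi}; $(iv)$--$(v)$ are pure harmonic analysis on $\TT^3$; and $(vi)$--$(vii)$ reduce the collision terms to the sharp bounds already proved in Theorem~\ref{ubofQe} and Theorem~\ref{thmub}.

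\textbf{Steps $(i)$--$(iii)$.} First I would observe that $H^{3/2+2\delta_1}_x\hookrightarrow L^\infty_x$, so \eqref{dynacondi} yields a uniform bound for $f$ in $L^\infty_x L^2_{\gamma+4}$; pairing $f$ against $1,v,|v|^2$ with the weight $\langle v\rangle^{\gamma+4}$ and Cauchy--Schwarz in $v$ then gives $2c_1\le\rho\le C(c_1,c_2)$ and $|u|,T\le C(c_1,c_2)$. The lower bound $T\ge c(c_1,c_2)>0$ is the one place where the $L^2$ bound is insufficient: for each fixed $x$ the nonnegativity of the local relative entropy gives $\int_{\R^3}f\log f\,dv\ge\rho\log\rho-\tfrac32\rho\log(2\pi T)-\tfrac32\rho$, and since the left side is controlled by the $L\log L$ part of \eqref{dynacondi}, $T$ cannot approach $0$. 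The $H^{3/2+\delta_1}_x$ bounds on $\rho-1,u,T-1$ then follow by differentiating the defining integrals in $x$, using the algebra property of $H^{3/2+2\delta_1}_x$ and the fact that $1/\rho\in H^{3/2+2\delta_1}_x$ because $\rho$ is bounded below. For $(ii)$ and $(iii)$ the key point is the symmetry identities $\int M^f_{\rho,u,T}\big((v-u)_i(v-u)_j-\tfrac13|v-u|^2\delta_{ij}\big)dv=0$ (tracelessness) and $\int M^f_{\rho,u,T}|v-u|^2(v-u)\,dv=0$ (odd parity), which let me rewrite $D$ and $R$ as moments of $f-M^f_{\rho,u,T}$; the $L^2_x$ bounds then come from Cauchy--Schwarz in $v$ (the weights $\gamma+4$ and $5$ make $\langle v\rangle^{-(\gamma+2)}$ and $\langle v\rangle^{-2}$ lie in $L^2_v$), while the $H^{3/2+\delta_1}_x$ bounds follow from the $x$-regularity of $f$ and of $M^f_{\rho,u,T}$ (a smooth function of $(\rho,u,T)$, hence controlled in $H^{3/2+2\delta_1}_x L^2_l$ for all $l$), combined with a standard product estimate in $H^{3/2+\delta_1}_x$ and, where needed, the moment control of this section.

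\textbf{Steps $(iv)$--$(v)$ and $(vi)$.} For $(iv)$, $\nabla|D_x|^{-2}$ is a Fourier multiplier of order $-1$ and $\nabla^2|D_x|^{-2}$ one of order $0$, both well defined on the mean-zero function $b-\lr{b}_x$, so $\nabla|D_x|^{-2}(b-\lr{b}_x)\in H^1_x$ and $\nabla^2|D_x|^{-2}(b-\lr{b}_x)\in L^2_x$ with norms $\lesssim\|b-\lr{b}_x\|_{L^2}$; the estimate then closes with the bilinear inequality $\|fg\|_{L^2}\lesssim\|f\|_{H^{s_1}}\|g\|_{H^{s_2}}$ on $\TT^3$ for $s_1,s_2\ge0$, $s_1+s_2>3/2$ (applied with $(s_1,s_2)=(\tfrac12+\delta_1,1)$ for the first term and with $\|a\|_{L^\infty}\lesssim\|a\|_{H^{3/2+\delta_1}}$ for the second). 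For $(v)$, since $|D_x|^{-1}:H^{-1}_x\to L^2_x$ on mean-zero functions, it suffices to bound $a\na b$ in $H^{-1}_x$; writing $a\na b=\na(ab)-(\na a)b$ reduces this to $\|ab\|_{L^2}$ and to the dual bound $|\lr{(\na a)b,\varphi}|\lesssim\|\na a\|_{H^{1/2+\delta_1}}\|b\|_{L^2}\|\varphi\|_{H^1}$, again the same bilinear inequality. For $(vi)$, $M_{\rho u T}\mathbf{P}\big(\tfrac{v-u}{\sqrt T}\big)$ is, for each $x$, a Gaussian times a fixed polynomial whose velocity $H^{2s}_4$ norm is a continuous function of $(\rho,u,T)$; by $(i)$ these lie in a compact set, so the norm is bounded by $C(c_1,c_2)$.

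\textbf{Step $(vii)$ and the main obstacle.} Finally $(vii)$ follows by applying the sharp upper bounds for $Q$ (Theorem~\ref{ubofQe}, third item, together with Theorem~\ref{thmub}) to $\lr{Q(f,f-M_{\rho u T}),\Psi}_v$ and $\lr{Q(f-M_{\rho u T},M_{\rho u T}),\Psi}_v$ with $\Psi=M_{\rho u T}\mathbf{P}\big(\tfrac{v-u}{\sqrt T}\big)F(\rho,u,T)$: one bounds the $L^1_{\gamma+2s}$ slot by a constant times $|f|_{L^2_{\gamma+4}}$ or $|f-M_{\rho u T}|_{L^2_{\gamma+4}}$, the fixed-Maxwellian slot by $C(c_1,c_2)$, the weighted $L^2$ slot by $|f-M_{\rho u T}|_{L^2_{\gamma+4}}$, and the velocity $H^{2s}$ norm of $\Psi$ by $C(c_1,c_2)|F(\rho,u,T)(x)|$ using $(vi)$; integrating in $x$ and using Cauchy--Schwarz gives the claimed bound $C(c_1,c_2)\|f-M_{\rho u T}\|_{L^2_{\gamma+4}}\|F(\rho,u,T)\|_{L^2}$. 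The only genuinely non-mechanical point in the whole proof is the temperature lower bound in $(i)$, which cannot be read off the Sobolev bound and must exploit the $L\log L$ (entropy) control through the nonnegativity of the local relative entropy; everything else is bookkeeping with Sobolev embeddings, the algebra property of $H^{3/2+2\delta_1}_x$, the vanishing-moment symmetries of the Maxwellian, and the already established collision bounds.
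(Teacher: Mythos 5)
Your proposal follows essentially the same route as the paper's for $(ii)$--$(vii)$: rewriting $D,R$ as moments of $f-M^f_{\rho,u,T}$ via tracelessness/odd parity, bilinear product estimates in $H^s_x(\TT^3)$ for $(iv)$--$(v)$, and reduction to the sharp collision bound $|\lr{Q(g,h),\Psi}_v|\lesssim |g|_{L^2_{\gamma+4}}|h|_{L^2}|\Psi|_{H^{2s}_{\gamma+2s}}$ together with $(vi)$ for $(vii)$. For $(vi)$ you appeal to continuity on a compact parameter set rather than, as the paper does, computing the Fourier transform of the Gaussian-times-polynomial explicitly; both are valid, the paper's computation being somewhat more quantitative.

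The one inaccuracy is in $(i)$: you assert that the $L^2_v$ bound is "insufficient" for the temperature lower bound and invoke "the $L\log L$ part of \eqref{dynacondi}" — but \eqref{dynacondi} contains no $L\log L$ assumption. As it happens, the $L^2_v$ bound alone does give $T\ge c(c_1,c_2)$: writing $\rho=\int_{|v-u|\le R}f\,dv+\int_{|v-u|>R}f\,dv\lesssim R^{3/2}\,|f|_{L^2_v}+R^{-2}\,(3\rho T)$ and optimizing over $R\sim\sqrt{T}$ shows $\rho\lesssim T^{3/4}|f|_{L^2_v}$, so $T$ is bounded below once $\rho\ge 2c_1$ and $|f|_{L^2_v}\le\sqrt{c_2}$. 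Your entropy inequality $\int f\log f\,dv\ge\rho\log\rho-\tfrac32\rho\log(2\pi T)-\tfrac32\rho$ also works, but to make it rigorous under the stated hypotheses you would need to first observe that the weighted $L^2_v$ control furnishes an upper bound on $\int f\log f$ (e.g.\ via $\int_{f\ge1}f\log f\le|f|_{L^2_v}^2$), since the $L\log L$ control is not assumed directly. So the step you flagged as "the only genuinely non-mechanical point" is in fact a mechanical consequence of the stated hypotheses, and your claim that $L^2$ is insufficient should be retracted; the rest of the proposal is sound.
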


\begin{proof} Let us give the proofs to the results term by term.
 We first remark that the inequalities in  $(i)$ are easily obtained by the definitions of the hydrodynamical fields and the assumption \eqref{dynacondi}.  
 
(1). For the results in $(ii)$ and $(iii)$,  we first observe that
$ \|D\|_{H^{\f32+\delta_1}}\lesssim \|f\|_{H^{\f32+\delta_1}_xL^2_4}(1+\|u\|_{H^{\f32+\delta_1}}) ^2$.
Notice that $D=\int_{\R^3} (f-M^f_{\rho,u,T})((v-u)\otimes (v-u)-\f13|v-u|^2\mathbf{I}_3)dv$. We have
$\|D\|_{L^2}\lesssim \|f-M^f_{\rho,u,T}\|_{L^2_4}(1+\|u\|_{L^\infty})^2,
$
which implies the results in $(ii)$. Since $R$ enjoys the similar structure as that for $D$, the estimates in $(iii)$ are easily followed. 

(2). It is easy to see that $(iv)$ is obtained by the fact that $\|ab\|_{L^2}\le \|a\|_{H^{j}}\|b\|_{H^{k}}$ where $j+k=\f32+\delta_1$ with $j,k\ge0$. We remark that this fact can be derived by  Lemma \ref{dsums}. As for $(v)$, we may copy the proof of Lemma  \ref{dsums} to show that 
\beno   |\sum_{m\in\ZZ^3, m\neq 0}\sum_{p\in\ZZ^3}
  |m|^{-1}  A_p(|m-p|B_{m-p})C_m|&\lesssim &\big(\sum_{p\in \ZZ^3} |p|^{2(\f32+\delta_1)} A_p^2\big)^{\f12}\big(\sum_{p\in \ZZ^3}   B_p^2\big)^{\f12}\big(\sum_{p\in \ZZ^3} C_p^2\big)^{\f12}\\&&+\big(\sum_{p\in \ZZ^3}  A_p^2\big)^{\f12}\big(\sum_{p\in \ZZ^3}  |p|^{2(\f32+\delta_1)} B_p^2\big)^{\f12}\big(\sum_{p\in \ZZ^3} C_p^2\big)^{\f12}, \eeno
which is enough to yield the desired result in $(v)$.

(3). Observe that
$ \lr{v}^4=(|v|^2+1)^2=(|v-u+u|^2+1)^2=a(v-u)+b(v-u)c(u)+d(u), $ 
 where $a,b,c,d$ are polynomial functions.
 Then we have
  \beno &&|M_{\rho u T}\mathbf{P}(\f{v-u}{\sqrt{T}})|_{H^{2s}_{4}}^2=\int_{\R^3} |\lr{D_v}^{2s} \lr{v}^4 M_{\rho u T}\mathbf{P}(\f{v-u}{\sqrt{T}})|^2dv =\int_{\R^3} |\lr{D_v}^{2s}  (a(v-u)+b(v-u)c(u)\\&&\quad+d(u))M_{\rho u T}\mathbf{P}(\f{v-u}{\sqrt{T}})|^2dv \lesssim  \rho^2\int_{\R^3}|\lr{\xi/\sqrt{T}}^{2s} e^{-iu\xi/\sqrt{T}} \mathcal{F}_{\xi}(M(a(\sqrt{T}\cdot)+b(\sqrt{T}\cdot)c(u)+d(u))\mathbf{P})(  \xi)|^2d\xi
 \\&&\le C(\|\rho\|_{L^\infty},\|u\|_{L^\infty},\|T\|_{L^\infty}),
\eeno 
which implies $(vi)$.
Next we recall that 
$ |\lr{Q(g,h), f}_v|\lesssim |g|_{L^2_{\gamma+4}}|h|_{L^2}|f|_{H^{2s}_{\gamma+2s}}$. From this together with the result in $(vi)$, we derive the desired result in $(vii)$.
	
We complete the proof of the proposition.	
 \end{proof}

Now we are in a position to prove
\begin{lem}\label{Disp1}
	 Suppose that the solution $f$  to the equation \eqref{Boltz eq} verifies  \eqref{dynacondi}. Then there exist constants $C=C(c_1,c_2)$ and $r_i(i=1,2,3)$ such that  
	 \beno 
	 &&(i).\, \f{d}{dt} \bigg(f-M^f_{\rho, u, T}, M^f_{\rho, u, T}\f{(2\pi T)^{\f32}}{\rho^2}\bigg(-5+\bigg|\f{v-u}{\sqrt{T}}\bigg|^2\bigg)(v-u)\cdot \na_x |D_x|^{-2}(T-\lr{T}_x)  \bigg)
	 +r_1\|T-\lr{T}_x\|_{L^2}^2\\&&\le 
	C \|f-M^f_{\rho,u,T}\|_{L^2_{\gamma+4}}(\|u-\lr{u}_x\|_{L^2}+\|T-\lr{T}_x\|_{L^2}+\|\rho-1\|_{L^2}+\|f-M^f_{\rho, u, T}\|_{L^2_xL^2_5});\\
	&&(ii).\, \f{d}{dt}\sum_{1\le m<n\le3}\bigg(f-M^f_{\rho, u, \lr{T}_x}, M^f_{\rho, u, \lr{T}_x}\f{(2\pi)^{\f32}}{\rho^2}\f{(v-u)_m}{\sqrt{\lr{T}_x}}\f{(v-u)_n}{\sqrt{\lr{T}_x}}|D_x|^{-2}[\pa_{x_m}(u_n-\lr{u_n}_x)+\pa_{x_n}(u_m\\&&\quad-\lr{u_m}_x)]\bigg)+ \f{d}{dt}\bigg(f-M^f_{\rho, u, \lr{T}_x}, M^f_{\rho, u, \lr{T}_x}\f{(2\pi)^{\f32}}{\rho^2} \sum_{1\le i\le 3}(-1+\f{(v-u)_i^2}{\lr{T}_x})|D_x|^{-2}\pa_{x_i}(u_i-\lr{u_i}_x)\bigg)\\&&+r_2\|u-\lr{u}_x\|_{L^2}^2\le C 	 \|f-M^f_{\rho,u,\lr{T}_x}\|_{L^2_xL^2_{\gamma+4}}( \|u-\lr{u}_x\|_{L^2}+\|T-\lr{T}_x\|_{L^2}+\|\rho-1\|_{L^2}+\|f-M^f_{\rho,u,T}\|_{L^2_xL^2_5});\\
	 &&(iii).\,  \f{d}{dt}\sum_{j=1}^3\bigg(f-M^f_{\rho, \lr{u}_x, \lr{T}_x}, M^f_{\rho,\lr{u}_x, \lr{T}_x}(v_j-\lr{u_j}_x)\pa_{x_j}|D_x|^{-2}(\rho-1) \bigg)+r_3\|\rho-1\|_{L^2}^2\\
	 &&\le C 
	 \|f-M^f_{\rho,\lr{u}_x, \lr{T}_x}\|_{L^2_xL^2_{\gamma+4}} ( \|u-\lr{u}_x\|_{L^2}+\|T-\lr{T}_x\|_{L^2}+\|\rho-1\|_{L^2}).
	 \eeno  
\end{lem}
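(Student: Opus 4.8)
The plan is to prove each of the three inequalities by the same three-step device: pair the relevant evolution equation derived in Subsection~5.2 with the explicit test function displayed in the statement, differentiate the scalar product in time, substitute the evolution equation into the time derivative, and then isolate the one coercive contribution while estimating everything else by the bounds collected in Proposition~\ref{Conput}.

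For $(i)$, write $\Psi_1\eqdefa M^f_{\rho,u,T}\f{(2\pi T)^{3/2}}{\rho^2}\bigl(-5+|\f{v-u}{\sqrt T}|^2\bigr)(v-u)\cdot\na_x|D_x|^{-2}(T-\lr{T}_x)$ and decompose $\f{d}{dt}(f-M^f_{\rho,u,T},\Psi_1)=(\pa_t(f-M^f_{\rho,u,T}),\Psi_1)+(f-M^f_{\rho,u,T},\pa_t\Psi_1)$. Using the equation for $f-M^f_{\rho,u,T}$, the first bracket splits into the transport part $(f-M^f_{\rho,u,T},v\cdot\na_x\Psi_1)$ after integrating by parts in $x$, the source part $-(M^f_{\rho,u,T}\mathbf{P}_1(\f{v-u}{\sqrt T}),\Psi_1)$, and the two collision brackets. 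The coercivity comes entirely from the source part: computing the Gaussian moments in $v$, the weight $-5+|\cdot|^2$ is precisely what kills the contributions of the linear-in-$\f{v-u}{\sqrt T}$ and constant-in-$v$ pieces of $\mathbf{P}_1$ (those carrying $\na_x\cdot D$, $\na_x\cdot R$, $\na_x u:D$), since $\int M^f_{\rho,u,T}(-5+|\f{v-u}{\sqrt T}|^2)(\f{v-u}{\sqrt T})_i(\f{v-u}{\sqrt T})_j\,dv=0$ and the odd moments vanish, leaving only the term $|\f{v-u}{\sqrt T}|^2\f{v-u}{\sqrt T}\cdot\f{\na_x T}{2\sqrt T}$; invoking $\Delta|D_x|^{-2}=-\mathrm{Id}$ on zero-mean functions then yields $-r_1\|T-\lr{T}_x\|_{L^2}^2$, up to corrections of the form $\|\rho-1\|_{L^2}\|T-\lr{T}_x\|_{L^2}$ produced by expanding the smooth, bounded prefactor $\f{(2\pi T)^{3/2}}{\rho^2}\sqrt T$ around $\rho\equiv1$, $T\equiv1$ (legitimate since $T\ge c(c_1,c_2)>0$ and $\rho\ge2c_1>0$ by Proposition~\ref{Conput}(i) and \eqref{dynacondi}). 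The collision brackets are of the form treated in Proposition~\ref{Conput}(vii) with the $x$-dependent factor $F=\f{(2\pi T)^{3/2}}{\rho^2}\sqrt T\,\pa_{x_i}|D_x|^{-2}(T-\lr{T}_x)$, hence bounded by $C(c_1,c_2)\|f-M^f_{\rho,u,T}\|_{L^2_{\gamma+4}}\|T-\lr{T}_x\|_{L^2}$. Finally, the transport bracket and $(f-M^f_{\rho,u,T},\pa_t\Psi_1)$ are handled by first using the hydrodynamic equations \eqref{dynamic1}--\eqref{dynamic3} to replace $\pa_t\rho,\pa_t u,\pa_t T$ by spatial derivatives of $\rho,u,T,D,R$, and then applying the product and nonlocal estimates of Proposition~\ref{Conput}(iv)--(v); the Gaussian decay in $v$ supplied by $M^f_{\rho,u,T}$ inside $\Psi_1$ allows the weight on $f-M^f_{\rho,u,T}$ to be taken as low as needed, and the $L^2$-bounds $\|D\|_{L^2}\lesssim\|f-M^f_{\rho,u,T}\|_{L^2_xL^2_4}$, $\|R\|_{L^2}\lesssim\|f-M^f_{\rho,u,T}\|_{L^2_xL^2_5}$ of Proposition~\ref{Conput}(ii)--(iii) account for the term $\|f-M^f_{\rho,u,T}\|_{L^2_xL^2_5}$ on the right-hand side.

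For $(ii)$ and $(iii)$ the scheme is identical: in $(ii)$ one pairs the equation for $f-M^f_{\rho,u,\lr{T}_x}$ with the stated test function built from $\pa_{x_m}(u_n-\lr{u_n}_x)$ and $|D_x|^{-2}$, and the surviving $v$-moments select the pieces of $\mathbf{P}_2$ carrying $\pa_{x_i}u_j+\pa_{x_j}u_i$ and $\pa_{x_i}u_i$, producing $r_2\|u-\lr{u}_x\|_{L^2}^2$; in $(iii)$ one pairs the equation for $f-M^f_{\rho,\lr{u}_x,\lr{T}_x}$ with the test function built from $\pa_{x_j}|D_x|^{-2}(\rho-1)$, and the piece $\sqrt{\lr{T}_x}\f{\na_x\rho}{\rho}$ of $\mathbf{P}_3$ yields $r_3\|\rho-1\|_{L^2}^2$ (here $\int_{\TT^3}(\rho-1)\,dx=0$ by \eqref{normalized}, so $|D_x|^{-2}$ acts on a zero-mean function). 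One extra bookkeeping point specific to these two cases is that $\lr{u}_x$ and $\lr{T}_x$ depend on $t$ but not $x$; their time derivatives are obtained by integrating \eqref{dynamic1}--\eqref{dynamic3} over $\TT^3$, which again produces only spatial-derivative expressions absorbed by Proposition~\ref{Conput}.

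The main obstacle will be the $v$-moment bookkeeping in the source terms: one must check, for each of the three test functions, that every ``dangerous'' piece of $\mathbf{P}_1,\mathbf{P}_2,\mathbf{P}_3$ — namely the ones containing $\na_x\cdot D$, $\na_x\cdot R$, $\na_x u:D$, which cannot be controlled directly by the three fluctuation norms — either vanishes by the parity/orthogonality engineered into the velocity weights, or survives only through $\pa_t\Psi$ via \eqref{dynamic1}--\eqref{dynamic3} and is then controlled through the $L^2$-bounds on $D$ and $R$, contributing the $\|f-M^f_{\rho,u,T}\|_{L^2_xL^2_5}$ term; and simultaneously one must keep the sign of the single surviving Gaussian moment so that the coercive term lands on the correct side of the inequality. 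Once these cancellations and sign are verified, all remaining estimates are routine combinations of integration by parts in $x$, the identity $\Delta|D_x|^{-2}=-\mathrm{Id}$ on mean-zero functions, and Proposition~\ref{Conput}.
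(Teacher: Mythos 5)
Your overall architecture — substitute the evolution equation for $f-M^f_{\rho,u,T}$, read off the coercive term from the source $-M^f_{\rho,u,T}\mathbf{P}_1$ paired with $\Psi_1$, and control the transport, collision and $\partial_t\Psi_1$ brackets via Proposition \ref{Conput} — is the same as the paper's. But there is a genuine error in the coercivity computation for $I_1=-(M^f_{\rho,u,T}\mathbf{P}_1,\Psi_1)$. You invoke the orthogonality
$\int_{\R^3} M^f_{\rho,u,T}(-5+|\tfrac{v-u}{\sqrt T}|^2)(\tfrac{v-u}{\sqrt T})_i(\tfrac{v-u}{\sqrt T})_j\,dv=0$
to claim the linear-in-$\tfrac{v-u}{\sqrt T}$ pieces of $\mathbf{P}_1$ drop out. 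That identity is true for a single $M^f_{\rho,u,T}$ (it is the choice of the weight $-5$). But the pairing $(M^f_{\rho,u,T}\mathbf{P}_1,\,M^f_{\rho,u,T}\mathbf{P}F)$ carries $(M^f_{\rho,u,T})^2$, i.e.\ the Gaussian $e^{-|w|^2}$ rather than $e^{-|w|^2/2}$; against $e^{-|w|^2}$ one has $\int e^{-|w|^2}(-5+|w|^2)w_i^2\,dw\neq0$, so the linear pieces of $\mathbf{P}_1$ — both the one carrying $\nabla_x T$ and the one carrying $\nabla_x\cdot D$ — do \emph{not} vanish. Worse, if you nevertheless discard the linear piece and extract coercivity from the cubic piece alone, you get $\tfrac12\int e^{-|w|^2}|w|^2(-5+|w|^2)w_j^2\,dw$, which is strictly negative, so the purported coercive term $r_1\|T-\lr{T}_x\|_{L^2}^2$ would appear with the wrong sign.

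The correct mechanism, which is what the paper's proof actually computes, is that the linear piece $-\tfrac52\,w\cdot\tfrac{\nabla_x T}{\sqrt T}$ and the cubic piece $\tfrac12|w|^2\,w\cdot\tfrac{\nabla_x T}{\sqrt T}$ \emph{must be combined} to form $\tfrac12(-5+|w|^2)\,w\cdot\tfrac{\nabla_x T}{\sqrt T}$; pairing this with the factor $(-5+|w|^2)\,w\cdot\nabla_x|D_x|^{-2}(T-\lr{T}_x)$ inside $\mathbf{P}F$ produces $\tfrac12\int e^{-|w|^2}(-5+|w|^2)^2 w_j^2\,dw>0$, a manifestly positive coefficient, and then $\Delta|D_x|^{-2}=-\mathrm{Id}$ on mean-zero functions gives $r_1\|T-\lr{T}_x\|_{L^2}^2$. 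The surviving linear piece carrying $\nabla_x\cdot D$ does not vanish either; it is controlled as an error through $\|D\|_{L^2}\lesssim\|f-M^f_{\rho,u,T}\|_{L^2_xL^2_4}$ from Proposition \ref{Conput}(ii). The analogous cancellation-vs.-combination bookkeeping also has to be redone for $\mathbf{P}_2$ and $\mathbf{P}_3$ in parts $(ii)$ and $(iii)$.
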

\begin{proof}  Since the proofs of $(i), (ii)$ and $(iii)$ are very similar, we only give the detailed proof to $(i)$. Denote 
	$$\mathbf{P}(\f{v-u}{\sqrt{T}})F(\rho,u,T)\eqdefa \f{(2\pi T)^{\f32}}{\rho^2}\bigg(-5+\bigg|\f{v-u}{\sqrt{T}}\bigg|^2\bigg)(v-u)\cdot \na_x |D_x|^{-2}(T-\lr{T}_x).$$
It is easy to check that 
\beno &&\f{d}{dt}\bigg(f-M^f_{\rho, u, T}, M^f_{\rho, u, T}\mathbf{P}(\f{v-u}{\sqrt{T}})F(\rho,u,T) \bigg)=\bigg(-M^f_{\rho, u, T}\mathbf{P}_1(\f{v-u}{\sqrt{T}}), M^f_{\rho, u, T}\mathbf{P}(\f{v-u}{\sqrt{T}})F(\rho,u,T)\bigg)\\
&&\quad+\bigg(Q(f, f-M^f_{\rho, u, T})+Q(f-M^f_{\rho, u, T},M^f_{\rho, u, T}), M^f_{\rho, u, T}\mathbf{P}(\f{v-u}{\sqrt{T}})F(\rho,u,T)\bigg)\eeno\beno
&&\quad+ \bigg(f-M^f_{\rho, u, T}, M^f_{\rho, u, T}v\cdot \na_x \big(\mathbf{P}(\f{v-u}{\sqrt{T}})F(\rho,u,T) \big)\bigg) +\bigg(f-M^f_{\rho, u, T},   M^f_{\rho, u, T}\mathbf{P}_1(\f{v-u}{\sqrt{T}})\mathbf{P}(\f{v-u}{\sqrt{T}})F(\rho,u,T) \bigg)\\  &&\quad+
\bigg(f-M^f_{\rho, u, T},  M^f_{\rho, u, T} \pa_t\mathbf{P}(\f{v-u}{\sqrt{T}})F(\rho,u,T) +\mathbf{P}(\f{v-u}{\sqrt{T}})\pa_tF(\rho,u,T) \bigg)\eqdef \sum_{i=1}^5I_i.\eeno	

We will give the estimates to $I_i(i=1,2,3,4,5)$ term by term. By Proposition \ref{Conput},  changing of variables and using the condition that if $1\le i,j,k\le 3$, 
$
\int_{\R^3} w_ie^{-|w|^2}dw=\int_{\R^3} w_iw_jw_ke^{-|w|^2}dw=0,
$ we have
\beno &&\bigg(M^f_{\rho, u, T}\mathbf{P}_1(\f{v-u}{\sqrt{T}}), M^f_{\rho, u, T}\mathbf{P}(\f{v-u}{\sqrt{T}})F(\rho,u,T)\bigg)\\
&&=\f12\int_{\TT^3}\int_{\R^3} e^{-|w|^2}(-5+|w|^2)^2 (w\cdot\na_x T)(w\cdot\na_x |D_x|^{-2}(T-\lr{T}_x))  dwdx\\
&&\quad-\int_{\TT^3}\int_{\R^3}\rho^{-1} e^{-|w|^2}(-5+|w|^2) \big(w\cdot(\na_x \cdot D)\big)(w\cdot\na_x|D_x|^{-2}(T-\lr{T}_x)) dwdx\\
&&\ge\sum_{j=1}^3(\f12\int_{\R^3} e^{-|w|^2}|w_j|^2(-5+|w|^2)^2dw)\|\pa_{x_j}|D_x|^{-1}(T-\lr{T}_x)\|_{L^2}^2- C\|  D\|_{L^2}\|T-\lr{T}_x\|_{L^2}\\
&&\ge r_1\|T-\lr{T}_x\|_{L^2}^2- C\|f-M^f_{\rho, u, T}\|_{L^2_xL^2_4}^2,  \eeno 	
	which implies that
$ I_1\le -r_1\|T-\lr{T}_x\|_{L^2}^2+ C\|f-M^f_{\rho, u, T}\|_{L^2_xL^2_4}^2$.

	For $I_2$, by virtue of $(vii)$ of Proposition  \ref{Conput}, it is not difficult to check that 
	\beno |I_2| \le C\|f-M^f_{\rho, u, T}\|_{L^2_xL^2_{\gamma+4}} \|T-\lr{T}_x\|_{L^2}.  \eeno
	
	For $I_3$ and $I_4$, we first observe that the typical terms in the expression of $ \na_x \big(\mathbf{P}(\f{v-u}{\sqrt{T}})F(\rho,u,T) \big)$ and $ \mathbf{P}_1(\f{v-u}{\sqrt{T}})\mathbf{P}(\f{v-u}{\sqrt{T}})F(\rho,u,T)$ are
	$ P_1(\f{v-u}{\sqrt{T}})P_2(\rho,u,T,D)\na a\na_x|D_x|^{-2}(T-\lr{T}_x)$ and $ P_3(\f{v-u}{\sqrt{T}})P_4(\rho,u,T) \\ \na_x^2|D_x|^{-2}(T-\lr{T}_x)$, where $P_i(i=1,2,3,4)$ are polynomial functions and $a=\rho, u, T, D,R$. 
By $(ii-iv)$ of Proposition  \ref{Conput}, we  arrive at	
	\beno  |I_3|+|I_4|\le C \|f-M^f_{\rho, u, T}\|_{L^2_xL^2_2}\|T-\lr{T}_x\|_{L^2}.\eeno
	
For $I_5$, the difficult term lies in the case that the time derivative acts on $\pa_{x_j}|D_x|^{-2}(T-\lr{T}_x)$. The other terms can be handled similarly as those for $I_3$ and $I_4$. By \eqref{dynamic3}, we derive that
\beno&& \pa_{x_j}|D_x|^{-2}\pa_t(T-\lr{T}_x)=-\pa_{x_j}|D_x|^{-2}\bigg( \f23\mathrm{div} (uT-\lr{u}_x\lr{T}_x)+\f13\big[(u-\lr{u}_x)\cdot\na_x (T-\lr{T}_x)-\lr{ u\cdot\na_x T}_x\big]\\&&\quad+\f13\lr{u}_x\cdot\na_x(T-\lr{T}_x) +\f23\big[(\f{1}\rho-1) (\na_x(u-\lr{u}_x):D+\na_x\cdot R)-\lr{ (\f{1}\rho-1) (\na_xu:D+\na_x\cdot R}_x\big]\\&&\quad+\f23 \big[ (\na_x(u-\lr{u}_x):D+\na_x\cdot R)\big]-\lr{   (\na_xu:D+\na_x\cdot R}_x \bigg).\eeno
	Thanks to $(v)$ of Proposition \ref{Conput}, we obtain that 
	\beno \|\pa_{x_j}|D_x|^{-2}\pa_t(T-\lr{T}_x)\|_{L^2}\le C(\|u-\lr{u}_x\|_{L^2}+\|T-\lr{T}_x\|_{L^2}+\|\rho-1\|_{L^2}+\|f-M^f_{\rho, u, T}\|_{L^2_xL^2_5}),\eeno
	which implies that
	\beno  |I_5|\le C\|f-M^f_{\rho, u, T}\|_{L^2_xL^2_2}(\|u-\lr{u}_x\|_{L^2}+\|T-\lr{T}_x\|_{L^2}+\|\rho-1\|_{L^2}+\|f-M^f_{\rho, u, T}\|_{L^2_xL^2_5}).\eeno

Now putting together all the estimates will yield the result in  $(i)$.
We may repeat the similar argument to derive $(ii)$ and $(iii)$. We only emphasize that to get $(ii)$ we will use 
Korn inequality, that is, 
\beno \|\na^{sym} |D_x|^{-1}(u-\lr{u}_x)\|_{L^2}\gtrsim \|\na_x |D_x|^{-1}(u-\lr{u}_x)\|_{L^2}\sim \|u-\lr{u}_x\|_{L^2},\eeno  
where $\na^{sym} u=\f12(\na u+(\na u)^T)$.
We complete the proof of the lemma. 
\end{proof}

As a consequence, we get the full dissipation estimates for the hydrodynamical fields. That is,
\begin{thm}\label{Disp2} Suppose that the solution $f$  to the equation \eqref{Boltz eq}verifies \eqref{dynacondi}. Then there exist a function $M_h(t)$ and a constant $C=C(c_1,c_2)$ verifying that $|M_h(t)|\lesssim \|(f-M)(t)\|_{L^2_xL^2_4}^2$ and 
\beno \f{d}{dt}M_h(t)+\|T-1\|_{L^2}^2+\|u\|_{L^2}^2+\|\rho-1\|_{L^2}^2 \le C\eta^{-3}H(f|M^f_{\rho,u,T})+C\eta (\|f-M\|_{H^1_xL^2_{10}}^2+\|f-M\|_{L^2_xH^1_{10}}^2). \eeno
 \end{thm}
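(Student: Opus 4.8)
The plan is to take $M_h$ to be a hierarchically weighted combination of the three functionals appearing under $\frac{d}{dt}$ in Lemma \ref{Disp1}. Write $F_1(t),F_2(t),F_3(t)$ for these functionals, so that parts (i)--(iii) of Lemma \ref{Disp1} read $\frac{d}{dt}F_j+r_j\|\cdot\|_{L^2}^2\le(\mathrm{r.h.s.})_j$ with the dissipated quantities $\|T-\langle T\rangle_x\|_{L^2}^2,\ \|u-\langle u\rangle_x\|_{L^2}^2,\ \|\rho-1\|_{L^2}^2$ respectively, and set $M_h=\lambda_1F_1+\lambda_2F_2+\lambda_3F_3$ with constants $0<\lambda_3\ll\lambda_2\ll\lambda_1\le1$ to be fixed. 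Each $F_j$ is an inner product of $f-M^f_\ast$ (with $M^f_\ast$ one of $M^f_{\rho,u,T}$, $M^f_{\rho,u,\langle T\rangle_x}$, $M^f_{\rho,\langle u\rangle_x,\langle T\rangle_x}$) against a smooth Gaussian times a polynomial in $v$ times $\nabla_x|D_x|^{-2}$ of a hydrodynamic fluctuation. Since $T,\langle T\rangle_x$ are bounded below and $\rho,u,T$ are bounded in $L^\infty$ (Proposition \ref{Conput}(i)), $\|\nabla_x|D_x|^{-2}g\|_{L^2_x}=\||D_x|^{-1}g\|_{L^2_x}\lesssim\|g\|_{L^2_x}$, and $\|f-M^f_\ast\|_{L^2_xL^2_4}\lesssim\|f-M\|_{L^2_xL^2_4}$ with the fluctuations $\lesssim\|f-M\|_{L^2_xL^2_4}$ (the hydrodynamic fields and their fluctuations being controlled by moments of $f-M$), we get $|M_h(t)|\lesssim\|(f-M)(t)\|_{L^2_xL^2_4}^2$ for any admissible choice of the $\lambda_j$.

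Next I add the three inequalities of Lemma \ref{Disp1} with weights $\lambda_1,\lambda_2,\lambda_3$. The prefactors $\|f-M^f_{\rho,u,\langle T\rangle_x}\|_{L^2_{\gamma+4}}$ and $\|f-M^f_{\rho,\langle u\rangle_x,\langle T\rangle_x}\|_{L^2_{\gamma+4}}$ on the right of (ii), (iii) are first reduced to $\|f-M^f_{\rho,u,T}\|_{L^2_{\gamma+4}}$ by a Gaussian-difference estimate (using again the lower/upper bounds of $\rho,u,T$), at the cost of extra terms $\|T-\langle T\rangle_x\|_{L^2_x}$, resp. $\|u-\langle u\rangle_x\|_{L^2_x}+\|T-\langle T\rangle_x\|_{L^2_x}$. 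All cross terms then have the shape $(\mathrm{const})\cdot(\|f-M^f_{\rho,u,T}\|_{L^2_xL^2_{\gamma+5}}+\mathrm{fluctuations})\cdot(\mathrm{fluctuations}+\|f-M^f_{\rho,u,T}\|_{L^2_xL^2_{\gamma+5}})$; the dangerous ones are the fluctuation--fluctuation products with $O(1)$ coefficients released by the reduced prefactors, which must be absorbed into the \emph{higher}-weighted dissipation terms (a $\|T-\langle T\rangle_x\|^2$ spilled by (ii) into $\lambda_1r_1\|T-\langle T\rangle_x\|^2$; a $\|u-\langle u\rangle_x\|^2$ and a $\|T-\langle T\rangle_x\|^2$ spilled by (iii) into $\lambda_2r_2\|u-\langle u\rangle_x\|^2$ and $\lambda_1r_1\|T-\langle T\rangle_x\|^2$). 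This forces the ordering $\lambda_3\ll\lambda_2\ll\lambda_1$. Choosing then the Young parameters small relative to $r_1,r_2,r_3$ and the $\lambda_j$ in that order, one is left with
\beno
&&\f{d}{dt}M_h(t)+b_1\|T-\langle T\rangle_x\|_{L^2}^2+b_2\|u-\langle u\rangle_x\|_{L^2}^2+b_3\|\rho-1\|_{L^2}^2\le C\|f-M^f_{\rho,u,T}\|_{L^2_xL^2_{\gamma+5}}^2,
\eeno
with $b_1,b_2,b_3,C$ depending only on $c_1,c_2$.

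Then I pass from the fluctuations to the full norms. As $|\TT^3|=1$ one has the orthogonal splittings $\|T-1\|_{L^2}^2=\|T-\langle T\rangle_x\|_{L^2}^2+|\langle T\rangle_x-1|^2$ and $\|u\|_{L^2}^2=\|u-\langle u\rangle_x\|_{L^2}^2+|\langle u\rangle_x|^2$, while the conservation normalization \eqref{normalized} gives $\langle u\rangle_x=\int_{\TT^3}(1-\rho)u\,dx$ and $\langle T\rangle_x-1=\int_{\TT^3}(1-\rho)T\,dx-\f13\int_{\TT^3}\rho|u|^2\,dx$; hence, by Proposition \ref{Conput}(i), $|\langle u\rangle_x|^2+|\langle T\rangle_x-1|^2\le C(c_1,c_2)(\|\rho-1\|_{L^2}^2+\|u-\langle u\rangle_x\|_{L^2}^2)$, so the left-hand side above controls $\|T-1\|_{L^2}^2+\|u\|_{L^2}^2+\|\rho-1\|_{L^2}^2$ after a harmless rescaling of $M_h$ and $C$. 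Finally $\|f-M^f_{\rho,u,T}\|_{L^2_xL^2_{\gamma+5}}^2$ is bounded by interpolating between the relative entropy and an $H^1$-norm: the Sobolev embedding $H^1(\TT^3\times\R^3)\hookrightarrow L^3$ together with H\"older interpolation (and a transfer of the velocity weight, which is where the power-$10$ moment in the statement is spent) gives a bound of the form $\|f-M^f_{\rho,u,T}\|_{L^2_xL^2_{\gamma+5}}^2\lesssim\|f-M^f_{\rho,u,T}\|_{L^1_{x,v}}^{1/2}\big(\|f-M^f_{\rho,u,T}\|_{H^1_xL^2_{10}}+\|f-M^f_{\rho,u,T}\|_{L^2_xH^1_{10}}\big)^{3/2}$; the weighted Csiszar--Kullback--Pinsker inequality $\|f-M^f_{\rho,u,T}\|_{L^1_{x,v}}^2\lesssim H(f|M^f_{\rho,u,T})$ (legitimate since $f$ and $M^f_{\rho,u,T}$ carry the same mass, momentum and energy at every $x$) and the reduction $\|f-M^f_{\rho,u,T}\|_{H^1_{x,v,10}}\lesssim\|f-M\|_{H^1_xL^2_{10}}+\|f-M\|_{L^2_xH^1_{10}}$ (bounding the $H^1_x$-norms of $\rho-1,u,T-1$ by moments of $f-M$) then yield $\|f-M^f_{\rho,u,T}\|_{L^2_xL^2_{\gamma+5}}^2\lesssim H(f|M^f_{\rho,u,T})^{1/4}\big(\|f-M\|_{H^1_xL^2_{10}}^2+\|f-M\|_{L^2_xH^1_{10}}^2\big)^{3/4}$, and Young's inequality $a^{1/4}b^{3/4}\le C\eta^{-3}a+\eta b$ produces the stated right-hand side.

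The main obstacle is the combination in the second paragraph: because the three dissipation identities of Lemma \ref{Disp1} are attached to three different local Maxwellians, the prefactors cannot be converted to $\|f-M^f_{\rho,u,T}\|$ without releasing $O(1)$-fluctuation terms, so one must arrange the weights $\lambda_1\gg\lambda_2\gg\lambda_3$ and the auxiliary Young parameters in exactly the right hierarchy to close the absorption; the secondary difficulty is keeping track of the weight growth in the Sobolev/interpolation step, which is what pins down the power-$10$ moment in the statement.
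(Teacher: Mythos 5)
Your proposal is correct and follows essentially the same route as the paper: you take a hierarchically weighted sum of the three functionals from Lemma~\ref{Disp1} (the paper uses explicit weights $1$, $\tfrac{r_1}{4C}\eta$, $\eta^{3/2}$ in place of your $\lambda_1\gg\lambda_2\gg\lambda_3$), reduce all prefactors to $\|f-M^f_{\rho,u,T}\|_{L^2_xL^2_{\gamma+5}}$ by the same Gaussian-difference trick, use \eqref{normalized} to pass from fluctuations to full norms, and finish with the same $L^2=L^1\text{--}L^3$ interpolation $\|f-M^f_{\rho,u,T}\|_{L^2_xL^2_{\gamma+5}}^2\le\|f-M^f_{\rho,u,T}\|_{L^1}^{1/2}\|(f-M^f_{\rho,u,T})W_{10}\|_{L^3}^{3/2}$, CKP, and Young.
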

\begin{proof}  Thanks to Lemma \ref{Disp1}, the results  can be written as
\beno  \f{d}{dt} A_1(t)+\f{r_1}2\|T-\lr{T}_x\|_{L^2}^2&\le &
	C\eta^{-2} \|f-M^f_{\rho,u,T}\|_{L^2_xL^2_{\gamma+5}}^2+\eta^2(\|u-\lr{u}_x\|_{L^2}^2+ \|\rho-1\|_{L^2}^2);\\
	 \f{d}{dt} A_2(t)+\f{r_2}2\|u-\lr{u}_x\|_{L^2}^2&\le &
	C\eta^{-1} (\|f-M^f_{\rho,u,T}\|_{L^2_xL^2_{\gamma+5}}^2+\|T-\lr{T}_x\|_{L^2}^2)+\eta \|\rho-1\|_{L^2}^2;\eeno\beno
	 \f{d}{dt} A_3(t)+\f{r_3}2\|\rho-1\|_{L^2}^2&\le &
	C(\|f-M^f_{\rho,u,T}\|_{L^2_xL^2_{\gamma+5}}^2+\|T-\lr{T}_x\|_{L^2}^2+\|u-\lr{u}_x\|_{L^2}^2),
	 \eeno
	where we use the inequalities $\|f-M^f_{\rho,u,\lr{T}_x}\|_{L^2_xL^2_{\gamma+5}}\le \|f-M^f_{\rho,u,T}\|_{L^2_xL^2_{\gamma+5}}+C\|T-\lr{T}_x\|_{L^2}$ and 
	 $\|f-M^f_{\rho,\lr{u}_x,\lr{T}_x}\|_{L^2_xL^2_{\gamma+5}}\le \|f-M^f_{\rho,u,T}\|_{L^2_xL^2_{\gamma+5}}+C(\|T-\lr{T}_x\|_{L^2}+\|u-\lr{u}_x\|_{L^2})$. 
	 
	Let $A(t)=A_1(t)+\f{r_1}4C^{-1}\eta A_2(t)+\eta^{\f32}A_3(t)$ with $\eta$ sufficiently small. Then we have
	 \beno \f{d}{dt} A(t)+\eta^{\f32}\f{r_3}4(\|T-\lr{T}_x\|_{L^2}^2+\|u-\lr{u}_x\|_{L^2}^2+\|\rho-1\|_{L^2}^2)\le C\eta^{-2} \|f-M^f_{\rho,u,T}\|_{L^2_xL^2_{\gamma+5}}^2.  \eeno
	Thanks to \eqref{normalized}, 
we derive that $|\lr{u}_x|=|\int_{\TT^3} (\rho-1)udx|$ and $|-\lr{T}_x+1|=|\int_{T^3} ((\rho-1)T+\f13\rho|u|^2)dx|$,  which yield
$ \|T-\lr{T}_x\|_{L^2}^2+\|u-\lr{u}_x\|_{L^2}^2+\|\rho-1\|_{L^2}^2\ge C(\|T-1\|_{L^2}^2+\|u\|_{L^2}^2+\|\rho-1\|_{L^2}^2).$

Then the theorem is followed by setting $M_h(t)=4\eta^{-\f32}(Cr_3)^{-1}A(t)$ and the inequality
$\|f-M^f_{\rho,u,T}\|^2_{L^2_xL^2_{\gamma+5}}\\ \le \|f-M^f_{\rho,u,T}\|_{L^1}^{\f12}\|(f-M^f_{\rho, u, T})W_{10}\|_{L^3} ^{\f32}
\lesssim \eta_1^{-3}H(f|M^f_{\rho,u,T})+\eta_1 (\|f-M\|_{H^1_xL^2_{10}}^2+\|f-M\|_{L^2_xH^1_{10}}^2).$
\end{proof}

 \subsection{Refined energy estimates for the equation}   Under the assumption \eqref{dynacondi}, we will refine the energy estimates for the equation
 \ben\label{h-equ} \pa_t h+v\cdot \na_x h=Q(f, h)+Q(h,g), \een
 where $h=f-g$. Suppose that $\mathbb{P}_e^{1,\f12+2\delta_1}$ and $\mathbb{P}_e^{N,\kappa}$ are function spaces associated to $\mathbb{W}_{I}(N,\kappa,\varrho,\delta_1)$ where  $  \mathbb{W}_{I}(N,\kappa,\varrho,\delta_1)=\{W_{1.\f12+\delta_1}, W_{1.\f12+2\delta_1}\}\cup \{W_{m,n}\}_{(m,n)\in \mathbb{I}_x(N,
 	\kappa)}$ with $N+\kappa\ge \f52+\delta_1$ and $W_{0,-1}= W_{l_1},  W_{0,0}= W_{l_2}$.  In what follows, we will assume that   
 \ben\label{boufg}&& 8C_E(\|g(t)\|_{H^{\f32+\delta_1}_xH^{\eta_1}_{ \gamma+4}}+c_2)\le l_1^{1+s}A_1(c_1,c_2),\,\|g(t)\|_{H^{\f32+\delta_1}_xL^2_{\gamma+4}}\le C_1,  \notag\\&& \sup_{t\ge0}\big(\|W_{l_1}W_{N_s+3}g(t)\|_{H^{\f32+\delta_1}_xH^{2s+\eta}} +\sum_{(m,n)\in\mathbb{I}_x(1,\f12+\delta_1)}\|W_{m,n}W_{\f32\gamma+2s+4}g(t)\|_{H^{m+n\varrho}_xH^s}\notag\\&&+\sum_{i=1}^2\|W_{1,\f12+i\delta_1}W_{\f32\gamma+2s+4} g\|^2_{H_x^{\f32+i\delta_1}H^s}\big) \le \mathbb{C}(l_1,g).\een

 \subsubsection{Estimates of the propagation and production of $L^1$ and $L^2$  moments.}
 We want to prove:

\begin{lem}\label{refL12} Suppose that \eqref{dynacondi} and \eqref{boufg} hold for f and g respectively and $h$ is a unique and smooth solution to \eqref{h-equ}. If   $\delta=\delta(c_1,c_2,C_1)$ is sufficiently small, then there exists a constant $c_{0,0}=C(\delta, A_2,A_3)$ such that
 \beno
 &&  4A_1^{-1}A_4\delta^{-12(1+s)} \|h(t)\|_{L^1_{l_1}}^2+\|h(t)\|_{L^2_{l_2}}^2+ l_1^sA_4 \delta^{-12(1+s)}   \int_0^t \|h\|_{L^1_{l_1}}\|h\|_{L^1_{l_1+\gamma}}d\tau
 \\&&\quad +A_2 
 \int_0^t  \| W_{l_2+\gamma/2}h\|_{L^2_xH^s}^2  d\tau+\f12A_3  \delta^{-2s} \int_0^t \|h\|_{L^2_{l_2+\gamma/2}}^2 d\tau +c_{0,0}\int_0^t \| W_{0,1}W_{\gamma/2+d_2}h\|^2_{H^{\varrho}_xL^2}d\tau
 \\&&\le \delta^{-12(1+s)}  4A_1^{-1}A_4\|h_0\|_{L^1_{l_1}}^2+\|h_0\|_{L^2_{l_2}}^2+c_{0,0}\|W_{0,1}W_{\gamma/2+d_1+d_2}h_0\|_{L^2}^2+\mathbb{C}(l_1,g)  \int_0^t     (\|h\|^2_{L^1} + \|h\|^2_{H^{\f32+\delta_1}_xL^2_{\gamma+4}})d\tau.
 \eeno
Moreover, the moment production estimates hold for $\gamma>0$, i.e. for any $l_1(=2l_2)>N_s+2 $ and $t\ge t_0>0$,
 \beno  &&\|h(t)\|_{L^1_{l_1}}+\|h(t)\|_{L^2_{l_1/2}}^2\lesssim \big(\f{8C(l_1, c_2)}{ C(c_1,c_2)(1-e^{-C(l_1,c_2)t\gamma/l_1})}\big)^{l_1/\gamma};
 \\&& \int^{t+1}_t  \| W_{l_2+\gamma/2}h\|_{L^2_xH^s}^2  d\tau  +\int^{t+1}_t \| W_{0,1}W_{\gamma/2+d_2}h\|^2_{H^{\varrho}_xL^2}d\tau \le C(t_0). \eeno
 \end{lem}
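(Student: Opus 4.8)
\textbf{Proof proposal for Lemma \ref{refL12}.}
The plan is to assemble the estimate from the basic $L^1$ and $L^2$ moment bounds already established (Proposition \ref{L1i}, Proposition \ref{L2m}, Proposition \ref{smx01}), but now with the reference function $g$ estimated using \eqref{boufg} rather than being absorbed into the energy functional. First I would apply the $L^1$-moment estimate (ii) of Proposition \ref{L1i} with $l=l_1$ (recalling $l_1=2l_2$, so $l_1-\gamma$ is the relevant index for the square version), noting that the conditions \eqref{dynacondi} on $f$ furnish the lower bound of the density and the uniform $H^{3/2+2\delta_1}_xL^2_{\gamma+4}$ control needed to invoke the coercive constant $A_1(c_1,c_2)$. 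The terms on the right involving $g$ (i.e. $\|W^\eps_{2s+\eta_1}(D)g\|_{L^2_{l_1+\gamma+2+N_s}}$ etc.) are controlled by $\mathbb{C}(l_1,g)$ via \eqref{boufg}, while the coefficient on the left-hand side $l_1^s A_1 - \eta^{2-2s}\|\cdots\|-2l_1^{-1}\|\cdots\|$ is kept strictly positive by the first inequality in \eqref{boufg}, which guarantees $8C_E(\|g\|_{H^{3/2+\delta_1}_xH^{\eta_1}_{\gamma+4}}+c_2)\le l_1^{1+s}A_1$. This gives the $\|h\|_{L^1_{l_1}}^2$-controlling inequality with dissipation $l_1^s A_1\int \|h\|_{L^1_{l_1}}\|h\|_{L^1_{l_1+\gamma}}$.

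Next I would invoke Proposition \ref{L2m} with $l=l_2$. Here the minus term $A_4\delta^{-6-6s}\int\|h\|_{L^1_{2l_2}}\|h\|_{L^\infty_xL^1_\gamma}$ on the right must be absorbed; this is precisely why the $L^1$ estimate is weighted by the large factor $4A_1^{-1}A_4\delta^{-12(1+s)}$ — the dissipation $l_1^s A_1\delta^{-12(1+s)}A_1^{-1}A_4\int\|h\|_{L^1_{l_1}}\|h\|_{L^1_{l_1+\gamma}}$ dominates $A_4\delta^{-6-6s}\|h\|^2_{L^1_{l_1}}$ once $\delta$ is small (using $l_1+\gamma\ge l_1$ and Cauchy–Schwarz, plus $\|h\|_{L^\infty_xL^1_\gamma}\lesssim\|h\|_{H^{3/2+\delta_1}_xL^2_{\gamma+4}}\lesssim c_2$). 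The remaining $g$-dependent terms in Proposition \ref{L2m} ($\|W^\ep_s(D)W_{l_2+\gamma/2+2s}g\|_{L^2}$, $\|g\|_{L^2_{l_2}}$, $\|g\|_{L^\infty_xL^1_2}$) are again bounded by $\mathbb{C}(l_1,g)$ through \eqref{boufg}, multiplied by $\|h\|^2$ factors that go into the right-hand side. One subtlety: the term $\mathcal{C}_2(c_1,c_2)\|h\|^2_{L^2_{l_2+\gamma/2}}$ and similar lower-order pieces must be absorbed by the $\delta^{-2s}$-dissipation; this forces $\delta^{-2s}\gtrsim \mathcal{C}_2/\mathcal{C}_3$ type restrictions, which is where $\delta=\delta(c_1,c_2,C_1)$ gets pinned down. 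Finally, Proposition \ref{smx01} (the hypo-elliptic gain for $x$) contributes the $\|W_{0,1}W_{\gamma/2+d_2}h\|^2_{H^\varrho_xL^2}$ dissipation; its right-hand side contains $\|W^\eps_s(D)(W_{0,1}W_{\gamma/2+d_1+d_2}h)\|^2_{L^2}$ which is absorbed by choosing $\eta$ (the parameter in Proposition \ref{smx01}) small relative to $A_2$, generating the constant $c_{0,0}=C(\delta,A_2,A_3)$, and a $g$-part again bounded by $\mathbb{C}(l_1,g)$. Summing the three inequalities with the stated weights yields the first displayed estimate.

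For the moment-\emph{production} statement when $\gamma>0$, the idea is classical (Povzner). From Lemma \ref{L1md} applied to \eqref{h-equ} one obtains a differential inequality of the form $\frac{d}{dt}\|h\|_{L^1_{l_1}}+cl_1^s\int f_* |h|\langle v\rangle^{l_1}|v-v_*|^\gamma\,dv_*dv\,dx \lesssim \text{(lower-order moments)}$. Using that $f$ has a uniform lower bound on its density and energy (from \eqref{dynacondi}), the gain term is bounded below by $C(c_1,c_2)\|h\|_{L^1_{l_1+\gamma}}\gtrsim C(c_1,c_2)\|h\|_{L^1_{l_1}}^{1+\gamma/l_1}\|h\|_{L^1}^{-\gamma/l_1}$ by interpolation, while the lower-order terms are bounded by $C(l_1,c_2)\|h\|_{L^1_{l_1}}$ plus a bounded constant; hence $y(t):=\|h(t)\|_{L^1_{l_1}}$ satisfies $y' + C(c_1,c_2)\|h\|_{L^1}^{-\gamma/l_1}y^{1+\gamma/l_1}\le C(l_1,c_2)y + C(l_1,c_2)$, a Bernoulli-type inequality whose solution (after a Gronwall/ODE comparison, using conservation-type bounds on $\|h\|_{L^1}$) obeys $y(t)\lesssim\big(\frac{8C(l_1,c_2)}{C(c_1,c_2)(1-e^{-C(l_1,c_2)t\gamma/l_1})}\big)^{l_1/\gamma}$, uniformly from $t=0^+$. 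Feeding this back into the $L^2$ estimate of Proposition \ref{L2m} (whose right-hand side is now controlled for $t\ge t_0/2$) and integrating over $[t,t+1]$ gives the claimed uniform-in-time bounds on $\int_t^{t+1}\|W_{l_2+\gamma/2}h\|^2_{L^2_xH^s}$ and $\int_t^{t+1}\|W_{0,1}W_{\gamma/2+d_2}h\|^2_{H^\varrho_xL^2}$ in terms of $C(t_0)$.

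I expect the main obstacle to be the bookkeeping of \emph{absorbing constants}: one has to simultaneously (a) keep the $L^1$-moment dissipation coefficient positive against the $g$-contributions, which is exactly what the delicate hypothesis $8C_E(\|g\|_{\cdots}+c_2)\le l_1^{1+s}A_1$ in \eqref{boufg} is designed for; (b) choose the large weight $4A_1^{-1}A_4\delta^{-12(1+s)}$ on the $L^1$ piece so the negative $A_4\delta^{-6-6s}$ term in the $L^2$ estimate is dominated; and (c) choose $\delta$ small enough that the $\delta^{-2s}$-dissipation beats all $O(1)$ lower-order moments \emph{while} the hypo-elliptic parameter $\eta$ is chosen so that $c_{0,0}>0$. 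Ensuring these choices are mutually compatible — i.e. that the same $\delta=\delta(c_1,c_2,C_1)$ and the same $c_{0,0}=C(\delta,A_2,A_3)$ work throughout — is the technically fiddly heart of the argument; the rest is routine application of the results from Sections 2 and 3.
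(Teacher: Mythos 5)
Your overall plan matches the paper's: a weighted combination of the $L^1$-moment estimate (Lemma~\ref{L1md}), the $L^2$-moment estimate (Proposition~\ref{L2m}), and the hypo-elliptic gain (Proposition~\ref{smx01}), with $g$-contributions controlled by \eqref{boufg} and, for $\gamma>0$, a Bernoulli-type ODE and feedback into the $L^2$ inequality for the moment-production claim.

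There is, however, a real slip in how you absorb the bad term of Proposition~\ref{L2m}. That term is $A_4\delta^{-6-6s}\int\|h\|_{L^1_{2l_2}}\|h\|_{L^\infty_xL^1_\gamma}$, which is \emph{bilinear} in two different norms of $h$. Your justification — Cauchy--Schwarz together with $\|h\|_{L^\infty_xL^1_\gamma}\lesssim c_2$ — yields a term that is \emph{linear} in $\|h\|_{L^1_{l_1}}$, not the quadratic $A_4\delta^{-6-6s}\|h\|^2_{L^1_{l_1}}$ you write down; and a linear term cannot be dominated by the quadratic dissipation $\int\|h\|_{L^1_{l_1}}\|h\|_{L^1_{l_1+\gamma}}$ in the regime where $\|h\|_{L^1_{l_1}}$ is small. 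The paper instead splits by Young, $\delta^{-6-6s}\|h\|_{L^1_{2l_2}}\|h\|_{L^\infty_xL^1_\gamma}\lesssim \delta^{-12(1+s)}\|h\|^2_{L^1_{2l_2}}+\|h\|^2_{L^\infty_xL^1_{\gamma+2}}$: this is where the exponent $-12(1+s)$ actually originates. The first piece is absorbed by the weighted $L^1$ dissipation (using $l_1^s\|h\|_{L^1_{l_1+\gamma}}\ge\|h\|_{L^1_{l_1}}$), and the second is sent to the right-hand side, becoming part of the $\mathbb{C}(l_1,g)\int(\|h\|^2_{L^1}+\|h\|^2_{H^{\f32+\delta_1}_xL^2_{\gamma+4}})$ term. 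A second, cosmetic, point: the squared $L^1$ inequality you want has indices $l_1,l_1+\gamma$ (the paper re-derives it from Lemma~\ref{L1md} as eq.~\eqref{mcL1}), not the indices $l_1-\gamma,l_1$ of Proposition~\ref{L1i}(ii); one should apply the latter with $l_1$ shifted to $l_1+\gamma$, or simply rerun Lemma~\ref{L1md}. With these two corrections your argument coincides with the paper's proof.
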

 \begin{proof} The proof will be concluded   by   $L^1$ and $L^2$ moments estimates.
 
 {\it \underline{$L^1$-moment estimates.}} By Lemma \ref{L1md} and the facts
 \beno &&\int_{\TT^3}|f|_{L^1_{l_1+\gamma-2}}|h|_{L^1_{\gamma+2}}dx\lesssim
 \|h\|_{L^\infty_xL^1_{\gamma+2}}\|h\|_{L^1_{l_1+\gamma-2}}+\|h\|_{L^1_{\gamma+2}}\|g\|_{L^\infty_xL^1_{l_1+\gamma-2}}\lesssim \mathbb{C}(l_1,g)\|h\|_{L^1_{l_1+\gamma-2}},\\
 &&\int_{\TT^3}|f|_{L^1_{l_1+\gamma}}|h|_{L^1_{\gamma}}dx \lesssim  \mathbb{C}(l_1,g)\|h\|_{L^1_{\gamma}}+(C_1+c_2)\|h\|_{L^1_{l_1+\gamma}},
  \eeno
  we infer that
  \beno&& \|h(t)\|_{L^1_{l_1}}+ \int_0^t \bigg(l_1^sA _1(c_1,c_2)-\eta^{2-2s}\|\lr{D}^{2s+\eta_1}g \|_{H^{\f32+\delta_1}_xL^2_{2}}
  -2l^{-1}_1C_E\|\lr{D}^{\eta_1} g\|_{H^{\f32+\delta_1}_xL^2_{\gamma+2}}-2l_1^{-1}C_E\\&&\times(C_1+c_2)\bigg)\|h\|_{L^1_{l_1+\gamma}}d\tau  \le \|h_0\|_{L^1_{l_1}}+\mathbb{C}(l_1,g) \bigg[\int_0^t    \|h\|_{L^1_{l_1+\gamma-2}}   d\tau\bigg]+
 \eta^{-2s} C(c_2)  \int_0^t    \|h\|_{ L^1_{N_s+2}} d\tau.\eeno
Thanks to \eqref{boufg} and interpolation inequality, we deduce that
\beno&& \|h(t)\|_{L^1_{l_1}}+\f12 l_1^sA _1(c_1,c_2 ) \int_0^t \|h\|_{L^1_{l_1+\gamma}}d\tau  
 \le \|h_0\|_{L^1_{l_1}}+\mathbb{C}(l_1,g)  \int_0^t     \|h\|_{L^1} d\tau,\eeno
which implies that
\ben\label{smL1} \f{d}{dt}\|h\|_{L^1_{l_1}}+\f18 l_1^sA _1(c_1,c_2 )\|h\|_{L^1_{l_1}}^{1+\gamma/l_1}\le \mathbb{C}(l_1,g)\|h\|_{L^1_{l_1}}.\een
On the other hand, slight modification of the proof in the above will yield that
 \ben\label{mcL1}&& \|h(t)\|^2_{L^1_{l_1}}+\f12 l_1^sA _1(c_1,c_2 ) \int_0^t \|h\|_{L^1_{l_1+\gamma}}\|h\|_{L^1_{l_1}}d\tau  
\le \|h_0\|^2_{L^1_{l_1}}+\mathbb{C}(l_1,g)  \int_0^t     \|h\|^2_{L^1} d\tau.\een 

{\it \underline{$L^2$-moment estimates.}} By modifying Proposition \ref{L2m} and using the assumptions \eqref{dynacondi} and \eqref{boufg}, we will get
 \ben\label{mcL2} &&\|h(t)\|_{L^2_{l_2}}^2
 +A_2(c_1,c_2)
 \int_0^t   \|W_{l_2+\gamma/2}h\|_{L^2_xH^s}^2  + (A_3(c_1,c_2) \delta^{-2s}-3C_E(C_1+c_2))\int_0^t \|h\|_{L^2_{l_2+\gamma/2}}^2 d\tau\notag
 \\&&\le \|h_0\|_{L^2_{l_2}}^2+A_4(c_1, c_2)\delta^{-12(1+s)}\int_0^t  \|h\|^2_{L^1_{2l_2}}d\tau+  \mathbb{C}(l_1,g)\int_0^t \|h\|^2_{L^\infty_xL^1_{\gamma+2}}d\tau,
\\   && \f{d}{dt}\|h(t)\|_{L^2_{l_2}}^2
 +A_2(c_1,c_2)
  \|W_{l_2+\gamma/2}h\|_{L^2_xH^s}^2  + (A_3(c_1,c_2) \delta^{-2s}-3C_E(C_1+c_2))  \|h\|_{L^2_{l_2+\gamma/2}}^2 
 \notag\\&&\le  C(c_1, c_2)\delta^{-6(1+s)}  \|h\|_{L^1_{2l_2}} + \mathbb{C}(l_1,g) \|h\|^2_{L^2_{\gamma+4}}.\label{smL2}
 \een

 Now we are in a position to prove the lemma. If we choose that $\f12A_3(c_1,c_2) \delta^{-2s}>3C_E(C_1+c_2)$, then by 
 \eqref{mcL1} and \eqref{mcL2} we deduce that
 \beno
 &&  4A_1^{-1}A_4\delta^{-12(1+s)} \|h(t)\|_{L^1_{l_1}}^2+\|h(t)\|_{L^2_{l_2}}^2+ l_1^sA_4 \delta^{-12(1+s)}   \int_0^t \|h\|_{L^1_{l_1}}\|h\|_{L^1_{l_1+\gamma}}d\tau
\\&&\quad +A_2 
 \int_0^t    \| W_{l_2+\gamma/2}h\|_{L^2_xH^s}^2   d\tau+\f12A_3  \delta^{-2s} \int_0^t \|h\|_{L^2_{l_2+\gamma/2}}^2 d\tau
 \\&&\le 4A_1^{-1}A_4 \delta^{-12(1+s)}  \|h_0\|_{L^1_{l_1}}^2+\|h_0\|_{L^2_{l_2}}^2+\mathbb{C}(l_1,g) \big(\int_0^t     \|h\|^2_{L^1} d\tau+ \int_0^t \|h\|^2_{H^{\f32+\delta_1}_xL^2_{\gamma+4}}d\tau\big).
 \eeno
Recalling Proposition \ref{smx01}, we derive that
 \ben\label{sHs}  \int_0^t \| W_{0,1}W_{\gamma/2+d_2}h\|^2_{H^{\varrho}_xL^2}d\tau 
 &\le &  \|W_{0,1}W_{\gamma/2+d_1+d_2}h_0\|_{L^2}^2+C\int_0^t \| W_{0,1}W_{\gamma/2+d_1+d_2}h\|_{L^2_xH^s}^2d\tau\notag \\&&  +C( c_2)\int_0^t  \|W_{0,1}W_{\f32\gamma+2s+d_1+d_2}h\|^2_{L^2}
  +C( l_{0,1})\int_0^t\|h\|^2_{L^2_{\gamma+4}} d\tau.
 \een
 We  conclude  the desired result by combining the last two estimates.

 Next we will prove the moment production estimates. In fact, by \eqref{smL1} and \eqref{smL2}, we have
 \beno &&\f{d}{dt}\big(2C(c_1,c_2)\|h\|_{L^1_{l_1}}+\|h\|_{L^2_{l_2}}^2\big)+\f18 l_1^sA _1(c_1,c_2 )\|h\|_{L^1_{l_1+\gamma}}+\f14A_3(c_1,c_2) \delta^{-2s}  \|h\|_{L^2_{l_2+\gamma/2}}^2
 \\&&
 \le C(\mathbb{C},c_1,c_2)\big(\|h\|_{L^1_{l_1}}+\|h\|_{L^2_{l_2}}^2\big),\eeno
 which together with interpolation inequality will give
 \beno &&\f{d}{dt}\big(2C(c_1,c_2)\|h\|_{L^1_{l_1}}+\|h\|_{L^2_{l_2}}^2\big)+\f18 l_1^sA _1(c_1,c_2 )\|h\|_{L^1_{l_1}}^{1+\gamma/l_1}+\f14A_3(c_1,c_2) \delta^{-2s}  \|h\|_{L^2_{l_2}}^{2(1+\gamma/(2l_2))}
 \\&&
 \le C(\mathbb{C}, c_1,c_2)\big(\|h\|_{L^1_{l_1}}+\|h\|_{L^2_{l_2}}^2\big).\eeno
 It implies that for any $l_1(=2l_2)>N_s+2$,
$\|h\|_{L^1_{l_1}}+\|h\|_{L^2_{l_1/2}}^2\lesssim \big(\f{8C(\mathbb{C},c_1,c_2)}{ C(c_1,c_2)(1-e^{-C(\mathbb{C},c_1,c_2)t\gamma/l_1})}\big)^{l_1/\gamma}.$
From this together with \eqref{smL2} and 
\eqref{sHs}, we get the desired results.
 \end{proof}

  \subsubsection{Refined energy inequality} We want to prove
  \begin{lem}\label{refEn} Suppose that \eqref{dynacondi} and \eqref{boufg} hold for f and g respectively and  $h$ is a unique and smooth solution to \eqref{h-equ}. Let $ X(t)= \mathbb{X}^{1,\f12+2\delta_1,q_1}(h(t))$ or $X(t)=\mathbb{P}_e^{1,\f12+2\delta_1}(h(t))$. Then
one has for any $t_2>t_1\ge0$, there exist constants $b_i=b_i(c_1,c_2,C_1)(i=1,2)$ such that
\beno  &&X(t_2)+\f{b_1}2\int_{t_1}^{t_2} \big(X(\tau)+  \|W_{1,N_{\rho,2}+1}W_{\gamma/2+d_2} h\|_{H^{1+(N_{\rho,2}+1)\varrho}_xL^2}^2\big)d\tau\le b_2X(t_1)+C(\mathbb{C},c_1,c_2)  \int_{t_1}^{t_2}     \|h\|^2_{L^1} d\tau, \eeno
and for all $t\ge0$, $e^{\f{b_1}4t}X(t)+\f{b_1}4\int_0^t e^{\f{b_1}4\tau}X(\tau)d\tau\le b_2X(0)+C(\mathbb{C},c_1,c_2)  \int_0^t    e^{\f{b_1}4\tau} \|h\|^2_{L^1} d\tau.$ As a result, we have
$ X(t)\le b_2e^{-\f{b_1}4t}X(0)+C(\mathbb{C},c_1,c_2)  \int_0^t    e^{-\f{b_1}4(t-\tau)} \|h\|^2_{L^1} d\tau.$
  \end{lem}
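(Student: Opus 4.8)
The plan is to re-run the energy estimates of Section~3 for the linear equation~\eqref{h-equ}, now taking $f$ (which obeys~\eqref{dynacondi}) as the first argument of the collision operator and $g$ (which obeys~\eqref{boufg}) as the second, with $N=1$ and $\kappa=\f12+2\delta_1$, and then to close everything into one differential inequality. The decisive structural point is that we are in the hard potential / Maxwellian regime $0\le\gamma\le2$: then $W_{\ell+\gamma/2}\gtrsim W_\ell$ pointwise and $W^\eps_s(\xi)\ge1$, so each dissipation functional produced by Corollary~\ref{basic-estimates} and its corollaries \emph{dominates} the corresponding piece of the energy rather than merely controlling one of its $v$-derivatives. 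Choosing $\delta=\delta(c_1,c_2,C_1)$ as in Lemma~\ref{refL12}, so that the $\delta^{-2s}$-weighted coercive terms of $(iv)$--$(vi)$ of Corollary~\ref{basic-estimates} and of Theorem~\ref{thmlb2} beat the $\mathcal{C}_2$-type defects, one obtains
\[
\mathbb{D}^{1,\f12+2\delta_1}_1(h)+\mathbb{D}^{1,\f12+2\delta_1}_2(h)\ \gtrsim\ c_\ast(c_1,c_2)\,X(h),
\]
and in the case $X=\mathbb{X}^{1,\f12+2\delta_1,q_1}$ the coercivity of Proposition~\ref{Envq} together with~\eqref{func4} additionally yields $\|W^\eps_{q_1+s}(D)W_{\gamma/2}h\|^2_{L^2}\gtrsim V^{q_1}(h)$.

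First I would assemble, following the proof of Theorem~\ref{En-Priori} but inserting the bounds~\eqref{dynacondi} for $f$ and~\eqref{boufg} for $g$, the component estimates for all ingredients of $\mathbb{P}_e^{1,\f12+2\delta_1}(h)$ (resp.\ $\mathbb{X}^{1,\f12+2\delta_1,q_1}(h)$): the $L^1$ and $L^2$ moment estimates supplied by Lemma~\ref{refL12}; the $x$-regularity estimates of Propositions~\ref{Enxmn1},~\ref{Enxmn2},~\ref{Enx32} for every $(m,n)\in\mathbb{I}_x(1,\f12+2\delta_1)$ together with $E^{1,\f12+\delta_1}$ and $E^{1,\f12+2\delta_1}$; the smoothing estimates of Propositions~\ref{smx01},~\ref{smxmn1},~\ref{smxmn2} and the interpolation bound Proposition~\ref{intx1}; and, in the $\mathbb{X}$ case, Proposition~\ref{Envq}. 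On the right-hand sides every term carrying a factor of $g$ is absorbed by~\eqref{boufg} --- these are exactly the norms that hypothesis is tailored to control --- while every term carrying a factor of $f$ is $\lesssim c_2$ by~\eqref{dynacondi}. Summing with suitably small multiplicative constants $\eta$, chosen so that the hypoelliptic gains appearing on the left of Propositions~\ref{smxmn1},~\ref{smxmn2} (the $\mathbb{D}_3$-type terms) absorb the negative $\mathcal{C}_1$-coercivity defect of~\eqref{boundWSP} at the next lower level while the extra weight stays within the budget encoded in conditions (W-1)-(W-6) of $\mathbb{W}_{I}$, all right-hand-side contributions except $\|h\|^2_{L^1}$ are absorbed into the left-hand side. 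Since the components are proved by integrating pointwise-in-time identities and $h$ is smooth, this produces a single differential inequality, for a constant $b_1=b_1(c_1,c_2,C_1)>0$,
\beno
&& \f{d}{dt}X(t)+b_1\Big(X(t)+\|W_{1,N_{\varrho,2}+1}W_{\gamma/2+d_2}h\|^2_{H^{1+(N_{\varrho,2}+1)\varrho}_xL^2}\Big)\ \le\ C(\mathbb{C},c_1,c_2)\,\|h\|^2_{L^1}.
\eeno

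The three displayed estimates then follow by elementary ODE manipulations. Integrating the last inequality over $[t_1,t_2]$ and keeping only half of the dissipation gives the first assertion (with $b_2=1$, hence for any larger constant). Multiplying by the integrating factor $e^{\f{b_1}4t}$ and using
\beno
\f{d}{dt}\big(e^{\f{b_1}4t}X\big)+\f{b_1}4e^{\f{b_1}4t}X
&=& e^{\f{b_1}4t}\Big(\f{b_1}2X+\f{d}{dt}X\Big)\\
&\le& e^{\f{b_1}4t}\Big(-\f{b_1}2X+C\|h\|^2_{L^1}\Big)\ \le\ Ce^{\f{b_1}4t}\|h\|^2_{L^1},
\eeno
then integrating from $0$ to $t$, gives the second; dropping the integral term on the left and dividing by $e^{\f{b_1}4t}$ gives the third.

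The main obstacle is the bookkeeping in the summation step: one has to verify that the finite chain of hypoelliptic gains indexed by $\mathbb{I}_x(1,\f12+2\delta_1)$ genuinely closes, i.e.\ that at each level the coercivity lost in~\eqref{boundWSP} is exactly recovered from the $x$-smoothing of the transport equation, that the accompanying increase in weight never exceeds the allowance built into $\mathbb{W}_{I}$, and that every $g$-dependent cross term really falls under~\eqref{boufg}. This is a long but by now routine repetition of the arguments of Section~3 and Theorem~\ref{En-Priori}, the essential difference being that, because $\gamma\ge0$, the coercive term works \emph{in our favour}, so the scheme produces genuine exponential relaxation rather than mere in-time boundedness.
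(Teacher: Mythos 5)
Your overall strategy --- assembling the component estimates from Lemma~\ref{refL12}, Propositions~\ref{Enxmn1}, \ref{Enxmn2}, \ref{Enx32}, \ref{smxmn1}, \ref{smxmn2}, \ref{intx1}, \ref{intx2} and, in the $\mathbb{X}$ case, Proposition~\ref{Envq}, absorbing $f$-terms with~\eqref{dynacondi} and $g$-terms with~\eqref{boufg}, and exploiting that for $\gamma\ge0$ the dissipation dominates the energy --- is exactly the route the paper follows. But there is a genuine gap in the way you close the argument. You claim that this assembly ``produces a single differential inequality'' $\f{d}{dt}X+b_1\big(X+\|W_{1,N_{\varrho,2}+1}W_{\gamma/2+d_2}h\|^2_{H^{1+(N_{\varrho,2}+1)\varrho}_xL^2}\big)\le C\|h\|^2_{L^1}$. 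This cannot be true: the $\mathbb{D}_3$-type hypoelliptic gains that you must feed into Proposition~\ref{intx1} to absorb the coercivity defect at each level come from the averaging lemma, Lemma~\ref{hypoellipticity}, whose output is an estimate on the \emph{time integral} $\int_0^T\|W_lf_\phi\|^2_{H^{\varrho}_xL^2}dt$, with an explicit $\eta^{-8}\|f|_{t=0}\|^2_{L^2}$ contribution that does not differentiate in $T$. There is no pointwise-in-$t$ version of Propositions~\ref{smxmn1}--\ref{smxmn2}, so the scheme only closes at the level of integral inequalities over $[t_1,t_2]$, never as a pointwise ODE. This is precisely why the first displayed assertion of the lemma carries the factor $b_2$ (in general $>1$) in front of $X(t_1)$: if your differential inequality held, integration would give $b_2=1$, but the hypoellipticity machinery and the interpolation step needed to absorb the residual $\|W_{\gamma/2}W_{\gamma+4}h\|^2_{H^{\f32+2\delta_1}_xL^2}$ term contribute an unavoidable overhead constant. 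Consequently your derivation of the exponential estimate by multiplying a (non-existent) pointwise ODE by $e^{b_1t/4}$ does not work either; the paper instead rederives the whole chain of estimates with the temporal weight $e^{ct}$, $c<b_1/8$, inserted \emph{inside} each integral inequality (in particular inside the averaging-lemma step), which is a genuinely different and necessary manoeuvre. Your identification of the coercivity mechanism and of the roles of~\eqref{dynacondi} and~\eqref{boufg} is correct; the place to repair the proof is to abandon the pointwise ODE and keep everything in integral form, then follow the paper's device of threading $e^{ct}$ through the estimates themselves.
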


 \begin{proof}
 We separate the proof into three steps to get the desired results. We remark that the condition \eqref{boufg} will be used in each step.

 {\it Step 1: Propagation of regularity for $v$ variable.} Thanks to Proposition \ref{Envq} and the interpolation inequalities used in {\it Step 1.3} in the proof of Theorem \ref{En-Priori}, we get that
 \beno
 &&V^{q}(h(t))+\mathcal{C}_1(c_1,c_2)\int_0^t \|W_{\gamma/2}h \|_{L^2_xH^{q_1+s}}^2\\
 &&\le  V^{q}(h_0)+C(c_1,c_2,\eta^{-1})\int_0^t \| h\|^2_{H^{\f32+\delta_1}_xL^2_{\gamma+4}}d\tau+\eta\int_0^t \|h\|_{H^1_xH^{s}}^2d\tau
 +C\int_0^t \|h\|_{L^2_{l_2}}^2d\tau.
 \eeno

 {\it Step2: Propagation of regularity for $x$ variable.} Thanks to Proposition \ref{Enxmn1}, Proposition \ref{Enx32} and Proposition \ref{Enxmn2}, we get
 \beno && \|W_{m,n}    h(t)\|^2_{H_x^{m+n\varrho}L^2}+\|W_{1,\f12+\delta_1}  h(t)\|^2_{H^{\f32+\delta_1}_xL^2}+\|W_{\gamma+4}  h(t)\|^2_{H^{\f32+2\delta_1}_xL^2}
  +c_oA_5(c_1, c_2)\\&&\times\int_0^t\bigg(\| W_{\gamma/2}W_{m,n} h\|_{H^{m+n\varrho}_xH^s}^2 +\| W_{\gamma/2}W_{1,\f12+\delta_1}   h\|_{H^{\f32+\delta_1}_xH^s}^2 +\| W_{\gamma/2}W_{\gamma+4} h\|_{H^{\f32+2\delta_1}_xH^s}^2 \bigg)d\tau\\
 &&\le  \|W_{m,n}  h_0\|^2_{H^{m+n\varrho}_xL^2}+\|W_{1,\f12+\delta_1}  h_0\|^2_{H^{\f32+\delta_1}_xL^2}+\|W_{\gamma+4} h_0\|^2_{H^{\f32+2\delta_1}_xL^2}   +C(c_2,\mathbb{C})\\&&\times\int_0^t \bigg(\|W_{\gamma/2}W_{m,n}  h\|_{H^{m+n\varrho}_xL^2}^2 +\|W_{\gamma/2}W_{1,\f12+\delta_1}h\|_{H_x^{\f32+\delta_1}L^2}^2
+\|W_{\gamma/2}W_{\gamma+4}h\|_{H_x^{\f32+2\delta_1}L^2}^2
 \\&&+  \| W_{m,n}W_{\gamma/2+2s} h\|^2_{H_x^{ m+n\varrho-\delta_1}H^s} +\| W_{1,\f12+\delta_1}W_{\gamma/2+2s} h\|^2_{H_x^{\f32}H^s}+\|W_{\gamma+4}W_{\gamma/2+2s}h\|_{H^{\f32+\delta_1}_xH^s}^2     \bigg)d\tau.\eeno
 With the help of  Proposition \ref{intx1} and Proposition \ref{intx2}, we   conclude that
 \beno && \sum_{0<n\le N_{\rho,1}}\|W_{0,n}  h(t)\|^2_{H^{n\rho}_xL^2}+\sum_{0<n\le N_{\rho,2}}\|W_{1,n}  h(t)\|^2_{H^{1+n\rho}_xL^2}+\|W_{1,\f12+\delta_1}  h(t)\|^2_{H^{\f32+\delta_1}_xL^2}+\|W_{\gamma+4} h(t)\|^2_{H^{\f32+2\delta_1}_xL^2}\\&&
 \qquad+c_oA_5(c_1, c_2)\int_0^t\bigg(\sum_{0<n\le N_{\rho,1}}\|W_{\gamma/2}W_{0,n}  h \|^2_{H^{n\rho}_xL^2}+\sum_{0<n\le N_{\rho,2}}\|W_{\gamma/2}W_{1,n}  h \|^2_{H^{1+n\rho}_xL^2}\\&&\quad+\|W_{\gamma/2}W_{1,\f12+\delta_1}  h \|^2_{H^{\f32+\delta_1}_xL^2}+\|W_{\gamma/2}W_{\gamma+4} h \|^2_{H^{\f32+2\delta_1}_xL^2} \bigg)d\tau\\ &&\le
  \sum_{0<n\le N_{\rho,1}}\|W_{0,n}  h_0\|^2_{H^{n\rho}_xL^2}+\sum_{0<n\le N_{\rho,2}}\|W_{1,n}  h_0\|^2_{H^{1+n\rho}_xL^2}+\|W_{1,\f12+\delta_1}  h_0\|^2_{H^{\f32+\delta_1}_xL^2}+\|W_{\gamma+4} h_0\|^2_{H^{\f32+2\delta_1}_xL^2}
  \\&&\quad+C(c_2, \mathbb{C},\eta^{-1})\bigg(\int_0^t \|h\|^2_{H^{\f32+2\delta_1}_xL^2}d\tau+\int_0^t \|h\|_{L^2_xH^s}^2d\tau\bigg)+\|W_{\gamma/2}W_{\gamma+4}h\|_{H_x^{\f32+2\delta_1}L^2}^2\\&&\quad+\eta
  \bigg(\sum_{m=0,0<n\le N_{\rho,1}}+\sum_{m=1,0<n\le N_{\rho,2}}\bigg)\int_0^t \big(\|W_{m,n}W_{\gamma/2+d_2}h\|^2_{H^{m+n\rho}_xL^2}+\|W_{1,\f12+\delta_1}W_{\gamma/2+d_2}h\|^2_{H^{\f32+\delta_1}_xL^2}\big)d\tau.
 \eeno

  {\it Step3: Gain of regularity for $x$ variable.} Thanks to Proposition \ref{smxmn1}, we get
\beno  &&\int_0^t \|W_{m,n+1}W_{\gamma/2+d_2} h\|_{H^{m+(n+1)\varrho}_xL^2}^2 d\tau \le C(\eta^{-1})\|W_{m,n+1}W_{\gamma/2+d_1+d_2}   h_0\|^2_{H^{m+n\varrho}_xL^2}\\&&+ \eta^{2s}\int_0^t  \| W_{m,n+1}W_{\gamma/2+d_1+d_2}  h\|_{H^{m+n\varrho}_xH^s}^2 d\tau+C(\eta^{-1})\int_0^t  \bigg(  \|W_{m,n+1}W_{\f32\gamma+2s+d_1+d_2}h\|^2_{H_x^{m+n\varrho}L^2}  \\&&+
\|W_{m,n+1}W_{\f32\gamma+2s+d_1+d_2} h\|^2_{H_x^{m+n\rho-\delta_1}H^s}+ \|h\|^2_{H_x^{\f32+\delta_1}L^2_{\gamma+4}} \bigg)d\tau.
\eeno
Due to (W-4)  of Definition \ref{ws1}:
$W_{m,n+1}W_{\f32\gamma+2s+d_1+d_2}W_{d_3}\le W_{m,n}$,  the interpolation inequality and Proposition \ref{intx2}, we derive that
 \beno  &&\int_0^t \|W_{m,n+1}W_{\gamma/2+d_2} h\|_{H^{m+(n+1)\varrho}_xL^2}^2 d\tau \le C(\eta^{-1})\big[\|W_{m,n+1}W_{\gamma/2+d_1+d_2}   h_0\|^2_{H^{m+n\varrho}_xL^2}+C(\eta^{-1})\int_0^t   (\|h\|_{L^2_xH^s}^2\\&&+\|h\|^2_{H_x^{\f32+2\delta_1}L^2_{\gamma+4}})d\tau\big]+\eta\int_0^t  \|W_{m,n} h\|^2_{H^{m+n\rho}_xL^2}d\tau    +\eta^{2s}\sum_{i=0}^1\int_0^t  \| W_{m,n-i}W_{\gamma/2} h\|_{H^{m+(n-i)\varrho}_xH^s}^2 d\tau.
 \eeno

 Combining the estimates in {\it Step 2} and {\it Step 3}, we are led to
  \beno && \sum_{0<n\le N_{\rho,1}}\|W_{0,n}  h(t)\|^2_{H^{n\rho}_xL^2}+\sum_{0<n\le N_{\rho,2}}\|W_{1,n}  h(t)\|^2_{H^{1+n\rho}_xL^2}+\|W_{1,\f12+\delta_1}  h(t)\|^2_{H^{\f32+\delta_1}_xL^2}+\|W_{\gamma+4} h(t)\|^2_{H^{\f32+2\delta_1}_xL^2}\\&&
 +\f12c_oA_5(c_1, c_2)\int_0^t\bigg(\sum_{0<n\le N_{\rho,1}}\big(\|W_{\gamma/2}W_{0,n}  h \|^2_{H^{n\rho}_xL^2}+\|W_{0,n+1}W_{\gamma/2+d_2} h\|_{H^{(n+1)\varrho}_xL^2}^2\big)+\|W_{\gamma/2}W_{\gamma+4} h \|^2_{H^{\f32+2\delta_1}_xL^2} \\&&+\sum_{0<n\le N_{\rho,2}}\big(\|W_{\gamma/2}W_{1,n}  h \|^2_{H^{1+n\rho}_xL^2}+\|W_{1,n+1}W_{\gamma/2+d_2} h\|_{H^{1+(n+1)\varrho}_xL^2}^2\big) +\|W_{\gamma/2}W_{1,\f12+\delta_1}  h \|^2_{H^{\f32+\delta_1}_xL^2}\bigg)d\tau\\&&\le C\bigg(
 \sum_{0<n\le N_{\rho,1}}\|W_{0,n}  h_0\|^2_{H^{n\rho}_xL^2}+\sum_{0<n\le N_{\rho,2}}\|W_{1,n}  h_0\|^2_{H^{1+n\rho}_xL^2}\bigg)+\|W_{1,\f12+\delta_1}  h_0\|^2_{H^{\f32+\delta_1}_xL^2}+\|W_{\gamma+4} h_0\|^2_{H^{\f32+2\delta_1}_xL^2}
 \\&&\quad+C(\mathbb{C},c_2) \bigg(  \int_0^t\|W_{\gamma/2}W_{\gamma+4}h\|_{H_x^{\f32+2\delta_1}L^2}^2d\tau+\int_0^t \|h\|_{L^2_xH^s}^2d\tau\bigg) +\eta
  \int_0^t  \|W_{0,1}W_{\gamma/2+d_2}h\|^2_{H^{\rho}_xL^2} d\tau.
 \eeno
 From this together with Lemma \ref{refL12} and the estimate in {\it Step 1}, we finally conclude that there exists constants $b_i$ such that
 \beno &&X(t)+b_1\int_0^t X(\tau)d\tau+\int_0^t (\|W_{1,N_{\rho,2}+1}W_{\gamma/2+d_2} h\|_{H^{1+(N_{\rho,2}+1)\varrho}_xL^2}^2+\|W_{l_2+\gamma/2}h\|_{L^2_xH^s}^2)d\tau\\
 &&\le b_2X(0)+C( \mathbb{C},c_1,c_2)  \int_0^t\|W_{\gamma/2}W_{\gamma+4}h\|_{H_x^{\f32+2\delta_1}L^2}^2d\tau+C( \mathbb{C},c_1,c_2)\int_0^t     \|h\|^2_{L^1} d\tau.\eeno
 Thanks to   (P-2) of Definition \ref{ws1}, we have $1+(N_{\rho,2}+1)\rho>\f32+2\delta_1$. By interpolation inequalities that
$\|W_{\gamma/2}W_{\gamma+4}h\|_{H_x^{\f32+2\delta_1}L^2}^2\le \eta \|W_{\gamma/2}W_{\gamma+4}h\|_{H_x^{1+(N_{\rho,2}+1)\varrho}L^2}^2+C_\eta  \|W_{\gamma/2}W_{\gamma+4}h\|^2_{L^2},$
$\|W_{\gamma/2}W_{\gamma+4}h\|^2_{L^2}\le C_\eta \|h\|_{L^2}^2+\eta  \|W_{3\gamma+8}h\|^2_{L^2},$
$\|h\|_{L^2}^2\le \eta \|h\|_{L^2_xH^s}^2+C_\eta \int_{\TT^3}|h|^2_{L^1}dx$, and
$\int_{\TT^3}|h|^2_{L^1}dx\le  C_\eta \|h\|_{L^1}^2+\eta \|h\|^2_{L^\infty_xL^2_2},
$
we derive that
$ \|W_{\gamma/2}W_{\gamma+4}h\|_{H_x^{\f32+2\delta_1}L^2}^2\le \eta(X+\|W_{1,N_{\rho,2}+1}W_{\gamma/2+d_2} h\|_{H^{1+(N_{\rho,2}+1)\varrho}_xL^2}^2+\|W_{l_2+\gamma/2}h\|_{L^2_xH^s}^2)+C_\eta\|h\|_{L^1}^2, $
 which implies that
 \beno  &&X(t)+\f{b_1}2\int_0^t \big(X(\tau)+  \|W_{1,N_{\rho,2}+1}W_{\gamma/2+d_2} h\|_{H^{1+(N_{\rho,2}+1)\varrho}_xL^2}^2\big)d\tau\le b_2X(0)+C( \mathbb{C},c_1,c_2)   \int_0^t     \|h\|^2_{L^1} d\tau. \eeno
 By slightly modification,  for any $t_2>t_1\ge0$, we  can get  that
\beno  &&X(t_2)+\f{b_1}2\int_{t_1}^{t_2} \big(X(\tau)+  \|W_{1,N_{\rho,2}+1}W_{\gamma/2+d_2} h\|_{H^{1+(N_{\rho,2}+1)\varrho}_xL^2}^2\big)d\tau\le b_2X(t_1)+C( \mathbb{C},c_1,c_2)   \int_{t_1}^{t_2}     \|h\|^2_{L^1} d\tau. \eeno
If all the estimates are performed for $e^{ct}X(t)$ with $c<b_1/8$, then we have \beno  &&e^{\f{b_1}4t}X(t)+\f{b_1}4\int_0^t e^{\f{b_1}4\tau}X(\tau)d\tau\le b_2X(0)+C( \mathbb{C},c_1,c_2)   \int_0^t    e^{\f{b_1}4\tau} \|h\|^2_{L^1} d\tau. \eeno
 As a corollary, we have
 $ X(t)\le b_2e^{-\f{b_1}4t}X(0)+C( \mathbb{C},c_1,c_2)  \int_0^t    e^{-\f{b_1}4(t-\tau)} \|h\|^2_{L^1} d\tau.$
 We complete the proof of the lemma.
 \end{proof}

 \subsubsection{Propagation of the  regularity}
 We will use inductive method to prove the global-in-time estimates for the solution $h$ to \eqref{h-equ}.
 We will show
 \begin{lem}\label{Regu} Suppose that  \eqref{dynacondi} and \eqref{boufg} hold for f and g respectively and $h$ is a unique and smooth solution to \eqref{h-equ}.
 
  {\bf (1)($\gamma>0$).} For $t\ge t_0>0$ and $m,q,l\in\R^+$,
   $\|h(t)\|_{H^m_xH^q_l}\le C(t_0,m,q,l,c_1,c_2).$
 
 {\bf (2)($\gamma=0$).}   If $\f522^{N_{q,s,1}} \le l_2, q\le N+\kappa$ and $\mathbb{P}_e^{N,\kappa}(h_0)+V^{q}(h_0)<\infty$, then for $t\ge0$,
 \beno \mathbb{P}_e^{N,\kappa}(h(t))+V^{q}(h(t))+\int_t^{t+1}( \mathbb{D}^{N,\kappa}_e(h)+\mathbb{D}^{N,\kappa}_3(h)+ V^{q+s}(W_{\gamma/2}h))d\tau\le C(c_1,c_2,  \mathbb{P}_e^{N,\kappa}(h_0),V^{q}(h_0)).\eeno
Moreover, if $t\ge (N-1)(N_{\varrho,1}+1)+N_{\varrho,\kappa}+[q/s]+3$, it holds
\beno \mathbb{P}_e^{N,\kappa}(h(t))+V^{q}(h(t))+\int_t^{t+1}( \mathbb{D}^{N,\kappa}_e(h)+\mathbb{D}^{N,\kappa}_3(h)+ V^{q+s}(W_{\gamma/2}h))d\tau\le C(c_1,c_2,  \mathbb{P}_e^{1,\f12+2\delta_1}(h_0)). \eeno
\end{lem}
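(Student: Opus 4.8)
The plan is to treat the two cases separately, since the dissipation structure is genuinely different. For \textbf{case (1)} ($\gamma>0$) the mechanism is moment production combined with the smoothing (hypo-elliptic) estimates, which are all available once the lower bound on the density and the control of $\|f\|_{H^{3/2+2\delta_1}_xL^2_{\gamma+4}}$ in \eqref{dynacondi} are in force. First I would invoke Lemma~\ref{refL12}, whose last display already gives, for any $l_1$ and any $t\ge t_0>0$, a pointwise-in-time bound $\|h(t)\|_{L^1_{l_1}}+\|h(t)\|^2_{L^2_{l_1/2}}\le C(t_0,l_1,c_1,c_2)$ together with the short-time integrated control of $\|W_{l_2+\gamma/2}h\|_{L^2_xH^s}^2$ and of $\|W_{0,1}W_{\gamma/2+d_2}h\|^2_{H^\varrho_xL^2}$. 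With these arbitrarily high $L^1$/$L^2$ moments in hand, the reference coefficients $\mathbb{C}(l_1,g)$ in \eqref{boufg} are finite (this is where the assumed bounds on $g$ are used), and the energy inequalities of Section~3 (Proposition~\ref{Enxmn1} through Proposition~\ref{Enmix}, together with the hypo-elliptic gain in Propositions~\ref{smxmn1},~\ref{smxmn2}) close for $h$ with no increase in weight. I would then run the step-by-step induction on the pair $(m,n)$ exactly as in the proof of Theorem~\ref{En-Priori}, but now \emph{globally in time} rather than on a short interval, because the dissipation dominates the energy for $\gamma>0$ (recall $D(t)\ge X(t)$ in that regime): at each inductive stage the Gronwall argument produces a uniform-in-$t$ bound on $\|h\|_{H^m_xL^2_l}$, then Proposition~\ref{Enfvq} (after the interpolation trick $|f|_{H^{q-1}}\le|f|_{L^2}^{1/(q+1)}|f|_{H^q}^{q/(q+1)}$) upgrades this to a uniform bound on $\|h\|_{H^m_xH^q_l}$. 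The only subtlety is a short initial layer: the moment-production bound degenerates as $t\to0$, so one fixes $t_0>0$, applies the above on $[t_0/2,\infty)$, and absorbs the implicit $t_0$-dependence into the constant.

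For \textbf{case (2)} ($\gamma=0$) there is no moment production, so the argument must instead exploit the already-established exponential-type decay of the low-order energy. The plan is: (i) apply Lemma~\ref{refEn} with $X(t)=\mathbb{P}_e^{1,\frac12+2\delta_1}(h(t))$, which — together with the Csiszár–Kullback–Pinsker control of $\|h\|_{L^1}$ by the relative entropy and the entropy decay already in force — yields $X(t)\le b_2 e^{-b_1 t/4}X(0)+C\int_0^t e^{-b_1(t-\tau)/4}\|h\|_{L^1}^2\,d\tau$, hence a global bound on $X(t)$ and on $\int_t^{t+1}X$, $\int_t^{t+1}\|W_{1,N_{\varrho,2}+1}W_{\gamma/2+d_2}h\|^2_{H^{1+(N_{\varrho,2}+1)\varrho}_xL^2}$; (ii) bootstrap upward through the well-prepared sequence $\mathbb{W}_{II}(N,\kappa,\varrho,\delta_1)$ using Propositions~\ref{Enxmn1}, \ref{Enx32}, \ref{Enxmn2}, \ref{EnNk} for the $x$-regularity, Propositions~\ref{smxmn1}, \ref{smxmn2} for the $x$-smoothing (gain of $D_3$), and Proposition~\ref{Envq} (with the interpolation inequalities of Definition~\ref{ws1}(W-6) and the hypothesis $\frac52 2^{N_{q,s,1}}\le l_2$) for the $v$-regularity $V^q$. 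At step $(m,n)$ the input is the integrated control of the $(m,n-1)$-dissipation from the previous step plus the uniform control of $\mathbb{P}_e^{1,\frac12+2\delta_1}$, and the output is the uniform-in-$t$ bound $E^{m,n}(h(t))+\int_t^{t+1}D_2^{m,n}(h)\,d\tau\lesssim C(\dots)$, together with $\int_t^{t+1}D_3^{m,n}(h)\,d\tau\lesssim C(\dots)$; the constant after $(N-1)(N_{\varrho,1}+1)+N_{\varrho,\kappa}+[q/s]+3$ steps no longer depends on the full initial data $\mathbb{P}_e^{N,\kappa}(h_0)+V^q(h_0)$ but only on $\mathbb{P}_e^{1,\frac12+2\delta_1}(h_0)$, because after that many unit time-steps the smoothing has completely regularized the data (each unit-time step of a gain estimate buys one order of regularity, and one needs $(N-1)(N_{\varrho,1}+1)+N_{\varrho,\kappa}$ steps for the $x$-regularity plus $[q/s]+3$ more for $V^q$).

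The main obstacle is the bookkeeping of the \emph{decreasing-order weight scheme}: because the upper bounds \eqref{boundWSP} for $Q$ require strictly more weight than the coercive lower bounds, at every inductive stage one must verify that the weight functions in the energy functional $E^{m,n}$, its dissipation $D^{m,n}_{2},D^{m,n}_3$, and the nonlinear error terms are compatible with the well-prepared conditions (W-1)–(W-5) (and (W-6) for the $v$-direction), so that the ``increase in weight'' is absorbed either by the hypo-elliptic gain (via Proposition~\ref{intx2} and Proposition~\ref{smxmn1}) or by the previously-controlled higher-moment norms. Once this compatibility is checked — which is exactly what Definitions~\ref{ws1}, \ref{ws2} were engineered to guarantee — the Gronwall closure at each stage is routine. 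A secondary technical point, for case (2), is ensuring the constants genuinely stabilize after the stated number of steps: this follows by noting that each gain estimate of the form $\int_0^t D_3^{m,n}\lesssim E^{m,n}(h_0)+\int_0^t(\text{lower order})$ can, after one unit of time, be replaced by $\sup_{\tau\in[1,t]}E^{m,n}(h(\tau))\lesssim \int_0^1 D_3^{m,n}+\cdots$, so the $h_0$-dependence is progressively discharged and only the lowest-order datum $\mathbb{P}_e^{1,\frac12+2\delta_1}(h_0)$ survives.
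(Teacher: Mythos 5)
Your proposal tracks the paper's proof quite closely: case $(1)$ runs the moment-production mechanism of Lemma~\ref{refL12} plus induction through Propositions~\ref{Enxmn1}--\ref{smxmn2}, case $(2)$ uses Lemma~\ref{refEn} as the base and bootstraps through $\mathbb{W}_{II}$, and the bookkeeping of the weight scheme is the correct focal point. You also correctly identify the mechanism that discharges the initial-data dependence — each gain-of-regularity (smoothing) estimate, after one unit of time, promotes an integrated dissipation bound to a pointwise energy bound, so after $(N-1)(N_{\varrho,1}+1)+N_{\varrho,\kappa}$ steps in $x$ plus $[q/s]+3$ more in $v$ only the lowest-order datum survives. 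Concretely the paper integrates the elementary inequality $E^{m,n+1}(h(t_2))\le E^{m,n+1}(h(t_1))+C$ with respect to $t_1$ over a unit interval to replace $E^{m,n+1}(h(t_1))$ by $\int D_3^{m,n}$; your $\sup_{\tau\in[1,t]}E^{m,n}(h(\tau))\lesssim \int_0^1 D_3^{m,n}+\cdots$ is an equivalent formulation of the same step.

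However, there is one genuine logical error in case $(2)$: you invoke "the Csiszár–Kullback–Pinsker control of $\|h\|_{L^1}$ by the relative entropy and the entropy decay already in force" to close the integral $\int_0^t e^{-b_1(t-\tau)/4}\|h\|_{L^1}^2\,d\tau$ in Lemma~\ref{refEn}. At the point where Lemma~\ref{Regu} is proved, no entropy decay has been established — quite the contrary, Lemma~\ref{Regu} is an ingredient in the proof of the entropy decay in Theorem~\ref{thmdyna}, via the pointwise lower bound and the regularity requirements of Lemma~\ref{etrdiss}. Appealing to it here would be circular. What the paper actually uses is that $\|h(\tau)\|_{L^1}$ is uniformly bounded by a universal constant: when $h=f-g$ it follows from the $L^1$ bounds implicit in \eqref{dynacondi} and \eqref{boufg}, and when $h=f-M_f$ it follows from conservation of mass plus \eqref{dynacondi}. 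Once you replace the CKP/entropy ingredient by this elementary observation, the rest of your step $(\text{i})$ goes through and the argument closes as you describe. A secondary, cosmetic remark: in case $(1)$ your phrase "the Gronwall argument produces a uniform-in-$t$ bound" glosses over the same time-averaging device; it is not Gronwall per se that yields the uniform bound but the moment production coupled with the integrated-to-pointwise upgrade described above.
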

 \begin{proof}
   We will divide the proof into two steps.

 {\it Step 1: Propagation of the regularity for $x$ variable.}
 The result can be concluded as:
 \begin{prop}\label{Regux} \begin{enumerate}
\item {\bf ($\gamma>0$)} For any $t\ge t_0>0$ and $m,n\in\N$, one has
 \beno
 E^{m,n}(h(t))+\int_t^{t+1}( D^{m,n}_2(h)+D^{m,n}_3(h))d\tau\le C(t_0).
 \eeno
 \item {\bf ($\gamma=0$) }If $\mathbb{P}_e^{N,\kappa}(h(t_0))<\infty$, one has for $t\ge t_0\ge0$,
 \beno \mathbb{P}_e^{N,\kappa}(h(t))+\int_t^{t+1}( \mathbb{D}^{N,\kappa}_e(h)+\mathbb{D}^{N,\kappa}_3(h))d\tau\le C(c_1,c_2, \mathbb{P}_e^{N,\kappa}(h(t_0))).\eeno
 And if $t\ge t_0+(N-1)(N_{\rho,1}+1)+N_{\rho,\kappa}+1$,
 \beno \mathbb{P}_e^{N,\kappa}(h(t))+\int_t^{t+1}( \mathbb{D}^{N,\kappa}_e(h)+\mathbb{D}^{N,\kappa}_3(h))d\tau\le   C(c_1,c_2, \mathbb{P}_e^{1,\f12+2\delta_1}(h_0)).\eeno
\end{enumerate}
   \end{prop}

 \begin{proof}  We first claim that
 if $i\le m, j\le n$, the solution $h$ verifies that for any $t\ge t_0\ge0$,
 \beno
 E^{i,j}(h(t))+\int_t^{t+1}( D^{i,j}_2(h(\tau))+D^{i,j}_3(h(\tau)))d\tau\lesssim  1,
 \eeno
 then one has that for $t\ge t_0$,
 \ben\label{indconti1}
 &&\qquad E^{m,n+1}(h(t))+\int_t^{t+1} (D^{m,n+1}_2(h(\tau))+D^{m,n+1}_3(h(\tau)))d\tau \lesssim E^{m,n+1}(h(t_0))+1,\\
\label{indsmest1}&&\qquad
 E^{m,n+1}(h(t))+\int_t^{t+1} (D^{m,n+1}_2(h(\tau))+D^{m,n+1}_3(h(\tau)))d\tau \lesssim \mathrm{1}_{t\in(t_0,t_0+2)} (t-t_0)^{-1}+\mathrm{1}_{t\in[t_0+2,\infty)}.\een
 In particular, if $t\ge t_0+1$,
 $
 E^{m,n+1}(h(t))+\int_t^{t+1} (D^{m,n+1}_2(h)+D^{m,n+1}_3(h))d\tau \lesssim 1.$

Thanks to Proposition \ref{Enxmn1} and Proposition \ref{Enxmn2}, we derive that for $0\le t_2-t_1\le 2$,
\begin{enumerate}
\item if $m+n\rho\le \f32+\delta_1$,
\beno
&&E^{m,n+1}(h(t_2))+c_0A_5(c_1,c_2)\int_{t_1}^{t_2} D^{m,n+1}_2(h)d\tau
 \le E^{m,n+1}(h(t_1))+C(c_1,c_2)\int_{t_1}^{t_2} (D^{m,n}_2(h)\\&&\qquad\qquad+D^{m,n-1}_2(h)+D^{m,n}_3(h)+D^{0,0}_2(h))d\tau+C(c_2,g) \le E^{m,n+1}(h(t_1))+C(c_1,c_2,g),
\eeno
  \item if $m+n\rho\ge \f32+2\delta_1$,
 \beno
&&E^{m,n+1}(h(t_2))+c_0A_5(c_1,c_2)\int_{t_1}^{t_2} D^{m,n+1}_2(h)d\tau
\le E^{m,n+1}(h(t_1))+C(c_1,c_2)\int_{t_1}^{t_2} (D^{m,n}_2(h)\\&&+D^{m,n-1}_2(h)+D^{m,n}_3(h)+D^{0,0}_2(h)+D^{1,\f12+\delta_1}_2(h))d\tau+C(c_2,g) \le E^{m,n+1}(h(t_1))+C(c_1,c_2,g).
\eeno
  \end{enumerate}
These two estimates yield that for $0\le t_2-t_1\le 2$,
 \ben\label{emn1}
 &&E^{m,n+1}(h(t_2))+c_0A_5(c_1,c_2)\int_{t_1}^{t_2} D^{m,n+1}_2(h)d\tau
 \le E^{m,n+1}(h(t_1))+C(c_1,c_2,g),
 \een
 which gives the proof of \eqref{indconti1} with $t\in[t_0,t_0+2]$. 
 
 (i). For $t_2\ge t_0+2$, integrating \eqref{emn1} with respect to $t_1$ over $[t_2-2,t_2-1]$,  we have  
 \beno E^{m,n+1}(h(t_2))\le \int_{t_2-2}^{t_2-1}E^{m,n+1}(h(t_1))dt_1+C(c_1,c_2,g) \le \int_{t_2-2}^{t_2-1}D_3^{m,n}(h(t_1))dt_1+C(c_1,c_2,g)\lesssim1_{t_2\ge t_0+2}, \eeno 
 which implies $E^{m,n+1}(h(t))\lesssim 1+ E^{m,n+1}(h(t_0))\mathrm{1}_{t\in[t_0,t_0+2]}+\mathrm{1}_{t\ge t_0+2}$. 
 
 (ii). For $t_2\in (t_0, t_0+2]$, one has
 \beno \f{t_2-t_0}2E^{m,n+1}(h(t_2))\le \int_{t_0}^{\f{t_2+t_0}2}E^{m,n+1}(h(\tau))d\tau+C(c_1,c_2,g)\le\int_{t_0}^{\f{t_2+t_0}2}D_3^{m,n}(h(\tau))d\tau+C(c_1,c_2,g). \eeno 
It  implies that  
 $ E^{m,n+1}(h(t_2))\lesssim \mathrm{1}_{t\in(t_0,t_0+2)} (t-t_0)^{-1}+\mathrm{1}_{t\ge t_0+2}$. 
 
 (iii). Go back to \eqref{emn1}, then for $t_1\ge t_0$ we get  
 \beno \int_{t_1}^{t_1+1} D^{m,n+1}_2(h)d\tau\lesssim  E^{m,n+1}(h(t_1))+C(c_1,c_2, g)\le C(t_0).  \eeno
 Thanks to Proposition \ref{smxmn1} and Proposition \ref{smxmn2},   the estimate of $\int_{t_1}^{t_2} D^{m,n+1}_3(h)d\tau$ follows the results in (i), (ii) and (iii).
 We complete the proof of the claim.

 Now we are in a position to complete the proof for the desired results.

 (a). For $\gamma>0$,  thanks to Lemma \ref{refL12}, we obtain that the condition of the claim is verified for $(m,n)=(0,0)$. Then the first result follows the inductive method.

 (b). For $\gamma=0$, we first recall that from the proof of Lemma \ref{refEn},
 \beno  \mathbb{P}_e^{1,\f12+2\delta_1}(h(t))+\int_{t}^{t+1} \big(\mathbb{D}_e^{1,\f12+2\delta_1}(h)+\mathbb{D}_3^{1,\f12+2\delta_1}(h)\big)d\tau\lesssim
\mathbb{P}_e^{1,\f12+2\delta_1}(h(t_0))+1. \eeno
Therefore the condition of the claim is verified for $m\le1, n\le  N_{\varrho,2}$. Then the result follows the  claim and the inductive method.  Observe that by \eqref{indsmest1},   $E^{m,n+1}(h(t))\lesssim1$ if $t\ge t_0+1$. Thus if $t\ge t_0+(N-1)(N_{\varrho,1}+1)+N_{\varrho,\kappa}+1$,
$ \mathbb{P}_e^{N,\kappa}(h(t))\lesssim 1$.
  \end{proof}

 {\it Step 2: Propagation of the regularity for $v$ variable.} We want to prove
\begin{prop}\label{Reguv} \begin{enumerate}
\item {\bf ($\gamma>0$)} For any $t\ge t_0>0$ and $m,n\in\N$, one has
 \beno
 V^{ns}(h(t))+\int_t^{t+1} V^{(n+1)s}(W_{\gamma/2}h)d\tau\le C(t_0).
 \eeno
 \item {\bf ($\gamma=0$)} If $\f522^{N_{q,s,1}}\le l_2, q\le N+\kappa$ and $\mathbb{P}_e^{N,\kappa}(h(t_0))+V^{q}(h(t_0))<\infty$, one has for $t\ge t_0\ge0$,
 \beno V^{q}(h(t))+\int_t^{t+1} V^{q+s}(W_{\gamma/2}h)d\tau\le C(c_1,c_2, V^{q}(h(t_0)),\mathbb{P}_e^{N,\kappa}(h(t_0))).\eeno
 In particular,  if $t\ge t_0+[q/s]+2$,
 $ V^{q}(h(t))+\int_t^{t+1} V^{q+s}(W_{\gamma/2}h)d\tau\le C(c_1,c_2,  \mathbb{P}_e^{N,\kappa}(h(t_0))).$
\end{enumerate}
   \end{prop}
\begin{proof}  Recall the definition of $N_{q,s,1}$ in Definition \ref{ws1}. Then we first claim that
if  $2^{N_{(n+1)s,s,1}}\le l_2, (n+1)s\le N+\kappa $  and $t\ge t_0\ge 0$,  \beno
 V^{ns}(h(t))+\int_t^{t+1} \big(V^{(n+1)s}(W_{\gamma/2}h)+V^0(W_{l_2+\gamma/2}h)+D^{N,\kappa}_2(h)\big)d\tau\lesssim 1,
 \eeno
 then  for $t>t_0$, \beno
 V^{(n+1)s}(h(t))+\int_t^{t+1} V^{(n+2)s}(W_{\gamma/2}h)d\tau\lesssim  \mathrm{1}_{t\in(t_0,t_0+2)} (t-t_0)^{-1}+\mathrm{1}_{t\in(t_0+2,\infty)}.\eeno
 In particular,  for $t\ge t_0+1$, it holds
$ V^{(n+1)s}(h(t))+\int_t^{t+1} V^{(n+2)s}(W_{\gamma/2}h)d\tau\lesssim1. $

 Thanks to Proposition \ref{Envq}, the interpolation inequalities used in {\it Step 1.3}\, in the proof of Theorem \ref{En-Priori} as well as  the conditions $\f52
 2^{N_{n(s+1),s,1}}\le l_2, n(s+1)\le N+\kappa$, we obtain that for $0\le t_2-t_1\le 2$,
 \beno
 &&V^{(n+1)s}(h(t_2))+\mathcal{C}_1(c_1,c_2)\int_{t_1}^{t_2}  V^{(n+2)s}(W_{\gamma/2}h) d\tau\le V^{(n+1)s}(h(t_1))+\int_{t_1}^{t_2}\big( V^0(W_{l_2+\gamma/2}h)+D^{N,\kappa}_2(h)\big)d\tau.
  \eeno
It implies that  for $t_2\ge t_0+2$,
 $  V^{(n+1)s}(h(t_2))\lesssim \int_{t_2-2}^{t_2-1}  V^{(n+1)s}(h(t_1)) d\tau+1\lesssim 1.$
 While for $t_2\in(t_0,t_0+2]$,
 \beno  \f12(t_2-t_0)V^{(n+1)s}(h(t_2))\lesssim \int_{t_0}^{(t_2+t_0)/2} V^{(n+1)s}(h(t_1)) d\tau+1\lesssim 1.\eeno
 We conclude the claim by combining the above estimates.

 Now we are in a position to complete the proof of the proposition.

 (a). For $\gamma>0$, thanks to Lemma \ref{refL12}, the condition of the claim  is always verified with $n=0$. Thus the result is obtained by the inductive method.

 (b). For $\gamma=0$, thanks to Proposition \ref{Regux}, the condition of the claim is verified with $n=0$ thanks to   $\mathbb{P}_e^{N,\kappa}(h(t_0))<\infty$. Moreover conditions $\f522^{N_{q,s,1}}\le l_2$ and $q\le N+\kappa$ yield that the claim can be applied for any $n\in\N$ verifying $(n+1)s\le q$. Thus to conclude the desired result, we only need to copy the proof of the claim via replacing the energy  $V^{(n+1)s}(h)$ by the energy $V^{q}(h)$.
\end{proof}

Now we give the proof of Lemma \ref{Regu}.  The results   follow   Proposition \ref{Regux} and Proposition \ref{Reguv}.
 \end{proof}

 \subsection{Proof of Theorem \ref{thmdyna}}
 Now we are in a position to prove Theorem \ref{thmdyna}.
\begin{proof}[Proof of Theorem  \ref{thmdyna}] Without lose of generality, we assume that $M_f=M_{1,0,1}\eqdefa M$ and $h=f-M$. We first consider $\gamma=0$. Since $h_0\in \mathbb{P}_{e,1}^{1,\f12+2\delta_1}$, by Lemma \ref{Regu} and \eqref{evolcondi5}, we derive that $\mathbb{P}_{e,1}^{N,\kappa}(h(t))$ and $V^{q}(h(t)) $ with $N+\kappa\ge q> 6$ are bounded globally in time (for  $t\ge t_0\eqdefa (N-1)(N_{\varrho,1}+1)+N_{\varrho,\kappa}+[q/s]+3$). Observe that the condition $\sup_{t,x}|f|_{L\log L}$ in the main theorem of \cite{Mouhot} can be replaced by the condition $\sup_{t}\|f\|_{H_x^{\f32+2\delta_1}L^2_{4}}$. Then due to \cite{Mouhot}, we have the pointwise lower bound: $f\ge K_0 \exp\{-A_0 |v|^{q_0}\}$ where $K_0, A_0, q_0$ are constants depending on $c_1,c_2,  \mathbb{P}_{e,1}^{N,\kappa}(h(t_0)),V^{q}(h(t_0))$. Then for $t\ge t_0$, we have
	\ben\label{crentp} H(f|M)(t)&\lesssim& \int_{\TT^3\times\R^3} |h|\max\{(1+\|h\|_{L^\infty_{x,v}}), C(K_0,A_0) \} \lr{v}^{\max\{2,	q_0\}} dvdx \\
	&\lesssim&  C(\mathbb{P}_{e,1}^{N,\kappa}(h(t)),V^{q}(h(t)))\|(f-M)(t)\|_{L^1_{q_0+2}},\notag \een
	thanks to the mean-value theorem and the lower bound of $f$.

  Due to the condition \eqref{evolcondi}, we have
$|h|_{H^1_{10}}\lesssim |hW_{0,0}^{(2)}|_{L^2}+|h|_{H^{q_1}}, $
which  implies that
$  \mathbb{X}_2^{1,\f12+2\delta_1,q_1}(h)=\mathbb{P}_{e,2}^{1,\f12+2\delta_1}(h)\cap V^{q_1}(h)\gtrsim \|h\|_{L^2_xH^1_{10}}^2.$
Now we want to  patch together all the results in Lemma \ref{etrdiss}, Theorem \ref{Disp2} and Lemma \ref{refEn}. By using the condition \eqref{evolcondi1} and the notation $X(t)=\mathbb{X}_2^{1,\f12+2\delta_1,q_1}(h(t))$, we arrive at that  there exist constants $b_i(i=1,\dots,6)$ such that for any $\bar{\delta},\bar{\eta}\ll1$,
\ben\label{evol1}
 &&H(f|M)(t_2)+b_1 \bar{\delta}^{\gamma+\f{3(2-\gamma)}{2(2m+4-\gamma)}}\int_{t_1}^{t_2}H(f|M^f_{\rho,u,T})d\tau \nonumber\\&&\le H(f|M)(t_1)+b_2\bar{\delta}^{\gamma+\f32} \int_{t_1}^{t_2}\big(H(f|M)+H(f|M)^a+X\big)d\tau;\\
 && \f{d}{dt} M_h(t)+\|T-1\|_{L^2}^2+\|u\|_{L^2}^2+\|\rho-1\|_{L^2}^2\le  \bar{\eta}^{-3}H(f|M^f_{\rho,u,T})+\bar{\eta} b_3 X;\label{evol2}\\
&&X(t_2)+b_4\int_{t_1}^{t_2}Xd\tau\le b_5 X(t_1)+b_6\int_{t_1}^{t_2}H(f|M)d\tau.\label{evol3}
\een
Suppose  $A=\f{3(2-\gamma)}{2(2m+4-\gamma)}$ and $X(t)\le C$ for all $t\ge0$. Let  $\bar{\eta}=\bar{\delta}^{\f{\eta}4(\f32-A)}$ with $\eta\in(0,1)$. Then from the fact $H(f|M)\sim H(f|M^{f}_{\rho,u,T})+\|T-1\|_{L^2}^2+\|u\|_{L^2}^2+\|\rho-1\|_{L^2}^2 $ if $\rho\sim1$ and $T\sim1$, we derive that there exist constants $b_7$ and $b_8$ such that
\beno  Y(t)&\eqdefa&  H(f|M)(t) +\bar{\delta}^{\gamma+A+\eta(\f32-A)}\big(b_7M_h(t)+b_8X(t)\big) 
 \sim H(f|M)(t)+\bar{\delta}^{\gamma+A+\eta(\f32-A)}X(t), \eeno
and for any $t_1<t_2$,
\ben\label{decme}
&&Y(t_2)+ \f12\bar{\delta}^{\gamma+A+\eta(\f32-A)}\int_{t_1}^{t_2}(b_1H(f| M)+b_4b_7X)d\tau 
\le b_5Y(t_1) +b_2\bar{\delta}^{\gamma+\f32} \int_{t_1}^{t_2}H(f|M)^a d\tau.
 \een
 
 Next we want to prove that $  \lim\limits_{t\rightarrow\infty} H(f|M)(t)=0$ and  $H(f|M)$ is a strictly decreasing function until it vanishes.
 
 (i). Suppose that $\lim\limits_{t\rightarrow\infty}H(f|M)(t)=c\neq0$. Then by  \eqref{decme},
we have
\beno
&&Y(t)+\f12b_1 \bar{\delta}^{\gamma+A+\eta(\f32-A)}ct
\le b_5Y(0)+b_2\bar{\delta}^{\gamma+\f32} tH(f|M)^a(0).
 \eeno
By choosing $\bar{\delta}$ sufficiently small, we will get that for all $t>0$,
$ \f13b_1 \bar{\delta}^{\gamma+A+\eta(\f32-A)}ct
\le b_5Y(0),$
which contradicts the fact that the lefthand side will tend to infinity when $t$ goes to infinity. We conclude the desired result.

(ii). Note that the relative entropy $H(f|M)(t)$ is a decreasing function. Thus if we assume that $H(f|M)(t_1)=H(f|M)(t_2)$, then thanks to \eqref{evol1}, we get that
\beno   b_1 \bar{\delta}^{\gamma+\f{3(2-\gamma)}{2(2m+4-\gamma)}}\int_{t_1}^{t_2}H(f|M^f_{\rho,u.T})d\tau\le  b_2\bar{\delta}^{\gamma+\f32} \int_{t_1}^{t_2}\big(H(f|M)(0)+H(f|M)^a(0)+C\big)d\tau. \eeno
 Since $\bar{\delta}$ is arbitrary small, we obtain that $\int_{t_1}^{t_2}H(f| M^f_{\rho,u,T})d\tau=0$, which implies that $f=M^{f}_{\rho,u,T}$ for $t\in(t_1,t_2)$. Thanks to the result $(i)$ in Lemma  \ref{Disp1}, we first get that $T=\lr{T}_x$ for  $t\in(t_1,t_2)$. It implies that $f=M^{f}_{\rho,u,\lr{T}_x}$.  Due to the result $(ii)$ in Lemma  \ref{Disp1}, we obtain that $u=\lr{u}_x$ for $t\in(t_1,t_2)$ which yields that $f=M^{f}_{\rho,\lr{u}_x,\lr{T}_x}$. Now by result  $(iii)$ in Lemma  \ref{Disp1}, we   get that $\rho=1$
 for  $t\in(t_1, t_2)$. Due to \eqref{normalized}, it is not difficult to check that $u=\lr{u}_x=0$ and $T=\lr{T}_x=1$ for $t\in(t_1,t_2)$. Finally we derive that $f=M_{1,0,1}=M$ for  $t\in(t_1, t_2)$ which implies
  $H(f|M)(t_1)=H(f|M)(t_2)=0$. It implies that the relative entropy $H(f|M)$ is a strictly decreasing function  until it vanishes.

Going further, we want to get the decay rate of the relative entropy. Suppose that at time $t_j$  we have $H(f|M)(t_j)=2^{-j}$. Let $T_j=t_{j+1}-t_j, \eta<\f12$ and
$\bar{\delta}=2^{-jB}$ with $B=\f{2(1-a)}{\f32-A}$. Then it is easy to check that
$B\big(\gamma+A+\eta(\f32-A)\big)+1< (\gamma+\f32)B+a.$
From this together with \eqref{decme}, we obtain that
$ Y(t_{j+1})+ \f18b_12^{-j(B(\gamma+A+\eta(\f32-A))+1)} T_j\le b_5Y(t_j).$
Thanks to the condition \eqref{evolcondi3}, we infer that
\beno  T_j&\le& 8b_5b_1^{-1}2^{j(B(\gamma+A+\eta(\f32-A))+1)} \big(H(f|M)(t_j)+22^{-jB(\gamma+A+\eta(\f32-A))} X(t_j)\big)
\\&\le& 8b_5b_1^{-1}(2^{jB(\gamma+A+\eta(\f32-A))}+22^{j(1-\theta_1)} ).\eeno
We conclude that
$ t_N\le \sum_{j=0}^{N-1}T_j\lesssim 2^{NB(\gamma+A+\eta(\f32-A))}+2^{N(1-\theta_1)}\lesssim 2^{Nc^{-1}}.$
In other words,   $H(f|M)(t_N)=2^{-N}\lesssim  t_N^{-c}$, where $c=\min\{(1-\theta_1)^{-1}, \f{\f32-A}{2(1-a)}( A+\eta(\f32-A))^{-1}\}$. It implies
$
 H(f|M)(t)\lesssim t^{-c}.
$
 Thanks to Lemma \ref{refEn}, we obtain that
$
  \mathbb{P}_{e,1}^{1,\f12+2\delta_1}(h(t))\lesssim t^{-c}
$.
 It concludes the result for $\gamma=0$.

 For the case of $\gamma>0$, thanks to the smoothing estimates in Lemma \ref{Regu}, we infer that the parameter $a$ and $\theta_1$ can be chosen to be close to $1$ arbitrarily. Thus $c$ can be chosen as large as we want. It implies that the order of the decay rate will be  $O(t^{-\infty})$.

 For the case of $\gamma=2$, by modifying the inequalities (\ref{evol1}-\ref{evol3}) (in the sense that we replace $X(t), H(f|M)(t) $ and $M_h(t)$ by $e^{ct} X(t), e^{ct}H(f|M)(t)$ and $e^{ct}M_h(t)$ with $c$ sufficiently small), we can deduce that   for any $t>t_0>0$,
$ e^{ct}Y(t)+c\int_{t_0}^{t} e^{c\tau}Y(\tau) d\tau\le  Y(t_0),$
 which implies the desired result.
\end{proof}

\section{Global-in-time strong stability for the Boltzmann equation}
In this section, we will give the proof of Theorem \ref{thmstab}.

\begin{proof}[Proof of  Theorem \ref{thmstab}] We first give the proof in the case of $\gamma=0$. Observe that $f_0=g_0+h_0$ verifies that $\mathbb{E}^{2,\f12+\delta_1}(f_0)\le\mathbb{E}^{N,\kappa}(f_0)$ with $N+\kappa\ge q>6$ and  $\mathbb{W}_{I}^{(1)}(N,\kappa,\varrho,\delta_1,q^{(1)}_1,q^{(2)}_2)$ verifies \eqref{unicon1} and \eqref{Rstreps}. Then  by Theorem \ref{thmwepo}, the equation \eqref{Boltz eq} admits a unique and smooth solution in $C([0,\tilde{T}];\mathbb{E}^{2,\f12+\delta_1})$. Notice that conditions \eqref{unicon1} and \eqref{Rstreps} depend only on $c_1$ and $c_2$. Then by    the continuity argument, we can assume that there exists a time $T$ which is a maximum time verifying that 
	 	\ben\label{bcri} \mbox{for $t\in [0,T)$,}\quad \rho_f(t,x)> c_1,\quad \|f\|^2_{H^{\f32+2\delta_1}_xL^2_{\gamma+4}}< c_2. \een It is obvious that $T>0$.   
Let $h=f-g$. Notice that the conditions \eqref{evolcondi2} and \eqref{evolcondi4}  implies  \eqref{boufg}. Thus by Lemma \ref{refEn}, we arrive at that for $t\in [0,T]$,
$ \mathbb{P}_{e,2}^{1,\f12+2\delta_1}(h(t))\le  b_2\mathbb{P}_{e,2}^{1,\f12+2\delta_1}(h_0)+C\int_0^t \|h\|_{L^1}^2d\tau. $
From this together with the Gronwall inequality, we are led to that for $t\in[0,T]$,
\ben\label{conti} \mathbb{P}_{e,2}^{1,\f12+2\delta_1}(h(t))\le b_2\mathbb{P}_{e,2}^{1,\f12+2\delta_1}(h_0)e^{Ct}. \een
Recalling that $ \mathbb{P}_{e,2}^{1,\f12+2\delta_1}(h_0)\le  \mathbb{P}_{e,1}^{1,\f12+2\delta_1}(h_0)\le \eta$,  we are in a position to assume that
\beno T^*\eqdefa\sup\{t>0\,\,|\,  \mathbb{P}_{e,2}^{1,\f12+2\delta_1}(h(\tau))\le \eta^{\f12},    \forall \tau\in[0,t] \}. \eeno
Observe that for $t\in [0,T^*]$, the condition \eqref{bcri} always holds, which implies that $T\ge T^*$ due to the definition of $T$.  Thus we have the estimate \eqref{conti} for $t\in [0,T^*]$ which gives that $T^*\ge  C^{-1}(-\ln b_2+\f12|\ln \eta|)$. Let $t_0=(N-1)(N_{\varrho,1}+1)+N_{\varrho,\kappa}+[q/s]+3$. Then $t_0\le T^*$ if $\eta$ is sufficiently small. By Lemma \ref{Regu}, for $t\in [t_0 , T^*]$, we have
\beno \mathbb{P}_{e,1}^{N,\kappa}((f-M_f)(t))+V^q((f-M_f)(t))\lesssim C(c_1,c_2,\mathbb{P}_{e,1}^{1,\f12+2\delta_1}(f_0) ). \eeno
Then due to \cite{Mouhot}, we have the pointwise estimate as follows: for $t\in [ t_0, T^*]$,
$ f\ge K_0e^{-A_0|v|^{q_0}}, $
where $K_0, A_0$ and $q_0$ depend only on $c_1,c_2,\mathbb{P}_{e,1}^{1,\f12+2\delta_1}(g_0)+1$. Let $t_1=(2C)^{-1}(-\ln b_2+\f12|\ln\eta|
)$.
Thanks to \eqref{crentp}, for $t\in [t_1 , T^*]$, we have
$  H(f|M_f)(t)\lesssim \|f-M_f\|_{L^2_{q_0+4}}.
$
Notice that
\beno \mathbb{P}_{e,2}^{1,\f12+2\delta_1}(f-M_f)&\lesssim& \mathbb{P}_{e,2}^{1,\f12+2\delta_1}(h)+\mathbb{P}_{e,2}^{1,\f12+2\delta_1}(M_g-M_f)
+\mathbb{P}_{e,2}^{1,\f12+2\delta_1}(g-M_g).\eeno
From this together with the fact $\mathbb{P}_{e,2}^{1,\f12+2\delta_1}(M_g-M_f)\lesssim \eta $ and Theorem \ref{thmdyna}, we deduce that for $t\in [t_1, T^*]$, we have
$  \mathbb{P}_{e,2}^{1,\f12+2\delta_1}((f-M_f)(t))\lesssim  (1+|\ln \eta|)^{-c},
$
which in turn implies  that for $t\in [t_1, T^*]$,
$H(f|M_f)(t)\le  (1+|\ln \eta|)^{-c/2}.$

Now we choose $t_1$ as a new initial time.
Suppose that \beno T^*_1\eqdefa\sup \{t\ge t_1 \,|\, \mathbb{P}_{e,2}^{1,\f12+2\delta_1}(f-M_f)(\tau))\lesssim  (1+|\ln \eta|)^{-c/3},\forall \tau\in [t_1, t]  \}.\eeno
It is easy to check that \eqref{bcri} still holds for $t\in [t_1, T^*_1]$ and then $T\ge T_1^*\ge T^*$. Thanks to Lemma \ref{refEn}, we deduce that for $t\in [t_1, T_1^*]$, we have
\beno   \mathbb{P}_{e,2}^{1,\f12+2\delta_1}((f-M_f)(t))&\lesssim& \mathbb{P}_{e,2}^{1,\f12+2\delta_1}((f-M_f)(t_1))e^{-c(t-t_1)}+H(f|M_f)(t_1)\lesssim (1+|\ln \eta|)^{-c/2}.
 \eeno
 It means that $T^*_1=\infty$ which yields that \eqref{bcri} holds globally. In other words, the equation \eqref{Boltz eq} admits a unique and global smooth solution $f\in C([0,\infty);\mathbb{P}_{e,1}^{N,\kappa}\cap V^q)$ with the initial data $f_0=g_0+h_0$. We derive that for $t\ge 0$,
 \beno   \mathbb{P}_{e,2}^{1,\f12+2\delta_1}(h(t))&\lesssim&\mathbb{P}_{e,2}^{1,\f12+2\delta_1}((f-M_f)(t))+\mathbb{P}_{e,2}^{1,\f12+2\delta_1}((g-M_g)(t))+\mathbb{P}_{e,2}^{1,\f12+2\delta_1}((M_g-M_f)(t)) \\
 &\lesssim& (1+t)^{-c}+\eta.
 \eeno
 From this together with the definition of $T^*$ and the  fact $T^*\ge  C^{-1}(-\ln b_2+\f12|\ln \eta|)$, we conclude that
 for $t>0$, $\mathbb{P}_{e,2}^{1,\f12+2\delta_1}(h(t)) \lesssim  (1+|\ln \eta|)^{-c}.$ We complete the proof to the desired result for $\gamma=0$.

We may follow the similar argument to prove the result for $\gamma>0$. The only thing that we need to take care of is the existence of $T$ which is defined in the above.  We observe that if \eqref{bcri} holds for $t\in [0,\tilde{T}]$, then by Lemma \ref{refEn} and \eqref{evolcondi6}, we obtain that $\mathbb{X}^{1,\f12+2\delta_1,q_1}_2(f(t))\le  2\big(\bar{\mathcal{C}}+b_2(1+N)+C(\bar{\mathcal{C}},c_1,c_2)\big)$, which together with \eqref{unicon2} imply that \eqref{unicon1} holds for $f(t)$ with $t\in [0,\tilde{T}]$. Therefore the continuity argument can be applied which is enough to derive the existence of $T$ defined in the before. Thanks to the smoothing estimates for $g$, we may copy the similar argument to get the desired result.
\end{proof}

\section{Appendix}

\begin{lem}\label{baslem1}(see \cite{hmuy}) Let $s, r\in \R$ and $a(v), b(v)\in C^\infty$ satisfy for any $\alpha\in \ZZ^3_{+}$,
\beno  |D^\alpha_v a(v)|\le C_{1,\alpha} \langle v\rangle^{r-|\alpha|}, |D^\alpha_\xi b(\xi)|\le C_{2,\alpha} \langle \xi\rangle^{s-|\alpha|} \eeno for constants $C_{1,\alpha}, C_{2,\alpha}$. Then there exists a constant $C$ depending only on $s, r$ and finite numbers of  $C_{1,\alpha}, C_{2,\alpha}$ such that for any $f\in S(\R^3)$,
\beno  |a(v)b(D)f|_{L^2}\le C|\langle D\rangle^s\langle v\rangle^rf|_{L^2}, \quad
|b(D)a(v)f|_{L^2}\le C|\langle v\rangle^r\langle D\rangle^sf|_{L^2}.
\eeno As a direct consequence, we get that  $|\langle D\rangle^m \langle v\rangle^{l} f|_{L^2}\sim | \langle v\rangle^{l} \langle D\rangle^mf|_{L^2}\sim |f|_{H^m_l}.$
\end{lem}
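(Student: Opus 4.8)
The plan is to reduce both estimates to the $L^2$-boundedness of pseudodifferential operators of order zero, for which one may invoke the Calder\'on--Vaillancourt theorem. Set $\tilde a(v)\eqdefa\lr{v}^{-r}a(v)$ and $\tilde b(\xi)\eqdefa\lr{\xi}^{-s}b(\xi)$; the hypotheses say precisely that $|D^\alpha_v\tilde a(v)|\lesssim\lr{v}^{-|\alpha|}$ and $|D^\alpha_\xi\tilde b(\xi)|\lesssim\lr{\xi}^{-|\alpha|}$, i.e. $\tilde a$ and $\tilde b$ are bounded symbols of type $S^0_{1,0}$ in $v$ and in $\xi$ respectively (in particular $\tilde b(D)$ is bounded on $L^2$, and so is multiplication by $\tilde a$).

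For the second inequality, Plancherel's theorem together with $|b(\xi)|\lesssim\lr{\xi}^s$ gives $|b(D)g|_{L^2}\lesssim|\lr{D}^sg|_{L^2}$, so it suffices to show that $\lr{D}^s a(v)\lr{v}^{-r}\lr{D}^{-s}=\lr{D}^s\tilde a(v)\lr{D}^{-s}$ is bounded on $L^2$; but this is the conjugation of the bounded multiplication operator $\tilde a(v)$ by $\lr{D}^s$, hence a pseudodifferential operator with symbol of order $0$, bounded by Calder\'on--Vaillancourt. For the first inequality, $|a(v)|\lesssim\lr{v}^r$ reduces matters to the $L^2$-boundedness of $T\eqdefa\lr{v}^r b(D)\lr{v}^{-r}\lr{D}^{-s}$ (write $f=\lr{v}^{-r}\lr{D}^{-s}g$). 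Here one expands the conjugation: by a commutator expansion one has, for any $N\in\N$,
$$\lr{v}^r b(D)\lr{v}^{-r}=\sum_{|\alpha|<N}c_\alpha\,(\pa^\alpha_\xi b)(D)\,\psi_\alpha(v)+R_N,$$
where $\psi_\alpha(v)\eqdefa(\alpha!)^{-1}\lr{v}^r(D^\alpha_v\lr{v}^{-r})$ satisfies $|D^\beta_v\psi_\alpha(v)|\lesssim\lr{v}^{-|\alpha|}$ (so $\psi_0\equiv1$, and for $|\alpha|\ge1$ it is bounded with bounded derivatives), and $R_N$ is a pseudodifferential operator of order $s-N$ whose seminorms are controlled by finitely many of the $C_{1,\alpha},C_{2,\alpha}$.

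Composing with $\lr{D}^{-s}$ and inserting suitable powers $\lr{D}^{\pm|\alpha|}$, $\lr{D}^{\pm(s-|\alpha|)}$, each term $(\pa^\alpha_\xi b)(D)\psi_\alpha(v)\lr{D}^{-s}$ is a product of operators of order $\le0$: a Fourier multiplier with bounded symbol $(\pa^\alpha_\xi b)(\xi)\lr{\xi}^{-(s-|\alpha|)}$, a conjugated multiplication operator $\lr{D}^{s-|\alpha|}\psi_\alpha(v)\lr{D}^{-(s-|\alpha|)}$ which is an order-zero pseudodifferential operator, and the bounded operator $\lr{D}^{-|\alpha|}$; for $N$ chosen larger than $s$, the term $R_N\lr{D}^{-s}$ has order $-N<0$ and is likewise bounded. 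This gives $T\in\mathcal L(L^2)$, i.e. the first inequality. Finally, the stated consequence follows by applying the first inequality with $a(v)=\lr{v}^l$, $b(\xi)=\lr{\xi}^m$ (which gives $|\lr{v}^l\lr{D}^mf|_{L^2}\lesssim|\lr{D}^m\lr{v}^lf|_{L^2}$) and the second inequality with the same $a,b$ but $f$ replaced by $\lr{D}^{-m}f$ (which, after renaming, gives the reverse bound), together with the definition $|f|^2_{H^m_l}=\int_{\R^3}|\lr{D}^m\lr{v}^lf|^2dv$.

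The part requiring care — rather than genuine difficulty — is the commutator expansion of $\lr{v}^r b(D)\lr{v}^{-r}$, and in particular estimating the remainder $R_N$ in terms of only finitely many of the constants $C_{1,\alpha},C_{2,\alpha}$, so that the final constant $C$ depends only on $s,r$ and finitely many seminorms as claimed; the $L^2$-boundedness of the resulting order-zero operators is classical.
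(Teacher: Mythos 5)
The paper gives no proof of this lemma (it is cited verbatim from \cite{hmuy}), so I can only assess your argument on its own terms, and there your treatment of the second inequality has a genuine gap. You reduce it to the boundedness of $\lr{D}^s\,\tilde a(v)\,\lr{D}^{-s}$ by writing $f=\lr{v}^{-r}\lr{D}^{-s}g$, so that $a(v)f=\tilde a(v)\lr{D}^{-s}g$. But with that substitution $g=\lr{D}^s\lr{v}^r f$, so what you actually establish is $|b(D)a(v)f|_{L^2}\lesssim|\lr{D}^s\lr{v}^r f|_{L^2}$ — the RHS has the \emph{same} ordering as in the first inequality, whereas the lemma asserts $|b(D)a(v)f|_{L^2}\lesssim|\lr{v}^r\lr{D}^s f|_{L^2}$. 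The deliberate reversal of $\lr{v}^r$ and $\lr{D}^s$ between the two inequalities is exactly what is needed: the first, specialized to $a=\lr{v}^l$, $b=\lr{D}^m$, gives $|\lr{v}^l\lr{D}^m f|\lesssim|\lr{D}^m\lr{v}^l f|$, and the second must give the reverse. With your version of the second inequality both estimates have the same RHS and the claimed equivalence $|\lr{D}^m\lr{v}^l f|_{L^2}\sim|\lr{v}^l\lr{D}^m f|_{L^2}$ does not follow.

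To prove the stated second inequality you must substitute $f=\lr{D}^{-s}\lr{v}^{-r}g$, which leads to the operator $\lr{D}^s a(v)\lr{D}^{-s}\lr{v}^{-r}$. The $\lr{v}^{\pm r}$ factors no longer cancel, so the ``conjugation of a bounded multiplier by $\lr{D}^s$'' argument is insufficient: you need a commutator expansion for $a(v)\lr{D}^{-s}$ (or, equivalently, for $\lr{D}^s a(v)$) analogous to the one you carry out for $\lr{v}^r b(D)\lr{v}^{-r}$ in the first inequality, pushing $\lr{v}^{-r}$ through the expansion at the cost of lower-order terms. Once that is done the rest of your scheme — symmetrize to order zero, invoke Calder\'on--Vaillancourt or the composition calculus on each term, choose $N$ large enough to kill the remainder — goes through, and your treatment of the first inequality and of the final consequence is otherwise sound.
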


\begin{lem}\label{baslem2}(see \cite{HE16}) Let $l, s, r\in \R$, $M(\xi)\in S^{r}_{1,0}$ and $\Phi(v)\in S^{l}_{1,0}$. Then there exists a constant $C$ such that
$ |[M(D_v), \Phi(v)]f |_{H^s}\le C|f|_{H^{r+s-1}_{l-1}}.$
Moreover, for any $N\in \N$,
\ben\label{psuedoexp} M(D_v)\Phi=\Phi M(D_v)+\sum_{1\le|\alpha|<N}\f1{\alpha !}\Phi_{\alpha}M^{\alpha}(D_v)+r_N(v,D_v),\een
where $\Phi_{\alpha}(v)=\pa_v^\alpha \Phi$, $M^\alpha(\xi)=\pa^\alpha_\xi M(\xi)$ and $ \lr{v}^{N-l}r_{N}(v, \xi)\in S^{r-N}_{1,0}$.
\end{lem}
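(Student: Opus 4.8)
The plan is to realize both $M(D_v)$ and the multiplication operator $\Phi(v)$ as left-quantized pseudodifferential operators in the Hörmander class $S^\bullet_{1,0}$ of Definition~\ref{psuopde}, and then to run the standard symbolic composition calculus, keeping track of the fact that each $v$-derivative of $\Phi\in S^l_{1,0}$ gains one power of $\lr{v}^{-1}$. Concretely, I would start from the oscillatory-integral formula for the composition symbol,
\beno
a(v,\xi)=\f1{(2\pi)^3}\int_{\R^3}\int_{\R^3} e^{-iy\cdot\eta}\,M(\xi+\eta)\,\Phi(v+y)\,dy\,d\eta,
\eeno
so that $M(D_v)\Phi=a(v,D_v)$ as operators, and reduce everything to an analysis of $a(v,\xi)$.

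First I would Taylor-expand $M(\xi+\eta)$ in $\eta$ around $\eta=0$ up to order $N$, using the integral form of the remainder. Inserting the polynomial part $\sum_{|\alpha|<N}\f1{\alpha!}(\pa^\alpha_\xi M)(\xi)\eta^\alpha$ into the oscillatory integral and using $\eta^\alpha e^{-iy\cdot\eta}=(-D_y)^\alpha e^{-iy\cdot\eta}$ followed by integration by parts in $y$ produces the $\alpha=0$ term $\Phi(v)M(D_v)$ together with the finite sum $\sum_{1\le|\alpha|<N}\f1{\alpha!}\Phi_\alpha M^\alpha(D_v)$; this is exactly the expansion \eqref{psuedoexp}. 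For the remainder $r_N(v,\xi)$ I would write it, after moving all $N$ surplus copies of $\eta$ onto $\Phi$ by integration by parts, as a finite superposition over $|\alpha|=N$ (with an extra $\int_0^1(1-t)^{N-1}\,dt$) of oscillatory integrals of the form $\int\int e^{-iy\cdot\eta}(\pa^\alpha_\xi M)(\xi+t\eta)(\pa^\alpha_v\Phi)(v+y)\,dy\,d\eta$. Since $\pa^\alpha_\xi M\in S^{r-N}_{1,0}$ and $\pa^\alpha_v\Phi\in S^{l-N}_{1,0}$, this is the source of the weight $\lr{v}^{N-l}$ in the statement.

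The technical core — and the main obstacle — is to verify that $\lr{v}^{N-l}r_N(v,\xi)\in S^{r-N}_{1,0}$, i.e. that every derivative $\pa_\xi^\beta\pa_v^{\beta'}r_N$ satisfies the bound $\lesssim\lr{v}^{l-N}\lr{\xi}^{r-N-|\beta|}$ uniformly. This is carried out by repeated integration by parts in the non-compactly-supported oscillatory integral: one splits dyadically in $|\eta|$ relative to $\lr{\xi}$, uses $\lr{y}^{-2J}(1-\Delta_\eta)^{J}e^{-iy\cdot\eta}=e^{-iy\cdot\eta}$ to gain decay in $y$ and $\lr{\eta}^{-2J'}(1-\Delta_y)^{J'}e^{-iy\cdot\eta}=e^{-iy\cdot\eta}$ to gain decay in $\eta$, and combines this with the symbol bounds for $\pa^\alpha_\xi M$ and $\pa^\alpha_v\Phi$ and the elementary inequality $\lr{v+y}^{l-N}\lesssim\lr{v}^{l-N}\lr{y}^{|l-N|}$; for $J,J'$ large enough the resulting integrals in $(y,\eta)$ converge. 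Differentiating $r_N$ in $v$ or $\xi$ before this bookkeeping only changes which of $M,\Phi$ is differentiated and costs extra powers of $\lr{\xi}$, all of which are absorbed into $S^{r-N}_{1,0}$. This is the standard, if somewhat tedious, Hörmander composition-calculus estimate.

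Finally, the commutator bound follows by specializing the expansion to $N=1$: then $[M(D_v),\Phi(v)]=M(D_v)\Phi-\Phi M(D_v)=r_1(v,D_v)$ with $\lr{v}^{1-l}r_1\in S^{r-1}_{1,0}$; indeed the integral-remainder form exhibits $r_1$ as built from $\nabla_v\Phi\in S^{l-1}_{1,0}$ and $\nabla_\xi M\in S^{r-1}_{1,0}$, so one may factor $r_1(v,\xi)=\lr{v}^{l-1}\lr{\xi}^{r-1}c(v,\xi)$ with $c$ a bounded zeroth-order symbol. Invoking the Calder\'on--Vaillancourt $L^2$-boundedness of zeroth-order pseudodifferential operators, and then Lemma~\ref{baslem1} to interchange $\lr{D}^{s+r-1}$ with $\lr{v}^{l-1}$ modulo lower-order terms already controlled by the same bound, yields
\beno
|[M(D_v),\Phi(v)]f|_{H^s}=|r_1(v,D_v)f|_{H^s}\lesssim |\lr{D}^{s+r-1}\lr{v}^{l-1}f|_{L^2}\sim |f|_{H^{r+s-1}_{l-1}},
\eeno
which is the asserted inequality. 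The symbolic expansion \eqref{psuedoexp} and this estimate are all that is needed.
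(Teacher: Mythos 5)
This lemma is quoted directly from \cite{HE16}; the paper you are reading contains no proof of it, so there is nothing to compare your argument against. I will therefore assess the argument on its own terms.

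Your plan is the right one: this is the standard Kohn--Nirenberg composition calculus, and your oscillatory-integral formula $a(v,\xi)=\frac{1}{(2\pi)^3}\iint e^{-iy\cdot\eta}M(\xi+\eta)\Phi(v+y)\,dy\,d\eta$ is correct for the convention $a(D)f=\frac{1}{(2\pi)^3}\int e^{i(v-y)\xi}a(\xi)f(y)\,dy\,d\xi$ used in Definition~\ref{psuopde}. Taylor-expanding $M(\xi+\eta)$, passing $\eta^\alpha$ to $(-D_y)^\alpha$ and integrating by parts gives exactly \eqref{psuedoexp} (up to the $(-i)^{|\alpha|}$ factors implicit in the paper's notation for $\Phi_\alpha$, which do not affect any estimate), and your observation that $\partial_v^\alpha\Phi\in S^{l-N}_{1,0}$ for $|\alpha|=N$ is the correct source of the weight $\lr{v}^{N-l}$ in the remainder. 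The regularity of the remainder, $\lr{v}^{N-l}r_N\in S^{r-N}_{1,0}$, does indeed follow from repeated integration by parts in $y$ and $\eta$ together with Peetre's inequality in the region $|\eta|\gtrsim\lr{\xi}$; this is tedious but routine.

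The one place where your argument is genuinely loose is the final step. Writing $r_1(v,\xi)=\lr{v}^{l-1}\lr{\xi}^{r-1}c(v,\xi)$ with $c\in S^0_{1,0}$ is a correct factorization of the \emph{symbol}, but the \emph{operator} $r_1(v,D)$ is not $\lr{v}^{l-1}\lr{D}^{r-1}c(v,D)$, because quantization does not turn products of symbols into products of operators. Lemma~\ref{baslem1} cannot repair this, because it only exchanges a purely $v$-dependent symbol with a purely $\xi$-dependent one, not a genuinely mixed symbol $c(v,\xi)$. Moreover, your phrase ``modulo lower-order terms already controlled by the same bound'' is exactly where circularity threatens: those lower-order terms are again commutators of the type you are proving. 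What is actually required is a weighted $L^2$-boundedness statement — for $p$ with $\lr{v}^{-m}p\in S^{n}_{1,0}$, the operator $p(v,D):H^{n}_{m}\to L^2$ is bounded — which can be proved non-circularly by a dyadic partition $\{\chi_j\}$ in $v$: each $\chi_j(v)\,p(v,D)$ has output supported where $|v|\sim 2^j$, the rescaled symbols $2^{-jm}\chi_j(v)p(v,\xi)\lr{\xi}^{-n}$ form a uniformly bounded family in $S^0_{1,0}$, so Calder\'on--Vaillancourt applies blockwise, and the near-disjoint supports let one sum. Applying this with $p=\lr{\xi}^s\# r_1$ (the composed symbol of $\lr{D}^sr_1(v,D)$, whose principal part lies in $\lr{v}^{l-1}S^{r+s-1}_{1,0}$ and whose lower-order parts gain a full power of $\lr{\xi}^{-1}$, so the recursion terminates) gives the stated $H^s$-bound. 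With that correction, your proof is complete.
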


Now we are in a position to  give the new profiles of the weighted Sobolev spaces $H^m_l(\R^3)$.

\begin{thm}\label{baslem3} Let $m,l\in \R$. Then for   $f\in H^m_l$, 
$ \sum_{k=-1}^\infty 2^{2kl}|\mathcal{P}_{k}f|_{H^m}^2\sim |f|_{H^m_l}^2\sim
\sum_{j=-1}^\infty 2^{2jm} |\mathfrak{F}_{j}f|_{L^2_l}^2 .$
\begin{proof} The first equivalence is proved in \cite{HE16}. The second one will be proven in a similar way. Thanks to the fact
$ |[W_l, 2^{mj}\mathfrak{F}_j]f|\lesssim 2^{-\f12 j} |f|_{H^{m-\f12}_{l-1}}, $
we infer that
$ \sum_{j=-1}^\infty 2^{2jm}|\mathfrak{F}_{j}f|_{L^2_l}^2+|f|_{H^{m-\f12}_{l-1}}^2
 \sim |f|^2_{H^m_l}. $

We first show that the result holds for $l\ge0$. In this case, we have $\sum_{j=-1}^\infty 2^{2jm}|\mathfrak{F}_{j}f|_{L^2_l}^2 \ge |f|^2_{H^m}. $ By induction, we deduce that for any $n\in \N$,
  $ \sum_{j=-1}^\infty 2^{2jm}|\mathfrak{F}_{j}f|_{L^2_l}^2+|f|_{H^{m-\f{n}2}_{l-n}}^2
 \sim |f|^2_{H^m_l}.$
 Choose $n$ large enough, then we get the equivalence.

 Next we turn to the case $l<0$. We only need to prove
$ \sum_{j=-1}^\infty 2^{2jm}|\mathfrak{F}_{j}f|_{L^2_l}^2\gtrsim |f|^2_{H^m_l}. $
 Notice that
 \beno  \big|\int_{\R^3} fg dv\big| 
\lesssim \big(\sum_{j=-1}^\infty 2^{2jm}|\mathfrak{F}_{j}f|_{L^2_l}^2\big)^{\f12}\big(\sum_{j=-1}^\infty 2^{-2jm}|\tilde{\mathfrak{F}}_{j}g|_{L^2_{-l}}^2\big)^{\f12}\lesssim \big(\sum_{j=-1}^\infty 2^{2jm}|\mathfrak{F}_{j}f|_{L^2_l}^2\big)^{\f12}|g|_{H^{-m}_{-l}}.\eeno
 Then it yields
$|\int_{\R^3} \lr{D}^mW_lf g dv|
 \lesssim    \big(\sum_{j=-1}^\infty 2^{2jm}|\mathfrak{F}_{j}f|_{L^2_l}^2\big)^{\f12}| g|_{L^2}$, which ends 
   the proof to the second equivalence.
\end{proof}	
\end{thm}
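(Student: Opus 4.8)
The plan is to reduce both equivalences to the single principle that the standard dyadic (Littlewood--Paley) square-function characterization of $|\cdot|_{H^m}$ is compatible, up to a lower-order error, with multiplication by the polynomial weight $W_l=\lr{v}^l$. The first (physical-space) equivalence $\sum_k 2^{2kl}|\mathcal{P}_kf|_{H^m}^2\sim|f|_{H^m_l}^2$ is already available from~\cite{HE16}, so the work is to prove the second, frequency-space, equivalence $|f|_{H^m_l}^2\sim\sum_j 2^{2jm}|\mathfrak{F}_jf|_{L^2_l}^2$, and the argument will run in parallel to that of~\cite{HE16}.

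First I would record the commutator identity $W_l\mathfrak{F}_jf-\mathfrak{F}_j(W_lf)=[W_l,\mathfrak{F}_j]f$ and, after multiplying by $2^{mj}$, invoke the pseudodifferential calculus of Lemma~\ref{baslem2} and~\eqref{psuedoexp} with the symbol $2^{mj}\varphi(2^{-j}\cdot)\in S^{m}_{1,0}$ (uniformly in $j$, and frequency-localized at $|\xi|\sim 2^j$) and $\Phi=W_l\in S^{l}_{1,0}$. Since the leading term of the expansion~\eqref{psuedoexp} carries an extra factor $2^{-j}$ from the frequency localization, this yields $|[W_l,2^{mj}\mathfrak{F}_j]f|_{L^2}\lesssim 2^{-j/2}|f|_{H^{m-1/2}_{l-1}}$ (a stronger gain is actually available, but this suffices). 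Square-summing in $j$ and using $\sum_j 2^{-j}\lesssim 1$, the sequences $\big(2^{mj}W_l\mathfrak{F}_jf\big)_j$ and $\big(2^{mj}\mathfrak{F}_j(W_lf)\big)_j$ differ in $\ell^2(L^2)$ by $\lesssim|f|_{H^{m-1/2}_{l-1}}$; since the latter has $\ell^2(L^2)$-norm $\sim|W_lf|_{H^m}^2\sim|f|_{H^m_l}^2$ by unweighted Littlewood--Paley theory together with Lemma~\ref{baslem1}, I obtain the intermediate relation $\sum_j 2^{2jm}|\mathfrak{F}_jf|_{L^2_l}^2+|f|_{H^{m-1/2}_{l-1}}^2\sim|f|_{H^m_l}^2$, valid for every $m,l$.

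The remaining step is to remove the lower-order error $|f|_{H^{m-1/2}_{l-1}}^2$, and here the sign of $l$ dictates the argument. The easy direction $\sum_j 2^{2jm}|\mathfrak{F}_jf|_{L^2_l}^2\lesssim|f|_{H^m_l}^2$ is immediate from the intermediate relation together with the embedding $H^m_l\hookrightarrow H^{m-1/2}_{l-1}$. For $l\ge 0$ I would iterate the intermediate relation, each step lowering the regularity index by $1/2$ and the weight index by $1$, using $2^{2j(m-1/2)}\le 2\cdot 2^{2jm}$ on $j\ge-1$ and $L^2_{l-1}\subset L^2_l$; after finitely many steps the weight index is $\le 0$, whence $|f|_{H^{m-n/2}_{l-n}}\le|f|_{H^{m-n/2}}\lesssim|f|_{H^m}\le\big(\sum_j 2^{2jm}|\mathfrak{F}_jf|_{L^2}^2\big)^{1/2}\le\big(\sum_j 2^{2jm}|\mathfrak{F}_jf|_{L^2_l}^2\big)^{1/2}$, the last step because $W_l\ge 1$, closing the estimate. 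For $l<0$ I would argue by duality: $|f|_{H^m_l}=\sup_{|g|_{H^{-m}_{-l}}\le 1}|\langle f,g\rangle_v|$; writing $\langle f,g\rangle_v=\sum_j\langle\mathfrak{F}_jf,\tilde{\mathfrak{F}}_jg\rangle_v$ (using self-adjointness of $\mathfrak{F}_j$ and $\tilde{\mathfrak{F}}_j\mathfrak{F}_j=\mathfrak{F}_j$), splitting the weight as $W_l\cdot W_{-l}=1$ inside each pairing, and applying Cauchy--Schwarz twice gives $|\langle f,g\rangle_v|\lesssim\big(\sum_j 2^{2jm}|\mathfrak{F}_jf|_{L^2_l}^2\big)^{1/2}\big(\sum_j 2^{-2jm}|\tilde{\mathfrak{F}}_jg|_{L^2_{-l}}^2\big)^{1/2}$, and the second factor is $\lesssim|g|_{H^{-m}_{-l}}$ by the already-proved easy direction applied at weight $-l>0$. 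Taking the supremum yields $|f|_{H^m_l}\lesssim\big(\sum_j 2^{2jm}|\mathfrak{F}_jf|_{L^2_l}^2\big)^{1/2}$, completing the second equivalence.

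The step I expect to be the main obstacle is the commutator estimate: one must extract from Lemma~\ref{baslem2} and~\eqref{psuedoexp} a bound uniform in the dyadic parameter $j$ that loses only one power of the weight and (at least) half a derivative, the frequency localization being what supplies the summable factor in $j$; matching the symbol classes and controlling the remainder $r_N$ is the only genuinely technical point. The rest is bookkeeping, the one subtlety being the $l\ge 0$ versus $l<0$ dichotomy used to eliminate the lower-order term.
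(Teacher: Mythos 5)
Your proposal is correct and follows essentially the same route as the paper: commutator estimate with $W_l$ to obtain the intermediate relation $\sum_j 2^{2jm}|\mathfrak{F}_jf|_{L^2_l}^2+|f|_{H^{m-1/2}_{l-1}}^2\sim|f|_{H^m_l}^2$, iteration for $l\ge 0$ combined with the trivial lower bound $\sum_j 2^{2jm}|\mathfrak{F}_jf|_{L^2_l}^2\gtrsim|f|_{H^m}^2$, and $L^2$-duality against $H^{-m}_{-l}$ for $l<0$. The only quibbles are notational ($L^2_l\subset L^2_{l-1}$, not the reverse, and $W_l\ge 1$ rather than $\ge 0$ for $l\ge 0$), which do not affect the argument.
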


\begin{thm}\label{thmub}(see \cite{HE16})  Let $w_1,w_2\in \R, a,b\in [0,2s]$ with $w_1+w_2=\gamma+2s$ and  $a+b=2s$.  Then for  smooth functions $g, h$ and
	$f$, we have
	\begin{enumerate}
		\item if $\gamma+2s>0$,     \ben\label{u1} |\langle Q(g,h),f\rangle_v|\lesssim
		(|g|_{L^1_{\gamma+2s+(-w_1)^++(-w_2)^+}}+|g|_{L^2})|h|_{H^{a}_{w_1}}|f|_{H^{b }_{w_2}},
		\een
		\item if $\gamma+2s= 0$,     \ben\label{u2} |\langle Q(g,h),f\rangle_v|\lesssim
		(|g|_{L^1_{w_3}}+|g|_{L^2})|h|_{H^{a}_{w_1}}|f|_{H^{b }_{w_2}},
		\een where $w_3=\max\{\delta,  (-w_1)^++(-w_2)^+\}$ with $\delta>0$ which is sufficiently small,
		\item if $-1<\gamma+2s<0$,     \ben\label{u3} |\langle Q(g,h),f\rangle_v|\lesssim
		(|g|_{L^1_{w_4}}+|g|_{L^2_{-(\gamma+2s)}})|h|_{H^{a}_{w_1}}|f|_{H^{b }_{w_2}},
		\een where  $w_4=\max\{-(\gamma+2s), \gamma+2s+(-w_1)^++(-w_2)^+\}$.
	\end{enumerate}
\end{thm}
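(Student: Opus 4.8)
The plan is to reduce the trilinear form $\langle Q(g,h),f\rangle_v$ to a sum of dyadic pieces — both in the size of the relative velocity $|v-v_*|$ and in the velocity frequency — and to estimate each piece via Bobylev's formula, exploiting the cancellation that tames the non-integrable angular singularity. First I would decompose the kinetic factor as $|v-v_*|^\gamma=\sum_{k\ge-1}\Phi_k^\gamma(|v-v_*|)$ with $\Phi_k^\gamma$ as in \eqref{DefPhi}, so that $\langle Q(g,h),f\rangle_v=\sum_{k\ge-1}\langle Q_k(g,h),f\rangle_v$ with $Q_k$ localized to $|v-v_*|\sim2^k$. On each $Q_k$ I would run Bony's paraproduct decomposition in the velocity variable, splitting the interaction into the three regimes: $g$ at low frequency against $h$ at high frequency (forcing $f$ at the same high frequency), $g$ at high frequency against $h$ at low frequency, and the diagonal piece with all three frequencies comparable — exactly the splitting used around \eqref{Qkdp} and \eqref{psdecom} in this paper.

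For a typical block $\langle\mathfrak{F}_jQ_k(\mathfrak{F}_pg,\mathfrak{F}_mh),\mathfrak{F}_jf\rangle_v$ I would invoke Bobylev's formula \eqref{bobylev}, so that the $\sigma$-integral carries the factor $b(\tfrac{\xi}{|\xi|}\cdot\sigma)\big[\mathfrak{F}(\Phi_k^\gamma)(\eta-\xi^-)-\mathfrak{F}(\Phi_k^\gamma)(\eta)\big]$ with $\xi^-=\tfrac{\xi-|\xi|\sigma}{2}$. The crucial point is that this difference is $O\big(\min\{1,\,|\xi^-|\,\|\nabla\mathfrak{F}(\Phi_k^\gamma)\|\}\big)=O\big(\min\{1,\,2^k|\xi|\sin(\theta/2)\}\big)$, and multiplying by $b(\cos\theta)\sim\theta^{-1-2s}$ gives $\theta^{-1-2s}\min\{1,(2^k|\xi|\theta)^2\}$, which is integrable over $\theta\in[0,\pi/2]$ and produces a factor $\lesssim(2^k|\xi|)^{2s}$; this is the source of the $H^a_{w_1}\times H^b_{w_2}$ regularity with $a+b=2s$. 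To make the cancellation rigorous I would split the sphere into $\theta$ bounded below (direct estimate) and $\theta$ small, where one Taylor-expands $\mathfrak{F}(\Phi_k^\gamma)$ to second order: the first-order term vanishes after the change of variables $v'\mapsto\psi_\sigma(v')$ with Jacobian $\tfrac14\langle\tfrac{v'-v_*}{|v'-v_*|},\sigma\rangle^2$ (the cancellation-lemma mechanism behind \eqref{canclem}), and the second-order remainder is controlled by $\theta^2b(\cos\theta)\sim\theta^{1-2s}\in L^1$. Next I would redistribute the kinetic weight: on $|v-v_*|\sim2^k$ one has $2^{k\gamma}\lesssim(\langle v\rangle+\langle v_*\rangle)^\gamma$, so in the regime $\langle v\rangle\lesssim\langle v_*\rangle$ the power is absorbed as an $L^1$ moment of $g$, while in the regime $\langle v\rangle\gtrsim\langle v_*\rangle$ it is shared between the weights $w_1,w_2$ on $h,f$, using $|v-v_*|\sim|v'-v_*|$ and $\langle\kappa(v)\rangle\sim\langle v\rangle$ along the collision path as in \eqref{changev}; the terms $(-w_1)^+$ and $(-w_2)^+$ appear precisely to cover negative $w_1$ or $w_2$.

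Summation over the dyadic indices then finishes the argument. In each interaction regime Littlewood--Paley orthogonality (Theorem \ref{baslem3}) reduces the $p,m,j$ sums to the $H^a_{w_1}$, $H^b_{w_2}$ and $L^2$ norms; the frequency localization forces $2^k$ to be comparable to the high frequency $2^j$ in the relevant blocks, so the residual $k$-sum for large $k$ is controlled by the $2^{2sj}$-weighted sum of the $\mathfrak{F}_jh$ pieces, while for small $k$ one is left with $\sum_{k\le0}2^{k\gamma}$, which converges precisely when $\gamma>0$, giving \eqref{u1}. I expect the main obstacle to be exactly this small relative velocity region when $\gamma+2s\le0$: there $2^{k\gamma}$ is no longer summable against the gain, and one must instead place an $L^2$ norm on $g$ and treat the low-$k$ part as a convolution estimate of Hardy--Littlewood--Sobolev type, which is what forces the auxiliary weight $w_3=\max\{\delta,(-w_1)^++(-w_2)^+\}$ when $\gamma+2s=0$ and the weight $w_4$ together with the extra norm $|g|_{L^2_{-(\gamma+2s)}}$ when $-1<\gamma+2s<0$, exactly as stated in \eqref{u2} and \eqref{u3}. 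The weight bookkeeping and the verification that the first-order Taylor term integrates to zero are the delicate-but-routine pieces; once these are in hand the assembly is standard.
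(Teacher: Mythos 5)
The paper does not prove Theorem~\ref{thmub} here --- it is imported verbatim from \cite{HE16} --- so there is no in-paper argument to check your sketch against. That said, your proposal reproduces the strategy of that reference faithfully: dyadic decomposition of the kinetic factor via $\Phi_k^\gamma$, Bony/Littlewood--Paley decomposition in velocity frequency as in \eqref{Qkdp}--\eqref{psdecom}, Bobylev's formula \eqref{bobylev} with Taylor expansion of $\mathfrak{F}(\Phi_k^\gamma)$ and the $\sigma$-reflection symmetry to extract the $(2^k|\xi|)^{2s}$ gain, then weight redistribution along the collision path as in \eqref{changev} and summation via Theorem~\ref{baslem3} --- exactly the machinery (cf.\ Lemma~\ref{lemub1}) the present paper borrows from the same source.
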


\begin{lem}\label{lemub1}(see \cite{HE16}) Recall that $\Phi_k^\gamma(v)$ is defined by \eqref{DefPhi}.  Suppose $N\in \N$ and  $ \gamma+2s>-1$. Let
	\beno   \mathfrak{W}_{k,p,l}^1&\eqdefa&\iint_{\sigma\in \SS^2,v_*,v\in \R^3} \big(\tilde{ \mathfrak{F}}_{p}\Phi_k^\gamma\big)(|v-v_*|)b(\cos\theta) (\mathfrak{F}_{p}g)_*(\mathfrak{F}_{l}h)\big[
	(\tilde{ \mathfrak{F}}_{p}f)'-
	\tilde{ \mathfrak{F}}_{p}f\big]d\sigma dv_* dv,\\
 	\mathfrak{W}_{k,p,l,m}^4&\eqdefa&\iint_{\sigma\in \SS^2,v_*,v\in \R^3}\big(\tilde{ \mathfrak{F}}_{p}\Phi_k^\gamma\big)(|v-v_*|)b(\cos\theta) (\mathfrak{F}_{p}g)_*(\mathfrak{F}_{l}h)\big[
	( \mathfrak{F}_{m}f)'-
	\mathfrak{F}_{m}f\big]d\sigma dv_* dv.
	\eeno

(i) If $l\le p-N_0$, then for $k\ge0$,  
	\beno  |\mathfrak{W}_{k,p,l}^1|&\lesssim&2^{k(\gamma+\f52-N)}(2^{-p(N-2s)}2^{2s(l-p)}+2^{-(N-\f52)p}2^{\f32(l-p)}) |\Phi_0^\gamma|_{H^{N+2}}|\varphi|_{W^{2,\infty}_N} |\mathfrak{F}_{p}g|_{L^1}|\mathfrak{F}_{l}h|_{L^2}|\tilde{ \mathfrak{F}}_{p}f|_{L^2},\\
	 |\mathfrak{W}_{-1,p,l}^1|&\lesssim&
		( 2^{2sl}2^{-p}+2^{\f32l}2^{-(\gamma+3)p})|\mathfrak{F}_{p}g|_{L^{2}} |\mathfrak{F}_{l}h|_{L^2}|\tilde{ \mathfrak{F}}_{p}f|_{L^2}. 
		\eeno 
				(ii). If $|l-p|\le N_0$ and $m<p-2N_0$, then for $k\ge0$, \beno
		 |\mathfrak{W}_{k,p,m}^4| & \lesssim& 2^{2s(m-p)} 2^{(\gamma+\f32-N)k}2^{-p(N-\f52)}|\Phi_0^\gamma|_{H^{N+2}}|\varphi|_{W^{2,\infty}_N} |\mathfrak{F}_{p}g|_{L^1}|
		\tilde{ \mathfrak{F}}_{p}h|_{L^2}|\mathfrak{F}_{m}f|_{L^2},\\
  |\mathfrak{W}_{-1,p,m}^4|&\lesssim& 2^{2sm}2^{-p}|\mathfrak{F}_{p}g|_{L^{2}}|\tilde{ \mathfrak{F}}_{p}h|_{L^2}|\mathfrak{F}_{m}f|_{L^2}. \eeno
		\end{lem}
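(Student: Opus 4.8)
The plan is to recognise both $\mathfrak{W}^1_{k,p,l}$ and $\mathfrak{W}^4_{k,p,l,m}$ as weak‑form collision quantities whose kernel is \emph{frequency localised}, and to estimate them through the Bobylev identity \eqref{bobylev}. Writing $\Psi\eqdefa\tilde{\mathfrak{F}}_p\Phi_k^\gamma$ (a radial function on $\R^3$), one has $\mathfrak{W}^1_{k,p,l}=\lr{Q_\Psi(\mathfrak{F}_pg,\mathfrak{F}_lh),\tilde{\mathfrak{F}}_pf}_v$, where $Q_\Psi$ is the collision operator with kernel $\Psi(|v-v_*|)b(\cos\theta)$, and similarly $\mathfrak{W}^4_{k,p,l,m}$ is the same object with the unprimed/primed roles of $h$ and $f$ adapted. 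Applying \eqref{bobylev} with $\Phi_k^\gamma$ replaced by $\Psi$ converts each quantity into a triple Fourier integral
\[
\iint b\big(\tfrac{\xi}{|\xi|}\cdot\sigma\big)\big[\hat\Psi(\eta-\xi^-)-\hat\Psi(\eta)\big]\,\widehat{\mathfrak{F}_pg}(\eta)\,\widehat{h_{\sharp}}(\xi-\eta)\,\overline{\widehat{f_{\sharp}}}(\xi)\,d\sigma\,d\eta\,d\xi,\qquad \xi^-=\tfrac{\xi-|\xi|\sigma}{2},
\]
where $h_\sharp,f_\sharp$ denote the relevant Littlewood--Paley blocks of $h,f$. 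So everything reduces to pointwise control of $\hat\Psi$ and its derivatives together with the elementary geometry forced by the frequency supports.

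For the kernel bounds I would use scaling. For $k\ge0$, $\Phi_k^\gamma(v)=2^{k\gamma}\Phi_0^\gamma(2^{-k}v)$, and writing $\tilde{\mathfrak{F}}_p=\tilde{\mathfrak{F}}_p|D|^{-M}|D|^M$ together with the homogeneity of $|D|^M$ and the convolution structure of $\tilde{\mathfrak{F}}_p$ (convolution with $\sum_{|q-p|\le 3N_0}\tilde\phi_q$, $\tilde\phi=\mathfrak{F}^{-1}\varphi$) gives, on $|\zeta|\sim 2^p$, schematic bounds $|\partial_\zeta^\alpha\hat\Psi(\zeta)|\lesssim 2^{k(\gamma+3-N)}2^{-pN}2^{p|\alpha|}\,|\Phi_0^\gamma|_{H^{N+2}}\,|\varphi|_{W^{2,\infty}_N}$ for $|\alpha|\le 2$; the extra factor $|v-v_*|\sim 2^k$ present in the collision integral accounts for the upgrade of $\gamma+3$ to $\gamma+\tfrac52$ and of the decay exponent to $N-\tfrac52$ in the statement. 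For $k=-1$ the profile $\Phi_{-1}^\gamma=|v|^\gamma\psi(|v|)$ is a fixed smooth, compactly supported function, so $\hat\Psi$ decays faster than any polynomial; there one retains only a bounded amount of this decay — $2^{-p}$ or $2^{-(\gamma+3)p}$ — and pays for it by invoking the $L^2$ (rather than $L^1$) norm of $\mathfrak{F}_pg$ via Cauchy--Schwarz.

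Next I would exploit the support conditions and split the $\sigma$‑integral. In case (i), $\widehat{\mathfrak{F}_lh}(\xi-\eta)$ forces $|\xi-\eta|\sim 2^l\ll 2^p\sim|\eta|\sim|\xi|$; in case (ii), $|\xi|\sim 2^m\ll 2^p\sim|\eta|\sim|\xi-\eta|$. In both, decompose into the small‑deviation region, where $|\xi^-|$ lies below the relevant low scale ($2^l$, resp.\ $2^m$), and its complement. On the small‑deviation region Taylor‑expand $\hat\Psi(\eta-\xi^-)-\hat\Psi(\eta)$: to first order when $2s<1$, to second order when $2s\ge1$, the surviving linear term being absorbed after the spherical average through convergence of $\int(1-\cos\theta)b(\cos\theta)\,d\sigma$ and the quadratic remainder through convergence of $\int b(\cos\theta)\sin^2(\theta/2)\,d\sigma$. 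The two competing ways of pairing the available decay of $\hat\Psi$ with the angular weight — using $\int b(\cos\theta)\min\{1,R^2\sin^2(\theta/2)\}\,d\sigma\sim R^{2s}$ from Proposition \ref{symbol} at $\eps=0$, with $R=2^l$ resp.\ $R=2^m$ — produce exactly the two‑term bounds (e.g.\ the $2^{-p(N-2s)}2^{2s(l-p)}$ versus $2^{-(N-\frac52)p}2^{\frac32(l-p)}$ dichotomy). On the complementary region one estimates $\hat\Psi(\eta-\xi^-)$ and $\hat\Psi(\eta)$ separately and pairs the factors by Cauchy--Schwarz and Plancherel in $(\eta,\xi)$; for $\mathfrak{W}^4$ this is where the gain $2^{2s(m-p)}$ appears, the output frequency $2^m$ being far below the input frequency $2^p$. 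Pulling out $\|\widehat{\mathfrak{F}_pg}\|_{L^\infty}\le|\mathfrak{F}_pg|_{L^1}$ (resp.\ its $L^2$ norm when $k=-1$) and the $L^2$ norms of the remaining blocks closes the bookkeeping.

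The main obstacle will be the \emph{sharp joint tracking of the $(k,p)$‑dependence}: region by region one must decide whether to spend the frequency decay of $\hat\Psi$ on a derivative (gaining $2^p$ but downgrading $2^{-pN}$ to $2^{-p(N-1)}$) or on the $L^2$/$L^\infty$ trade‑off for $g$, and to keep the angular singularity optimally paired with $2^l$ or $2^m$, so that both terms in each stated estimate emerge with no loss. The delicate point within this is the regime $2s\ge1$, where the first‑order Taylor term does not vanish and must be killed by the $\sigma$‑average while the quadratic remainder is kept controlled by the finiteness of $\int b(\cos\theta)\sin^2(\theta/2)\,d\sigma$.
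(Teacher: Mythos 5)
The paper itself does not prove this lemma: it is stated with the attribution ``(see \cite{HE16})'' in the appendix and is used as a black box, so there is no in‑paper proof to compare your proposal against.

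That said, your overall plan is the right one and almost certainly matches what is done in the cited source: recognise $\mathfrak{W}^1$ and $\mathfrak{W}^4$ as weak‑form collision brackets $\lr{Q_\Psi(\mathfrak{F}_pg,\cdot),\cdot}_v$ with a frequency‑localised kernel $\Psi=\tilde{\mathfrak{F}}_p\Phi_k^\gamma$, pass to Bobylev's formula \eqref{bobylev}, estimate $\hat\Psi$ via the scaling $\Phi_k^\gamma(v)=2^{k\gamma}\Phi_0^\gamma(2^{-k}v)$ and the annular support $|\zeta|\sim2^p$, split the $\sigma$‑integral at the low scale ($2^l$ resp.\ $2^m$), Taylor‑expand the kernel difference in the near‑grazing region with the first‑order term killed by the symmetry of $\int_{\SS^2}\xi^-\,b\,d\sigma$ when $2s\ge1$, and close by Cauchy–Schwarz/Plancherel in $(\eta,\xi)$. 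Your identification of the main obstacle — the joint $(k,p,l)$ bookkeeping so that both terms of the stated estimate emerge with no loss — is also the right diagnostic.

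The one concrete misstep is the sentence claiming that ``the extra factor $|v-v_*|\sim 2^k$ present in the collision integral accounts for the upgrade of $\gamma+3$ to $\gamma+\tfrac52$ and of the decay exponent to $N-\tfrac52$''. This is backwards: the raw scaling of $\hat\Psi$ gives the prefactor $2^{k(\gamma+3-N)}2^{-Np}$, and an extra factor of $|v-v_*|\sim 2^k$ would \emph{increase} the $k$‑exponent to $\gamma+4-N$, not lower it to $\gamma+\tfrac52-N$. The half‑integer shifts actually come from the step you label ``closing the bookkeeping'': converting $|\widehat{\mathfrak{F}_pg}|_{L^\infty}\le|\mathfrak{F}_pg|_{L^1}$ (for $k\ge0$) versus the Cauchy–Schwarz/Plancherel pairing of the $L^2$ blocks contributes factors of the form $2^{3p/2}$ and $2^{3l/2}$ (or $2^{3m/2}$), and the angular min‑estimate $\int b\min\{1,R^2\sin^2(\theta/2)\}\,d\sigma\sim R^{2s}$ must be paired with the correct choice of $R$ in each region. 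Your proposal flags this bookkeeping as delicate but does not actually carry it out, and with the stated heuristic for the half‑integer shift the exponents you would obtain do not match the lemma. If you correct that step — tracing the $\tfrac32$'s to the Plancherel/volume factors of the frequency blocks rather than to an ad hoc factor of $|v-v_*|$ — the plan should go through.
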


\end{document}